\numberwithin{equation}{section}
\newcommand{\bZ}{{\mathbb Z}}
\newcommand{\beq}{\begin{equation}}
	\newcommand{\bEq}{\end{equation}}
\newcommand{\bv}{{\bf{v}}}
\newcommand{\bw}{{\bf{w}}}
\newcommand{\ord}{{\rm{ord}}}
\newcommand{\size}{{\rm{size}}}
\newcommand{\gh}{{\rm{gh}}}
\newcommand{\al}{\alpha}
\newcommand{\be}{\begin{equation}}
	\newcommand{\ee}{\end{equation}}
\newcommand{\e}{{\varepsilon}}
\newcommand{\fa}{{\mathfrak a}}
\newcommand{\fb}{{\mathfrak b}}
\renewcommand{\Dot}{\mathbf {Dot}}
\renewcommand{\cal}{\mathcal}
\newcommand{\wh}{\widehat}
\newcommand{\wt}{\widetilde}
\newcommand{\ii}{\mathrm{i}} %\newcommand{\mi}{\mathrm{i}}
\newcommand{\dd}{\mathrm{d}}
\renewcommand{\epsilon}{\varepsilon}
\renewcommand{\leq}{\leqslant}
\renewcommand{\geq}{\geqslant}
\renewcommand{\le}{\leq}
\renewcommand{\ge}{\geq}
\renewcommand{\P}{\mathbb{P}}
\newcommand{\E}{\mathbb{E}}
\newcommand{\R}{\mathbb{R}}
\newcommand{\C}{\mathbb{C}}
\newcommand{\N}{\mathbb{N}}
\newcommand{\Z}{\mathbb{Z}}
\newcommand{\dashed}{\text{diffusive}}
\newcommand{\Dashed}{\text{Diffusive}}
\newcommand{\self}{\text{self-energy}}
\newcommand{\selfs}{\text{self-energies}}
\newcommand{\Selfs}{\text{Self-energies}}
\newcommand{\renormal}{\text{self-energy renormalization}}
\DeclareMathOperator{\tr}{Tr}
\DeclareMathOperator{\re}{Re}
\DeclareMathOperator{\im}{Im}
\DeclareMathOperator{\OO}{O}
\theoremstyle{plain} %plain, definition, remark
\newtheorem{theorem}{Theorem}[section]
\newtheorem*{theorem*}{Theorem}
\newtheorem{lemma}[theorem]{Lemma}
\newtheorem{assumption}[theorem]{Assumption}
\newtheorem*{lemma*}{Lemma}
\newtheorem{corollary}[theorem]{Corollary}
\newtheorem*{corollary*}{Corollary}
\newtheorem*{proposition*}{Proposition}
\newtheorem{claim}[theorem]{Claim}
\newtheorem*{claim*}{Claim}
\newtheorem{notation}[theorem]{Notation}
\newtheorem{definition}[theorem]{Definition}
\newtheorem*{definition*}{Definition}
\theoremstyle{remark}
\newtheorem{example}[theorem]{Example}
\newtheorem*{example*}{Example}
\newtheorem{remark}[theorem]{Remark}
\newtheorem*{remark*}{Remark}
\newtheorem*{remarks*}{Remarks}
\newtheorem{strategy}[theorem]{Strategy}
\newcommand{\Sdelta}{{\Sigma}_T}
\newcommand{\Sdeltak}{{\Sigma}_{T,k}}
\newcommand{\wtSdelta}{\Sigma}
\newcommand{\wtSdeltan}{\Sigma^{(n)}}
\newcommand{\Sele}{{\mathcal E}}
\newcommand{\Selek}{{\mathcal E}}
\newcommand{\incomp}{{\text{$T$-equation}}}
\newcommand{\PT}{\mathcal R_T}
\newcommand{\wtPT}{\widetilde{\mathcal R}_T}
\newcommand{\PTn}{\mathcal R_T^{(n)}}
\newcommand{\PTk}{{\mathcal R_{T,k}}}
\newcommand{\PIT}{\mathcal R_{IT}}
\newcommand{\wtPIT}{\widetilde{\mathcal R}_{IT}}
\newcommand{\whPIT}{\widehat{\mathcal R}_{IT}}
\newcommand{\PITn}{{\mathcal R_{IT}^{(n)}}}
\newcommand{\PITk}{{\mathcal R_{IT,k}}}
\newcommand{\PGn}{\mathcal R^{(n)}}
\newcommand{\AGn}{\mathcal A^{(>n)}_{ho}}
\newcommand{\QGn}{\mathcal Q^{(n)}}
\newcommand{\QT}{\mathcal Q_T}
\newcommand{\QTn}{\mathcal Q_T^{(n)}}
\newcommand{\QTk}{{\mathcal Q_{T,k}}}
\newcommand{\QIT}{\mathcal Q_{IT}}
\newcommand{\whQIT}{\widehat{\mathcal Q}_{IT}}
\newcommand{\QITn}{{\mathcal Q_{IT}^{(n)}}}
\newcommand{\QITk}{{\mathcal Q_{IT,k}}}
\newcommand{\AT}{\mathcal A_T}
\newcommand{\ATn}{\mathcal A_T^{(>n)}}
\newcommand{\AIT}{\mathcal A_{IT}}
\newcommand{\AITn}{{\mathcal A_{IT}^{(>n)}}}
\newcommand{\Pol}{{\mathbf{M}}}
\newcommand{\Iso}{{\mathcal I}}
\newcommand{\Err}{{\mathcal Err}}
\newcommand{\nonuni}{{\text{non-universal $T$-expansion}}}
\newcommand{\Nonuni}{{\text{Non-universal $T$-expansion}}}
\def\bZ{{\mathbb Z}}
\newcommand{\cM}{{\cal M}}
\def\@empty{}
\def\author#1{\par
	{\centering{\authorfont#1}\par\vspace*{0.05in}}
}
\def\titlefont{\fontsize{13}{15}\bfseries\boldmath\selectfont\centering{}}
\def\authorfont{\fontsize{13}{15}}
\def\abstractfont{\fontsize{8}{10}}
\let\affiliationfont\rhfont
\def\address#1{\par
	{\centering{\affiliationfont#1\par}}\par\vspace*{11pt}
}
\def\body{
	\setcounter{footnote}{0}
	\def\thefootnote{\alph{footnote}}
	\def\@makefnmark{{$^{\rm \@thefnmark}$}}
}
\def\title#1{
	\thispagestyle{plain}
	\vspace*{-14pt}
	\vskip 79pt
	{\centering{\titlefont #1\par}}%
	\vskip 1em
}
\renewenvironment{abstract}{\par%
	\vspace*{6pt}\noindent %{\bf Abstract}
	\abstractfont
	\noindent\leftskip18pt\rightskip18pt
}{%
	\par}
\renewcommand{\section}{\@startsection
	{section}%                   % the name
	{1}%                         % the level
	{0mm}%                       % the indent
	{-2\baselineskip}%            % the before skip
	{2\baselineskip}%          % the after skip
	{\normalfont\large\scshape\centering}} % the style
\renewcommand{\subsection}{\@startsection
	{subsection}%                   % the name
	{2}%                         % the level
	{0mm}%                       % the indent
	{-\baselineskip}%            % the before skip
	{0.5\baselineskip}%          % the after skip
	{\normalfont\bf\itshape}} % the style
\newcommand{\tnorm}[1]{{\left\vert\kern-0.25ex\left\vert\kern-0.25ex\left\vert #1 
		\right\vert\kern-0.25ex\right\vert\kern-0.25ex\right\vert}}
\begin{document}

%
%
%\title{Delocalization of random band matrices in high dimensions}
\title{Delocalization and quantum diffusion of random band matrices in high dimensions II: $T$-expansion}
{\let\thefootnote\relax\footnotetext{\noindent 
		%The work of P.B. is partially supported by the NSF grant DMS\#1513587. 
		%The work of H.-T. Y. is partially supported by NSF Grant  DMS-1606305 and a Simons Investigator award.
		%This work is partially supported by the NSF grant DMS\#1552192.
		The work of F.Y. is partially supported by the Wharton Dean’s Fund for Postdoctoral Research. 
		The work of H.-T. Y. is partially supported by the NSF grant DMS-1855509 and a Simons Investigator award. 
		The work of J.Y. is partially supported by the NSF grant DMS-1802861. 
}}
\vspace{1cm}
%~\hspace{0.1cm}
\noindent \begin{minipage}[b]{0.32\textwidth}
	\author{Fan Yang }
	\address{University of Pennsylvania\\
		fyang75@wharton.upenn.edu}
\end{minipage}
\begin{minipage}[b]{0.32\textwidth}
	\author{Horng-Tzer Yau}
	\address{Harvard University\\
		htyau@math.harvard.edu}
\end{minipage}
\begin{minipage}[b]{0.32\textwidth}
	\author{Jun Yin}
	\address{University of California, Los Angeles\\
		jyin@math.ucla.edu}
\end{minipage}

\begin{abstract}

We consider the Green's function $G(z):=(H-z)^{-1}$ of Hermitian random band matrix $H$ on the $d$-dimensional lattice $(\Z/L\Z)^d$. The entries $h_{xy}=\overline h_{yx}$ of $H$ are independent centered complex Gaussian random variables with variances $s_{xy}=\mathbb E|h_{xy}|^2$, which satisfy a banded profile so that $s_{xy}$ is negligible if $|x-y|$ exceeds the band width $W$. For any fixed $n\in \N$,  we construct an expansion of the $T$-variable, $T_{xy}=|m|^2 \sum_{\alpha}s_{x\alpha}|G_{\alpha y}|^2$, with an error $\OO(W^{-nd/2})$, and use it to prove a local law on the Green's function. This $T$-expansion was the main tool to prove the delocalization and quantum diffusion of random band matrices for dimensions $d\ge 8$ in part I \cite{PartI_high} of this series. 
 
%The $T$-expansion and local laws have been used in \cite{PartI_high} to prove the delocalization and quantum diffusion of random band matrices. We expect that the results of this paper can be extended to non-Gaussian band matrices. 

\end{abstract}

\tableofcontents

\vspace{20pt}

\newpage
\section{Introduction}
%\subsection{Random band matrices}
Random band matrices $H=(h_{xy})$ model interacting quantum systems on a lattice of scale $L$ with random transition amplitudes effective up to a short scale $W\ll L$. In this paper, we consider a general Hermitian random band matrix ensemble on the $d$-dimensional integer lattice $\Z_L^d:=\{1,2, \cdots, L\}^d$ with $N:= L^d$ many lattice sites. The entries of $H$ are independent centered random variables  up to the Hermitian condition $h_{xy}=\overline h_{yx}$. The variance $s_{xy}:=\mathbb E|h_{xy}|^2$ typically decays with the distance between $x$ and $y$ on a characteristic length scale $W$, called the {\it band width} of $H$, and is negligible when $|x-y|\gg W$. We require that $s_{xy}$ satisfy the normalization condition  %The band structure is imposed by requiring that the variance profile is given by 
\be\label{fxy}
%s_{xy} = \frac{1}{W^d}f\left( \frac{x-y}{W}\right) \quad \text{with } \quad 
\sum_{x}s_{xy}=\sum_{y}s_{xy}=1.
\ee
It is well-known that under the condition \eqref{fxy}, the global eigenvalue distribution of $H$ converges weakly to the Wigner's semicircle law supported in $[-2,2]$. 
%For the simplest one-dimensional model with graph $\Gamma=\{1,2,\cdots, N\}$ and $h_{xy}=0$ for $|x-y|\ge W$, we have a band matrix in the usual sense that only the matrix entries in a narrow band of width $2W$ around the diagonal can be nonzero. 

As $W$ varies, random band matrices naturally interpolate between the random Schr\"odinger operator \cite{Anderson} and mean-field (generalized) Wigner matrix ensembles \cite{Wigner}. One important conjecture about random band matrices is that a sharp {\it localization-delocalization transition} occurs at some critical band width $W_c$. An eigenvector is said to be \emph{localized} if most of its weight resides in a region of scale $\ell \ll L$, and \emph{delocalized} if $\ell\sim L$. In physics, this length scale $\ell$ is called the \emph{localization length}, which generally depends on the energy level of the eigenvector. We restrict ourselves to the bulk eigenvectors with eigenvalues in $(-2+ \kappa, 2-\kappa)$ for a small constant $\kappa>0$. It is conjectured that there exists $W_c\in [1, L]$ such that if $W\ll W_c$, the bulk eigenvectors of $H$ are localized, while if $W\gg W_c$, the bulk eigenvectors of $H$ are delocalized. Based on simulations \cite{ConJ-Ref1, ConJ-Ref2, ConJ-Ref4, ConJ-Ref6} and non-rigorous supersymmetric arguments \cite{fy}, the critical band width of one-dimensional random band matrices is conjectured to be of order $W_c\sim \sqrt L$. In higher dimensions, the critical band width is conjectured to be $W_c\sim \sqrt{\log L}$ when $d=2$ and $W_c\sim 1$ when $d\ge 3$; see \cite{Spencer1,Spencer2,Spencer3, PB_review} for more details about these conjectures. 

%Heuristically, the random band matrices are expected to have similar qualitative properties to the Anderson model. The localization-delocalization transition is also one of the most the most important conjectures regarding the random Schr\"odinger operators. However, this is beyond the scope of this paper and the reader can refer to \cite{Kirsch2007, CarLa1990, Spencer_Anderson} for extensive reviews.  

Many partial results have been proved rigorously concerning the localization-delocalization conjecture of random band matrices in dimension $d=1$ \cite{BaoErd2015,Semicircle, ErdKno2013,ErdKno2011,delocal,HeMa2018,BouErdYauYin2017,PartI,PartII,PartIII,Sch2009,PelSchShaSod,Sod2010,SchMT,Sch1,Sch2,Sch3,1Dchara,Sch2014} and dimensions $d\ge 2$ \cite{ErdKno2013,ErdKno2011,delocal, HeMa2018,PartIII}. We refer the reader to the introduction of \cite{PartI_high} for a brief review.   
%\iffalse 
%We now summarize some key results. It has been shown \cite{PartI, PartII, PartIII} that the localization lengths of bulk eigenvectors of general random band matrices satisfy $\ell > W^{4/3}$, which gives $W_c < L^{3/4}$. If the entries of $H$ are Gaussian, a lower bound $\ell < W^7$ was obtained in \cite{PelSchShaSod}, which gives $W_c > L^{1/7}$. For a class of complex Gaussian band matrices satisfying some special variance profiles, supersymmetry technique has been used to prove a transition in the two-point correlation function for the bulk eigenvalues at $W_c\sim L^{1/2}$ \cite{1Dchara}. However, it is still not clear if the supersymmetry method can be adapted to establish the localization or delocalization of the random band matrices treated in \cite{1Dchara}. 
%Compared to the one-dimensional case, the understanding of random band matrices in dimensions $d\ge 2$ is more limited.   
%By studying the unitary operator $e^{\ii t H}$, it was proved \cite{ErdKno2013,ErdKno2011} that the localization lengths of most bulk eigenvectors of $d$-dimensional random band matrices satisfy $\ell > W^{1+d/6}$. This bound was later strengthened to $\ell > W^{1+d/2}$ in \cite{PartIII} by using a Green's function method, which also improves earlier results obtained in \cite{delocal, HeMa2018}. 
%\fi
In this series (i.e. part I \cite{PartI_high} and the current paper), we prove that, as long as $W\ge L^\e$ for a small constant $\e>0$, most bulk eigenvectors of $H$ have localization lengths comparable to $L$. This gives a positive answer to the delocalization conjecture (in the weak delocalization sense) for dimensions $d \ge 8$ under the slightly stronger assumption $W\ge L^\e$ (vs. $W\ge C$). 

In part I \cite{PartI_high}, we described the main structure of the proof of the delocalization conjecture and quantum diffusion of random band matrices. This paper will provide some key results used in \cite{PartI_high}. Our proof is based on the resolvent (or Green's function) of $H$ defined by 
\be\nonumber
G(z)=(H-z)^{-1},\quad z\in \C_+:=\{x\in \C: \im z>0\},
\ee
and the $T$-matrix defined by \cite{delocal} 
\be\label{defGT}
T_{xy}(z):=|m|^2\sum_\al s_{x\al}|G_{\al y}(z)|^2,\quad x,y \in \Z_L^d.
\ee
An important tool is an expansion of the $T$-matrix, called the \emph{$T$-expansion}, with errors of order $W^{-nd/2}$ for $n\in \N$.
In \cite{PartI_high}, we defined basic graph operations in constructing the $T$-expansion (see Section \ref{sec_basiclocal}), studied the local expansion strategy, and explored a key $\renormal$ property. In this paper, we will describe the subtle global expansion strategy (see Sections \ref{sec pre} and \ref{sec global}) and complete the construction of the $T$-expansion up to any fixed order. % (see Theorem \ref{incomplete Texp} and Corollary \ref{lem completeTexp}). 
The $T$-expansion thus obtained gives a precise description of the fluctuation of the $T$-matrix by decomposing it into a sum of terms with sophisticated but sufficiently nice structures. Furthermore, given an $n$-th order $T$-expansion for any fixed $n\in \N$, we will use it to prove an almost sharp local law on $G(z)$ (cf. Theorem \ref{thm ptree}), which is also a key input for \cite{PartI_high}. For the convenience of the reader, we will repeat some notations and definitions of \cite{PartI_high} in Sections \ref{sec notation}--\ref{sec_basiclocal} so that  this paper is relatively self-contained.

\subsection{The model and $T$-expansion} 

We will consider $d$-dimensional random band matrices indexed by a cube of linear size \(L\) in \(\mathbb{Z}^{d}\), i.e., 
\be\label{ZLd}
\Z_L^d:=\left( \Z\cap ( -L/2 , L/2]\right) ^d. 
\ee
We will view $\Z_L^d$ as a torus and denote  by  $[x-y]_L$ the representative of $x-y$ in $\Z_L^d$, i.e.,  
\be\label{representativeL}[x-y]_L:= \left[(x-y)+L\Z^d\right]\cap \Z_L^d.\ee
Clearly, $\|x-y\|_L:=\| [x-y]_L \|$ is a {periodic} distance on $\Z_L^d$ for any norm $\|\cdot\|$ on $\Z^d$. For definiteness, we use $\ell^\infty$-norm in this paper, i.e. $\|x-y\|_L:=\|[x-y]_L\|_\infty$. In this paper, we consider the following class of $d$-dimensional random band matrices. 

\begin{assumption}[Random band matrix] \label{assmH}%Consider a family of random matrices $\{H_N\}_{N=1}^\infty$, assume that 
Fix any $d\in \N$. For $L\gg W\gg 1$ and $N:=L^d$, we assume that $ H\equiv H_{d,f,W,L}$ is an $N\times N$ complex Hermitian random matrix whose entries $(\re h_{xy}, \im   h_{xy}: x,y \in \Z_L^d)$ are independent Gaussian random variables (up to symmetry $H=H^\dagger$) such that  
\be\label{bandcw0}
\mathbb E h_{xy} = 0, \quad \E (\re h_{xy})^2 =  \E (\im h_{xy})^2 = s_{xy}/2, \quad x , y \in \bZ_L^d,
\ee
where the variances $s_{xy}$ satisfy that
\be\label{sxyf}s_{xy}= f_{W,L}\left( [x-y]_L \right)\ee
for a positive symmetric function $f_{W,L}$ satisfying Assumption \ref{var profile} below. Then, we say that $H$ is a $d$-dimensional  random band matrix with linear size $L$, band width $W$ and variance profile $f_{W,L}$. Denote the variance matrix by $S : = (s_{xy})_{x,y\in \Z_L^d}$, which is a doubly stochastic symmetric $N \times N$ matrix. 
	%	
	%	Suppose that $\re h_{xy}$ and $\im h_{xy}$ are i.i.d. Gaussian random variables of mean zero and variance $s_{xy}/2$ up to the relation $ h_{xy}=\overline h_{yx}$.
\end{assumption}   

\begin{assumption}[Variance profile]\label{var profile}
	We assume that $f_{W,L}:\Z_L^d\to \mathbb R_+$ is a positive symmetric function on $\Z_L^d$ that can be expressed by the Fourier transform 
	%\be\label{choicef}
	%f_{W,L}(x):= \frac{1}{(2\pi)^d }\int_{\R^d} \psi(p)e^{\ii p\cdot x}\dd p, \ee
	\be\label{choicef}
	f_{W,L}(x):= \frac{1}{(2\pi)^d Z_{W,L}}\int \psi(Wp)e^{\ii p\cdot x} \dd p.  \ee
	Here $\Z_{W,L}$ is the  normalization constant so that $\sum_{x\in \Z_L^d} f_{W,L}(x)=1$, and $\psi\in C^\infty(\R^d)$ is a symmetric smooth function independent of $W$ and $L$ and satisfies the following properties:
	\begin{itemize}
		\item[(i)] $\psi(0)=1$ and $\|\psi\|_\infty \le 1$;  %for $|p|\to 0$, $ \psi(p) \le 1- \al |p|^2$ for a constant $\al>0$;
		\item[(ii)] $\psi(p)\le \max\{1 - c_\psi |p|^2 , 1-c_\psi  \}$ for a constant $c_\psi>0$;
		%\item[(iii)] $\psi$ is symmetric, i.e. $\psi(p)=\psi(-p)$; 
		\item[(iii)]  $\psi$ is in the Schwartz space, i.e.,
		\be\label{schwarzpsi} \lim_{|p|\to \infty}(1+|p|)^{k}|\psi^{(l)}(p)| =0, \quad \text{for any }k,l\in \N.\ee
	\end{itemize}
	%Of course, we also need to take $\psi$ such that $f$ is always positive. 
\end{assumption}

Clearly, $f_{W,L}$ is of order $\OO(W^{-d})$
%With summation by parts, it is easy to see that $f_{W,L}$ has 
and decays faster than any polynomial, that is, for any fixed $k\in \N$, there exists a constant $C_k>0$ so that
\be\label{subpoly}
|f_{W,L}(x)|\le C_k W^{-d}\left( {\|x\|_L}/{W}\right)^{-k}.%\quad \text{and}\quad \lim_{k\to \infty}\left(\frac{\|x\|_L}{W}\right)^k W^d |f_{W,L}(x)|=0 \quad \text{for any $k\in \N$.  }
\ee
Hence the variance profile $S$ defined in \eqref{sxyf}  has a banded structure, namely,  for any constants $\tau,D>0$,
\be\label{app compact f}
\mathbf 1_{|x-y|\ge W^{1+\tau}}|s_{xy}|\le W^{-D}.
\ee
Combining \eqref{schwarzpsi} and \eqref{subpoly} with the Poisson summation formula, we obtain that  
\be\label{bandcw1} 
Z_{W,L} =   \psi(0) + \OO(W^{-D})=1+ \OO(W^{-D}),
\ee
for any large constant $D>0$ as long as $L\ge W^{1+\e}$ for a constant $\e>0$.

The diagonal resolvent entries of $G(z)$ are expected to be given by the Stieltjes transform of Wigner's semicircle law,
\be\label{msc}
m(z):=\frac{-z+\sqrt{z^2-4}}{2} = \frac{1}{2\pi}\int_{-2}^2 \frac{\sqrt{4-\xi^2}}{\xi-z}\dd\xi,\quad z\in \C_+.
\ee
On the other hand, it was proved in \cite{PartI_high} that off-diagonal resolvent entries can be approximated by a \emph{diffusive kernel} $\Theta$ defined by  %one of the most important deterministic matrices in this paper, defined as
\be\label{Thetapm}
\Theta(z):=\frac{|m(z)|^2 S}{1-|m(z)|^2 S} ,
\ee
for $z=E+\ii \eta$ with $E\in (-2+\kappa,2-\kappa)$ and $\eta\ge W^{2+\e}/L^2$ for a constant $\e>0$. In this case, it is well-known that (see, e.g., \cite{delocal,PartIII} for a proof of \eqref{thetaxy})
\be\label{thetaxy}
\Theta_{xy}(z) \le   \frac{W^\tau \mathbf 1_{|x-y|\le   \eta^{-1/2}W^{1+\tau}}}{W^2\left(\|x-y\|_{ L}+W\right)^{d-2}}   + \frac{1}{ \left(\|x-y\|_{ L}+W\right)^{D}}  \le W^{\tau} B_{xy},
\ee
for any small constant $\tau>0$ and large constant $D>0$, where we have abbreviated that 
\be\label{defnBxy}B_{xy}:=W^{-2}\left(\|x-y\|_{ L}+W\right)^{-d+2}.\ee 
%The following theorem provides an essentially sharp local law on the resolvent entries under the assumptions of Theorem \ref{comp_delocal}. 
In part I of this series \cite{PartI_high}, we proved the following local law in Theorem \ref{main thm} and quantum diffusion in Theorem \ref{main thm2}. In addition, Theorem \ref{main thm} was used in \cite{PartI_high} to prove the delocalization of random band matrices in dimensions $d\ge 8$. We remark that all results in this paper hold only for large enough $W$ and $L$, and, for simplicity, we do not repeat it in  our statements.

\begin{theorem}[Theorem 1.4 of \cite{PartI_high}] %[Local law]
	\label{main thm}
Fix $d\ge 8$ and any small constants $ c_w, \e, \kappa>0$. Suppose that $L^{c_w} \le W \le L$, and $H$ is a $d$-dimensional random band matrix satisfying Assumptions \ref{assmH} and \ref{var profile}. Then for any small constant $\tau>0$ and large constant $D>0$, we have the following estimate for $z=E+\ii \eta$ and all $x,y \in \Z_{L}^d$: %holds uniformly for all $x,y \in \Z_{L}^d$ and $z= E+\ii\eta$ with $\eta\ge W^{2+\e}/L^2$:
	\be\label{locallaw}
	\P\bigg(\sup_{E\in (-2+\kappa, 2- \kappa)}\sup_{W^{2+\e}/L^{2}\le \eta\le 1} |G_{xy} (z) -m(z)\delta_{xy}|^2 \le  W^\tau B_{xy}\bigg) \ge 1- L^{-D} . %+\OO_\prec\left(N^{-\e}{\Theta}_{xy}(z)\right)
	\ee
	%(\cor need to add a conjecture on the $S_{\Delta}$ terms\nc) %for some constant $\e>0$.
\end{theorem}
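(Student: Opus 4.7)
The plan is to deduce the local law from the $n$-th order $T$-expansion via a standard bootstrap in the spectral parameter $\eta$, combined with high-moment estimates of the stochastic terms and a Ward-identity reduction from averaged bounds on $T$ to entrywise bounds on $G$.

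First I would reduce to bounding the $T$-variable. Note that the Ward identity $\sum_y |G_{xy}|^2 = \eta^{-1}\im G_{xx}$ and the definition $T_{xy} = |m|^2 \sum_\alpha s_{x\alpha}|G_{\alpha y}|^2$ show that $T$ encodes a local-spatial average of $|G|^2$, smoothed on scale $W$. Since the banded profile has $\|S\|_\infty \asymp W^{-d}$, pointwise control $|G_{xy}-m\delta_{xy}|^2 \lesssim W^\tau B_{xy}$ can be recovered from a bound $T_{xy} \le W^\tau \Theta_{xy}$ plus the corresponding estimate on diagonal entries (which follows from the self-consistent equation for $G_{xx}$ and the closeness of $\langle G \rangle$ to $m$ in the bulk). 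Thus the target is to show $|T_{xy} - |m|^2 \Theta_{xy}| \le W^\tau\,(B_{xy}/(W\eta)^{1/2})$ or an equivalent bound.

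Next I would invoke the $n$-th order $T$-expansion constructed in this paper: it writes $T_{xy} = |m|^2\Theta_{xy} + \mathcal{Q}_{xy} + \mathcal{A}_{xy} + \mathcal{R}_{xy}$, where $\mathcal{Q}$ is a sum of stochastic ``$Q$-graphs'' (centered by partial expectations), $\mathcal{A}$ is a higher-order deterministic remainder of order $W^{-nd/2}$, and $\mathcal{R}$ contains ``$\Theta$-chain'' pieces that are already controlled deterministically by iterates of $\Theta$. Taking a high moment $\mathbb{E}|\mathcal{Q}_{xy}|^{2p}$ and expanding each factor using the graph calculus developed in Part I, the self-energy renormalization property ensures that the dangerous ``doubly-connected'' subgraphs either cancel or contribute only $B$-type kernels, and the remaining sums over internal lattice vertices collapse into products of $\Theta$'s. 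Markov's inequality and a union bound over a polynomial grid in $z$ then yield high-probability control of $T_{xy}$ in terms of the a priori $G$-bound plugged into the expansion.

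This sets up a bootstrap in $\eta$. Starting from $\eta$ of order $1$, where the usual local semicircle law provides the initial input, I would run the induction hypothesis $|G_{xy}-m\delta_{xy}|^2 \le W^\tau B_{xy}\cdot\Phi(\eta)$ downward along a dyadic sequence $\eta_k = 2^{-k}$. At each step, feeding the current bound into the $T$-expansion and into the high-moment estimate produces a strictly improved $\Phi$, and standard Lipschitz continuity $|\partial_\eta G| \le \eta^{-2}$ allows one to interpolate between grid points. Iterating $O(\log L)$ times down to $\eta = W^{2+\epsilon}/L^2$ gives $\Phi = 1$ up to powers of $W^\tau$.

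The hard part will be closing the bootstrap at the smallest scale $\eta \sim W^{2+\epsilon}/L^2$. There the diffusive propagator $\Theta$ saturates at $\Theta_{xy} \asymp W^{-2}(L^{-d+2} + \|x-y\|^{-d+2})$, and the $\eta^{-1}$ factors appearing from Ward-identity reductions of stochastic graphs must be absorbed exactly by the gains coming from self-energy renormalization inside $\mathcal{Q}_{xy}$. In other words, a naive power counting loses factors of $L/W$, and only the full cancellation structure embedded in the $T$-expansion—specifically, the renormalization of $\Theta$-chains into $\Theta$ itself—yields the correct scaling. The dimensional restriction $d\ge 8$ enters precisely here, through the summability of products of $B$-kernels required to close the fluctuation estimate for $\mathcal{Q}_{xy}$ at top order.
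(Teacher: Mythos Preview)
Your overall architecture—bootstrap in $\eta$, with high-moment control of $T_{xy}$ via the $n$-th order $T$-expansion, and reduction from $T$ to $G$ via Lemma~\ref{lem G<T}—matches the paper (see the proof of Theorem~\ref{thm ptree} in Section~\ref{sec_pflocal}). The genuine gap is in how you bridge successive scales $\eta_k\to\eta_{k+1}$. You invoke ``standard Lipschitz continuity $|\partial_\eta G|\le\eta^{-2}$,'' but over a multiplicative step (dyadic or $W^{-\e_0}$) this loses a factor $\eta_k^{-1}$, which is useless. The monotonicity $\eta\mapsto\eta\,\im G_{xx}(E+\ii\eta)$ is better, but it only controls $\sum_y|G_{xy}|^2$, not the \emph{spatial profile} $|G_{xy}|^2\lesssim B_{xy}$; yet the high-moment estimate at $\eta_{k+1}$ needs an a~priori input at $\eta_{k+1}$ that retains spatial decay, or else the internal-vertex sums in the $T$-expansion graphs diverge. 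The paper closes this with a nontrivial continuity lemma (Lemma~\ref{lem: ini bound}): from the sharp law at $\wt\eta$ one deduces, for $\eta\in[W^{-\e_0}\wt\eta,\wt\eta]$, both $\|G-m\|_{\max}\prec W^{-d/2+\e_0}$ and the averaged decay~\eqref{initial Txy2}. The latter is obtained by bounding $\|A\|_{\ell^2\to\ell^2}$ for $A_{y'y}=\im G_{y'y}(\wt z)$ restricted to boxes of every scale $K\in[W,L]$, via the trace moments $\E\tr A^{2p}$ (Lemma~\ref{lem normA}). That trace bound requires an entirely separate machinery—the \emph{non-universal $T$-expansion} with ghost edges and the generalized doubly connected property (all of Section~\ref{sec ini_bound})—to handle the $2p$-gons $\prod_i G^{s_i}_{\fa_i\fa_{i+1}}$. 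This is the substantive missing step, and it is precisely where the condition~\eqref{Lcondition1} (hence the choice of $n$, hence $L^{c_w}\le W$) is used to absorb the $(L^2/W^2)^{k_{\gh}}$ factors.

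A secondary point: your description of the high-moment step inverts the role of doubly connected graphs. In the paper these are the \emph{good} graphs—doubly connectedness is exactly what makes them summable (Lemma~\ref{no dot}), and the entire expansion strategy of Sections~\ref{sec pre}--\ref{sec global} is designed to preserve it. The self-energy renormalization (the sum-zero property~\eqref{3rd_property0}) is a separate mechanism controlling the labelled diffusive edges $\Theta\Sele_{k_1}\Theta\cdots\Theta$ via Lemma~\ref{lem redundantagain}; it does not act on the fluctuation graphs in $\cal Q_T^{(n)}$, which are bounded directly through Claim~\ref{lem QTab} after the $Q$-expansions of Section~\ref{sec defnQ}.
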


\begin{theorem}[Theorem 1.5 of \cite{PartI_high}] %[Quantum diffusion of the $T$-matrix]
	\label{main thm2}
	Suppose the assumptions of Theorem \ref{main thm} hold. %Fix any $E\in (-2+\kappa, 2- \kappa)$. 
	Fix any small constant $\e>0$ and large constant $M\in \N$. Then for all $x,y \in \Z_{L}^d$ and $z=E+\ii \eta$ with $E\in (-2+\kappa,2-\kappa)$ and $W^{2+\e}/L^{2}\le \eta\le 1$, we have that %holds uniformly for all $x,y \in \Z_{L}^d$ and $z= E+\ii\eta$ with $\eta\ge W^{2+\e}/L^2$:
	\be\label{E_locallaw}
	\begin{split}
		\E T_{xy} &=  \left[  \Theta^{(M)} \left(|m|^2 + \cal G^{(M)} \right)\right]_{xy}  + \OO( W^{-Md/2} ) . %+\OO_\prec\left(N^{-\e}{\Theta}_{xy}(z)\right)
	\end{split}
	\ee
	Here $\Theta^{(M)}$ is the $M$-th order \emph{renormalized diffusive matrix}
	\be\label{theta_renormal}\Theta^{(M)}:= \frac{1}{1-|m|^2  S\left(1+\wtSdelta^{(M)}\right)}|m|^2S,\ee
	and it  satisfies the bound 
	\be\label{Self_theta} \big|\Theta^{(M)}_{xy} \big| \le W^{\tau}B_{xy}, \ee %\frac{}{W^2(\|x-y\|_L + W)^{d-2}}\mathbf 1_{\|x-y\|_L \le \eta^{-1/2}W^{1+\tau}} + W^{-D} ,\ee
	for any constant $\tau>0$. Furthermore, the \emph{self-energy correction} $\wtSdelta^{(M)}$ is given by $\wtSdelta^{(M)}(z):=\sum_{l=4}^M \Sele_l(z)$ where $\{\Sele_l\}_{l=4}^M$ is a sequence of deterministic matrices satisfying the following properties:  % \eqref{two_properties0}--\eqref{3rd_property0} below. \hty{we should state it here later on} %(cf. \eqref{decompose Sdelta} below) 
	\be\label{two_properties0}
	\Selek_{l} (x, x+a) =  \Selek_{l} (0,a), \quad   \Selek_{l} (0, a) = \Selek_{l}(0,-a), \quad \forall \ x,a\in \Z_L^d,
	\ee 
	and for any constant $\tau>0$,
	%where we used the notation $R_\Delta^{(2n)} (x, y)\equiv (R_\Delta^{(2n)})_{xy}$; 
	%\item[(ii)] %From the doubly connected property, we get
	\be\label{4th_property0}
	\left|  (\Selek_{l})_{0x}(z) \right| \le  W^{-(l-4)d/2+\tau}B_{0x}^2, \quad \forall \ x\in \Z_L^d,\ \eta\in [W^{2+\e}/L^{2}, 1],
	\ee
	\be\label{3rd_property0}
	\Big|\sum_{x\in \Z_L^d} (\Selek_{l})_{0x}(z)\Big|   \le  \eta W^{-(l-2)d/2 +\tau } , \quad \forall  \ \eta\in [W^{2+\e}/L^{2},1].
	\ee
	Here in \eqref{two_properties0} and throughout the rest of this paper, we use $\cal A_{xy}$ and $ \cal A(x,y)$ interchangeably for any matrix $\cal A$.  The $M$-th order \emph{local correction} $ \cal G^{(M)}$ satisfies that
	\be\label{bound_calG}
	\big|\cal G_{xy}^{(M)}\big|\le W^\tau B_{xy}^{3/2} , %\frac{W^{-d/2+2}}{(\|x-y\|_L + W)^{d+2-\tau}}\mathbf 1_{\|x-y\|_L \le \eta^{-1/2}W^{1+\tau}} + W^{-D},
	\ee
	for any constant $\tau>0$. 
\end{theorem}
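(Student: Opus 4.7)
The plan is to derive this expectation identity directly from the $M$-th order $T$-expansion constructed in the body of this paper. Applying that $T$-expansion to $T_{xy}$, one writes
\[
T_{xy} = \big[|m|^2 \Theta^{(M)}\big]_{xy} + \big[\Theta^{(M)} \cal G^{(M)}\big]_{xy} + \cal R_{xy} + \cal Q_{xy} + \cal A_{xy},
\]
where $\Theta^{(M)}$ is the renormalized diffusive matrix of \eqref{theta_renormal} obtained by summing the geometric-series-like expansion of $\Theta(1+\wtSdelta^{(M)}\Theta)^{-1}$, the term $\cal G^{(M)}$ collects the deterministic ``local'' contributions, $\cal R$ is a sum of $Q$-graphs whose leading randomness has been removed, $\cal Q$ is a martingale-type remainder, and $\cal A$ is an error graph of size $\OO(W^{-Md/2})$. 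Taking expectation kills $\cal Q$ entirely (it is a sum of $Q$-variables), and the recoverable graphs $\cal R$ reduce to deterministic contributions that are either absorbed into $\Sele_l$ (hence into $\Theta^{(M)}$) or into $\cal G^{(M)}$; all remaining pieces sit inside $\cal A$, which by the size estimate of the $T$-expansion is $\OO(W^{-Md/2})$.

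The nontrivial content is then to verify the properties \eqref{two_properties0}--\eqref{3rd_property0} of each self-energy block $\Sele_l$. Translation invariance and symmetry in \eqref{two_properties0} are read off from the graph construction: every deterministic subgraph contributing to $\Sele_l$ is built from $S$ and $m,\bar m$ only, inheriting the invariance of $s_{xy} = f_{W,L}([x-y]_L)$ under translation and $x\leftrightarrow y$. The pointwise bound \eqref{4th_property0} follows from the standard doubly-connected-property of the self-energy graphs (each $\Sele_l$ molecule has two internal edge-disjoint paths between the external vertices $0$ and $x$), which gives a factor $B_{0x}^2$, while the internal loops of degree $l-4$ provide the factor $W^{-(l-4)d/2}$ by the \emph{self-energy renormalization} established in part I. The sum rule \eqref{3rd_property0} is the heart of the matter: summing the entries corresponds to replacing one external leg by a row of $S$, and because each $\Sele_l$ subgraph is constructed to be \emph{fully renormalized}, the leading $\eta$-independent contribution cancels by the analogue of the Ward identity $\sum_x s_{xy} = 1$ combined with $|m|^2 = 1 + \OO(\eta)$; what remains carries a free factor of $\eta$ and the additional power $W^{-(l-2)d/2}$ gained from the $l-2$ internal renormalized loops.

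Finally, the bound \eqref{Self_theta} on $\Theta^{(M)}$ is a perturbative argument: one writes
\[
\Theta^{(M)} = \Theta + \Theta\,\wtSdelta^{(M)}\,\Theta^{(M)},
\]
and iterates; using \eqref{thetaxy} together with \eqref{4th_property0}--\eqref{3rd_property0}, each application of $\wtSdelta^{(M)}$ costs at most a factor $W^{-d/2+\tau}$ but preserves the diffusive profile $B$, so $\Theta^{(M)}$ retains the bound $W^\tau B$ of the unrenormalized $\Theta$. The bound \eqref{bound_calG} for $\cal G^{(M)}$ is cruder and follows from the definition: local correction graphs are those whose deterministic part is not doubly connected, which forces an extra short edge of size $B_{xy}^{1/2}$ beyond the diffusive $B_{xy}$. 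The main obstacle throughout is the sum rule \eqref{3rd_property0}, since this requires that the recursive construction of the $T$-expansion produce self-energies that are renormalized to \emph{all} orders simultaneously; this is precisely the delicate point controlled by the global expansion strategy of Sections \ref{sec pre}--\ref{sec global} and the renormalization mechanism developed in part I.
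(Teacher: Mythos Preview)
This theorem is \emph{not} proved in the present paper; it is quoted as Theorem~1.5 of the companion paper \cite{PartI_high}. The present paper provides the key ingredients (the $T$-equation and $T$-expansion in Theorems~\ref{incomplete Texp} and Corollary~\ref{lem completeTexp}, and the local law Theorem~\ref{thm ptree}), and the remark following Definition~\ref{def incompgenuni} indicates the route: take $\fb_1=\fb_2$ and $n=M$ in the $T$-equation \eqref{mlevelT incomplete}, take expectation (killing the $Q$-graphs), and solve the resulting linear equation for $\E T$ using $(1-\Theta\wtSdelta^{(M)})^{-1}\Theta=\Theta^{(M)}$. Your overall plan is aligned with this.

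That said, several points in your sketch are inaccurate. First, you misidentify $\cal R$: in the paper's decomposition these are \emph{recollision} graphs (Definition~\ref{Def_recoll}), not ``$Q$-graphs whose leading randomness has been removed''; after taking expectation they contribute to $\cal G^{(M)}$, not to the self-energies. Second, your explanation of \eqref{bound_calG} is wrong: the local-correction graphs are \emph{doubly connected} (property~(x) of Definition~\ref{defn genuni}), and the $B_{xy}^{3/2}$ bound comes from the strong estimate \eqref{bound 2net1 strong} in Lemma~\ref{no dot} (or Lemma~\ref{lem Rdouble} for deterministic graphs), not from a failure of double connectivity. Third, the self-energy graphs are built from $S$, $S^\pm$, and $\Theta$, not just $S$ and $m,\overline m$; translation invariance and parity still follow, but from the corresponding properties of all these building blocks. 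Finally, the sum-zero property \eqref{3rd_property0} is the deepest point and is not a consequence of a simple Ward-type identity; its proof (Lemma~\ref{cancellation property}, established in \cite{PartI_high}) requires the local law \eqref{locallaw1} as input, which is why the construction is genuinely inductive in $n$.
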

 
%\subsection{The main results} 
%We define the resolvent $G(z)$ as
%\be\nonumber
%G(z)=(H-z)^{-1},\quad z\in \C_+.
%\ee
%Then the following almost sharp local law was proved for $G(z)$ in part I of this series \cite{PartI_high}. 
%
%%The first main result of this paper is the following theorem, which gives an (almost) sharp local law on the  resolvent entries. {\cor(define the stochastic domination)}
% 
%\begin{theorem}[Theorem 1.5 of \cite{PartI_high}]\label{main thm}
%Suppose $d\ge 8$, $ \log_L W \ge c_w$ for a small constant $c_w>0$, and $E\in (-2+\kappa, 2- \kappa)$ for a small constant $\kappa>0$. Suppose $H$ is a $d$-dimensional random band matrix satisfying Assumptions \ref{assmH} and \ref{var profile}. %and \ref{assmGaus} 
%	%and equation \eqref{compact f} hold. 
%Suppose that $\re h_{xy}$ and $\im h_{xy}$, $x,y\in \Z_L^d$, are i.i.d. Gaussian random variables of mean zero and variance $s_{xy}/2$ up to the relations $ h_{xy}=\overline h_{yx}$. 
%Then for any constant $\e>0$, the following estimate holds uniformly for all $z= E+\ii\eta$ with $\eta\ge W^{2+\e}/L^2$:
%\be\label{locallaw}
%|G_{xy} (z) -m(z)\delta_{xy}|^2 \prec  B_{xy}:=W^{-2}\left(\|x-y\|_{ L}+W\right)^{-d+2}, \quad x,y \in \Z_{L}^d. %+\OO_\prec\left(N^{-\e}{\Theta}_{xy}(z)\right)
%\ee
%\end{theorem}

%\hty{ I change the following in this section}

As remarked in \cite{PartI_high}, the proof in this paper can be adapted to non-Gaussian band matrices after some technical modifications. 
% We have chosen to present it for Gaussian cases to avoid technical complexities associated with non-Gaussian distributions (which will increase the number of terms in expansions). 
The proof for the real symmetric Gaussian case is similar to the complex case except that the number of terms will double in every expansion step (which is due to the fact that $\E h_{xy}^2 = 0$ in the complex case but not in the real case). 
The condition $d\ge 8$ can also be improved, but it seems to require substantial improvements in estimating the terms in the $T$-expansion to reach the physical dimension $d=3$. We will deal with these improvements in forthcoming papers.

%The proof of this paper can be readily adapted to the case with more generally distributed entries of $H$ after some technical modifications (cf. Remark \ref{rem general distr} below for more details). We want to avoid the minor technical complexities associated with non-Gaussian distributions and instead focus on the more essential difficulties in the proof. Hence we have made the stronger Gaussian assumption in this paper, and postpone the general case to a future paper. 

%\subsection{Main ideas}

The proofs of Theorem \ref{main thm} and Theorem \ref{main thm2} in \cite{PartI_high} depend on main results of this paper. More precisely, we will prove Lemmas 5.2, 5.4, 5.6 and 5.7 of \cite{PartI_high}. We summarize our main results in Section \ref{subsec_main}: in Theorem \ref{incomplete Texp}, we will construct the \emph{$T$-equation} defined in Definition \ref{def incompgenuni}; as a corollary of Theorem \ref{incomplete Texp}, in Corollary \ref{lem completeTexp}, we will construct the \emph{$T$-expansion} defined in Definition \ref{defn genuni}; in Theorem \ref{thm ptree}, we will prove a local law on $G(z)$ using the $T$-expansion.
%depend on the $T$-expansions and $\incomp$s constructed in this paper (cf. Definitions \ref{defn genuni} and \ref{def incompgenuni}). In this paper, we will provide the expansion strategies to construct the $T$-expansions and $\incomp$s, and show how to use them to prove the local law. 
%is not complete in \cite{PartI_high}. In this paper, we will provide the missing details for that proof, that is, the complete expansion strategy of $T$-expansions and some important implications of this $T$-expansion strategy. 
%However, we remark that this paper is self-contained and can be also read independently of \cite{PartI_high}. %The new ideas and tools developed in this paper are also different from those of \cite{PartI_high}. %---the proofs in this paper are mostly combinatorial and graphical, while the proofs in \cite{PartI_high} are more analytical in nature. 

%Our $T$-expansions and $\incomp$s are formulated for the following 
%%{\cor say we only discuss the key ideas for constructing the $\incomp$s; the proofs of local law depend on the same idea} We consider a generalization of the $T$-variable in \eqref{defGT} to the following 
%more general 

Define $T$-variables with three subscripts by 
\be\label{general_T}
T_{x,yy'}:=|m|^2\sum_\al s_{x\al}G_{\al y}\overline G_{\al y'},\quad \text{and}\quad T_{yy',x}:=|m|^2\sum_\al G_{ y\al }\overline G_{y' \al}s_{\al x}. 
\ee
By definition, the $T$-variable in \eqref{defGT} can be written as $T_{xy}\equiv T_{x,yy}$. For any $x\in \Z_L^d$, we define $\E_x$ as the partial expectation with respect to the $x$-th row and column of $H$, i.e., $\E_x(\cdot) := \E(\cdot|H^{(x)}),$ where $H^{(x)}$ denotes the $(N-1)\times(N-1)$ minor of $H$ obtained by removing the $x$-th row and column. For simplicity, in this paper we will use the notations 
$$P_x :=\E_x , \quad Q_x := 1-\E_x.$$
Then we write 
$$T_{x,yy'}=|m|^2\sum_{\al} s_{x\al} P_\al \left(G_{\al y}\overline G_{\al y'}\right)  + |m|^2\sum_{\al} s_{x\al} Q_\al \left(G_{\al y}\overline G_{\al y'}\right) . $$
The $Q_\al$ term is a fluctuation term. We will calculate the $P_\al$ term using Gaussian integration by parts to derive an expression of the form 
\be\nonumber
T_{x,yy'} =m \overline G_{yy'} \Theta_{xy} +  (\text{higher order term}) + (\text{fluctuation term}),
\ee
where the higher order term is a sum of expressions of order $\OO(W^{-3d/2})$ (see \eqref{seconduniversal} for the explicit form). We call this expansion a second order $T$-expansion. However, this expansion is useful only when $L$ is not too large compared to $W$, because the error is bounded in terms of a power of $W$ instead of $L$. %As shown in Theorem \ref{thm ptree} below, the second order $T$-expansion is good enough only when $L\ll W^{1+d/4}$.  

%\hty{ these 2-3 paragraphs are the only new message in this intro. So the previous ones should be concise and these few paragraphs be more precise if possible. }
In order to handle the case $L\ge W^C$ for any large constant $C>0$, we need to obtain much finer expansions of the $T$-matrix with arbitrarily high order errors. In this paper, we will construct a sequence of $n$-th order $T$-expansions with leading terms $\Theta^{(n)}$ and errors of order $\OO(W^{-(n+1)d/2})$. Roughly speaking, we will construct an expansion of the form  
\be\label{T-exp-intro}
T_{x,yy'} =m \overline G_{yy'}  \Theta^{(n)}_{xy} + (\text{recollision term}) + (\text{higher order term}) + (\text{fluctuation term}) + (\text{error term}),
\ee
where the recollision term is a sum of expressions with coincidences in the summation indices, the higher order term is a sum of expressions of order $\OO(W^{-(n+1)d/2})$, the fluctuation term is a sum of expressions that can be written into the form $\sum_\al Q_\al(\cdot)$, and the error term is negligible for all of our proofs. %which can be analyzed via the fluctuation averaging mechanism, 
In the expansion process, we need to give a precise construction of the $\selfs$ $\Sele_l$, and show that they lead to the renormalized diffusive matrix $\Theta^{(n)}$ defined in \eqref{theta_renormal}. In addition, we also need to track the recollision, higher order and fluctuation terms very carefully, so that they satisfy a key structural property---\emph{doubly connected property} in Definition \ref{def 2net}---which we will explain now. 

In this paper, we represent all expressions using  graphs with edge labels  $G$, $\Theta$ and $S$ as in the standard graphic language in quantum physics. These graphs naturally have \emph{two-level structures}: a pair of atoms are at most $\OO(W)$ apart from each other if they are connected by a path of $S$ edges; otherwise the distance between them can vary  up to $L$. We define quotient graphs with the equivalence relation that two vertices are equivalent if they are connected by a path of $S$ edges. Following notations in \cite{PartIII,PartI_high}, we call equivalence classes of vertices \emph{molecules} (cf. Definition \ref{def_poly}) and corresponding quotient graphs \emph{molecular graphs} (cf. Definition \ref{def moleg}). The (original) subgraph inside a molecule is called the {\it local structure} of this molecule, and the structure of a molecular graph is said to be {\it global}. Local structures are easy to track while global structures are our main focus. In particular, the \emph{doubly connected property} defined in Definition \ref{def 2net} is the most important global structural property of this paper. Roughly speaking, a doubly connected graph contains a spanning tree of $\Theta$ edges and a spanning tree of $G$ edges that connect all molecules together. This property is needed to derive effective  estimates on the terms in \eqref{T-exp-intro}. By \eqref{thetaxy} and \eqref{locallaw}, the typical sizes of $\Theta_{xy}$ and $G_{xy}$ are of order $B_{xy}$ and \smash{$B_{xy}^{1/2}$}, respectively. The doubly connected property of a graph ensures that in each sum of an index, we have at least a product of a $\Theta$ factor and a $G$ factor, so that the sum can be controlled by the row sums of \smash{$B_{xy}^{3/2}$}, which are bounded by $\OO(W^{-d/2}$) independently of $L$. This is one of the reasons why we can treat random band matrices with large $L\ge W^C$. 

The main difficulty in constructing the $T$-expansion is that the doubly connectedness is not necessarily preserved in arbitrary graph expansions. With terminologies that will be introduced in Section \ref{sec_basiclocal}, \emph{local expansions} in Section \ref{sec localexp} will always preserve the doubly connectedness and have been discussed in details in part I \cite{PartI_high}. On the other hand, global expansions in Section \ref{subsec global} may break the doubly connected structure of a graph. Roughly speaking, a global expansion involves a substitution of a $T$-variable with a lower order $T$-expansion. We will see that a global expansion does not break the doubly connected property if and only if it involves a substitution of a \emph{redundant} $T$-variable, where a $T$-variable is said to be redundant in a graph if after removing it, the remaining graph is still doubly connected (cf. Definition \ref{defn pivotal}). In this paper, we find a sophisticated global expansion strategy such that we only need to expand a redundant $T$-variable in every global expansion until getting the expansion \eqref{T-exp-intro}. Hence doubly connected structures of the graphs in \eqref{T-exp-intro} will follow immediately from our construction. The global expansion strategy is based on a deeper study of subtle structures of the molecular graphs, where we identify two key graphical properties: the sequentially pre-deterministic property (cf. Definition \ref{def seqPDG}) and the globally standard property (cf. Definition \ref{defn gs}).   
%One contribution of this paper is to construct an expansion strategy to  avoid any  ``bad" graph,  which  breaks the doubly connectedness, to appear   in the expansion procedure. 
We refer the reader to Section \ref{sec_idea} for a more detailed discussion of some key ideas.

%{\cor say that preserving the doubly connected property is hard; point the reader to main idea section}

\medskip

The rest of this paper is organized as follows. In Section \ref{sec notation}, we introduce some graphical notations that will be used in the definition of the $T$-expansion. In Section \ref{sec outline}, we define two main concepts of this paper---the $T$-expansion and $\incomp$. Then we will state main results of this paper, that is, Theorem \ref{incomplete Texp} and Corollary \ref{lem completeTexp} regarding the construction of the $\incomp$ and $T$-expansion up to arbitrarily high order, and, Theorem \ref{thm ptree} giving a sharp local law on $G(z)$ as a consequence of the $n$-th order $T$-expansion for any fixed $n\in \N$. In Section \ref{sec_basiclocal}, we introduce basic graph operations that are used to construct the $\incomp$ and $T$-expansion, and in Section \ref{sec pre}, we explore the \emph{pre-deterministic property} of graphs appearing in our expansions. With the tools in Sections \ref{sec_basiclocal} and \ref{sec pre}, we prove Theorem \ref{incomplete Texp} and Corollary \ref{lem completeTexp} in Section \ref{sec global}. In Section \ref{sec_pflocal}, we give the proof of Theorem \ref{thm ptree} based on three lemmas, Lemma \ref{eta1case0}, Lemma \ref{lem: ini bound} and Lemma \ref{lemma ptree}. Lemma \ref{eta1case0} and Lemma \ref{lemma ptree} will be proved in Section \ref{sec ptree}, and Lemma \ref{lem: ini bound} will be proved in Section \ref{sec ini_bound}.

\medskip
\noindent{\bf Acknowledgements.} We would like to thank Changji Xu for helpful discussions.

\section{Graphical tools}\label{sec notation}

To be prepared for the definition of the $\incomp$ and $T$-expansion, we introduce the concept of atomic and molecular graphs, and some important graphical properties that will be satisfied by our graphs. The graphical notations in this section have been defined in \cite{PartI_high}, and we repeat them in this section for completeness of this paper.

We introduce the following two deterministic matrices that will appear in expansions:
\be\label{Thetapm2}
S^+(z):=\frac{m^2(z) S}{1-m^2(z) S}, \quad S^-(z):=\overline S^+(z) . 
\ee
For the reason of defining these two matrices, we refer the reader to Definitions \ref{Ow-def} and \ref{GG-def} below. The matrix $S^+(z)$ satisfies the following Lemma \ref{lem deter}, which is a folklore result. A formal proof is given in equation (4.21) of \cite{PartII}. 
For simplicity of notations, throughout the rest of this paper, we abbreviate 
\be\label{Japanesebracket} |x-y|\equiv \|x-y\|_L,\quad \langle x-y \rangle := \|x-y\|_L + W.\ee

\begin{lemma} \label{lem deter}
	Suppose the assumptions of Theorem \ref{main thm} hold and $\eta\ge W^{2+\e}/L^2$ for a small constant $\e>0$. Then for any small constant $\tau>0$ and large constant $D>0$, we have that
	\be\label{S+xy}|S^\pm_{xy}(z)| \lesssim    W^{-d}\mathbf 1_{|x-y| \le W^{1+\tau}} + \OO\left(\langle x-y\rangle^{-D}\right). \ee 
\end{lemma}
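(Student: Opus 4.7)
The plan is to work in Fourier space on the discrete torus $\Z_L^d$ and show that the multiplier $\widehat{S^\pm}(p) = m^2 \widehat S(p)/(1 - m^2 \widehat S(p))$ is a smooth, rapidly decaying function of the rescaled frequency $Wp$, and then invert the Fourier transform. The key preliminary estimate is that $|1 - m^2 \widehat S(p)|$ is bounded below by a positive constant uniformly in $p$ and $z$, which relies crucially on the fact that $m^2$ is genuinely complex and not close to $1$ in the bulk. Indeed, for $E \in (-2+\kappa, 2-\kappa)$, the phase $\arg(m^2) = 2\arg(m)$ is bounded away from $0$ and $2\pi$, and since $\widehat S(p)$ is real with $|\widehat S(p)| \le 1$, writing $|1 - m^2\widehat S(p)|^2 = (1 - |m|^2|\widehat S(p)|)^2 + 2|m|^2|\widehat S(p)|(1 - \cos(\arg(m^2) + \arg \widehat S(p)))$ yields a uniform lower bound depending only on $\kappa$. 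This is the essential reason $S^\pm$ decays much faster than $\Theta$.

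First I would use Assumption \ref{var profile} together with the Poisson summation formula to write $\widehat S(p) = \psi(Wp) + \OO(W^{-D})$ for $p$ in the Brillouin zone, thanks to the Schwartz decay in \eqref{schwarzpsi} and the normalization \eqref{bandcw1}. Combined with the lower bound on $|1 - m^2 \widehat S(p)|$, this shows that $\widehat{S^+}(p) = g(Wp) + \OO(W^{-D})$, where $g$ is a smooth Schwartz function on $\R^d$ determined by $\psi$ and $m^2$; in particular all its partial derivatives are bounded, and $g$ and its derivatives decay faster than any polynomial at infinity.

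Next I would invert the Fourier transform: $S^+_{xy} = N^{-1}\sum_p e^{\ii p\cdot(x-y)}\widehat{S^+}(p)$. Approximating the Riemann sum by an integral (again via Poisson summation, with an error controlled by the Schwartz bounds and absorbed into $\OO(\langle x-y\rangle^{-D})$) and making the change of variables $q = Wp$, one obtains $S^+_{xy} = W^{-d} \check g((x-y)/W) + \OO(\langle x-y\rangle^{-D})$, where $\check g$ denotes the continuous Fourier inverse of $g$. Since $g \in \cal S(\R^d)$, so is $\check g$, yielding $|\check g(\xi)| \le C_k (1+|\xi|)^{-k}$ for any $k$. This immediately produces the two-regime bound in \eqref{S+xy}: on the band scale $|x-y|\le W^{1+\tau}$ we use $|\check g((x-y)/W)| \le C$, giving the $W^{-d}$ term, while for $|x-y|\ge W^{1+\tau}$ we pick $k$ large enough to absorb everything into $\OO(\langle x-y\rangle^{-D})$. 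The statement for $S^-$ follows by taking complex conjugates.

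The main obstacle, if any, is checking the uniform lower bound on $|1 - m^2\widehat S(p)|$ with the correct constant depending only on $\kappa$, and tracking that the Poisson-summation errors introduced by passing from the torus Fourier sum to a continuous Fourier integral decay faster than any polynomial in $\langle x-y\rangle$; both are standard but must be done carefully to obtain the claimed sub-polynomial error $\langle x-y\rangle^{-D}$ for any $D$.
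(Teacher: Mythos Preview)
The paper does not prove this lemma at all: it simply states that it is a folklore result and refers to equation (4.21) of \cite{PartII} for a formal proof. Your Fourier-analytic outline is exactly the standard route (and is what the cited reference does): pass to Fourier variables on the torus, use Assumption~\ref{var profile} and Poisson summation to write $\widehat S(p)=\psi(Wp)+\OO(W^{-D})$, divide by $1-m^2\psi(Wp)$, and invert.

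One small refinement worth noting in your lower-bound step: the inequality $|1-m^2\widehat S(p)|\ge c_\kappa$ does not follow solely from $\arg(m^2)$ being bounded away from $0$ and $2\pi$, because $\widehat S(p)$ can be negative, in which case $\arg(m^2\widehat S(p))=\arg(m^2)+\pi$ can land near $0$ (this happens around $E=0$, where $m^2\approx -1$). What rescues the bound is that the positivity of $f_{W,L}$ in Assumption~\ref{var profile} forces $\inf_q\psi(q)>-1$ (since $\psi$ is then the Fourier transform of a nonnegative Schwartz function with full-dimensional support), so $|\widehat S(p)|$ is bounded strictly away from $1$ whenever $\widehat S(p)<0$. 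You already flagged the lower bound as the point requiring care, so this is just making explicit which assumption closes that case.
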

%\begin{proof}
%	The estimate \eqref{S+xy} has been proved in equation (4.21) of \cite{PartII} (actually \cite{PartII} only considers the $d=1$ case, but the proof can be extended directly to the general $d$ case). The estimate \eqref{thetaxy} has been proved in Appendix A of \cite{PartIII}. 
%\end{proof}

\subsection{Atomic graphs} \label{unisec}

%The second and third order $T$-expansions are already rather lengthy. For higher order $T$-expansions, the number of terms will grow exponentially (there are about $\OO(n^{Cn})$ many terms in $n$-th order $T$-expansion). More seriously, each term have a complicated structure, which is very hard to track. Thus 

In an $n$-th order $T$-expansion, the number of terms will grow super-exponentially with respect to $n$ (there are about $ n^{Cn}$ many terms).  Moreover, each term has a complicated structure, which is represented by a graph in this paper. Our goal is to expand the $T$-variable $T_{\fa,\fb_1 \fb_2}$ for some fixed indices $\fa, \fb_1,\fb_2 \in \Z_L^d$. We represent these three indices by special vertices
\be\label{rep_abc}\fa\equiv \otimes, \quad \fb_1\equiv \oplus, \quad \fb_2\equiv \ominus,\ee
in graphs. In other words, we use $\fa,\fb_1,\fb_2$ in expressions, and draw them as $\otimes,\oplus,\ominus$ in graphs. Some of these indices can be equal to each other. For example, if $\fb_1=\fb_2$, then we only have two special vertices $\otimes$ and $\oplus$. 
We first introduce atomic graphs, and the concept of subgraphs. %(although we will mostly not use the graph of minus edges)

  \begin{definition}[Atomic graphs] \label{def_graph1} 
	%By definition, a 
	Given a standard oriented graph with vertices and edges, we assign the following  structures and call the  resulting graph an atomic graph.  
	
	\begin{itemize}
		%\hspace{-0.27in} 
		%\item {\bf The atom $\oplus$\;:} In each graph, there exists at most one star atom, which represents the $+$ index. 
		%%\medskip
		%
		%\item {\bf The atom $\ominus$\;:} In each graph, there exists at most one $\oplus$ atom, which represents the $-$ index. 
		%
		%%\noindent
		%\item{\bf internal atoms $\circ$\, :} Any vertex that is not the $\ominus$ or $\oplus$ atom is called a (regular) atom. %(or simply atom). %(see \eqref{fig1}). 
		%Usually there are many internal atoms in each graph
		
		\item {\bf Atoms:} We will call the vertices atoms  (vs. molecules in Definition \ref{def_poly} below). Each graph has some external atoms and internal atoms. 
		%All other atoms are called internal atoms and will be denoted by  $\circ$. 
		The external atoms represent external indices whose values are fixed, while internal atoms represent summation indices that will be summed over. In particular, each graph in expansions of $T_{\fa,\fb_1\fb_2}$ has $\otimes$, $\oplus$ and $\ominus$ atoms. By fixing the value of an internal atom, it will become an external atom; by summing over an external atom, it will become an internal atom. 
		
		\item {\bf Regular weights:}  A regular weight on an atom $x $ represents a $G_{xx}$ or $\overline G_{xx}$ factor. Each regular weight has a charge, where ``$+$" charge indicates that the weight is a $G$ factor, represented by a blue solid $\Delta$, and ``$-$" charge indicates that the weight is a $\overline G$ factor, represented by a red solid $\Delta$. 
				
		\item \noindent{\bf Light weights}:  Corresponding to regular weights defined above, we define the light weights on atom $x$ representing $G_{xx}-m$ and $\overline G_{xx}-\overline m$ factors. They are drawn as blue or red hollow $\Delta$ in graphs depending on their charges.

		\item {\bf Edges:} The edges are divided into the following types. 
		
		\begin{enumerate}

			\item{\bf  Solid edges:} A solid edge represents a $G$ factor. More precisely, 
			\begin{itemize}
				\item each oriented edge from atom $\alpha$ to atom $\beta$ with $+$ charge represents a $G_{\al\beta}$ factor; 
				\item each oriented edge from atom $\alpha$ to atom $\beta$ with $-$ charge represents a $\overline G_{\al\beta}$ factor.
			\end{itemize}
			Plus $G$ edges will be drawn as blue solid edges, while minus $G$ edges will be drawn as red solid edges. In this paper, whenever we say ``$G$ edges", we mean both the plus and minus $G$ edges.

			\item {\bf Waved edges:} We have neutral black, positive blue and negative red waved edges:  
			\begin{itemize}
				\item a neutral waved edge connecting atoms $x$ and $y$ represents an $s_{xy}$ factor; 
				
				\item a blue waved edge of positive charge between atoms $x$ and $y$ represents an $S^+_{xy}$ factor;
				
				\item a red waved edge of negative charge between atoms $x$ and $y$ represents an $S^-_{xy}$ factor.
			\end{itemize}

			\item {\bf $\Dashed$ edges:} A $\dashed$ edge connecting atoms $x$ and $y$ represents a ${\Theta}_{xy}$ factor;
			we draw it as a double-line edge between atoms $x$ and $y$. 			
			
			\item {\bf Dotted edges:} A dotted line connecting atoms $\al$ and $\beta$  represents the factor $\mathbf 1_{\al=\beta}\equiv \delta_{\al\beta}$; a dotted line with a cross ($\times$) represents the factor $\mathbf 1_{\al\ne \beta} \equiv  1-\delta_{\al\beta} $. There is at most one dotted or $\times$-dotted edge between each pair of atoms. %We will use dotted and $\times$-dotted edges to partite the atoms. In particular, we will use 
By definition, a $\times$-dotted edge between the two ending atoms of a $G$ edge indicates that this $G$ edge is off-diagonal. We also allow for dotted edges between external atoms.   
		\end{enumerate}
		The orientations of non-solid edges do not matter. Edges between internal atoms are called \emph{internal edges}, while edges with at least one end at an external atom are called \emph{external edges}.

		\item{\bf $P$ and $Q$ labels:} Some solid edges and weights may have a label $P_x$ or $Q_x$ for an atom $x$ in the graph. Moreover, every edge or weight can have at most one $P$ or $Q$ label.

		\item{\bf Coefficients:} There is a coefficient (which is a polynomial of $m$, $m^{-1}$, $(1-m^2)^{-1}$ and their complex conjugates) associated with each graph.  
		
		%\item{\bf Other components:} Along the proof, we will introduce more types of graphical components. \hty{ maybe we should drop this one. it is not very meaningful}. 
		
		%solid edges, $\dashed$ edges and weights.
	\end{itemize}
	
\end{definition}

Along the proof, we will introduce some other types of weights and edges.

\begin{definition}[Sugraphs]\label{def_sub}
	A graph $\cal G_1$ is said to be a subgraph of $\cal G_2$, denoted by $\cal G_1\subset \cal G_2$, if every graphical component of $\cal G_1$ is also in $\cal G_2$. Moreover, $\cal G_1 $ is a proper subgraph of $\cal G_2$ if  $\cal G_1\subset \cal G_2$ and $\cal G_1\ne \cal G_2$. Given a subset $\cal S$ of atoms in a graph $\cal G$, the subgraph $\cal G|_{\cal S}$ induced on $\cal S$ %, denoted by $\cal G_{\cal S}$, 
	refers to the subgraph of $\cal G$ with atoms in $\cal S$ as vertices, the edges between these atoms, and the weights on these atoms.  
\end{definition}

%\iffalse
%
%\begin{example}
%	As an example, we draw the graphs for $\AT^{(>2)}$ in \eqref{Aho>2}: ({\cor in these graphs I used the old notations where the dashed edges represent the $\dashed$ $\Theta$ edges})
%	%\begin{center}
%	\be  \label{Aho2} 
%	\parbox[c]{0.4\linewidth}{\includegraphics[width=4cm]{Aho2.pdf}} %\includegraphics[width=10cm]{Aho3.png}} 
%	\ee
%	%\end{center}
%	For conciseness, we do not draw the coefficients of these graphs.
%	%Then we draw some graphs in $\cal A_{ho}^{(3)}$, where each graph in the first line has two light weights and is of fourth order, and each graph in the second line has four solid edges and is of fourth order:
%	%\begin{center}
%	%\be  \label{Aho3} 
%	%\parbox[c]{0.53\linewidth}{\includegraphics[width=\linewidth]{Aho3.png}} 
%	%\ee
%	%\end{center}
%\end{example}
%\fi

We assign a \emph{value} to each graph as follows.

\begin{definition}[Values of graphs]\label{ValG} 
Given an atomic graph $\mathcal G$, we define its value, denoted by $\llbracket \cal G\rrbracket$, as an expression obtained as follows. We first take the product of 
	%all the factors represented by its elements. \cor [ 
	all the edges, all the weights and the coefficient of the graph $\cal G$. %when all vertex  labels are given. 
	Then for the edges and weights with the same $P_x$ or $Q_x$ label, we take the product of them and apply $P_x$ or $Q_x$ to them. Finally, we sum over all the internal indices represented by the internal atoms. The values of the external indices are fixed by their given values. 
	%If the values for the vertices are not fixed, then the value of the graphs is the sum over all indices represented by the vertices (except for the external vertices $\otimes$, $\oplus$ and $\ominus$). 
	%] \nc 
	For a linear combination of graphs  $\sum_i c_i \cal G_i$, where $\{c_i\}$ is a sequence of coefficients and $\{\cal G_i\}$ is a sequence of graphs, we naturally define its value by 
	$$ \Big\llbracket\sum_i c_i  \cal G_i\Big\rrbracket=\sum_i c_i \left\llbracket  \cal G_i\right\rrbracket.$$
	%We will often  always identify 
	For simplicity, we will abuse the notation by identifying  a  graph (which is a geometric object) with its value (which is an analytic expression). 
\end{definition}

Next, we introduce the concepts of \emph{regular} and \emph{normal regular graphs}.

\begin{definition}[Normal regular graphs]  \label{defnlvl0} % \cob (need details) \nc 
	We say an atomic graph $\cal G$ is \emph{regular} if it satisfies the following properties:
	\begin{itemize}
		\item[(i)] it is a connected graph that contains at most $\OO(1)$ many atoms and edges;
		%\item[(ii)] it may have some external atoms whose values are fixed; % (e.g. $\otimes$, $\oplus$ and $\ominus$ atoms are external atoms);
		%\item it has no dotted (i.e. $h$) egdes;
		%\item[(ii)] each internal atom is connected with an even number of $G$ edges;
		\item[(ii)] all internal atoms are connected together through paths of waved and $\dashed$ edges;
		\item[(iii)] there are no dotted edges between internal atoms.
	\end{itemize}
	Moreover, we say a regular graph is \emph{normal} if it satisfies the following additional property:
	\begin{itemize}
		\item[(iv)] any pair of atoms $\al$ and $\beta$ in the graph are connected by a $\times$-dotted edge \emph{if and only if} they are connected by a $G$ edge.
	\end{itemize}
\end{definition}

By this definition, every $G$ edge in a normal regular graph is off-diagonal, while all diagonal $G$ factors will be represented by weights. Given a normal regular graph, we define its scaling order as follows. 

   \begin{definition} [Scaling order] \label{def scaling}
	Given a normal regular  graph $\cal G$, we define its scaling order as 
	\begin{align}
		\text{ord}(\cal G): = & \#\{\text{off-diagonal }  G  \text{ edges}\} + \#\{\text{light weights}\} + 2\#\{ \text{waved edges}\}  + 2 \#\{\text{$\dashed$ edges}\} \nonumber\\
		&  - 2\left[ \#\{\text{internal atoms}\}- \#\{\text{dotted edges}\} \right]. \label{eq_deforderrandom2}
	\end{align}
	Here every dotted edge in a normal regular graph means that an internal atom is equal to an external atom, so we lose one free summation index. The concept of scaling order can be also defined for subgraphs. 
\end{definition}  

In the following proof, whenever we say the order of a graph, we are referring to its scaling order. We emphasize that in general the scaling order does not imply the real size of the graph value directly. In order to establish such a connection, we need to introduce the \emph{doubly connected property} in Definition \ref{def 2net} below.  
%---this will be dealt with using the concept of {\bf doubly connected property} in Definition \ref{def 2net}. We refer the reader to Lemmas \ref{no dot} and \ref{general dot} below. 

\subsection{$\Selfs$ and labelled $\dashed$ edges}\label{sec_selfc}

One of the most important concepts in the $T$-expansion is the $\self$ introduced in \cite{PartI_high}. More precisely, an $l$-th order $\self$ is a linear combination of deterministic graphs satisfying the following definition.

%In particular, the $T$-expansions are defined using a collection of special deterministic expressions that satisfy some important properties. Since we will use these properties frequently in this paper, for simplicity of presentation, we introduce the following notion of ``$\selfs$" adopting the language from Feynman diagrams in mathematical physics. %and the corresponding set $\Cele_l$.
%The properties in Definition \ref{collection elements}, specifically the sum zero property \eqref{weaz}, are one of the key reasons why we can define the $T$-expansion up to any order. We remark that in previous works \cite{delocal,PartIII}, the $T$-expansions can only be performed to third order without knowing the sum zero property.  
%
%The $T$-expansions are defined using a collection of special sums of deterministic graphs, which satisfy some important properties given by Definition \ref{collection elements} below. Following the notations in Feynman diagrams, we call them ``$\selfs$". The properties in Definition \ref{collection elements}, especially the sum zero property \eqref{3rd_property0}, are one of the key reasons why we can define the $T$-expansions up to any order. In previous works \cite{delocal,PartIII}, the $T$-expansions can only be performed to third order without using the concept of $\selfs$.  

\begin{definition}[$\Selfs$]\label{collection elements}
%Under the assumptions of Theorem \ref{main thm}, 
Fix any $l\in \N$. $\Sele_l(z)$ is a deterministic matrix depending on $m(z)$, $S$, $S^\pm(z)$ and $\Theta(z)$ only, and satisfying the following properties for $z= E+ \ii \eta$ with $E\in (-2+\kappa, 2- \kappa)$ and $\eta\in[ W^{2+\e}/L^{2}, 1]$.
	
	\begin{itemize}
		\item[(i)] For any $x,y \in \Z_L^d$, $  (\Selek_{l})_{xy}$ is a sum of at most $C_l$ many deterministic graphs of scaling order $l$ and with external atoms $x$ and $y$. Here $C_l$ is a large constant depending on $l$. Some graphs, say $\cal G$, in $\Sele_l$ can be diagonal matrices satisfying $  \cal G_{xy}= \cal G_{xx} \delta_{xy}$, i.e. there is a dotted edge between the atoms $x$ and $y$.  
		
		\item[(ii)] $ \Selek_{l} (z)$ satisfies the properties \eqref{two_properties0}--\eqref{3rd_property0}.
		
	\end{itemize} 
	Then we call $\Selek_l$ the  $l$-th order \emph{$\self$} and graphically we will use a square, $\square$, with a label $l$ and between  $x$ and $y$ to represent $(\Sele_l)_{xy}$. 
\end{definition}

By Definition \ref{def scaling}, the scaling order of a deterministic graph can only be even. Moreover, every nontrivial $\self$ $\Selek_{l}$ in this paper has scaling order $\ge 4$. Hence we always have 
\be\label{trivial conv}
\Sele_1=\Sele_2=\Sele_3=0, \quad \text{and}\quad \Sele_{2l+1}:=0,\quad l\in \N.
\ee
%In this paper, the $\selfs$ will be constructed in our expansions.
% \subsection{Labelled $\dashed$ $\Theta$ edges}\label{sec double} 
Using the properties \eqref{two_properties0}--\eqref{3rd_property0}, we have proved the following crucial estimates in \cite{PartI_high}. These estimates are necessary for bounding the graphs in the $T$-expansion and $\incomp$.  

\begin{lemma}[Lemma 6.2 of \cite{PartI_high}]\label{lem redundantagain}
 Fix $d\ge 6$. Let $\Selek_{2l} $ be a $\self$ satisfying Definition \ref{collection elements}. We have that for any constant $\tau>0$ and $x,y \in \Z_L^d$,
	\begin{align}\label{redundant again}
		\Big| \sum_{\al}\Theta_{x \al} (\Selek_{2l})_{\al y}\Big| \le \frac{W^\tau}{W^{(l-1)d}\langle x-y\rangle^d } .
	\end{align}
	Let $\Sele_{2k_1}, \ \Sele_{2k_2},\ \cdots , \ \Sele_{2k_l} $ be a sequence of $\selfs$ satisfying Definition \ref{collection elements}. We have that for any constant $\tau>0$ and $x,y \in \Z_L^d$,
	\be\label{BRB} 
	\left|\left(\Theta \Sele_{2k_1}\Theta  \Sele_{2k_2}\Theta \cdots \Theta  \Sele_{2k_l}\Theta\right)_{xy}\right|\le W^{-(k-2)d/2+\tau}B_{xy}  , \ee
%	and
%	\be\label{sumBRB0} 
%	\Big|\sum_y \left(\Theta \Sele_{2k_1}\Theta  \Sele_{2k_2}\Theta \cdots \Theta  \Sele_{2k_l}\Theta\right)_{xy}\Big|\le {\eta}^{-1}{W^{-(k-2)d/2+\tau}} . \ee
	where $k:=\sum_{i=1}^l  2k_i -2(l-1)$ is the scaling order of $\left(\Theta \Sele_{2k_1}\Theta  \Sele_{2k_2}\Theta \cdots \Theta  \Sele_{2k_l}\Theta\right)_{xy}$.
\end{lemma}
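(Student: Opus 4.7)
The proof has two parts, and the crucial inputs are the three properties of a self-energy: the symmetry property \eqref{two_properties0}, the pointwise decay \eqref{4th_property0}, and most importantly the near-zero sum \eqref{3rd_property0}. The plan is to prove \eqref{redundant again} first by a centering-plus-Taylor argument, and then deduce \eqref{BRB} by iterating \eqref{redundant again} through a convolution estimate for kernels of the type $\langle\cdot\rangle^{-d}$.

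\emph{Part 1: proof of \eqref{redundant again}.} I would split
\begin{equation}
\sum_\al \Theta_{x\al}(\Sele_{2l})_{\al y} \;=\; \Theta_{xy}\sum_\al (\Sele_{2l})_{\al y} \;+\; \sum_\al \bigl[\Theta_{x\al}-\Theta_{xy}\bigr](\Sele_{2l})_{\al y}. \nonumber
\end{equation}
For the centering piece, use \eqref{3rd_property0} to bound the sum by $\eta W^{-(l-1)d+\tau}$. Combine with the elementary inequality $\Theta_{xy}\eta \lesssim W^\tau \langle x-y\rangle^{-d}$, which follows from the support constraint $|x-y|\le \eta^{-1/2}W^{1+\tau}$ in \eqref{thetaxy} together with $\Theta_{xy}\le W^\tau B_{xy}$. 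This contributes $\lesssim W^{-(l-1)d + C\tau}\langle x-y\rangle^{-d}$. For the fluctuation piece, invoke the symmetry $(\Sele_{2l})(0,a)=(\Sele_{2l})(0,-a)$, which gives $\sum_\al (\al-y)(\Sele_{2l})_{\al y}=0$ and therefore kills the first-order Taylor term. A second-order Taylor expansion reduces the estimate to $|\nabla^2 \Theta_{xy}|\cdot \sum_\al |\al-y|^2|(\Sele_{2l})_{\al y}|$. Using $|\nabla^2 \Theta_{xy}|\lesssim W^{-2}\langle x-y\rangle^{-d}$ and the decay bound \eqref{4th_property0}, the second moment is handled by a shell-by-shell summation of $\langle \al-y\rangle^{2-2(d-2)}$, which converges (up to logarithms absorbed by $W^\tau$) exactly when $d\ge 6$. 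The small-distance regime $|x-y|\lesssim W^{1+\tau}$ is treated directly by combining the trivial bound $\|\Theta\|_\infty \lesssim W^{-d+\tau}$ with the $\ell^1$ bound $\sum_\al |(\Sele_{2l})_{\al y}|\lesssim W^{-(l-1)d+\tau}$ (again derived from \eqref{4th_property0} via shell summation in $d\ge 5$).

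\emph{Part 2: proof of \eqref{BRB}.} The strategy is to regroup the product as a chain of effective kernels:
\begin{equation}
\Theta\,\Sele_{2k_1}\Theta\,\Sele_{2k_2}\cdots \Theta\,\Sele_{2k_l}\Theta \;=\; K_1\,K_2\,\cdots\,K_l\,\Theta,\qquad K_i := \Theta\,\Sele_{2k_i}. \nonumber
\end{equation}
Part 1 gives the pointwise bound $|K_i(u,v)|\lesssim W^{-(k_i-1)d+\tau}\langle u-v\rangle^{-d}$. The required convolution estimates
\begin{equation}
\sum_w \langle u-w\rangle^{-d}\langle w-v\rangle^{-d} \;\lesssim\; W^\tau\,\langle u-v\rangle^{-d},\qquad \sum_w \langle u-w\rangle^{-d}\,\Theta_{wv} \;\lesssim\; W^\tau\,\Theta_{uv}, \nonumber
\end{equation}
follow from the usual splitting $|u-w|\le |u-v|/2$ versus $|w-v|\le |u-v|/2$ together with the shell-summation bound $\sum_w \langle w\rangle^{-d}\lesssim W^\tau$ (the short-distance cutoff at $W$ is essential here, since $\langle \cdot\rangle^{-d}$ is exactly the borderline decay for $\ell^1$ summability in dimension $d$). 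Iterating these $l$ times yields $|(K_1\cdots K_l \Theta)_{xy}|\lesssim W^{\tau - d\sum_i (k_i-1)}\Theta_{xy}$. Using $\sum_{i=1}^l (k_i-1) = (k-2)/2$ together with $\Theta_{xy}\le W^\tau B_{xy}$ gives \eqref{BRB}.

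\emph{Main obstacle.} The hardest ingredient is the second-moment bound $\sum_\al |\al-y|^2|(\Sele_{2l})_{\al y}|$: the $\Sele_{2l}$ decay $B^2_{\al y}\sim W^{-4}\langle \al-y\rangle^{-2(d-2)}$ is only marginally fast enough, and the estimate becomes borderline logarithmic at $d=6$. This is exactly where the hypothesis $d\ge 6$ enters, and careful shell summation is needed to absorb all logarithmic losses into the $W^\tau$ prefactor. The iteration in Part 2 is also at the edge of integrability because $\langle\cdot\rangle^{-d}$ sits precisely on the critical $\ell^1$ scale in $\Z_L^d$; managing the logarithmic blow-ups through all $l$ convolutions is the second technical point to be careful with.
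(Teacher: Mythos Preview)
Your Part~2 is correct and essentially matches the paper. In Part~1, the strategy---center at $\Theta_{xy}$, use \eqref{3rd_property0} for the mean, kill the linear term by the symmetry \eqref{two_properties0}, and bound the remainder via second-order smoothness of $\Theta$---is the right idea and close to what the paper does. But there is a genuine gap in the step ``a second-order Taylor expansion reduces the estimate to $|\nabla^2\Theta_{xy}|\cdot\sum_\al|\al-y|^2|(\Sele_{2l})_{\al y}|$.''

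The Lagrange remainder in the expansion of $\al\mapsto\Theta_{x\al}$ around $\al=y$ is controlled by $|\al-y|^2\sup_{\xi\in[y,\al]}|\nabla^2\Theta_{x\xi}|$, with the supremum over the whole segment, not evaluated at $y$. When $|\al-y|\gtrsim|x-y|$ that segment can pass arbitrarily close to $x$, where $|\nabla^2\Theta_{x\xi}|$ is of order $W^{-(d+2)}$ rather than $W^{-2}\langle x-y\rangle^{-d}$; your uniform prefactor is therefore only valid on the near region $|\al-y|\le c|x-y|$. The far region $|\al-y|\gtrsim|x-y|$ must be treated by a direct estimate, and this is exactly what the paper does: on each dyadic shell $|\al-y|\sim K\gtrsim\langle x-y\rangle$ one combines $\sum_{\al\in\text{shell}}\Theta_{x\al}\lesssim K^2/W^2$ with the pointwise bound $|(\Sele_{2l})_{\al y}|\lesssim W^{-(l-2)d-4}K^{-2(d-2)}$ from \eqref{4th_property0}, and the sum over dyadic $K$ converges precisely when $d\ge 6$. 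With the far region removed, your Taylor argument (equivalently, the paper's cancellation lemma for $\Theta$ against mean-zero inputs on the near region) handles the rest correctly.
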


By \eqref{BRB}, \smash{$\left(\Theta \Sele_{2k_1}\Theta  \Sele_{2k_2}\Theta \cdots \Theta  \Sele_{2k_l}\Theta\right)_{xy}$} has the same decay with respect to $|x-y|$ as $\Theta_{xy}$ except for an extra $W^{-(k-2)d/2}$ factor. Hence we will regard it as another type of diffusive edge. The proof of Lemma \ref{lem redundantagain} uses a summation by parts argument. In particular, the $\eta$ factor in \eqref{3rd_property0} plays an essential role in the sense that it will cancel the following factor from a row sum of $\Theta$:
\be\label{sumTheta}\sum_{y}\Theta_{xy}(z)=\frac{|m(z)|^2 }{1-|m(z)|^2 }\sim \eta^{-1}.\ee
In \cite{PartI_high}, we refer to \eqref{3rd_property0} as a \emph{sum zero property}.

\begin{definition}[Labelled $\dashed$ edges] \label{def_graph comp} 
Given $l$ $\selfs$ $\Sele_{2k_i}$, $ i =1,2,\cdots,l$, we represent the entry
	\be\label{eq label} \left(\Theta \Sele_{2k_1}\Theta  \Sele_{2k_2}\Theta \cdots \Theta  \Sele_{2k_l}\Theta\right)_{xy}\ee
	by a labelled $\dashed$ edge between atoms $x$ and $y$ with label $(k;  {2k_1}, \cdots , {2k_l})$, where $k:=\sum_{i=1}^l 2 k_i -2(l-1)$ is the scaling order of this edge.  
	%We remark that a $\self$ component is used in a labelled $\dashed$ edge only if we know that it satisfies \eqref{two_properties0}-\eqref{weaz}. 
	In graphs, every labelled $\dashed$ edge is drawn as one single double-line edge with a label but  without any internal structure as in the following figure: 
	\begin{center}
		\includegraphics[width=11cm]{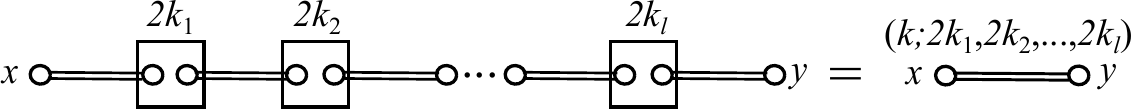}
	\end{center}
\end{definition}

In this paper, whenever we refer to $\dashed$ edges, we mean both the $\dashed$ and labelled $\dashed$ edges. The scaling order of a normal regular graph with labelled $\dashed$ edges can be equivalently calculated as
%by regarding athe definition \eqref{eq_deforderrandom2} of scaling order is extended as 
\begin{align}
	\text{ord}(\cal G) &:= \#\{\text{off-diagonal }  G  \text{ edges}\} + \#\{\text{light weights}\} + 2\#\{ \text{waved edges}\}  + 2 \#\{\text{$\dashed$ edges}\} \nonumber\\
	&+\sum_k k\cdot   \#\{k\text{-th order labelled $\dashed$ edges}\}  -  2\left[ \#\{\text{internal atoms}\}- \#\{\text{dotted edges}\} \right] .\label{eq_deforderrandom3}
\end{align}
In other words, a $k$-th order labelled $\dashed$ edge is simply counted as an edge of scaling order $k$, and there is no need to count its internal structures using Definition \ref{def scaling}.

\subsection{Molecular graphs and doubly connected property}\label{sec doublenet}
 
Each of our graphs has a two-level structure, that is, a local structure varying on scales of order $W$, and a global structure varying on scales up to $L$. To explain this, we introduce the following concept of molecules. 

%We first introduce the concept of molecules for the graphs defined in Section \ref{unisec}.

\begin{definition}[Molecules]\label{def_poly}
We partition the set of all atoms into a union of disjoint sets $\{\text{all atoms}\}=\cup_j \cal M_j$, where every $\cal M_j$ is called a molecule. Two internal atoms belong to the same molecule if and only if they are connected by a path of neutral/plus/minus \emph{waved edges} and \emph{dotted edges}. % (note there may be dotted edges if the graph is not regular). 
	%For the atoms in the same molecules, we always combine them if they are the same (i.e. if they are connected by a dotted line). %Moreover, we assign a dash-dotted line partition of them, that is each two atoms in the same molecule are connected through a cross dah-dotted edge. We do not assign dah-dotted line partitions to all the graph, that is, atoms in different molecules can be the same (except in some middle steps of the expansions, in which case we will mention it explicitly). 
	Every external atom will be by definition called an external molecule (such as $\otimes$, $\oplus$ and $\ominus$ molecules).  %If $\oplus=\ominus$, then we call the molecule containing them as $\oplus$ molecule. 
	An edge is said to be inside a molecule if its ending atoms belong to this molecule.
\end{definition}
%\begin{remark}
%	Note that by \eqref{compact f} and \eqref{S+xy}, if two atoms are in the same molecule, then we essentially have $$|x-y|\le (\log W)^2 W$$ up to a negligible error $\OO(W^{-D})$ in all the expressions. 
%\end{remark}
By \eqref{subpoly} and \eqref{S+xy}, if two atoms $x$ and $y$ are in the same molecule, then we essentially have $|x-y|\le W^{1+\tau}$ up to a negligible error $\OO(W^{-D})$. Given an atomic graph, we will call the subgraph inside a molecule the {\it local structure} of this molecule. On the other hand, the {\it global structure} of an atomic graph refers to its \emph{molecular graph}, which is the quotient graph with each molecule regarded as a vertex. 

\begin{definition}[Molecular graphs] \label{def moleg}% \cob (need details) \nc 
 Molecular graphs are graphs consisting of 
	\begin{itemize}
		\item external molecules which represent the external atoms (such as the $\otimes$, $\oplus$ and $\ominus$ molecules);
		\item internal molecules;
		\item blue and red solid edges, which represent the plus and minus $G$ edges between molecules;
		\item $\dashed$ edges between molecules;
		\item dotted edges between external and internal molecules.
		%  \item dots are molecules
		% \item only has blue edges
		%\item No inside information of molecule, (\cob we can also leave the local information of blue edges in molecule-level graphs.\nc)  
	\end{itemize}
	%  \end{definition}
	%\begin{definition}  \nc  
	Given any atomic graph $\cal G$, we define its molecular version, called $ \cal G_{\cal M}$, in the following way:
	\begin{itemize}
		\item every molecule of $\cal G$ is represented by a vertex in $ \cal G_{\cal M}$;
		
		\item every blue or red solid edge of $\cal G$ between atoms in different molecules is represented by a blue or red solid edge between these two molecules in $ \cal G_{\cal M}$;
		
		\iffalse
		\item each solid blue edge inside a molecule becomes a self-loop on this molecule in $ \cal G_{\cal M}$;
		\fi
		
		\item every $\dashed$ edge of $\cal G$ between atoms in different molecules is represented by a $\dashed$ edge between these two molecules in $ \cal G_{\cal M}$;
		
		\item every dotted edge of $\cal G$ between an external atom and an internal atom is represented by a dotted edge between the corresponding external and internal molecules;
		
		%\item if there are multiple $\times$-dotted lines between the same pair of molecules, then we only keep one of them; 
		%{\it dotted line} between them, which means that there are at least a pair of atoms in these two molecules that always take the same value;
		
		\item we discard all the other components in $\cal G$ (including weights, $\times$-dotted edges, and edges inside molecules).
	\end{itemize}
	%  \noindent{\bf Simple level 2 standard graphs:} We now introduce a strange trick, that is, we only keep the blue solid edges in the graphs and for the red solid edges we only record the total number of them. More precisely, corresponding to a standard graph $\cal G$, we define the simple standard graph $S(\cal G)$ as the subgraph with only black and grey $\dashed$ edges and blue solid $G$ edges. Note that we will expand the graph $\cal G$ only, and $S(\cal G)$ is used to help to study the molecule family tree structure. (\cor draw an example\nc) 
\end{definition}

%Note that the local structures can only vary on scales of order $\OO (W^{1+\tau})$, while the global structure varies on scales up to  $L$. The \emph{two-level structure} of an atomic graph---a global structure plus several local structures---has been explored in \cite{PartIII} already. 

%\cor
%We will only expand the atomic graphs, and the molecule-level-graphs is used solely to help with the analysis of the graph structures. In the proof, we assume that for each atomic graph we automatically have a molecular graph associated with it. Note that we do not keep the red solid $\overline G$ edges in the molecular graphs. This is because we want to keep the molecular graph as concise as possible, so that we can focus on the structures of the $\dashed$ and plus $G$ edges. 
%\nc

In this paper, molecular graphs are used solely to help with the analysis of graph structures, while all graph expansions are  applied to atomic graphs only. In the proof, we assume that there is automatically a molecular graph associated with each atomic graph. In general, local structures are almost trivial to bound, while global structures are much harder to analyze. This is one of the main reasons why we want to introduce the concept of molecular graphs---we want to get rid of all local structures and focus on the more intricate global structures instead.

The following \emph{doubly connected property} is crucial for our proof, because it allows us to establish a direct connection between the scaling order of a graph and a bound on its value. All graphs in the $T$-expansion and $\incomp$ will satisfy this property (cf. Definitions \ref{defn genuni} and \ref{def incompgenuni}). 

\begin{definition} [Doubly connected property]\label{def 2net}  %\cob details are needed \nc. 
A subgraph $\cal G$ without external molecules is said to be doubly connected if its molecular graph $\cal G_{\cal M}$ satisfies the following property. There exists a collection, say $\cal B_{black}$, of $\dashed$ edges, and another collection, say $\cal B_{blue}$, of  either blue solid or  $\dashed$ edges  such that (a) $\cal B_{black}\cap \cal B_{blue}=\emptyset$, and (b) both $\cal B_{black}$ and $\cal B_{blue}$ contain a spanning tree that connects all molecules in the graph. For simplicity of notations, we call the $\dashed$ edges in $\cal B_{black}$ as black edges, and the blue solid and $\dashed$ edges in $\cal B_{blue}$ as blue edges. Correspondingly, $\cal B_{black}$ and $\cal B_{blue}$ are referred to as black net and blue net, respectively, where a ``net" refers to a subset of edges that contains a spanning tree.  

A graph $\cal G$ with external molecules is called doubly connected if its subgraph with all external molecules removed is doubly connected,  i.e. the spanning trees in the two nets are not required to contain the external molecules.
\end{definition}
%\begin{remark}
%We emphasize that the double-net property is defined on the molecular graphs, so it is a \emph{global property}. In the above definition, the $\dashed$ edges also include the labelled $\dashed$ edges introduced in Definition \ref{def_graph comp}.  
%the double-net property in Definition \ref{def 2net} can be also extended by including labelled $\dashed$ edges, with the understanding that a labelled $\dashed$ edge can be used either in the blue net or in the black net.
Note that red solid edges are not tracked in the doubly connected property. In fact, the connectivity of red solid edges will be broken in our expansion procedures. Hence we will often consider molecular graphs with all red solid edges removed. % see Section \ref{sec pre} below. %In other words, in the graphs from the expansions of the $T$-variables, we generally {\bf do not} have a collection $\cal B_{black}$ of $\dashed$ edges and another disjoint collection $\cal B_{red}$ of red solid and $\dashed$ edges, so that both of them are connected subset of edges that connect all molecules. %However, a weaker double-net property holds: if we split each $\dashed$ edge into a blue solid edge and a red solid edge, then there is a connected blue net of blue solid edges and a  connected red net of red solid edges in the graph. This property is weaker than the double-net property in Definition \ref{def 2net}.
%For the graphs in $T$-expansions, they are all doubly connected in the sense of Definition \ref{def 2net} if we remove the external molecules $\oplus$, $\ominus$ and $\otimes$ (together with the edges connected to them).
Doubly connected graphs satisfy some important bounds that depend explicitly on their scaling orders; see Lemma \ref{no dot} below.

%\begin{remark}
%Here the two external edges refers to the fact that: there is at least one blue edge between the internal molecules and $\ominus$, and at least one black edge between the internal molecules and $\oplus$. Roughly speaking, the black external edge will be used in summation, and the blue external edge will be used in expansion (if it is a $G$ edge). 
%\end{remark}

%For a doubly-connected graph, we can use its scaling order defined algebraically in \eqref{eq_deforderrandom2} to give an estimate on its graph value as we will show in Lemma \ref{general dot}. This is the main reason why we want to keep the doubly connected property for all the graphs. 

\section{Main results}\label{sec outline}

%\subsection{Proof of Theorem \ref{main thm}.}\ 
%\noindent{\underline{\bf (I) $T$-expansions.}} 

In this section, we define the concepts of $T$-expansion and $\incomp$, which were introduced in \cite[Section 2]{PartI_high}. 
Then we will state our main results in Section \ref{subsec_main}. In this paper, we will focus on expansions of the $T$-variable $T_{\fa,\fb_1 \fb_2}$, while expansions of $T_{\fb_1\fb_2,\fa}$ can be obtained immediately by considering the transposition of $T_{\fa,\fb_1 \fb_2}$.
%from which we can immediately obtain the corresponding $T$-expansions for $T_{yy',x}$ by replacing $G$ with $G^\top$ in all the expressions.

\subsection{$T$-expansion}\label{sec Texp2}

In this subsection, we define the $T$-expansion using the notations introduced in Section \ref{sec notation}. In addition, we also introduce the following two types of graphs. 
 
\begin{definition}[Recollision graphs and $Q$-graphs]\label{Def_recoll}
(i) We say a graph is a $\oplus$/$\ominus$-recollision graph, if there is at least one dotted edge connecting $\oplus$ or $\ominus$ to an internal atom. In other words, a recollision graph represents an expression where we set at least one summation index to be equal to $\fb_1$ or $\fb_2$.
 	
\medskip
\noindent(ii) We say a graph is a $Q$-graph if all  $G$ edges and $G$ weights in the graph have the same $Q_x$ label with a specific atom $x$, i.e., %all $Q$ operators 
all $Q$ labels in the graph are the same $Q_x$ for some atom $x$. In other words, the value of a $Q$-graph can be written as $Q_x(\Gamma)$ for an external atom $x$ or $\sum_x Q_x(\Gamma)$ for an internal atom $x$, where $\Gamma$ is a graph without $Q$ labels. 
\end{definition}

For any $n\in \N$, we now define the $n$-th order $T$-expansion. Its graphs actually satisfy several more delicate properties to be stated later in %Definition \ref{def genuni} and 
Definition \ref{def genuni2}.

\begin{definition} [$n$-th order $T$-expansion]\label{defn genuni}
	%For any fixed order $n$, suppose we have defined the sequence of deterministic graphs $S^{(k)}_ \Delta$, $2\le k \le n-1$. Then 
	Fix any $n\in \N$ and a large constant $D>n$. For $\fa, \fb_1,\fb_2 \in \Z_L^d$, an $n$-th order $T$-expansion of $T_{\fa,\fb_1\fb_2}$ with $D$-th order error is an expression of the following form: %for any fixed large constant $D>0$,  
	%\be\label{mlevelTgdefintro}
	%T_{xy}= |m|^2  \Theta_{xy} +|m|^2   \left(\Theta \Sdelta^{(n)} \Theta\right)_{xy}  + (\PTn)_{xy} +  (\ATn)_{xy} + (\QTn)_{xy} . %+ \OO_\prec(W^{-D})
	%\ee
	\be\label{mlevelTgdef}
	\begin{split}
		T_{\fa,\fb_1 \fb_2}&= m  \Theta_{\fa \fb_1}\overline G_{\fb_1\fb_2} +m   (\Theta \Sdelta^{(n)}\Theta)_{\fa\fb_1} \overline G_{\fb_1\fb_2} \\
		&+ (\PTn)_{\fa,\fb_1 \fb_2} +  (\ATn)_{\fa,\fb_1\fb_2}  + (\QTn)_{\fa,\fb_1\fb_2}  +  (\Err_{n,D})_{\fa,\fb_1\fb_2}.
	\end{split}
	\ee
	The graphs on the right side depend only on $n$, $D$, $m(z)$, $S$, $S^{\pm}(z)$, $\Theta(z)$ and $G(z)$, but do not depend on $W$, $L$ and $d$ explicitly. Moreover, they satisfy the following properties with $C_n$ and $C_D$ denoting large constants depending on $n$ and $D$, respectively.
	\begin{enumerate}
		\item  The graphs on the right side are normal regular graphs (recall Definition \ref{defnlvl0}) with external atoms $\otimes\equiv \fa$, $\oplus\equiv \fb_1$ and $\ominus\equiv \fb_2$, and with at most $C_D$ many atoms.
		%Moreover, the expressions on the right-hand side satisfy the following properties.  
		
		\item %The graphs on the right-hand side of \eqref{mlevelTgdef} may contain labelled $\dashed$ edges of the form \eqref{eq label}, where we must have that $4\le 2k_i \le n-1$ and $k=\sum_{i=1}^l 2 k_i -2(l-1)\le n-1$. In other words, the definition of the $n$-th order $T$-expansion only uses lower order $\selfs$.
		A diffusive edge in any graph on the right-hand side of \eqref{mlevelTgdef} is either a $\dashed$ edge or a labelled $\dashed$ edge of the form \eqref{eq label} with $4\le 2k_i \le n$.
		
		\item $\Sdelta^{(n)}$ is a sum of at most $C_n$ many deterministic normal regular graphs. %of $m(z)$, $I_N$, $S$, $S^\pm(z)$ and $\Theta(z)$ only. 
		We decompose it according to the scaling order as 
		\be\label{decompose Sdelta0} \Sdelta^{(n)} = \sum_{k\le n} \Sdeltak.\ee
		%where each $\Sdelta_{k}$ is a sum of deterministic graphs of scaling order $k$. 
		Moreover, we have a sequence of $\selfs$ $\Sele_{k}$ satisfying Definition \ref{collection elements} for $4\le k \le n$ such that $\Sdeltak$ can be written into the form %of chain structures: 
		\be\label{chain S2k}
		\Sdeltak=\Sele_{k} + \sum_{ l =2}^k \sum\limits_{ \mathbf k=(k_1,\cdots, k_l) \in \Omega_{k}^{(l)} } \Sele_{k_1}\Theta  \Sele_{k_2}\Theta \cdots \Theta  \Sele_{k_l} . %\mathbf 1\left(\mathbf k \in \Omega_{k}^{(l)}\right).
		\ee
		%where $\Sele_{k}\in \Cele_{k}$ for $2\le k \le n-1$ and $\Sele_{k_i} \in \Cele_{k_i}$. 
		Here all deterministic graphs with $l=1$ are included into $\Sele_k$ so that the summation starts with $l=2$. Moreover,  $\Omega_{k}^{(l)} \subset \N^l$ is the subset of vectors $\mathbf k$ satisfying that 
		\be\label{omegakl}
		4\le k_i \le k-1,\quad \text{ and }\quad  \sum_{i=1}^l  k_i  -2(l-1)=k. 
		\ee
		%$ \Omega_{k}^{(l)} \subset \N^l$ is a subset of vectors such that every $ \mathbf k \in \Omega_{k}^{(l)} $ satisfies %\eqref{omegakl},  
		%\be\label{omegakl}
		% 4\le k_i \le k-1,\quad \text{ and }\quad \left(\sum_{i=1}^l  k_i\right) -2(l-1)=k, 
		% \ee
		The second condition of \eqref{omegakl} guarantees that the subgraph $(\Sele_{k_1}\Theta  \Sele_{k_2}\Theta \cdots \Theta  \Sele_{k_l})_{xy}$ has scaling order $k$.
		%But we remark that $ \Omega_{k}^{(l)}$ does not necessarily contain all possible vectors $\mathbf k\in \N^l$ that satisfy \eqref{omegakl}.
		%${\bm \kappa}=(\kappa_1,\cdots, \kappa_l)$ is a sequence of labels for the expressions $\Sele_{k_i}$, $1\le i \le l$ (as an additional remark, $\kappa_i$ does not necessarily range over all the elements in $\Cele_{k_i}$). 
		%Finally, the deterministic expression $\Sele_{n}$ satisfies the properties (i)-(iii) of Definition \ref{collection elements}.  %and we include it into the set $\Cele_n$.  
		%for each fixed $l\in \N$, there are at most $C_l$ many labels $\kappa$ for $\Selek_{2l}$, and as a convention, we regard the label of $\Sele_{2k}$ as $\emptyset$.

		\item $(\PTn)_{\fa,\fb_1\fb_2}$ is a sum of at most $C_n$ many $\oplus/\ominus$-recollision graphs of scaling order $\le n$ and without any $P/Q$ labels.  Moreover, it can be decomposed as 
		\be\label{decomposeP}(\PTn)_{\fa,\fb_1\fb_2} =\sum_{ k =3}^{ n}(\PTk)_{\fa,\fb_1\fb_2},\ee
		where each $(\PTk)_{\fa,\fb_1\fb_2}$ is a sum of the $\oplus/\ominus$-recollision graphs of scaling order $k$ in $(\PTn)_{\fa,\fb_1\fb_2}$.

		\item  $(\ATn)_{\fa,\fb_1\fb_2}$ is a sum of at most $C_D$ many graphs of scaling order $> n$ and without any  $P/Q$ labels.

		\item $(\QTn)_{\fa,\fb_1\fb_2} $ is a sum of at most $C_D$ many $Q$-graphs.  Moreover, it can be decomposed as 
		\be\label{decomposeQ} (\QTn)_{\fa,\fb_1\fb_2}= \sum_{k=2}^{n} (\QTk)_{\fa,\fb_1\fb_2} + (\cal Q^{(>n)}_T)_{\fa,\fb_1\fb_2},\ee
		where  $(\QTk)_{\fa,\fb_1\fb_2}$ is a sum of the $Q$-graphs in $ (\QTn)_{\fa,\fb_1\fb_2}$ of scaling order $k$, and $(\cal Q^{(>n)}_T)_{\fa,\fb_1\fb_2}$ is a sum of all the $Q$-graphs in $ (\QTn)_{\fa,\fb_1\fb_2}$ of scaling order $>n$.

		\item $\Sdeltak$, $\PTk$ and $\QTk$ are independent  of $n$.
		
		\item $(\Err_{n,D})_{\fa,\fb_1\fb_2}$ is a sum of at most $C_D$ many graphs, each of which has scaling order $> D$ and may contain some $P/Q$ labels in it. 
		
		\item In every graph of $(\PTn)_{\fa,\fb_1\fb_2}$, $(\ATn)_{\fa,\fb_1\fb_2}$, $(\QTn)_{\fa,\fb_1\fb_2} $ and $(\Err_{n,D})_{\fa,\fb_1\fb_2}$,  there is a unique $\dashed$ edge connected to $\otimes$. Furthermore,  there is at least an edge, which is either blue solid or $\dashed$ or dotted,  connected to  $\oplus$, and there is at least an edge, which is either red solid or $\dashed$ or dotted,  connected to $\ominus$.
		
		\item Every graph in $(\PTn)_{\fa,\fb_1\fb_2}$, $(\ATn)_{\fa,\fb_1\fb_2}$, $(\QTn)_{\fa,\fb_1\fb_2}$ and $(\Err_{n,D})_{\fa,\fb_1\fb_2}$ is doubly connected in the sense of Definition \ref{def 2net}. 
		
		%Furthermore, there may be some dotted edges connected with $\oplus$ and $\ominus$. 
		
		%\iffalse
		%\item $(\PTn)_{xy}$ is a sum of at most $C_n$ many recollision graphs of scaling order $\le n$: 
		% $$(\PTn)_{xy} =\sum_{k=3}^{ n}(\PTk)_{xy},$$
		% where each $(\PTk)_{xy}$ is a sum of recollision graphs of scaling order $k$. Here a recollision graph means that there is at least one dotted edge connecting the external vertex $y$ to an internal vertex in the graph. In other words, a recollision graph represents an expression where we set at least one summation index to be equal to $y$.
		% %some summation indices in the expression of $(\PT_l)_{xy}$ are equal to the fixed indices $x$ or $y$ {\cor(check this sentence)}.
		%\item $(\ATn)_{xy}$ is a sum of at most $C_n$ many higher order graphs with scaling order $> n$. 
		%
		%\item $(\QTn)_{xy}$ is a sum of at most $C_n$ many $Q$-graphs. Here a $Q$-graph means a fluctuation expression of the form $\sum_\al Q_\al (\Gamma(\al))$ for a graph $\Gamma$ and an internal vertex $\al$ in $\Gamma$, where we denote $Q_\al:=1-\mathbb E_\al$ with $\E_\al[\cdot]$ being the partial expectation over the $\al$-th row and column of $H$.
		%Moreover, we can write %decompose $\QTn$ as 
		%$$ (\QTn)_{xy}= \sum_{k=2}^{n} (\QTk)_{xy} + (\cal Q^{(>n)}_T)_{xy},$$
		%where each $(\QTk)_{xy}$ is the sum of $Q$-graphs of scaling order $k$ in $(\QTn)_{xy}$, and $(\cal Q_T^{(>n)})_{xy}$ is the sum of all the $Q$-graphs of scaling order $>n$ in $(\QTn)_{xy}$.
		% \fi
		
	\end{enumerate}
	The graphs on the right-hand side of \eqref{mlevelTgdef} satisfy some additional properties given in Definition \ref{def genuni2} below. %and Definition \ref{def genuni2}.
\end{definition}

% \begin{remark}\label{lem redagain}
% We make the following remarks regarding the above definition.
%
%\vspace{5pt}
%\noindent{(i)} 

%\iffalse
%In Definition \ref{defn genuni}, the graph $\Selek_{l}$ can be regarded as a sum of irreducible graphs (in the sense of Definition \ref{def 2net} below). Then the graphs in $\Sdeltak$ are formed by connecting the irreducible graphs with $\Theta$ edges. We also remark that in accordance with \eqref{trivial conv}, we have
% $$\Sigma_1=\Sigma_2=\Sigma_3=\Sigma_{2l+1}=0,\quad l\in \N.$$
%\fi
% \end{remark}
% \begin{remark}

%\vspace{5pt}
%\noindent{(ii)} One of the leading graphs in $(\cal P_{T,3})_{xy}$ is 
%$$m \sum_{\al,\beta} \delta_{\al y}\Theta_{x\al} s_{\al\beta}  ( \overline G_{\al\al } - \overline m) |G_{\beta y}|^2 ,$$
%which can be easily bounded by $\Theta_{xy}$ if we have a rough estimate on $G$ as in \eqref{initialGT}. In general, there are many more complicated graphs in $\PTn$, but they are all sufficiently small for our purpose. 
% \end{remark}
% \begin{remark}

% \vspace{5pt}
%\noindent{(iii)} 
%Compared with \eqref{sum S2l}, the estimate \eqref{3rd_property0} is much stronger by an extra $\eta$ factor, which actually follows from the sum zero property \eqref{weaz} and is crucial for our proof. In particular, 

By \eqref{BRB}, the term $\left(\Theta \Sdeltak \Theta\right)_{\fa\fb_1}$ can be bounded as 
\be\label{intro_redagain}|\left(\Theta \Sdeltak \Theta\right)_{\fa\fb_1} |\le W^{-(k-2)d/2+\tau}B_{\fa\fb_1} \ee
for any constant $\tau>0$. This bound shows that when $\fb_1=\fb_2=\fb$, the second term on the right-hand side of \eqref{mlevelTgdef} can be bounded by  $m\overline G_{\fb\fb}(\Theta \Sdelta^{(n)} \Theta)_{\fa\fb}=\OO(W^\tau B_{\fa\fb})$, which is necessary for the local law \eqref{locallaw} to hold.

In \cite{PartI_high}, the following second order $T$-expansion is given explicitly in Lemma 2.5: 
\begin{align}\label{seconduniversal}
	& T_{\fa,\fb_1\fb_2}= %m\delta_{xy} G_{y' y} f(G) + 
	m \overline G_{\fb_1\fb_2}  \Theta_{\fa\fb_1}   + (\AT^{(>2)})_{\fa,\fb_1\fb_2} + (\QT^{(2)})_{\fa,\fb_1\fb_2}  ,
\end{align}
where 
\begin{align}
	(\AT^{(>2)})_{\fa,\fb_1\fb_2}& :  =  m\sum_{x,y} \Theta_{\fa x} s_{xy} (G_{yy}-m) G_{x \fb_1}\overline G_{x \fb_2}  + m \sum_{x,y} \Theta_{\fa x} s_{xy}  ( \overline G_{xx} - \overline m) G_{y \fb_1}\overline G_{y \fb_2}, \label{Aho>2}  \\
	(\QT^{(2)})_{\fa,\fb_1\fb_2}& := \sum_x Q_x\left(\Theta_{\fa x} G_{x \fb_1}   \overline G_{x \fb_2} \right) - m Q_{\fb_1} \left( \Theta_{\fa\fb_1} \overline G_{\fb_1 \fb_2}\right)  \nonumber\\
	& -  m\sum_{x,y} Q_x\left[\Theta_{\fa x} s_{xy} (G_{yy}-m) G_{x \fb_1} \overline G_{x \fb_2} \right] - m \sum_{x,y} Q_x\left[ \Theta_{\fa x} s_{xy}  \overline G_{xx} G_{y\fb_1}\overline G_{y\fb_2} \right] .\label{QT>2}
\end{align}
We still need to expand $(\AT^{(>2)})_{\fa,\fb_1\fb_2}$ and $(\QT^{(2)})_{\fa,\fb_1\fb_2} $ into sums of normal regular graphs using the operation in Definition \ref{dot-def} below, but we omit this minor issue here. 

\subsection{$\incomp$}

In this subsection, we define the $n$-th order $\incomp$ for any fixed $n\in \N$. 

\begin{definition} [$n$-th order $\incomp$]\label{def incompgenuni}
	%For any fixed order $n$, suppose we have defined the sequence of deterministic graphs $S^{(k)}_ \Delta$, $2\le k \le n-1$. Then 
Fix any $n\in \N$ and a large constant $D>n$. For $\fa, \fb_1,\fb_2 \in \Z_L^d$, an $n$-th order $\incomp$ of $T_{\fa,\fb_1\fb_2}$ with $D$-th order error is an expression of the following form: %for any fixed large constant $D>0$,  
	%\be\label{mlevelTgdefintro}
	%T_{xy}= |m|^2  \Theta_{xy} +|m|^2   \left(\Theta \Sdelta^{(n)} \Theta\right)_{xy}  + (\PTn)_{xy} +  (\ATn)_{xy} + (\QTn)_{xy} . %+ \OO_\prec(W^{-D})
	%\ee
	\be \label{mlevelT incomplete}
	\begin{split}
		T_{\fa,\fb_1 \fb_2} &= m  \Theta_{\fa \fb_1}\overline G_{\fb_1\fb_2}   +\sum_x (\Theta \wtSdeltan)_{\fa x} T_{x,\fb_1\fb_2} \\
		&+ (\PITn)_{\fa,\fb_1 \fb_2} +  (\AITn)_{\fa,\fb_1 \fb_2} + (\QITn)_{\fa,\fb_1 \fb_2} + (\Err'_{n,D})_{\fa,\fb_1\fb_2}  ,
	\end{split}
	\ee
	where the graphs on the right-hand side depend only on $n$, $D$, $m(z)$, $S$, $S^{\pm}(z)$, $\Theta(z)$ and $G(z)$, but do not depend on $W$, $L$ and $d$ explicitly. %(Here ``$IT$" in the subscript means ``initial $T$-expansion".) 
	Moreover, they satisfy the following properties.
	\begin{enumerate}
		\item  $(\PITn)_{\fa,\fb_1 \fb_2}$, $(\AITn)_{\fa,\fb_1 \fb_2}$, $(\QITn)_{\fa,\fb_1 \fb_2}$ and $(\Err'_{n,D})_{\fa,\fb_1\fb_2}$ respectively satisfy the same properties as $(\PTn)_{\fa,\fb_1 \fb_2}$, $(\ATn)_{\fa,\fb_1 \fb_2}$, $(\QTn)_{\fa,\fb_1 \fb_2}$ and $(\Err_{n,D})_{\fa,\fb_1\fb_2}$ in Definition \ref{defn genuni}. Furthermore,  $(\PITn)_{\fa,\fb_1 \fb_2}$ and $(\QITn)_{\fa,\fb_1 \fb_2}$ can be decomposed as
		\be\label{decomposePIT}(\PITn)_{\fa,\fb_1\fb_2} =\sum_{ k =3}^{ n}(\PITk)_{\fa,\fb_1\fb_2},\ee
		and
		\be\label{decomposeQIT} (\QITn)_{\fa,\fb_1\fb_2}= \sum_{k=2}^{n} (\QITk)_{\fa,\fb_1\fb_2} + (\cal Q^{(>n)}_{IT})_{\fa,\fb_1\fb_2},\ee
		where  $(\PITk)_{\fa,\fb_1\fb_2}$ is a sum of the $\oplus/\ominus$-recollision graphs in $(\PITn)_{\fa,\fb_1\fb_2}$ of scaling order $k$,  $(\QITk)_{\fa,\fb_1\fb_2}$ is a sum of the $Q$-graphs in $(\QITn)_{\fa,\fb_1 \fb_2}$ of scaling order $k$, and $(\cal Q^{(>n)}_{IT})_{\fa,\fb_1\fb_2}$ is a sum of the $Q$-graphs in  $(\cal Q^{(n)}_{IT})_{\fa,\fb_1\fb_2}$ of scaling order $>n$.  Moreover, $\PITk$ and $\QITk$ are independent of $n$.
		% i.e. they are the same expressions in every order $\incomp$.

		\item $\wtSdeltan$ can be decomposed according to the scaling order as 
		\be\label{decompose Sdelta} 
		\wtSdeltan = \Sele_n + \sum_{l=4}^{n-1} \Sele_{l},
		\ee
		where $\Sele_{l}$ (which is the same as that appeared in \eqref{chain S2k}), $1\le l \le n-1$, is a sequence of $\selfs$ satisfying Definition \ref{collection elements}. For any $x,y \in \Z_L^d$, $(\Sele_{n})_{xy}$ is a sum of at most $C_n$ many deterministic graphs of scaling order $n$ and with external atoms $x$ and $y$. 
		%Moreover, it has an infinite space limit $\Sele^{\infty}_{n}(E)$ as $L\to \infty$, and satisfies the properties \eqref{two_properties0}, \eqref{4th_property0}, \eqref{two_properties0V} and \eqref{4th_property0V} with $l=n$. 
		
		\item A diffusive edge in $\Sele_n$, $(\PITn)_{\fa,\fb_1 \fb_2}$, $(\AITn)_{\fa,\fb_1 \fb_2}$, $(\QITn)_{\fa,\fb_1 \fb_2}$ and $(\Err'_{n,D})_{\fa,\fb_1\fb_2}$ is either a $\dashed$ edge or a labelled $\dashed$ edge of the form \eqref{eq label} with $4\le 2k_i \le n-1$. % and $k=\sum_{i=1}^l 2 k_i -2(l-1)\le n-1$. 
		
		%The graphs on the right-hand side of \eqref{mlevelT incomplete} may contain labelled $\dashed$ edges of the form \eqref{eq label} with $4\le 2k_i \le n-1$ and $k=\sum_{i=1}^l 2 k_i -2(l-1)\le n-1$. 

		\item Every graph in $(\PITn)_{\fa,\fb_1\fb_2}$, $(\AITn)_{\fa,\fb_1\fb_2}$, $(\QITn)_{\fa,\fb_1\fb_2} $ and $(\Err_{n,D}')_{\fa,\fb_1\fb_2}$ can be written as %the following form:
		$$ \sum_{x}\Theta_{\fa x} {\cal G}_{x,\fb_1\fb_2},$$ 
		where $ {\cal G}_{x,\fb_1\fb_2}$ is a normal regular graph with external atoms $x$, $\fb_1$ and $\fb_2$. Moreover, $ {\cal G}_{x,\fb_1\fb_2}$ has at least an edge, which is either blue solid or $\dashed$ or dotted,  connected to  $\oplus$, and at least an edge, which is either red solid or $\dashed$ or dotted, connected to $\ominus$.
		
		%it has one plus solid $G$ or $\dashed$ or dotted edge connected with $\oplus$, and one minus solid $ G$ or $\dashed$ or dotted edge connected with $\ominus$.
		
		%there is at least an edge, which is either  plus solid $G$ or $\dashed$ or dotted,  connected to  $\oplus$, 	and  there is at least an edge, which is either  minus solid $G$ or $\dashed$ or dotted,  connected to $\ominus$.

		\item Every graph in $\Sele_n$, $(\PITn)_{\fa,\fb_1\fb_2}$, $(\AITn)_{\fa,\fb_1\fb_2}$, $(\QITn)_{\fa,\fb_1\fb_2}$ and $(\Err'_{n,D})_{\fa,\fb_1\fb_2}$ is doubly connected in the sense of Definition \ref{def 2net}. 
	\end{enumerate}
	The graphs on the right side of \eqref{mlevelT incomplete} satisfy some additional properties given in Definition  \ref{def genuni2} below.
\end{definition} 
\begin{remark}
The form of \eqref{mlevelT incomplete} is different from \eqref{mlevelTgdef} only in the second term on the right-hand side. It can be regarded as a linear equation of the $T$-variable $T_{x,\fb_1\fb_2}$. If we move the second term on the right-hand side of \eqref{mlevelT incomplete} to the left-hand side and multiply both sides with $(1-\Theta\wtSdeltan)^{-1}$, we can get the $n$-th order $T$-equation (see Section \ref{sec lastglobal} for more details). Furthermore, taking expectation and choosing $\fb_1=\fb_2$ and $n=M$, we will get \eqref{E_locallaw} as discussed in \cite{PartI_high}.  We remark that using the language of Feynman diagrams, $\wtSdelta_n$ in \eqref{decompose Sdelta} is a sum of \emph{one-particle irreducible graphs}, while \smash{$\Sdelta^{(n)}$} in \eqref{decompose Sdelta0} is a sum of \emph{one-particle reducible graphs}. We also remark that \smash{$\Theta\Sdelta^{(n)}\Theta$} is exactly the sum of scaling order $\le n$ graphs in the Taylor expansion of $\Theta^{(n)}=(1-\Theta\wtSdeltan)^{-1}\Theta$.

We will construct a sequence of $\incomp$s inductively. In particular, before constructing the $n$-th order $\incomp$, we have obtained the $k$-th order $T$-expansion and proved the properties \eqref{two_properties0}--\eqref{3rd_property0} for $\Sele_k$ for all $4\le k\le n-1$. On the other hand, $ \Sele_n$ is a new sum of deterministic graphs obtained in the $n$-th order $\incomp$, whose properties  \eqref{two_properties0}--\eqref{3rd_property0} are yet to be shown. Moreover, we perform the expansions in a careful way so that the doubly connected property of the graphs in $\Sele_n$ will follow easily from the construction. 
\end{remark}

%$\Sele_l$'s in \eqref{decompose Sdelta} are the same $\selfs$ as those in Definition \ref{defn genuni}. 

\subsection{Main results}\label{subsec_main}

We are ready to state the main results of this paper. First, given an $n$-th order $T$-expansion, we can get a local law in Theorem \ref{thm ptree} for random band matrices satisfying \eqref{Lcondition1}. For simplicity of presentation, we will adopt the following convention of stochastic domination introduced in \cite{EKY_Average}. %It simplifies the presentation of the results and their proofs by systematizing statements of the form ``$\xi$ is bounded by $\zeta$ with high probability up to a small power of $W$".

\begin{definition}[Stochastic domination and high probability event]\label{stoch_domination}
	(i) Let
	\[\xi=\big(\xi^{(W)}(u):W\in\mathbb N, u\in U^{(W)}\big),\hskip 10pt \zeta=\big(\zeta^{(W)}(u):W\in\mathbb N, u\in U^{(W)}\big),\]
	be two families of non-negative random variables, where $U^{(W)}$ is a possibly $W$-dependent parameter set. We say $\xi$ is stochastically dominated by $\zeta$, uniformly in $u$, if for any fixed (small) $\tau>0$ and (large) $D>0$, 
	\[\mathbb P\bigg[\bigcup_{u\in U^{(W)}}\left\{\xi^{(W)}(u)>W^\tau\zeta^{(W)}(u)\right\}\bigg]\le W^{-D},\]
	for large enough $W\ge W_0(\tau, D)$, and we will use the notation $\xi\prec\zeta$.	If some complex family $\xi$ satisfies $|\xi|\prec\zeta$, then we will also write $\xi \prec \zeta$ or $\xi=\OO_\prec(\zeta)$. 
	
	\vspace{5pt}
	\noindent (ii) As a convention, for two deterministic non-negative quantities $\xi$ and $\zeta$, we will write $\xi\prec\zeta$ if and only if $\xi\le W^\tau \zeta$ for any constant $\tau>0$. 
	
	\vspace{5pt}
	\noindent (iii) We say that an event $\Xi$ holds with high probability (w.h.p.) if for any constant $D>0$, $\mathbb P(\Xi)\ge 1- W^{-D}$ for large enough $W$. More generally, we say that an event $\Omega$ holds $w.h.p.$ in $\Xi$ if for any constant $D>0$,
	$\P( \Xi\setminus \Omega)\le W^{-D}$ for large enough $W$.
\end{definition}

\begin{theorem}[Local law]\label{thm ptree}
	Fix any $n\in \N$. Under the assumptions of Theorem \ref{main thm}, suppose we have an $n$-th order $T$-expansion satisfying Definition \ref{defn genuni}. Assume that $L$ satisfies 
	\be\label{Lcondition1}  {L^2}/{W^2}  \le W^{(n-1)d/2-c_0} \ee
	for some constant $c_0>0$. Then for any constant $\e>0$, the local law %\eqref{locallaw} 
	\be\label{locallaw1}
	|G_{xy} (z) -m(z)\delta_{xy}|^2 \prec B_{xy}
	\ee
	holds uniformly in all $z= E+\ii\eta$ with $E\in (-2+\kappa,2-\kappa)$ and $\eta \in [W^{2+\e}/L^{2},1]$.
\end{theorem}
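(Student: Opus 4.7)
The plan is to run a standard multi-scale (bootstrap) argument in the spectral parameter $\eta$, descending from $\eta=1$ down to the target scale $\eta=W^{2+\epsilon}/L^2$, and to use the $n$-th order $T$-expansion as the engine at each scale. The three auxiliary lemmas announced at the end of the introduction fit this scheme naturally: Lemma \ref{eta1case0} supplies the base case at $\eta \sim 1$ (where $\|G(z)\|\le \eta^{-1}$ is bounded and \eqref{locallaw1} follows from essentially deterministic manipulations), Lemma \ref{lem: ini bound} provides, at each new scale $\eta_2<\eta_1$, a rough a priori bound on $|G_{xy}|^2$ obtained from the sharp bound at $\eta_1$ via the Lipschitz continuity of $G(E+\ii\eta)$ in $\eta$ (with Lipschitz constant $\eta^{-2}$), and Lemma \ref{lemma ptree} is the self-improving estimate: given a weak a priori bound on $G$ at a fixed $\eta$, it upgrades this to the sharp bound \eqref{locallaw1} at the same $\eta$. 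Iterating over a geometric sequence of $\eta$'s yields the theorem.

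The core of the argument lies in Lemma \ref{lemma ptree}, whose proof I would approach by inserting the $n$-th order $T$-expansion \eqref{mlevelTgdef} with $\fb_1=\fb_2=y$, giving
\begin{equation*}
T_{xy} \;=\; m\,\overline G_{yy}\,\Theta_{xy} \;+\; m\,\overline G_{yy}\,(\Theta\,\Sdelta^{(n)}\,\Theta)_{xy} \;+\;(\PTn)_{x,yy}+(\ATn)_{x,yy}+(\QTn)_{x,yy}+(\Err_{n,D})_{x,yy},
\end{equation*}
and then estimating each piece separately under the a priori bound. The leading term is $|m|^2\Theta_{xy}$ plus a small correction from $G_{yy}-m$; the renormalized term is $\OO_\prec(W^{-(n-2)d/2}B_{xy})$ by \eqref{intro_redagain}; the $(\ATn)$ and $(\Err_{n,D})$ terms are controlled by their high scaling orders together with Lemma \ref{no dot}--type bounds that exploit the doubly connected property (Definition \ref{def 2net}); the recollision term $(\PTn)$ picks up extra smallness because the dotted edges to $\oplus/\ominus$ collapse a summation index and thereby reduce the effective number of free vertices; and the fluctuation term $(\QTn)$ is controlled via a high-moment estimate on $Q$-graphs, using that $\E_x Q_x=0$ together with the doubly connected structure to extract, from each graph raised to the $2p$-th power, enough factors of $W^{-d/2}$ to beat the size of $B_{xy}$. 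Once $T_{xy}\prec \Theta_{xy}$ is established, the Ward identity $\sum_y |G_{xy}|^2 = \eta^{-1}\im G_{xx}$ together with the definition $T_{xy}=|m|^2\sum_\alpha s_{x\alpha}|G_{\alpha y}|^2$ and a standard off-diagonal extraction (using the smoothing by $s_{x\alpha}$ on scale $W$) converts the $T$-bound into the desired $|G_{xy}|^2\prec B_{xy}$.

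The role of the hypothesis \eqref{Lcondition1} is to make the renormalized error \emph{uniformly} negligible at the smallest scale of interest: a single diffusive factor $\Theta$ carries row sum of order $\eta^{-1}\sim L^2/W^2$ (see \eqref{sumTheta}), so the worst case of the self-energy correction behaves like $W^{-(n-2)d/2}\cdot(L^2/W^2)$, and \eqref{Lcondition1} forces this to be $\le W^{-c_0/2}$, much smaller than the target precision $B_{xy}$. This is also the reason the bootstrap does not have to be pushed beyond $\eta\ge W^{2+\epsilon}/L^2$.

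The main obstacle, as usual, is closing the high-moment concentration estimate for $(\QTn)$ in a way that is compatible with the bootstrap. The difficulty is twofold: first, the number of graphs in $(\QTn)$ grows super-exponentially with $n$, so the polynomial-in-$p$ estimates must be sharp enough not to accumulate uncontrolled combinatorial factors; second, after raising a $Q$-graph to the $2p$-th power and applying Gaussian integration by parts (or cumulant expansion in the general case), one obtains new graphs whose doubly connectedness must again be verified in order to invoke the graph-size bounds. I expect the verification of doubly connectedness after the $2p$-fold multiplication, combined with a careful accounting of how the a priori $G$-bounds propagate through all $G$-edges in these new graphs, to be the technical heart of Lemma \ref{lemma ptree}; the terminology \emph{ptree} presumably refers to the tree-like book-keeping used in this step.
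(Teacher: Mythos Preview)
Your bootstrap scheme and the roles of Lemmas~\ref{eta1case0}, \ref{lem: ini bound}, \ref{lemma ptree} match the paper exactly, as does the plan to bound $\E T_{xy}^p$ by inserting the $T$-expansion into one factor and estimating $(\PTn)$, $(\ATn)$, $(\QTn)$ separately via the doubly connected structure (this is Lemma~\ref{lem highp1} together with Claims~\ref{lem ATab} and~\ref{lem QTab} in the paper). Two points, however, need correction.

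The continuity step (Lemma~\ref{lem: ini bound}) is \emph{not} obtainable from the naive Lipschitz bound $\|\partial_\eta G\|\le\eta^{-2}$; that estimate loses a full factor of $\eta^{-1}$ and cannot produce the averaged bound \eqref{initial Txy2} over boxes of all scales $K\in[W,L/2]$, which is precisely the input that controls the weak-$(1,2)$ norm $\|G-mI\|_{w;(1,2)}$ used throughout the proof of Lemma~\ref{lemma ptree}. The paper devotes all of Section~\ref{sec ini_bound} to this: one reduces \eqref{initial Txy2} to an operator-norm bound on the restricted imaginary-part matrix $A$ of \eqref{Ayy}, and then controls $\|A\|_{\ell^2\to\ell^2}$ by a trace-moment estimate $\E\tr(A^{2p})$ (Lemma~\ref{lem normA}), whose proof in turn requires a separate \emph{non-universal $T$-expansion} (Lemma~\ref{def nonuni-T}) built with ghost edges and going well beyond Definition~\ref{defn genuni}. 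This is a substantial part of the argument, not a perturbative remark.

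A smaller point: the hypothesis \eqref{Lcondition1} is not used for the self-energy term $(\Theta\Sdelta^{(n)}\Theta)_{\fa\fb}$, which by \eqref{intro_redagain} is already $\OO_\prec(B_{\fa\fb})$ with no $L$-dependence at all. It enters instead in the bound on the higher-order remainder $(\ATn)_{\fa,\fb\fb}$ (Claim~\ref{lem ATab}): those graphs have scaling order $\ge n+1$, but after extracting one $B_{\fa\fb}$ factor a free $B$-sum contributes $L^2/W^2$, giving $W^{-(n-1)d/2}\cdot(L^2/W^2)\cdot(B_{\fa\fb}+\wt\Phi^2)$, and \eqref{Lcondition1} is exactly what makes this $\le W^{-c_0}(B_{\fa\fb}+\wt\Phi^2)$.
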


%\begin{theorem}\label{thm ptree}
%	{\cor revise}	Fix any $n\in \N$. Suppose the assumptions of Theorem \ref{main thm} hold, and we have $k$-th order $T$-expansions for $2\le k \le n-1$ as in Definition \ref{defn genuni} (together with the additional properties in Definition \ref{def genuni} and Definition \ref{def genuni2}). 
%	%Suppose for all $2l\le k \le n-1$, properties \eqref{two_properties0R}-\eqref{3rd_property0R} hold for $R_{2l}^{(k)}$. 
%	Assume that $L$ satisfies 
%	\be\label{Lcondition0}  {L^2}/{W^2}  \le W^{(n/2-1)d-c_0} \ee
%	for some constant $c_0>0$. Then for any constant $\e>0$, the estimate \eqref{locallaw} holds uniformly in all $z= E+\ii\eta$ with $E\in (-2+\kappa,2-\kappa)$ and $\eta\ge W^{2+\e}/L^2$.
%\end{theorem}

Next we show that we can construct the $\incomp$ and $T$-expansion up to any fixed order $M\in \N$.

\begin{theorem}[Construction of the $\incomp$]   \label{incomplete Texp} 
Given any $M\in \N$, %Suppose we have defined the $k$-th order $T$-expansion for $2\le k \le n-1$. Then 
we can construct a sequence of $n$-th order $\incomp$s satisfying Definition \ref{def incompgenuni} for all $2\le n \le M$.
\end{theorem}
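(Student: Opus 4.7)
The plan is to prove Theorem \ref{incomplete Texp} by induction on $n$, starting from the second order $T$-expansion \eqref{seconduniversal} given explicitly in Lemma 2.5 of \cite{PartI_high}. Note that \eqref{seconduniversal} already has the form of a $2$nd order $\incomp$ with $\wtSdelta^{(2)}=0$, so the base case is essentially free (modulo rewriting $(\AT^{(>2)})_{\fa,\fb_1\fb_2}$ and $(\QT^{(2)})_{\fa,\fb_1\fb_2}$ into sums of normal regular graphs via Definition \ref{dot-def} in Section \ref{sec_basiclocal}). For the inductive step, suppose we have already constructed $k$-th order $\incomp$s for all $2 \le k \le n-1$, and consequently (by Corollary \ref{lem completeTexp}) $k$-th order $T$-expansions with self-energies $\Sele_k$ satisfying properties \eqref{two_properties0}--\eqref{3rd_property0}. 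I then want to produce an $n$-th order $\incomp$ from the $(n-1)$-th one.

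The key idea is to take each graph in $(\AIT^{(n-1)})_{\fa,\fb_1\fb_2}$ of scaling order exactly $n-1$ (which, under the bounds of Section \ref{sec pre}, is not yet small enough for the new error threshold $W^{-nd/2}$) and further expand it. There are two flavors of graph expansion: \emph{local expansions} (Section \ref{sec localexp} of \cite{PartI_high}), which operate inside a single molecule and automatically preserve the doubly connected property, and \emph{global expansions} (Section \ref{subsec global}), which are substitutions of a $T$-variable by a lower-order $T$-expansion and may in general destroy double connectedness. The main structural observation is that a global substitution on a $T$-variable preserves doubly connectedness \emph{iff} that $T$-variable is \emph{redundant} in the sense of Definition \ref{defn pivotal}, i.e.~removing its $\dashed$ edge leaves the graph doubly connected. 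Thus my plan is: (i) apply local expansions to each scaling-order-$(n-1)$ graph as far as possible, (ii) invoke the \emph{globally standard / sequentially pre-deterministic} structure of Definitions \ref{def seqPDG}--\ref{defn gs} to always locate a redundant $T$-variable to expand, and (iii) when a global substitution is applied, replace that $T$-variable using the $\lceil(n-1)/\text{ord}\rceil$-th order $\incomp$, splitting the output into terms that belong to the new $(\PITn),(\AITn),(\QITn),(\Err'_{n,D})$ buckets or that produce a new deterministic piece feeding into $\Sele_n$.

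After all scaling-order-$(n-1)$ leaves are processed, collect every deterministic, doubly connected graph of scaling order exactly $n$ that arises in the second (deterministic) term of the rewritten expansion; their sum is the candidate $\Sele_n$. The remaining graphs are routed into $(\PITn)$ if they contain a dotted edge to $\oplus$ or $\ominus$, into $(\QITn)$ if they carry $Q_x$-labels of the required form, into $(\AITn)$ otherwise (with scaling order $> n$), and into $(\Err'_{n,D})$ once the scaling order exceeds $D$ (so they are terminated without further expansion). The bookkeeping of Properties (1)--(5) in Definition \ref{def incompgenuni} then reduces to checking that: each step creates only normal regular doubly connected graphs (built into the local/global machinery), that the unique $\dashed$-edge at $\otimes$ and the edges at $\oplus,\ominus$ survive every expansion step (follows because these edges are never touched by the expansions applied to internal vertices), and that the diffusive edges produced have the claimed labels, which holds because substitutions only introduce labelled $\dashed$ edges made of previously-constructed $\Sele_{2k_i}$ with $4\le 2k_i\le n-1$.

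The main obstacle, and the real heart of the argument, is twofold. First, one must guarantee that at every stage of the expansion there genuinely exists a redundant $T$-variable to substitute; this requires the careful globally standard / sequentially pre-deterministic invariants of Section \ref{sec pre}, which must be shown to be preserved by both local and global expansions. Second, the candidate $\Sele_n$ assembled above must be verified to satisfy the self-energy axioms of Definition \ref{collection elements} --- in particular the translation-symmetry and reflection-symmetry conditions \eqref{two_properties0}, the pointwise decay \eqref{4th_property0}, and, most delicately, the sum-zero property \eqref{3rd_property0}. Symmetry is inherited from the symmetric variance profile and the symmetric treatment of $+/-$ charges; pointwise decay follows from Lemma \ref{lem redundantagain} applied to the chain of $\Theta$ and $\Sele_{k}$ factors contained in each graph together with the doubly connected bound of Lemma \ref{no dot}. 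The sum-zero property is the hard step: it must be extracted using the \emph{self-energy renormalization} mechanism identified in part I \cite{PartI_high}, namely that the algorithm groups together precisely those deterministic graphs whose total $x$-sum exhibits the cancellation producing the extra $\eta$ factor in \eqref{3rd_property0}. I expect the detailed verification of this cancellation --- ensuring that the global expansion strategy pairs diagrams correctly so that the sum-zero property holds for the newly produced $\Sele_n$ --- to be the principal technical obstacle, and the reason the elaborate pre-deterministic and globally standard notions are needed.
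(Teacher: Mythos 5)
Your toolkit is the right one (redundant vs.\ pivotal edges, pre-deterministic orders, the SPD and globally standard invariants, the routing of outputs into the $\PITn/\AITn/\QITn/\Err'$ buckets), but the induction you propose is built on the wrong base object, and this is exactly the trap the paper flags. You want to take the $(n-1)$-th order $\incomp$ and further expand its remainder graphs (the order-$n$ graphs in $\AIT^{(>n-1)}$) to reach order $n$. This is the ``naturally inductive'' strategy discussed in Section \ref{sec_idea}, and it fails: the graphs in $\AIT^{(>n-1)}$ are only guaranteed to be SPD (property (ii) of Definition \ref{def genuni2}), \emph{not} globally standard, because they can arise from substituting a $t$-variable by a graph in $\AT^{(>n-2)}$ (case (3) of Lemma \ref{lem globalgood} only yields SPD). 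An SPD graph whose MIS is locally standard, deterministic and strongly isolated has no redundant blue solid edge and is non-expandable; your step (ii), which relies on the globally standard invariant to ``always locate a redundant $T$-variable,'' therefore has nothing to stand on when the input is such a remainder graph. The paper's actual construction avoids this by \emph{restarting from the second order $T$-expansion} \eqref{seconduniversal} for every $n$ and rerunning the entire local/global expansion (Strategy \ref{strat_global}), using the $(n-1)$-th order $T$-expansion \eqref{replaceT} as the substitution input and the stopping rules (S1)--(S4); globally standardness is then an invariant from the start (Lemmas \ref{lem localgood3} and \ref{lem globalgood}), and non-stopped outputs have deterministic maximal subgraphs (Lemma \ref{lemm expansionstrat_nonstop}). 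The price is having to show that $\Sele_l$, $\PITk$, $\QITk$ for $l,k\le n-1$ agree with the lower-order constructions, which the paper handles by the parallel-expansion argument of Section \ref{sec expandlvl4} (running the $(n-1)$-th and $n$-th order expansions in lockstep, differing only in the cutoff order). Your proposal contains neither the restart nor the consistency argument.

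Two smaller points. First, the substitution input is always the full $(n-1)$-th order $T$-expansion, not some ``$\lceil(n-1)/\mathrm{ord}\rceil$-th order $\incomp$''; mixing orders would break the consistency of the self-energies. Second, you fold the verification of the sum-zero property \eqref{3rd_property0} for the new $\Sele_n$ into the proof of Theorem \ref{incomplete Texp}; in the paper this is deliberately kept out of the construction — Theorem \ref{incomplete Texp} only produces $\Sele_n$ as a sum of deterministic doubly connected graphs of scaling order $n$, and the properties \eqref{two_properties0}--\eqref{3rd_property0} are supplied separately by Lemma \ref{cancellation property} (quoted from \cite{PartI_high}) and are needed only to pass from the $T$-equation to the $T$-expansion via Corollary \ref{lem completeTexp}.
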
 

Solving the $n$-th order $T$-equation, we can get an $n$-th order $T$-expansion. 

\begin{corollary}[Construction of the $T$-expansion]  \label{lem completeTexp} 
Assume that Theorem \ref{incomplete Texp} holds. Then we can construct a sequence of $n$-th order $T$-expansions satisfying Definition \ref{defn genuni} for all $2\le n\le M-1$. If we further assume that $\Sele_M$ satisfies \eqref{two_properties0}--\eqref{3rd_property0}, then we can also construct the $M$-th order $T$-expansion. 
\end{corollary}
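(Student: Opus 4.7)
The plan is to algebraically invert the $n$-th order $\incomp$ \eqref{mlevelT incomplete} and reorganize the resulting expression to match the $n$-th order $T$-expansion \eqref{mlevelTgdef}. Viewing \eqref{mlevelT incomplete} as a linear equation $(1-\Theta\wtSdeltan)T^{\fb_1\fb_2} = v + \mathcal{R}^{\fb_1\fb_2}$ in the vector $T^{\fb_1\fb_2}_\fa := T_{\fa,\fb_1\fb_2}$, with $v_\fa := m\Theta_{\fa\fb_1}\overline G_{\fb_1\fb_2}$ and $\mathcal{R}^{\fb_1\fb_2} := \PITn+\AITn+\QITn+\Err'_{n,D}$, we multiply on the left by $(1-\Theta\wtSdeltan)^{-1}$ and invoke the identity $(1-\Theta\wtSdeltan)^{-1}\Theta = \Theta^{(n)}$ implicit in \eqref{theta_renormal} (valid since $\Theta$ and the $\Sele_l$ are translation invariant by \eqref{two_properties0} and hence commute as convolution operators on the torus) to get
\[
T_{\fa,\fb_1\fb_2} = m\Theta^{(n)}_{\fa\fb_1}\overline G_{\fb_1\fb_2} + \bigl[(1-\Theta\wtSdeltan)^{-1}\mathcal{R}^{\fb_1\fb_2}\bigr]_\fa.
\]

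Next, we convert this closed-form expression to graphical form by iterating the $\incomp$ into itself $L$ times to obtain $T = \sum_{j=0}^{L}(\Theta\wtSdeltan)^j(v+\mathcal{R}^{\fb_1\fb_2}) + (\Theta\wtSdeltan)^{L+1}T$, where $L = \OO(D)$ is picked so that every chain in the remainder has scaling order exceeding $D$ (each factor of $\Theta\wtSdeltan$ raises the order by at least $2$, using $\Sele_{l'}=0$ for $l'\le 3$ by \eqref{trivial conv}). Substituting $\wtSdeltan = \sum_{l'=4}^n\Sele_{l'}$ and regrouping the resulting chains $\Theta\Sele_{k_1}\Theta\cdots\Theta\Sele_{k_j}\Theta$ by total scaling order $K = \sum_i k_i - 2(j-1)$ (as in Lemma \ref{lem redundantagain}), the contribution arising from $v$ splits into three parts: chains with $K\le n$ assemble precisely into $m(\Theta\Sdelta^{(n)}\Theta)_{\fa\fb_1}\overline G_{\fb_1\fb_2}$ via \eqref{chain S2k}, chains with $K>n$ are deterministic graphs placed in $\ATn$, and the remainder $(\Theta\wtSdeltan)^{L+1}T$ is absorbed into $\Err_{n,D}$. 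The analogous partial expansion applied to $\mathcal{R}^{\fb_1\fb_2}$ merely prepends chains $(\Theta\wtSdeltan)^j$ to each of its four components, so $\PITn,\AITn,\QITn,\Err'_{n,D}$ redistribute into $\PTn,\ATn,\QTn,\Err_{n,D}$ with their recollision/$Q$-label/error classifications inherited intact, since every $Q$-label and every $\oplus/\ominus$-dotted edge lives in the original graph and is untouched by prepending deterministic chains.

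Verifying Items (1)--(10) of Definition \ref{defn genuni} is then routine bookkeeping. The graph counts are $\OO(C_D)$ because chain length is at most $L+1$; uniqueness of the $\dashed$ edge at $\otimes$ (Item 9) is preserved because every prepended chain attaches at $\otimes$ through a single $\Theta$; the $\oplus,\ominus$-edge conditions pass through trivially; and the decompositions \eqref{decomposeP}, \eqref{decomposeQ} follow directly from the corresponding \eqref{decomposePIT}, \eqref{decomposeQIT}. The most delicate property, doubly connectedness (Item 10, Definition \ref{def 2net}), also survives because each $\Sele_{l'}$ is itself doubly connected by Definition \ref{def incompgenuni}(5); prepending $\Theta\Sele_{l'}\Theta$ to a doubly connected graph simply extends both its blue and its black spanning trees by the molecules introduced within $\Sele_{l'}$ together with the two flanking $\Theta$ edges.

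The principal technical subtlety concerns the hypotheses on $\Sele_k$. Lemma \ref{lem redundantagain}---and hence the bound \eqref{intro_redagain} on $(\Theta\Sdeltak\Theta)_{\fa\fb_1}$---requires $\Sele_k$ to satisfy \eqref{two_properties0}--\eqref{3rd_property0} for every $4\le k\le n$ appearing in the $T$-expansion. For $n\le M-1$ this is automatic, since the needed properties of $\Sele_k$ for $k\le n\le M-1$ are established within the inductive construction of the higher-order $\incomp$s up through order $M$ (cf.\ the remark following Definition \ref{def incompgenuni}); for $n=M$ the only new object is $\Sele_M$, whose corresponding properties are supplied exactly by the extra hypothesis of the corollary, so the same inversion argument goes through verbatim. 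The hardest combinatorial step will be checking that the Neumann chains reassemble into \eqref{chain S2k} with the correct multiplicities, and that the overflow chains (those with $K>n$ from $v$ together with the $(\Theta\wtSdeltan)^j\AITn$ contributions) are consistently classified and bounded as part of $\ATn$.
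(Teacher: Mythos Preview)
Your overall approach---solving the $\incomp$ via a truncated Neumann series for $(1-\Theta\wtSdeltan)^{-1}$ and redistributing the resulting chains---is exactly what the paper does in Section~\ref{sec lastglobal}. However, there is a genuine gap in your justification of doubly connectedness (property (x) of Definition~\ref{defn genuni}). You claim that prepending $\Theta\Sele_{l'}\Theta$ to a doubly connected graph ``extends both its blue and its black spanning trees by the molecules introduced within $\Sele_{l'}$ together with the two flanking $\Theta$ edges.'' This fails: one of those flanking $\Theta$ edges connects $\Sele_{l'}$ to the \emph{external} atom $\fa$, and by Definition~\ref{def 2net} the two spanning trees are required only on the \emph{internal} molecules, so an edge to an external atom cannot serve in either net. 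After removing external molecules, the internal molecules of $\Sele_{l'}$ are joined to those of the original graph by a single $\Theta$ edge, which cannot lie in two disjoint nets simultaneously. The obstruction compounds for longer chains $(\Theta\wtSdeltan)^j$: consecutive $\Sele_{k_i}$ blocks are separated by single $\Theta$ edges, so if their molecules are treated as explicit internal molecules, the prepended graph is \emph{never} doubly connected for $j\ge 1$.

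The paper avoids this by never exposing the molecules of the chain as internal molecules at all. It expands each $(\Theta\wtSdeltan)^k\Theta$ into a sum of \emph{labelled $\dashed$ edges} (Definition~\ref{def_graph comp}), which by convention are drawn as single edges ``without any internal structure''; the tail $\Theta^{(n)}_{err}$ is likewise declared a new diffusive-edge type. Under this convention, prepending a chain merely replaces the edge $\Theta_{\fa x}$ by a single labelled diffusive edge from $\fa$ to $x$, introducing no new internal molecules, so doubly connectedness is inherited trivially from the original $\PITn,\AITn,\QITn,\Err'_{n,D}$ graphs. This packaging is in any case mandated by property (ii) of Definition~\ref{defn genuni}, which requires every diffusive edge in the $T$-expansion to be a (possibly labelled) $\dashed$ edge; your proposal, by treating the $\Sele_{l'}$ molecules explicitly, would also violate that property.
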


%In general, it is almost impossible to define the $T$-expansions explicitly (for $n=6$, there are already more than $10^4$ many terms). Instead, we adopt a different strategy, that is, we define a programmatic way to generate the $T$-expansions by specifying the elementary expansions and some rules on the sequence of expansions. For each $n$, the $n$-th order $T$-expansion obtained in this way may not be unique depending on the choices of the sequence of expansions, but the existence is much more important to us. 

In the proof of Theorem \ref{incomplete Texp}, we will construct a sequence of $\incomp$s inductively. Suppose we have constructed the $\incomp$ up to order $n$. Then by Corollary \ref{lem completeTexp}, we have an $(n-1)$-th order $T$-expansion. Using this $T$-expansion, we obtain from Theorem \ref{thm ptree} that the local law \eqref{locallaw1} holds when $L$ satisfies the condition (with $n$ in \eqref{Lcondition1} replaced by $n-1$)
\be\nonumber %\label{Lcondition0}  
{L^2}/{W^2}  \le W^{(n-2)d/2-c_0} .
\ee
Using the local law \eqref{locallaw1}, it was proved in \cite{PartI_high} that $\Sele_n$ constructed in the $n$-th order $\incomp$ in Theorem \ref{incomplete Texp} is actually an $n$-th order $\self$.
  
\begin{lemma}[Lemma 5.8 of \cite{PartI_high}]\label{cancellation property}
%Under the assumptions of Theorem \ref{incomplete Texp}, 
The deterministic matrix $\Sele_n$ constructed in the $n$-th order $\incomp$ in Theorem \ref{incomplete Texp} satisfies the properties \eqref{two_properties0}--\eqref{3rd_property0} with $l=n$.
\end{lemma}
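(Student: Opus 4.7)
The plan is to verify the three requirements of Definition \ref{collection elements} in turn, exploiting the graph structure produced by Theorem \ref{incomplete Texp} for the first two and a comparison-with-local-law argument for the third. Throughout I would proceed by induction on $n$, assuming the properties \eqref{two_properties0}--\eqref{3rd_property0} have already been established for every $\Sele_k$ with $4\le k\le n-1$; this hypothesis is what lets me invoke Lemma \ref{lem redundantagain} and Theorem \ref{thm ptree} for the $(n-1)$-th order $T$-expansion, and in turn control all the auxiliary terms appearing in the $n$-th order $\incomp$.

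First I would dispose of the translation and reflection symmetries in \eqref{two_properties0}. By Definition \ref{def incompgenuni}(2), $\Sele_n$ is a sum of finitely many deterministic graphs built from $m$, $S$, $S^\pm$ and $\Theta$, all of which are translation invariant on the torus $\Z_L^d$ and symmetric under $x\mapsto -x$ (via \eqref{sxyf} and Assumption \ref{var profile}). Every graph operation used in the construction respects these two symmetries, so both properties in \eqref{two_properties0} follow by inspection of the construction.

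Next, for the pointwise bound \eqref{4th_property0}, I would use that every graph contributing to $\Sele_n$ is deterministic, doubly connected in the sense of Definition \ref{def 2net}, and of scaling order exactly $n$, with external atoms $0$ and $x$. On a doubly connected deterministic graph with two external atoms the two spanning nets of $\dashed$ edges (together with the labelled $\dashed$ edges controlled by Lemma \ref{lem redundantagain}) deliver a product of two copies of the diffusive decay $B_{0x}$, while the remaining scaling order slack of $n-4$ translates into an extra small factor $W^{-(n-4)d/2+\tau}$. This is the graphical bookkeeping already used in Lemma \ref{lem redundantagain}, and it yields \eqref{4th_property0} directly.

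The genuine obstacle is the sum zero property \eqref{3rd_property0}, since a naive pointwise summation of \eqref{4th_property0} only gives $W^{-(n-4)d/2+\tau}$, and one must extract the additional factor $\eta W^{-d}$ by cancellation. My plan here is an inductive comparison with the local law. Apply the $n$-th order $\incomp$ \eqref{mlevelT incomplete} with $\fb_1=\fb_2=\fb$, take $\E$, and sum over $\fa$; the left-hand side $\sum_\fa\E T_{\fa,\fb\fb}$ is computable via the Ward identity $\sum_\fa T_{\fa,\fb\fb}=|m|^2\eta^{-1}\im G_{\fb\fb}$, and using the local law from Theorem \ref{thm ptree} (valid under the inductive condition $L^2/W^2\le W^{(n-2)d/2-c_0}$) it equals $|m|^2\eta^{-1}\im m$ up to an error $\eta^{-1}\cdot\OO_\prec(W^{-(n-2)d/2})$. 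On the right-hand side, the leading term $m\sum_\fa\Theta_{\fa\fb}\overline G_{\fb\fb}$ produces exactly $|m|^2\eta^{-1}\im m$ modulo a controllable error, while the $\PITn$, $\AITn$, $\QITn$ and $\Err'_{n,D}$ contributions are all bounded by $\eta^{-1}\cdot \OO_\prec(W^{-(n-2)d/2})$ using the doubly connected property and stochastic domination for $Q$-graphs. Matching the two sides then forces
\[
\Big|\sum_a(\Sele_n)_{0a}\Big|\cdot\eta^{-1}\;\lesssim\; \eta^{-1}W^{-(n-2)d/2+\tau},
\]
which is \eqref{3rd_property0}. The hard part, and what I expect to be the most delicate point, is extracting the precise factor of $\eta$ on the right: it requires showing that every auxiliary term in \eqref{mlevelT incomplete} either cancels against a matching term in the comparison, or contributes at the size $\eta\cdot W^{-(n-2)d/2+\tau}$ rather than the naive $W^{-(n-2)d/2+\tau}$. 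This is precisely the \renormal{} mechanism emphasized in Part I, and it is the place where the careful doubly connected construction of $\Sele_n$, together with the sum zero property for the lower-order self-energies in the inductive hypothesis, is essential.
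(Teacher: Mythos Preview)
This lemma is not proved in the present paper; it is quoted as Lemma~5.8 of Part~I \cite{PartI_high}, and the only information given here is that the proof uses the local law \eqref{locallaw1} from Theorem~\ref{thm ptree}. So there is no in-paper proof to compare against, and your proposal is effectively a reconstruction of what the Part~I argument should look like. Your handling of \eqref{two_properties0} and \eqref{4th_property0} is correct and matches what the paper's machinery provides: the symmetries are inherited from $S$, $S^\pm$, $\Theta$, and the pointwise decay is exactly Lemma~\ref{lem Rdouble} applied to the deterministic doubly connected graphs in $\Sele_n$.

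The gap is in the sum zero property \eqref{3rd_property0}. After summing \eqref{mlevelT incomplete} over $\fa$ and taking expectation, the $\Sele_n$ contribution is actually of order $\eta^{-2}\big|\sum_a(\Sele_n)_{0a}\big|$ (one factor $\eta^{-1}$ from $\sum_\fa\Theta_{\fa\cdot}$ via \eqref{sumTheta}, another from $\sum_x\E T_{x,\fb\fb}$ via Ward), so matching does buy you the extra factor of $\eta$ that your displayed inequality omits. The real problem is the size of what you are matching against. The discrepancy between the Ward left side $|m|^2\eta^{-1}\E\im G_{\fb\fb}$ and the leading term $m\sum_\fa\Theta_{\fa\fb}\E\overline G_{\fb\fb}$ is of order $\eta^{-1}|\E(G_{\fb\fb}-m)|$, and the local law only gives $|\E(G_{\fb\fb}-m)|\prec W^{-d/2}$, not $W^{-(n-2)d/2}$. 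Likewise, the recollision graphs in $\PITn$ start at scaling order $3$, so after the $\fa$-sum they contribute $\eta^{-1}W^{-d/2}$, again too large. Your comparison therefore yields only $\big|\sum_a(\Sele_n)_{0a}\big|\prec\eta W^{-d/2}$, which is \eqref{3rd_property0} only for $n\le 3$. The missing idea is that these apparently large lower-order pieces are not generic errors: they must be shown to reorganize into lower-order self-energy contributions whose row sums are already small by the inductive hypothesis, or to cancel exactly against matching pieces elsewhere in the identity. Making this cancellation precise is the substance of the $\renormal$ argument in Part~I, and it cannot be replaced by a black-box appeal to the local law on a single diagonal entry.
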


After showing that $ \Sele_{n} $ is a $\self$ satisfying Definition \ref{collection elements}, we can use Corollary \ref{lem completeTexp}
again to get the $n$-th order $T$-expansion. Then we use the $n$-th order $T$-expansion to construct the $(n+1)$-th order $\incomp$ and prove the properties \eqref{two_properties0}--\eqref{3rd_property0} for $\Sele_{n+1}$. Continuing this process, we can construct the $\incomp$ up to any fixed order. One contribution of this paper is that we develop a sophisticated strategy to construct the $n$-th order $\incomp$ by using the $(n-1)$-th order $T$-expansion. The reader can refer to Section \ref{sec_idea} for a discussion of some key ideas in the proof. 

%further expand the variable $T_{x,\fb_1\fb_2}$ in the second term on the right-hand side of \eqref{mlevelT incomplete} to obtain the $n$-th order  $T$-expansion.

%\iffalse
%The rest of this paper is devoted to proving Theorem \ref{thm ptree} and Theorem \ref{incomplete Texp}. In Section \ref{sec_basiclocal}, we introduce the basic graph operations that are used to expand the graphs. Then the $\incomp$ and $T$-expansion are constructed by applying these operations in a carefully chosen order, which will be discussed in Section \ref{sec pre}. The proofs of Theorem \ref{incomplete Texp} and Corollary \ref{lem completeTexp} will be completed in Section \ref{sec global} by combining the tools developed in Sections \ref{sec_basiclocal} and \ref{sec pre}. Finally, the proof of Theorem \ref{thm ptree} will be given in Sections \ref{sec_pflocal}--\ref{sec ini_bound}. 
%\fi

\subsection{Main ideas}\label{sec_idea}

In this subsection, we briefly discuss some key ideas for the proof of Theorem \ref{incomplete Texp} in Sections \ref{sec_basiclocal}--\ref{sec global}. Similar ideas will also be used in the proof of Theorem \ref{thm ptree} in Sections \ref{sec_pflocal}--\ref{sec ini_bound}. We remark that the following discussions are only for heuristic purposes, and they are not completely rigorous regarding some technical details. %The full details will be presented in Sections \ref{sec pre} and \ref{sec global}.

\subsubsection{Basic strategy} %{Inductive construction of $\incomp$s}

As discussed above, we will construct a sequence of $\incomp$s \eqref{mlevelT incomplete} inductively. We start with the second order $T$-expansion in \eqref{seconduniversal}. As an induction hypothesis, suppose we have obtained the $(n -1)$-th order $T$-expansion.
We then construct the $n$-th order $\incomp$ by repeating the following two steps: \emph{local expansions} and \emph{global expansions}. Roughly speaking, given \eqref{seconduniversal}, we first apply local expansions in Section \ref{sec localexp} to expand \smash{$\AT^{(>2)}$} into a sum of \emph{locally standard} graphs in which every blue solid edge ($G$ entry) is paired with a red solid edge ($\overline G$ entry) to form a $T$-variable.  
%In local expansions, we will apply the operations defined in Section \ref{sec localexp} to expand $T_{\fa,\fb_1\fb_2}$ into a sum of \emph{locally standard} graphs in which every $G$ entry is paired with a $\overline G$ entry to form a $T$-variable.  
Then in every locally standard graph, we choose a $T$-variable and substitute it with the $(n-1)$-th order $T$-expansion. This process is called a global expansion. If a resulting graph can be included into one of the terms on the right-hand side of \eqref{mlevelT incomplete}, then we will not expand it anymore. Otherwise, we will further expand it using local and global expansions. Our basic strategy is to repeat the above process until we get \eqref{mlevelT incomplete}. The main difficulty with this strategy is to maintain the doubly connectedness of graphs throughout the expansions, which will be discussed in next few subsections.   
%\hty{ one should add that $T$ eq removes all one particle reducible graphs and thus we now discuss doubly connected graphs.} 

%More details about the local and global expansions can be found in Section \ref{sec_basiclocal}. 

%\hty{ replace the first sentence by: 
%
%The above expansion strategy consists of inserting $(n -1)$-th order $T$-expansion into the second order $T$-equation \eqref{seconduniversal}
%and then continue with local expansions to obtain an $n$-th order $T$-equation. This is different from the following strategy: 
%
%question: after the local expansion, do we continue to do "global expansion", i.e.,  inserting $(n -1)$-th order $T$-expansion into $T$ variables? 
%
%or do we do insertion one by one and sometime we do local expansion and then also insertion again? 
%} 
%We emphasize that the above expansion strategy is different from the following strategy: 

The above expansion strategy involves inserting the $(n-1)$-th order $T$-expansion into the second order $T$-equation and then repeating local and global expansions to obtain the $n$-th order $T$-equation. This is different from the following ``naturally inductive"  strategy:  given an $(n-1)$-th order $\incomp$, we further expand the $n$-th order graphs in \smash{$\AIT^{(>n-1)}$} into graphs satisfying Definitions \ref{def incompgenuni} plus higher order graphs of scaling order $>n$. However, this procedure fails because there may be some non-expandable graphs in \smash{$\AIT^{(>n-1)}$}, i.e. graphs that cannot be expanded without breaking the doubly connected property. % (see item (S4) of Section \ref{sec expandlvl40}). % that only contain pivotal plus $G$ edges. % connected with \emph{strongly isolated} subgraphs. 
%These graphs cannot be included into the second term on the right-hand side of \eqref{mlevelTgdef incomp}. 
The existence of such graphs does not indicate that the concept of $\incomp$ fails, because there may be other graphs that cancel this graph up to a small error (although it is almost impossible to know what these graphs are).  
%In fact, in Section \ref{sec ini_bound}, we will expand the pivotal edges in some graphs. In this case, although the new graphs may not satisfy the doubly connected property, they can be bounded properly as long as $L$ satisfies the constraint \eqref{Lcondition0}. However, the $\incomp$s and $\selfs$ obtained in this way depend explicitly on $L$, and cannot be applied to random band matrices with $L$ violating the constraint.
% because there may be other graphs in the expansion that can cancel this graph up to a small error (although it is almost impossible to know what these graphs are using the above expansion strategy). 
On the other hand, our global expansion strategy explicitly maintains the $\selfs$ (especially their delicate cancellations given by the sum zero property \eqref{3rd_property0}) by replacing each $T$-variable with a lower order $T$-expansion. In particular, this allows us to plug into terms \smash{$ \left(\Theta \Sdeltak \Theta\right)_{xy} $} as a whole, which are well-behaved by \eqref{intro_redagain}.

%\iffalse
%\hty{maybe we remove the following sentence.} 
%
%Intuitively, one can compare our global expansion strategy to self-similar constructions of fractals: each $T$-variable can be expanded into graphs of $T$-variables only (which are obtained through local expansions), and each $T$-variable in it can be further expanded into self-similar graphs (which is attained through the substitution procedure in global expansions).
%\fi

\subsubsection{Redundant edges and pre-deterministic property}

%Since 
Local expansions preserve the doubly connected property (cf. Lemma \ref{lem localgood}), and they were dealt with in \cite{PartI_high}.
%are relatively easier to deal with and have been discussed in \cite{PartI_high}. \hty{ change to      they were dealt with in \cite{PartI_high}. }
On the other hand, global expansions may break doubly connected structures of our graphs, and hence are much harder to deal with. We will see that a global expansion will preserve the doubly connected property if and only if we expand a \emph{redundant} blue solid edge. (Here an expansion of a blue solid edge refers to an expansion of a $T$-variable containing this edge.) As will be defined in Definition \ref{defn pivotal}, a blue solid edge is redundant if after removing it, the remaining graph is still doubly connected. A non-redundant blue solid edge is called \emph{pivotal}. To preserve the doubly connected structure of every graph, we will follow the strict rule that we  only \emph{expand a redundant blue solid edge in a global expansion}. 
%\hty{ we  only  expand a redundant blue .....} 

Under the above rule, if we expand blue solid edges in an arbitrary order, then we may get graphs that are locally standard and only contain pivotal blue solid edges, namely, \emph{non-expandable} graphs. These graphs may not be included into one of the six terms on the right-hand side of \eqref{mlevelT incomplete}. % if they are not recollision, $Q$, higher order or error graphs. 
To avoid such ``bad" graphs, we need to impose structural properties that are stronger than the doubly connected property. 

The leading term of a global expansion corresponds to replacing a blue solid edge with a $\dashed$ edge (i.e. the first term on the right-hand side of \eqref{mlevelTgdef}). Due to this observation, we see that in order to expand a graph, say $\cal G$, into deterministic graphs (together with some recollision, $Q$, higher order or error graphs) using global expansions under the above rule about redundant edges, we need the following property. All the blue solid edges in $\cal G$ can be replaced by $\dashed$ edges one by one according to an order, such that at each step we are dealing with a redundant edge. We call this property a \emph{pre-deterministic property}  (cf. Definition \ref{def PDG}), and the corresponding order of blue solid edges a \emph{pre-deterministic order}. The term ``pre-deterministic" refers to that pre-deterministic graphs will finally contribute to deterministic graphs (i.e. graphs in the second term on the right-hand side of \eqref{mlevelT incomplete}) after the prescribed expansion procedure. Since there is at least one redundant edge in a pre-deterministic graph, that is, the first edge in the pre-deterministic order, a pre-deterministic graph is always expandable unless it is deterministic.  

We remark that the pre-deterministic property is defined by only looking at leading terms of global expansions (i.e. replacing a $T$-variable with a graph in the first two terms on the right-hand side of \eqref{mlevelTgdef} with $n$ replaced by $n-1$). %\hty{ make more precise what are leading terms}. 
If we consider subleading graphs obtained by inserting recollision, $Q$ or higher order graphs, the pre-deterministic property may be broken in a global expansion. These subleading graphs actually satisfy a slightly weaker sequentially pre-deterministic property, which we introduce now.

\subsubsection{Isolated subgraphs and sequentially pre-deterministic property}

In a doubly connected molecular graph (by convention, blue and black nets are selected), every subset of molecules are connected to other molecules through at least an edge in the blue net and an edge in the black net. We define an isolated subgraph to be a subgraph induced on a subset of molecules that are connected to other molecules exactly by two edges in the molecular graph with all red solid edges removed (because red solid edges are not used in the definition of doubly connected property). It is easy to see that if a graph contains a proper  isolated subgraph, then this graph cannot be pre-deterministic
%\hty{ there is a convention question whether a graph without redundant graph is pre-deterministic} 
 because of the pivotal external blue solid edge connected with this isolated subgraph. In general, global expansions may create isolated subgraphs, and hence break the pre-deterministic property. Hence we need to introduce a weaker property, called the \emph{sequentially pre-deterministic} (SPD) property. 

A SPD graph is a doubly connected graph satisfying Definition \ref{def seqPDG} below. Roughly speaking, the term ``sequentially" refers to that a SPD graph contains a \emph{sequence of isolated subgraphs}, say $\Iso_k \subset \Iso_{k-1} \subset \cdots \subset \Iso_1$, as shown in the following figure:
\be\label{deter_comps}
\parbox[c]{0.65\linewidth}{\includegraphics[width=10cm]{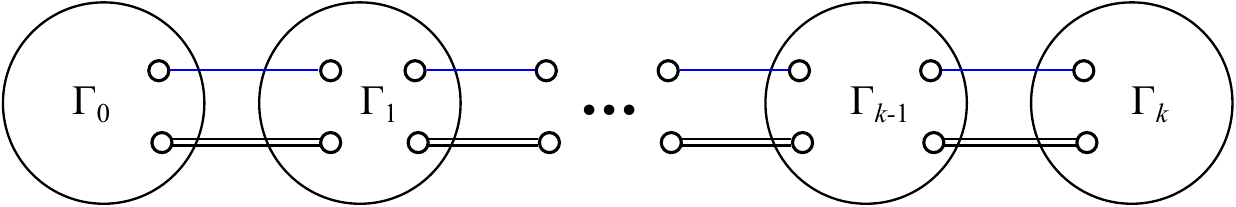}} 
\ee
Here we draw a molecular graph without red solid edges, and inside the black circles are some subgraphs. $\Gamma_k$ is the minimal isolated subgraph (MIS) $\Iso_k$, $\Gamma_k \cup \Gamma_{k-1}$ plus the two edges between them form the isolated subgraph $\Iso_{k-1}$, and $\Gamma_k \cup \Gamma_{k-1} \cup \cdots \cup \Gamma_{k-i}$ plus the edges between these components form the isolated subgraph $\Iso_{k-i}$. $\Gamma_0$ is a subgraph containing internal molecules that are connected with external edges, and we do not treat it as an isolated subgraph. The term ``pre-deterministic" refers to that the MIS of a SPD graph is pre-deterministic.   

As a key step of our proof, we will show that the SPD property will be preserved both in local expansions (cf. Lemma \ref{lem localgood2}) and global expansions (cf. Lemma \ref{lem globalgood}) if we always expand the \emph{first blue solid edge} in a pre-deterministic order of the MIS of every SPD graph. Hence continuing expansions, we can finally expand $T_{\fa, \fb_1\fb_2}$ into a sum of SPD graphs, which, by definition, imply the doubly property of them. However, unlike pre-deterministic graphs, a SPD graph is not always expandable.    
For example, if \eqref{deter_comps} corresponds to a locally standard atomic graph and all its components $\Gamma_i$ do not contain any blue solid edges, then the atomic graph only contains pivotal blue solid edges and cannot be expanded without breaking the doubly connected property.

\subsubsection{Globally standard property} % \hty{ I am confused which graphs are globally standard}}

Our goal is thus to find a graphical property as a compromise between the pre-deterministic and SPD properties, so that it is preserved in both local and global expansions and also guarantees that we will not get non-expandable graphs. We will identify such a property in Definition \ref{defn gs}, called the \emph{globally standard property}. Roughly speaking, a globally standard graph is a SPD graph such that its MIS is connected with at least two external red solid edges.  First, using the fact that the SPD property is preserved by local and global expansions, % and keeping track of the number of external red solid edges of isolated subgraphs, 
it is not hard to show that the globally standard property is also preserved by both local and global expansions (as long as we always expand the first blue solid edge in a pre-deterministic order of the MIS). Second, a globally standard graph is always expandable. To see why, we consider the setting \eqref{deter_comps}. If $\Gamma_k$ contains at least one blue solid edge, then it contains a redundant edge due to its pre-deterministic property. Otherwise, $\Gamma_k$ is connected with at least two external red solid edges and at most one blue solid edge. 
This shows that we cannot have a perfect pairing between blue and red solid edges, so the graph is not locally standard. Then we can apply local expansions to the graph. 

%One novel observation of this paper is that the graphs in our expansions actually satisfy a \emph{globally standard property} (cf. Definition \ref{defn gs}), which is strictly stronger than the doubly connected property. Moreover, globally standard graph has the following three key features. First, the globally standard property is also preserved by local expansions. Second, in a locally and globally standard graph, there is a canonical choice of the $T$-variable so that after replacing it with the $(n-1)$-th order $T$-expansion, the resulting graphs are still globally standard. Third, a globally standard graph can be always expanded without breaking the doubly connected property, unless it is already deterministic (except for some external edges connected with $y$ or $y'$). These graphs will give rise to the first term on the right-hand side of \eqref{T-exp-intro}. With the above three features, 

Now our expansion strategy roughly proceeds as follows. We first apply local expansions to \eqref{seconduniversal} to get locally and globally standard graphs. Then we expand the first blue solid edge in a pre-deterministic order of the MIS in every graph using 
the $(n-1)$-th order $T$-expansion. Next we apply local expansions to the resulting graphs and get some new locally and globally standard graphs. In every such graph, we further expand the first blue solid edge in a pre-deterministic order of the MIS. Repeating this process for sufficiently many times, we finally obtain a linear combination of graphs that can be written into the form \eqref{mlevelT incomplete}. Throughout the expansion process, the doubly connected property is preserved, so all the graphs in \eqref{mlevelT incomplete} are also doubly connected.

\section{Basic graph operations}\label{sec_basiclocal}
  
A graph operation $\cal O[\cal G]$ on a graph $\cal G$ is a linear combination of new graphs such that the graph value of  $\cal G$ 
is \emph{unchanged}, i.e. $\llbracket \cal O[\cal G]\rrbracket=\llbracket \cal G\rrbracket$. All graph operations are linear, that is,
\be\label{linear graphO} \cal O\Big[\sum_i c_i  \cal G_i\Big] =\sum_i c_i \cal O\left[ \cal G_i\right].
\ee
The graphs operations in Sections \ref{sec_dotted}--\ref{subsec global} have been defined in \cite[Section 3]{PartI_high}, while the $Q$-expansions in Section \ref{sec defnQ} are new. 
  
\subsection{Dotted edges operations}\label{sec_dotted}

Recall that a dotted edge between atoms $\al$ and $\beta$ represents a $\delta_{\al\beta}$ factor. We will identify internal atoms  connected by dotted edges, but we will {\it not} identify an external and an internal atom due to their different roles in graphs. We define the following simple merging operation regarding dotted edges. 
\begin{definition}[Merging operation]\label{def merge}
	Given a graph $\cal G$ that contains dotted edges between different atoms, we define an operator $\cal O_{merge}$ in the following way: $\cal O_{merge}[\cal G]$ is a graph obtained by merging every pair of internal atoms, say $\al$ and $\beta$, that are connected by a path of dotted edges into a single internal atom, say $\gamma$. Moreover, the weights and edges attached to $\al$ and $\beta$ %in $\cal G$ 
	are now attached to the atom $\gamma$ 
	in $\cal O_{merge}[\cal G]$. In particular, the $G$ edges between $\al$ and $\beta$ become weights on $\gamma$, and the waved and $\dashed$ edges between $\al$ and $\beta$ become self-loops on $\gamma$. 
\end{definition}
%\begin{remark}
%It is easy to see that the graph operator $\cal O_{merge}$ is an identity in the sense of graph values:
%$ \llbracket \cal O_{merge}[\cal G]\rrbracket=\llbracket \cal G\rrbracket$.
%We  stress  that a path of dotted edges can pass through an external atom, but we will not identify the internal atoms on this path with the external atom.  \hty{We  stress  that  we will not identify  internal atoms which are connected by dotted edges only through an external atom.}
%\end{remark}
Given any regular graph, we can rewrite it into a linear combination of normal regular graphs using the following \emph{dotted edge partition} operation. 

%\iffalse
%More precisely, if $\cal G$ does not satisfy the {off-$G$-property}, then we first assign the following {\bf dotted-line partition} of all the pairs of atoms that are connected by a $G$ edge. In other words, given two atoms $\al$ and $\beta$, if there are $G$ edges between them, then we decompose 
%\be\label{assigndot}1= \mathbf 1_{\al\ne \beta} + \mathbf 1_{\al = \beta}. \ee
%If there is a $\times$-dotted line between $\al$ and $\beta$, then the edges between them are all off-diagonal; otherwise, all the edges become weights on $\al$ or $\beta$. In this way, we can write %a graph $\cal G$ as 
%$$\left\llbracket \cal G \right\rrbracket= \sum \left\llbracket \cal O_{merge}[{Dot} \cdot \cal G ]\right\rrbracket,$$
%where each ${Dot}$ is a product of some dotted and $\times$-dotted edges such that each ${Dot} \cdot \cal G$ satisfies the {off-$G$-property}. Here the sum is over all possible self-consistent ${Dot}$, where the {\bf self-consistency} means: if $x_i$ and $x_j$ are connected by a dotted edge in ${Dot}$, and $x_j$ and $x_l$ are also connected by a dotted edge in ${Dot}$, then $x_i$ and $x_l$ {\bf cannot} be connected by a $\times$-dotted edge in ${Dot}$. The operator $\cal O_{merge}$ merges all pairs of internal atoms in ${Dot} \cdot \cal G$ connected by a dotted edge.
%  \fi

\begin{definition}[Dotted edge partition] \label{dot-def}
	%{\bf Assigning dotted-line partition $\cal O_{dot}$:} 
	Given a regular graph $\cal G$, for any pair of atoms $\al$ and $\beta$, if there is at least one $G$ edge but no $\times$-dotted edge between them, then we write 
	$$1=\mathbf 1_{\al=\beta} + \mathbf 1_{\al \ne \beta} ;$$
	if there is a $\times$-dotted edge but no $G$ edge between them, then we write 
	$$\mathbf 1_{\al\ne \beta} =1 - \mathbf 1_{\al = \beta}.$$
	Expanding the product of all these sums on the right-hand sides, we can expand $\cal G$ as
	\be\label{odot}\cal O_{dot}[\cal G] := \sum \cal O_{merge}[{\Dot} \cdot \cal G],\ee
	where each ${\Dot}$ is a product of dotted and $\times$-dotted edges together with a $+$ or $-$ sign. %such that $\cal O_{merge}[{\Dot} \cdot \cal G]$ is a normal regular graph.
	In ${\Dot} \cdot \cal G$, if there is a $\times$-dotted edge between $\al$ and $\beta$, then the $G$ edges between them are off-diagonal; otherwise, the $G$ edges between them become weights after the merging operation. If $\Dot$ is ``inconsistent" (i.e., two atoms are connected by a  $\times$-dotted edge and a path of dotted edges), then we trivially have $\llbracket \Dot \cdot \cal G\rrbracket=0$. Thus we will drop all inconsistent graphs.  Finally, if the graph $\cal G$ is already normal, then $\cal O_{dot}$ acting on $\cal G$ is a null operation and we let $\cal O_{dot}[\cal G]:=\cal G$.
	
	\iffalse
	Without loss of generality, we only sum over consistent  ${\Dot}$s, that is, if $x_i$ and $x_j$ are connected by a dotted edge in ${\Dot}$, and $x_j$ and $x_k$ are connected by a dotted edge in ${\Dot}$, then $x_i$ and $x_k$  cannot be connected by a $\times$-dotted edge in ${\Dot}$. Note that if $\Dot$ is not self-consistent, then we trivially have $\llbracket \Dot \cdot \cal G\rrbracket=0$.
	\fi
	
	%\hty{ we only need to say
	
	%}

	% \medskip
	%\noindent  {\bf Removing $\times$-dotted lines $\cal O_{rDot}$:} 
	%  Given a regular graph $\cal G$, for any pair of atoms $\al$ and $\beta$, if there is a $\times$-dotted line $ \mathbf 1_{\al\ne \beta}$ and but {\bf no} $G$ edge between them, then we write 
	% $$\mathbf 1_{\al\ne \beta} =1 - \mathbf 1_{\al = \beta}.$$
	% Expanding the product of all these sums, we can expand $\cal G$ as
	% \be\label{ordot}\cal O_{rDot}(\cal G)= \sum \cal O_{merge}[\rDot \cdot \cal G],\ee
	% where each ${\rDot}$ is a self-consistent product of dotted and $\times$-dotted edges together with a $+$ or $-$ sign. Each ${\rDot} \cdot \cal G$ satisfies the following {\bf reverse off-$G$-property}: for any pair of atoms in the graph, there is a $\times$-dotted edge between them {\bf only if} there is at least one $G$ edge between them (in other words, if there is no $G$ edge between this pair of atoms, then they are not connected by $\times$-dotted edge). Finally, if the graph $\cal G$ already satisfies the reverse off-$G$-property, then $\cal O_{rDot}$ acting on $\cal G$ is a null operation and we let $\cal O_{rDot}[\cal G]:=\cal G$.
	
\end{definition}
%  From the above definition, it is easy to obtain the following result.
%It is easy to see that the expansions in \eqref{odot} and \eqref{ordot} are identities in the sense of graph values.
%\begin{lemma}\label{lem Odot}
From this definition, we see that $\cal O_{dot}[\cal G]$ of a regular graph $\cal G$ is a sum of normal regular graphs. %for any regular graph $\cal G$, 

\subsection{Local expansions} \label{sec localexp}

In this section, we define the basic local expansions. We call them ``local" because these expansions do not create new molecules, that is, every molecule in a new graph is obtained by adding new atoms to existing molecules or merging some molecules in the original graph.
%which, by \eqref{yixi}, means that all the new atoms created in the local expansions are all included into the existing molecules in the graphs. 
%In this section, to organize the graph expansions, we will define several graph operators. Given a graph $\cal G$ and a graph operator $\cal O$, $\cal O[\cal G]$ is a linear combination of new graphs such that the graph value is unchanged, i.e. $\llbracket \cal O[\cal G]\rrbracket=\llbracket \cal G\rrbracket$ (recall Definition \ref{ValG}). Hence a graph operator acting on a graph represents both a graph expansion, where an old graph is expanded into the sum of several new graphs, and a mathematical identity in the sense of graph values. All the graph operators are linear, that is, for a linear combination of graphs  $\sum_i c_i \cal G_i$, where $\{c_i\}$ is a sequence of deterministic coefficients and $\cal G_i$ is a sequence of graphs, we have
%\be\label{linear graphO} \cal O\left[\sum_i c_i  \cal G_i\right]=\sum_i c_i \cal O\left[ \cal G_i\right].
%\ee
%We remark that $\cal O_{merge}$ in Definition \ref{def merge} and $\cal O_{dot}$ in Definition \ref{dot-def} are graph operators in the above sense. 
It is easy to see that $\cal O_{dot}$ is a local expansion. We emphasize that local expansions can change the global structure, but they will preserve the doubly connected property as we will show in Lemma \ref{lem localgood}.

As introduced in \cite{PartI_high}, there are four basic local expansions, including one weight expansion and three edge expansions. We now define them one by one.  

\begin{definition} [Weight expansion operator]\label{Ow-def} \ %  \cob (need details, the notations can be changed to anything you like) \nc 
	Given a normal regular graph $\cal G$ containing an atom $x$, if there is no weight or light weight on $x$, then %$\cal O_{weight}(x)$ acting on $\cal G$ is a null operation and 
	we trivially define $\cal O_{weight}^{(x)}[\cal G]:=\cal G$. Otherwise,  we define ${\cal O}_{weight}^{(x)}[\cal G]$ in the following way. 
	
	\medskip
	
	\noindent{\bf (i) Removing regular weights:} Suppose there are regular $G_{xx}$ or $\overline G_{xx}$ weights on $x$. Then we rewrite %by
	$$G_{xx}=m+ (G_{xx}-m),\quad \text{or}\quad \overline G_{xx}=\overline m+ (\overline G_{xx}-\overline m).$$
	Expanding the product of all these sums, we can rewrite $\cal G$ into a linear combination of normal regular graphs containing only light weights on $x$. We denote this graph expansion operator as $ {\cal O}^{(x), 1}_{weight}$.  
	
	\medskip
	
	\noindent{\bf (ii) Expanding light weight:} If $\cal G$ has a light weight $G_{xx}-m$ of positive charge on $x$ and is of  the form $\cal G= (G_{xx}-m)f (G)$,  then we define the light weight expansion on $x$ by  
	\begin{align} 
		{\cal O}^{(x),2}_{weight} \left [  \cal G \right] &:=  m \sum_{  \al} s_{x\al}  (G_{xx}-m) (G_{\al\al}-m)f (G) +m \sum_{  \al,\beta}S^{+}_{x\al} s_{\al \beta}  (G_{\al\al}-m) (G_{\beta\beta}-m)f (G) \nonumber\\
		&-  m  \sum_{ \al} s_{x \al} G_{\al x}\partial_{ h_{ \al x}} f (G) -  m \sum_{ \al,\beta} S^{+}_{x\al}s_{\al \beta} G_{\beta \al}\partial_{ h_{ \beta\al}} f(G) + \cal Q_w ,\label{Owx}\end{align}
	where %the first two terms on the right hand side contain one more light weight and are of higher order, and 
	$\cal Q_w$ is a sum of $Q$-graphs,
	\begin{align} 
		\cal Q_w &:=  Q_x \left[(G_{xx}  -  m) f(G)\right] + \sum_{\al} Q_\al \left[  S^{+}_{x\al}(G_{\al\al}  -  m) f(G)\right] \nonumber\\
		& - m  Q_x\Big[ \sum_{\al} s_{x\al}  (G_{\al\al}-m)G_{xx} f(G) \Big]  - m \sum_\al Q_\al\Big[ \sum_{ \beta}S_{x\al}^{+}s_{\al\beta}(G_{\beta\beta}-m)G_{\al\al} f(G) \Big] \nonumber\\
		&+  m  Q_x  \Big[  \sum_\al s_{x \al}  G_{\al x}\partial_{ h_{\al x}} f(G)\Big]+ m \sum_\al Q_\al \Big[  \sum_{ \beta} S^{+}_{x\al}s_{\al \beta} G_{\beta \al}\partial_{ h_{\beta \al}} f(G)\Big]. \nonumber
	\end{align} 
	%For the reason of the above definition, we refer to Lemma \ref{ssl} below.
	%For a regular weight $G_{xx}$, we define
	%\be\label{Owx2} {\cal O}^{(2)}_{weight}(x)\left[  G_{xx} f_x(G) \right] =m f_x(G) + \wh{\cal O}_{weight}(x)\left[  (G_{xx}  -  m)f_x(G) \right] .\ee
	%if there is not light weight, then we pick any regular weight $G_{xx}$ and then apply \eqref{Owx2}. 
	%On the other hand, suppose the graph $\cal G_x$ has a light weight of negative charge on $x$, and it takes the form 
	If $\cal G= (\overline G_{xx}-\overline m)f (G)$, then we define 
	\be\label{Owx neg}
	{\cal O}^{(x),2}_{weight}\left [  \cal G \right] := \overline{ {\cal O}^{(x),2}_{weight} \left [  (G_{xx}  -  m) \overline{f(G)} \right ] } ,
	\ee
	where the right-hand side is defined using \eqref{Owx}. When there are more than one light weights on $x$, we pick any one of them and apply \eqref{Owx} or \eqref{Owx neg}. 
	
	Combining the above two graph operators, we define %a weight expansion of a normal regular graph $\cal G$ on atom $x$ by 
	\be\label{eq defOdot}
	{\cal O}_{weight}^{(x)}[ \cal G]:={\cal O}^{(x),1}_{weight} \circ\cal O_{dot} \circ  {\cal O}^{(x),2}_{weight} \circ {\cal O}^{(x),1}_{weight}  [\cal G], \ee
	where $\cal O_{dot}$ is applied to ensure that the resulting graphs after applying ${\cal O}_{weight}^{(x)}$ are normal regular. %The reason for the last  ${\cal O}^{(1)}_{weight}(x)$ operator will be explained in Remark \ref{remark remove_reg} below.
	
	%If a graph $\cal G=\sum_x \cal G_x$ %has weights on an internal atom $x$,
	%then we  define the weight expansion of $\cal G$ %on the atom $x$ 
	%as $\sum_x {\cal O}^{(x)}_{weight} [\cal G_x ] $. \hty{ this is problematic. it's not clear what $[\cal G_x$ means. } 
	%using the linearity \eqref{linear graphO}. 
	% where ${\cal O}^{(x)}_{weight} [\cal G_x ]= \cal G_x$ if 
	% \hty{it seems that there is no difference between internal or external case. Am I correct? } {\cob (FY: Yes.)}
\end{definition}

We introduce the following notion of degree of atoms: 
\be\label{degree}
{\deg}(x) := \# \{\text{solid edges connected with $x$}\} . 
\ee
In other words, we only count the plus and minus $G$ edges connected with an atom regarding its ``degree". We define the following three edge expansions on atoms of nonzero degrees.
%generally, let  $\cal A$ denote any subset of atoms in the graph. We define 
%\be\label{degree}
%{\deg}(\cal A) := \# \text{ of solid edges which connect atoms in } \cal A \text{ and } \cal A^c, 
%\ee
%i.e., the total number of solid edges which have one ending atom in $\cal A$ and the other one in $\cal A^c$. In particular, for any atom $x$, $\deg(x)$ denotes the number of solid edges attached to $x$. %and for any molecule $\cal M_i$, $\deg (\cal M_i)$ denotes the number of solid edges that connect the atoms in $\cal M_i$ to the atoms outside the molecule. 
%\end{definition}
%As a convention, in the definitions of graph operators, we always write the new graphs according to the order
%$$\text{main terms} + \text{light weight terms}+ \text{integration by parts terms} + Q\text{-terms}.$$
%%where the light weight terms and integration by parts terms have higher scaling order than the main terms. 
%Moreover, we write the $Q$-terms into the form $\sum_{\al}Q_\al(\Gamma_\al)$ for an internal atom $\al$ and a graph $\Gamma_\al$ that contains no $P$/$Q$ labels.
%We remark that the expansions are only in the graphical sense: a graph $\cal G$ is expanded into a linear combination of new graphs, which, however, has the same value as the original graph (see Definition \ref{ValG}). 

%The purpose of the weight expansions is to remove all the weights and light weights in the graphs. After that, we then deal with the atoms that have degrees larger than 2 (recall \ref{degree}).%Here we add some remarks regarding the $\partial_{\overline h_{x\beta}} f(G)$ term.

 \begin{definition} [Multi-edge expansion operator]\label{multi-def} 
 Given a normal regular graph $\cal G$, if there are no solid edges connected with the atom $x$, then we trivially define \smash{$\cal O_{multi-e}^{(x)}[\cal G]:=\cal G$}. Otherwise, we define \smash{${\cal O}_{multi-e}^{(x)}$} in the following way. 
 	Suppose the graph takes the form
 	%\be
 	%\cal G_x\equiv \cal G_x(G,\{y_i\},\{y_i'\},\{w_i\},\{w'_i\}):= \prod_{i=1}^{k_1}G_{x y_i}  \cdot  \prod_{i=1}^{k_2}\overline G_{x y'_i} \cdot \prod_{i=1}^{k_3} G_{ w_i x} \cdot \prod_{i=1}^{k_4}\overline G_{ w'_i x} \cdot f (G),\ee
 	\be\label{multi setting}
 	\cal G := \prod_{i=1}^{k_1}G_{x y_i}  \cdot  \prod_{i=1}^{k_2}\overline G_{x y'_i} \cdot \prod_{i=1}^{k_3} G_{ w_i x} \cdot \prod_{i=1}^{k_4}\overline G_{ w'_i x} \cdot f(G),
 	\ee
 	where $f (G) $ does not contain $G$ edges attached to $x$, and the atoms $y_i,$ $y'_i$, $w_i$ and $w'_i$ are all not equal to $x$.  
 	
 	\medskip
 	\noindent{(i)} If $k_1\ge 1$, then we define the multi-edge expansion on $x$ as 
 	\begin{align} 
 		& \wh{\cal O}_{multi-e}^{(x)} \left[\cal G\right] : = \sum_{i=1}^{k_2}|m|^2  \left( \sum_\al s_{x\al }G_{\al y_1} \overline G_{\al y'_i}\right)\frac{\cal G}{G_{x y_1} \overline G_{xy_i'}}   + \sum_{i=1}^{k_3} m^2 \left(\sum_\al s_{x\al }G_{\al y_1} G_{w_i \al} \right)\frac{\cal G}{G_{xy_1}G_{w_i x}} \nonumber \\
 		& + \sum_{i=1}^{k_2}m  (\overline G_{xx} -\overline m) \left( \sum_\al s_{x\al }G_{\al y_1} \overline G_{\al y'_i}\right)\frac{\cal G}{G_{x y_1} \overline G_{xy_i'}}   + \sum_{i=1}^{k_3} m  (G_{xx}-m) \left(\sum_\al s_{x\al }G_{\al y_1} G_{w_i \al} \right)\frac{\cal G}{G_{xy_1}G_{w_i x}} \nonumber \\
 		& +  m   \sum_\al s_{x\al }\left(G_{\al \al}-m\right) \cal G  +(k_1-1) m  \sum_\al s_{x\al } G_{x \al} G_{\al y_1}\frac{ \cal G}{G_{xy_1}}  + k_4 m   \sum_\al s_{x\al }\overline G_{\al x} G_{\al y_1}  \frac{\cal G}{G_{xy_1}}  \nonumber\\
 		& - m    \sum_\al s_{x\al } \frac{\cal G}{G_{x y_1}f(G)}G_{\al y_1}\partial_{ h_{\al x}}f (G)  +\cal Q_{multi-e} .\label{Oe1x}
 		%  - m    \sum_\al s_{x\al }G_{\al y_1}\cdot \prod_{i=2}^{k_1}G_{x y_i}  \cdot  \prod_{i=1}^{k_2}\overline G_{x y'_i} \cdot \prod_{i=1}^{k_3} G_{ w_i x} \cdot \prod_{i=1}^{k_4}\overline G_{ w'_i x} \partial_{\overline h_{x\al}}f(G)  +\cal Q_{multi-e} 
 	\end{align}
 	%\begin{align} 
 	%& \cal O_{multi-e}(x) \left[G_{xy_1}  \overline G_{xy_2} G_{y_3 x}  \overline G_{y_4 x}  f(G)\right]  \nonumber\\
 	%& =  m \overline G_{xx}  \left( \sum_\al s_{x\al }G_{\al y_1} \overline G_{\al y_2}\right) G_{y_3 x}  \overline G_{y_4 x}  f(G)+ m  G_{xx} \left(\sum_\al s_{x\al }G_{\al y_1} G_{y_3 \al} \right) \overline G_{x y_2} \overline G_{y_4 x}  f(G) \nonumber \\
 	%& + \sum_\al s_{x\al }(G_{\al \al} -m) G_{x y_1}  \overline G_{xy_2} G_{y_3 x}  \overline G_{y_4 x} f(G)  - m  \sum_\al s_{x\al }G_{\al y_1}  \overline G_{xy_2} G_{y_3 x}  \overline G_{y_4 x} \partial_{\overline h_{x\al}} f(G)  \nonumber\\
 	%&+  m   \overline G_{xx}  \left( \sum_\al s_{x\al }G_{\al y_1}\overline G_{\al x}\right)  \overline G_{xy_2} G_{y_3 x}  \overline G_{y_4 x}   f(G)  +\cal Q_{multi-e} ,\label{Oe1x}
 	%\end{align}
 	On the right-hand side of \eqref{Oe1x}, the first two terms are main terms with the same scaling order as $\cal G$, but the degree of atom $x$ is reduced by 2 and a new atom $\al$ of degree 2 is created; the third to fifth terms contain one more light weight and hence are of  strictly higher scaling order than $\cal G$; the sixth to eighth terms contain at least one more off-diagonal $G$ edge and hence are of strictly higher scaling order than $\cal G$.  The last term $\cal Q_{multi-e} $ is a sum of $Q$-graphs defined by 
 	\begin{align*}
 		\cal Q_{multi-e} &:= Q_x \left( \cal G\right)  -  \sum_{i=1}^{k_2}m Q_x \left[ \overline G_{xx}  \left( \sum_\al s_{x\al }G_{\al y_1} \overline G_{\al y'_i}\right)\frac{\cal G}{G_{x y_1} \overline G_{xy_i'}} \right] \\
 		& - \sum_{i=1}^{k_3} m Q_x \left[ G_{xx} \left(\sum_\al s_{x\al }G_{\al y_1} G_{w_i \al} \right)\frac{\cal G}{G_{xy_1}G_{w_i x}}\right] -  m Q_x \left[  \sum_\al s_{x\al }\left(G_{\al \al}-m\right) \cal G\right] \\
 		&-(k_1-1) m Q_x \left[  \sum_\al s_{x\al }G_{x \al} G_{\al y_1}  \frac{ \cal G}{G_{xy_1}}\right] - k_4 m Q_x \left[  \sum_\al s_{x\al }\overline G_{\al x} G_{\al y_1}  \frac{\cal G}{G_{xy_1}}\right] \\
 		& +m Q_x \left[  \sum_\al s_{x\al } \frac{\cal G}{G_{x y_1}f(G)}G_{\al y_1}\partial_{h_{\al x}}f(G)  \right].
 		% & +m Q_x \left[  \sum_\al s_{x\al }G_{\al y_1}\cdot \prod_{i=2}^{k_1}G_{x y_i}  \cdot  \prod_{i=1}^{k_2}\overline G_{x y'_i} \cdot \prod_{i=1}^{k_3} G_{ w_i x} \cdot \prod_{i=1}^{k_4}\overline G_{ w'_i x} \cdot \partial_{\overline h_{x\al}}f(G)\right] .%\label{Oe1x}
 	\end{align*} 
 	
 	\noindent{(ii)} If $k_1=0$ and $k_2\ge 1$, then we define 
 	$$\wh{\cal O}_{multi-e}^{(x)} \left[\cal G\right]:= \overline{  \wh{\cal O}_{multi-e}^{(x)} \left[\prod_{i=1}^{k_2}G_{x y'_i} \cdot \prod_{i=1}^{k_3} \overline  G_{ w_i x} \cdot \prod_{i=1}^{k_4} G_{ w'_i x} \cdot \overline{f (G)}\right] },$$
 	where the right-hand side can be defined using (i).
 	
 	\medskip
 	
 	\noindent{(iii)} If $k_1=k_2=0$ and $k_3 \ge 1$, then we can define $ \wh{\cal O}_{multi-e}^{(x)} \left[\cal G\right]$ by exchanging the order of matrix indices in (i). More precisely, we define
 	\begin{align} 
 		\wh{\cal O}_{multi-e}^{(x)} \left[\cal G\right] & := \sum_{i=1}^{k_4}|m|^2  \left( \sum_\al s_{x\al }G_{w_1\al} \overline G_{w'_i\al}\right)\frac{\cal G}{G_{w_1x} \overline G_{w_i'x}}   + \sum_{i=1}^{k_4}m  (\overline G_{xx} -\overline m) \left( \sum_\al s_{x\al }G_{w_1 \al } \overline G_{ w'_i \al}\right)\frac{\cal G}{G_{w_1x} \overline G_{w_i' x}}    \nonumber \\
 		& +  m   \sum_\al s_{x\al }\left(G_{\al \al}-m\right) \cal G  +(k_3-1) m  \sum_\al s_{x\al }G_{w_1\al} G_{\al x} \frac{ \cal G}{G_{w_1x}} \nonumber\\
 		&  - m    \sum_\al s_{x\al } \frac{\cal G}{G_{w_1x}f(G)}G_{  w_1 \al}\partial_{ h_{x\al}}f(G)  +\cal Q_{multi-e} ,\label{Oe1x3}
 	\end{align}
 	where 
 	\begin{align*}
 		\cal Q_{multi-e} &:= Q_x \left( \cal G\right)  -  \sum_{i=1}^{k_4}m Q_x \left[ \overline G_{xx}  \left( \sum_\al s_{x\al }G_{w_1 \al } \overline G_{ w'_i \al}\right)\frac{\cal G}{G_{w_1 x} \overline G_{w_i' x}} \right]  -  m Q_x \left[  \sum_\al s_{x\al }\left(G_{\al \al}-m\right) \cal G\right] \\
 		&  -(k_3-1) m Q_x \left[  \sum_\al s_{x\al }G_{w_1 \al } G_{\al x} \frac{ \cal G}{G_{w_1 x}}\right]  +m Q_x \left[  \sum_\al s_{x\al } \frac{\cal G}{G_{w_1x}f(G)}G_{ w_1\al}\partial_{ h_{x\al }}f(G) \right] .%\label{Oe1x}
 	\end{align*} 
 	
 	\noindent{(iv)} If $k_1=k_2=k_3=0$ and $k_4\ge 1$, then we define 
 	$$\wh{\cal O}_{multi-e}^{(x)} \left[\cal G\right]:= \overline{  \wh{\cal O}_{multi-e}^{(x)} \left[  \prod_{i=1}^{k_4} G_{ w'_i x} \cdot \overline{f (G)}\right] },$$
 	where the right-hand side can be defined using (iii).
 	
 	%Finally, $\wh{\cal O}_{multi-e}(x) \left[\cal G\right]$ is defined by using \eqref{linear graphO}, and 
 	Finally, applying the $\cal O_{dot}$ in Definition \ref{dot-def}, we define
 	$${\cal O}_{multi-e}^{(x)}[ \cal G]:= \cal O_{dot} \circ  \wh{\cal O}_{multi-e}^{(x)}   [\cal G]. $$ 
 	%If $x$ is an internal atom in a graph $\cal G=\sum_x \cal G_x$, then we naturally define the multi-edge expansion of $\cal G$ on the atom $x$ as $\sum_x {\cal O}^{(x)}_{multi-e} [\cal G_x ] .$
 \end{definition}

   \begin{definition}[$G G$ expansion operator] \label{GG-def}
	Suppose a normal regular graph $\cal G$ takes the form $\cal G=G_{xy}   G_{y' x } f(G)$ with $y,y' \ne x$, where $f (G) $ does not contain $G$ edges attached to $x$. %suppose the atom $x$ is only connected with two matched $G$ edges of the same charge, and $\cal G$ 
	Then we define
	\begin{align}
		\wh{\cal O}_{GG}^{(x)} [\cal G] & := %m\delta_{xy} G_{y' y} f(G) + 
		m S^{+}_{xy} G_{y' y} f(G)  + m \sum_\al  s_{x\al} (G_{\al \al}-m) \cal G + m \sum_{\al,\beta}  S^{+}_{x\al}  s_{\al\beta} (G_{\beta \beta}-m) G_{\al y}   G_{y'\al} f(G) \nonumber \\
		&  + m(G_{xx }-m)   \sum_\al s_{x\al}G_{\al y}   G_{y'\al} f(G) + m \sum_{\al,\beta}  S^+_{x\al} s_{\al\beta}  (G_{\al\al }-m) G_{\beta y}   G_{y'\beta} f(G)  \nonumber\\
		& - m \sum_{ \al}  s_{x\al}G_{\al y} G_{y' x} \partial_{ h_{\al x}}f(G) - m \sum_{\al,\beta} S^{+}_{x\al} s_{\al\beta}G_{\beta y} G_{y' \al} \partial_{ h_{\beta \al}}f(G) + \cal Q_{GG} .\label{Oe2x}
	\end{align}
	On the right hand side of \eqref{Oe2x}, the first term is the main term which is either of the same scaling order as $\cal G$ if $y=y'$ or of strictly higher scaling order if $y\ne y'$; the second to fifth terms contain one more light weight and hence are of strictly higher scaling order than $\cal G$; the sixth and seventh terms contain at least one more off-diagonal $G$ edge and hence are of strictly higher scaling order than $\cal G$; $\cal Q_{GG} $ is a sum of $Q$-graphs defined by
	\begin{align}
		\cal Q_{GG}&:=  Q_x \left(\cal G\right)+  \sum_\al Q_\al\Big[ S^+_{x\al}  G_{\al y}   G_{y'\al} f(G) \Big]  - m Q_y\Big[S^{+}_{xy}  G_{y' y} f(G)\Big] - m Q_x\Big[\sum_\al  s_{x\al} (G_{\al\al}-m) \cal G\Big]  \nonumber\\
		&- m \sum_\al Q_\al\Big[\sum_{ \beta}  S^{+}_{x\al}  s_{\al\beta} (G_{\beta \beta}-m)G_{\al y}   G_{y'\al} f(G)\Big] - mQ_x\Big[ G_{xx }  \sum_\al s_{x\al} G_{\al y}   G_{y'\al} f(G)\Big] \nonumber\\
		&- m\sum_\al Q_\al\Big[ \sum_{ \beta}  S^{+}_{x\al}  s_{\al\beta} G_{\al\al }  G_{\beta y}   G_{y'\beta} f(G)\Big]  + m Q_x\Big[\sum_{ \al}  s_{x\al} G_{\al y} G_{y' x} \partial_{ h_{ \al x}}f(G)\Big] \\
		& + m\sum_\al Q_\al\Big[ \sum_{\beta} S^{+}_{x\al} s_{\al\beta}G_{\beta y} G_{y' \al} \partial_{h_{\beta \al}}f (G)\Big] .\nonumber 
	\end{align}
	If $\cal G$ takes the form $\cal G=\overline G_{xy}  \overline G_{y' x }  f(G)$, then we define 
	$$\wh{\cal O}_{GG}^{(x)}\left[\cal G\right]:=\overline{ \wh{\cal O}_{GG}^{(x)}\left[G_{xy}   G_{y' x }  \overline{f(G)}\right] },$$
	where the right-hand side is defined using \eqref{Oe2x}. Finally, applying the $\cal O_{dot}$ in Definition \ref{dot-def}, we define
	$${\cal O}_{GG}^{(x)}[ \cal G]:= \cal O_{dot} \circ  \wh{\cal O}_{GG}^{(x)}   [\cal G]. $$  
	%If $x$ is an internal atom in a graph $\cal G=\sum_x \cal G_x$, then we naturally define the $GG$ expansions of $\cal G$ on atom $x$ as %(recall the linearity \eqref{linear graphO})
	%$\sum_x {\cal O}^{(x)}_{GG} [\cal G_x ] .$
\end{definition}

\begin{definition} [$G\overline G$ expansion operator]\label{GGbar-def} 
	Suppose a normal regular graph $\cal G$ takes the form $\cal G= G_{xy}  \overline G_{xy'} f(G)$ with $y,y' \ne x$, where $f (G) $ does not contain $G$ edges attached to $x$.
	%suppose the atom $x$ is only connected with two matched $G$ edges of opposite charges,  and $\cal G$ .  % \hty{ this is confusing. I thought that we can't improve on it already? it seems some cases you can still improve on it? }
	%However, $x$ may {\bf not} be connected with {\bf only one $s$ edge}. 
	Then by taking $k_1=k_2=1$ and $k_3=k_4=0$ in Definition \ref{multi-def}, we define  
	\begin{align}
		\wh{\cal O}_{G\overline G}^{(x)}[\cal G] &:=   |m|^2  \sum_\al s_{x\al }G_{\al y} \overline G_{\al y'} f(G) + m \sum_\al s_{x\al }\left(G_{\al \al} -m \right) \cal G \nonumber  \\
		&  +  m (\overline G_{xx} - \overline m)   \sum_\al s_{x\al }G_{\al y} \overline G_{\al y'} f(G)  - m  \sum_\al s_{x\al }G_{\al y}  \overline G_{xy'} \partial_{ h_{\al x}} f(G) + \cal Q_{G\overline G},\label{Oe3x}
	\end{align}
	%Here on the right hand side, the first term is of the same scaling order as $\cal G$, and the new atom $\al$ is standard neutral; the second and third terms contain one more light weight and hence are of strictly higher scaling order than $\cal G$; the fourth term contains at least one more off-diagonal $G$ edge and hence is
	%is an integration by part term, and is of strictly higher scaling order than $\cal G$; 
	where $\cal Q_{G\overline G} $ is defined by
	\begin{align*}
		\cal Q_{G\overline G}&:= Q_x \left(\cal G\right) -  m Q_x\Big[ \sum_\al s_{x\al }\overline G_{xx}G_{\al y} \overline G_{\al y'} f(G) \Big] - m Q_x\Big[\sum_\al s_{x\al }\left(G_{\al \al} -m \right) \cal G\Big] \\
		& + mQ_x\Big[  \sum_\al s_{x\al }G_{\al y}  \overline G_{xy'} \partial_{ h_{\al x}} f(G) \Big].
	\end{align*}
	If $\cal G$ takes the form $\cal G= G_{yx}  \overline G_{y'x} f(G)$, we define $\cal O_{G\overline G}(x)[ \cal G] $ by taking $k_1=k_2=0$ and $k_3=k_4=1$ in Definition \ref{multi-def}, and we omit the explicit expression. Finally, applying the $\cal O_{dot}$ in Definition \ref{dot-def}, we define
	$${\cal O}_{G\overline G}^{(x)}[ \cal G]:= \cal O_{dot} \circ  \wh{\cal O}_{G\overline G}^{(x)}   [\cal G]. $$  
\end{definition}

  %Moreover, by Lemma \ref{expandmulti}, we have $ \llbracket \cal O_{G\overline G}(x)[\cal G]\rrbracket  =\llbracket \cal G\rrbracket$ in the sense of graph values. 

%\begin{remark}
%We remark that the $G^2$ and $\Theta$ expansions can be derived from the multi-edge expansion as follows. 
%%Taking $k_1=k_2=1$ and $k_3=k_4=0$ in \eqref{Oe1x}, we can obtain the 
%We have seen that the $G\overline G$ expansion is a special case of the multi-edge expansion. 
%Then if we regard \eqref{Oe3x} as a linear equation of $G_{xy}\overline G_{xy'} f(G)$, we can rewrite it as 
%\begin{align*}
%\sum_\al (1-|m|^2 S)_{x\al}G_{\al y} \overline G_{\al y'} f(G)  &= \sum_\al s_{x\al }\left(G_{\al \al} -m \right) G_{xy}  \overline G_{xy'} f(G)  +  m (\overline G_{xx} - \overline m)  \left( \sum_\al s_{x\al }G_{\al y} \overline G_{\al y'}\right) f(G)  \\
%& - m  \sum_\al s_{x\al }G_{\al y}  \overline G_{xy'} \partial_{\overline h_{x\al}} f(G) + \cal Q_{G\overline G}.
%\end{align*}
%Solving it, we can obtain the $\Theta$ expansion in Lemma \ref{expandtheta}. Similarly, if we take $k_1=k_3=1$ and $k_2=k_4=0$ in \eqref{Oe1x}, we get a linear equation of $G_{xy} G_{y' x} f(G)$, solving which we get the $GG$ expansion.
%\end{remark}

%\subsection{Local expansion strategy} 

%By applying the basic weight and edge expansions repeatedly, 

As shown in \cite{PartI_high}, the operations $\cal O^{(x)}_{weight}$, $\cal O^{(x)}_{multi-e}$, $\cal O^{(x)}_{GG}$ and $\cal O^{(x)}_{G\overline G}$ are all obtained from Gaussian integration by parts. Their basic properties have been summarized in Section 3 of \cite{PartI_high}. In particular, it is easy to see that the new atoms $\al$ and $\beta$ appearing in these expansions are connected to atom $x$ through a path of waved edges, so they are included into the existing molecule containing atom $x$. Hence Definitions \ref{Ow-def}--\ref{GGbar-def} all give local expansions. Applying these expansions repeatedly, we can expand any regular graph into a linear combination of \emph{locally standard graphs},  $Q$-graphs and higher order graphs. To define locally standard graphs, we first introduce the following concept of \emph{standard neutral atoms}.

\begin{definition}[{Standard neutral atoms}]\label{def SNA}
	Consider an internal atom $x$ of degree 2 in a graph. We say the two edges connected with $x$ are \emph{mismatched} if they are of  following forms: 
	%In the following figure, we have mismatched edges at $x$:
	\begin{center}
		%\be\label{pic mismatch}
		\parbox[c]{0.8\linewidth}{\includegraphics[width=13cm]{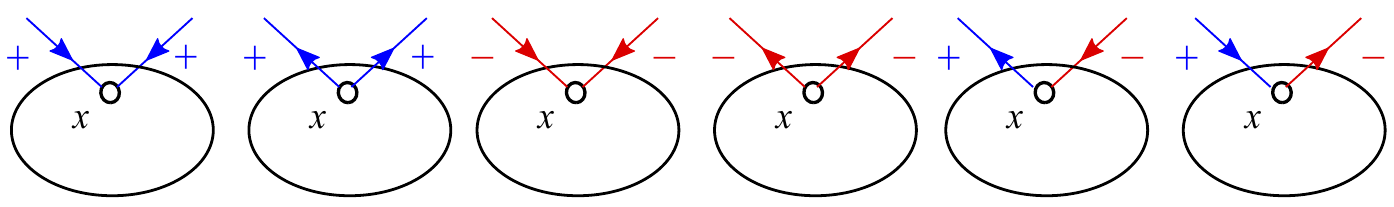}} 
		%\ee
	\end{center}
	%Otherwise, if the two edges connected with $x$ are of the following  forms in the following figure, then we call them  {matched}:  
	%\hty{ Otherwise the two edges are  {matched} and of the following forms"}
	Otherwise the two edges are \emph{matched} and of the following forms:
	\begin{center}
		%\be\label{pic match}
		\parbox[c]{0.56\linewidth}{\includegraphics[width=9cm]{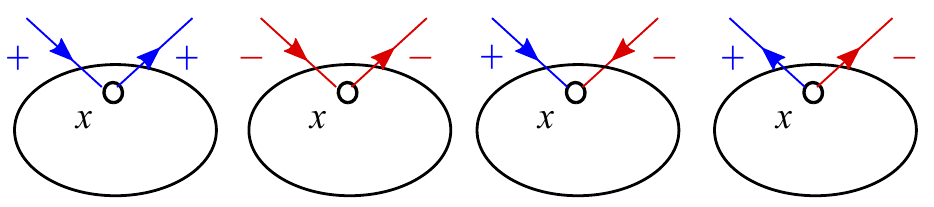}} 
		%\ee
	\end{center}
	An atom is said to be \emph{standard neutral} if it is only connected to three edges besides possible $\times$-dotted edges: two {matched} $G$ edges of {opposite charges} and one waved $S$ edge. 
\end{definition}

The edges connected with a standard neutral atom almost form a $T$-variable (but not an exact $T$-variable because of $\times$-dotted edges; see Section \ref{subsec global} for more details).

 \begin{definition} [Locally standard graphs] \label{deflvl1} 
	A graph $\cal G$ is  \emph{locally standard}  if 
	\begin{itemize}
		\item[(i)] it is a normal regular graph without $P/Q$ labels; % (recall Definition \ref{defnlvl0});  
		
		%\item[(ii)] all the edges in the graph have no independent sets;
		
		%\item[(ii)] any two atoms in the same molecule are different, and there is a $\times$-dotted edge between them;
		
		\item[(ii)] it has no weights or light weights;
		
		\item[(iii)] the degree of any internal atom is $0$ or $2$; %(recall \eqref{degree}); %that is connected with solid $G$ edges must have degree 2 (at this step, the two edges can be of the same or different charges), and is connected with one waved $S$ edge;
		
		\item[(iv)] all degree 2 internal atoms are standard neutral. % (recall Definition \ref{def SNA}).%, that is, it is connected to exactly three edges---two {\bf matched} $G$ edges of {\bf opposite charges} and one waved $S$ edge---besides the $\times$-dotted edges. %if a internal atom is connected with two $G$ edges, then these two edges must be of opposite charges; 
		
		\end{itemize}
\end{definition}

Given a regular graph $\cal G$, we first apply $\cal O_{dot}$ to expand it into a linear combination of normal regular graphs, then apply the weight expansion to remove all weights, and then apply the multi-edge, $GG$ and $G\overline G$ expansions one by one to remove all atoms that are not standard neutral. In the end, we can expand $\cal G$ into a sum of locally standard graphs, recollision graphs, $Q$-graphs and higher order graphs. The following basic lemma on local expansions has been proved in \cite{PartI_high}.

\begin{lemma}[Lemma 3.22 of \cite{PartI_high}] \label{lvl1 lemma}
Let $\cal G_{\fa, \fb_1\fb_2}$ be a normal regular graph without solid edges connected with $\fa$. Then for any fixed $n\in \N$, we have that
\begin{align}\label{expand lvl1}
\cal G_{\fa, \fb_1\fb_2} =  (\cal G_{local})_{\fa, \fb_1\fb_2}  + \PGn_{\fa,\fb_1\fb_2} +  (\AGn)_{\fa,\fb_1\fb_2} + \QGn_{\fa,\fb_1\fb_2} ,
\end{align}
where $(\cal G_{local})_{\fa, \fb_1\fb_2} $ is a sum of $\OO(1)$ many locally standard graphs, \smash{$ \PGn_{\fa,\fb_1\fb_2} $} is a sum of $\OO(1)$ many $\oplus$/$\ominus$-recollision graphs, \smash{$(\AGn)_{\fa,\fb_1\fb_2}$} is a sum of $\OO(1)$ many graphs of scaling order $> n$, and \smash{$(\QGn)_{\fa,\fb_1\fb_2} $} is a sum of $\OO(1)$ many $Q$-graphs.  Every molecule in a graph on the right-hand side is obtained by merging some molecules in the original graph $\cal G_{\fa, \fb_1\fb_2}$. 
\end{lemma}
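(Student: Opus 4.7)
The plan is to prove Lemma \ref{lvl1 lemma} by iteratively applying the four local expansion operators of Definitions \ref{Ow-def}--\ref{GGbar-def} together with the dotted-edge partition $\cal O_{dot}$ of Definition \ref{dot-def}, in a carefully ordered scheme. The key observation is that a normal regular graph without $P/Q$ labels fails to be locally standard exactly when one of three defects is present: (i) a (regular or light) weight, (ii) an internal atom of degree $\ge 4$, or (iii) an internal degree-$2$ atom that is not standard neutral. Each of the four operators targets exactly one of these defects: $\cal O_{weight}^{(x)}$ handles (i); $\cal O_{multi-e}^{(x)}$ handles (ii) and also the mismatched case of (iii); and $\cal O_{GG}^{(x)}$ handles the matched-same-charge case of (iii). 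By inspecting the right-hand sides of \eqref{Owx}, \eqref{Oe1x}, \eqref{Oe2x}, and \eqref{Oe3x}, together with $\cal O_{dot}$, one verifies the following trichotomy: applying the relevant operator to a graph $\cal G$ yields a finite sum of daughter graphs, each of which is either a $Q$-graph, or a graph of scaling order strictly greater than $\ord(\cal G)$, or a normal regular non-$Q$ graph of the same scaling order whose count of defects of the targeted type has strictly decreased (possibly introducing only defects of a lower-priority type that will be cleaned up in subsequent rounds).

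The algorithm then proceeds by priority on each active graph: as long as a weight is present, apply $\cal O_{weight}^{(x)}$ at an atom carrying one; once no weights remain, apply $\cal O_{multi-e}^{(x)}$ at any internal atom of degree $\ge 4$; and once every internal atom has degree $\le 2$, apply $\cal O_{GG}^{(x)}$ or $\cal O_{multi-e}^{(x)}$ at a non-standard degree-$2$ atom $x$, depending on whether its two solid edges are matched of the same charge or mismatched. After each step, route the outputs: $Q$-graph summands enter the accumulator $\QGn_{\fa,\fb_1\fb_2}$; daughter graphs of scaling order $>n$ enter $(\AGn)_{\fa,\fb_1\fb_2}$; daughter graphs in which $\cal O_{dot}$ has identified an internal atom with $\oplus$ or $\ominus$ enter $\PGn_{\fa,\fb_1\fb_2}$; all remaining graphs are pushed back onto the active stack. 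Termination holds because the scaling order is capped at $n$ (graphs exceeding $n$ are parked in $\AGn$), while within any fixed scaling order the lexicographic triple
\[
\Psi(\cal G) \deq \Bigl(\#\{\text{weights}\},\ \sum_{\text{int.\ }x}\max(\deg(x)-2,0),\ \#\{\text{non-standard-neutral degree-}2\text{ int.\ atoms}\}\Bigr)
\]
strictly decreases at each step; the only subtle case is the $y=y'$ sub-case of the $GG$-expansion \eqref{Oe2x}, which temporarily creates a weight $G_{yy}$, but this weight is removed by the next round of weight expansion before any new degree-based defect is addressed, and the net effect over the two rounds is still a $\Psi$-decrease.

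The molecule-merging claim in the last sentence of the lemma is automatic from the construction: every new summation atom created by the four operators is joined to its pivot $x$ by a path of waved edges, and $\cal O_{dot}$ only merges molecules. The principal obstacle is the careful verification of the trichotomy for each operator combined with $\cal O_{dot}$, in particular tracking what happens when $\cal O_{dot}$ identifies a freshly introduced summation vertex $\alpha$ (or $\beta$) with an existing atom or with one of $\oplus, \ominus$ in the leading terms of \eqref{Oe1x} and \eqref{Oe2x}, and confirming in every sub-case that either the scaling order strictly rises by at least one or the targeted defect is strictly reduced without increasing any higher-priority defect. This is the same long but entirely mechanical bookkeeping carried out in \cite{PartI_high}; none of the cases present a conceptual obstruction beyond patience.
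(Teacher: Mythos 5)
Your proposal is correct and follows essentially the same route as the paper's (i.e.\ Part I's) proof of this lemma: iterate $\cal O_{dot}$ and the four local expansions under a priority scheme, verify for each operator the trichotomy (strictly higher scaling order / $Q$-graph / same order with strictly fewer defects of the targeted type), park recollision, order-$>n$ and $Q$ outputs into the three accumulators, and terminate via the scaling-order cap combined with a decreasing complexity measure. The one slip is in your dispatch at degree-$2$ atoms: a matched pair of \emph{opposite}-charge edges at an atom that is not yet standard neutral (e.g.\ one lacking its single waved $S$ edge) falls into neither of your two branches and requires $\cal O_{G\overline G}^{(x)}$ (the $k_1=k_2=1$ case of the multi-edge expansion), whose leading term creates the standard neutral atom $\al$ together with its waved $s_{x\al}$ edge; with that case added, and with your two-round accounting for the transient weight produced by the $y=y'$ term of the $GG$-expansion, the bookkeeping closes.
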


%\begin{remark}

%We have  noted that  local expansions will not create new molecules. Hence 
If there are no dotted or waved edges added  between different molecules, then the molecules in a new graph are the same as those  in $\cal G_{\fa, \fb_1\fb_2}$. In general, there may be newly added dotted edges (due to the dotted edge partition $\cal O_{dot}$) or waved edges (due to the first term on the right-hand side of \eqref{Oe2x}), so the molecules in a new graph are obtained from merging the molecules connected by dotted or waved edges. Now we show that local expansions do not break the doubly-connected property.

\begin{lemma}\label{lem localgood}
Suppose a normal regular graph $\cal G$ %without external molecules 
is doubly-connected in the sense of Definition \ref{def 2net}. Then applying any expansion in Definitions \ref{dot-def}, \ref{Ow-def}, \ref{multi-def}, \ref{GG-def} or \ref{GGbar-def} to $\cal G$, the new graphs (including the $Q$-graphs) are all doubly connected normal regular graphs. % in the sense of Definition \ref{def 2net} with external molecules removed.   
 \end{lemma}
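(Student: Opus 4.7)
My plan is to analyze the effect of each local expansion on the molecular graph $\cal G_{\cal M}$ and to verify that the two edge-disjoint spanning trees $(\cal B_{blue},\cal B_{black})$ guaranteed by the doubly connected property of $\cal G$ remain available after the expansion. The starting observation, which is already built into Definitions \ref{Ow-def}, \ref{multi-def}, \ref{GG-def}, and \ref{GGbar-def}, is that every new atom $\al$ (and, when relevant, $\beta$) introduced in an expansion is attached to $x$ by a newly inserted waved edge ($s_{x\al}$, $S^{\pm}_{x\al}$ or $s_{\al\beta}$). By Definition \ref{def_poly} all such new atoms therefore lie in the molecule $\cal M_x$, so no new molecules appear and $\cal G_{\cal M}$ can only change by internal restructuring of $\cal M_x$, by merging $\cal M_x$ with another molecule, or by a rerouting of existing $G$-edges through $\cal M_x$.

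Next I will classify the molecular-level changes into three exclusive types. Type (i) is a modification strictly inside $\cal M_x$, such as adding a light weight, adding an internal $G$-factor, or renaming an external $G$-edge $G_{xy_1}$ into $G_{\al y_1}$ with $\al\in\cal M_x$; these leave $\cal G_{\cal M}$ unchanged. Type (ii) is a \emph{contraction} merging two molecules: it occurs in $\cal O_{dot}$ when a dotted edge identifies internal atoms belonging to different molecules, and in the leading term $m S^+_{xy}G_{y'y}f(G)$ of the $GG$ expansion in \eqref{Oe2x}, where the new waved edge $S^+_{xy}$ pulls $\cal M_y$ into $\cal M_x$. Type (iii) is a \emph{subdivision through $\cal M_x$}: by the rule $\partial_{h_{\al x}}G_{ab}=-G_{a\al}G_{xb}$ and its analogues appearing in \eqref{Owx}, \eqref{Oe1x}, \eqref{Oe1x3}, \eqref{Oe2x} and \eqref{Oe3x}, a partial derivative acting on an edge $G_{ab}$ of $f(G)$ replaces the molecular edge $(\cal M_a,\cal M_b)$ by the two-edge path $\cal M_a - \cal M_x - \cal M_b$.

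Then I will verify that each of these three types preserves the property in Definition \ref{def 2net}. Type (i) is immediate. For a contraction of $\{\cal M_1,\cal M_2\}$ into $\cal M_{12}$, since $\cal B_{blue}$ and $\cal B_{black}$ are edge-disjoint, at most one of them uses an edge between $\cal M_1$ and $\cal M_2$; for a tree inside the net that does use such an edge $e$, removing $e$ gives a spanning tree of the contracted molecular graph, while for a tree that does not use such an edge, the unique $\cal M_1$--$\cal M_2$ path inside it becomes a cycle after identification and removing any edge on that cycle restores a spanning tree. Edge-disjointness is preserved in both cases. For a subdivision of $e=(\cal M_a,\cal M_b)$ through $\cal M_x$, note that $\dashed$ edges are never differentiated, so only $G$-edges get subdivided and $\cal B_{black}$ is unaffected. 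If the spanning tree $T\subseteq \cal B_{blue}$ does not contain $e$, no change is needed; if it does, I replace $e$ by one of the two new blue solid pieces, say $(\cal M_x,\cal M_b)$, which reconnects the two components of $T-e$ using the crucial fact that $\cal M_x$ is already a vertex of $\cal G_{\cal M}$, and the replacement edge is still disjoint from $\cal B_{black}$ because it is blue solid rather than $\dashed$.

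Finally, every $Q$-graph produced by the expansions shares the atomic and molecular structure of the corresponding main-term graph: the added $P_x/Q_x$ labels do not enter the molecular graph or the definition of double connectedness, so the $Q$-graphs inherit the property automatically. The most delicate step will be the subdivision case for $\cal B_{blue}$, where I will need to verify term by term that the blue-net rewiring is compatible with the simultaneously introduced new $G$-edges (e.g.\ the factors $G_{\al y_1}$ appearing in the last line of \eqref{Oe1x} and their analogues). This bookkeeping is routine once one notes that every such new $G$-edge has $\cal M_x$ as one endpoint and therefore cannot obstruct the reconnection argument above.
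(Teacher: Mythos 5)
Your proposal is correct and follows essentially the same route as the paper's proof: new atoms are absorbed into the molecule of $x$, the black net is untouched, dotted/waved identifications only merge molecules, and a differentiated $G$ edge becomes a two-edge path through $\cal M_x$ that leaves the blue net's connectivity intact. Your explicit three-type classification and spanning-tree surgery are just a more detailed bookkeeping of the same observations.
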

 \begin{proof}
%Using the definitions of local expansions, it is easy to check that all the new graphs are regular. The less trivial part of the proof is to show that the property (iii) of Definition \ref{defnlvl0} holds for the integration by parts graphs. Here we shall use the fact that $\partial_{\overline h_{\al\beta}}f_x(G)$ contains two edges connected with atoms $\al$ and $\beta$, respectively,  which are generated from taking the partial derivative $\partial_{\overline h_{\al\beta}}$ of a $G$ edge or $G$ weight. 
% Then we show the doubly connected property for all the new graphs in the local expansions. 
%The following discussions focus on the molecular graphs. 
First, all blue solid and (labelled) $\dashed$ edges between molecules are all unchanged in a dotted edge partition. The only changes are that in new molecular graphs, some molecules are merged into one molecule due to newly added dotted edges between different molecules. It is obvious that such merging operations will not break the doubly connected property. 
%we only add new dotted lines to the graphs or remove $\times$-dotted lines. Hence the doubly connected property is not affected.

Then we consider the weight expansion in Definition \ref{Ow-def}. It is trivial to see that \smash{${\cal O}^{(x),1}_{weight}$} does not change molecular graphs at all. Hence it remains to show that \smash{${\cal O}^{(x),2}_{weight}[\cal G]$} of a doubly connected graph $\cal G$ is still a sum of doubly connected graphs. Let $\cal G_1$ be one of the new graphs. Since new atoms are all included into the molecule containing $x$, the molecules in the molecular graph of $\cal G_1$ are the same as those in the original molecular graph of $\cal G$. Moreover, (labelled) $\dashed$ edges between molecules are also not affected, i.e. the black net does not change. It remains to consider the plus $G$ edges between molecules in $\cal G_1$. From \eqref{Owx}, it is easy to see that any plus $G$ edge in the molecular graph of $\cal G$ between two different molecules, say $\cal M_1$ and $\cal M_2$, either is not affected or becomes a connected path of two plus $G$ edges between $\cal M_1$ and $\cal M_2$ due to the partial derivatives $\partial_{h_{ \al x}}$ or $\partial_{h_{\beta\al}}$. (For example, if the atoms $x$, $\al$ and $\beta$ are in molecule $\cal M_3$, then the edge between $\cal M_1$ and $\cal M_2$ becomes two plus $G$ edges from $\cal M_1$ to $\cal M_3$ and from $\cal M_3$ to $\cal M_2$.) In either case, the path connectivity of the blue net is not affected, and hence the doubly connected property preserves in $\cal G_1$.

%{\cor For the local expansion in Definitions \ref{Ow-def}, \ref{multi-def}, \ref{GG-def} and \ref{GGbar-def}, the new atoms like $\al$ and $\beta$ are connected to atom $x$ through a path of waved $s$ and $S^{\pm}$ edges, and hence they are included into the molecule containing $x$. This shows that the molecules in the new molecular graphs of these local expansions are the same molecules as in the original molecular graph of $\cal G$. On the one hand, the labelled $\dashed$ edges between molecules are not affected in these expansions. On the other hand, any  plus $G$ edge in the molecular graph of  $\cal G$ between two different molecules, say $\cal M_1$ and $\cal M_2$, is either not affected or becomes a connected path of two plus $G$ edges between $\cal M_1$ and $\cal M_2$ due to the partial derivatives $\partial_{\overline h_{x\al}}$ or $\partial_{\overline h_{\al\beta}}$. (For example, if the atoms $x$, $\al$ and $\beta$ are in molecule $\cal M_3$, then this edge between $\cal M_1$ and $\cal M_2$ becomes two plus $G$ edges from $\cal M_1$ to $\cal M_3$ and from $\cal M_3$ to $\cal M_2$.) In either case, the path connectivities of both the blue net and the black net (in the molecular graph) are not affected, and hence the doubly-connected property still holds.}

For the edge expansions in Definitions \ref{multi-def}, \ref{GG-def} and \ref{GGbar-def}, the argument is exactly the same. 
 \end{proof}

\subsection{Global expansions}\label{subsec global}

In this section, we define global expansions. As an induction hypothesis, suppose we have obtained the $(n-1)$-th order $T$-expansion.  Then a global expansion of a locally standard graph, say $\cal G$, consists of the following three steps:
\begin{itemize}
	\item[(i)] we choose a standard neutral atom in $\cal G$;
	\item[(ii)] we replace the $T$-variable containing the atom in (i) by the $(n-1)$-th order $T$-expansion;
	\item[(iii)] we apply $Q$-expansions to every resulting graph with $Q$-labels from (ii).
\end{itemize}
We call this procedure ``global" because it may create new molecules in our graphs. %For example, if we replace a $T_{x,y_1y_2}$ with the right-hand side of \eqref{Otheta} (with $f(G)\equiv 1$), then the new atoms $\al$ and $\beta$ are in a different molecule from $x$. 
Unlike local expansions, a global expansion may break the doubly connectedness of graphs. To avoid this issue, %To preserve the double-net structures, 
we need to follow a specific criterion in choosing the standard neutral atom in (i). This will be discussed in Section \ref{sec pre}. %We remark that the estimate \eqref{intro_redagain} is needed to bound the graphs in global expansions. This is a reason why the $\renormal$ is needed. 
%have to use the sum zero properties \eqref{3rd_property0} and \eqref{weaz} in an essential way, and 
%apply the global expansions according to a carefully chosen order. We will deal with this issue in \cite{PartII_high}, and we refer the reader to Section \ref{sec strategy} for a brief discussion. 
%Note that the graphs obtained from the global expansions cannot be bounded properly unless we  know the $\renormal$. 
%Then we only use one type of global expansion, that is, replacing a $T$-variable with the $(n-1)$-th order $T$-expansion.
In this subsection, we define step (ii) in the above procedure, and postpone the definition of $Q$-expansions in step (iii) to the next subsection.

Picking a standard neutral atom, say $\al$, in a locally standard graph, the edges connected to it take one of the following forms:
\begin{equation}\label{12in4T}
	t_{x,y_1 y_2} :=|m|^2\sum_\al s_{x\al}G_{\al y_1}\overline G_{\al y_2}\mathbf 1_{\al\ne y_1}\mathbf 1_{\al\ne y_2},\quad \text{or}\quad t_{y_1y_2, x}:=|m|^2\sum_\al G_{ y_1\al }\overline G_{y_2 \al}s_{\al x}\mathbf 1_{\al\ne y_1}\mathbf 1_{\al\ne y_2} .
\end{equation}
Then we apply the $(n-1)$-th order $T$-expansion in \eqref{mlevelTgdef} to these variables in the following way: %take $ t_{x,y_1 y_2} $ as an example,
\begin{align*}
	t_{x,y_1 y_2} & = m  \Theta_{xy_1}\overline G_{y_1y_2} +m (\Theta \Sdelta^{(n-1)}\Theta)_{xy_1} \overline G_{y_1y_2} + (\PT^{(n-1)})_{x,y_1 y_2} +  (\AT^{(>n-1)})_{x,y_1y_2}  + (\QT^{(n-1)})_{x,y_1y_2} \nonumber\\
	& +  (\Err_{n-1,D})_{x,y_1y_2}  - \sum_\al s_{x\al}G_{\al y_1}\overline G_{\al y_2}\left(\mathbf 1_{\al\ne y_1}\mathbf 1_{\al = y_2}+\mathbf 1_{\al=y_1}\mathbf 1_{\al \ne y_2} +\mathbf 1_{\al = y_1}\mathbf 1_{\al = y_2}\right) .
\end{align*}
%Note that $\mathbf 1_{\al = y_1}\mathbf 1_{\al = y_2} =0$ if $y_1\ne y_2$, and $\mathbf 1_{\al\ne y_1}\mathbf 1_{\al = y_2} =\mathbf 1_{\al=y_1}\mathbf 1_{\al \ne y_2}=0$ if $y_1= y_2$. 
The last term on the right-hand side gives one (since $\mathbf 1_{\al\ne y_1}\mathbf 1_{\al = y_2} =\mathbf 1_{\al=y_1}\mathbf 1_{\al \ne y_2}=0$ if $y_1= y_2$) or two (since $\mathbf 1_{\al = y_1}\mathbf 1_{\al = y_2} =0$ if $y_1\ne y_2$) recollision graphs, so we combine it with {$ (\PT^{(n-1)})_{x,y_1 y_2}$} and denote the resulting term by {$(\wtPT^{(n-1)})_{x,y_1 y_2}$}. Hence we get the following expansion formula of the $t$-variable $t_{x,y_1 y_2}$:
\begin{align}
	t_{x,y_1 y_2} &=   m  \Theta_{xy_1}\overline G_{y_1y_2} +m   (\Theta \Sdelta^{(n-1)}\Theta)_{xy_1} \overline G_{y_1y_2}  +  (\wtPT^{(n-1)})_{x,y_1 y_2} +  (\AT^{(>n-1)})_{x,y_1y_2}  \nonumber\\
	& + (\QT^{(n-1)})_{x,y_1y_2} +  (\Err_{n-1,D})_{x,y_1y_2} .\label{replaceT}
\end{align}
The expansion of $t_{y_1y_2, x}$ can be obtained by exchanging the order of every pair of matrix indices in the above equation. 

\subsection{$Q$-expansions}\label{sec defnQ}

If we replace a $t$-variable with a graph in $\QT^{(n-1)}$, then we will get a graph of the form  
\be\label{QG}\cal G_0=\sum_x \Gamma_0 Q_x (\wt\Gamma_0) ,\ee
where both the graphs $\Gamma_0$ and $\wt\Gamma_0$ do not contain $P/Q$ labels. The $Q$-expansions defined  in this subsection aim to expand the above graph into a sum of $Q$-graphs and graphs without $P/Q$ labels.  By Definition \ref{Def_recoll}, $Q$-graphs refer to graphs where all edges and weights have the same $Q$-label. The graph $\cal G_0$ does not satisfy this property, so we will not call it a $Q$-graph to avoid confusion. 

\medskip
\noindent{\bf Step 1:} First, we apply \smash{${\cal O}^{(x),1}_{weight} \circ\cal O_{dot}$} to $\cal G_0$ in \eqref{QG} and get a sum of normal regular graphs without regular weights on atom $x$. Now for any new graph, say $\cal G=\sum_x \Gamma Q_x (\wt \Gamma), $ if $\Gamma$ does not contain any light weight or edges attached to $x$, then we apply the following operations.

Let $H^{(x)}$ be the $(N-1)\times(N-1)$ minor of $H$ obtained by removing the $x$-th row and column of $H$, and define the resolvent minor $G^{(x)}(z):=(H^{(x)}-z)^{-1}$. The following resolvent identity is well-known:
\be\label{resol_exp0}G_{x_1x_2}=G_{x_1x_2}^{(x)}+\frac{G_{x_1 x}G_{xx_2}}{G_{xx}},\quad x_1,x_2\in \Z_L^d,\ee
which follows from the Schur complement formula. Correspondingly, we introduce a new type of weights, $(G_{xx})^{-1}$ and $(\overline G_{xx})^{-1}$, and a new label $(x)$ to solid edges and weights. More precisely, if a weight on $x$ has a label ``$(-1)$", then this weight represents a $(G_{xx})^{-1}$ or $(\overline G_{xx})^{-1}$ factor depending on its charge; if an edge or a weight has a label $(x)$, then it represents a $G^{(x)}$ entry. 

Applying \eqref{resol_exp0} to expand the resolvent entries in $\Gamma$ one by one, we can get that
\be\label{decompose_gamma} \Gamma=\Gamma^{(x)}+\sum_{\omega} \Gamma_\omega,\ee 
where $\Gamma^{(x)}$ is a graph whose weights and solid edges all have the $(x)$ label, and hence is independent of the $x$-th row and column of $H$. The second term on the right-hand side of \eqref{decompose_gamma} is a sum of $\OO(1)$ many graphs. Moreover, every $\Gamma_\omega$ has a strictly higher scaling order than $\Gamma$, at least two new solid edges connected with $x$, and some weights with label $(-1)$ on $x$. Using \eqref{decompose_gamma}, we can expand $\cal G$ as
\be\label{QG2}
\cal G=\sum_x \Gamma Q_x (\wt \Gamma)=\sum_{\omega}\sum_x \Gamma_\omega Q_x(\wt \Gamma) + \sum_x Q_x ( \Gamma \wt\Gamma )- \sum_{\omega}\sum_x Q_x (\Gamma_\omega \wt\Gamma ),
\ee
where the second and third terms are sums of $Q$-graphs.

Next, we expand graphs $\Gamma_\omega$ in \eqref{QG2} into graphs without $G^{(x)}$, $(G_{xx})^{-1}$ or $(\overline G_{xx})^{-1}$ entries. First, we apply the following expansion to the $G^{(x)}$ entries in $\Gamma_{\omega}$: 
\be\label{resol_reverse}G_{x_1x_2}^{(x)}= G_{x_1x_2} - \frac{G_{x_1 x}G_{xx_2}}{G_{xx}}. \ee
In this way, we can write $\Gamma_{\omega}$ as a sum of graphs \smash{$\sum_{\zeta} \Gamma_{\omega,\zeta}$}, where $\Gamma_{\omega,\zeta}$ does not contain any $G^{(x)}$ entries. Second, we expand the $(G_{xx})^{-1}$ and $(\overline G_{xx})^{-1}$ weights in $\Gamma_{\omega,\zeta}$ using the Taylor expansion
\be\label{weight_taylor}\frac{1}{G_{xx}}=\frac{1}{m} + \sum_{k=1}^{D}\frac1m\left(-\frac{G_{xx}-m}{m}\right)^k + \cal W^{(x)}_{D},\quad \cal W^{(x)}_{D}:=\sum_{k>D}\left(-\frac{G_{yy}-m}{m}\right)^k . \ee
We will regard $\cal W^{(x)}_{D}$ and $\overline{\cal W}^{(x)}_{D}$ as a new type of weights of scaling order $D+1$. Expanding the product of all Taylor expansions of the $(G_{xx})^{-1}$ and $(\overline G_{xx})^{-1}$ weights, we can write every $\Gamma_{\omega,\zeta}$ into a sum of graphs that do not contain weights with label $(-1)$. %Note that the graphs containing {$\cal W^{(x)}_{D}$ or $\overline{\cal W}^{(x)}_{D}$} can be included into the error term $\Err_{n,D}$ in \eqref{mlevelTgdef}. 
Finally, applying \smash{${\cal O}^{(x),1}_{weight} \circ\cal O_{dot}$} again, we will get a sum of normal regular graphs without regular weights on $x$. 

Now we summarize Step 1 in the following lemma. 
\begin{lemma}\label{Q_lemma1}
Given a normal regular graph taking the form \eqref{QG}, we can expand it into a sum of $\OO(1)$ many graphs: %as In sum, in Step 1 we expand \eqref{QG} as 
\be\label{Q_eq1}\cal G_0 = \sum_{\omega} \Gamma_\omega Q_x(\wt \Gamma_{\omega}) + \sum_{\zeta} Q_x \left( \cal G_\zeta \right) + \cal G_{err},\ee
where $\Gamma_{\omega}$, $\wt \Gamma_{\omega}$ and $\cal G_\zeta $ are normal regular graphs without $P/Q$, ${(x)}$ or $(-1)$ labels, and $\cal G_{err}$ is a sum of normal regular graphs of scaling order $>D$.  %Moreover, every $\Gamma_\omega$ contains light weights or edges attached to the atom $x$. 
%\hty{ change to 
%where $\Gamma_{\omega}$, $\wt \Gamma_{\omega}$ and $\cal G_\zeta $ are normal regular graphs without $P/Q$-labels and $\cal G_{err}$ is a sum of normal regular graphs of scaling order $>D$. Notice that by definition of normal regular graphs, $\Gamma_{\omega}$, $\wt \Gamma_{\omega}$ and $\cal G_\zeta $ do not contain any 
% $G^{(x)}$ entries, or $(G_{xx})^{-1}$ or  $(\overline G_{xx})^{-1}$ weights. } 
If $\Gamma_0$ does not contain any weights or edges attached to $x$, then every $\Gamma_\omega$ has a strictly higher scaling order than $\Gamma_0$ and contains at least one atom which either is connected to $x$ through a solid edge or has been merged with $x$.

%one atom that satisfies one of the following properties: it is connected to $x$ through solid edges; it has been merged with $x$.
%\hty{ one atom which  either is connected to $x$ through a solid edge or  it has been merged with $x$. There are similar sentences in the following which should be also corrected.} 
\end{lemma}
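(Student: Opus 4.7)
The plan is to follow exactly the three-substep procedure outlined in the excerpt and verify that each substep produces normal regular graphs of the advertised type. The heart of the argument is the identity \(\Gamma^{(x)} Q_x(\wt\Gamma) = Q_x\bigl(\Gamma^{(x)} \wt\Gamma\bigr)\), which holds because \(\Gamma^{(x)}\) depends only on \(H^{(x)}\) and therefore commutes with the partial expectation \(P_x = \E_x\) (hence with \(Q_x = 1 - P_x\)).

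First, I would apply \({\cal O}^{(x),1}_{weight}\circ \cal O_{dot}\) to \(\cal G_0\). By Lemma \ref{lem localgood} and the definitions of these operators, this yields a linear combination of normal regular graphs \(\sum_x \Gamma\,Q_x(\wt\Gamma)\) with no regular weights on \(x\), no dotted-edge ambiguities, and the same graph value as \(\cal G_0\). It suffices to treat one such summand.

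Next, I would apply the Schur-complement identity \eqref{resol_exp0} to every resolvent entry appearing in \(\Gamma\) that is attached to the atom \(x\) (i.e.\ every weight on \(x\), every solid edge incident to \(x\), and every other entry we choose to peel off), thereby producing the decomposition \eqref{decompose_gamma}. Each \(\Gamma_\omega\) arises from applying \eqref{resol_exp0} at least once with the ``\((G_{x_1 x} G_{xx_2})/G_{xx}\)'' correction term, so it carries at least two new solid edges attached to \(x\) and weights with the \((-1)\) label; in particular, by the scaling-order count of Definition \ref{def scaling}, \(\mathrm{ord}(\Gamma_\omega) > \mathrm{ord}(\Gamma)\) because each application of \eqref{resol_exp0} trades one solid edge for at least three solid factors. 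Writing \(\Gamma^{(x)} = \Gamma - \sum_\omega \Gamma_\omega\) and inserting into \(\sum_x\Gamma\,Q_x(\wt\Gamma)\), the commutation identity above converts the \(\Gamma^{(x)}\)-piece into a \(Q\)-graph \(\sum_x Q_x(\Gamma^{(x)}\wt\Gamma)\); replacing \(\Gamma^{(x)}\) back by \(\Gamma - \sum_\omega \Gamma_\omega\) inside \(Q_x\) produces exactly the three-term identity \eqref{QG2}.

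Third, I would clean up the \(G^{(x)}\) entries that now appear inside the \(Q_x(\Gamma\wt\Gamma)\) and \(Q_x(\Gamma_\omega \wt\Gamma)\) pieces by applying \eqref{resol_reverse}, and then Taylor-expand every surviving \((-1)\)-labelled weight via \eqref{weight_taylor}, grouping the tail \(\cal W^{(x)}_D\) (and its conjugate) into a new type of weight of scaling order \(D+1\). A final pass of \({\cal O}^{(x),1}_{weight}\circ \cal O_{dot}\) renormalizes the result to normal regular form without \(P/Q\), \((x)\), or \((-1)\) labels, except for the tail graphs carrying \(\cal W^{(x)}_D\), which are collected into \(\cal G_{err}\); by construction these have scaling order \(> D\). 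This delivers the decomposition \eqref{Q_eq1}.

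For the last assertion of the lemma, under the hypothesis that \(\Gamma_0\) carries no weights or edges attached to \(x\), the only way a \(\Gamma_\omega\) can arise in \eqref{decompose_gamma} is from applying \eqref{resol_exp0} to resolvent entries that are \emph{not} originally attached to \(x\); each such application introduces at least two solid edges that meet at \(x\) (or, after \(\cal O_{dot}\), that meet at an atom merged with \(x\)) and strictly increases the scaling order, as counted in \eqref{eq_deforderrandom2}. The main technical obstacle I anticipate is the careful bookkeeping of the two \(\cal O_{dot}\) passes: one must check that after \(\cal O_{dot}\) identifies atoms with \(x\), every resulting graph still has the claimed ``new atom connected to \(x\) or merged with \(x\)'' structure, and that the scaling-order strict increase is preserved even in the degenerate merging cases. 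This is where one has to revisit the cases in the proofs of Lemmas \ref{expandlabel}--\ref{expandG2} style case analysis, since a naive count can lose an order when the new atom is immediately dotted to an existing one.
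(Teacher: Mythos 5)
Your proposal is correct and follows essentially the same route as the paper: the preliminary $\cal O^{(x),1}_{weight}\circ\cal O_{dot}$ pass, the Schur-complement decomposition $\Gamma=\Gamma^{(x)}+\sum_\omega\Gamma_\omega$ combined with the commutation of $\Gamma^{(x)}$ past $Q_x$ to produce \eqref{QG2}, the cleanup via \eqref{resol_reverse} and \eqref{weight_taylor} with the Taylor tails collected into $\cal G_{err}$, and the same observation for the final assertion that each $\Gamma_\omega$ must contain a correction term $G_{x_1x}G_{xx_2}/G_{xx}$ whose two new solid edges either survive as edges attached to $x$ or become weights when an endpoint is merged with $x$ under $\cal O_{dot}$. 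The bookkeeping concern you raise at the end is exactly the point the paper also leaves at the level of a one-line remark, so no gap relative to the paper's own proof.
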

\begin{proof} 
We only need to prove the second statement. Suppose $\Gamma_0$ does not contain any weights or edges attached to $x$. Then there exists at least one $G_{x_1x_2}$ or $\overline G_{x_1x_2}$ entry in $\Gamma_0$ that has been replaced by two solid edges $G_{x_1 x}G_{xx_2}$ or $\overline G_{x_1 x}\overline G_{x x_2}$ (together with a $(G_{xx})^{-1}$ or $(\overline G_{xx})^{-1}$ weight). These two solid edges will either appear in $\Gamma_{\omega}$ or become weights if $x_1$ or $x_2$ is identified with $x$ due to newly added dotted edges in $\cal O_{dot}$ operations. 
\end{proof}
If $\Gamma_0$ does not contain any weights or edges attached to $x$, then after Step 1 the structures of $\Gamma_\omega$ in \eqref{Q_eq1} are already good enough for our purpose. In Steps 2 and 3, we aim to remove the $Q_x$ label in $Q_x(\wt \Gamma_{\omega})$.

\medskip   

\noindent{\bf Step 2:} In this step and Step 3, we will remove the $Q_x$ label in  
$$\Gamma_\omega Q_x(\wt \Gamma_{\omega})=\Gamma_\omega \wt \Gamma_{\omega} - \Gamma_\omega P_x(\wt \Gamma_{\omega}).$$
It suffices to write $P_x (\wt \Gamma_{\omega})$ into a sum of graphs without the $P_x$ label. To this end, we will use the following lemma. 

\begin{lemma}
Let $f$ be a differentiable function of $G$. %For a graph $\cal G=(G_{xx}  -  m) f(G)$, 
We have that
\begin{align}
	  P_x \left[(G_{xx}  -  m) f(G)\right] &=  P_x\Big[ m^2 \sum_{\al} s_{x\al} (G_{\al\al}-m)  f(G) + m \sum_{\al} s_{x\al} (G_{\al\al}-m) (G_{xx}-m)f(G)\Big] \nonumber\\
	& -  P_x  \Big[  m \sum_\al s_{x\al} G_{\al x}\partial_{  h_{\al x}} f(G)\Big]. \label{gHw}
\end{align}
Given a graph $\cal G$ taking the form \eqref{multi setting}, we have the following identity if $k_1\ge 1$:
\begin{align} 
	  P_x[\cal G]   &= \sum_{i=1}^{k_2}|m|^2 P_x \left[  \left( \sum_\al s_{x\al }G_{\al y_1} \overline G_{\al y'_i}\right)\frac{\cal G}{G_{x y_1} \overline G_{xy_i'}} \right] + \sum_{i=1}^{k_3} m^2 P_x \left[ \left(\sum_\al s_{x\al }G_{\al y_1} G_{w_i \al} \right)\frac{\cal G}{G_{xy_1}G_{w_i x}}\right] \nonumber \\
	 & + \sum_{i=1}^{k_2}m P_x \left[ \left(\overline G_{xx} -\overline m\right) \left( \sum_\al s_{x\al }G_{\al y_1} \overline G_{\al y'_i}\right)\frac{\cal G}{G_{x y_1} \overline G_{xy_i'}} \right] \nonumber\\
	 &+ \sum_{i=1}^{k_3} m P_x \left[ (G_{xx}-m) \left(\sum_\al s_{x\al }G_{\al y_1} G_{w_i \al} \right)\frac{\cal G}{G_{xy_1}G_{w_i x}}\right] \nonumber \\
	&+  m P_x \left[  \sum_\al s_{x\al }\left(G_{\al \al}-m\right) \cal G\right] +(k_1-1) m P_x \left[  \sum_\al s_{x\al }G_{\al y_1} G_{x \al} \frac{ \cal G}{G_{xy_1}}\right] \nonumber\\
	&+ k_4 m P_x \left[  \sum_\al s_{x\al }G_{\al y_1} \overline G_{\al x} \frac{\cal G}{G_{xy_1}}\right]  - m P_x \left[  \sum_\al s_{x\al }\frac{\cal G}{G_{x y_1}f(G)}G_{\al y_1}\partial_{ h_{\al x}}f (G)\right].\label{gH0} 
	% & - m P_x \left[  \sum_\al s_{x\al }G_{\al y_1}\cdot \prod_{i=2}^{k_1}G_{x y_i}  \cdot  \prod_{i=1}^{k_2}\overline G_{x y'_i} \cdot \prod_{i=1}^{k_3} G_{ w_i x} \cdot \prod_{i=1}^{k_4}\overline G_{ w'_i x}\cdot \partial_{ h_{ \al x}}f(G)\right]+ Q_x \left(  \cal G\right)  .
\end{align} 
\end{lemma}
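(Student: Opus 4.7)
Both identities are straightforward consequences of Gaussian integration by parts (IBP) applied to the resolvent identity, combined with the defining equation $zm+m^{2}+1=0$ (equivalently, $z+m=-m^{-1}$). I will use the pointwise differentiation rule
\[
\pd_{h_{ij}}G_{ab}(z)=-G_{ai}(z)G_{jb}(z),
\]
which holds for any spectral parameter $z$, together with the conjugation relation $\overline{G_{ab}(z)}=G_{ba}(\overline z)$. From these one gets, in particular, the formulas $\pd_{h_{\al x}}G_{\al y_{1}}=-G_{\al\al}G_{xy_{1}}$, $\pd_{h_{\al x}}G_{xy_{i}}=-G_{x\al}G_{xy_{i}}$, $\pd_{h_{\al x}}\overline G_{xy_{i}'}=-\overline G_{\al y_{i}'}\overline G_{xx}$, $\pd_{h_{\al x}}G_{w_{i}x}=-G_{w_{i}\al}G_{xx}$, and $\pd_{h_{\al x}}\overline G_{w_{i}'x}=-\overline G_{\al x}\overline G_{w_{i}'x}$. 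The complex Gaussian IBP formula is $P_{x}[h_{x\al}F]=s_{x\al}P_{x}[\pd_{h_{\al x}}F]$ for $\al\ne x$, while $P_{x}[h_{xx}F]=s_{xx}P_{x}[\pd_{h_{xx}}F]$; both can be written uniformly in $\al$.

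\textbf{Proof of \eqref{gHw}.} The starting point is the row identity $(HG)_{xx}=1+zG_{xx}$, i.e.\ $\sum_{\al}h_{x\al}G_{\al x}=1+zG_{xx}$. Multiply by $f(G)$, apply $P_{x}$, and use IBP on the left-hand side; this yields
\[
-\sum_{\al}s_{x\al}P_{x}\bigl[G_{\al\al}G_{xx}f(G)\bigr]+\sum_{\al}s_{x\al}P_{x}\bigl[G_{\al x}\pd_{h_{\al x}}f(G)\bigr]=P_{x}[f(G)]+zP_{x}[G_{xx}f(G)].
\]
Writing $G_{xx}=m+(G_{xx}-m)$ on the right, and $G_{\al\al}G_{xx}=m^{2}+m(G_{\al\al}-m)+m(G_{xx}-m)+(G_{\al\al}-m)(G_{xx}-m)$ on the left (using $\sum_\al s_{x\al}=1$), the term $-m^{2}P_{x}[f(G)]$ cancels with $(zm+1)P_{x}[f(G)]$. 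What remains rearranges to
\[
(z+m)P_{x}[(G_{xx}-m)f(G)]=-mP_{x}\Big[\sum_{\al}s_{x\al}(G_{\al\al}-m)f(G)\Big]-P_{x}\Big[\sum_{\al}s_{x\al}(G_{\al\al}-m)(G_{xx}-m)f(G)\Big]+\sum_{\al}s_{x\al}P_{x}[G_{\al x}\pd_{h_{\al x}}f(G)].
\]
Multiplying through by $-m$ and using $-m(z+m)=1$ produces exactly \eqref{gHw}.

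\textbf{Proof of \eqref{gH0}.} Write $\cal G=G_{xy_{1}}\cal G'$ with $\cal G':=\cal G/G_{xy_{1}}$, and use the off-diagonal resolvent identity $zG_{xy_{1}}=\sum_{\al}h_{x\al}G_{\al y_{1}}$ (valid since $y_{1}\ne x$) to substitute and apply IBP:
\[
P_{x}[\cal G]=z^{-1}\sum_{\al}s_{x\al}P_{x}\bigl[\pd_{h_{\al x}}(G_{\al y_{1}}\cal G')\bigr].
\]
Using the derivative formulas listed above,
\[
\pd_{h_{\al x}}(G_{\al y_{1}}\cal G')=-G_{\al\al}\cal G+G_{\al y_{1}}\pd_{h_{\al x}}\cal G',
\]
where
\[
\pd_{h_{\al x}}\cal G'=-(k_{1}-1)G_{x\al}\cal G'-k_{4}\overline G_{\al x}\cal G'-\sum_{i=1}^{k_{2}}\frac{\overline G_{\al y_{i}'}\overline G_{xx}}{\overline G_{xy_{i}'}}\cal G'-\sum_{i=1}^{k_{3}}\frac{G_{w_{i}\al}G_{xx}}{G_{w_{i}x}}\cal G'+\frac{\cal G'}{f(G)}\pd_{h_{\al x}}f(G).
\]
Split $\sum_{\al}s_{x\al}G_{\al\al}=m+\sum_{\al}s_{x\al}(G_{\al\al}-m)$, which produces the self-consistent contribution $-z^{-1}m\cal G=\tfrac{m^{2}}{1+m^{2}}\cal G$; transpose it to the left and multiply through by $1+m^{2}$. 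The coefficient becomes $(1+m^{2})z^{-1}=-m$, giving the compact identity
\[
P_{x}[\cal G]=m\sum_{\al}s_{x\al}P_{x}[(G_{\al\al}-m)\cal G]-m\sum_{\al}s_{x\al}P_{x}\bigl[G_{\al y_{1}}\pd_{h_{\al x}}\cal G'\bigr].
\]
Substituting the expression for $\pd_{h_{\al x}}\cal G'$ and finally expanding $m\overline G_{xx}=|m|^{2}+m(\overline G_{xx}-\overline m)$ and $mG_{xx}=m^{2}+m(G_{xx}-m)$ in the two terms with $\overline G_{xx}$ and $G_{xx}$ splits each into a leading term (with prefactor $|m|^{2}$ or $m^{2}$) plus a light-weight subleading term (with prefactor $m$). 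Matching these against the nine terms on the right of \eqref{gH0} concludes the proof.

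\textbf{Main obstacle.} There is no deep step; the whole argument is one careful IBP computation. The only delicate points are (i) getting the signs right when differentiating $\overline G$-entries via $\overline G_{ab}(z)=G_{ba}(\overline z)$, and (ii) keeping the coefficient arithmetic $(1+m^{2})z^{-1}=-m$ straight so as to produce the exact prefactors $|m|^{2}$, $m^{2}$, $m$ appearing in \eqref{gH0} rather than the raw $z^{-1}$ or $m/(1+m^{2})$ one gets before the final rearrangement.
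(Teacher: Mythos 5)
Your computation is correct and is exactly the argument the paper intends: the paper's own "proof" simply cites Gaussian integration by parts (Lemmas 3.5 and 3.10 of Part I), and your derivation — starting from $(HG)_{xx}=1+zG_{xx}$ resp. $zG_{xy_1}=\sum_\al h_{x\al}G_{\al y_1}$, integrating by parts, and absorbing the leading self-consistent term via $-m(z+m)=1$ — reproduces all prefactors $|m|^2$, $m^2$, $m$ in \eqref{gHw} and \eqref{gH0} correctly (one can also cross-check them against the $Q$-versions in Definitions \ref{Ow-def} and \ref{multi-def} via $P_x=1-Q_x$). No gap.
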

\begin{proof}
These two identities both follow from Gaussian integration by parts, and have been proved in Lemma 3.5 and Lemma 3.10 of \cite{PartI_high}.
\end{proof}

Similar to Definition \ref{Ow-def}, we can define an operator $\wt{\cal O}^{(x)}_{weight}$ acting on a graph $P_x(\cal G)$ as 
$$	\wt{\cal O}_{weight}^{(x)}[ P_x(\cal G)]:={\cal O}^{(x),1}_{weight} \circ\cal O_{dot} \circ  \wt{\cal O}^{(x),2}_{weight} \circ {\cal O}^{(x),1}_{weight}  [P_x(\cal G)],$$
where $\wt{\cal O}^{(x),2}_{weight}$ is an operator defined from \eqref{gHw}. Also similar to Definition \ref{multi-def}, we can define an operator \smash{$\wt{\cal O}^{(x)}_{multi-e}$} acting on a graph $P_x(\cal G)$ as 
$$\wt{\cal O}_{multi-e}^{(x)}[ P_x(\cal G) ]:= \cal O_{dot} \circ  \wt{\cal O}_{multi-e}^{(x),1} [P_x(\cal G)], $$ 
where $\wt{\cal O}_{multi-e}^{(x),1}$ is an operator defined from \eqref{gH0} (the $k_1=0$ case can be handled in the same way as cases (ii)--(iv) of Definition \ref{multi-def} and we omit the details).

%We will apply the $\wt{\cal O}_{weight}^{(x)}$ and $\wt{\cal O}^{(x),2}_{weight}$ repeatedly to expand $P_x (\wt \Gamma_{\omega})$. 

If a normal regular graph $P_x(\cal G)$ contains weights or solid edges attached to $x$, then we will apply \smash{$\wt{\cal O}_{weight}^{(x)}$ or $\wt{\cal O}^{(x)}_{multi-e}$} to it. We repeat these operations until all graphs do not contain weights or solid edges attached to $x$, and hence obtain the following lemma.

\begin{lemma}\label{Q_lemma2}
Suppose $\cal G$ is a normal regular graph without $P/Q$, $(x)$ or $(-1)$ labels. % $G^{(x)}$ entries, and $(G_{xx})^{-1}$ or $(\overline G_{xx})^{-1}$ weights. 
Then we can expand $P_x(\cal G)$ into a sum of $\OO(1)$ many graphs: 
$$P_x(\cal G) = \sum_{\omega}  P_x(\cal G_{\omega}) +  \cal G_{err}\ ,$$
where $\cal G_{\omega}$ are normal regular graphs without weights or edges attached to $x$ (and without $P/Q$, $(x)$ or $(-1)$ labels), and $\cal G_{err}$ is a sum of normal regular graphs of scaling order $>D$. 
\end{lemma}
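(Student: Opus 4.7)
The plan is to iterate the two $P_x$-preserving operators $\wt{\cal O}_{weight}^{(x)}$ and $\wt{\cal O}_{multi-e}^{(x)}$ built from \eqref{gHw} and \eqref{gH0}, exactly in the spirit of how $\cal O_{weight}^{(x)}$ and $\cal O_{multi-e}^{(x)}$ were constructed in Section~\ref{sec localexp}. Starting from $P_x(\cal G)$, I would first invoke $\cal O^{(x),1}_{weight}$ to turn any regular weights at $x$ into $m$-terms plus light weights; if a light weight at $x$ remains, I apply \eqref{gHw}, and if $x$ still carries solid edges, I apply \eqref{gH0}. After each such step I follow up with $\cal O^{(x),1}_{weight}\circ\cal O_{dot}$ to keep the output in normal regular form. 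The iteration stops for a graph when either (a) no weights or solid edges are attached to $x$, in which case it contributes to $\sum_\omega P_x(\cal G_\omega)$, or (b) the scaling order has been driven above $D$, in which case it is placed in $\cal G_{err}$.

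To justify termination, I would equip each intermediate graph with the lexicographic measure $(\max(D-\ord,0),\,\#\{\text{weights at }x\}+\deg(x))$ with lex order decreasing. A close reading of \eqref{gHw} shows that its three right-hand-side terms split into: (i) one term in which the light weight $(G_{xx}-m)$ is transferred to the new atom $\al$, so the count of weights at $x$ drops by one; (ii) one term which retains the light weight at $x$ but inserts an extra light weight $(G_{\al\al}-m)$, strictly raising $\ord$; and (iii) a $\partial_{h_{\al x}}$ term which removes the light weight at $x$ at the cost of introducing $G_{\al x}$ together with a derivative factor whose scaling order is still at least that of the original. The parallel analysis of \eqref{gH0} yields the same trichotomy for $\wt{\cal O}^{(x)}_{multi-e}$: the leading $|m|^2 S$ and $m^2 S$ terms strictly drop $\deg(x)$, the light-weight terms strictly raise $\ord$, and the $\partial_{h_{\al x}}f$ term is likewise covered. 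Hence each iteration strictly decreases the lexicographic measure, the procedure halts in $\OO(\ord(\cal G)+D)$ steps, and since each step spawns $\OO(1)$ new graphs the final output has $\OO(1)$ terms as claimed.

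The main obstacle will be controlling the differentiation terms $\partial_{h_{\al x}}f(G)$ in \eqref{gHw} and \eqref{gH0}, because a derivative hitting a factor $G_{*x}$ or $G_{x*}$ inside $f$ can in principle reintroduce solid edges or weights at $x$ and threaten an infinite loop. The resolution is to track that the product $s_{x\al}G_{\al x}\partial_{h_{\al x}}f(G)$, after applying $\cal O_{dot}$ to handle coincidences between $\al$ and existing atoms, always carries an extra waved edge plus an off-diagonal solid edge compared with the ancestor graph, so its scaling order is strictly larger; this is exactly what keeps the lex measure decreasing. A secondary bookkeeping point is to verify that $\wt{\cal O}^{(x)}_{weight}$ and $\wt{\cal O}^{(x)}_{multi-e}$ never introduce $(x)$ or $(-1)$ labels (they do not, since \eqref{gHw} and \eqref{gH0} are pure Gaussian integration by parts identities) and that the final $\cal O^{(x),1}_{weight}$ at the end eliminates any residual regular weights produced when $\cal O_{dot}$ merges $\al$ with another atom, so that the resulting $\cal G_\omega$ are genuinely normal regular and free of $P/Q$, $(x)$, $(-1)$ labels.
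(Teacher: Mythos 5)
Your proposal is correct and follows essentially the same route as the paper: the paper's proof is a short sketch observing that every graph produced by $\wt{\cal O}_{weight}^{(x)}$ or $\wt{\cal O}^{(x)}_{multi-e}$ either has strictly higher scaling order or strictly fewer weights/solid edges attached to $x$, and then iterates $C_{\cal G,D}$ times. Your lexicographic measure and the explicit check that the $\partial_{h_{\al x}}f$ terms strictly raise the scaling order (so they cannot cause a loop even though they may reattach solid edges to $x$) simply make that terse dichotomy-and-termination argument precise.
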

\begin{proof}
We only describe the main argument for the proof without writing down all the details. Given a normal regular graph $P_x(\cal G)$, it is easy to check that every graph in \smash{$\wt{\cal O}_{weight}^{(x)}[P_x(\cal G)]$ or $\wt{\cal O}^{(x)}_{multi-e}[P_x(\cal G)]$}, say $P_x(\cal G_1)$, satisfies one of the following properties:
\begin{itemize}
	\item $P_x(\cal G_1)$ has a strictly higher scaling order than $P_x(\cal G_0)$;
	\item $P_x(\cal G_1)$ contains strictly fewer weights or edges attached to $x$. 
\end{itemize}
Hence applying the operations \smash{$\wt{\cal O}_{weight}^{(x)}$ or $\wt{\cal O}^{(x)}_{multi-e}$} for $C_{\cal G, D}$ many times,  each resulting graph either is of scaling order $>D$, or has no weights or edges attached to $x$. Here $C_{\cal G, D}\in \N$ is a large constant depending only on $D$ and the number of weights and edges attached to atom $x$ in $\cal G$.
\end{proof}

With Lemma \ref{Q_lemma2}, in Step 2 we can expand $Q_x(\wt \Gamma_{\omega})$ as
\be\label{PxGamma} Q_x(\wt \Gamma_{\omega})=   \wt \Gamma_{\omega} + \sum_{\zeta}P_x(\wt \Gamma_{\omega,\zeta}) + \cal G_{err},\ee
where $\Gamma_{\omega,\xi}$ are normal regular graphs without weights or edges attached to $x$, and $\cal G_{err}$ is a sum of graphs of scaling order $>D$.

\medskip
\noindent{\bf Step 3:} In this step, we expand the graphs $P_x(\wt \Gamma_{\omega,\zeta})$ in \eqref{PxGamma}. Suppose $\cal G$ is a normal regular graph without weights or edges attached to $x$. Using the resolvent identity \eqref{resol_exp0}, we can write $\cal G$ into a similar form as in \eqref{decompose_gamma}:
$$\cal G = \cal G^{(x)} + \sum_\omega \cal G_{\omega},$$
where $\cal G^{(x)}$ is a graph whose weights and solid edges all have the same $(x)$ label, and every graph $\cal G_{\omega}$ has a strictly higher scaling order than $\cal G$, at least two new solid edges connected with $x$, and some weights with label $(-1)$ on $x$. Then we can expand $P_x(\cal G)$ as 
$$P_x(\cal G) = \cal G- \sum_\omega \cal G_{\omega} + \sum_\omega P_x(\cal G_{\omega}).$$
Next, as in Step 1, we apply \eqref{resol_reverse} and \eqref{weight_taylor} to remove all $G^{(x)}$, $(G_{xx})^{-1}$ and $(\overline G_{xx})^{-1}$ entries from $\cal G_{\omega}$. Finally, we apply \smash{${\cal O}^{(x),1}_{weight} \circ\cal O_{dot}$} to all resulting graphs to get a sum of normal regular graphs without regular weights on $x$. In sum, we obtain the following result. 

\begin{lemma}\label{Q_lemma3}
Suppose $\cal G$ is a normal regular graph without $P/Q$, $(x)$ or $(-1)$ labels. Moreover, suppose $\cal G$ has no weights or edges attached to $x$. Then we can expand $P_x(\cal G)$ into a sum of $\OO(1)$ many graphs: 
$$P_x(\cal G) = \cal G+ \sum_\xi  \cal G_{\xi}  + \sum_\gamma  P_x(\wt{\cal G}_{\gamma}) + \cal G_{err},$$
where \smash{$\cal G_{\xi}$ and $\wt{\cal G}_{\gamma}$} are normal regular graphs without $P/Q$, $(x)$ or $(-1)$ labels, and $\cal G_{err}$ is a sum of normal regular graphs of scaling order $>D$. Moreover, $\cal G_\xi$ and $\wt{\cal G}_{\gamma}$ are of strictly higher scaling order than $\cal G$, and have light weights or edges attached to the atom $x$.
\end{lemma}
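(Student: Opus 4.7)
The argument follows the three-step outline sketched in the text just before the statement; the only work is to check that the bookkeeping of labels, scaling orders, and attachments at $x$ comes out as claimed. Here is how I would carry it out.

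First, because $\cal G$ has no weight and no solid edge incident to $x$, every resolvent entry in $\cal G$ is of the form $G_{x_1 x_2}$ or $\overline G_{x_1 x_2}$ with $x_1,x_2\ne x$. I would apply the Schur complement identity \eqref{resol_exp0} (and its conjugate) to each such entry, expand the product, and collect terms as
\[ \cal G \;=\; \cal G^{(x)} \;+\; \sum_\omega \cal G_\omega, \]
where $\cal G^{(x)}$ is the graph obtained by simultaneously carrying the $(x)$-label on \emph{every} solid edge and weight (hence $\cal G^{(x)}$ is measurable with respect to $H^{(x)}$), and each $\cal G_\omega$ contains at least one factor of the form $G_{x_1 x}G_{x x_2}/G_{xx}$ (or its conjugate). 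Since each such replacement introduces two additional off-diagonal $G$-edges incident to $x$ plus a $(-1)$-weight on $x$, we have $\ord(\cal G_\omega)>\ord(\cal G)$. Using $P_x(\cal G^{(x)})=\cal G^{(x)}$ together with $\cal G^{(x)} = \cal G - \sum_\omega \cal G_\omega$, this yields
\[ P_x(\cal G) \;=\; \cal G \;-\; \sum_\omega \cal G_\omega \;+\; \sum_\omega P_x(\cal G_\omega). \]

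Second, I would eliminate the $(x)$-decorations and the $(-1)$-weights that now appear inside every $\cal G_\omega$. As in Step 1 of the $Q$-expansion, I would substitute \eqref{resol_reverse} into each $G^{(x)}$-entry and apply the truncated Taylor expansion \eqref{weight_taylor} (to order $D$) to each $(G_{xx})^{\pm 1}$ or $(\overline G_{xx})^{\pm 1}$ weight. Because $\cal G_\omega$ contains only $\OO(1)$ many entries of these types, this expansion terminates with $\OO(1)$ many graphs, while the remainders $\cal W^{(x)}_D$ produce terms of scaling order $>D$, all absorbed into $\cal G_{err}$. A final application of ${\cal O}^{(x),1}_{weight}\circ \cal O_{dot}$ normalizes each surviving graph into a normal regular graph carrying neither the $(x)$ nor the $(-1)$ label. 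The summands produced outside $P_x$ form the $\cal G_\xi$ family; those produced inside $P_x$ form the $P_x(\wt{\cal G}_\gamma)$ family.

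Finally, I would verify the two tail claims: $\ord(\cal G_\xi),\ord(\wt{\cal G}_\gamma)>\ord(\cal G)$, and each of these graphs retains at least one light weight or solid edge incident to $x$. Both follow by bookkeeping: the two extra off-diagonal solid edges $G_{x_1 x},G_{x x_2}$ introduced by \eqref{resol_exp0} persist through Step 2 (the only operations there act on unrelated entries), and the subsequent $\cal O_{dot}$ can at worst merge one of their endpoints with $x$, converting the edge into a light weight on $x$ rather than destroying the attachment. Since neither operation in Step 2 can reduce the scaling order (the Taylor remainders are pushed into $\cal G_{err}$, and \eqref{resol_reverse} only adds additional solid edges), the strict inequality $\ord>\ord(\cal G)$ inherited from $\cal G_\omega$ is preserved.

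The main point to be careful about is the \emph{termination} and \emph{finiteness} of the procedure: one must confirm that applying \eqref{resol_reverse} inside Step 2 does not re-introduce structures that would force a further round of \eqref{resol_exp0}. This is fine because \eqref{resol_reverse} is only used to rewrite genuine $G^{(x)}$-entries back into $G$-entries (with explicit $G_{xx}^{-1}$ correction handled by Taylor expansion), and the new $G$-entries produced this way carry no $(x)$-label to begin with, so the whole process halts after $\OO(1)$ many steps with the advertised structure.
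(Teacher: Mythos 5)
Your proposal is correct and follows the same route the paper takes: the paper's justification of this lemma is exactly the construction in the paragraph preceding it — decompose $\cal G=\cal G^{(x)}+\sum_\omega\cal G_\omega$ via \eqref{resol_exp0}, use $P_x(\cal G^{(x)})=\cal G^{(x)}=\cal G-\sum_\omega\cal G_\omega$, then remove the $(x)$ and $(-1)$ labels via \eqref{resol_reverse} and \eqref{weight_taylor} and normalize with ${\cal O}^{(x),1}_{weight}\circ\cal O_{dot}$. Your extra bookkeeping on the scaling-order increase and the persistence of attachments at $x$ (noting that an $\cal O_{dot}$ identification cannot consistently remove both new solid edges) only makes explicit what the paper leaves implicit.
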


With this lemma, we can expand $P_x(\wt \Gamma_{\omega,\zeta}) $ in \eqref{PxGamma} as
$$P_x(\wt \Gamma_{\omega,\zeta}) =\wt \Gamma_{\omega,\zeta}+ \sum_\xi  \Gamma_{\omega,\zeta,\xi} + \sum_\gamma  P_x (\wt \Gamma_{\omega,\zeta,\gamma} ) +\cal G_{err},$$
where $\Gamma_{\omega,\zeta,\xi}$ and $\wt \Gamma_{\omega,\zeta,\gamma} $ respectively satisfy the same properties as 
$ \cal G_{\xi}$ and $\wt{\cal G}_{\gamma} $ in Lemma \ref{Q_lemma1}. Then we apply Step 2 to the graphs $P_x(\wt \Gamma_{\omega,\zeta,\gamma} )$ again. Repeating Steps 2 and 3 for $\OO(1)$ many times, we can finally expand $P_x (\wt \Gamma_{\omega})$ into a sum of graphs without $P_x$ labels plus graphs of scaling order $>D$. 

\medskip
 
Combining Steps 1--3, we obtain the following lemma on $Q$-expansions. 

\begin{lemma}[$Q$-expansions]\label{Q_lemma}
Suppose $\Gamma_0$ and $\wt\Gamma_0$ in \eqref{QG} are normal regular graphs without $P/Q$, $(x)$ or $(-1)$ labels. %\hty{ this sentence should be corrected as previous one. We already said that normal regular graph cannot contain any $G^{(x)}$, $(G_{xx})^{-1}$ or $(\overline G_{xx})^{-1}$ labels. Isn't it?  } 
Then $\cal G_0$ can be expanded into a sum of $\OO(1)$ many graphs:
\be\label{G0Q} \cal G_0 = \sum_\omega \cal G_\omega + \sum_\zeta Q_x(\wt{\cal G}_\zeta)  + \cal G_{err},\ee
where $\cal G_{\omega}$ and $\wt{\cal G}_\zeta$ are normal regular graphs without $P/Q$, $(x)$ or $(-1)$ labels, and $\cal G_{err}$ is a sum of normal regular graphs of scaling order $>D$. Moreover, these graphs satisfy the following properties.
\begin{itemize}
	\item[(i)] Every graph on the right-hand side of \eqref{G0Q} has a scaling order $\ge \ord(\cal G_0)$.
	
	\item[(ii)] If there is a new atom in a graph on the right-hand side of \eqref{G0Q}, then it is connected to $x$ through a path of waved edges. Hence the expansion \eqref{G0Q} is a local expansion. 
	
	\item[(iii)] If $\cal G_0$ is doubly connected, then all graphs on the right-hand side of \eqref{G0Q} are also doubly connected.
	
	\item[(iv)] If $\Gamma_0$ does not contain any weights or edges attached to $x$, then the scaling order of  $\cal G_\omega$ is  strictly higher than $\ord(\cal G_0)$ for every $\omega$.
	%then each $\cal G_\omega$ is of strictly higher scaling order than $\cal G_0$. \hty{ 	then the scaling order of  $\cal G_\omega$ is  strictly higher  than that of $\cal G_0$ for every $\omega$. } 
 Furthermore,  $\cal G_\omega$ contains at least one atom that belongs to the original graph $\Gamma_0$ and satisfies the following property: it is connected to $x$ through a solid edge or it has been merged with $x$.

\end{itemize}
%The above $Q$-expansion is an expansion of the commutator $[\Gamma, Q_y]$. It has the following important properties: (i) the scaling order of any graph $\sum_x \Gamma_\omega Q_y(\cal G_0)$ is strictly higher than $\cal G$; (ii) for any $\omega$,  at least one weight or solid edge in $\Gamma$ is replaced by two solid edges connected to $y$ in  $\Gamma_\omega$. 
\end{lemma}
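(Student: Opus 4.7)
The plan is to combine the three steps already carried out in the discussion preceding the lemma (summarized in Lemmas \ref{Q_lemma1}, \ref{Q_lemma2}, and \ref{Q_lemma3}) into a single finite iteration. First, I would apply Step 1 to $\cal G_0$: using the resolvent expansion \eqref{resol_exp0} on $\Gamma_0$, followed by the partial-expectation identity $Q_x = 1 - P_x$ applied to the $(x)$-independent factor, together with the reverse expansion \eqref{resol_reverse} and the weight Taylor expansion \eqref{weight_taylor} to remove all $G^{(x)}$, $(G_{xx})^{-1}$, $(\overline G_{xx})^{-1}$ entries that were introduced, and finally $\cal O^{(x),1}_{weight} \circ \cal O_{dot}$ to normalize. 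By Lemma \ref{Q_lemma1} this yields
\[
\cal G_0 \;=\; \sum_{\omega} \Gamma_\omega\, Q_x(\wt\Gamma_\omega) \;+\; \sum_{\zeta} Q_x(\cal G_\zeta) \;+\; \cal G_{err},
\]
where the middle term is already a sum of $Q$-graphs suitable for the right-hand side of \eqref{G0Q}.

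Next, I would remove the $Q_x$ label in each $\Gamma_\omega Q_x(\wt\Gamma_\omega)$ by writing $Q_x = 1 - P_x$ and then processing every $P_x(\wt\Gamma_\omega)$ with an alternating application of Steps 2 and 3. In Step 2, invoking Lemma \ref{Q_lemma2}, I repeatedly apply $\wt{\cal O}^{(x)}_{weight}$ and $\wt{\cal O}^{(x)}_{multi-e}$ (built from \eqref{gHw} and \eqref{gH0}) until every surviving $P_x$-graph has no weight or edge attached to $x$; each such application either strictly raises the scaling order or strictly decreases the number of weights/edges at $x$, so the process terminates after a finite number of rounds (bounded by a constant depending only on $D$ and the size of $\wt\Gamma_\omega$). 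Then Step 3, via Lemma \ref{Q_lemma3}, re-introduces (possibly different) weights/edges at $x$ while strictly increasing the scaling order of the new $P_x$-graphs. Alternating Steps 2 and 3, the scaling orders of all remaining $P_x$-graphs grow by at least one per round, so after at most $\OO(D)$ rounds every $P_x$-term either has been expanded into pieces without any $P_x$ label or is absorbed into $\cal G_{err}$. Collecting all terms produces the decomposition \eqref{G0Q}.

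It remains to verify the four properties. Property (i) follows because every single operation $\cal O_{dot}$, $\cal O^{(x),1}_{weight}$, $\wt{\cal O}^{(x),2}_{weight}$, and $\wt{\cal O}^{(x)}_{multi-e}$ produces graphs of scaling order at least that of their input; the resolvent expansion \eqref{resol_exp0} does the same since each substituted piece contains more off-diagonal edges than it removes. Property (ii) is immediate from the explicit formulas: every new atom $\alpha$ or $\beta$ arising in \eqref{gHw}, \eqref{gH0}, in Definition \ref{Ow-def}, or in the identities \eqref{resol_exp0}, \eqref{resol_reverse}, is tied to $x$ by a neutral or $S^\pm$ waved edge, so it lies in the molecule of $x$; hence the whole expansion is local. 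Property (iii) then follows from Lemma \ref{lem localgood}, because $\cal O_{dot}$, $\cal O^{(x),1}_{weight}$ and the weight/edge expansions all preserve double connectedness, and the $G$-edge splittings produced by \eqref{resol_exp0} replace a single blue or red solid edge between two molecules by a path of solid edges passing through the molecule of $x$, which cannot disconnect the blue spanning tree. Property (iv) is the content of the second statement of Lemma \ref{Q_lemma1}: if $\Gamma_0$ carries no weight or edge at $x$, then at least one $G$-entry in $\Gamma_0$ must have been split by \eqref{resol_exp0} to produce a nontrivial $\Gamma_\omega$, which forces both a strict scaling-order increase and the existence of an atom of $\Gamma_0$ now adjacent to $x$ (or merged with $x$ by a dotted edge from $\cal O_{dot}$); this property is inherited by all graphs produced by the subsequent local operations, since they only add structure near $x$.

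The main obstacle, and the place where care is needed, is showing that the alternating Step~2 / Step~3 loop actually terminates with scaling orders controlled uniformly in the size of $\wt\Gamma_\omega$ by a constant depending only on $D$. One must check that in Step~2 the ``gain" obtained from reducing the number of weights/edges at $x$ is not undone by the ``loss" in Step~3 when \eqref{resol_exp0} reintroduces edges at $x$: the key point is that Step~3 only reintroduces them at \emph{strictly higher} scaling order, so the induction parameter in the combined loop (scaling order plus weighted count of $x$-incident components) decreases monotonically modulo an $\OO(D)$ budget. Once this bookkeeping is set up, \eqref{G0Q} and all four properties follow by assembling Lemmas \ref{Q_lemma1}--\ref{Q_lemma3}.
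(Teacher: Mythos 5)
Your proposal is correct and follows essentially the same route as the paper: assemble Lemmas \ref{Q_lemma1}--\ref{Q_lemma3}, iterate Steps 2 and 3 until every surviving $P_x$-graph either loses its label or exceeds scaling order $D$ (termination being guaranteed because Lemma \ref{Q_lemma3} only reintroduces $x$-incident structure at strictly higher scaling order), and read off properties (i)--(iv) from the constituent lemmas and the locality of the operations. The termination bookkeeping you flag as the main obstacle is exactly the point the paper relies on, and your resolution of it matches the paper's.
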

\begin{proof}
This lemma is an easy consequence of Lemmas \ref{Q_lemma1}, \ref{Q_lemma2} and \ref{Q_lemma3}. We now give the main arguments without writing down all the details.

First, we show that the expansion \eqref{G0Q} exists. Given $\cal G_0$, we first expand it as in \eqref{Q_eq1}. Then we repeat Steps 2 and 3 to remove the $Q_x$ label in \smash{$\Gamma_\omega Q_x(\wt \Gamma_{\omega}) $}. By Lemmas \ref{Q_lemma2} and \ref{Q_lemma3}, after an iteration of Steps 2 and 3, every new graph either has no $P/Q$ labels or has a strictly higher scaling order than the original graph. Hence after repeating Steps 2 and 3 for $D$ many times, we can expand \smash{$\Gamma_\omega Q_x(\wt \Gamma_{\omega})$} into a sum of graphs without $P/Q$ labels and graphs of scaling orders $>D$. This gives \eqref{G0Q}.

The properties (i) and (ii) are trivial. The proof of (iii) is similar to the one for Lemma \ref{lem localgood}. The property (iv) follows from Lemma \ref{Q_lemma1}.
\end{proof}
 
The property (iv) of Lemma \ref{Q_lemma} is crucial to our global expansion strategy as we will explain in Section \ref{sec global}. Graphs with $(x)$ and $(-1)$ labels only appear in $Q$-expansions, and will not appear in any other part of the expansion process. Hence for the rest of this paper, unless mentioned otherwise, our graphs are assumed to have no $(x)$ and $(-1)$ labels.  
 
%\iffalse
% 
%In Figure \ref{expan_Aho3}, we show four examples of local expansions to obtain the 6th order deterministic graphs from $\cal A_{ho}^{(>3)}$ in \eqref{thirduniversal}.
%%\begin{center}
%%\be  \nonumber
%%\parbox[c]{0.8\linewidth}
%%{\includegraphics[width=\linewidth]{6th_order.png}} 
%%\ee
%%\end{center}
%%\begin{center}
%%\be  \nonumber
%%\parbox[c]{0.8\linewidth}{\includegraphics[width=\linewidth]{6th_order2.png}} 
%%\ee
%%\end{center}
%\begin{figure}[htb!]
%\begin{center}
%{\includegraphics[width=10cm]{6th_order.png}}
%{\includegraphics[width=10cm]{6th_order2.png}} 
%\end{center}
%\caption{Expansions of $\cal A_{ho}^{(>3)}$.}\label{expan_Aho3}
%\end{figure}
%  
%\fi

 \section{Pre-deterministic property}\label{sec pre}

%There is only one basic global expansion, that is, we apply a lower order $T$-expansion as in Definition \ref{def genuni} to a (generalized) $T$-variable in each locally standard graph. 

The choice of the standard neutral atom is one of the most delicate parts of the global expansion defined in Section \ref{subsec global}, and this subsection is devoted to this issue. Our goal is to define a rule to choose the $t$ variable for every locally standard graph, so that the  doubly connected property is preserved by global expansions. In this section, we will define a specific order of blue solid edges, called the \emph{pre-deterministic order}, and we will show that the doubly connected property is preserved if we expand blue solid edges according to this order. 
% \hty{The term ``pre-deterministic" refers to that all pre-deterministic graphs will contribute to deterministic graphs after integration procedure prescribed. (might add some more explanation here)}
As a convention, expansion of a blue solid edge refers to the expansion of a $t$-variable containing this edge.
%In contrast to the local expansions, the global expansions may create new molecules, which can break the doubly connected property of the new graphs. To avoid this issue and to make sure that we can keep expanding until all graphs are written into the form of the right-hand side of \eqref{mlevelTgdef incomp}, it is crucial to choose the proper $T$-variable to expand at each step. 
%To get prepared for the definition of globally standard graphs, we first introduce three new graphical concepts: isolated subgraphs, redundant edges, and pre-deterministic property. 
%As the double-net property, all the properties defined in this section are global properties, so we will only consider the molecular graphs. %because they are all properties of the molecular graphs. 

In this section, we mostly consider molecular graphs with all red solid edges removed, because these edges are not used in the doubly connected property.
% On the other hand, new molecules are only created through the applications of  in Definition \ref{def genuni}, which we will refer to as the ``global expansions". {\cor Before giving the strategy of global expansions, we first introduce two new graphical concepts: the isolated subgraphs, which are the main trouble in the expansions, and the pre-deterministic graphs, which enforces an {\it ordered way} of global expansions. We will define the \emph{globally standard graphs} using these two properties.} 

\subsection{Isolated subgraphs}

Using Definition \ref{def 2net}, it is trivial to prove the following simple property. 
 
\begin{claim}\label{trivial_claim1}
Let $\cal G$ be a doubly connected graph satisfying Definition \ref{def 2net}. Then in its molecular graph $\cal G_{\cal M}$ with all red solid edges removed, any subset of internal molecules are connected to other molecules (including external molecules) through at least one blue solid edge and one $\dashed$ edge, or at least two $\dashed$ edges. 
\end{claim}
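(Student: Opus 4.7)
The plan is to unpack the doubly connected property directly; there is essentially one line of content in the claim. Fix a subset $S$ of internal molecules of $\cal G_{\cal M}$ (with red solid edges removed); the meaningful case is when $S$ is a proper nonempty subset, since otherwise the only ``other molecules'' are external and the spanning trees in Definition \ref{def 2net} do not make assertions across the external boundary. Let $T_{black}\subset \cal B_{black}$ and $T_{blue}\subset \cal B_{blue}$ be the two spanning trees on the internal molecules guaranteed by the doubly connected hypothesis.

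First I would apply the standard spanning-tree cut argument to $T_{black}$: because $T_{black}$ is connected and spans all internal molecules, removing the set of edges of $T_{black}$ with both endpoints inside $S$ (or both outside $S$) cannot disconnect the tree, so at least one edge of $T_{black}$ must have exactly one endpoint in $S$. This produces a $\dashed$ boundary edge $e_1$ of $S$. Repeating the argument with $T_{blue}$ produces a second boundary edge $e_2$, which by the definition of $\cal B_{blue}$ is either a blue solid edge or a $\dashed$ edge.

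The only substantive step is to observe that $e_1\neq e_2$, which is immediate from the disjointness condition $\cal B_{black}\cap \cal B_{blue}=\emptyset$ in Definition \ref{def 2net}. A two-line case split then concludes: if $e_2$ is a blue solid edge, then $\{e_1,e_2\}$ supplies one blue solid and one $\dashed$ boundary edge of $S$; if $e_2$ is a $\dashed$ edge, then $e_1$ and $e_2$ are two distinct $\dashed$ boundary edges of $S$. I do not foresee any real obstacle, which is consistent with the authors calling this a ``trivial'' claim; the only care required is to carry out the cut argument inside the internal-molecule subgraph, where the spanning-tree hypothesis actually applies, and then note that boundary edges to external molecules (if any) only strengthen the conclusion.
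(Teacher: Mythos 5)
Your proof is correct and is exactly the standard spanning-tree cut argument that the authors have in mind when they state this claim without proof (the paper only remarks that it is "trivial to prove" from Definition \ref{def 2net} and gives no argument): each of $\cal B_{black}$ and $\cal B_{blue}$ contains a spanning tree of the internal molecules, so each contributes a distinct cut edge for any proper nonempty subset, and disjointness plus the edge types gives the stated dichotomy. Your remark restricting to proper subsets is also the right reading, since for $S$ equal to all internal molecules the conclusion requires the external edges guaranteed elsewhere (e.g.\ property (ix) of Definition \ref{defn genuni}) rather than the doubly connected property itself.
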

%\begin{proof}
%This claim follows immediately from the properties of the two nets $\cal B_1$ and $\cal B_2$ in Definition \ref{def 2net}. 
%\end{proof}

Then we define an isolated subgraph to be a subgraph induced on a subset of molecules that are connected to other molecules through \emph{exactly two edges} in a molecular graph without red solid edges. 

\begin{definition}[Isolated subgraphs]\label{defn iso}
Let $\cal G$ be a doubly connected graph and $\cal G_{\cal M}$ be its molecular graph with all red solid edges removed. %satisfying Definition \ref{def 2net}. 
\begin{itemize}
\item {\bf Isolated subsets of molecule.} A subset of internal molecules in $\cal G$, say $\Pol$, is \emph{isolated} if and only if $\Pol$ is connected to $\Pol^c$ exactly by two edges %\hty{ change "by only" to "exactly by "}  
in $\cal G_{\cal M}$. Here the complement $\Pol^c$ contains the internal molecules that are not in $\Pol$ and all external molecules. 

 %For a doubly connected graph, consider a subset $A$ of molecules.  It is called isolated if $A$ is connected with $A^c$ only through two edges in the {\it molecular graph}. 
%(Note that by doubly connected property, there are at least two edges between $A$ and $A^c$---an edge in the black net and an edge in the blue net.) 

\item {\bf Isolated subgraph.} An isolated subgraph of $\cal G$ is a subgraph induced on an isolated subset of molecules. Here by the ``subgraph induced on a subset of molecules", we mean the subgraph induced on the atoms in these molecules (recall Definition \ref{def_sub}).
\end{itemize}
Given any isolated subset of molecules $\Pol$, we can define an isolated subgraph induced on it, denoted by $\Iso_{\Pol}$. 
%$\Iso= \cal G|_{\Pol}$. \hty{ maybe use just ${\cal I}_{\bf M}$}   
Conversely, given an isolated subgraph $\Iso$, the molecules in it form an isolated subset of molecules, denoted by $\Pol(\Iso)$. Furthermore, we define the concept of proper isolated subgraphs.
\begin{itemize}

\item{\bf Proper isolated subgraph.} Given any subset of molecules (which is not necessarily isolated), say $\cal S$, $\Pol_1\subset \cal S$ is called a \emph{proper isolated subset} if $\Pol_1\ne \cal S$ and $\Pol_1$ is an isolated subset of molecules in $\cal G$. Moreover, %if $ {\cal G}|_{\cal S}$ is the subgraph induced on $\cal S$, 
a proper isolated subgraph of ${\cal G}|_{\cal S}$ refers to a subgraph induced on a proper isolated subset of molecules in $\cal S$.  
\end{itemize}

%If $\cal G_s$ is an isolated subgraph, then we define its {\bf closure} $\overline{\cal G}_s$ as the subgraph $\cal G_s$ plus the edges {\cor(including the ending atoms)} connecting $\cal G_s$ with its complement. 
 \end{definition}

%\item{\bf Maximal subgraph.} 

We call the set of all internal molecules the \emph{maximal subset of internal molecules}, denoted by $\Pol_{\max}$. Correspondingly, the subgraph of $\cal G$ induced on $\Pol_{\max}$ is called the \emph{maximal subgraph}, %{\cor(or call it the internal subgraph $\cal G_{int}$?)}, 
denoted by $\cal G_{\max}$. Whenever we say a proper isolated subset of molecules (resp. isolated subgraph) without specifying the superset, we mean a proper isolated subset (resp. isolated subgraph) of $\Pol_{\max}$ (resp. $\cal G_{\max}$).
%\begin{remark}
%Following the notations in Chemistry, the collections of atoms are called molecules, while the collections of molecules are called polymers. This is why we have used $\Pol$ to denote the isolated subset of molecules in the above definition. 
Given any subset of molecules $\cal S$, we say an atom is inside $\cal S$ if this atom belongs to a molecule in $\cal S$, and we say an edge is inside $\cal S$ if its ending atoms are inside $\cal S$. Given two disjoint subsets of molecules $\cal S_1$ and $\cal S_2$, an edge between $\cal S_1$ and $\cal S_2$ refers to an edge between an atom in $\cal S_1$ and an atom in $\cal S_2$.

Every graph in our expansions has at least one edge between an internal atom and an external atom (recall property (ix) in Definition \ref{defn genuni}). Then from Definition \ref{defn iso}, it is easy to get the following property. 

\begin{claim}\label{trivial_claim2}
Let $\cal G$ be a doubly connected graph such that there is at least one edge between its internal and external molecules in the molecular graph $\cal G_{\cal M}$ without red solid edges. Then for any two isolated subsets of molecules $\Pol_1$ and $\Pol_2$ in $\cal G$, we have that either one is a subset of the other or $\Pol_1\cap \Pol_2=\emptyset$. 
\end{claim}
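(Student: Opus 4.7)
The plan is to argue by contradiction. Suppose $\Pol_1$ and $\Pol_2$ overlap improperly, i.e.\ all three sets
\[
A := \Pol_1\cap\Pol_2,\qquad B := \Pol_1\setminus\Pol_2,\qquad C := \Pol_2\setminus\Pol_1
\]
are nonempty. Let $D$ be the complement of $\Pol_1\cup\Pol_2$ in the molecular graph $\cal G_{\cal M}$ with all red solid edges removed, and write $D=D_i\sqcup D_e$ for its decomposition into internal and external parts. For bookkeeping, abbreviate the six edge counts between these regions as $\alpha=e(A,B)$, $\beta=e(A,C)$, $\gamma=e(A,D)$, $\delta=e(B,C)$, $\mu=e(B,D)$, $\nu=e(C,D)$, and write $\gamma=\gamma_i+\gamma_e$, $\mu=\mu_i+\mu_e$, $\nu=\nu_i+\nu_e$ for the refinement according to whether the outer endpoint lies in $D_i$ or $D_e$.

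The first step uses the submodularity identity for the edge boundary applied to $\Pol_1$ and $\Pol_2$:
\[
|\partial\Pol_1|+|\partial\Pol_2| = |\partial(\Pol_1\cap\Pol_2)|+|\partial(\Pol_1\cup\Pol_2)|+2\,e(B,C),
\]
which together with the isolation hypothesis $|\partial\Pol_1|=|\partial\Pol_2|=2$ becomes $(\alpha+\beta+\gamma)+(\gamma+\mu+\nu)+2\delta=4$. The hypothesis of the claim forces $D_e\ne\emptyset$ and hence $D\ne\emptyset$, so Claim~\ref{trivial_claim1} applies both to $A$ and to $\Pol_1\cup\Pol_2=A\cup B\cup C$, giving $\alpha+\beta+\gamma\ge 2$ and $\gamma+\mu+\nu\ge 2$. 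Plugging back into the identity forces $\delta=0$ together with $\alpha+\beta+\gamma=\gamma+\mu+\nu=2$; in particular, $\alpha+\beta+\mu+\nu=4-2\gamma\le 4$.

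The second step invokes the full strength of the doubly connected property: by Definition~\ref{def 2net}, the subgraph on internal molecules admits two edge-disjoint spanning trees, and the Nash--Williams theorem therefore guarantees that for every partition of the internal vertex set into $p$ nonempty parts the number of inter-part edges is at least $2(p-1)$. The argument splits into two cases. If $D_i\ne\emptyset$, applying Nash--Williams to the partition $\{A,B,C,D_i\}$ with $p=4$ requires at least $6$ inter-part edges, but the actual count is $\alpha+\beta+\gamma_i+\delta+\mu_i+\nu_i \le \alpha+\beta+\gamma+\mu+\nu = 4-\gamma\le 4$, a contradiction. If $D_i=\emptyset$, all internal molecules lie in $A\cup B\cup C$, and the partition $\{A,B,C\}$ with $p=3$ demands $\alpha+\beta+\delta\ge 4$; but $\delta=0$ and $\alpha+\beta\le\alpha+\beta+\gamma=2$, again a contradiction.

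The main conceptual point, and the reason the internal-external-edge hypothesis is essential, is that Claim~\ref{trivial_claim1} alone is too weak to rule out improper overlap: simple doubly connected examples on three internal molecules with no external molecules at all (e.g., a middle molecule joined to each of two other molecules by a pair of parallel $\dashed$ edges) already refute the claim without this hypothesis. The hypothesis is used precisely to force $D\ne\emptyset$ so that Claim~\ref{trivial_claim1} can be applied to $\Pol_1\cup\Pol_2$; once this observation is in place, the proof reduces to elementary edge counting combined with Nash--Williams.
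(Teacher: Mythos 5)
Your proof is correct in substance, but it takes a genuinely different route from the paper's. The paper argues by a direct case analysis: it splits on whether $\Pol_1\cap\Pol_2$ is connected to $(\Pol_1\cup\Pol_2)^c$ and, within each case, counts the boundary edges of $\Pol_1\setminus\Pol_2$ and $\Pol_2\setminus\Pol_1$ using Claim \ref{trivial_claim1} (always applied to \emph{proper} subsets of the internal molecules), deriving a contradiction with the isolation of $\Pol_1$ or $\Pol_2$, with the doubly connected property, or with the internal--external edge assumption. You replace this by two clean structural inputs: the submodularity identity for edge boundaries, which pins down $\delta=0$ and the boundary sizes, and the easy direction of the Nash--Williams/Tutte tree-packing theorem, which converts the two disjoint spanning trees guaranteed by Definition \ref{def 2net} into the lower bound $2(p-1)$ on cross-partition edges. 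This is more systematic and scales better, at the cost of importing a (standard) external theorem that the paper avoids; your three-molecule counterexample showing the necessity of the internal--external hypothesis is also correct and is a worthwhile observation the paper leaves implicit.

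One step deserves a caveat. You justify $\gamma+\mu+\nu\ge 2$ by applying Claim \ref{trivial_claim1} to $\Pol_1\cup\Pol_2$, on the grounds that the hypothesis forces $D\ne\emptyset$. But when $D_i=\emptyset$, the set $\Pol_1\cup\Pol_2$ is the \emph{entire} set of internal molecules, and Claim \ref{trivial_claim1} does not legitimately apply to it: the two spanning trees of Definition \ref{def 2net} live entirely among the internal molecules, so they force two edges out of any \emph{proper} internal subset but say nothing about edges from the full internal set to the external molecules (your own counterexample is a doubly connected graph whose full internal set has zero such edges). In that sub-case the hypothesis only yields $\gamma+\mu+\nu\ge 1$. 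This is harmless: with $\gamma+\mu+\nu\ge 1$ the identity still forces $\delta=0$ and gives $\alpha+\beta\le 3-\gamma<4$, so the three-part Nash--Williams bound still produces the contradiction; and when $D_i\ne\emptyset$ the set $\Pol_1\cup\Pol_2$ is a proper internal subset, so Claim \ref{trivial_claim1} applies (indeed your four-part count needs only the identity and nonnegativity of $\gamma$ and $\delta$). So the gap is cosmetic, but your closing sentence mislocates where the hypothesis really enters: it is not needed to make Claim \ref{trivial_claim1} applicable to $\Pol_1\cup\Pol_2$, it is needed to supply the one internal-to-external edge that kills the degenerate configuration $\gamma=\mu=\nu=\delta=0$, $\alpha=\beta=2$ in the $D_i=\emptyset$ case.
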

\begin{proof}
Suppose $\Pol_1\not\subset\Pol_2 $, $\Pol_2\not\subset \Pol_1$ and $\Pol_1\cap \Pol_2\ne \emptyset$. First, assume that $\Pol_1\cap \Pol_2$ is connected to $(\Pol_1\cup \Pol_2)^c$ through $a$ many edges with $a\in \{1,2\}$ in $\cal G_{\cal M}$. Since $\Pol_1$ and $\Pol_2$ are isolated, $\Pol_1\setminus \Pol_2$ connects to $\Pol_1^c$ through $2-a$ many edges and $\Pol_2\setminus \Pol_1$ connects to $\Pol_2^c$ through $2-a$ many edges. Applying Claim \ref{trivial_claim1}, we get that $\Pol_1\setminus \Pol_2$ connects to $\Pol_1\cap\Pol_2$ through at least $a$ many edges. This implies that $\Pol_2$ connects to $\Pol_2^c$ through at least $a+2\ge 3$ many edges, contradicting the fact that $\Pol_2$ is isolated. 

Now we assume that $\Pol_1\cap \Pol_2$ is not connected to $(\Pol_1\cup \Pol_2)^c$. Suppose $\Pol_1\cap \Pol_2$ connects to $\Pol_1\setminus \Pol_2$ through $a$ many edges and to $\Pol_2\setminus \Pol_1$ through $b$ many edges. By Claim \ref{trivial_claim1} and the fact that $\Pol_1$ and $\Pol_2$ are isolated, we have $0\le a\le 2$, $0\le b\le 2$ and $a+b\ge 2$. If $a=0$, then $\Pol_1\setminus \Pol_2$ connects to $\Pol_1^c$ through two edges and $\Pol_1\cap \Pol_2$ connects to $\Pol_2\setminus \Pol_1$ through two edges, contradicting the fact that $\Pol_1$ is isolated. If $a=2$, then $\Pol_2\setminus \Pol_1$ is not connected to $\Pol_2^c$ since $\Pol_2$ is isolated. This implies that $b=2$ by applying Claim \ref{trivial_claim1} to $\Pol_2\setminus \Pol_1$. Since $\Pol_1$ is isolated, $\Pol_1\setminus \Pol_2$ is also not connected to $\Pol_1^c$. Hence we get that $\Pol_1\cup \Pol_2$ is disconnected from $(\Pol_1\cup \Pol_2)^c$, which contradicts the assumption on $\cal G_{\cal M}$. Finally, we consider the case where $a=b=1$, $\Pol_1\setminus \Pol_2$ connects to $\Pol_1^c$ through one edge and $\Pol_2\setminus \Pol_1$ connects to $\Pol_2^c$ through one edge. However, it is not hard to see that this case contradicts the doubly connected property.
%through two edges and $\Pol_1\cap \Pol_2$ connects to $\Pol_2\setminus \Pol_1$ through two edges, which contradicts the fact that $\Pol_1$ is isolated. Then by Claim \ref{trivial_claim1}, in $\cal G_{\cal M}$, there are at least two edges between $\Pol_1\cap \Pol_2$ and $\Pol_2\setminus \Pol_1$, at least two edges between $\Pol_1\cap \Pol_2$ and $\Pol_1\setminus \Pol_2$, and at least one edge between $\Pol_1\cup \Pol_2$ and $(\Pol_1\cup \Pol_2)^c$ (because $(\Pol_1\cup \Pol_2)^c$ contains external molecules). This shows that either $\Pol_1$ or $\Pol_2$ is connected to its complement by  at least three edges, and hence is not an isolated subset of molecules. We get a contradiction.  
\end{proof}

%For the rest of this paper, all the doubly connected graphs will satisfy the assumption of Claim \ref{trivial_claim2}. 
%graphs which have at least one edge between the internal and external molecules, so that Claim \ref{trivial_claim2} holds for any double connected graph in the following proof. 
%Then 
%Any two isolated subgraphs $I_1$ and $I_2$ in a doubly connected graph satisfying Claim \ref{trivial_claim2} are either disjoint, or one is a subgraph of the other. 
This claim shows that there is a natural partial order of isolated subgraphs in a doubly connected graph satisfying Claim \ref{trivial_claim2}. In particular, we can define maximal and minimal subgraphs.
\begin{definition}\label{defn_min_iso}
(i) An isolated subgraph (resp. isolated subset of molecules) is said to be \emph{minimal} if it has no proper isolated subgraphs (resp. proper isolated subsets). As a convention, if a graph $\cal G$ does not contain any proper isolated subgraph, then the minimal isolated subgraph (MIS) refers to the maximal subgraph $\cal G_{\max}$.  
%Here ``proper" means``not equal".  %Thus the subgraphs of $\cal A$ and $\cal A^c$ are only connected by an edge in the black net and an edge in the blue net.

\vspace{5pt}

\noindent (ii) Given a subset of molecules $\cal S$, an isolated subset $\Pol$ of $\cal S$ is said to be \emph{maximal} if it is not a proper subset of any other proper isolated subset of $\cal S$. In this case, $\Iso_{\Pol}$ is called a  maximal isolated subgraph of $ {\cal G}|_{\cal S}$ induced on $\cal S$. 
\end{definition}

 \subsection{Redundant edges} 
 
%In Lemma \ref{lem localgood}, we have seen that the doubly-connected property is preserved in local expansions. On the other hand, 
For the global expansions defined in Section \ref{subsec global}, it is easy to see that expanding a blue solid edge inside a molecule will not break the doubly connected property. On the other hand, the doubly connected property may or may not be broken when we expand a blue solid edge between molecules, depending on whether this edge is \emph{pivotal} or \emph{redundant}. %(Here and throughout the following, whenever we expand a blue solid edge, we are referring to the expansion of a $T$-variable containing this edge.) %which is the main issue we need to deal with in level 3 expansions. 
%This issue happens exactly when we try to expand a pivotal edge. 
%is most serious when we try to expand a plus $G$ edge connected with an isolated subgraph. We now give the precise definition of this kind of subgraphs. %So far, the level 1-2 expansions do not break this property.

 \begin{definition}[Pivotal edges and redundant edges] \label{defn pivotal}
 We say a blue solid edge in a doubly connected graph $\cal G$ is \emph{pivotal} if after removing it, the remaining graph is not doubly connected anymore. Otherwise, this blue solid edge is said to be \emph{redundant}. %(i.e. a plus $G$ edge is redundant if the graph still satisfies the doubly connected property without it). 
 \end{definition}
 
%\begin{definition}[Minial isolated subgraph (MIG)]
% We say an isolated subgraph is {\bf minimal} if it has no proper isolated sub-familiy-tree. Here ``proper" means``not equal". 
%% {\bf Isolated sub-familiy-tree.} A sub-family-tree $X_k^{(2)}$ (for some molecule $\cal M_k$) is called {\bf isolated} if, besides the black edge between $\cal M_k$ and its parent, there is only one blue or grey $\dashed$ edge connecting $X_k^{(1)}$ and $X_k^{(2)}$. 
%% \nc  {\bf Minimum Isolated sub-familiy-tree.}
%% Maybe  the concept (2-blue connected graphs) is not needed. 
% \end{definition} 

\begin{example}
In \eqref{eq pivotal}, the blue solid edge $b$ connecting an isolated subgraph $\Iso$ to its complement is pivotal: %Also notice that the blue edges inside molecules are always redundant. In the rest of this paper, we shall call such blue edges as ``short blue edges". In contrast, blue edges between different molecules are referred to as ``long blue edges". The length of a short blue edge is at most $\OO ((\log W)^2W)$, while a long blue edge can be as long as $L$. %(But be careful that a ``short blue edge" can be longer than a  ``long blue edge" in some cases.) 
\be  \label{eq pivotal}
\parbox[c]{0.6\linewidth}
{\includegraphics[width=9cm]{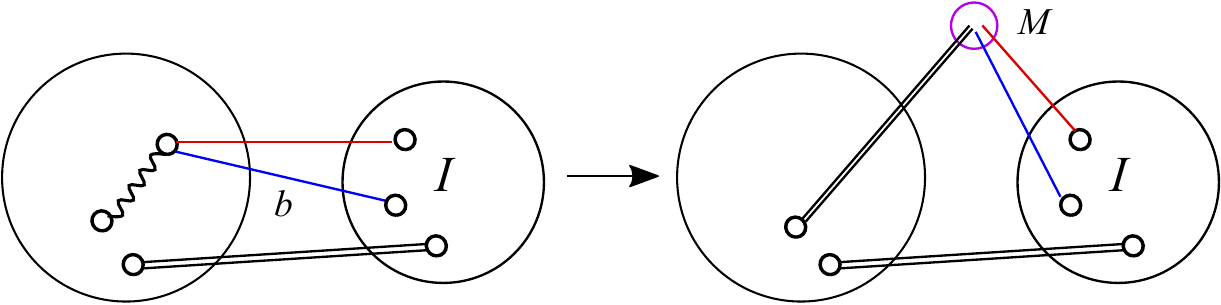}} 
\ee
In these graphs, inside the black circles are some subgraphs whose details are not drawn, and we did not indicate the directions of the solid edges for conciseness. If we replace the $T$-variable containing edge $b$ in the first graph by a graph in \eqref{Aho>2}, then we get the second graph, where the purple circle denotes a new molecule $\cal M$.  
%In the following figure, we give an example to illustrate why expanding a pivotal blue edge can break the doubly connected property. In the first graph, we draw the blue solid and $\dashed$ edges connecting an isolated sugraph $\Iso$ with its complement. The blue solid (plus $G$) edge forms a generalized $T$-variable together with a waved $S$ edge and a red solid (minus $G$) edge. Then we replace this generalized $T$-variable with a lower order $T$-expansion, and get the second graph, where $\Gamma$ is a doubly connected subgraph. 
It is easy to see that the second graph is not doubly connected anymore: in order to have a connected black net $\cal B_{black}$, we have to put the two $\dashed$ edges into $\cal B_{black}$, but then there is no edge in the blue net $\cal B_{blue}$ that connects $\cal M$ and the molecules in $\Iso$ to other molecules. 
\end{example}

% molecular graphs, where the vertices are molecules. Suppose in the first graph, the molecule $\cal M_2$ forms a real-isolated subgraph. Then we apply the $\Theta$ expansion \eqref{Otheta} to the pivotal blue edge, and keep the second term on the right-hand side. Then we get a new molecule $\cal M_3$ as in the second graph, whose scaling order is higher than the first graph (because of the light weight in molecule $\cal M_3$). But its naive size (without taking into account of any cancellations) can be much larger than the first graph because it is not doubly connected. To see this, we observe that after summing over $\cal M_3$, we get a single $\Theta$ edge connected with $\cal M_2$, and its sum provides a large factor of order $\eta^{-1}$. 
%%Recall that the $\Theta$ expansion \eqref{Otheta} is the second order $T$-expansion. 
%In general, if we apply the $T$-expansion \eqref{mlevelTgdef} to a blue pivotal blue edge, then replacing the edge with the higher order graphs in $\ATn$ will lead to graphs that are not doubly connected.

By definition, blue solid edges inside molecules are all redundant. Now we show that a redundant edge can be expanded without breaking the doubly connected property. 
\begin{lemma}\label{trivial_claim3}
Suppose $\cal G$ is a doubly connected graph containing a $t_{x,y_1y_2}$ variable defined in \eqref{12in4T}, where the blue solid edge $G_{\al y_1}$ is redundant. If we replace $t_{x,y_1y_2}$ with a graph on the right-hand side of \eqref{replaceT} (including any recollision, $Q$, higher order or error graph), %\hty{including any recollision, $Q$ or error graphs}, 
then the resulting graph is still doubly connected.
%{\cor Suppose a blue solid edge $b_1=G_{\al y_1}$ is redundant in $\cal G$. Moreover, suppose the ending atom $\al$ of $b_1$ is a standard neutral atom (recall Definition \ref{deflvl1}), which is connected to a waved $S$ edge $s_{x\al}$, a red solid edge $b_2=\overline G_{\al y_2}$ and (possibly) some $\times$-dotted edges. If replacing atom $\al$ and the edges attached to $\al$ with any graph in the five terms on the right-hand side of \eqref{mlevelTgdef} with $n=k$, $\fa=x$, $\fb_1=y_1$ and $\fb_2=y_2$, then the resulting graph is still doubly connected.}
\end{lemma}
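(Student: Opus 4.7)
The plan is to verify the doubly connected property of the replaced graph term by term for each of the six summands on the right-hand side of \eqref{replaceT}. The $t$-variable $t_{x,y_1y_2}$ contributes, in the molecular graph of $\cal G$ (with red edges discarded), exactly one blue solid edge between the molecule $\cal M_x$ containing $x$ (and $\al$, through the waved edge $s_{x\al}$) and the molecule $\cal M_{y_1}$ containing $y_1$; the red edge $\overline G_{\al y_2}$ and the waved edge $s_{x\al}$ play no role in the doubly connected property. Since $G_{\al y_1}$ is redundant, by Definition \ref{defn pivotal} I may fix witnessing nets $\cal B_{blue}$ and $\cal B_{black}$ of $\cal G$ with $G_{\al y_1}\notin \cal B_{blue}$ (it is automatically outside $\cal B_{black}$, which holds only $\dashed$ edges); so removing the atom $\al$ and the edge $G_{\al y_1}$ leaves both nets intact as disjoint spanning structures of the reduced molecular graph.

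For the two leading terms $m\Theta_{xy_1}\overline G_{y_1y_2}$ and $m(\Theta\Sdelta^{(n-1)}\Theta)_{xy_1}\overline G_{y_1y_2}$, the replacement merely inserts a single (possibly labelled) $\dashed$ edge between $\cal M_x$ and $\cal M_{y_1}$. The pre-chosen $\cal B_{blue}$ and $\cal B_{black}$ already span the new molecular graph, so the new graph is doubly connected; optionally, the inserted $\dashed$ edge can be placed in $\cal B_{black}$ since $\dashed$ edges are admissible on either side and disjointness is preserved.

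For each of the four remaining insertions $(\wtPT^{(n-1)})_{x,y_1y_2}$, $(\AT^{(>n-1)})_{x,y_1y_2}$, $(\QT^{(n-1)})_{x,y_1y_2}$, and $(\Err_{n-1,D})_{x,y_1y_2}$, I would invoke properties (9) and (10) of Definition \ref{defn genuni}. By (10) the inserted subgraph $\Gamma$ (with its external atoms removed) is already doubly connected, and by (9) it carries a unique $\dashed$ edge at $\otimes\equiv x$ together with at least one blue solid, $\dashed$, or dotted edge at $\oplus\equiv y_1$. Combining $\Gamma$'s internal black net with its $\dashed$ port at $x$ connects all of $\Gamma$'s internal molecules into $\cal M_x$ through $\dashed$ edges; combining $\Gamma$'s internal blue net with the port at $y_1$ (where a dotted port merely merges an internal molecule of $\Gamma$ into $\cal M_{y_1}$) connects them into $\cal M_{y_1}$ through blue or $\dashed$ edges, and this is disjoint from the black augmentation. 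Gluing these two pieces onto $\cal B_{black}$ and $\cal B_{blue}$ (which both avoid the removed $G_{\al y_1}$) produces two disjoint spanning nets of the new molecular graph.

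The main obstacle I expect is the last paragraph: carefully checking, across the three subcases of the port at $y_1$ (a blue solid edge, a $\dashed$ edge, or a dotted-edge merger into $\cal M_{y_1}$), that the internal blue and black nets of $\Gamma$ can be extended to $\cal M_x$ and $\cal M_{y_1}$ without reusing any edge and without accidentally re-creating an edge that conflicts with $\cal B_{blue}$ or $\cal B_{black}$. The redundancy of $G_{\al y_1}$ is precisely what frees up room on the $\cal B_{blue}$ side to accommodate $\Gamma$, while properties (9)--(10) of Definition \ref{defn genuni} provide exactly the structural hooks (one $\dashed$ port at $\otimes$ and at least one admissible port at $\oplus$) needed on the $\Gamma$ side.
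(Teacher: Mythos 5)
Your proposal is correct and follows essentially the same route as the paper's own proof: for the leading terms the blue solid edge simply becomes a $\dashed$ edge, and for the remaining terms you use the redundancy of $G_{\al y_1}$ to drop it from the blue net and then glue in the inserted graph's own black and blue nets via the $\dashed$ port at $\otimes$ and the blue/$\dashed$/dotted port at $\oplus$ guaranteed by properties (ix) and (x) of Definition \ref{defn genuni}, treating the dotted-port case by merging molecules. The one detail you flag as an "obstacle" is handled in the paper exactly as you sketch it, so there is no gap.
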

\begin{proof}
We denote the blue solid edge $G_{\al y_1}$ as $b$. If we replace $t_{x,y_1y_2}$ with $ m  \Theta_{xy_1}\overline G_{y_1y_2}$ or a graph in \smash{$m (\Theta \Sdelta^{(n-1)}\Theta)_{xy_1} \overline G_{y_1y_2}$}, then it corresponds to replacing the edge $b$ with a $\dashed$ edge in the molecular graph. Obviously, this will not break the doubly connected property because a $\dashed$ edge can be also used in the blue net. Now suppose we replace $t_{x,y_1y_2}$ with a graph, say \smash{$\wt{\cal G}_{x,y_1y_2}$}, in other terms. Since $b$ is redundant, $\cal G$ has a black net $\cal B_{black}$ and a blue net $\cal B_{blue}$ satisfying Definition \ref{def 2net} such that $\cal B_{blue}\setminus\{b\}$ is still a blue net. By property (x) of Definition \ref{defn genuni}, \smash{$\wt{\cal G}_{x,y_1y_2}$} also contains a black net \smash{$\wt{\cal B}_{black}$} and a blue net \smash{$\wt{\cal B}_{blue}$} satisfying Definition \ref{def 2net}. Moreover, by property (ix) of Definition \ref{defn genuni}, the atom $x$ connects to the internal molecules of \smash{$\wt{\cal G}_{x,y_1y_2}$} through a $\dashed$ edge, say \smash{$\wt b_1$}, while the atom $y_1$ connects to the internal molecules of $\wt{\cal G}_{x,y_1y_2}$ through a blue solid or $\dashed$ or dotted edge, say $\wt b_2$. Then $\cal B_{black}\cup \wt{\cal B}_{black} \cup \{\wt b_1\}$ and $(\cal B_{blue}\setminus\{b\})\cup \wt{\cal B}_{blue} \cup \{\wt b_2\}$ are respectively the black net and blue net of the resulting graph if $\wt b_2$ is not a dotted edge. If $\wt b_2$ is a dotted edge, then the blue net can be chosen as $(\cal B_{blue}\setminus\{b\})\cup \wt{\cal B}_{blue}$ after merging the molecules connected by $\wt b_2$. In either case, the resulting graph is still doubly connected.
\end{proof}

\subsection{Pre-deterministic property} 

%Recall that after the local expansions, all the graphs are locally standard graphs, and each plus $G$ edge is  paired with a minus $G$ edge to form a generalized $T$-variable. Then we have to apply the $\Theta$ expansion \eqref{Otheta} or the $T$-expansion \eqref{mlevelTgdef} to them. 
In the proof of Theorem \ref{incomplete Texp}, we need to expand $T_{\fa, \fb_1\fb_2}$ into a sum of graphs that satisfy the properties in Definition \ref{def incompgenuni}. Hence for a graph that is not a recollision, higher order or $Q$ graph, we want its maximal subgraph to be deterministic, so that the only random parts in the graph are two plus and minus $G$ edges connected with $\oplus$ and $\ominus$. 
%For simplicity of notations, we shall call such graphs \emph{deterministic graphs}. 
On the other hand, since we want to maintain the doubly connected property during expansions, we are only allowed to expand redundant $G$ edges. Hence, not all doubly connected graphs with non-deterministic maximal subgraphs can be expanded without breaking the doubly connected property. For example, in a doubly connected graph where both the black and blue nets are trees, all the blue solid edges  between molecules are pivotal.  

Now we want to identify a graphical property with which we can tell whether a graph is potentially expanded into deterministic graphs in the end without breaking the doubly connected property. We can obtain a necessary condition by looking at the leading terms of global expansions, where we replace $t_{x,y_1y_2}$ in \eqref{12in4T} with $m \Theta_{xy_1}\overline G_{y_1y_2}$. This corresponds to changing a plus $G$ edge into a $\dashed$ edge in the molecular graph. Hence we need to be able to change all plus $G$ edges into $\dashed$ edges one by one, such that the plus $G$ edge for every expansion is redundant. This leads to the following definition. % of the \emph{pre-deterministic property}. %Later we will see that this property is also sufficient for a graph to be expanded into  deterministic graphs.

\begin{definition} [Pre-deterministic property] \label{def PDG} %  \cob details are needed. \nc
We say a doubly connected graph $\cal G$ is \emph{pre-deterministic} if the following property holds. There exists an order of all  internal blue solid edges, denoted by $b_1 \preceq b_2 \preceq \cdots$, such that for any $k$, after changing the edges $b_1, \cdots, b_{k-1} $ into $\dashed$ edges, the blue solid edge $b_k$ becomes redundant. (Recall that internal edges refer to edges that do not connect to external atoms.) We will call this order of blue solid edges a \emph{pre-deterministic order}.
\end{definition}
%Note that in the above definition, we do not care about the ``short" blue edge inside molecules. 

%\begin{remark}
%In the above definition, by removing the $\oplus$ molecule, we do not take into account the plus $G$ edge connected with $\oplus$---this edge will be handled in the last step using the cancellation property of $S_{\Delta}^{(n)}$ in \eqref{3rd_property0}. 
A pre-deterministic order is not necessarily unique. Moreover, we can always choose an order where blue solid edges inside molecules precede blue solid edges between molecules. If all internal blue solid edges in a graph $\cal G$ are redundant, then $\cal G$ is obviously a pre-deterministic graph. On the other hand, Definition \ref{def PDG} covers more general graphs. We use the following example to explain why we define pre-deterministic graphs in the above way, and why the order of blue solid edges is important: 
\begin{center}
\includegraphics[width=14cm]{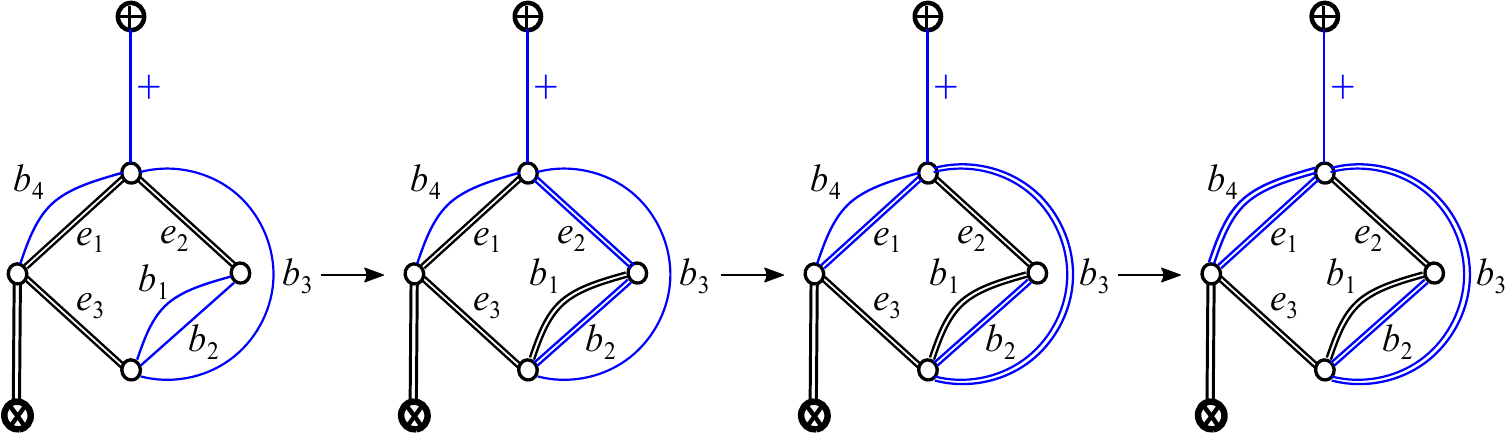}
\end{center}
Here all graphs are molecular graphs without red solid edges. Note that the edges $b_1$ and $b_2$ are redundant in the first graph, but $b_3$ and $b_4$ are not. Now we show that this graph is actually pre-deterministic. First, we replace $b_1$ and $b_2$ with $\dashed$ edges. In the second graph, if we choose the black net as $\{e_1, e_3, b_1\}$ and the blue net as $\{e_2,b_2,b_3,b_4\}$, then the edge $b_3$ is redundant, and we can replace it with a $\dashed$ edge in the third graph. In the third graph, if we choose the black net as $\{e_2, e_3, b_1\}$ and the blue net as $\{e_1,b_2,b_3,b_4\}$, then $b_4$ becomes redundant, and we can replace it with a $\dashed$ edge in the fourth graph. This argument gives a pre-deterministic order $b_1\preceq b_2\preceq b_3\preceq b_4$.
%\end{remark}

%Given an isolated subgraph $\cal G_s$, it is said to be \emph{deterministic} if and only if its closure $\overline {\cal G}_s$ (recall Definition \ref{defn iso}) is deterministic. Equivalently, by a \emph{non-deterministic} isolated subgraph ${\cal G}_s$, we mean that either $\cal G_s$ contains some $G$ weights or $G$ edges, or there are plus or minus $G$ edges connecting $\cal G_s$ to its complement. Then 
\subsection{Sequentially pre-deterministic property}

Not all graphs in our expansions satisfy the pre-deterministic property. Instead, we need to consider the following extension. 

\begin{definition}[Sequentially pre-deterministic property]\label{def seqPDG}
A graph $\cal G$ is said to be sequentially pre-deterministic (SPD) if its molecular graph $\cal G_{\cal M}$ without red solid edges satisfies the following properties.
\begin{itemize}
\item[(i)] $\cal G_{\cal M}$ is doubly connected in the sense of Definition \ref{def 2net}.

\item[(ii)] Any two proper isolated subgraphs $\Iso_1$ and $\Iso_2$ satisfy either $\Iso_1\subset \Iso_2$ or $\Iso_2\subset \Iso_1$.

%$\cal G$ contains at most one chain of proper isolated subgraphs (recall Definition \ref{defn iso}), say
%\be\label{seq_iso}
%\Iso_k \subset \Iso_{k-1} \subset \cdots \subset \Iso_1 \subset  \cal G_{\max},
%\ee
%where $\Iso_{j+1}$ is a maximal isolated subgraph of $\Iso_{j}$ for any $j$. 

\item[(iii)] %The maximal subgraph $\cal G_{\max}$ or an isolated subgraph $\Iso$ of $\cal G$ is pre-deterministic if it has no proper isolated subgraph. 
The minimal isolated subgraph of $\cal G_{\cal M}$ is pre-deterministic. Moreover, given an isolated subgraph $\Iso$, if we replace its maximal isolated subgraph, say $\Iso'$, plus the two external edges with a single $\dashed$ edge, then $\Iso$ becomes pre-deterministic. (Here by ``external edges", we mean the blue solid or $\dashed$ edges connecting $\Pol(\Iso')$ to the two molecules, say $\cal M$ and $\cal M'$, in the complement of $\Pol(\Iso')$; by ``replace", we mean that we remove  $\Iso'$ and its two external edges, and add a $\dashed$ edge between $\cal M$ and $\cal M'$. If $\cal M=\cal M'$, then this $\dashed$ edge is inside $\cal M$ and will not appear in the molecular graph.) Finally, if we replace the maximal isolated subgraph of $(\cal G_{\cal M})_{\max}$ (recall that $(\cal G_{\cal M})_{\max}$ is the maximal subgraph induced on all internal molecules) plus the two external edges with a single $\dashed$ edge, then $(\cal G_{\cal M})_{\max}$ becomes pre-deterministic. 
%\hty{ We agree to explain the structure \eqref{chain_mole_sub} or \eqref{seq_iso} here or after def. then it is clearer of max subgraph}
%by using this blue edge as the first edge in the pre-deterministic order. 
\end{itemize}

\end{definition}

%\begin{remark}
Now we discuss the meanings of properties (ii) and (iii). The property (ii) means that $\cal G$ contains at most one sequence of proper isolated subgraphs, say %(recall Definition \ref{defn iso}), say
\be\label{seq_iso}
\Iso_k \subset \Iso_{k-1} \subset \cdots \subset \Iso_1 \subset  \cal G_{\max},
\ee
where $\Iso_1$ is the maximal isolated subgraph of $\cal G_{\max}$ and for any $j$, $\Iso_{j+1}$ is the maximal isolated subgraph of $\Iso_{j}$. In particular, any subgraph has at most one maximal isolated subgraph, so property (iii) is well-defined. 
%\end{remark}
Suppose %the sequence of proper isolated subgraphs in $\cal G$ is given by 
that corresponding to \eqref{seq_iso}, the sequence of proper isolated subsets of molecules is
\be\label{chain_mole}
\Pol_k \subset \Pol_{k-1} \subset \cdots \subset \Pol_1 \subset \Pol_{\max},\quad \Pol_i:= \Pol(\Iso_i), \ \ 1\le i \le k.
\ee
We draw a sequence of isolated subgraphs in the following figure, where inside each black circle is a subgraph $\Gamma_i$, which will be called a \emph{component} in our proof, and we only draw the $\dashed$ and blue solid edges between each isolated subgraph and its complement:
\be\label{chain_mole_sub}
\parbox[c]{0.7\linewidth}{\includegraphics[width=11cm]{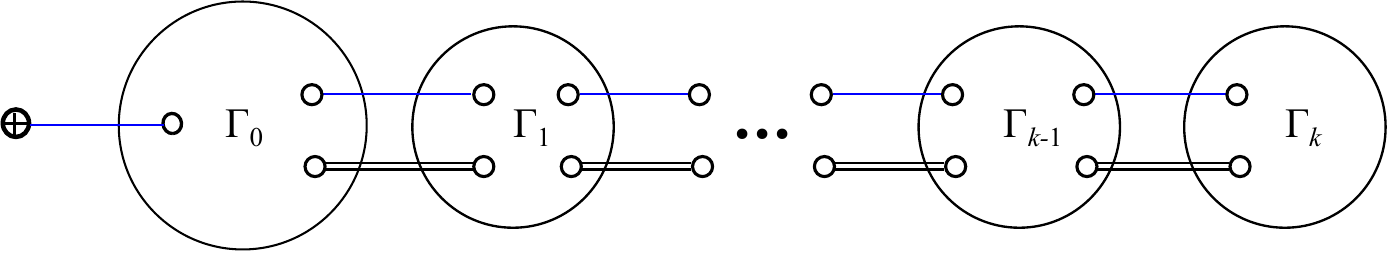}} 
\ee
Here for definiteness, we draw a graph with one external blue solid edge, but generally there may be zero or more than one external edges. Moreover, the blue solid edges between components can be replaced by blue $\dashed$ edges. In \eqref{chain_mole_sub}, the component $\Gamma_k$ is the minimal isolated subgraph $\Iso_k$, the component $\Gamma_{i}$, $1\le i \le k-1$, is the subgraph induced on $\Pol_{i}\setminus \Pol_{i+1}$, and $\Gamma_0$ is the subgraph induced on $\Pol_{\max}\setminus\Pol_1$. %From \eqref{chain_mole_sub}, 
One can see that the isolated subgraphs in a SPD graph form a simple chain structure. Then property (iii) means that if we replace the MIS $\Gamma_k$ and its two external edges with a $\dashed$ edge, then the resulting graph is still SPD. In the new graph, replacing the MIS $\Gamma_{k-1}$ and its two external edges with a $\dashed$ edge again gives a SPD graph. Continuing this process, after replacing $\Gamma_1$ and its two external edges with a $\dashed$ edge, $\Gamma_0$ becomes pre-deterministic.

The reason why we define the SPD property in the above way is due to our global expansion strategy below (cf. Strategy \ref{strat_global}). We now describe one possible scenario to obtain a SPD graph. Initially, there is no proper isolated subgraph in a graph obtained from  local expansions, and its maximal subgraph is pre-deterministic. Then we find the first redundant blue solid edge in a pre-deterministic order, replace a $t$-variable containing it with a graph, say $\wt{\cal G}_1$, on the right-hand side of \eqref{replaceT}, and get a new graph $\cal G_1$. If $\wt{\cal G}_1$ contains a pre-deterministic maximal subgraph, then this subgraph becomes a pre-deterministic isolated subgraph $\Iso_1$ in graph $\cal G_1$. Next we find the first blue solid edge in a pre-deterministic order of $\Iso_1$, replace it with a graph, say $\wt{\cal G}_2$, on the right-hand side of \eqref{replaceT}, and get a new graph. Continuing this process, we get a SPD graph with a sequence of isolated subgraphs in \eqref{seq_iso}. At each step we expand the first blue solid edge in the minimal isolated subgraph. Moreover, replacing $\Iso_{i+1}$ and its two external edges with a $\dashed$ edge corresponds to replacing the first blue solid edge in a pre-deterministic order of $\Iso_{i}$ by a $\dashed$ edge, which still gives a pre-deterministic $\Iso_i$ by Definition \ref{def PDG}. This explains why we define property (iii) in such a way. 
%Then $\cal G$ is sequentially pre-deterministic if for any $1\le j \le k$,
%after replacing $I_j$ with a blue edge, then the subgraph $I_{j-1}$ becomes PDG. This explains the word ``sequential" in the name.

%As in Lemma \ref{lem localgood}, we can also show that the local expansions do not break the PDG property.

We now show that the SPD property is preserved under local expansions.

\begin{lemma}\label{lem localgood2}
Let $\cal G$ be a SPD graph. Then applying any expansion in Definitions \ref{dot-def}, \ref{Ow-def}, \ref{multi-def}, \ref{GG-def} and \ref{GGbar-def} to $\cal G$, the new graphs (including the $Q$-graphs) are all sequentially pre-deterministic. 
 \end{lemma}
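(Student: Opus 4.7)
The plan is to verify the three defining conditions of the SPD property (Definition \ref{def seqPDG}) for each new graph $\cal G'$ produced by a local expansion at an atom $x$. Property (i), doubly connectedness of the molecular graph, is immediate from Lemma \ref{lem localgood}. For properties (ii) and (iii), the central observation is that a local expansion modifies the molecular graph only in a controlled neighborhood of the molecule $\cal M_x$ containing $x$: new atoms are placed inside $\cal M_x$, molecule merges occur only via dotted edges (added by $\cal O_{dot}$) or newly introduced waved edges (such as the $S^+_{xy}$ in \eqref{Oe2x}) incident to $\cal M_x$, and any $G$ edge split by a partial derivative $\partial_{h_{\al x}}$ is routed through $\cal M_x$.

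To verify property (ii), I would establish that the chain of proper isolated subsets $\Pol_k \subset \cdots \subset \Pol_1$ in $\cal G$ lifts to a chain $\Pol_k' \subset \cdots \subset \Pol_1'$ in $\cal G'$, by carrying each $\Pol_i$ through the merging of molecules and absorbing $\cal M_x$ (together with its new atoms) whenever $\cal M_x \in \Pol_i$. The essential point is that the boundary count of each $\Pol_i'$ in $\cal G'$ remains exactly two: if a boundary edge of $\Pol_i$ in $\cal G$ is split through $\cal M_x$ by a partial derivative, then $\cal M_x$ lies entirely in $\Pol_i$ or entirely in its complement, so the pair of resulting edges contributes exactly one external edge to $\Pol_i'$. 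Conversely, I would check that no new proper isolated subsets appear in $\cal G'$, which follows because the new structure is confined to $\cal M_x$ and its immediate merged neighbors and cannot create new bottleneck cuts in the molecular graph.

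For property (iii), I would transport the pre-deterministic order of blue solid edges of the MIS of $\cal G$ to that of $\cal G'$. If $b_1 \preceq b_2 \preceq \cdots$ witnesses pre-determinism of the MIS of $\cal G$, and each $b_j$ is either unaffected or replaced by a path of two blue solid edges through $\cal M_x$ (as in the proof of Lemma \ref{lem localgood}), then inserting the split pieces in place of $b_j$ gives an order on the blue solid edges of the MIS of $\cal G'$. Using the blue and black nets transported from $\cal G$ (again as in Lemma \ref{lem localgood}), each edge in this new order is redundant at the appropriate step. The analogous argument applies level by level: replacing the maximal isolated subgraph of an isolated subgraph in $\cal G'$ by a single $\dashed$ edge corresponds to the same operation in $\cal G$, whose outcome is pre-deterministic by hypothesis. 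For $Q$-graphs generated by the expansions, the same analysis goes through because the $Q$ labels are attached to atoms inside $\cal M_x$ and do not affect the molecular graph.

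The main obstacle is the careful bookkeeping needed to certify that merging of molecules does not destroy the chain structure. Specifically, a dotted or waved edge introduced by the expansion can identify an atom in a molecule $\cal M_1 \in \Pol_i$ with an atom in $\cal M_x \in \Pol_i^c$, producing a merged molecule that straddles the boundary of $\Pol_i$. The key is to argue that such a merge can only happen through one of the (at most two) boundary edges of $\Pol_i$, so that it collapses that boundary edge to an internal self-loop and shrinks the isolated subset in a harmless way; simultaneous merges through two independent paths would force $\Pol_i$ to have more than two boundary edges, contradicting isolation. Once this case analysis is dispatched (by inspecting each of the expansion formulas in Definitions \ref{dot-def}, \ref{Ow-def}, \ref{multi-def}, \ref{GG-def} and \ref{GGbar-def}), the remainder of the argument is routine.
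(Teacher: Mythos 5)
Your outline of property (i) and the reduction to the two elementary operations (molecule merges via new dotted/waved edges, and splitting of a $G$ edge through the molecule of $x$) matches the paper's proof. Your treatment of property (ii) is also essentially the paper's: one shows that every proper isolated subset of the new graph arises from a proper isolated subset of the old one (the paper argues this by contradiction on the boundary count, exactly as you sketch), and the chain condition follows. Note, however, that your phrase ``the chain lifts to a chain $\Pol_k'\subset\cdots\subset\Pol_1'$'' is not accurate: when a merge or an edge-splitting connects two different components $\Gamma_i$ and $\Gamma_j$ of the chain, the intermediate subgraphs $\Iso_l$ with $j+1<l<i$ \emph{cease to be isolated}, so the chain collapses rather than lifts.

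This collapse is where your argument for property (iii) has a genuine gap. You propose to transport the pre-deterministic order by ``inserting the split pieces in place of $b_j$,'' and you dismiss the remaining bookkeeping as routine. That works only when the merge or the splitting molecule $\cal M''=\cal M_x$ stays inside a single component. But a dotted edge from $\cal O_{dot}$ can identify a new atom in $\cal M_x\subset\Gamma_j$ with an atom in a distant component $\Gamma_i$, and a partial derivative $\partial_{h_{\al x}}$ splits an arbitrary $G$ edge of the graph --- whose endpoints may lie in $\Gamma_i$ with $i\ne j$ --- through $\cal M_x\subset\Gamma_j$. In these cross-component cases one must show that the \emph{merged} isolated subgraph (e.g.\ $\Iso_i$ after replacing $\Iso_{j+1}$ and its external edges by a diffusive edge) is pre-deterministic. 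The individual pre-deterministic orders of $\Gamma_i,\dots,\Gamma_j$ do not concatenate automatically, because redundancy of an edge is a global property of the doubly connected structure, which has changed. This is exactly the content of Lemma \ref{hard lemmdot} and Lemma \ref{hard lemmQ} (with its five sub-cases), whose proofs in Appendix \ref{sec PDG} require a sequence of nontrivial surgery claims (Claims \ref{trivial_graph1}--\ref{trivial_graph7}) tracking how black and blue nets can be re-chosen after each modification. Your closing assertion that a merge across the boundary of $\Pol_i$ ``can only happen through one of the boundary edges'' is also false: the merge happens through a \emph{newly created} dotted or waved edge, not an existing boundary edge, and this is precisely the configuration that forces the hard analysis. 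Without an argument covering these cross-component interactions, the proof is incomplete.
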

% \begin{remark}
%Lemma \ref{lem localgood2} is already sufficient for the purpose of this paper. However, it can be extended to the case where $\cal G$ has more than one chains of isolated subgraphs, that is, if $\cal G$ is a sequential PDG, then all the graphs from local expansions of $\cal G$ are also sequential PDG. The proof of this more general statement is almost the same as the proof for Lemma \ref{lem localgood2} below with only some minor changes in notations---the isolated subgraphs form a chain structure for $\cal G$ in Lemma \ref{lem localgood2}, while for a general sequential PDG the isolated subgraphs form a tree structure. 
% \end{remark}
 
\begin{proof}
By Lemma \ref{lem localgood}, all new graphs are still doubly connected. It remains to check the properties (ii) and (iii) in Definition \ref{def seqPDG} for any new graph, say $\wt {\cal G}$. Let $\wt{\cal G}_{\cal M}$ be its molecular graph without red solid edges.

By Lemma \ref{lvl1 lemma}, every molecule in $\wt{\cal G}$ is obtained by merging some molecules in $\cal G$. We claim that any proper isolated subset of molecules, say $\Pol$, in \smash{$\wt{\cal G}$} is obtained from merging a proper isolated subset of molecules in $\cal G$. In fact, suppose the molecules of $\Pol$ are obtained from merging a subset of molecules $\cal S$ that is not isolated in $\cal G$. Then $\cal S$ is connected to $\cal S^c$ through at least three edges, say $b_1,$ $ b_2$ and $b_3$, in the molecular graph $\cal G_{\cal M}$ without red solid edges. For $i =1,2,3 $, if $b_i$ is a $\dashed$ edge, then it is not affected in expansions and is still an external edge of $\Pol$ in $\wt{\cal G}_{\cal M}$. If $b_i$ is a plus $G$ edge in $\cal G_{\cal M}$ between molecules $\cal M_1$ and $\cal M_2$, then either $b_i$ is still a plus $G$ edge between $\cal M_1$ and $\cal M_2$ in $\wt{\cal G}_{\cal M}$, or $b_i$ becomes a connected path of two plus $G$ edges, say $b_{i,1}$ and $b_{i,2}$, between $\cal M_1$ and $\cal M_2$. %which may happen if a partial derivative $\partial_{h_{\al x}}$ or $\partial_{h_{ \beta\al}}$ acts on $b_i$. 
%(The molecules $\cal M_1$ and $\cal M_2$ may be merged with other molecules in \smash{$\wt{\cal G}_{\cal M}$}.) 
In the former case, $b_i$ is an external edge of $\Pol$ in \smash{$\wt{\cal G}_{\cal M}$}, and in the latter case, either $b_{i,1}$ or $b_{i,2}$ is an external edge of $\Pol$ in \smash{$\wt{\cal G}_{\cal M}$}. In sum, we see that $\Pol$ is connected to $\Pol^c$ through at least three edges in \smash{$\wt{\cal G}_{\cal M}$}, which contradicts the assumption that $\Pol$ is isolated. 

Now  with the above claim, we prove the property (ii) of Definition \ref{def seqPDG} for $\wt{\cal G}$. Suppose two proper isolated subsets of molecules $\Pol_1$ and $\Pol_2$ in \smash{$\wt{\cal G}$} are respectively obtained from merging two proper isolated subsets of molecules $\Pol_1'$ and $\Pol_2'$ in $\cal G$. Then we have either $\Pol_1 \subset \Pol_2$ if $ \Pol_1'\subset \Pol_2'$, or $\Pol_2 \subset \Pol_1$ if $ \Pol_2'\subset \Pol_1'$. %Hence the isolated subgraphs in $\wt{\cal G}$ form a chain.

It remains to prove the property (iii) of Definition \ref{def seqPDG} for $\wt{\cal G}$. In all expansions, it is not hard to see that
%adding a weight, or adding an edge inside a molecule, or changing an end atom of a $G$ edge to another atom in the {\bf same} molecule do not affect the sequentially pre-deterministic property because it is defined on the molecular graphs. 
there are only two types of operations that may affect the doubly connected structures:
\begin{itemize}
\item[(A)] merging a pair of molecules due to a newly added dotted or waved edge between them;

\item[(B)] replacing a plus $G$ edge $b_0$ between molecules $\cal M$ and $\cal M'$ with a path of two plus $G$ edges---edge $b$ between $\cal M$ and $\cal M''$ and edge $b'$ between $\cal M'$ and $\cal M''$---due to partial derivatives $\partial_{h_{\al x}}$ or $\partial_{h_{ \beta\al}}$, where $\cal M''$ is a different molecule from $\cal M$ and $\cal M'$.
\end{itemize}
All the other operations only act on local structures within molecules, and hence are irrelevant for our proof. Moreover, in case (B), we have assumed that $\cal M''$ is different from $\cal M$ and $\cal M'$, because otherwise the molecular graph without red solid edges will be unchanged. To conclude the proof, it suffices to show that both operations (A) and (B) do not break the property (iii) of Definition \ref{def seqPDG}.

Suppose the sequence of proper isolated subgraphs in $\cal G$ is given by \eqref{seq_iso} and takes the form \eqref{chain_mole_sub}. In case (A), it is easy to see that merging a pair of molecules in the same component does not break the SPD property. Now suppose we merge a pair of molecules in different components $\Gamma_{i}$ and $\Gamma_j$ with $i<j$. 
%For $l\le i$ and $i>j$, we still denote the isolated subgraph obtained from $\Iso_l$ in $\cal G$ (by merging the molecules) as $\Iso_l$ in the new graph, say $\wt{\cal G}$. 
With a slight abuse of notation, we still denote the subgraph induced on $\Pol_l$ by $\Iso_l$ in \smash{$\wt {\cal G}$}. Then the sequence of isolated subgraphs in $\wt{\cal G}$ is
$$ \Iso_k \subset \Iso_{k-1} \subset \cdots \subset \Iso_{j+1} \subset \Iso_i \subset \Iso_{i-1} \subset \cdots \subset \Iso_1. $$
In particular, $\Iso_l$ is not isolated in $\wt{\cal G}$ anymore if $j+1<l <i$. 
%\iffalse
%\be\label{eq_keepdot}
% \parbox[c]{0.5\linewidth}{\includegraphics[width=10cm]{chain1_5.png}} 
%\ee
%Here in order to have a clearer picture, we draw a dotted edge (or we can also replace it with a waved edge) between two molecules in $\Gamma_i$ and $\Gamma_j$ without merging them, but one should think of these two molecules as one single molecule (recall Definition \ref{def_poly}). Moreover, the graph component is the subgraph of $\wt{\cal G}$ induced on $\Pol_i \setminus \Pol_{j+1}$. Now in $\wt{\cal G}$,  we notice that the chain of isolated subgraphs in $\wt{\cal G}$ become
%$$ \Iso_k \subset \Iso_{k-1} \subset \cdots \subset \Iso_{j+1} \subset \Iso_i \subset \Iso_{i-1} \subset \cdots \subset \Iso_1. $$
%%where we still keep the original names of the isolated subgraphs. 
%\fi
By the SPD property of $\cal G$, we immediately obtain the following property of $\wt{\cal G}$: for $l\ge j+1$ or $l\le i-1$, if we replace $\Iso_{l+1}$ and its two external edges with a $\dashed$ edge in the molecular graph of $\wt{\cal G}$, then $\Iso_l$ becomes pre-deterministic. 
%Furthermore, it is not hard to check that $\Iso_{j+1}$ is now the maximal isolated subgraph in $\Iso_i$. (This is due to merging the two molecules in $\Gamma_i$ and $\Gamma_j$, and we omit the details of the proof.) Hence 
It remains to show that after replacing $\Iso_{j+1}$ and its two external edges with a $\dashed$ edge, the subgraph $\Iso_i$ becomes pre-deterministic. This is given by the following lemma, whose proof is postponed to Appendix \ref{sec PDG}.
%until we complete the proof of Lemma \ref{lem localgood2}. 

\begin{lemma}\label{hard lemmdot}
In the setting of \eqref{chain_mole_sub}, we merge a pair of molecules in different components $\Gamma_{i}$ and $\Gamma_j$ with $i<j$. Then after replacing $\Iso_{j+1}$ and its two external edges with a $\dashed$ edge in the molecular graph without red solid edges, the subgraph $\Iso_i$ becomes pre-deterministic. 
%If we add a dotted edge between $\Gamma_{j}$ and an external molecule ($\otimes$, $\oplus$ or $\ominus$), then after replacing $\overline I_{j+1}$ with a blue edge, the whole graph becomes pre-deterministic. 
\end{lemma}

Next we consider case (B). First, we assume that $\cal M$, $\cal M'$ and $\cal M''$ are all inside $\Gamma_i$ for some $0\le i \le k$. %Suppose the  edge $b_0$ between $\cal M$ and $\cal M'$ becomes two edges $b$ between $\cal M$ and $\cal M''$ and $b'$ between $\cal M'$ and $\cal M''$ in the new graph, say $\wt{\cal G}$. 
Then all components are unchanged after operation (B), except the component $\Gamma_i$. By definition, after replacing $\Iso_{i+1}$ and its two external edges with a $\dashed$ edge, $\Iso_i$ becomes pre-deterministic in $\cal G$, and suppose a pre-deterministic order of $\Iso_i$ is $e_1 \preceq \cdots \preceq e_{\ell-1} \preceq b_0 \preceq e_{\ell+1} \preceq \cdots$. Then in \smash{$\wt{\cal G}$}, after replacing $\Iso_{i+1}$ and its two external edges with a $\dashed$ edge, $e_1\preceq \cdots\preceq e_{\ell-1}\preceq b\preceq b' \preceq  e_{\ell+1}\preceq\cdots$ is a pre-deterministic order in $\Iso_i$. Using this fact, we can readily check that $\wt{\cal G}$ is still SPD.

Second, we assume that $\cal M\in \Gamma_i$, $\cal M'\in \Gamma_{i+1}$ and (1) $\cal M''\in \Gamma_i$, or (2) $\cal M''\in \Gamma_{i+1}$. Suppose we have replaced $\Iso_{i+2}$ and its two external edges with a $\dashed$ edge in $\Iso_{i+1}$ and get the following graphs:
\begin{center}
\includegraphics[width=13cm]{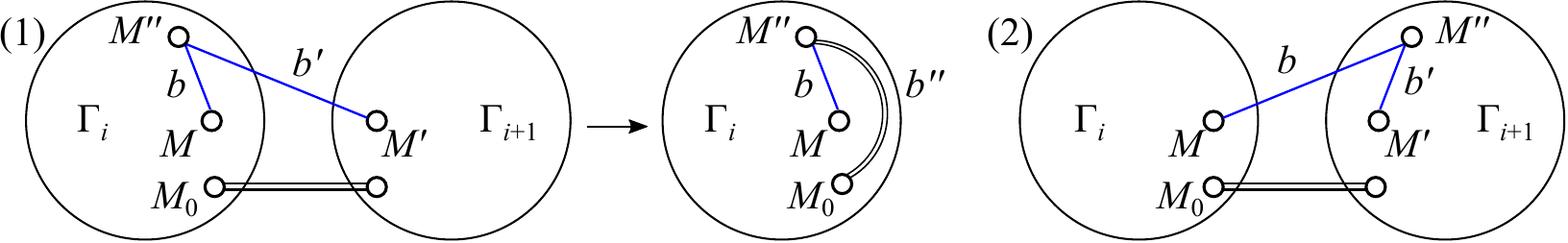}
\end{center}

In case (1), $\Iso_{i+1}$ is pre-deterministic because it is unchanged after the operation (B). In the second graph, we replace $\Iso_{i+1}$ and its two external edges with a $\dashed$ edge $b''$. We show that the edge $b$ is now redundant in $\Iso_{i}$. By the  doubly connected property of $\cal G$, removing the subgraph $\Iso_{i+1}$ and its two external edges still gives a doubly connected graph. Correspondingly, in the first graph of case (1), if we remove $\Gamma_i$, its two external edges and edge $b$, we still get a doubly connected graph. Hence the edge $b$ is redundant in the second graph of case (1). Now if we replace $b$ with a $\dashed$ edge, then the path of $\dashed$ edges $b''$ and $b$ will play the role of a single $\dashed$ edge between $\cal M_0$ and $\cal M$ in $\Iso_i$, and hence $\Iso_{i}$ becomes pre-deterministic by the property (iii) of Definition \ref{def seqPDG} for the original graph $\cal G$.

In case (2), $\Iso_{i+1}$ is still pre-deterministic by choosing the edge $b'$ as the last edge in a pre-deterministic order. Moreover, after replacing $\Iso_{i+1}$ and its two external edges with a $\dashed$ edge, $\Iso_{i+1}$ becomes pre-deterministic by the property (iii) of Definition \ref{def seqPDG} for the original graph $\cal G$. %Hence we observe that in both cases, $\wt {\cal G}$ obtained from operation (B) is pre-deterministic.

It remains to consider the case where $\cal M''$ is in a different component from both $\cal M$ and $\cal M'$. %Depending on the relations between $\cal M$ and $\cal M'$, we have five different cases as given by the following lemma. 

\begin{lemma}\label{hard lemmQ}
In the setting of \eqref{chain_mole_sub}, suppose a blue solid edge between molecules $\cal M$ and $\cal M'$ is replaced by a path of two blue solid edges from $\cal M$ to $\cal M''$ and from $\cal M''$ to $\cal M'$. Let the new graph be \smash{$\wt{\cal G}$}, and with a slight abuse of notation, we still denote the subgraphs induced on $\Pol_l$ by $\Iso_l$ in $\wt {\cal G}$. Suppose $\cal M''$ is inside component $\Gamma_j$.  Then we have the following five cases for the molecular graph $\wt {\cal G}_{\cal M}$ without red solid edges.
\begin{itemize}
\item[(i)] If $\cal M,\cal M' \in \Gamma_i$ and $i<j$, then $\Iso_{j+1}$ is the maximal isolated subgraph of $\Iso_i$. Moreover, after replacing $\Iso_{j+1}$ and its two external edges with a $\dashed$ edge, the subgraph $\Iso_i$ becomes pre-deterministic.
 
\item[(ii)] If $\cal M,\cal M' \in \Gamma_i$ and $i>j$, then $\Iso_{i+1}$ is the maximal isolated subgraph of $\Iso_j$. Moreover, after replacing $\Iso_{i+1}$ and its two external edges with a $\dashed$ edge, the subgraph $\Iso_j$ becomes pre-deterministic.
 
 \item[(iii)] If $\cal M\in \Gamma_i$, $\cal M'\in \Gamma_{i+1}$ and $i<j$, then $\Iso_{i+1}$ is the maximal isolated subgraph of $\Iso_i$, and $\Iso_{j+1}$ is the maximal isolated subgraph of $\Iso_{i+1}$. Moreover, after replacing $\Iso_{j+1}$ and its two external edges with a $\dashed$ edge, the subgraph $\Iso_{i+1}$ becomes pre-deterministic.
 
 \item[(iv)] If $\cal M\in \Gamma_i$, $\cal M'\in \Gamma_{i+1}$ and $i>j$, then $\Iso_{i+1}$ is the maximal isolated subgraph of $\Iso_j$. Moreover, after replacing $\Iso_{i+1}$ and its two external edges with a $\dashed$ edge, the subgraph $\Iso_{j}$ becomes pre-deterministic.

\item[(v)] If $\cal M$ is an external molecule and $\cal M'\in \Gamma_0$, then $\Iso_{j+1}$ is the maximal proper isolated subgraph. Moreover, after replacing $\Iso_{j+1}$ and its two external edges with a $\dashed$ edge, the maximal subgraph \smash{$\wt{\cal G}_{\max}$} becomes pre-deterministic.
\end{itemize}
%After adding $Q$-pulled between two polymers, then all the polymers between these two polymers (on the original polymer tree) become locally potential grey.  
\end{lemma}

%Then we have five different cases as in the following figure: 
%\begin{center}
%	\includegraphics[width=16cm]{Five_cases.png}
%\end{center}
%where we only draw the relevant $\Gamma$ components, while all the other ones are unchanged. For these five cases, we have the following lemma, whose proof will be given in Appendix \ref{sec PDG}. 

The proof of Lemma \ref{hard lemmQ} will be postponed to Appendix \ref{sec PDG}. Similar to case (A), combining Lemma \ref{hard lemmQ} with the SPD property of the original graph $\cal G$, we can readily show that the new graph obtained from the operation (B) is still SPD. We omit the details. This concludes Lemma \ref{lem localgood2}.
%Finally, we can conclude Lemma \ref{hard lemma} with Lemma \ref{keykeykey}. If we have two polymers connected through $Q$-pulled edges at some step of expansions, then the molecules between them will combine into a polymer that is locally potential grey. Hence even in the case $\dag$, the ``big" MIFT we finally get will be PDG.
 \end{proof}
Similar to Lemma \ref{lem localgood2}, we have the following result for $Q$-expansions.   
%I suggest to add this following lemma, since this property are very useful in the whole expansion. 

\begin{lemma}\label{lem localgoodQ}
Let $\cal G_0$ in \eqref{QG} be a SPD graph. Then applying the $Q$-expansions in Section \ref{sec defnQ}, the new graphs (including the $Q$-graphs) are all SPD. 
\end{lemma}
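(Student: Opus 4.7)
The plan is to reduce the claim to Lemma \ref{lem localgood2} by decomposing the $Q$-expansion into its elementary building blocks and showing that each such block preserves the SPD property of Definition \ref{def seqPDG}. Inspecting Section \ref{sec defnQ}, the $Q$-expansion is assembled from four types of operations: (a) the dotted edge partition $\cal O_{dot}$ and the regular-weight removal ${\cal O}^{(x),1}_{weight}$; (b) the resolvent identities \eqref{resol_exp0} and \eqref{resol_reverse} (and their conjugate analogues for $\overline G$), which rewrite $G_{x_1x_2}$ in terms of $G^{(x)}_{x_1x_2}$ and $G_{x_1x}G_{xx_2}/G_{xx}$; (c) the Taylor expansion \eqref{weight_taylor} of $(G_{xx})^{-1}$ in powers of $G_{xx}-m$, together with the remainder weight $\cal W^{(x)}_D$; and (d) the operators $\wt{\cal O}^{(x)}_{weight}$ and $\wt{\cal O}^{(x)}_{multi-e}$ that drain the $P_x$ label in Steps 2 and 3. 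It suffices to check each type separately.

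Operations of type (a) have already been treated in Lemma \ref{lem localgood2}. For (c), the Taylor expansion and the remainder weight $\cal W^{(x)}_D$ only introduce light weights attached to the single atom $x$; these changes are entirely internal to the molecule containing $x$, hence they do not alter the molecular graph $\cal G_{\cal M}$ with red solid edges removed, nor any isolated subset, so the SPD property is preserved trivially. For (d), the two operators are derived from the Gaussian integration-by-parts identities \eqref{gHw} and \eqref{gH0}, which structurally coincide with $\cal O^{(x)}_{weight}$ and $\cal O^{(x)}_{multi-e}$ of Definitions \ref{Ow-def} and \ref{multi-def} (the only difference being a persistent $P_x$ applied to the entire output). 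Consequently the analysis of Lemma \ref{lem localgood2} applies verbatim: the only potentially nontrivial effects on $\cal G_{\cal M}$ are the two basic operations (A) (merging molecules via newly added dotted or waved edges) and (B) (replacing a plus $G$ edge between molecules $\cal M,\cal M'$ by a path of two plus $G$ edges through a third molecule $\cal M''$), both of which were shown to preserve the SPD property via Lemmas \ref{hard lemmdot} and \ref{hard lemmQ}.

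The main obstacle, and the only step requiring genuinely new input, is operation (b). When \eqref{resol_exp0} is applied to a factor $G_{x_1x_2}$ in $\Gamma$, the second term $G_{x_1x}G_{xx_2}/G_{xx}$ realises exactly operation (B) of the proof of Lemma \ref{lem localgood2}: the plus $G$ edge between the molecules of $x_1$ and $x_2$ is replaced by a path of two plus $G$ edges through the molecule $\cal M''$ containing $x$, accompanied by a purely local weight $(G_{xx})^{-1}$ at $x$. I would therefore invoke Lemma \ref{hard lemmQ} directly, splitting into the five geometric cases there according to the positions of the molecules of $x_1$, $x_2$ and $x$ in the chain \eqref{chain_mole_sub}; the conjugate version and the backward identity \eqref{resol_reverse} are handled identically. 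The first term $G^{(x)}_{x_1x_2}$ keeps the same endpoints and thus does not touch the molecular graph, the superscript $(x)$ being merely a change of local interpretation.

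Combining these observations, Steps 1--3 of the $Q$-expansion are all compositions of operations of types (a)--(d), each of which preserves the SPD property. A further point to note is that the subsequent $\cal O_{dot}$ applications interleaved with the Taylor expansions and the resolvent identities can only trigger additional instances of operation (A), covered by Lemma \ref{hard lemmdot}. Hence every graph produced on the right-hand side of \eqref{G0Q}---both the graphs without $P/Q$ labels and the remaining $Q$-graphs $Q_x(\wt{\cal G}_\zeta)$---inherits the SPD property from $\cal G_0$, which is the conclusion of Lemma \ref{lem localgoodQ}.
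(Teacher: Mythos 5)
Your proposal is correct and follows essentially the same route as the paper, which simply notes that the $Q$-expansions are local expansions and reduces the claim to Lemma \ref{lem localgood2} via Lemmas \ref{hard lemmdot} and \ref{hard lemmQ}, omitting the details. Your decomposition into the building blocks (a)--(d) — in particular identifying the resolvent identity \eqref{resol_exp0} as an instance of operation (B) handled by Lemma \ref{hard lemmQ}, and the remaining operations as instances of (A) or as purely intra-molecular changes — supplies exactly the details the paper leaves out.
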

\begin{proof}
The proof of this lemma is similar to the one for Lemma \ref{lem localgood2} by using Lemmas \ref{hard lemmdot} and \ref{hard lemmQ}, because $Q$-expansions are also local expansions as those in Lemma \ref{lem localgood2}. We omit the details. 
\end{proof}

\section{Proof of Theorem \ref{incomplete Texp} and Corollary \ref{lem completeTexp}}\label{sec global}

%For this purpose, we introduce the concept of globally standard graphs. In each globally standard graph, there is a canonical order of all the $T$-variables. Then roughly speaking, the core reason why our global expansion strategy works is that every globally standard graph can be expanded, while each expansion of a globally standard graph still gives globally standard graphs as long as we expand the {\bf first} $T$-variable in it.  

\subsection{Proof of Corollary \ref{lem completeTexp}}\label{sec lastglobal}

Given the $n$-th order $\incomp$ constructed in Theorem \ref{incomplete Texp}, we can solve \eqref{mlevelT incomplete} to get that
\begin{align}
	T_{\fa,\fb_1\fb_2} &=  m \left( \frac{1}{1-\Theta \wtSdelta^{(n)}}\Theta\right)_{\fa\fb_1} \overline G_{\fb_1\fb_2} \nonumber \\
	&   + \sum_x \left( \frac{1}{1-\Theta \wtSdelta^{(n)}}\right)_{\fa x} \left[  (\PIT^{(n)})_{x,\fb_1 \fb_2} + (\AITn)_{x,\fb_1 \fb_2} + (\QITn)_{x,\fb_1 \fb_2}+ (\Err'_{n,D})_{x,\fb_1\fb_2} \right] . \label{solving_T}
\end{align}	
By the property (iv) of Definition \ref{def incompgenuni}, we can write that 
$$(\PIT^{(n)})_{x,\fb_1 \fb_2} = \sum_y \Theta_{xy} (\cal G_R^{(n)})_{y,\fb_1 \fb_2},\quad (\AITn)_{x,\fb_1 \fb_2} = \sum_y \Theta_{xy}(\cal G_A^{(>n)})_{y,\fb_1 \fb_2} ,$$
$$ (\QITn)_{x,\fb_1 \fb_2}= \sum_y \Theta_{xy}(\cal G_Q^{(n)})_{y,\fb_1 \fb_2},\quad (\Err'_{n,D})_{x,\fb_1\fb_2}=\sum_y \Theta_{xy}(\cal G_{err}^{(n,D)})_{y,\fb_1 \fb_2} ,$$
for some sums of graphs $\cal G_R^{(n)}$, $\cal G_A^{(>n)}$, $\cal G_Q^{(n)}$ and $\cal G_{err}^{(n,D)}$. Using the definition \eqref{theta_renormal}, we can write \eqref{solving_T} as 
\begin{align}
  T_{\fa,\fb_1\fb_2} &=  m \Theta^{(n)}_{\fa\fb_1} \overline G_{\fb_1\fb_2}   + \sum_x \Theta^{(n)}_{\fa x} \left[(\cal G_R^{(n)})_{x,\fb_1 \fb_2} + (\cal G_A^{(>n)})_{x,\fb_1 \fb_2}+  (\cal G_Q^{(n)})_{x,\fb_1 \fb_2} +   (\cal G_{err}^{(n,D)})_{x,\fb_1 \fb_2}\right] . \label{solving_T2}
\end{align}	
We expand $\Theta^{(n)}$ as 
\be\label{expand_thetan}
\Theta^{(n)}= \sum_{k=0}^D ( \Theta \wtSdelta^{(n)})^{k}\Theta + \Theta^{(n)}_{err},\quad \Theta^{(n)}_{err}:= \sum_{k>D} ( \Theta \wtSdelta^{(n)})^{k}\Theta.
\ee
The terms $( \Theta \wtSdelta^{(n)})^{k}\Theta$ can be expanded into sums of labelled $\dashed$ edges, and we regard  \smash{$(\Theta^{(n)}_{err})_{xy}$} as a new type of $\dashed$ edge of scaling order $\ge 2(D+2)$ between atoms $x$ and $y$. Then we plug the expansion \eqref{expand_thetan} into \eqref{solving_T2} and rearrange the resulting graphs as follows. First, all graphs containing \smash{$\Theta^{(n)}_{err}$} and graphs from \smash{$\sum_x \Theta^{(n)}_{\fa x} (\cal G_{err}^{(n,D)})_{x,\fb_1 \fb_2}$} will be included into $(\Err_{n,D})_{\fa,\fb_1\fb_2}$. For other graphs, 
\begin{itemize}
\item the term \smash{$m \Theta^{(n)}_{\fa\fb_1} \overline G_{\fb_1\fb_2}$} will give the first two terms on the right-hand side of \eqref{mlevelTgdef} and some higher order graphs in $ (\ATn)_{\fa,\fb_1\fb_2}$; 

\item the term \smash{$\sum_x \Theta^{(n)}_{\fa x} (\cal G_R^{(n)})_{x,\fb_1 \fb_2}$} will give $(\PTn)_{\fa,\fb_1\fb_2}$ and some higher order graphs in $ (\ATn)_{\fa,\fb_1\fb_2}$;  

\item the term \smash{$\sum_x \Theta^{(n)}_{\fa x} (\cal G_A^{(>n)})_{x,\fb_1 \fb_2}$} will give higher order graphs in $ (\ATn)_{\fa,\fb_1\fb_2}$;  

\item the term \smash{$\sum_x \Theta^{(n)}_{\fa x} (\cal G_Q^{(n)})_{x,\fb_1 \fb_2}$} will give graphs in $(\QTn)_{\fa,\fb_1\fb_2}$. 
\end{itemize}
This concludes Corollary \ref{lem completeTexp}.

\subsection{Globally standard graphs}\label{sec_global_standard}

For the rest of this section, we focus on proving Theorem \ref{incomplete Texp}. In this subsection, we introduce the concept of \emph{globally standard graphs}. By taking into account the external red solid edges connected with isolated subgraphs, we define the following concept of weakly and strongly isolated subgraphs.

\begin{definition}[Weakly and strongly isolated subgraphs]\label{def_weakstrong}
%By taking into account the minus $G$ edges (red solid edges), 
%We divide the isolated subgraphs into the following two types.
%\begin{itemize}
%\item 
An isolated subset of molecules $\Pol$ is said to be \emph{strongly isolated} if there is \emph{at most one} red solid edge between $\Pol$ and $\Pol^c$, and the subgraph $\Iso_\Pol$ induced on $\Pol$ is called a strongly isolated subgraph. % only through three edges: an edge in the black net, an edge in the blue net, and a red solid edge.
Otherwise, $\Pol$ and $\Iso_\Pol$ are said to be weakly isolated (i.e. a weakly isolated subgraph is an isolated subgraph that is not 
strongly isolated).

%\item An isolated subset of molecules $\Pol$ is said to be {\bf weakly isolated} if there is at least two red solid edges between $\Pol$ and $\Pol^c$. The corresponding subgraph $\Iso(\Pol)$ is called a weakly isolated subgraph.

%An isolated subgraph $\cal G_s$ is said to be {\bf pseudo-isolated} if $\cal G_s$ is connected with its complement through an edge in the black net, an edge in the blue net, and more than one red solid edges.
%\end{itemize}
\end{definition}

%----------heruristic discussion
%In the following figure, $\Iso$ is a strongly (resp. weakly) isolated subgraph in the first (resp. second) graph. In the first graph, if $\Iso$ contains no solid blue edges, then $\Iso$ is connected with a pair of red and blue external solid edges. Moreover, we cannot expand them without breaking the doubly connected property because the solid blue edge is pivotal. In the second graph, if $\Iso$ contains no solid blue edges, then it must contain an atom which is connected with more red solid edges than blue solid edges. Hence we can expand the graph by performing a local expansion $\cal O_{multi}$ (recall Definition \ref{multi-def}) or $\cal O_{GG}$ (recall Definition \ref{GG-def}). 
%If the graph $\Iso$ is still isolated in the expansion, then the number of its external red solid edges is $\ge 2$, and we can always perform a local expansion because the red edges and blue solid edges do not match. Such local expansions only stop when $\Iso$ is not isolated anymore due to newly added waved, dotted or blue solid edge between $\Iso$ and its complement. Lemma \ref{lem localgood2} tells that the resulting graph is still SPD, and we can continue performing the global expansions.
%
%\begin{center}
%\includegraphics[width=12cm]{Weak_strong.png}
%\end{center}
%------------------------------

With Definition \ref{def seqPDG} and Definition \ref{def_weakstrong}, we define globally standard graphs.

\begin{definition}[Globally standard graphs]\label{defn gs}
A graph is said to be \emph{globally standard} if it is SPD, and every proper isolated subgraph of it is weakly isolated. 
\end{definition}

%\hty{ we need to say a few words why this is important.} 

The reason for introducing globally standard graphs is to avoid \emph{non-expandable graphs}, i.e. graphs that cannot be expanded without breaking the doubly connected property. As an example, we consider the following two graphs, where $\Gamma_1$ is strongly isolated in (a) and weakly isolated in (b):
\begin{center}
	\includegraphics[width=9cm]{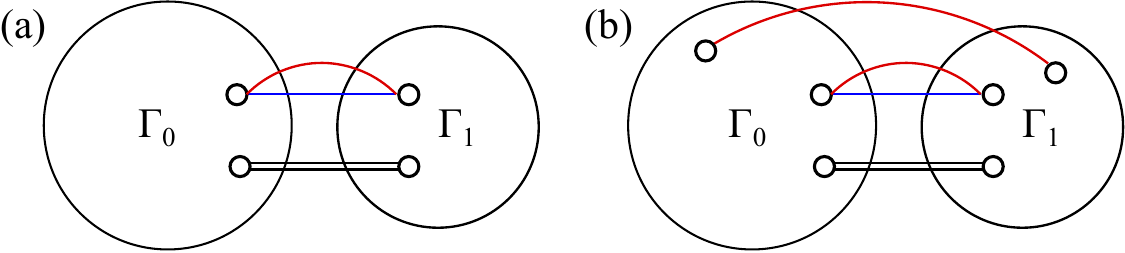}
\end{center}
Suppose both graphs (a) and (b) are SPD with $\Gamma_1$ being the minimal isolated subgraph. If $\Gamma_1$ contains blue solid edges, then we can either perform local expansions or expand the first blue solid edge in a pre-deterministic order of $\Gamma_1$. Now suppose $\Gamma_1$ does not contain any internal blue solid edge, then it is not locally standard in graph (b) because there are more external red solid edges than blue solid edges. Hence we can perform local expansions to graph (b). On the other hand, if both $\Gamma_0$ and $\Gamma_1$ in (a) are deterministic, then graph (a) is locally standard and only contains a pivotal blue solid edge, so it cannot be expanded without breaking the doubly connected property. Such a graph may not be included into one of the six terms on the right-hand side of \eqref{mlevelT incomplete}. 
% may not be included into any term on the right-hand side of \eqref{mlevelT incomplete} if it is not a recollision, $Q$, higher order or error graph. 
In sum, the SPD property in Definition \ref{defn gs} gives a canonical order of blue solid edges to expand, while the weakly isolated property guarantees that our expansions will not give non-expandable graphs. %graphs which only contain pivotal blue solid edges.  

%Before discussing the global expansions, we first 

Now we state some additional properties satisfied by the graphs in the $T$-expansion and $\incomp$. Recall the definition of minimal isolated subgraphs (MIS) in Definition \ref{defn_min_iso}.

\begin{definition} [$T$-expansion and $\incomp$: additional properties]\label{def genuni2}
%An $n$-th order $T$-expansion of $T_{\fa,\fb_1 \fb_2}$ is an expression satisfying 
The graphs in Definition \ref{defn genuni} satisfy the following additional properties. 

\begin{enumerate}

\item Every graph in $(\PTn)_{\fa,\fb_1\fb_2}$ is globally standard. %, and all the non-standard neutral atoms (recall Definition \ref{def SNA}) in it belong to the MIS. 

\item Every graph in $  (\ATn)_{\fa,\fb_1\fb_2} $ is SPD. %, and all the non-standard neutral atoms in it belong to the MIS.

\item Every $Q$-graph in $(\QTn)_{\fa,\fb_1\fb_2} $ is SPD. % and all the non-standard neutral atoms in it belong to the MIS. 
Moreover, the atom in the $Q$-label of a $Q$-graph belongs to the MIS, i.e. all solid edges and weights have the same $Q$-label $Q_x$ for an atom $x$ inside the MIS. 

\end{enumerate}
$(\PITn)_{\fa,\fb_1 \fb_2}$, $(\AITn)_{\fa,\fb_1\fb_2}$ and $ (\QITn)_{\fa,\fb_1\fb_2}  $ in Definition \ref{def incompgenuni} also satisfy these properties.
\end{definition}

By this definition, the higher order and $Q$ graphs in the $T$-expansion and $\incomp$ may contain strongly isolated subgraphs, but the recollision graphs do not. The heuristic reason is as follows. We only expand globally standard graphs, and we want to show that an expansion of a globally standard graph still gives globally standard graphs plus some graphs that are not needed to be expanded further (i.e. recollision, $Q$, higher order and error graphs). Suppose we substitute a $t$-variable with a recollision graph $\cal G$ in \smash{$\PT^{(n-1)}$}. In order for the resulting graph to be globally standard, $\cal G$ cannot contain any strongly isolated subgraph. %\hty{ add This explains why recollision graphs have to be weakly isolated.}  
This explains why recollision graphs have to be weakly isolated. If we substitute a $T$-variable with a graph in \smash{$\AT^{(n-1)}$}, then the resulting graph will be of scaling order $>n$ and we will not expand this graph further. Hence we do not require this graph to be globally standard. Finally, if we substitute a $T$-variable with a $Q$-graph $\cal G$ in \smash{$\QT^{(n-1)}$} and get a graph, say $\cal G_0$, then we need to perform $Q$-expansions to $\cal G_0$. Suppose a $Q$-expansion of $\cal G_0$ is given by \eqref{G0Q}. By the property (iv) of Lemma \ref{Q_lemma}, there will be solid edges connecting the atom in the $Q$-label of $\cal G$, say $x$, to old atoms in the original graph. Since $x$ belongs to the MIS of $\cal G$, one can see that the strongly isolated subgraphs in $\cal G_0$ before the $Q$-expansion are not strongly isolated anymore in graphs $\cal G_\omega$ after the $Q$-expansion. The rigorous argument will be given in the proof of Lemma \ref{lem globalgood} below.

With Lemma \ref{lem localgood2}, we can easily show that local expansions preserve the globally standard property. % in Definition \ref{def genuni2}. %More precisely, we have the following lemma.

\begin{lemma}\label{lem localgood3}
Let $\cal G$ be a globally standard graph without any $P/Q$ labels. 
\begin{enumerate}
\item Apply any expansion in Definitions \ref{dot-def}, \ref{Ow-def}, \ref{multi-def}, \ref{GG-def} and \ref{GGbar-def} to $\cal G$. The new graphs (including the $Q$-graphs) are all globally standard. 

\item Apply any expansion in Definitions \ref{Ow-def}, \ref{multi-def}, \ref{GG-def} and \ref{GGbar-def} on an atom in the MIS of $\cal G$. In each new $Q$-graph, the atom in the $Q$-label also belongs to the MIS. %minimal isolated subgraph.

%Suppose the non-standard neutral atoms all belong to the MIS of $\cal G$. Then applying any expansion in Definitions \ref{Ow-def}, \ref{multi-def}, \ref{GG-def} and \ref{GGbar-def} on a non-standard neutral atom $x$ of $\cal G$, all the non-standard neutral atoms also belong to the MIS in each new graph. Moreover, in each new $Q$-graph, the atom in the $Q$-label belongs to the MIS.
\end{enumerate}
\end{lemma}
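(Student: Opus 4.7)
The plan is to prove both parts by leveraging Lemma \ref{lem localgood2} and then doing the extra bookkeeping that the globally standard property demands. By Lemma \ref{lem localgood2} every new graph (and every $Q$-graph produced along the way) is already SPD, so for part (1) the only thing left to check is that every proper isolated subgraph of a new graph $\wt{\cal G}$ is \emph{weakly} isolated, and for part (2) we need to verify that the atom carrying each new $Q$-label still belongs to the MIS of $\wt{\cal G}$.

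For part (1), I would invoke the key structural observation established in the proof of Lemma \ref{lem localgood2}: any proper isolated subset of molecules $\Pol$ in $\wt{\cal G}$ must arise from a proper isolated subset $\Pol'$ of $\cal G$ (no genuinely new isolated subsets are created). Since $\cal G$ is globally standard, $\Pol'$ is weakly isolated, i.e.\ at least two red solid edges cross the boundary between $\Pol'$ and $(\Pol')^c$. It then suffices to show, operation by operation, that passing from $\cal G$ to $\wt{\cal G}$ never reduces the number of red solid edges crossing the boundary of $\Pol$. The merging operation (operation (A) in the proof of Lemma \ref{lem localgood2}) either identifies two molecules both lying in $\Pol'$ or both in $(\Pol')^c$, leaving the count unchanged, or else identifies molecules across the boundary, in which case $\Pol'$ loses its isolation and is irrelevant. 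The edge-splitting operation (operation (B)) replaces a single red solid edge between two molecules $\cal M_1,\cal M_2$ by a pair of red solid edges routed through a new molecule $\cal M_3$ (the molecule of the expanded atom $x$); checking the three relevant placements of $\cal M_3$ relative to $\Pol$ shows the boundary red-edge count can only stay the same or increase. Hence $\Pol$ still carries at least two boundary red edges and is weakly isolated.

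For part (2), the new atoms $\alpha,\beta$ produced in Definitions \ref{Ow-def}, \ref{multi-def}, \ref{GG-def} and \ref{GGbar-def} are attached to $x$ by waved edges and therefore absorbed into the molecule of $x$; hence every $Q$-label occurring in a newly created $Q$-graph is either $Q_x$ itself or $Q_\alpha$ with $\alpha$ in that same molecule. Since $x$ is assumed to lie in the MIS $\Iso_k$ of $\cal G$, it is enough to check that the molecule of $x$ still lies inside the MIS of $\wt{\cal G}$. But by the ``no new proper isolated subsets'' fact above, the chain of proper isolated subgraphs of $\wt{\cal G}$ is obtained from \eqref{seq_iso} by possibly dropping some entries and performing merges, and in particular the innermost element cannot shrink; the molecule containing $x$, which lay in $\Iso_k$, therefore still lies in the new innermost isolated subgraph. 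The main obstacle I anticipate is the red-edge bookkeeping under operation (B), which produces two red edges from a single $\overline G$ via $\partial_{h_{\alpha x}}\overline G_{ba}=-\overline G_{bx}\overline G_{\alpha a}$; the delicate subcases are those in which the split edge straddles the boundary of $\Pol'$, which must be handled as the red-edge analogue of the blue-edge analyses already carried out in Lemmas \ref{hard lemmdot} and \ref{hard lemmQ}.
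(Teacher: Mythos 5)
Your proposal is correct and follows essentially the same route as the paper: invoke Lemma \ref{lem localgood2} for the SPD property, use the fact (from its proof) that every proper isolated subset of a new graph arises from a proper isolated subset of $\cal G$, observe that the number of external red solid edges of such a subset can only stay the same or increase under the merging and edge-splitting operations, and note that the new atoms and hence all $Q$-label atoms stay in the molecule of $x$, which remains inside the (possibly enlarged) MIS. The paper simply compresses the red-edge monotonicity and part (ii) into "easy to see"/"readily checked", so your case analysis is just a fleshed-out version of the same argument; the appeal to Lemmas \ref{hard lemmdot} and \ref{hard lemmQ} at the end is unnecessary, since the boundary red-edge count is elementary and does not involve the pre-deterministic structure.
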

\begin{proof}
By Lemma \ref{lem localgood2}, the new graphs are all SPD. Let $\wt{\cal G}$ be one of the new graphs. As shown in the proof of Lemma \ref{lem localgood2}, any proper isolated subset of molecules $\Pol$ in $\wt{\cal G}$ is obtained from merging a proper isolated subset of molecules, say $\Pol_0$, in $\cal G$. Moreover, it easy to see that the number of external red solid edges of $\Pol$ in $\wt{\cal G}$ is larger than or equal to the number of external red solid edges of $\Pol_0$ in ${\cal G}$. Since $\Pol_0$ is weakly isolated in ${\cal G}$, we get that $\Pol$ is also weakly isolated in $\wt{\cal G}$. This concludes (i). 
The property (ii) can be checked readily using the Definitions \ref{Ow-def}, \ref{multi-def}, \ref{GG-def} and \ref{GGbar-def}.
%\iffalse
%For (ii), %recall that the expansions in Definitions \ref{Ow-def}, \ref{multi-def}, \ref{GG-def} and \ref{GGbar-def} only act on non-standard neutral atoms. In particular, 
%on the one hand, a non-standard neutral atom $x_0$ in $\cal G$ belongs to the MIS. Then it is easy to check with Definitions \ref{Ow-def}, \ref{multi-def}, \ref{GG-def} and \ref{GGbar-def} that $x_0$ is still in the MIS after the expansion. On the other hand, notice that a standard neutral atom $x_1$ in $\cal G$ remains to be standard neutral in every new graph, unless it is identified with another atom, say $y_1$, through an dotted edge. This only occurs when $x_1$ is connected with $y_1$ through a newly appeared $G$ edge, and we have added a dotted edge between them when applying $\cal O_{dot}$. From Definitions \ref{Ow-def}, \ref{multi-def}, \ref{GG-def} and \ref{GGbar-def}, we see that atom $y_1$ must be in a molecule that contains a non-standard neutral atom, say $y_0$, in $\cal G$. By assumption, $y_0$ belongs to the MIS of $\cal G$. Hence after identifying $x_1$ with $y_1$, we can check that atom $x_1$ must also belong to the MIS of the new graph. 
%
%Finally, we notice that the atom in the $Q$-label of a $Q$-graph, say $x$, belongs to a molecule around an atom that is originally not standard neutral in $\cal G$. With this fact, we can get that the atom $x$ also belongs to the minimal isolated subgraph in the new graph. \fi 
\end{proof}

The following lemma is key to our proof. Roughly speaking, it shows that the globally standard property is preserved if we expand the \emph{first} blue solid edge in a pre-deterministic order of the MIS. 

%The above lemma shows that the local expansions preserve the local standard property of graphs. Later we will show that the global standard property will be preserved as long as we expand the {\bf first} blue solid edge in the pre-deterministic order of the minimal isolated subgraph at each step. Then roughly speaking, the core argument of our proof is to show that every globally standard graph can be expanded, while each expansion of a globally standard graph still gives globally standard graphs. % as long as we expand the {\bf first} $T$-variable in it.

\begin{lemma}\label{lem globalgood}
Suppose $\cal G$ is a globally standard graph without $P/Q$ labels. Let $\Iso_k$ be the MIS of $\cal G$. Consider a $t_{x,y_1y_2}$ variable in $\cal G$ defined by \eqref{12in4T}, so that the blue solid edge $G_{\al y_1}$ is %either inside a molecule in $\Iso_k$, or is 
the first blue solid edge in a pre-deterministic order of $\Iso_k$.  
  
%Let $b_1$ be the first plus $G$ edge in a pre-deterministic order of $\Iso_k$. Suppose this plus edge is $G_{\al y_1}$ in the $t_{x,y_1y_2}$ variable in \eqref{12in4T}. 
%Then for any $l$-th order $T$-expansion with $l\le n-1$, 

\begin{itemize}
\item[(1)] If we replace $t_{x,y_1y_2}$ with a graph in the first two terms on the right-hand side of \eqref{replaceT},
%$\overline m^{-1}\Theta_{xy_1}\overline G_{y_1y_2}$ or a labelled $\dashed$ $\Theta$ edge $\overline m^{-1}(\Theta \Sele_{2k_1}\Theta  \Sele_{2k_2}\Theta \cdots \Theta  \Sele_{2k_l}\Theta)_{xy_1}\overline G_{y_1y_2}$, 
then the resulting graph, say \smash{$\wt{\cal G}$}, has no $P/Q$ labels and is globally standard. % and has a MIS containing all  non-standard neutral atoms. 
Moreover, \smash{$\wt{\cal G}$} either has a strictly higher scaling order than $\cal G$, or is obtained by replacing $t_{x,y_1y_2}$ with $ m\Theta_{xy_1}\overline G_{y_1y_2}$.

\item[(2)] If we replace $t_{x,y_1y_2}$ with a graph $(\cal G_{R})_{x,y_1 y_2}$ in $(\wtPT^{(n-1)})_{x,y_1 y_2}$, then the resulting graph has no $P/Q$ labels, is globally standard and has a scaling order $\ge \ord(\cal G) +1 $. % and has a MIS containing all non-standard neutral atoms. 

\item[(3)] If we replace $t_{x,y_1y_2}$ with a graph $(\cal G_{A})_{x,y_1y_2}$ in $(\AT^{(>n-1)})_{x,y_1 y_2}$, then the resulting graph  has no $P/Q$ labels, is SPD and has a scaling order $\ge \ord(\cal G)+n-2$. % and has a MIS containing all non-standard neutral atoms. 

\item[(4)] Suppose we replace $t_{x,y_1y_2}$ with a graph $(\cal G_{Q})_{x,y_1y_2}$ in $(\QT^{(n-1)})_{x,y_1 y_2}$ and get a graph $\wt{ \cal G} $. Then applying the $Q$-expansions, we can expand it into a sum of $\OO(1)$ many graphs: %for any fixed $D>0$,
\be\label{mlevelTgdef3_Q}
\wt{ \cal G} = \sum_\omega \cal G_\omega  + \cal Q   + \cal G_{err} ,
\ee
where every $\cal G_\omega$ has no $P/Q$ labels, is globally standard and has a scaling order $\ge \ord(\cal G) +1 $; %and has a MIS containing all non-standard neutral atoms; 
$\cal Q$ is a sum of $Q$-graphs, each of which is SPD and has a MIS containing the atom in the $Q$-label; 
%and each of them whose each of which is  and has a minimal isolated subgraph containing all non-standard neutral atoms and the atom in the $Q$-label; 
$\cal G_{err}$ is a sum of doubly connected graphs of scaling orders $> D$.

\item[(5)] If we replace $t_{x,y_1y_2}$ with a graph $(\cal G_{err})_{x,y_1y_2}$ in $(\Err_{n-1,D})_{x,y_1 y_2}$, then the resulting graph is doubly connected and has a scaling order $\ge \ord(\cal G)+D-1$. 
\end{itemize}
\end{lemma}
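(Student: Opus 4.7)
\medskip

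\noindent\textbf{Proof plan for Lemma \ref{lem globalgood}.}

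The overall strategy rests on a single structural fact: since $G_{\al y_1}$ is the first edge in a pre-deterministic order of $\Iso_k$, it is a redundant blue solid edge inside $\Iso_k$ (hence redundant in $\cal G$). Consequently, by Lemma \ref{trivial_claim3}, every replacement considered in (1)--(5) automatically yields a doubly connected graph, which already settles part (5) and the doubly connected half of (1)--(4). What remains is to verify the finer properties (SPD, globally standard, Q-label placement, and scaling-order bookkeeping) case by case. Throughout, we write $\wt{\cal G}$ for the resulting graph and note that the $t$-variable has scaling order exactly $2$, so the scaling-order increments in (1)--(5) follow immediately from the scaling-order properties of the pieces being substituted in the $(n-1)$-th order $T$-expansion (recall \eqref{mlevelTgdef} and the decompositions \eqref{decomposeP}--\eqref{decomposeQ}).

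For part (1), the leading replacement $m\Theta_{xy_1}\overline G_{y_1y_2}$ merely recolours the first blue solid edge of the pre-deterministic order of $\Iso_k$ into a $\dashed$ edge; the sequence $b_2\preceq b_3\preceq\cdots$ then becomes a pre-deterministic order of the modified $\Iso_k$, no new molecules appear, and no proper isolated subset is created, so $\wt{\cal G}$ is globally standard of the same scaling order as $\cal G$. The subleading replacements $m(\Theta\Sdelta^{(n-1)}\Theta)_{xy_1}\overline G_{y_1y_2}$ behave identically at the level of the molecular graph (they are still captured by a single labelled $\dashed$ edge from $x$ to $y_1$), but contribute a strictly higher scaling order. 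For part (2), a recollision graph in $(\wtPT^{(n-1)})_{x,y_1y_2}$ is itself globally standard by the induction hypothesis and Definition \ref{def genuni2}(1); the substituted piece, when pasted into $\cal G$, either becomes the new MIS of $\wt{\cal G}$ or fuses with $\Iso_k$ through the external atoms $x,y_1,y_2$, and in either case the pre-deterministic order of $\Iso_k$ (continued by a pre-deterministic order of the inserted recollision subgraph) witnesses the SPD property. The weakly isolated property is preserved because the inserted subgraph carries its own external red solid edges to $y_2$ (and, via dotted edges, to $\fb_1$ or $\fb_2$), so no strongly isolated subgraph arises. For part (3), substituting a higher-order graph from $(\AT^{(>n-1)})_{x,y_1y_2}$ need only give an SPD graph, which follows from the analogous SPD property assumed in Definition \ref{def genuni2}(2) together with the same pasting argument; the scaling-order count $\ge \ord(\cal G)+n-2$ is a direct consequence of $\text{ord}((\AT^{(>n-1)})_{x,y_1y_2}) > n-1$ and $\text{ord}(t_{x,y_1y_2})=2$.

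The main obstacle, and the place where the proof genuinely needs work, is part (4). Substituting a $Q$-graph $(\cal G_Q)_{x,y_1y_2}$ from $(\QT^{(n-1)})_{x,y_1y_2}$ produces a graph $\wt{\cal G}$ carrying a dangling $Q_x$-label whose atom $x$ lies in the MIS of the substituted piece (by Definition \ref{def genuni2}(3)). To clean this up we apply the $Q$-expansions of Section \ref{sec defnQ} and invoke Lemma \ref{Q_lemma}: the SPD property survives by Lemma \ref{lem localgoodQ}, while property (iv) of Lemma \ref{Q_lemma} guarantees that in each non-$Q$ summand $\cal G_\omega$ at least one atom of the original substituted subgraph gets tied to $x$ by a solid edge (or is merged with $x$). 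This new solid connection is precisely what destroys any would-be strong isolation: any subset of molecules that was strongly isolated before the $Q$-expansion now picks up an additional red or blue external solid edge, forcing it to become weakly isolated (or to cease being isolated altogether). Hence each $\cal G_\omega$ is globally standard; the scaling-order gain $+1$ comes from property (i) of Lemma \ref{Q_lemma} together with the fact that every non-trivial $Q$-expansion step strictly increases the scaling order when there are no weights or edges attached to $x$ in the original piece. The $Q$-graphs collected in $\cal Q$ are SPD, and their $Q$-labels remain attached to atoms inside the MIS because $Q$-expansions keep the $Q$-label atom within the molecule originally carrying it, which by construction sits inside the new MIS of $\wt{\cal G}$.

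The delicate combinatorics---verifying that pasting together two pre-deterministic orders (one from the outer graph, one from the inserted subgraph) really produces a pre-deterministic order for each newly formed isolated subgraph in the chain \eqref{seq_iso}, and that the weakly isolated condition is inherited---will be carried out by a case analysis entirely analogous to the one underlying Lemmas \ref{hard lemmdot} and \ref{hard lemmQ}; in fact the present lemma can be viewed as the ``global'' counterpart of those two ``local'' lemmas, and its proof will reuse the same chain-of-components picture \eqref{chain_mole_sub}. The hardest piece is the strong-to-weak isolation argument in (4), since there one must simultaneously track the scaling-order gain from Lemma \ref{Q_lemma}(iv) and the new solid edges it produces; this is the step that truly requires the $Q$-label to sit inside the MIS of the substituted $Q$-graph, explaining why Definition \ref{def genuni2}(3) is formulated the way it is.
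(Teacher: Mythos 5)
Your proposal is correct and follows essentially the same route as the paper: double connectivity in all five cases is reduced to the redundancy of the first edge in the pre-deterministic order of $\Iso_k$ (Lemma \ref{trivial_claim3}), the SPD property in case (2) is obtained by appending the inserted subgraph to the chain of isolated subgraphs and invoking Lemma \ref{hard lemmdot} together with the observation that replacing the new innermost isolated subgraph by a $\dashed$ edge is exactly the conversion of $G_{\al y_1}$ into a $\dashed$ edge, and the weakly isolated property in case (4) is rescued by Lemma \ref{Q_lemma}(iv) and the placement of the $Q$-label inside the MIS. The case analysis you defer at the end is precisely what the paper carries out via the figures around \eqref{eq_recolli1} and Lemmas \ref{hard lemmdot}, \ref{hard lemmQ}, \ref{Q_lemma} and \ref{lem localgoodQ}, so no new idea is missing.
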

\begin{proof}
The proofs of (1), (3) and (5) are simple by using the properties in Definitions \ref{defn genuni} and \ref{def genuni2}. We focus on the proofs of (2) and (4). 

\vspace{5pt}
\noindent {\bf Proof of (2):} We denote the new graph by $\wt{\cal G}$. Then we have the following graph (a), where we only show a case which has a dotted edge connected with $y_1$ in $(\cal G_{R})_{x,y_1 y_2}$. There are also cases with a dotted edge connected with $y_2$. With a slight abuse of notation, we have used $x$ and $y_1$ to denote the respective molecules that contain atoms $x$ and $y_1$. 
\be\label{eq_recolli1}
\parbox[c]{0.9\linewidth}{\includegraphics[width=15cm]{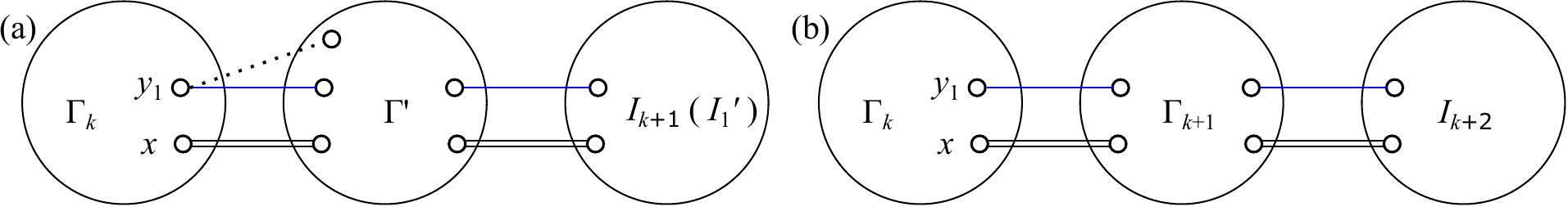}}
\ee
The graph (a) gives the isolated molecular subgraph $\Iso_k$ in $\wt{\cal G}$ with all red solid edges removed, where we have not merged the molecules connected by the dotted edge.  Inside the back circles are some subgraphs, where $\Gamma_k$ contains the molecules in $\Iso_k$ of the original graph $\cal G$, $\Iso'_1=\Iso_{k+1}$ is the first proper isolated subgraph in $(\cal G_{R})_{x,y_1 y_2}$ and also the $(k+1)$-th proper isolated subgraph in $\wt{\cal G}$, and $\Gamma'$ contains the molecules in $(\cal G_{R})_{x,y_1 y_2}$ that are not in $\Iso'_1$. 
%\iffalse
%Here the external dotted edge connects to $\Gamma'$ due to the following reason. Suppose we have an external dotted edge connected to a proper isolated subgraph, say $\Iso_i'$, in $(\cal G_{R})_{x,y_1 y_2}$. Then this dotted edge appears only when $\Iso'_i$ is connected with an external molecule through a $G$ edge during the expansions and we have added a dotted edge between them when applying $\cal O_{dot}$. 
%In this case, $\Iso_i'$ is not isolated anymore, which gives a contradiction. 
%\fi
The graph (b) is obtained by removing the dotted edge from graph (a) and renaming the subgraphs.

First, $\ord(\wt{\cal G})\ge \ord(\cal G) +1 $ follows immediately from the fact that $(\cal G_{R})_{x,y_1 y_2}$ has scaling order $\ge 3$.
%we notice that the maximal subgraph of $(\cal G_P)_{x,y_1y_2}$ is not an isolated subgraph because it is connected with $x$ through a $\dashed$ edge, with $y_1$ through a blue solid edge, and with $y_1$ or $y_2$ through a dotted edge in the molecular graph.
%with its complement through a blue edge, a black $\dashed$ edge and a dotted edge. 
Second, using the fact that both $\cal G$ and $(\cal G_R)_{x,y_1y_2}$ have no proper strongly isolated subgraphs, we immediately get that \smash{$\wt{\cal G}$} also does not contain any proper strongly isolated subgraph. 
%Furthermore, with property (i) of Definition \ref{def genuni2}, it is easy to see that the atoms that are not standard neutral in the new graph all belong to the minimal isolated subgraph.
%except for the maximal isolated subgraph of $\cal G_1$. 
It remains to prove the SPD property of $\wt{\cal G}$. With Lemma \ref{hard lemmdot}, it suffices to show that graph (b) in \eqref{eq_recolli1}, denoted by $\wt{\cal G}_b$, is SPD. In this graph, we have renamed $\Iso_{k+1}$ in (a) as $\Iso_{k+2}$ and $\Gamma'$ as $\Gamma_{k+1}$. Notice that $\Gamma_{k+1}$, $\Iso_{k+2}$ and the two edges between them form the isolated subgraph $\Iso_{k+1}$ in \smash{$\wt{\cal G}_b$}.
Now we verify the properties (i)--(iii) of Definition \ref{def seqPDG} for $\wt{\cal G}_b$. The property (i) follows from Lemma \ref{trivial_claim3}. 
%the facts that $b_1$ is redundant in the original graph $\cal G$, and that $(\cal G_{R})_{x,y_1y_2}$ satisfies properties (vi) and (vii) of Definition \ref{def genuni}.  
Using the SPD property of $(\cal G_{R})_{x,y_1y_2}$ given by Definition \ref{def genuni2}, we get that there is at most one sequence of isolated graphs in $\Iso_k$. Together with the SPD property of $\cal G$ and the fact that $\Iso_k$ is the MIS of $\cal G$, it concludes the property (ii) for $\wt{\cal G}_b$. 
%So $\cal G_{new}'$ also contains at most one chain of isolated graphs and satisfies . 
It remains to prove the property (iii). Suppose the sequence of proper isolated subgraphs in $\wt{\cal G}_b$ is 
\be\label{pf_chain_iso}\Iso_{l}\subset \cdots \subset \Iso_{k+2}\subset \Iso_{k+1} \subset  \Iso_k \subset \cdots \subset \Iso_1,\quad \text{for some} \ \ l\ge k,\ee
and for simplicity of notation, we denote the maximal subgraph by $\Iso_0$.   
%where $\Iso_{j+1}$ is a maximal isolated subgraph of $\Iso_{j}$ for any $j$. 
For $j\ge k+1$, if we replace $\Iso_{j+1}$ and its two external edges with a $\dashed$ edge, then $\Iso_j$ becomes pre-deterministic because $(\cal G_{R})_{x,y_1y_2}$ is SPD. %Similarly, if we replace $\Iso_{k+1}$ and its two external edges with a $\dashed$ edge, then $\Iso_k'$ becomes pre-deterministic. 
For $j\le k-1$, if we replace $\Iso_{j+1}$ and its two external edges with a $\dashed$ edge, then $\Iso_j$ becomes pre-deterministic because $\cal G$ is SPD. Finally, if we replace $\Iso_{k+1}$ and its two external edges with a $\dashed$ edge, then it corresponds to changing the first blue solid edge in a pre-deterministic order of $\Iso_k$ into a $\dashed$ edge. Hence $\Iso_k$ becomes pre-deterministic by Definition \ref{def PDG}. In sum, we get that $\wt{\cal G}_b$ is SPD, which concludes statement (2) of Lemma \ref{lem globalgood}. 

\vspace{5pt}
\noindent {\bf Proof of (4):} Using a similar argument as the one for $\wt{\cal G}_b$, we get that $\wt {\cal G}$ is SPD. Then the SPD property of the graphs on the right-hand side of \eqref{mlevelTgdef3_Q} follow from Lemma \ref{lem localgoodQ}. The conditions on scaling orders and the fact that the atom in the $Q$-label of a $Q$-graph belongs to the MIS follow from Lemma \ref{Q_lemma}. It remains to prove the weakly isolated property of (possible) proper isolated subgraphs in $\cal G_{\omega}$.

%\iffalse Note that $Q$-expansions are also local expansions in the sense that they %following sense. In the $Q$-expansions, 
%will not create new molecules. Furthermore, the $\dashed$ edges, waved edges, and dotted edges are not affected either. On the other hand, any solid $G$ edge between two different molecules, say $\cal M_1$ and $\cal M_2$, is either not affected or becomes a connected path of two $G$ edges between $\cal M_1$ and $\cal M_2$. Finally, new dotted edges or solid $G$ edges may be added to the graphs due to the expansions. In particular, the path connectivities of both the blue net and the black net are not affected, and in fact become ``better" due to the possible new edges. %{\cob (We need to discuss about these observations together with a precise definition of the $Q$-expansions.)} 
%Hence as Lemma \ref{local lemma}, this lemma also follows from Lemma \ref{lem localgood2} if $\cal G_s$ does not contain any real-isolated subgraph. 
%\fi 

We assume that $(\cal G_{Q})_{x,y_1y_2}$ indeed contains strongly isolated subgraphs, since otherwise the proof will be trivial. Then we have the following molecular graph without red solid edges: 
\begin{equation}\nonumber
	\parbox[c]{0.8\linewidth}{\center
		\includegraphics[width=12cm]{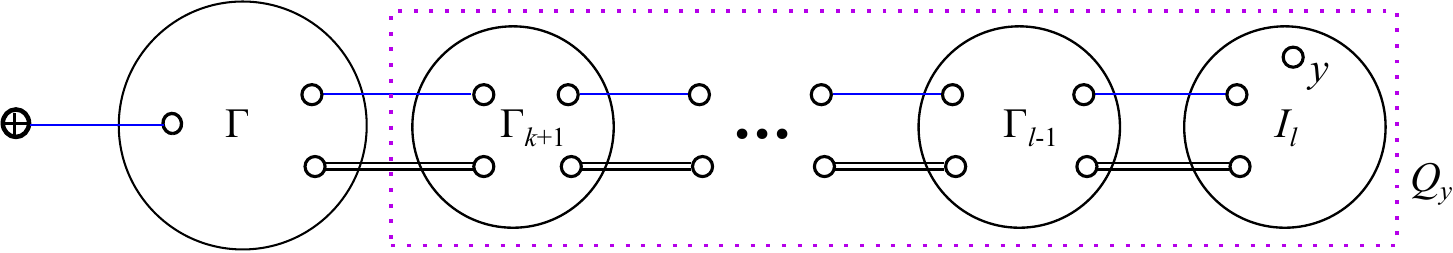}}
\end{equation}
Here all solid edges and weights inside the purple dashed rectangle have the same $Q_y$ label for an atom $y$ in the MIS, denoted by $\Iso_l$ for some $l\ge k$. Inside the back circles are some subgraphs, where $\Gamma$ contains the molecules in the original graph $\cal G$, and $\Gamma_i$'s are components corresponding to the isolated subgraphs in $(\cal G_{Q})_{x,y_1y_2}$. 
%(following the notation in \eqref{chain_mole_sub}). 
Again we denote the isolated subgraphs of $\wt{\cal G}$ by \eqref{pf_chain_iso}. Then by property (iv) of Lemma \ref{Q_lemma}, every $\cal G_{\omega}$ has atoms in $\Gamma$ or external atoms so that (a) they connect to $y$ through red solid edges, (b) they connect to $y$ through blue solid edges, or (c) they have been merged with $y$. In cases (b) and (c), the subgraphs $\Iso_{i}$, $k+1\le i \le l$, are not isolated anymore. In case (a), we have the following graph (or some variants of it, where we are not trying to draw all possible cases):
\begin{equation}\nonumber
	\parbox[c]{0.8\linewidth}{\center
		\includegraphics[width=11cm]{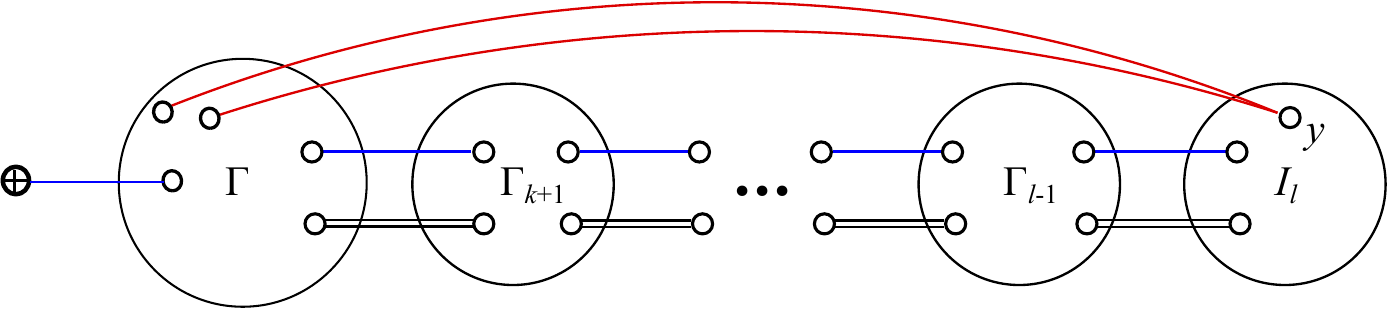}}
\end{equation}
In this case, any subgraph $\Iso_{i}$, $k+1\le i \le l$, is weakly isolated if it is still an isolated subgraph in $\cal G_\omega$. In sum, we see that $\cal G_{\omega}$ does not contain proper strongly isolated subgraphs, and hence is globally standard. 
\end{proof}

%For this purpose, we introduce the concept of globally standard graphs. In each globally standard graph, there is a canonical order of all the $T$-variables. Then roughly speaking, the core reason why our global expansion strategy works is that every globally standard graph can be expanded, while each expansion of a globally standard graph still gives globally standard graphs as long as we expand the {\bf first} $T$-variable in it.  
%{\color{red} explain the ideas of solving equations for $T$}

\subsection{Global expansion strategy} \label{sec expandlvl40}

%The above lemma shows that the local expansions preserve the local standard property of graphs. Later we will show that the global standard property will be preserved as long as we expand the {\bf first} blue solid edge in the pre-deterministic order of the minimal isolated subgraph at each step. Then roughly speaking, the core argument of our proof is to show that every globally standard graph can be expanded, while each expansion of a globally standard graph still gives globally standard graphs. % as long as we expand the {\bf first} $T$-variable in it.

Lemma \ref{lem globalgood} gives a canonical choice of the standard neutral atom in a global expansion of a globally standard graph, that is, we choose an ending atom of the first blue solid edge in a pre-deterministic order of the MIS. With this choice, we define the global expansion strategy for the proof of Theorem \ref{incomplete Texp} in this subsection.

Fix any $n\in \N$. As an induction hypothesis, suppose we have obtained the $(n-1)$-th order $T$-expansion. 
%Given $n\in \N$ in Theorem \ref{incomplete Texp}, 
Then we define the following stopping rules. We stop the expansion of a graph if it is a normal regular graph and satisfies at least one of the following properties:
\begin{itemize}
\item[(S1)] it is a $\oplus$/$\ominus$-recollision graph; %\hty{ we stop whenever a resollision? we dont care order? } 

\item[(S2)] its scaling order is at least $n+1$;

\item[(S3)] it is a $Q$-graph; % (recall Definition \ref{Def_recoll}); %\hty{ I thought that $Q$ graphs still need to expand? } 

\item[(S4)] it is \emph{non-expandable}, that is, it is locally standard and has no redundant blue solid edge. % plus \hty{do we need plus? or we want to use blue? } $G$ edge in the maximal subgraph. \hty{ why max subgraph instead of just saying no redundant edge? } 
\end{itemize}
If a graph has a deterministic maximal subgraph, then it is non-expandable. On the other hand, in a non-expandable graph that is not deterministic, we cannot expand a plus $G$ edge without breaking the doubly connected property. Now we give our global expansion strategy. %The core of the expansion strategy is to expand the plus $G$ edges according to the pre-deterministic order in the minimal isolated subgraph, such that the graphs appearing in the expansions are globally standard if they do not satisfy the stopping rules (S1)-(S3) already. 
%Here by ``good", we mean the graph is sequentially pre-deterministic, and does not contains any non-deterministic real-isolated subgraph except for the maximal isolated subgraph $I_0$.

%For simplicity of presentation, we introduce the following notion. 
%\begin{definition}[``Good" graphs]\label{defn goodgraph}
%We say a graph is ``good" if it has no $Q$-labels and satisfies the following properties:
%\begin{itemize}
%\item[(1)] it is doubly connected;
%\item[(2)] it contains at most one chain of non-deterministic isolated graphs, and is sequentially pre-deterministic;
%\item[(3)] it contains no real-isolated subgraph (that is, the chain of isolated graphs can only be pseudo-isolated).
%\end{itemize}  
%\end{definition}

%Then the expansion strategy is as follows.

\begin{strategy}[Global expansion strategy]\label{strat_global}
Fix any $n\in \N$ and large $D>n$. Given the above stopping rules (S1)--(S4), we apply the following strategy.
%\noindent{\bf Global expansion strategy for $n$-th order $T$-expansion:}
 \vspace{5pt}

\noindent{\bf Step 0}: We start with the second order $T$-expansion \eqref{seconduniversal}, and apply local expansions to obtain a linear combination of new graphs, each of which either satisfies the stopping rules (S1)--(S3) already, or is locally standard. At this step, there is only one internal molecule in every graph, which is trivially globally standard. %by Lemma \ref{lem localgood2}. (actually all the plus $G$ edges are short edges within one single molecule).   

\vspace{5pt}
\noindent{\bf Step 1}: Given a globally standard input graph, we perform the local expansions in Definitions \ref{Ow-def}, \ref{multi-def}, \ref{GG-def} and \ref{GGbar-def} on atoms in the minimal isolated subgraph (MIS). We send the resulting graphs that already satisfy the stopping rules (S1)--(S4) to the outputs. Every remaining graph is globally standard by Lemma \ref{lem localgood3}, and its MIS %\hty{ it might be better to spell out mim isolated}  
is locally standard (i.e. the MIS has no weights and every atom in it either has degree 0 or is a standard neutral atom by Definition \ref{deflvl1}).

\vspace{5pt}
\noindent{\bf Step 2}: Given a globally standard input graph $\cal G$ with a locally standard MIS, say $\Iso_k$, we find a $t_{x,y_1y_2}$ or $t_{y_1y_2,x}$ variable defined in \eqref{12in4T}, so that $\al$ is a standard neutral atom in $\Iso_k$ and the blue solid edge $G_{\al y_1}$ or $G_{y_1 \al}$ is the first blue solid edge in a pre-deterministic order of $\Iso_k$. %\hty{ Maybe we just use $\cal I$ for $\Iso$} 
If we cannot find such a $t$-variable, then we stop expanding the input graph.

\vspace{5pt}
\noindent{\bf Step 3}: We apply the global expansions defined in Section \ref{subsec global} to the $t_{x,y_1y_2}$ or $t_{y_1y_2,x}$ variable chosen in Step 2. More precisely, we first replace $t_{x,y_1y_2}$ (or $t_{y_1y_2,x}$) with \eqref{replaceT} (or an expression obtained by taking transpositions of all $G$ entries in \eqref{replaceT}). Then we apply the $Q$-expansions defined in Section \ref{sec defnQ} to graphs that take the form \eqref{QG}, where both $\Gamma_0$ and \smash{$\wt\Gamma_0$} are non-trivial. 
%\hty{ non-$Q$ graph (i.e., $\Gamma_0$) in the expression} 
%\hty{this all depends on how you define $Q$ expansion? in my mind, it is applied to the graph $\cal G_0$. but you seem to have a different thing in mind}
We send the resulting graphs that already satisfy the stopping rules (S1)--(S4) to the outputs. The remaining graphs are all globally standard by Lemma \ref{lem globalgood}, and we sent them back to Step 1. 
\end{strategy}

Using the definition of globally standard graphs, it is easy to show that if an output graph from the above strategy does not satisfy the stopping rules (S1)--(S3), then it must have a deterministic maximal subgraph. 
\begin{lemma}\label{lemm expansionstrat_nonstop}
Let $\cal G_{\fa,\fb_1\fb_2}$ be an output graph from the global expansion strategy \ref{strat_global}. If $\cal G_{\fa,\fb_1\fb_2}$ does not satisfy stopping rules (S1)-(S3), then it has a deterministic maximal subgraph with a doubly connected structure. %except for the two edges connected with $\oplus$ and $\ominus$.
\end{lemma}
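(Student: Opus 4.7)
By iterating Lemma~\ref{lem localgood3} (for Step~1 local expansions) and Lemma~\ref{lem globalgood} (for Step~3 global and $Q$-expansions), every graph produced by Strategy~\ref{strat_global} is globally standard; in particular the output $\cal G_{\fa,\fb_1\fb_2}$ is SPD and all of its proper isolated subgraphs are weakly isolated. Since $\cal G_{\fa,\fb_1\fb_2}$ fails (S1)--(S3), it must have terminated either via (S4) (locally standard with no redundant blue solid edge) or at Step~2 (no admissible $t$-variable could be found in the MIS). In both cases the graph is locally standard: for (S4) this is explicit, while for a Step~2 termination the MIS is locally standard by the description of Step~1, and the outer layers will be forced to be deterministic by the argument below. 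Under local standardness, every internal atom has degree $0$ or $2$, and every degree-$2$ atom is standard neutral, hence incident to one blue and one red solid edge.

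The heart of the proof is to show the MIS $\Iso_k$ is deterministic, and then propagate determinism outward through the chain of proper isolated subgraphs $\Iso_k\subset\Iso_{k-1}\subset\cdots\subset\Iso_1\subset\cal G_{\max}$. Suppose for contradiction that $\Iso_k$ contained a blue solid edge. By Definition~\ref{def seqPDG}(iii), $\Iso_k$ is pre-deterministic, so the first such edge $b$ in some pre-deterministic order is redundant in $\Iso_k$. I lift this redundancy to the full graph as follows: since $\Iso_k$ is isolated with exactly two external edges and $b$ is blue (hence not used in the black net), a blue spanning tree of $\Iso_k$ with $b$ removed (which exists by the pre-deterministic redundancy) may be stitched to a blue spanning tree of the quotient graph obtained by collapsing $\Iso_k$ to a single vertex, yielding a blue spanning tree of $\cal G_{\fa,\fb_1\fb_2}$ with $b$ removed. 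This makes $b$ redundant in $\cal G_{\fa,\fb_1\fb_2}$, contradicting (S4) directly; for Step~2 termination, the endpoint $\alpha$ of $b$ lying in $\Iso_k$ must be standard neutral by local standardness of the MIS, so Step~2 would have chosen the $t$-variable containing $b$. Hence $\Iso_k$ has no blue solid edges; together with standard neutrality of every degree-$2$ atom (which requires one blue and one red solid edge), this forces every internal atom in $\Iso_k$ to have degree $0$, so $\Iso_k$ has no solid edges at all, no light weights (by local standardness), and is therefore deterministic. Inductively, assume $\Iso_{i+1}$ has been shown to be deterministic. By Definition~\ref{def seqPDG}(iii), collapsing $\Iso_{i+1}$ together with its two external edges into a single $\dashed$ edge turns $\Iso_i$ into a pre-deterministic graph whose blue solid edges coincide with those of $\Gamma_i:=\Iso_i\setminus\Iso_{i+1}$. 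Since the collapsed $\Iso_{i+1}$ is purely $\dashed$ by the inductive hypothesis, the collapse preserves the doubly-connected architecture, and the same stitching argument promotes any redundancy in the collapsed $\Iso_i$ to a redundancy in $\cal G_{\fa,\fb_1\fb_2}$. This again contradicts the stopping condition, so $\Gamma_i$ has no blue solid edges and is therefore deterministic. Iterating down to $\cal G_{\max}$ itself (treated as the outermost isolated ``layer''), we obtain that $\cal G_{\max}$ is deterministic, and its doubly-connected structure is inherited from that of $\cal G_{\fa,\fb_1\fb_2}$.

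The principal obstacle is the redundancy-lifting step: verifying that a blue solid edge redundant within an isolated subgraph (or within a collapsed intermediate isolated subgraph) remains redundant in the ambient graph $\cal G_{\fa,\fb_1\fb_2}$. This requires a careful manipulation of spanning trees in the two nets across the boundary of the isolated subgraph, exploiting both the isolated property (exactly two external edges) and the colors of those edges (blue solid versus $\dashed$, with the black net untouched since $b$ is blue). A secondary subtlety is that Step~2 only inspects the MIS, so a priori a Step~2 output could harbor a redundant blue solid edge in an outer layer; the chain argument above, combined with the lifting, shows that any such configuration would be incompatible with the stopping behavior of the strategy, since the lifted redundant edge would have had to be processed by some earlier iteration of Steps~1--3.
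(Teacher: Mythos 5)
Your first paragraph and the first half of your second paragraph track the paper's proof: the output is globally standard, its MIS is locally standard, and the pre-deterministic property forces the MIS to contain no internal blue solid edge (your redundancy-lifting discussion is more elaborate than needed, but the conclusion is right). The gap is in how you get from "the MIS is deterministic" to "the maximal subgraph is deterministic". Your outward induction over the chain $\Iso_k\subset\cdots\subset\Iso_1\subset\cal G_{\max}$ rests on the claim that a redundant blue solid edge in an outer layer $\Gamma_i$ "would have had to be processed by some earlier iteration" and is therefore "incompatible with the stopping behavior of the strategy". This is not justified, and it is in fact the wrong mechanism: Steps 1 and 2 of Strategy \ref{strat_global} only ever inspect the \emph{minimal} isolated subgraph. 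If a graph had a deterministic, properly isolated MIS and blue solid edges left over in $\Gamma_{k-1}$ (which is exactly what the SPD construction generically produces, since expanding the first edge of $\Iso_{k-1}$'s pre-deterministic order leaves the remaining ones untouched while creating $\Iso_k$), Step 2 would simply fail to find a $t$-variable and the strategy would halt, outputting a graph with a non-deterministic maximal subgraph. The stopping rules alone cannot rule this out.

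What actually rules it out — and what your proof never uses at the decisive moment — is the \emph{weakly isolated} property in Definition \ref{defn gs}. The paper's argument is: if the MIS $\Iso_k$ were a proper isolated subgraph, then it is weakly isolated, hence connected to its complement by at least two external red solid edges, while (being isolated in the molecular graph without red edges, with one external edge needed for the black net) it has at most one external blue solid edge and, as just shown, no internal ones. An atom of $\Iso_k$ carrying one of those external red edges then cannot be standard neutral, contradicting local standardness of the MIS. Hence the MIS must already be the entire maximal subgraph, and determinism of the maximal subgraph follows in one step with no induction over layers. Note also that your intermediate assertion that standard neutrality "forces every internal atom in $\Iso_k$ to have degree $0$" overlooks precisely these external solid edges; pushing on that point is what leads to the paper's contradiction rather than to your induction.
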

\begin{proof}
By Lemma \ref{lem localgood3} and Lemma \ref{lem globalgood}, we know that $\cal G_{\fa,\fb_1\fb_2}$ is globally standard. Under the assumption of this lemma, $\cal G_{\fa,\fb_1\fb_2}$ either is non-expandable or does not contain a $t$-variable required by Step 2 of Strategy \ref{strat_global}. In either case, $\cal G_{\fa,\fb_1\fb_2}$ contains a locally standard MIS, say $\Iso_k$, which, by the pre-deterministic property of $\Iso_k$, does not contain any internal blue solid edge. 

Now we consider two cases. First, suppose $\Iso_k$ is a proper isolated subgraph. Due to the weakly isolated property, $\Iso_k$ is connected with at least two external red solid edges and at most one blue solid edge. Hence $\Iso_k$ contains a non-standard neutral atom, which gives a contradiction. Second, suppose $\Iso_k$ is indeed the maximal subgraph of $\cal G_{\fa,\fb_1\fb_2}$. Then $\Iso_k$ is locally standard and does  not contain any internal blue solid edges, so it is deterministic.
\end{proof}

With Lemma \ref{lem globalgood}, we show that Strategy \ref{strat_global} will stop after $\OO(1)$ many iterations.

\begin{lemma}\label{lemm expansionstrat}
The expansions in Strategy \ref{strat_global} will stop after at most $C_n$ many iterations of Steps 1--3, where $C_n$ denotes a large constant depending on $n$. All graphs from these expansions are doubly connected. If a graph satisfies the stopping rule (S1)/(S2)/(S3) and has scaling order $\le D$, then it satisfies the property (i)/(ii)/(iii) of Definition \ref{def genuni2}.   
\end{lemma}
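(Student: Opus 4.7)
The plan is to verify the three claims in order: termination in $C_n$ iterations, the doubly connected property of every output graph, and the properties required by Definition \ref{def genuni2}. The last two claims follow almost immediately from our earlier structural lemmas, so the real content is the termination count. My approach will be to exhibit a bounded complexity measure that decreases (lexicographically) in every iteration that does not send a graph to the output pile.

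For termination, I will first handle local expansions (Step 1). By Lemma \ref{lvl1 lemma} applied to the MIS of a globally standard input graph, at most $C_n$ iterations of the operators in Definitions \ref{Ow-def}--\ref{GGbar-def} are needed to make the MIS locally standard, with all intermediate graphs either satisfying a stopping rule or feeding into Step 2. For global expansions (Step 3), the key observation is that a graph which is neither sent to (S1)--(S4) during an iteration must be globally standard with scaling order $\le n$ (by Lemma \ref{lem globalgood}). The number of blue solid edges in such a graph is bounded by $C_n$, since each blue solid edge contributes at least $1$ to the scaling order by Definition \ref{def scaling}. I will then consider the lexicographic complexity measure $\Phi(\cal G) := (n - \ord(\cal G),\ \# \{\text{blue solid edges in the MIS of } \cal G\})$. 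Inspecting the five cases of Lemma \ref{lem globalgood}, every non-leading graph produced in a global expansion has strictly higher scaling order than the input (cases (2)--(4) of Lemma \ref{lem globalgood} give $\ord \ge \ord(\cal G)+1$), while the leading term in case (1) preserves the scaling order but replaces the chosen blue solid edge by a $\dashed$ edge, strictly decreasing the second coordinate of $\Phi$. Since $\Phi$ takes values in a well-founded set of size $\OO(C_n^2)$, Steps 1--3 can be iterated at most $C_n$ many times before every descendant is sent to the output.

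The doubly connected property of every output graph follows by induction along the expansion tree: the starting graph in \eqref{seconduniversal} is doubly connected; Lemma \ref{lem localgood} shows that Step 1 preserves this property; Lemma \ref{trivial_claim3} together with the five cases of Lemma \ref{lem globalgood} shows that Step 3 preserves it as well. For the last claim, I will simply match the output destinations against Definition \ref{def genuni2}: a graph sent to (S1) is a $\oplus$/$\ominus$-recollision graph, and Lemma \ref{lem globalgood}(2) (together with Lemma \ref{lem localgood3}) guarantees it is globally standard, i.e.\ property (i); a graph sent to (S2) is globally standard in particular SPD, giving property (ii); a graph sent to (S3) is a $Q$-graph, and cases (2) and (4) of Lemma \ref{lem globalgood} combined with Lemma \ref{lem localgood3}(ii) show that the atom in its $Q$-label lies in the MIS and that the graph is SPD, which is property (iii).

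The main obstacle I anticipate is book-keeping the interaction between Step 1 and Step 3 in the complexity count. Specifically, Step 3 can create new blue solid edges inside the MIS when it substitutes a $t$-variable with a non-leading term from \eqref{replaceT} (for example a graph from $\wtPT^{(n-1)}$ or from the $Q$-expansion in case (4)). I need to make sure that such insertions, although they may \emph{increase} the second coordinate of $\Phi$, always \emph{decrease} the first coordinate, which is why the lexicographic order on $\Phi$ is the right measure and why the strict scaling-order gain in Lemma \ref{lem globalgood}(2)--(4) is essential. After Step 3, an intervening Step 1 may merge molecules or add degree-$2$ standard neutral atoms, but by Lemma \ref{lem localgood3} this never decreases $\ord$ and only affects the second coordinate by a bounded amount per invocation; combined with the bound $C_n$ on iterations of Step 1 between two consecutive Step 3's, the overall count stays at $C_n$.
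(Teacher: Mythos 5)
Your proof follows essentially the same route as the paper's: a lexicographic descent on the pair (scaling order, number of blue solid edges), with the first coordinate bounded by $n$ via stopping rule (S2) and the second bounded by a constant depending on $n$, and the remaining claims delegated to Lemmas \ref{lem localgood}, \ref{lem localgood3} and \ref{lem globalgood} exactly as the paper does. There are, however, two points where your version as written would not quite go through. First, your secondary coordinate counts blue solid edges \emph{in the MIS}, but the MIS is not a stable object along order-preserving steps: an order-preserving local expansion (e.g.\ the main term of the $GG$ expansion in Definition \ref{GG-def}, which replaces a pair $G_{xy}G_{yx}$ by $mS^+_{xy}G_{yy}$ and thereby merges two molecules with a waved edge) can enlarge the MIS without raising the scaling order, and the enlarged MIS may contain \emph{more} blue solid edges than before, so your measure can increase lexicographically. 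The paper avoids this by counting \emph{all} blue solid edges in the graph: the only way a full iteration of Steps 1--3 preserves the scaling order is via the leading term of Lemma \ref{lem globalgood}(1), which removes one blue solid edge, while the order-preserving local-expansion terms never add one. Second, your justification that the blue-edge count is $\le C_n$ ("each blue solid edge contributes at least $1$ to the scaling order") ignores the $-2\,\#\{\text{internal atoms}\}$ term in \eqref{eq_deforderrandom2}; one needs property (ii) of Definition \ref{defnlvl0} (internal atoms are connected through waved and diffusive edges, so their number is dominated by the number of such edges) to conclude $\#\{\text{blue solid edges}\}\le \ord(\cal G)\le n$. Both repairs are cosmetic and bring your argument into exact agreement with the paper's, which organizes the count as: along any root-to-leaf path of the expansion tree the positions of strict order increase number at most $n$, and between two consecutive increases there are at most $n$ steps, giving height $\le n^2$.
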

\begin{proof}
Let $\cal G$ be an input graph for Step 3 of Strategy \ref{strat_global}. %From the second order $T$-expansion \eqref{seconduniversal}, we see that $\ord(\cal G)$ 
Its scaling order must be larger than or equal to the scaling orders of the graphs in \eqref{Aho>2}, i.e. $\ord(\cal G)\ge 3$. Hence the new graphs from case (3) of Lemma \ref{lem globalgood} are SPD and have scaling orders $\ge n + 1$, so it satisfies the property (ii) of Definition \ref{def genuni2}; the new graphs from case (5) of Lemma \ref{lem globalgood} are doubly connected and have scaling orders $\ge D + 2$. Furthermore, the new graphs from cases (1) and (2) of Lemma \ref{lem globalgood} are globally standard, so they satisfy the property (i)/(ii) of Definition \ref{def genuni2} if they satisfy the stopping rule (S1)/(S2). Finally, in equation \eqref{mlevelTgdef3_Q}, the graphs in $\cal Q$ satisfy the stopping rule (S3) and the property (iii) of Definition \ref{def genuni2}; the graphs in $\cal G_{err}$ are doubly connected and have scaling orders $\ge D+1$; the graphs $\cal G_{\omega}$ are globally standard, so they satisfy the property (i)/(ii) of Definition \ref{def genuni2} if they satisfy the stopping rule (S1)/(S2). Combining the above discussions with Lemma \ref{lem localgood3} for local expansions in Step 1 of Strategy \ref{strat_global}, we conclude the second and third statements. 
	
Now we prove the first statement. Let $\cal G_0$ be a graph from Step 0. We construct inductively a tree diagram $\cal T$ for the  expansions of $\cal G_0$ as follows. Let $\cal G_0$ be the root. Given a graph $\cal G$ represented by a vertex of the tree, its children are the graphs obtained from an iteration of Steps 1--3 of Strategy \ref{strat_global} acting on $\cal G$. Suppose a graph $\cal G$ satisfies a stopping rule, then we stop the expansion and $\cal G$ has no children on $\cal T$. Let the height of $\cal T$ be the maximum distance between the root and a leaf of $\cal T$. To show that the expansions will stop after $C_n$ many iterations of Steps 1--3, it is equivalent to show that $\cal T$ is a finite tree with height $\le C_n$.
	
Let $\cal G_0 \to \cal G_1 \to \cal G_2 \to \cdots \to \cal G_h$ be a self-avoiding path from the root to a leaf. Let $k_0=0$. After having defined $k_i$, let $k_{i+1}:=h\wedge \min\{j> k_i : \text{ord}(\cal G_j) > \text{ord}(\cal G_{k_i})\}$. Then the increasing sequence $\{k_0,k_1, k_2, \cdots\}$ has length at most $n$, because a graph of scaling order $\ge n+1$ satisfies the stopping rule (S2) and has no children. Moreover, we claim that $|k_{i+1}-k_i|\le n$. First, by Lemma \ref{lem globalgood}, if a new graph after an iteration of Steps 1--3 has the same scaling order as the input graph, then it must have one fewer blue solid edge than the input graph. Second, by the property (ii) of Definition \ref{defnlvl0}, the number of internal atoms in $\cal G_{k_i}$ is less than or equal to the number of waved and $\dashed$ edges, which gives that the number of blue solid edges in $\cal G_{k_i}$ is at most $n$. These two facts together conclude the claim. In sum, we have shown that $h\le n^2$, i.e. the height of $\cal T$ is at most $n^2$.   
\end{proof}

\subsection{Proof of Theorem \ref{incomplete Texp}}\label{sec expandlvl4}

In this subsection, we prove Theorem \ref{incomplete Texp} using the results in Sections \ref{sec_global_standard} and \ref{sec expandlvl40}. To summarize, we have proved the following facts.
\begin{itemize}
\item[(I)] The expansions of $T_{\fa,\fb_1\fb_2}$ will stop and give $\OO(1)$ many new graphs (Lemma \ref{lemm expansionstrat}). 

%\item[(II)] If a new graph from the expansions does not satisfy the stopping rules, then it is globally standard (Lemma \ref{lem localgood3} and Lemma \ref{lem globalgood}). 

\item[(II)] If a new graph from the expansions satisfies the stopping rule (S1)/(S2)/(S3), then it satisfies the property (i)/(ii)/(iii) of Definition \ref{def genuni2} (Lemma \ref{lemm expansionstrat}).

\item[(III)] If a new graph from the expansions does not satisfy the stopping rules (S1)--(S3), then it has a deterministic doubly connected maximal subgraph (Lemma \ref{lemm expansionstrat_nonstop}).
\end{itemize}

Now we complete the proof of Theorem \ref{incomplete Texp} using induction. We have obtained the second order $T$-expansion and $\incomp$ in \eqref{seconduniversal}. Fix any $ 2\le n\le M$. Suppose for all $2\le k \le n-1$, we have constructed the $k$-th order $\incomp$ using Strategy \ref{strat_global}, shown the properties \eqref{two_properties0}--\eqref{3rd_property0} of $\Sele_{n-1}$ by Lemma \ref{cancellation property}, and obtained the $k$-th order $T$-equation using Corollary \ref{lem completeTexp}. Now applying Strategy \ref{strat_global}, we obtain the following expansion of $T_{\fa,\fb_1\fb_2}$: %can be expanded into %the form %that is similar to \eqref{mlevelTgdef incomp}: %with $n$ replaced by $k$, 
\begin{align} 
 T_{\fa,\fb_1 \fb_2}&= m  \Theta_{\fa \fb_1} \overline G_{\fb_1\fb_2} + \sum_{x} (\Theta \wtSdelta^{(n)})_{\fa x} t_{x,\fb_1\fb_2} + (\wtPIT^{(n)})_{\fa,\fb_1 \fb_2} +  (\AIT^{(>n)})_{\fa,\fb_1\fb_2}  + (\QIT^{(n)})_{\fa,\fb_1\fb_2}+  (\Err'_{n,D})_{\fa,\fb_1\fb_2} .\nonumber
\end{align}
Denoting
$$
	(\PIT^{(n)})_{\fa,\fb_1 \fb_2}=	(\wtPIT^{(n)})_{\fa,\fb_1 \fb_2} -   \sum_{x } (\Theta  \wtSdelta^{(n)})_{\fa x} |m|^2 \sum_\al s_{x\al} G_{\al\fb_1}\overline G_{\al\fb_2}\left(1- \mathbf 1_{\al \ne \fb_1}\mathbf 1_{\al\ne \fb_2} \right),
$$
the above equation can be rewritten as 
\begin{align} 
T_{\fa,\fb_1 \fb_2}&= m  \Theta_{\fa \fb_1} \overline G_{\fb_1\fb_2} + \sum_{x } (\Theta \wtSdelta^{(n)})_{\fa x} T_{x,\fb_1\fb_2} +  (\PIT^{(n)})_{\fa,\fb_1 \fb_2} +  (\AIT^{(>n)})_{\fa,\fb_1\fb_2} \nonumber\\
&+ (\QIT^{(n)})_{\fa,\fb_1\fb_2}+  (\Err'_{n,D})_{\fa,\fb_1\fb_2}. \label{add_incompT}
\end{align}
By the above three facts (I)--(III), we have shown that \eqref{add_incompT} satisfies Definitions \ref{def incompgenuni} and \ref{def genuni2}, except for the following properties:
\begin{itemize}
%\item[(A)] there is a way of expansions such that the matrix $\wtSdelta^{(k)}$ does not depend on whether $\fb_1=\fb_2$ or $\fb_1\ne \fb_2$ for any $2\le k \le n$; 

\item in the decomposition \eqref{decomposePIT} of $\PIT^{(n)}$, $\cal R_{IT,l}$, $4\le l\le n-1$, are the same expressions as those in lower order $\incomp$s; 

\item in the decomposition \eqref{decomposeQIT} of $\QIT^{(n)}$, $\cal Q_{IT,l}$, $4\le l\le n-1$, are the same expressions as those in lower order $\incomp$s; 

\item in the decomposition \eqref{decompose Sdelta} of $\wtSdeltan$, $\Sele_{l}$, $4\le l\le n-1$, are the same $\selfs$ as those in lower order $\incomp$s.

%\item[(B)] there is a way of expansions such that $\Sele_{l}$ (or $\cal P_{IT,l}$, or $\cal Q_{IT,l}$) is the same expressions in every $k$-th order $T$-expansion with $l\le k\le n$ (recall that , $\PIT^{(k)}$ can be decomposed as in \eqref{decomposePIT}, and $\QIT^{(k)}$ can be decomposed as in \eqref{decomposeQIT}); 

%i.e. $\Sdelta^{(k)}= \Sele_k+ \Sdelta^{(k-1)}$,

%\item[(C)] $\Sele_k$ satisfies the properties \eqref{two_properties0}-\eqref{weaz} for all $4\le k \le n-1$, and $\Sele_n$ satisfies the properties \eqref{two_properties0}, \eqref{4th_property0}, \eqref{two_properties0V}, \eqref{3rd_property0 diff} and \eqref{4th_property0V}.

\end{itemize}

%Second, to prove item (B), we use an induction argument. Suppose we have obtained the $(k-1)$-th order incomplete $T$-expansion using a $(k-1)$-th order global expansion strategy as in Strategy \ref{strat_global}. Then after obtaining \eqref{mlevelTgdef incomp} for $n=k$,
We decompose the graphs $\wtSdeltan$, $\PIT^{(n)}$ and $\QIT^{(n)}$ in \eqref{add_incompT} according to scaling orders as  
\be\label{same Sdelta0}\wtSdeltan=\wh\wtSdelta^{(n-1)}+ \Sele_{n},\quad \PIT^{(n)} = \whPIT^{(n-1)} + \cal R_{IT,n} ,\quad  \QIT^{(n)} = \whQIT^{(n-1)} +  \cal Q_{IT,n} + \QIT^{(>n)}, \ee
where $\wh\wtSdelta^{(n-1)}$, $\whPIT^{(n-1)}$ and $\whQIT^{(n-1)}$ are sums of graphs of scaling orders $\le n-1$, $ \Sele_{n}$, $\cal R_{IT,n}$ and $\cal Q_{IT,n}$ are sums of graphs of scaling order $n$, and \smash{$\QIT^{(>n)}$} is a sum of $Q$-graphs of scaling order $>n$. To conclude the proof, it suffices to prove that %for $2\le k \le n$,
\be\label{same Sdelta} 
\wh\wtSdelta^{(n-1)}= \wtSdelta^{(n-1)} ,\quad \whPIT^{(n-1)}=\PIT^{(n-1)},\quad \whQIT^{(n-1)}=\QIT^{(n-1)}, %=\sum_{2\le 2l \le n-1}\Sele_{2l}.
\ee
if we perform the expansions in a \emph{proper way}. %This concludes item (B) by induction. %If this claim holds, then we obtain that matrix $\wtSdeltak$ can be decomposed as in \eqref{decompose Sdelta} by induction in $k$. 
%As an induction hypothesis, suppose we have shown that \eqref{same Sdelta} holds for $ k\le \ell$. Then we consider the case $k=\ell +1$. 
In order to have \eqref{same Sdelta}, we perform almost the same $(n-1)$-th and $n$-th order expansions of $T_{\fa,\fb_1 \fb_2}$ in parallel, with the only difference being the cutoff order in the stopping rule: the cutoff order is $n-1$ for the $(n-1)$-th order expansion strategy, and $n$ for the $n$-th order expansion strategy. We now give a more precise description of the expansion procedure. 
%We focus on the graphs that are of scaling order $\le n-1$ and are not recollision graphs or $Q$-graphs, because only this kind of graphs will finally contribute to $\wtSdelta^{(n-1)}$ and $\wh\Sdelta^{(n-1)}$. 
%satisfying the following properties: $\cal G$ is not a $Q$-graph;  whose maximal subgraphs $\cal G_0$ consisting of internal molecules are isolated subgraphs (in other words, the internal molecules are connected to the external molecules through three edges---a $\Theta$ edge, a plus $G$ edge, and a minus $G$ edges), because otherwise there will be dotted edges connected with external molecules and the graphs will be included in recollision graphs (if they are not already higher order or $Q$-graphs).

After the $r$-th step expansion (where one step refers to a local expansion in Section \ref{sec localexp}, a $Q$-expansion in Section \ref{sec defnQ} or a substitution in global expansions), we denote by \smash{$\mathfrak G^{(n-1)}_r$ and $\mathfrak G^{(n)}_r$} the collections of scaling order $\le n-1$ graphs in the $(n-1)$-th and $n$-th order expansion processes, respectively. We trivially have \smash{$\mathfrak G^{(n-1)}_0=\mathfrak G^{(n)}_0$}. Suppose \smash{$\mathfrak G^{(n-1)}_r=\mathfrak G^{(n)}_r$} for an $r\in \N$. Then given any graph in \smash{$\cal G \in \mathfrak G^{(n-1)}_r$}, we perform the $(r+1)$-th step expansion as follows.
\begin{itemize}

%\item If $\cal G$ is a recollision graph or a $Q$-graph or a graph of scaling order $\ge n$, then any graph from its expansions will not be included into the collections \smash{$\mathfrak G^{(n-1)}_{r+1}$ and $\mathfrak G^{(n)}_{r+1}$}.

\item If we apply a local expansion to $\cal G$ in the $(n-1)$-th order expansion process, then we apply the same expansion to the same weight or edge of $\cal G$ in the $n$-th order expansion process. 

\item If $\cal G$ is of the form \eqref{QG}, then we apply exactly the same $Q$-expansion to it in both the $(n-1)$-th and $n$-th order expansion  processes. 
%(The only difference is that $n$-th order graphs are higher order graphs in the $(n-1)$-th order expansion, but they will be further expanded in the $n$-th order expansion. However, all these graphs will be not included in  $\mathfrak G^{(k-1)}_{r+1}$ and $\mathfrak G^{(k)}_{r+1}$.) 

\item Suppose in the $(n-1)$-th order expansion processes, we replace a $t_{x,y_1y_2}$ variable in $\cal G$ with an $(n-2)$-th order $T$-expansion (i.e. \eqref{replaceT} with $n-1$ replaced by $n-2$). Then in the $n$-th order expansion, we replace the {same $t_{x,y_1y_2}$ variable} with \eqref{replaceT}. By \eqref{decompose Sdelta0}, \eqref{decomposeP} and \eqref{decomposeQ}, we have that
$$\Sdelta^{(n-1)}-\Sdelta^{(n-2)}=\Sigma_{T,n-1},\quad \PT^{(n-1)}-\PT^{(n-2)}=\cal R_{T,n-1},$$ 
$$  \QT^{(k-1)}-\QT^{(k-2)}=\cal Q_{T,k-1}+\QT^{(>k-1)}- \QT^{(>k-2)}. $$
Hence the $(n-2)$-th order version of \eqref{replaceT} is different from \eqref{replaceT} only in the following terms: $$m(\Theta\Sigma_{T,n-1}\Theta)_{xy_1}\overline G_{y_1y_2}, \quad  (\cal R_{T,n-1})_{x,y_1y_2}, \quad  (\cal Q_{T,n-1}+\QT^{(>n-1)}- \QT^{(>n-2)})_{x,y_1y_2},$$ 
$$(\AT^{(>n-2)})_{x,y_1y_2}, \quad (\AT^{(>n-1)})_{x,y_1y_2}, \quad (\Err_{n-1,D})_{x,y_1y_2}, \quad (\Err_{n,D})_{x,y_1y_2}.$$  
However, if we have replaced $t_{x,y_1y_2}$ with a graph in one of these terms, then the resulting graph must be of scaling order at least $n$. Such graphs will not be included into $\mathfrak G^{(n-1)}_{r+1}$ or $\mathfrak G^{(n)}_{r+1}$.

%As in \eqref{same Sdelta0}, we decompose
%\be\nonumber
%\wtSdelta^{(k-a+2)}=\wh\Sdelta_{IT}^{(k-a+1)}+ (\wtSdelta)_{k-a+2},\quad \PIT^{(k-a+2)} = \whPIT^{(k-a+1)} + (\PIT)_{k-a+2} ,\quad  \QIT^{(k-a+2)} = \whQIT^{(k-a+1)} +  (\QIT)_{k-a+2} .
%\ee
%Note that the graphs with $ (\wtSdelta)_{k-a+2}$, $ (\PIT)_{k-a+2}$ and $ (\QIT)_{k-a+2}$ are all of scaling order $\ge k$, so they will be not included in  $\mathfrak G^{(k-1)}_{r+1}$ and $\mathfrak G^{(k)}_{r+1}$. Moreover, by the induction hypothesis, we have
%\be \nonumber%\label{same Sdelta3} 
%\wh\Sdelta^{(k-a+1)}= \wtSdelta^{(k-a+1)} ,\quad \whPIT^{(k-a+1)}=\PIT^{(k-a+1)},\quad \whQIT^{(k-a+1)}=\QIT^{(k-a+1)}. %=\sum_{2\le 2l \le n-1}\Sele_{2l}.
%\ee
%Thus the resulting new graphs of scaling order $\le k-1$ in the $k$-th order global expansion is the same as the ones in the $(k-1)$-th order global expansion.
\end{itemize}
In sum, we see that $\mathfrak G^{(n-1)}_{r+1}=\mathfrak G^{(n)}_{r+1}$ as long as we perform the $n$-th order expansions in the above way. By induction in $r$, we get that \eqref{same Sdelta} holds, which concludes the proof.

 \section{Proof of Theorem \ref{thm ptree}} \label{sec_pflocal}

Recall the following standard large deviation estimates in Lemma \ref{lem G<T}, which show that the resolvent entries are controlled $T$-variables. The bound \eqref{offG largedev} was proved in equation (3.20) of \cite{PartIII}, and \eqref{diagG largedev} was proved in Lemma 5.3 of \cite{delocal}. Given a matrix $M$, we will use $\|M\|_{\max}=\max_{i,j}|M_{ij}|$ to denote its maximum norm. 

\begin{lemma}\label{lem G<T}
	Suppose for a constant $\delta_0>0$ and deterministic parameter $W^{-d/2}\le \Phi\le W^{-\delta_0}$ we have that
	\be\label{initialGT} 
	\|G(z)-m(z)\|_{\max}\prec W^{-\delta_0},\quad \|T\|_{\max} \prec \Phi^2 ,
	\ee
	uniformly in $z\in \mathbf D$ for a subset $\mathbf D\subset \C_+$. Then 
	\be\label{offG largedev} \mathbf 1_{x\ne y} |G_{xy}(z)|^2  \prec T_{xy}(z)\ee
	uniformly in $x, y \in \Z_L^d$ and $z\in \mathbf D$, and
	\be\label{diagG largedev} |G_{xx}(z)-m(z)| \prec \Phi
	\ee
	uniformly in $x\in \Z_L^d$ and $z\in \mathbf D$.
\end{lemma}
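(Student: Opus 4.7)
\smallskip

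The plan is to derive both bounds from the Schur complement formula together with standard large deviation estimates for linear and quadratic forms in the independent Gaussian entries of $H$. Throughout, I will use the resolvent minor notation $G^{(x)}=(H^{(x)}-z)^{-1}$, which is independent of the $x$-th row and column of $H$, and the fact that for Hermitian $H$ with independent Gaussian entries of variance $s_{x\alpha}$, the following concentration inequalities hold uniformly in deterministic vectors $a_\alpha$ and matrices $B_{\alpha\beta}$:
\begin{align*}
\Big|\sum_\alpha h_{x\alpha} a_\alpha\Big|^2 &\prec \sum_\alpha s_{x\alpha}|a_\alpha|^2,\\
\Big|\sum_{\alpha,\beta\ne x} h_{x\alpha}B_{\alpha\beta}h_{\beta x} - \sum_\alpha s_{x\alpha} B_{\alpha\alpha}\Big|^2 &\prec \sum_{\alpha,\beta} s_{x\alpha}s_{x\beta}|B_{\alpha\beta}|^2.
\end{align*}
These are standard consequences of Gaussian hypercontractivity.

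\smallskip

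\emph{Proof of \eqref{offG largedev}.} Using the Schur complement formula one has the identity, for $x\ne y$,
\[
G_{xy} = -G_{xx}\sum_{\alpha\ne x} h_{x\alpha}\, G^{(x)}_{\alpha y},
\]
so that $|G_{xy}|^2=|G_{xx}|^2\,|Z_{xy}|^2$ with $Z_{xy}:=\sum_{\alpha\ne x}h_{x\alpha}G^{(x)}_{\alpha y}$. Conditioning on $H^{(x)}$ and applying the linear concentration bound,
\[
|Z_{xy}|^2 \prec \sum_{\alpha\ne x} s_{x\alpha}\,|G^{(x)}_{\alpha y}|^2.
\]
I then use the minor identity $G^{(x)}_{\alpha y}=G_{\alpha y}-G_{\alpha x}G_{xy}/G_{xx}$ and the assumption $|G_{xx}-m|\prec W^{-\delta_0}$ (so $|G_{xx}|\asymp 1$ w.h.p.) to replace $G^{(x)}_{\alpha y}$ by $G_{\alpha y}$ up to a negligible additive error that can be absorbed. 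Combined with $|G_{xx}|=\OO_\prec(1)$, this yields $|G_{xy}|^2\prec |m|^{-2}\sum_\alpha s_{x\alpha}|G_{\alpha y}|^2 = T_{xy}$, which is \eqref{offG largedev}. (A small amount of care is needed to extract the $\alpha=x$ term, which gives $s_{xx}|G^{(x)}_{xy}|^2$ bounded by $W^{-d}\|T\|_{\max}$ and hence negligible.)

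\smallskip

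\emph{Proof of \eqref{diagG largedev}.} Again by Schur's formula,
\[
\frac{1}{G_{xx}} = h_{xx} - z - \sum_{\alpha,\beta\ne x} h_{x\alpha}G^{(x)}_{\alpha\beta}h_{\beta x}.
\]
Writing the quadratic form as its conditional expectation plus a fluctuation, the quadratic concentration inequality gives
\[
\Big|\sum_{\alpha,\beta\ne x}h_{x\alpha}G^{(x)}_{\alpha\beta}h_{\beta x}-\sum_{\alpha\ne x}s_{x\alpha}G^{(x)}_{\alpha\alpha}\Big|^2 \prec \sum_{\alpha,\beta} s_{x\alpha}s_{x\beta}|G^{(x)}_{\alpha\beta}|^2.
\]
The diagonal contribution is bounded by $\sum_\alpha s_{x\alpha}^2|G^{(x)}_{\alpha\alpha}|^2=\OO(W^{-d})\le \Phi^2$, while the off-diagonal part is controlled by the assumption $\|T\|_{\max}\prec \Phi^2$ via $|m|^{2}\sum_\beta s_{x\beta}|G^{(x)}_{\alpha\beta}|^2 \approx T_{\alpha x}\prec \Phi^2$, giving
\[
\sum_{\alpha,\beta\ne x}s_{x\alpha}s_{x\beta}|G^{(x)}_{\alpha\beta}|^2 \prec \Phi^2\sum_\alpha s_{x\alpha}=\Phi^2.
\]
Moreover $|h_{xx}|\prec W^{-d/2}\le \Phi$. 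Using also $\sum_\alpha s_{x\alpha}G^{(x)}_{\alpha\alpha}=\sum_\alpha s_{x\alpha}G_{\alpha\alpha}+\OO_\prec(\Phi^2)=m+\OO_\prec(\Phi)$ by the first hypothesis in \eqref{initialGT} and a further application of the minor identity, we obtain
\[
\frac{1}{G_{xx}} = -z - m +\OO_\prec(\Phi) = \frac{1}{m}+\OO_\prec(\Phi),
\]
where in the last step I used the self-consistent equation $m^{-1}=-z-m$ for the semicircle law. Inverting and using $|G_{xx}|=\OO_\prec(1)$ together with $|m|\asymp 1$ in the bulk gives \eqref{diagG largedev}.

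\smallskip

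The main obstacle in each part is the control of the replacement error between $G^{(x)}$ and $G$, since a priori $G^{(x)}_{\alpha y}$ need not be close to $G_{\alpha y}$; this is handled by iterating the minor identity and exploiting the two a priori bounds in \eqref{initialGT}, which provide both the boundedness of diagonal entries and a $\Phi^2$-scale bound on $T$. All other steps are routine applications of Gaussian concentration and the bulk semicircle identity.
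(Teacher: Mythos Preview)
The paper does not prove this lemma in-line; it merely cites equation (3.20) of \cite{PartIII} for \eqref{offG largedev} and Lemma~5.3 of \cite{delocal} for \eqref{diagG largedev}. Your argument for the off-diagonal bound is correct and is essentially the Schur--complement argument that appears in those references: the self-bounding step (absorbing the term $|G_{xy}|^2 T_{xx}\prec W^{-2\delta_0}|G_{xy}|^2$ coming from the minor correction) is exactly how the replacement of $G^{(x)}$ by $G$ is handled.

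There is, however, a genuine gap in your diagonal argument. You write
\[
\sum_\alpha s_{x\alpha}G_{\alpha\alpha}\;=\;m+\OO_\prec(\Phi)\quad\text{``by the first hypothesis in \eqref{initialGT}''},
\]
but the first hypothesis is only $\|G-m\|_{\max}\prec W^{-\delta_0}$, and since $\Phi\le W^{-\delta_0}$ this yields merely $(Sv)_x\prec W^{-\delta_0}$ with $v_\alpha:=G_{\alpha\alpha}-m$. Plugging that back in gives $|G_{xx}-m|\prec W^{-\delta_0}$, i.e.\ no improvement over the assumption. As written, the step is circular: you are using the conclusion $|v_\alpha|\prec\Phi$ at every $\alpha$ to deduce it at $x$.

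The argument in the cited source closes this by keeping the linear term and solving the resulting vector self-consistent equation. From your Schur identity one gets
\[
v_x \;=\; m^2(Sv)_x \;+\;\OO_\prec\bigl(\Phi+\Lambda^2\bigr),\qquad \Lambda:=\|v\|_\infty,
\]
i.e.\ $(1-m^2 S)v=\OO_\prec(\Phi+\Lambda^2)$ in $\ell^\infty$. The key point is that $(1-m^2 S)^{-1}=1+S^+$ is bounded $\ell^\infty\to\ell^\infty$ \emph{uniformly in $\eta$} in the bulk (cf.\ Lemma~\ref{lem deter} in the paper): because $\im m\sim 1$ for $E\in(-2+\kappa,2-\kappa)$, the complex number $m^2$ stays a fixed distance from the real spectrum of $S$. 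Inverting gives $\Lambda\prec\Phi+\Lambda^2\prec\Phi+W^{-\delta_0}\Lambda$, hence $\Lambda\prec\Phi$. Note that this genuinely exploits the phase of $m^2$; the analogous operator $1-|m|^2 S$ has $\ell^\infty$ inverse of order $\eta^{-1}$ and would not close the argument. This stability step is the piece your sketch is missing.
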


Now we state the three main ingredients, Lemmas \ref{eta1case0}, \ref{lem: ini bound} and \ref{lemma ptree}, for the proof of Theorem  \ref{thm ptree}. First, we have an initial estimate when $\eta=1$, whose proof will be given in Section \ref{sec ptree}. %The following lemma is a folklore result, and has been proved in e.g. \cite{delocal} in a different setting. We will give a formal proof , .%Lemma \ref{eta1case}. in \cite{PartII_high}. 

 \begin{lemma}[Initial estimate] \label{eta1case0} 
 	Under the assumptions of Theorem \ref{thm ptree}, for any $z=E+\ii\eta$ with $E\in (-2+\kappa,2-\kappa)$ and $\eta=1$, we have that 
 	% $$
 	%K^{-d} \max_z\sum_{|y-z|\le K} T_{xy}( z)\prec \left(B_{xy}\right)^{1/2},\quad  z=E+\ii \eta.  
 	% $$
 	\be\label{locallaw eta1}
 	|G_{xy} (z) -m(z)\delta_{xy}|^2\prec  B_{xy}  ,\quad \forall \ x,y \in \Z_L^d.  
 	\ee
 \end{lemma}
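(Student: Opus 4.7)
The plan is to bootstrap from a crude a priori bound at $\eta=1$ to the optimal estimate \eqref{locallaw eta1} by feeding the bound into the $n$-th order $T$-expansion provided by Corollary \ref{lem completeTexp} (which is assumed). First I would establish an elementary initial bound: since $\|H\|\prec 2$ and $|z|\ge \eta=1$, we trivially have $\|G(z)\|\le 1$, hence $|G_{xy}|\le 1$ and $T_{xy}\prec 1$ for all $x,y$. A short Gaussian-integration-by-parts argument (applied directly to $\E[G_{xx}]$ at $\eta=1$, where there are no small denominators) then upgrades this to $\|G-m\|_{\max}\prec W^{-\delta_0}$ for some small constant $\delta_0>0$. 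This puts us in the regime required by Lemma \ref{lem G<T}, so any bound of the form $T_{xy}\prec \Phi^2$ uniformly transfers to $|G_{xy}|^2\prec T_{xy}\prec\Phi^2$ off the diagonal and $|G_{xx}-m|\prec\Phi$ on the diagonal.

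Next I would apply the $n$-th order $T$-expansion \eqref{mlevelTgdef} with $\fa=x$, $\fb_1=\fb_2=y$, and bound each of the six terms separately at $\eta=1$. The leading term $m\Theta_{xy}\overline G_{yy}$ is controlled by \eqref{thetaxy} and the a priori estimate on $G_{yy}$, giving $\prec B_{xy}$ directly (at $\eta=1$ the indicator $\mathbf 1_{|x-y|\le\eta^{-1/2}W^{1+\tau}}$ imposes no real constraint, and the polynomial tail beyond $W^{1+\tau}$ is absorbed into the $W^\tau$ factor in stochastic domination). The labelled-diffusive term $m(\Theta\Sigma_T^{(n)}\Theta)_{xy}\overline G_{yy}$ is dominated by $B_{xy}$ via \eqref{intro_redagain}, with an extra $W^{-(k-2)d/2}$ factor. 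The higher-order piece $\cal A_T^{(>n)}$ and the error $\Err_{n,D}$ are made arbitrarily small by choosing $n$ and $D$ large and using the scaling-order bounds of Lemma \ref{no dot} combined with the doubly connected structure: each extra scaling order costs a factor $W^{-d/2}$ when one carries out the summation over internal atoms using the $B$-type weights.

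The core technical step, and what I expect to be the main obstacle, is the control of the recollision graphs $\cal R_T^{(n)}$ and the $Q$-graph contribution $\cal Q_T^{(n)}$. Recollisions carry at least one dotted edge to $\oplus$ or $\ominus$, producing an extra $B$-type factor that is incorporated into a bound of the form $\prec B_{xy}$ again through the doubly connected property. For the $Q$-graphs I would use a high-moment/martingale argument: write a $Q$-graph as $\sum_\alpha Q_\alpha(\cdot)$ for $\alpha$ belonging to the minimal isolated subgraph, and compute the $2p$-th moment by pairing the $Q_\alpha$ factors. Each successful pairing is controlled by the doubly connected structure of the paired graph (itself doubly connected by construction, cf. Definition \ref{def genuni2}), and Markov's inequality converts the moment bound into the desired high-probability statement $\cal Q_T^{(n)}\prec B_{xy}$.

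Putting the pieces together one obtains $T_{xy}\prec B_{xy}+W^{-\delta_0}\Phi^{2}$-type estimates from the a priori input $T_{xy}\prec \Phi^2$, which after finitely many bootstrap steps (each improving $\Phi$ by a factor $W^{-\delta_0/2}$) converges to $T_{xy}\prec B_{xy}$. Feeding this final bound back into Lemma \ref{lem G<T} yields $|G_{xy}|^2\prec B_{xy}$ for $x\ne y$, and $|G_{xx}-m|\prec B_{xx}^{1/2}=W^{-d/2}$, which together give \eqref{locallaw eta1}. The role of the hypothesis \eqref{Lcondition1} is invisible at $\eta=1$ because the $\eta$ factor in the sum-zero property \eqref{3rd_property0} provides the full gain without any loss from $L^2/W^2$; this is why $\eta=1$ serves as a clean starting point for the subsequent continuity argument in $\eta$ contained in Lemma \ref{lemma ptree}.
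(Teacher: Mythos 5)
Your overall architecture (a priori maximum bound $\to$ $T$-expansion $\to$ high-moment estimate $\to$ bootstrap in $\wt\Phi$ $\to$ Lemma \ref{lem G<T}) matches the paper's, but you take a heavier route through the full $n$-th order $T$-expansion, whereas the paper deliberately uses only the \emph{second order} expansion \eqref{seconduniversal}. The point the paper exploits is that at $\eta=1$ the diffusive edge is short-ranged, $\Theta_{xy}\le W^{\tau-d}\mathbf 1_{|x-y|\le W^{1+\tau}}+\OO(W^{-D})$, equivalently $\sum_y\Theta_{xy}\sim\eta^{-1}=\OO(1)$, so the only two nontrivial terms $(\AT^{(>2)})_{\fa,\fb\fb}$ and $(\QT^{(2)})_{\fa,\fb\fb}$ are bounded by completely local sums and one gains a clean factor $W^{-d/2+2d\tau}$ per step with no appearance of $L$. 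Your route would also work, but only if you redo the graph estimates (Claims \ref{lem ATab}, \ref{lem QTab}) using this short-range bound on $\Theta$ rather than the generic $B_{xy}$ bound; otherwise the $\sum_y B_{\fa y}\lesssim L^2/W^2$ step reintroduces the $L$-dependence and you fall back on \eqref{Lcondition1}. Relatedly, your closing remark that \eqref{Lcondition1} is ``invisible at $\eta=1$ because the $\eta$ factor in the sum-zero property \eqref{3rd_property0} provides the full gain'' misidentifies the mechanism: the self-energies do not even enter the second-order expansion the paper uses, and the actual source of the gain is the $\OO(1)$ row sum of $\Theta$ at $\eta=1$.

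One further caveat: your a priori step is underpowered as written. The bound $\|G(z)\|\le\eta^{-1}=1$ is fine, but upgrading it to $\|G-m\|_{\max}\prec W^{-\delta_0}$ is not ``a short Gaussian-integration-by-parts argument''---it is the local semicircle law for general variance profiles, which the paper imports as Lemma \ref{semicircle} (from \cite{ErdYauYin2012Univ,Semicircle}) and which in fact gives the much stronger input $\|G-m\|_{\max}\prec(W^d\eta)^{-1/2}=W^{-d/2}$ at $\eta=1$. You should cite that result rather than sketch a rederivation; with it in hand the rest of your bootstrap goes through.
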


Second, we have a continuity estimate in Lemma \ref{lem: ini bound}, whose proof will be given in Section \ref{sec ini_bound}. It allows us to get some a priori estimates on $G(z)$ from the local law \eqref{locallaw1} on $G(\wt z)$ for $\wt z$ with a larger imaginary part $\im \wt z = W^{\e_0} \im z$, where $\e_0>0$ is a small constant.
 
\begin{lemma}[Continuity estimates]\label{lem: ini bound}
	Under the assumptions of Theorem \ref{thm ptree}, suppose that %for all $x,y \in \Z_L^d$, %and  $E\in (-2+\kappa,2-\kappa)$, %consider $L$, $\eta$ and $\wt \eta$  as sequence w.r.t. $W$. Suppose  
	\be\label{ulevel}
	|G_{xy} (\wt z) -m(\wt z)\delta_{xy}|^2\prec B_{xy}(\wt z),\quad \forall \ x,y \in \Z_L^d,
	\ee
	with $\wt z =E+ \ii \wt\eta$ for some $E\in (-2+\kappa,2-\kappa)$ and $\wt\eta\in [W^{2+\e}/L^2,1]$. Then we have that %for any $z=E+ \ii\eta$ with $W^{2+\e}/L^2\le \eta\le \wt \eta$, we have that
	\be \label{initial Txy2}
	\max_{x,x_0}\frac{1}{K^{d}}\sum_{y:|y-x_0|\le K} \left(|G_{xy}( z)|^2 + |G_{yx}( z)|^2\right) \prec \left(\frac{\wt \eta} {\eta}\right)^2 \frac{1}{W^4 K^{d-4}},
	\ee
	uniformly in $K \in[W,L/2]$ and $z=E+ \ii\eta$ with $W^{2+\e}/L^{2}\le \eta\le \wt \eta$. %for all $K \in[W,L/2]$. 
	Moreover, for any constant $\e_0\in (0,d/20)$, we have that
	\be\label{Gmax}
	\|G(z)-m(z)\|_{\max}\prec W^{-d/2+\e_0},
	\ee
	uniformly in $z=E+\ii \eta$ with $\max\{W^{-\e_0}\wt\eta, W^{2+\e}/L^{2}\} \le \eta \le \wt\eta$.
\end{lemma}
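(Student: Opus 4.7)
The plan for Lemma \ref{lem: ini bound} is to first prove the averaged bound \eqref{initial Txy2} using the Ward identity, spectral monotonicity and the input local law at $\wt z$, and then deduce the pointwise max-norm estimate \eqref{Gmax} by converting \eqref{initial Txy2} into a bound on $T_{xy}(z)$ and invoking Lemma \ref{lem G<T}.

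For \eqref{initial Txy2}, the starting point is the Ward identity $\sum_y|G_{xy}(z)|^2 = \eta^{-1}\im G_{xx}(z)$ together with the monotonicity that $\eta \mapsto \eta\,\im G_{xx}(E+\ii\eta)$ is nondecreasing in $\eta$; this follows from the spectral representation $\eta\,\im G_{xx}(E+\ii\eta) = \sum_k |u_k(x)|^2 \eta^2/[(\lambda_k-E)^2+\eta^2]$, each summand of which is increasing in $\eta$. Applied to the full sum this already produces a factor of $(\wt\eta/\eta)^2$. To obtain the spatial decay factor $K^{4-d}/W^4$ in the localized version, I would combine this monotonicity with the resolvent identity $G(z) - G(\wt z) = \ii(\wt\eta-\eta)\,G(z) G(\wt z)$ and the input bound $|G_{xy}(\wt z)|^2 \prec B_{xy}(\wt z)$, which gives $\sum_{|y-x_0|\le K}|G_{xy}(\wt z)|^2 \prec K^2/W^2$ for $K \ge W$ (since $\sum_{|y-x_0|\le K}B_{xy} \lesssim K^2/W^2$ in $d\ge 3$). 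Propagating from $\wt\eta$ down to $\eta$ shell by shell using a dyadic decomposition $\{K_{j-1}\le |y-x_0|\le K_j\}$ with $K_j=2^j W$ allows one to exploit the decay of $B_{xy}$ while keeping the $(\wt\eta/\eta)^2$ prefactor intact.

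For \eqref{Gmax}, taking $K = W^{1+\tau}$ in \eqref{initial Txy2} for a small constant $\tau>0$ gives
\[
\sum_{|\alpha-x|\le W^{1+\tau}}|G_{\alpha y}(z)|^2 \prec (\wt\eta/\eta)^2 W^{4\tau}.
\]
Since $s_{x\alpha}\lesssim W^{-d}$ is supported on $|\alpha-x|\le W^{1+\tau}$ up to negligible errors by \eqref{app compact f}, the definition of $T$ yields
\[
T_{xy}(z) = |m|^2\sum_\alpha s_{x\alpha}|G_{\alpha y}(z)|^2 \prec W^{-d}(\wt\eta/\eta)^2 W^{4\tau} \prec W^{-d+2\e_0+4\tau},
\]
using the hypothesis $\eta \ge W^{-\e_0}\wt\eta$. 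Choosing $\tau$ small enough depending on $\e_0$ and $d$, this gives $\|T\|_{\max}\prec W^{-d+2\e_0}$. Lemma \ref{lem G<T} then provides the pointwise bound $|G_{xy}(z)|^2 \prec T_{xy}(z) \prec W^{-d+2\e_0}$ for $x\ne y$, while the diagonal case $|G_{xx}(z)-m(z)|\prec W^{-d/2+\e_0}$ follows from the input diagonal bound at $\wt z$ combined with a direct continuity estimate on $|G_{xx}(z)-G_{xx}(\wt z)|$.

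The main obstacle is two-fold. First, Lemma \ref{lem G<T} requires the a priori input $\|G(z)-m(z)\|_{\max}\prec W^{-\delta_0}$ for some $\delta_0 > 0$, so a bootstrap is needed: starting from $\|G(\wt z) - m(\wt z)\|_{\max}\prec W^{-d/2}$ at $\wt z$, a crude continuity estimate of the form $|G_{xy}(z)-G_{xy}(\wt z)|\prec \wt\eta/\eta \le W^{\e_0}$ (obtained by applying Cauchy--Schwarz and Ward to $\ii(\wt\eta-\eta)\,G(z)G(\wt z)$) yields only a weak bound that must be iteratively improved via the $T$-variable route until $\|G-m\|_{\max}\prec W^{-\delta_0}$ is reached. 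Second, extracting the sharp decay factor $K^{4-d}/W^4$ in \eqref{initial Txy2} rather than the coarser $K^{2-d}/W^2$ that a naive Cauchy--Schwarz produces is delicate, as one must carefully exploit the spatial structure of $B_{xy}$ and the banded variance profile $S$ when propagating from $\wt\eta$ to $\eta$.
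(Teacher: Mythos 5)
Your outline for \eqref{Gmax} (reduce to a bound on $T_{xy}(z)$ via the averaged estimate at scale $K=W^{1+\tau}$, then invoke Lemma \ref{lem G<T} together with a bootstrap to supply its a priori hypothesis) is consistent with the standard perturbation argument the paper relies on. The problem is the main estimate \eqref{initial Txy2}, where your proposal has a genuine gap precisely at the step you flag as "delicate." The resolvent identity $G(z)-G(\wt z)=\ii(\wt\eta-\eta)G(z)G(\wt z)$ involves a sum over \emph{all} intermediate lattice sites, so a dyadic decomposition of the target index $y$ into shells around $x_0$ does not localize the perturbation term. When you apply Cauchy--Schwarz to $\sum_{y\in\cal I}\absb{\sum_\al G_{x\al}(z)G_{\al y}(\wt z)}^2$ with $\cal I=\{y:|y-x_0|\le K\}$, the Ward identity controls $\sum_\al|G_{x\al}(z)|^2\lesssim\wt\eta/\eta^2$, but the other factor is unavoidably the operator norm of $\f{1}_{\cal I}\,\im G(\wt z)\,\f{1}_{\cal I}$ (up to a factor $\wt\eta^{-1}$) --- this is exactly the matrix $A$ in \eqref{Ayy}. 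The entire content of \eqref{initial Txy2} is the bound $\|A\|_{\ell^2\to\ell^2}\prec K^4/W^4$. No elementary argument delivers this: the Hilbert--Schmidt bound from the input local law gives only $\|A\|\prec K^{(d+2)/2}/W$, which for $d\ge 8$ and $K\gg W$ is larger than $K^4/W^4$ by an unbounded factor, and the trivial bound $\|A\|\le\|G(\wt z)\|\le\wt\eta^{-1}$ is likewise useless at small $K$.

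The paper closes this gap with Lemma \ref{lem normA}, the high-moment estimate $\E\tr(A^{2p})\le K^d(W^\e K^4/W^4)^{2p-1}$, whose proof occupies most of Section \ref{sec ini_bound}: one expands the $2p$-gon $\prod_i G^{s_i}_{\fa_i\fa_{i+1}}$ via a new \emph{non-universal $T$-expansion} (Lemma \ref{def nonuni-T}), which requires introducing ghost edges, the generalized doubly connected property, and the averaged bound of Lemma \ref{lemma_boundgen2net} to control the resulting deterministic graphs after averaging the $2p$ external atoms over the box $\cal I$. This is a substantial diagrammatic argument, not a refinement of the Ward-identity monotonicity you describe, and it is the reason the continuity estimate is a separate section of the paper rather than a two-page computation. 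Your proposal as written would at best reproduce the coarser bound $(\wt\eta/\eta)^2 K^{2-d}W^{-2}\cdot\|A\|$ with $\|A\|$ uncontrolled, so the factor $K^{4-d}/W^4$ is not reached.
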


Compared with \eqref{locallaw1}, the $\ell^\infty$ bound \eqref{Gmax} is sharp up to a factor $W^{\e_0}$. The estimate \eqref{initial Txy2} is an averaged bound instead of an entrywise bound and the right-hand side of \eqref{initial Txy2} loses an $W^2/K^2$ factor when compared with the sharp averaged bound $W^{-2}K^{-(d-2)}$. We need to use the following lemma to improve the weaker estimates \eqref{initial Txy2} and \eqref{Gmax} to the stronger local law \eqref{locallaw1}. Its proof will be given in Section \ref{sec ptree}. Note that \eqref{initial Txy2} verifies the assumption \eqref{initial Txy222} as long as we have $W^{-\e_0}\wt\eta \le \eta\le \wt\eta$. 
 
 % Finally, with the continuity estimate, we can obtain the following high moment bounds, whose proof will be given in Section \ref{sec ptree}. 
 
 \begin{lemma}[Entrywise bounds on  $T$-variables]\label{lemma ptree}  
 	Suppose the assumptions of Theorem \ref{thm ptree} hold. Fix any $z=E+ \ii\eta$ with $E\in (-2+\kappa,2-\kappa)$ and $ \eta\in [W^{2+\e}/L^2,1]$. Suppose for some constant $\e_0>0$, \eqref{Gmax} and the following estimate hold:
 	\be \label{initial Txy222}
 	\max_{x,x_0} \frac1{K^{d}}\sum_{y:|y-x_0|\le K} \left(|G_{xy}( z)|^2 + |G_{yx}( z)|^2\right) \prec  \frac{W^{2\e_0}}{W^4 K^{d-4}} ,
 	\ee
 	for all $K \in[W,L/2]$. As long as $\e_0$ is sufficiently small (depending on $n$ and $c_0$ in \eqref{Lcondition1}), we have that %for any fixed $p\in 2\N$ and small constant $\e>0$, 
 	\be\label{pth T}
 	T_{xy}(z) \prec B_{xy} ,\quad \forall \ x,y\in \Z_L^d.
 	\ee
 \end{lemma}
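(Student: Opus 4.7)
The plan is to substitute the $n$-th order $T$-expansion \eqref{mlevelTgdef} with $\fa=x$, $\fb_1=\fb_2=y$ and control the six contributions separately, using the a priori inputs \eqref{Gmax} and \eqref{initial Txy222} together with the doubly connected property of the graphs supplied by Definition \ref{defn genuni}. The two leading deterministic pieces $m\Theta_{xy}\overline G_{yy}$ and $m(\Theta\Sdelta^{(n)}\Theta)_{xy}\overline G_{yy}$ are immediately controlled by $W^\tau B_{xy}$ by combining \eqref{Gmax}, which gives $|G_{yy}|\prec 1$, with $\Theta_{xy}\prec B_{xy}$ from \eqref{thetaxy} and the renormalized diffusive bound \eqref{intro_redagain}. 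The error graphs in $(\Err_{n,D})_{x,yy}$ have scaling order exceeding $D$ and are trivially $\prec B_{xy}$ for $D$ large.

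For each random non-error graph $\cal G$ on the right-hand side of \eqref{mlevelTgdef}, I would prove a generic doubly connected graph bound of the schematic form $W^{-(k-2)d/2+C\e_0}\eta^{-a}\,B_{xy}$, where $k=\ord(\cal G)$ and $a$ is small. The argument uses the two spanning structures $\cal B_{black}$ and $\cal B_{blue}$ from Definition \ref{def 2net}: the $\Theta$-spanning tree $\cal B_{black}$ is used to sum out internal atoms one by one, each internal vertex contributing a convolution of two $B$-factors which produces a single $B_{xy}$ up to a $W^\tau$ loss (a standard estimate for dimension $d\geq 5$); the second spanning structure then accommodates the $G$-edges, which are controlled by $|G_{\alpha\beta}|^2\prec T_{\alpha\beta}$ from \eqref{offG largedev} (whose hypotheses are verified by \eqref{Gmax}), followed by either the a priori averaged bound \eqref{initial Txy222} when we can afford a summation, or the trivial pointwise bound $T\prec\eta^{-1}$ together with \eqref{sumTheta}. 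Invoking the hypothesis \eqref{Lcondition1}, which yields $\eta^{-1}\leq W^{(n-1)d/2-c_0}$, and using $k\geq n+1$ for every graph in $(\ATn)_{x,yy}$, the resulting bound collapses to $W^{-c_0/4}B_{xy}$ for $\e_0$ small enough. For the recollision graphs $(\PTn)_{x,yy}$, the dotted edges to $y$ forced by $\fb_1=\fb_2=y$ reduce the number of free summation indices and, combined with the same spanning-tree bookkeeping, upgrade the smallness to $W^{-\delta}B_{xy}$ for some $\delta>0$.

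The $Q$-graphs $(\QTn)_{x,yy}$ require an additional concentration argument beyond the deterministic graph bound. For each $Q$-graph of the form $\sum_u Q_u(\Gamma_u)$, I would estimate its $2p$-th moment using the orthogonality $\E Q_{u_1}(\cdots)Q_{u_2}(\cdots)=0$ whenever $u_1\neq u_2$: expanding $\E\bigl|\sum_u Q_u(\Gamma_u)\bigr|^{2p}$ and applying Gaussian integration by parts in the spirit of the local expansions of Section \ref{sec_basiclocal} produces a paired graph whose doubly connected bound carries an extra factor $W^{-d/2}$ per matched pair of $Q_u$ labels. After Markov's inequality this provides the desired $W^{-\delta}B_{xy}$ control on each $Q$-graph, with the structural conditions on $\QTn$ from Definitions \ref{defn genuni} and \ref{def genuni2} (notably that the $Q$-label atom lies in the minimal isolated subgraph) ensuring that the pairing can actually be performed without breaking the doubly connectedness.

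The main obstacle I anticipate is the quantitative bookkeeping: each graph carries $\OO(n)$ off-diagonal $G$-edges, and the averaged input \eqref{initial Txy222} loses a factor $W^{2\e_0}$ compared with the target $T_{xy}\prec B_{xy}$; moreover \eqref{Gmax} is off from the target by the same factor $W^{\e_0}$ on the diagonal. Hence the naive accumulation of errors along a graph of scaling order $k$ is of size $W^{C k\e_0}$, and one must verify that the scaling-order gain $W^{-(k-2)d/2}$, together with the room $W^{c_0}$ from \eqref{Lcondition1} and the extra $W^{-d/2}$ concentration gain from the $Q$-expansions, absorbs this loss. This is the reason the lemma requires $\e_0$ to be chosen sufficiently small relative to $n$ and $c_0$, and making the constants in this absorption completely explicit is the most delicate part of the argument.
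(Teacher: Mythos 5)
Your proposal has a genuine gap: it attempts to get $T_{xy}\prec B_{xy}$ from a \emph{single} substitution of the $T$-expansion, but the higher-order and $Q$-contributions cannot be bounded by $W^{-\delta}B_{xy}$ in one pass. With only the a priori inputs \eqref{Gmax} and \eqref{initial Txy222}, a graph in $(\ATn)_{x,yy}$ containing a factor $|G_{vy}|^2$ can only be controlled in terms of $B_{vy}+\wt\Phi^2$ with $\wt\Phi^2=W^{-d+2\e_0}$, and after summation one obtains (as in Claim \ref{lem ATab}) a bound of the form $W^{-c_0+C\e_0}\left(B_{xy}+\wt\Phi^2\right)$ — \emph{not} $W^{-c_0/4}B_{xy}$. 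Since $B_{xy}$ can be as small as $W^{-2}L^{-(d-2)}\ll W^{-d}$ when $|x-y|\sim L\gg W$, the term $W^{-c}\wt\Phi^2$ is not absorbed by $B_{xy}$, and the conclusion of one pass is only the incremental improvement $T_{xy}\prec B_{xy}+W^{-c}\wt\Phi^2$. This is exactly why the paper isolates Lemma \ref{lem highp1} (the high-moment estimate $\E T_{xy}^p\prec (B_{xy}+W^{-c}\wt\Phi^2)^p$ under the hypothesis $T_{xy}\prec B_{xy}+\wt\Phi^2$) and then \emph{iterates} it $\lceil D/c\rceil$ times via Markov's inequality, shrinking $\wt\Phi$ by $W^{-c/2}$ at each step, to reach $T_{xy}\prec B_{xy}+W^{-D}$. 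Your outline contains no such bootstrap, and without it the argument does not close.

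A second, independent problem is your treatment of the $Q$-graphs. The claimed orthogonality $\E\,Q_{u_1}(\cdot)Q_{u_2}(\cdot)=0$ for $u_1\neq u_2$ is false: $Q_u=1-\E_u$ only gives $\E[Q_u(X)Y]=\E[Q_u(X)Q_u(Y)]$, which vanishes only when $Y$ is independent of the $u$-th row, and resolvent polynomials are not. The paper's mechanism is different and is needed for the quantitative gain: it computes $\E\,T_{\fa\fb}^{p-1}(\QTn)_{\fa,\fb\fb}$, applies the $Q$-expansions of Section \ref{sec defnQ} to the product (so that the surviving terms have $r\geq k-1$ of the external $T_{\fa\fb}$ edges attached to the $Q$-labelled atom, cf. property (iv) of Lemma \ref{Q_lemma}), obtains the bound $\sum_{k=2}^p\bigl[W^{-d/4+\e_0/2}(B_{\fa\fb}+\wt\Phi^2)\bigr]^k\,\E T_{\fa\fb}^{p-k}$ of Claim \ref{lem QTab}, and closes the moment inequality with H\"older and Young. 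A standalone $2p$-th moment bound on the $Q$-graph via genuine fluctuation averaging could in principle be made to work, but it would have to be set up much more carefully than stated, and it would still produce a bound involving $\wt\Phi^2$, returning you to the missing iteration above.
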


 Combining the above three lemmas, we can complete the proof of Theorem \ref{thm ptree} using a bootstrapping argument on a sequence of multiplicatively decreasing $\eta$ given in \eqref{def etak}. The details have been given in Section 5.2 of \cite{PartI_high}. For the reader's convenience, we repeat the main argument here. %as in \cite{PartII}

 \begin{proof}[Proof of Theorem \ref{thm ptree}]
 Fix any $E\in (-2+\kappa,2-\kappa)$. We define the following sequence of decreasing $\eta$ for a small constant $\e_0>0$: 
 	\be\label{def etak}
 	\eta_{k}:=\max(W^{- k\e_0},\; W^{2+\e}/L^2) ,\quad 0\le k \le \ell_{0},
 	\ee
 	where $\ell_0$ is the smallest integer such that $W^{-\ell_0\e_0}\le W^{2+\e}/L^2$. By definition, $\eta_{k+1}=W^{-\e_0}\eta_k$ for $k\le \ell_0-1$ and we always have $\eta_{\ell_0}=W^{2+\e}/L^2$. Now we prove Theorem \ref{thm ptree} through an induction on $k$. First, by Lemma \ref{eta1case0},  \eqref{locallaw1} holds with $z= z_0=E+\ii\eta_0$. %Here the first bound in \eqref{initialGT} is provided by Lemma \ref{semicircle}, and the second bound in \eqref{initialGT} follows from Lemma \ref{eta1case0}. 
 	Then suppose \eqref{locallaw1} holds with $z= z_{k}:=E+ \ii \eta_k$ for some $0\le k \le \ell_0-1$. 
 	By Lemma \ref{lem: ini bound},  \eqref{Gmax} and \eqref{initial Txy222} hold for all $z=E+\ii \eta$ with $\eta_{k+1}\le \eta \le \eta_{k}$. Then applying Lemma \ref{lemma ptree}, we obtain that \eqref{pth T} holds with $z=z_{k+1}$. Now using Lemma \ref{lem G<T}, we can conclude \eqref{locallaw1} with $z=z_{k+1}$. Repeating the induction for $\ell_0$ steps, we obtain that 
 	\begin{itemize}
 		\item[(i)]\eqref{locallaw1} holds for all $z_k$ with $0\le k \le \ell_0$;
 		\item[(ii)]\eqref{Gmax} and \eqref{initial Txy222} hold for all $z=E+\ii \eta$ with $\eta_{\ell_0}\le \eta\le 1$.
 	\end{itemize}  
 	To conclude Theorem \ref{thm ptree}, we need to extend \eqref{locallaw1} uniformly to all $z=E+\ii \eta$ with $E\in (-2+\kappa,2-\kappa)$  and $\eta\in [\eta_{\ell_0}, 1]$. This is a standard perturbation and union bound argument, so we omit the details.  
% 	Now applying Lemma \ref{lemma ptree}, we obtain that \eqref{pth T} holds for any $z_{k,j}$: 
% 	%Then using Markov's inequality, we obtain that 
% 	$$T_{xy}(z_{k,j})\prec B_{xy}(z_{k,j}),\quad j\in \llbracket 0, L^{10d}\rrbracket.$$
% 	Using Lemma \ref{lem G<T} and taking a union bound, we conclude that \eqref{locallaw} holds uniformly in all $z_{k,j}$. It is easy to get the following resolvent expansion
% 	\be\label{resol exp0}G_{xy}(E+\ii \eta) = G_{xy}(E+\ii \eta') - \ii (\eta'-\eta) \sum_{w}G_{xw}(E+\ii \eta)G_{wy}(E+\ii \eta'),\quad  \forall \ E\in \R, \text{ and }\eta, \eta'>0.  \ee
% 	Combining it with the trivial bound
% 	$$\|G(E+\ii \eta)\|_{\max}\lesssim \eta^{-1} \le L^2/W^2,\quad \forall \  W^2/L^2 \le \eta_{k+1}\le \eta\le \eta_k,$$
% 	we can obtain the simple perturbation estimate
% 	\be\label{perturb G}\|G(z)-G(z_{k,j-1})\|\le L^{-d},\quad \forall\  z=E+\ii \eta, \ \im z_{k,j}\le \eta \le \im z_{k,j-1}.\ee
% 	This implies that \eqref{locallaw} holds uniformly in all $z=E+\ii \eta$ with $\eta_k \le \eta \le \eta_{k+1}$.
% 	
% 	\medskip
% 	Combining the above three steps, we conclude the proof of Theorem \ref{thm ptree} by induction on $k$. 
 \end{proof}

% \section{Analysis of graphs}\label{sec Sdelta}
% In this section, we collect the proofs of some estimates used in the proof of Theorem \ref{main thm}, including Lemma \ref{lem V-R wt} and Lemma \ref{lem informal}.

\section{High moment estimate}\label{sec ptree}

In this section, we prove Lemma \ref{eta1case0} and Lemma \ref{lemma ptree}. Their proofs use the same idea, that is, we bound the high moment $\E  T_{xy} (z) ^p $ for any fixed $p\in \N$ using the $n$-th order $T$-expansion.
First, Lemma \ref{lemma ptree} follows immediately from the high moment estimate in Lemma \ref{lem highp1}, which will be proved in Section \ref{sec_pfp1}. 

\begin{lemma}\label{lem highp1}
Suppose the assumptions of Lemma \ref{lemma ptree} hold. Assume that
\be\label{initial_p}
T_{xy} \prec B_{xy}+\wt\Phi^2,\quad \forall \ x,y \in \Z_L^d,
\ee
for a deterministic parameter $\wt\Phi $ satisfying $0\le \wt\Phi \le W^{-\delta}$ for a constant $\delta>0$. Then for any fixed $p\in \N$, we have that
\be\label{locallawptree}
\E T_{xy} (z) ^p \prec  (B_{xy} +W^{-c} \wt\Phi^2)^p
\ee
for a constant $c>0$ depending only on $d$ and $c_0$ in \eqref{Lcondition1}. 
\end{lemma}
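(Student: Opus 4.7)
The plan is to prove \eqref{locallawptree} by applying the $n$-th order $T$-expansion of Definition \ref{defn genuni} to one factor in $T_{xy}^p$, bounding each resulting contribution, and then invoking an inductive/bootstrapping argument in $p$. Since the assumption \eqref{initial_p} plus Lemma \ref{lem G<T} gives a priori control on all resolvent entries (so that every graph appearing in the expansion is meaningful), we may freely use the graphical language of Sections \ref{sec notation}--\ref{sec outline}. Writing $T_{xy} = T_{x,yy}$ and substituting the $n$-th order $T$-expansion \eqref{mlevelTgdef} with $\fa=x$, $\fb_1=\fb_2=y$ into one factor, we decompose
\[
\E T_{xy}^p \;=\; \E\!\left[\bigl(m\Theta_{xy}\overline{G}_{yy} + m(\Theta\Sdelta^{(n)}\Theta)_{xy}\overline{G}_{yy} + (\PTn)_{x,yy}+(\ATn)_{x,yy}+(\QTn)_{x,yy}+(\mathrm{Err}_{n,D})_{x,yy}\bigr)\, T_{xy}^{p-1}\right]
\]
and estimate the six resulting expectations separately. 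Throughout, the key structural input is the doubly connected property of all nontrivial graphs, which by Lemma \ref{no dot}–type row sums reduces every free summation to a factor $\OO(W^{-d/2})$ up to $W^\tau$.

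For the first two deterministic terms, $|\Theta_{xy}|\prec B_{xy}$ by \eqref{thetaxy} and $|(\Theta\Sdelta^{(n)}\Theta)_{xy}|\prec B_{xy}$ by \eqref{BRB}, so each contributes $\prec B_{xy}\cdot \E|T_{xy}|^{p-1}$, which by a straightforward induction on $p$ fits into $(B_{xy}+W^{-c}\wt\Phi^2)^p$. The recollision term $(\PTn)_{x,yy}$ is a sum of doubly connected graphs with a dotted edge forcing an internal summation index to equal $y$; using the a priori bound \eqref{initial_p} for the $T$-subgraphs created inside each recollision graph, together with the doubly connectedness to control the remaining summations, gives a bound of the form $B_{xy}\cdot (B_{xy}+\wt\Phi^2)^{\text{const}}$ per graph, which is again absorbed into $B_{xy}\cdot\E|T_{xy}|^{p-1}$ up to lower-order terms. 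The higher-order term $(\ATn)_{x,yy}$ has scaling order strictly greater than $n$, and the condition \eqref{Lcondition1} is calibrated precisely so that such graphs contribute a factor $W^{-(n-1)d/2}L^{\OO(1)}\le W^{-c_0}$ smaller than the leading diffusive size; they therefore feed back at most $W^{-c}(B_{xy}+\wt\Phi^2)\cdot\E|T_{xy}|^{p-1}$. The error term $(\mathrm{Err}_{n,D})_{x,yy}$ has scaling order $>D$ and is handled trivially by choosing $D$ sufficiently large.

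The main obstacle is the $Q$-graph term $\E\!\left[(\QTn)_{x,yy}\, T_{xy}^{p-1}\right]$, because each summand is of the form $\sum_\alpha Q_\alpha(\Gamma_\alpha)$ (or $Q_y(\cdot)$), and naively the factor $T_{xy}^{p-1}$ is not independent of $H^{(\alpha)}$, so the $Q_\alpha$ does not kill it. The standard device, which I will carry out graphically, is to perform a further $Q$-expansion (as in Section \ref{sec defnQ}) that transfers each $Q_\alpha$-label onto the remaining $p-1$ copies of $T_{xy}$ by Gaussian integration by parts: concretely, write $Q_\alpha(\Gamma_\alpha)\, T_{xy}^{p-1} = \Gamma_\alpha\, T_{xy}^{p-1} - P_\alpha(\Gamma_\alpha)\, T_{xy}^{p-1}$ and then, using $P_\alpha = \E_\alpha$ together with the independence of $H^{(\alpha)}$, redistribute the derivative $\partial_{h_{\alpha\cdot}}$ produced by integration by parts across the $p-1$ factors of $T_{xy}$. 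Every such derivative creates two new $G$-edges and one new waved edge landing on atom $\alpha$, which, by Lemma \ref{lem localgood} and Lemma \ref{lem localgoodQ}, preserve doubly connectedness; each new doubly connected loop formed between the old graph and one of the other $T_{xy}$ factors must contain at least one $\Theta$-edge summed with one $G^2$-factor, producing an extra smallness of $\sum_w B_{xw}^{3/2}\prec W^{-d/2}$. The number of resulting graphs is $\OO_p(1)$, and their scaling orders are $\ge \mathrm{ord}(\QTn)+2\ge n+2$, giving a global gain of $W^{-c}$ for some $c>0$ over the naive bound.

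With these gains in hand, the closing step is a standard $p$-moment induction. Assuming \eqref{locallawptree} for exponents $< p$, each of the six categories above produces an estimate of the form $\prec (B_{xy} + W^{-c}\wt\Phi^2)\cdot \E|T_{xy}|^{p-1} \prec (B_{xy}+W^{-c}\wt\Phi^2)^p$, where the constant $c>0$ is determined by the gain from the doubly connected factor $W^{-d/2}$ produced either by a recollision, by the higher-order graphs under \eqref{Lcondition1}, or by the integration-by-parts trick for the $Q$-graphs. Taking $c$ to be the minimum of these gains (which depends only on $d$ and $c_0$) yields \eqref{locallawptree}. The technical bookkeeping of the $Q$-graph step --- verifying that every newly generated graph is doubly connected and of sufficiently high scaling order to yield the gain $W^{-c}$, and that no self-energy $\Sele_k$ is destroyed in the redistribution of derivatives --- is the main work and will be carried out graph-by-graph in Section \ref{sec_pfp1}.
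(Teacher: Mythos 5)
Your proposal follows essentially the same route as the paper: substitute the $n$-th order $T$-expansion into one factor of $T_{xy}^p$, bound the deterministic, recollision, higher-order and error terms using the doubly connected estimates (with \eqref{Lcondition1} supplying the gain for the higher-order part), and treat the fluctuation term by a $Q$-expansion that redistributes the $Q_\al$-label over the remaining $T$-factors. The only structural difference is the closing step: you induct on $p$, whereas the paper closes within a fixed $p$ via H\"older's inequality $\E T^{p-k}\le(\E T^p)^{(p-k)/p}$ followed by Young's inequality (see \eqref{Holder3.5}--\eqref{Holder3.6}); both work, since $c$ does not depend on $p$.

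One point in your $Q$-graph paragraph needs correction, because as written it would not close the argument with a $p$-independent $c$. After the $Q$-expansion, the surviving graphs in which $k-1$ of the external $T_{xy}$ factors have interacted with the $Q$-label atom must be bounded by $[W^{-c'}(B_{xy}+\wt\Phi^2)]^{k}\,\E T_{xy}^{p-k}$, with the gain raised to the power $k$; this is \eqref{QTab}, where $c'=d/4-\e_0/2$ comes from the count $r\ge k-1\ge k/2$ of external solid edges attached to the molecule of the $Q$-label atom, each worth $W^{-d/2+\e_0}$. A single ``global gain of $W^{-c}$,'' as your summary states, is insufficient: feeding $W^{-c}(B_{xy}+\wt\Phi^2)^k(\E T_{xy}^p)^{(p-k)/p}$ into Young's inequality (or into your induction) forces the constant to degrade like $c/k$, hence to depend on $p$. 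Your per-loop smallness $W^{-d/2}$ does in fact supply $k-1$ such factors, so the mechanism you describe is the right one, but the accounting must be kept per consumed $T$-factor. Relatedly, the claim that the post-expansion graphs have scaling order $\ge n+2$ is incorrect (the $Q$-graphs in $\QTn$ start at scaling order $2$, cf.\ \eqref{QT>2}) and is not the source of the gain; the gain comes from the external solid edges guaranteed by Lemma \ref{Q_lemma}(iv), not from the scaling order exceeding $n$.
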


\begin{proof}[Proof of Lemma \ref{lemma ptree}]
By \eqref{Gmax}, we have that \eqref{initial_p} holds with $\wt\Phi=\wt\Phi_0:=W^{-d/2+\e_0}$. Then combining \eqref{locallawptree} with Markov's inequality, we obtain that
$$T_{xy} (z) \prec  B_{xy} +W^{-c} \wt\Phi_0^2 \ .$$
Hence \eqref{initial_p} holds with a smaller parameter $\wt\Phi=\wt\Phi_1:=W^{-c/2}\wt\Phi_0$. Applying Lemma \ref{lem highp1} and Markov's inequality again, we obtain that
$$T_{xy} (z) \prec  B_{xy} +W^{-c} \wt\Phi_1^2=B_{xy} +W^{-2c} \wt\Phi_0^2 \ .$$
Now for any fixed $D>0$, iterating the above argument for $\left\lceil D/c\right\rceil$ many times, we obtain that 
$$T_{xy} (z) \prec  B_{xy} +W^{-D} .$$
This concludes \eqref{pth T} as long as $D$ is large enough.
\end{proof}

\subsection{Estimates of doubly connected graphs}\label{sec bound double}

The proof of Lemma \ref{lem highp1} will use some important estimates on doubly connected graphs stated in this subsection. First,  inspired by the maximum bound \eqref{Gmax}, the weak averaged bound \eqref{initial Txy222} and the local law \eqref{locallaw1}, the following weak and strong norms were introduced in \cite{PartI_high}. %which will be a convenient tool for the proof of Lemma \ref{no dot} in bounding graphs with complicated structures.

\begin{definition}\label{Def PseudoG}
Given a $\Z_L^d\times \Z_L^d$ matrix $\cal A$ and some fixed $a,b>0$, we define its weak-$(a,b)$ norm as
$$\|\cal A\|_{w;(a,b)}:= W^{a d/2}\max_{x,y\in \Z_L^d} \left|\cal A_{xy}\right| + \sup_{ K \in [W, L/2]} \left( \frac{W}{K}\right)^b K^{ad/2} \max_{x, x_0\in \Z_L^d} \frac1{K^d}\sum_{y:| y - x_0 |\le K} \left(\left|\cal A_{xy}\right|+\left| \cal A_{yx}\right| \right),$$
and its strong-$(a,b)$ norm as
$$\|\cal A\|_{s;(a,b)}:= \max_{x,y\in \Z_L^d} \left( \frac{W}{\langle x-y\rangle}\right)^b \langle x-y\rangle^{ad/2} \left|\cal A_{xy}\right|.$$
%	Given a $\Z_L^d\times \Z_L^d$ matrix $\cal A$ and some fixed $a,b>0$, we say that $\cal A$ satisfies the {\bf level-$(a,b)$ averaged bound} or $\cal A_{xy}$ is a variable satisfying the {\bf level-$(a,b)$ averaged bound}, if the following properties hold: 
%	\be\label{eq PseudoG2} 
%	\max_{x,y\in \Z_L^d} \left|\cal A_{xy}\right| \prec W^{-a d/2},
%	\ee
%	and for all $W\le K\le L$, 
%	\be\label{eq PseudoG}
%	K^{-d}\max_{y, x_0\in \Z_L^d}\sum_{x:| x - x_0 |\le K} \left(\left|\cal A_{xy}\right|+\left| \cal A_{yx}\right| \right)\prec \frac{1}{W^{b}K^{a d/2 - b}}.
%	\ee
%	On the other hand, we say $\cal A$ satisfies the {\bf level-$(a,b)$ uniform bound} if the following estimate holds uniformly in $x,y \in \Z_L^d$:
%	\be\label{eq unifG}
%	\left|\cal A_{xy}\right| \prec \frac{1}{W^{b}\langle x-y\rangle^{a d/2 - b}}.
%	\ee
\end{definition}

It is easy to check that $\|B\|_{s;(2,2)}\prec 1$,  $\|G(z)-m(z)I_N\|_{s;(1,1)}\prec 1$ if \eqref{locallaw1} holds, and $\|G(z)-m(z)I_N\|_{w;(1,2)}\prec W^{\e_0}$ if \eqref{Gmax} and \eqref{initial Txy222} hold. We now introduce a positive random variable $\Psi_{xy}(\tau, D)$ for a small constant $\tau>0$ and a large constant $D>0$, which was defined in \cite[Definition 3.4]{PartIII}:
\be\label{eq defPsi}
\Psi^2_{xy}\equiv \Psi^2_{xy}(\tau,D) :=W^{-D}+ \max\limits_{ \substack{|x_1-x| \le   W^{1+\tau}  \\ |y_1-y|\le  W^{1+\tau}}}s_{x_1y_1} + W^{-(2+2\tau)d}\sum_{ |x_1-x| \le  W^{1+\tau}}\sum_{ |y_1-y|\le  W^{1+\tau}} |G_{x_1y_1}|^2  .\ee
It is easy to check that  $\|\Psi(z)\|_{w;(1,2)} \prec \|G(z)-m(z)I_N\|_{w;(1,2)} + 1$ and  $\|\Psi(z)\|_{s;(1,1)} \prec \|G(z)-m(z)I_N\|_{s;(1,1)} + 1$ as long as $D$ is large enough. The motivation for introducing the $\Psi$ variable is as follows: given two atoms $x_1,x_2\in \Z_L^d$, suppose $y_1\ne y_2$ satisfy that  
\be\label{y12x12}|y_1-x_1|\le W^{1+\tau/2} ,\quad |y_2-x_2|\le W^{1+\tau/2}.\ee 
%If $y_1\ne y_2$ and we know that $\|G(z)\|_{\max}\prec 1$, 
Then under the setting of Lemma \ref{lem G<T}, by applying \eqref{offG largedev} twice we obtain that 
\begin{align}
	|G_{y_1y_2}(z)|^2&\prec T_{y_1y_2}(z) =|m|^2 s_{y_1y_2}|G_{y_2y_2}(z)|^2 +|m|^2 \sum_{\al\ne y_2}s_{y_1 \al}|G_{y_2\al }(\overline z)|^2 \nonumber\\
	&\prec   s_{y_1y_2} +  \sum_{\al\ne y_2}s_{y_1\al} T_{ y_2 \al}(\overline z)  \le s_{y_1y_2} + \sum_{\al ,\beta} s_{y_1\al}s_{y_2\beta} |G_{\al\beta }(z)|^2 \nonumber\\
	&\le W^{-D}+s_{y_1y_2} + W^{-2d}\sum_{|\al-y_1|\le W^{1+\tau/2}}\sum_{|\beta-y_2|\le W^{1+\tau/2}}  |G_{\al\beta}(z)|^2  \le W^{2d\tau} \Psi_{x_1x_2}^2(\tau,D),\label{Gpsi}
\end{align}
where we also used \eqref{subpoly} and the identity $G_{xy}(z)=\overline{G_{yx}(\overline z)}$ in the derivation. In particular, if $y_1$ and $y_2$ are in the same molecules as $x_1$ and $x_2$, respectively, then \eqref{y12x12} holds, since otherwise the graph will be smaller than $W^{-D}$ for any fixed $D>0$ by \eqref{subpoly} and \eqref{S+xy}. Hence \eqref{Gpsi} shows that all the $G$ edges between two molecules containing atoms $x_1$ and $x_2$ can be bounded by the same variable $\Psi_{x_1x_2}$. This fact will be convenient for our proof.

In this paper, we only use weak or strong-$(a,b)$ norms with $a\le 2$. In this case, it is not hard to check that the strong-$(a,b)$ norm is strictly stronger than the weak-$(a,b)$ norm. By Definition \ref{Def PseudoG}, we immediately get the bounds
\be\label{eq PseudoG2} 
\max_{x,y\in \Z_L^d} \left|\cal A_{xy}\right| \le W^{-a d/2}\|\cal A\|_{w;(a,b)},
\ee
\be\label{eq PseudoG}
\max_{x, x_0\in \Z_L^d} \frac1{ K^{d}}\sum_{y:| y - x_0 |\le K} \left(\left|\cal A_{xy}\right|+\left| \cal A_{yx}\right| \right)\le   \frac{1}{W^b K^{ad/2-b}}  \|\cal A\|_{w;(a,b)},\quad \text{for all $  K \in [W, L/2]$,}
\ee
\be\label{eq unifG}
\left|\cal A_{xy}\right| \le    \frac{1}{W^b \langle x-y\rangle^{ad/2-b}} \|\cal A\|_{s;(a,b)}.
\ee
Using these estimates, we can easily prove the following bounds.

\begin{claim}\label{claim_basic}
Let $a$ and $b$ be two positive constants satisfying that  
	\be\label{a+b}
	a d/2 - b - 2\ge 0.
	\ee
Given any two matrices $\cal A^{(1)}$ and $\cal A^{(2)}$, we have that 
\be\label{keyobs3}
\sum_{x}\cal A^{(1)}_{x \beta} \cdot \prod_{i=1}^k B_{x y_i} \prec W^{-ad/2 } \Gamma(y_1,\cdots, y_k) \cdot \|\cal A^{(1)}\|_{w;(a,b)},
\ee
\be\label{keyobs2.2}
\sum_{x} \cal A^{(1)}_{x \al}\cal A^{(2)} _{x \beta} \cdot \prod_{i=1}^k B_{x y_i} \prec W^{-a d/2 }\Gamma(y_1,\cdots, y_k) \cdot \frac{1}{W^b\langle x-y\rangle^{ad/2-b}} \cdot \|\cal A^{(1)}\|_{s; (a,b)}\|\cal A^{(2)}\|_{s; (a,b)},
\ee
\be\label{keyobs2}
\sum_{x} \cal A^{(1)}_{x \al}\cal A^{(2)} _{x \beta} \cdot  \prod_{i=1}^k B_{x y_i} \prec W^{-a d/2 }\Gamma(y_1,\cdots, y_k)  {\cal A}_{\al\beta} \cdot \|\cal A^{(1)}\|_{w; (a,b)}\|\cal A^{(2)}\|_{w; (a,b)},
\ee
where $ {\cal A}_{\al\beta}$ is a positive variable satisfying $\|\cal A\|_{w;(a,b)}\le 1$ and $\Gamma(y_1,\cdots, y_k)$ is defined by 
\be\label{defn_Gamma}\Gamma(y_1,\cdots, y_k):= \sum_{i=1}^k \prod_{j\ne i}B_{y_{i}y_j}.\ee
\end{claim}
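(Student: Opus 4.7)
The overall approach is to prove the first estimate directly via dyadic decomposition, and then to reduce the remaining two estimates to it by different manipulations. Label the three displayed bounds of the claim as (i), (ii), (iii) for brevity. The essential preliminary observation is a ``triangle-inequality'' bound for products of $B$-kernels: given $x$ and $y_1,\ldots,y_k$, let $j^{*}$ achieve $\min_{i}\langle x-y_i\rangle$; then $\langle y_{j^{*}}-y_i\rangle \le \langle x-y_{j^{*}}\rangle + \langle x-y_i\rangle \le 2\langle x-y_i\rangle$ for $i\ne j^{*}$, so $B_{xy_i}\lesssim B_{y_{j^{*}}y_i}$. Hence
\[\prod_{i=1}^{k}B_{xy_i} \;\lesssim\; B_{xy_{j^{*}}}\prod_{i\ne j^{*}}B_{y_{j^{*}}y_i} \;\le\; \sum_{j=1}^{k}B_{xy_j}\prod_{i\ne j}B_{y_jy_i},\]
which peels off the combinatorial factor $\Gamma(y_1,\ldots,y_k)$ and reduces each estimate to its $k=1$ version.

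For (i), it therefore suffices to show $\sum_{x}|\cal A^{(1)}_{x\beta}|\,B_{xy_j} \prec W^{-ad/2}\|\cal A^{(1)}\|_{w;(a,b)}$. I would dyadically decompose $\sum_{x}=\sum_{n\ge 0}\sum_{x:\,|x-y_j|\in[2^{n-1}W,2^{n}W)}$, using $B_{xy_j}\lesssim W^{-d}2^{-n(d-2)}$ on the $n$-th shell and the averaged weak-norm estimate \eqref{eq PseudoG} to get $\sum_{|x-y_j|\le 2^{n}W}|\cal A^{(1)}_{x\beta}| \lesssim W^{-b}(2^{n}W)^{d-ad/2+b}\|\cal A^{(1)}\|_{w;(a,b)}$. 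Multiplying these and simplifying gives a summand of size $W^{-ad/2}\cdot 2^{-n(ad/2-b-2)}\|\cal A^{(1)}\|_{w;(a,b)}$, so the assumption \eqref{a+b} makes the geometric series convergent (with at worst a $\log L$ factor absorbed by $\prec$).

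For (ii) I would split on the triangle-type dichotomy: for any $x$ either $\langle x-\al\rangle\ge \langle \al-\beta\rangle/2$ or $\langle x-\beta\rangle\ge \langle \al-\beta\rangle/2$. In the first case the strong-norm bound \eqref{eq unifG} yields $|\cal A^{(1)}_{x\al}|\lesssim \|\cal A^{(1)}\|_{s;(a,b)}/(W^{b}\langle \al-\beta\rangle^{ad/2-b})$, reducing the remaining sum $\sum_{x}|\cal A^{(2)}_{x\beta}|\prod_{i}B_{xy_i}$ to an application of (i) (using $\|\cdot\|_{w;(a,b)}\lesssim \|\cdot\|_{s;(a,b)}$). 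The complementary case is symmetric in $\al,\beta$, and summing the two contributions gives the claimed bound---with $\langle \al-\beta\rangle$ in place of the $\langle x-y\rangle$ in the statement, which I read as a typo since $x$ is summed out.

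For (iii), set $F_{\al\beta}:=\sum_{x}\cal A^{(1)}_{x\al}\cal A^{(2)}_{x\beta}\prod_{i}B_{xy_i}$ and define $\cal A_{\al\beta} := F_{\al\beta}/(W^{-ad/2}\Gamma\,\|\cal A^{(1)}\|_{w;(a,b)}\|\cal A^{(2)}\|_{w;(a,b)})$; then $\|\cal A\|_{w;(a,b)}\le 1$ is equivalent to showing $\|F\|_{w;(a,b)}\prec W^{-ad/2}\Gamma\,\|\cal A^{(1)}\|_{w;(a,b)}\|\cal A^{(2)}\|_{w;(a,b)}$. The max-norm part follows from $\sup_{\al,\beta}|F_{\al\beta}|\le \|\cal A^{(2)}\|_{\max}\cdot \sup_{\al}\sum_{x}|\cal A^{(1)}_{x\al}|\prod_{i}B_{xy_i}$, estimating the first factor by \eqref{eq PseudoG2} and the second by (i). The averaged part $K^{-d}\sum_{|\beta-\beta_0|\le K}|F_{\al\beta}|$ is handled by interchanging the order of summation to get $\sum_{x}|\cal A^{(1)}_{x\al}|\prod_{i}B_{xy_i}\cdot K^{-d}\sum_{\beta}|\cal A^{(2)}_{x\beta}|$, and then bounding the inner average by \eqref{eq PseudoG} for $\cal A^{(2)}$ and the outer sum by (i); the symmetric average over $\al$ (i.e.\ $|F_{\beta\al}|$) is the same argument with $\cal A^{(1)}\leftrightarrow\cal A^{(2)}$. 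The main obstacle throughout is the dyadic series in (i), whose convergence sits exactly at the sharp threshold \eqref{a+b}; once that is settled, (ii) and (iii) are clean reductions.
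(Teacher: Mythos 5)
Your proof is correct and follows exactly the route the paper intends: the authors omit the argument, noting only that it is ``basic by using \eqref{eq PseudoG2}--\eqref{eq unifG}'' and deferring to Claim 6.11 of \cite{PartI_high}, and your three steps --- the dyadic-shell summation against the averaged weak-norm bound for \eqref{keyobs3}, the $\langle x-\al\rangle$ versus $\langle x-\beta\rangle$ dichotomy combined with \eqref{eq unifG} for \eqref{keyobs2.2}, and the direct verification of the weak norm of the summed kernel for \eqref{keyobs2} --- together with the triangle-inequality reduction producing $\Gamma(y_1,\cdots,y_k)$, constitute precisely that argument. You are also right that $\langle x-y\rangle^{ad/2-b}$ in \eqref{keyobs2.2} should read $\langle \al-\beta\rangle^{ad/2-b}$, and that the borderline case $ad/2-b-2=0$ of \eqref{a+b} only costs a $\log(L/W)$ factor, which is absorbed by $\prec$ since $L\le W^{1/c_w}$.
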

\begin{proof}
The proof of this claim is basic by using \eqref{eq PseudoG2}--\eqref{eq unifG}, so we omit the details. The reader can also refer to the proof of Claim 6.11 in \cite{PartI_high} for a formal proof. 
\end{proof}

The following lemma gives useful estimates on doubly connected graphs.
\begin{lemma}[Lemma 6.10 of \cite{PartI_high}]\label{no dot}
Suppose $d\ge 8$ and $\cal G$ is a doubly connected normal regular graph without external atoms. Pick any two atoms of $\cal G$ and fix their values $x , y\in \Z_L^d$. Then the resulting graph $\cal G_{xy}$ satisfies that %that
\be\label{bound 2net1}
\left|\cal G_{xy}\right| \prec W^{ - \left(n_{xy}-3\right)d/2 } B_{xy} \cal A_{xy} \cdot \|G(z)-m(z)I_N\|_{w;(1,2)}^{n_{xy}-2} \ ,
\ee
and 
\be\label{bound 2net1 strong}
\left|\cal G_{xy}\right| \prec W^{ -  \left(n_{xy}-3\right)d/2} B_{xy}^{3/2}\cdot \|G(z)-m(z)I_N\|_{s;(1,1)}^{n_{xy}-2}\ ,
\ee
where  $n_{xy}:=\ord(\cal G_{xy})$ is the scaling order of $\cal G_{xy}$ and  $\cal A_{xy}$ is some positive variable satisfying $\|\cal A\|_{w;(1,2)}\prec 1$. If we fix an atom $x \in \cal G$, then the resulting graph  $\cal G_{x}$ satisfies that  
\be\label{bound 2net1 singlex}
\left|\cal G_{x}\right| \prec W^{ - \ord(\cal G_{x}) \cdot d/2 }  \cdot \|G(z)-m(z)I_N\|_{w;(1,2)}^{n_{xy}-2} .
\ee
The above bounds hold also for the graph $ {\cal G}^{{\rm abs}}$, which is obtained by replacing every component (including edges, weights and coefficients) in $\cal G$ with its absolute value and ignoring all $P$ or $Q$ labels (if any). 
\end{lemma}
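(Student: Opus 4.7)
The plan is to exploit the two edge-disjoint spanning trees guaranteed by the doubly connected structure and reduce the sum over internal atoms to a telescoping leaf-peeling procedure at the molecular level. First I would separate local from global estimation: by \eqref{subpoly} and \eqref{S+xy}, the neutral/$\pm$ waved edges decay faster than any polynomial in $W$ outside a window of size $W^{1+\tau}$, so up to a negligible $W^{-D}$ error every internal atom is confined to a ball of radius $W^{1+\tau}$ around the other atoms of its molecule. The local structure inside a molecule can then be dispatched atom-by-atom: each non-pinned internal atom contributes $\OO(W^{d(1+\tau)})$ from its spatial sum but is tied to the rest of its molecule by at least one waved edge of size $\OO(W^{-d})$, and every $G$-edge between atoms of the same two molecules is replaced by a single representative $\Psi_{x_ix_j}$ of \eqref{eq defPsi} exactly as in the calculation \eqref{Gpsi}. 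This passage to the molecular graph $\cal G_{\cal M}$ costs only $W^{\tau}$ factors for arbitrarily small $\tau>0$, which are absorbed by the stochastic domination $\prec$.

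Next I would work at the molecular level. Label the internal molecules $\cal M_1,\ldots,\cal M_m$ and let $\cal M_x,\cal M_y$ denote the molecules containing the pinned atoms $x,y$. The doubly connected property supplies a black spanning tree $\cal T_1\subset\cal B_{black}$ of $\dashed$ (or labelled $\dashed$) edges connecting all molecules, and a second, edge-disjoint blue spanning tree $\cal T_2\subset\cal B_{blue}$ whose edges are either $\dashed$ or blue-solid. By \eqref{thetaxy} any $\dashed$ edge between $\cal M_i$ and $\cal M_j$ is dominated by $B_{x_ix_j}$, a labelled $\dashed$ edge of order $k$ carries the improved bound $W^{-(k-2)d/2}B_{x_ix_j}$ via \eqref{BRB}, and each blue-solid edge is bounded by $\Psi_{x_ix_j}$. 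All remaining (non-tree) edges—red-solid edges, extra $G$ edges or extra $\dashed$ edges between the molecules—are similarly bounded by $B$ or $\Psi$ and kept as residual factors attached to specific molecules.

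The heart of the argument is a repeated application of Claim \ref{claim_basic} to peel off a leaf of $\cal T_1\cup\cal T_2$ other than $\cal M_x,\cal M_y$. Such a leaf exists as long as $m\ge 3$ because each of $\cal T_1$ and $\cal T_2$ has at least two leaves and only two molecules are exempt from peeling. The two adjacent tree-edges at the chosen leaf $\cal M_i$ supply precisely the input for \eqref{keyobs2} with $(a,b)=(1,2)$: summing the leaf atom $z_i$ against these two edges and against the residual $B$-factors attached to $\cal M_i$ produces a factor $W^{-d/2}$, a residual $B$-factor linking the two surviving neighbours of $\cal M_i$, and, whenever a blue-tree edge is of $\Psi$-type, a weak-norm factor $\|\Psi\|_{w;(1,2)}\prec \|G-m\|_{w;(1,2)}+1$. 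After each peel the reduced graph is again doubly connected with one fewer internal molecule, so iterating collapses the tree until only $\cal M_x,\cal M_y$ remain, joined by a product of $B$-factors that telescopes to a single $B_{xy}$ and an $\cal A_{xy}$ with $\|\cal A\|_{w;(1,2)}\prec 1$. Accounting for the $W^{-d/2}$ per peeled internal atom, the $W^{-d}$ from the local structure, and the $W^{-(k-2)d/2}$ from labelled $\dashed$ edges, and comparing with the scaling-order formula \eqref{eq_deforderrandom3}, yields exactly $W^{-(n_{xy}-3)d/2}B_{xy}\cal A_{xy}\|G-m\|_{w;(1,2)}^{n_{xy}-2}$, i.e.\ \eqref{bound 2net1}.

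For the strong bound \eqref{bound 2net1 strong} the only change is to bound each inter-molecular $G$-edge by $|G_{x_ix_j}|\prec B_{x_ix_j}^{1/2}\|G-m\|_{s;(1,1)}$ using \eqref{eq unifG} with $(a,b)=(1,1)$, and to use \eqref{keyobs2.2} in place of \eqref{keyobs2} at each peeling step; the half-powers of $B$ along $\cal T_2$ combine with the full powers of $B$ along $\cal T_1$ to produce $B_{xy}^{3/2}$ on the final path. The single-atom estimate \eqref{bound 2net1 singlex} is simpler and follows by peeling all $m-1$ non-$x$ molecules along one spanning tree alone, using \eqref{keyobs3}. The passage to the absolute-value graph $\cal G^{\rm abs}$ requires no change since all bounds are applied edgewise in modulus and the $P/Q$-labels can be dropped after bounding each summand trivially. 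The principal obstacle in the argument is a careful bookkeeping of the labelled $\dashed$ orders—one must verify that the $\sum_k k\cdot\#\{k\text{-th labelled }\dashed\}$ contribution to \eqref{eq_deforderrandom3} is exactly matched by the $W^{-(k-2)d/2}$ savings delivered by \eqref{BRB} at the corresponding peeling step—together with the verification that a suitable leaf of $\cal T_1\cup\cal T_2$ avoiding $\cal M_x,\cal M_y$ always exists, which is where the hypothesis $d\ge 8$ ultimately enters to ensure that the $W^{-d/2}$ savings dominate the $W^{d(1+\tau)}$ volume factors from the local structure.
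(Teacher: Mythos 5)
Your overall strategy (local/global separation via the waved-edge decay, replacement of inter-molecular $G$ edges by $\Psi$ as in \eqref{Gpsi}, reduction to an auxiliary molecular graph carried by two edge-disjoint spanning trees, and summation of internal atoms one at a time via Claim \ref{claim_basic}) is the same as the one used in this paper for the analogous estimates (Lemma \ref{no dot weak0}, Claims \ref{lem ATab} and \ref{lem QTab}). However, the central peeling step as you describe it has a genuine gap. You peel "a leaf of $\cal T_1\cup\cal T_2$", meaning a molecule that is simultaneously a leaf of the black tree and of the blue tree, and you justify its existence by noting that each tree has at least two leaves. This is false: the leaf sets of two edge-disjoint spanning trees can be disjoint (e.g.\ on four molecules take $\cal T_1=\{12,23,34\}$ with leaves $1,4$ and $\cal T_2=\{13,14,24\}$ with leaves $2,3$), so your induction need not even start. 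Moreover, even at a genuine common leaf, feeding \emph{both} adjacent tree edges into \eqref{keyobs2} as the two $\cal A$-factors converts the black tree edge into a weak-norm variable $\cal A_{\al\beta}$ rather than a $B$-factor; this destroys the black spanning tree that the remaining summations (and the final factor $B_{xy}$) rely on. The correct organization, which is what the paper does, is to peel leaves of the \emph{blue} tree only: at the summed atom, the single blue edge is the lone $\cal A$-factor in \eqref{keyobs3} (or one of the two $\cal A$-factors in \eqref{keyobs2}/\eqref{keyobs5} when an external solid edge is also attached), while \emph{all} black edges at that atom — however many, and regardless of whether the atom is a leaf of the black tree — enter as the product $\prod_i B_{xy_i}$, whose output $\Gamma(y_1,\dots,y_k)$ reconnects them into a new black spanning tree on the surviving molecules. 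This is precisely the structural point your write-up misses.

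Two smaller inaccuracies: first, the hypothesis $d\ge 8$ does not enter through leaf existence or through beating the $W^{d(1+\tau)}$ volume factors (those are already cancelled by the $W^{-d}$ waved edges for any $d$, with the leftover $W^{d\tau}$ absorbed by $\prec$); it enters through condition \eqref{a+b} with $(a,b)=(1,2)$, i.e.\ $d/2-2-2\ge 0$, which is what makes Claim \ref{claim_basic} applicable for the weak norm (the strong bound \eqref{bound 2net1 strong} with $(a,b)=(1,1)$ only needs $d\ge 6$). Second, your bookkeeping of the exponent $n_{xy}-3$ and of the labelled-diffusive-edge orders is asserted rather than checked; in the paper's scheme this comes out of counting one $W^{-d/2}$ per internal atom after cancellation against the waved edges plus one per remaining off-tree edge, and should be verified against \eqref{eq_deforderrandom3} once the peeling is set up correctly.
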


Deterministic doubly connected graphs satisfy better bounds than Lemma \ref{no dot}, because all edges in their blue nets are now (labelled) $\dashed$ edges, whose strong-$(2,2)$ norms are bounded by $\OO_\prec(1)$.

%\begin{lemma}[Corollary 6.12 of \cite{PartI_high}]\label{lem Rdouble}
%	Suppose $d\ge 7$. Let $\cal G$ be a {\bf deterministic} normal regular graph satisfying the  doubly connected property without external molecules as in Definition \ref{def 2net}. 
%	Pick any two atoms of $\cal G$ and fix their values as $x , y\in \Z_L^d$. Then for the resulting graph, say $\cal G_{xy}$, we have that for any constant $\e>0$,
%	\be\label{bound 2net1B}
%	\left|\cal G_{xy}\right| \prec W^{ - \left(n_{xy} -4\right) d/2} B_{xy}^2 ,
%	\ee
%	where $n_{xy}:=\ord(\cal G_{xy})$ is the scaling order of $\cal G_{xy}$. In fact, the above bounds hold even for the graph $ {\cal G}^{{\rm abs}}_{xy}$, which is obtained by replacing each component (including edges, weights and coefficients) in $\cal G_{xy}$ with its absolute value. 
%\end{lemma}

\begin{lemma}[Corollary 6.12 of \cite{PartI_high}]\label{lem Rdouble}
	Suppose $d\ge 6$ and $\cal G$ is a deterministic doubly connected normal regular graph without external atoms. Pick any two atoms of $\cal G$ and fix their values $x , y\in \Z_L^d$. Then the resulting graph $\cal G_{xy}$ satisfies that
	\be\label{bound 2net1B}
	\left|\cal G_{xy}\right| \prec W^{ - \left(n_{xy} -4\right) d/2} B_{xy}^2 ,
	\ee
	where $n_{xy}:=\ord(\cal G_{xy})$ is the scaling order of $\cal G_{xy}$. This bound holds also for the graph $ {\cal G}^{{\rm abs}}$, which is obtained by replacing every component (including edges and coefficients) in $\cal G$ with its absolute value. 
\end{lemma}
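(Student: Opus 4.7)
The plan is to derive this as a refinement of Lemma \ref{no dot}, exploiting the key structural fact that for a deterministic graph $\cal G$ there are no $G$ edges in the blue net. Specifically, since $\cal G$ is deterministic and normal regular, it contains no $G$ edges and no weights, so in any doubly connected decomposition (Definition \ref{def 2net}) the blue net $\cal B_{blue}$---which a priori may consist of blue solid or $\dashed$ edges---must consist entirely of (possibly labelled) $\dashed$ edges. Both nets $\cal B_{black}$ and $\cal B_{blue}$ therefore provide spanning trees of diffusive edges on the molecular graph, and each such edge satisfies $|\Theta_{xy}| \prec B_{xy}$ (or $W^{-(k-2)d/2} B_{xy}$ for a $k$-th order labelled edge, by Lemma \ref{lem redundantagain}), i.e.\ has strong-$(2,2)$ norm $\prec 1$ (resp.\ $\prec W^{-(k-2)d/2}$).

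Next I would run the same reduction-to-molecular-graph argument as in the proof of Lemma \ref{no dot}, using the bandedness of $S$, $S^{\pm}$ and the estimate $|\Theta_{xy}|\prec B_{xy}$ to absorb all local structure (waved edges, atoms inside a single molecule) into scaling-order bookkeeping. The essential point is the edge-by-edge bound in the blue net: in Lemma \ref{no dot} each blue $G$ edge is estimated via $\|G-m\|_{s;(1,1)}$ and contributes one factor of $B^{1/2}$ plus one unit of scaling order; in our deterministic setting each blue edge is a $\dashed$ edge contributing one factor of $B$ and two units of scaling order. Replacing $B^{1/2}$ by $B$ for each blue edge turns $B_{xy}^{3/2}$ into $B_{xy}^{2}$, and simultaneously increases the total scaling order by $1$ per edge in the blue net, which accounts for the shift of the $W$-exponent from $n_{xy}-3$ to $n_{xy}-4$.

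To finish, I would sum over internal molecules following the two edge-disjoint spanning trees. The key summation estimate is
\[
\sum_{\alpha\in\Z_L^d} B_{x\alpha}^{2}\,B_{\alpha y}^{2}\ \lesssim\ W^{-d}\,B_{xy}^{2},
\]
valid for $d\ge 5$ (and hence for $d\ge 6$ as assumed), which one verifies by splitting the sum into the regions $|x-\alpha|\le|x-y|/2$ and $|\alpha-y|\le|x-y|/2$ and using $\sum_\alpha B_{x\alpha}^2 \sim W^{-d}$. Iterating this along one spanning tree, while using the edges of the other spanning tree (and any remaining non-tree $\dashed$ edges) to supply the $B$-decay between the contracted vertices, produces a factor $W^{-d}$ per summation and reduces the graph vertex by vertex down to a direct bound $B_{xy}^{2}$ between the two fixed atoms. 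Carefully matching the number of such summations and the extra strong-$(2,2)$ factors coming from non-tree edges against the scaling-order formula \eqref{eq_deforderrandom3} yields exactly the exponent $W^{-(n_{xy}-4)d/2}$. The same argument applied pointwise (replacing each edge, weight and coefficient by its absolute value) gives the bound for $\cal G^{\r{abs}}$.

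The main technical obstacle is the bookkeeping of the $W$-exponent in the presence of labelled $\dashed$ edges of various orders $k\ge 4$ and of extra internal structure within molecules (waved edges and merged atoms via dotted edges), both of which contribute nontrivially to $n_{xy}$. For labelled edges one must absorb the built-in factor $W^{-(k-2)d/2}$ from Lemma \ref{lem redundantagain} into the exponent, and for internal molecular structure one must verify that each free internal summation costing $2$ in the scaling order is compensated by a $W^{-d/2}$ saving from applying the strong-$(2,2)$ norm along a suitable edge; this is exactly the accounting carried out in the proof of Lemma \ref{no dot} in \cite{PartI_high}, and adapts verbatim here with $B_{xy}^{3/2}\to B_{xy}^{2}$ and $n_{xy}-3\to n_{xy}-4$.
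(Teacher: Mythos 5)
Your proposal matches the approach the paper intends: Lemma \ref{lem Rdouble} is imported as Corollary 6.12 of \cite{PartI_high}, and the sentence immediately preceding it attributes the improvement over Lemma \ref{no dot} precisely to the fact that in a deterministic graph every blue-net edge is a (labelled) $\dashed$ edge of strong-$(2,2)$ norm $\OO_\prec(1)$, which is exactly the mechanism you exploit (formally, Claim \ref{claim_basic} with $(a,b)=(2,2)$, whose admissibility condition $d-4\ge 0$ holds for $d\ge 6$). Your bookkeeping — each blue edge trading $B^{1/2}$ for $B$ at the cost of one extra unit of scaling order, shifting $B_{xy}^{3/2}$ to $B_{xy}^2$ and $n_{xy}-3$ to $n_{xy}-4$ — is the correct accounting.
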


\subsection{Proof of Lemma \ref{lem highp1}} \label{sec_pfp1}
In this subsection, we prove Lemma \ref{lem highp1} using the $n$-th order $T$-expansion.
 
\begin{proof}[Proof of Lemma \ref{lem highp1}]
We rename the indices $x$ and $y$ as $\fa$ and $\fb$, respectively. Moreover, following the notation in \eqref{rep_abc}, we represent $\fa$ and $\fb$ by $\otimes$ and $\oplus$ in graphs. Applying \eqref{mlevelTgdef} to a $T_{\fa\fb}$ in $\E T_{\fa\fb}^p$, we obtain that 
 \be\label{mlevelT n-1}
 \begin{split}
\E T_{\fa\fb}^p & = \E T_{\fa\fb}^{p-1}\left[m  \Theta_{\fa\fb} + m(\Theta \Sdelta^{(n)} \Theta)_{\fa\fb} \right] \overline G_{\fb\fb} + \E T_{\fa\fb}^{p-1}  (\PT^{(n)})_{\fa,\fb\fb}  \\
&+ \E T_{\fa\fb}^{p-1} (\AT^{(>n)})_{\fa,\fb\fb} + \E T_{\fa\fb}^{p-1}(\QT^{(n)})_{\fa,\fb\fb} + \E T_{\fa\fb}^{p-1} (\Err_{n,D})_{\fa,\fb\fb} . 
\end{split}
\ee
Using \eqref{thetaxy} and \eqref{intro_redagain}, we can bound the first term on the right-hand side as
\be\label{Holder1}
\E T_{\fa\fb}^{p-1}\left[m \Theta_{\fa\fb} +m(\Theta \Sdelta^{(n)} \Theta)_{\fa\fb} \right]\overline G_{\fb\fb} \prec B_{\fa\fb} \E T_{\fa\fb}^{p-1} .%\le B_{\fa\fb} \left(\E T_{\fa\fb}^{p}\right)^{\frac{p-1}{p}},
\ee
%where we used H{\"o}lder's inequality in the last step. 
Next we claim that 
\be\label{PTab}
 (\PT^{(n)})_{\fa,\fb\fb}  \prec W^{-d/2+\e_0} B_{\fa\fb}.
\ee
Let $\cal G_{\fa\fb}$ be a graph in $(\PT^{(n-1)})_{\fa\fb}$. By Definition \ref{defn genuni}, it has dotted edges connected with $\oplus$, has a (labelled) $\dashed$ edge connected with $\otimes$, is of scaling order $\ge 3$, and is doubly connected. Now merging $\oplus$ with the internal atoms that are connected to it through dotted edges, we can write $\cal G_{\fa\fb}$ as
$$\cal G_{\fa\fb}=\sum_x \Theta_{\fa x}(\cal G_0)_{x \fb},$$
where the $\Theta$ edge can also be replaced by a labelled $\dashed$ edge and $(\cal G_0)_{x\fb}$ is a graph satisfying the assumptions of Lemma \ref{no dot}. Then using \eqref{thetaxy} (or \eqref{BRB} if $\Theta_{\fa x}$ is replaced by a labelled $\dashed$ edge), \eqref{bound 2net1} and $\|G(z) - m(z)I_N\|_{w;(1,2)}\prec W^{\e_0}$ by the assumptions of Lemma \ref{lemma ptree}, we can bound that
\be\label{PTab1}
 |\cal G_{\fa\fb}|\prec W^{ (n_{x\fb} - 3) (-d/2+\e_0) + \e_0} \sum_{x} B_{\fa x}  B_{x\fb}\cal A_{x\fb} \prec W^{ (n_{x\fb} - 2) (-d/2+\e_0)}  B_{\fa  \fb} ,
 \ee
where $n_{x\fb}:=\ord((\cal G_0)_{x \fb})$, $\cal A_{x\fb}$ is a positive variable satisfying $\|\cal A\|_{w;(1,2)}\prec 1$, and in the last step we used  \eqref{keyobs3} with $(a,b)=(1,2)$. Using \eqref{PTab1} and the fact that $n_{x\fb}\ge 3$, we conclude \eqref{PTab}. Now with \eqref{PTab}, we can bound the second term on the right-hand side of \eqref{mlevelT n-1} as 
\be\label{Holder2}
\E T_{\fa\fb}^{p-1} (\PT^{(n)})_{\fa,\fb\fb}   \prec W^{-d/2+\e_0} B_{\fa\fb} \E T_{\fa\fb}^{p-1} .%\le W^{-d/2+\e_0}B_{\fa\fb} \left(\E T_{\fa\fb}^{p}\right)^{\frac{p-1}{p}},
\ee
It remains to bound the last three terms on the right-hand side of \eqref{mlevelT n-1}. This can be done with the following two claims.
\begin{claim}\label{lem ATab}
Under the setting of Lemma \ref{lem highp1}, we have that
\be\label{ATab}
 (\ATn)_{\fa,\fb\fb}  \prec  W^{(n-1)(-d/2+  \e_0) }  \frac{L^2}{W^2}(B_{\fa\fb} + \wt\Phi^2) ,
\ee
and
\be\label{Err_ab}
(\Err_{n,D})_{\fa,\fb\fb}  \prec  W^{D (-d/2 + \e_0)}  \frac{L^2}{W^2}(B_{\fa\fb} + \wt\Phi^2).
\ee
\end{claim}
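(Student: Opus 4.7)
The proofs of \eqref{ATab} and \eqref{Err_ab} have essentially the same structure, so I focus on \eqref{ATab} and comment on modifications for the error term at the end. By properties (5), (ix), and (x) of Definition \ref{defn genuni}, every graph $\cal G_{\fa\fb}$ in $(\ATn)_{\fa,\fb\fb}$ is a doubly connected normal regular graph without $P/Q$ labels, of scaling order $m\ge n+1$, with external atoms $\fa$ (playing the role of $\otimes$) and $\fb$ (obtained by merging $\oplus$ and $\ominus$ after setting $\fb_1=\fb_2=\fb$). The goal is to establish the uniform entrywise bound
$$|\cal G_{\fa\fb}|\prec W^{-(m-3)(d/2 - O(\e_0))}\cdot \frac{L^2}{W^2}\bke{B_{\fa\fb}+\wt\Phi^2},$$
from which \eqref{ATab} follows by taking $m=n+1$ and summing over the at most $C_D$ graphs in $\ATn$. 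For \eqref{Err_ab}, the same bound is applied with $m\ge D+1$, and the possible $P/Q$ labels (allowed by property (8)) are absorbed by $|Q_xf|\le |f|+|P_xf|$ at the cost of an $\OO(1)$ multiplicative factor.

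First I would pass to the absolute-value graph $\cal G^{{\rm abs}}_{\fa\fb}$. By the input \eqref{initial_p} together with \eqref{offG largedev}, each off-diagonal $G$ edge satisfies $|G_{\al\beta}|^2\prec B_{\al\beta}+\wt\Phi^2$, so every pair of solid edges between the same two molecules may be replaced by a choice $\varphi^2_{\al\beta}\in\{B_{\al\beta},\wt\Phi^2\}$. Light weights are controlled by $|G_{xx}-m|\prec W^{-d/2+\e_0}$ from \eqref{Gmax}, $\Theta$ and labelled $\dashed$ edges satisfy $|\Theta_{\al\beta}|\prec B_{\al\beta}$ and the analogous \eqref{BRB}, and waved edges are bounded by Lemma \ref{lem deter}. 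Expanding the product yields a sum of finitely many ``choice graphs'' indexed by the subset of $G$-pairs for which $\varphi^2_{\al\beta}=\wt\Phi^2$ is selected.

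For the choice graph in which \emph{every} $G$-pair is assigned the $B$-factor, the resulting graph is still doubly connected with both nets built from $B$-edges or labelled $\dashed$ edges, and iterating the convolution bounds \eqref{keyobs3}--\eqref{keyobs2} of Claim \ref{claim_basic} along the two nets yields $W^{-(m-3)(d/2 - O(\e_0))}B_{\fa\fb}$ in the spirit of \eqref{bound 2net1}, except that the stronger $T$-input replaces $\cal A_{\fa\fb}\|G-mI\|_{w;(1,2)}^{m-2}$ by the cleaner $B_{\fa\fb}$. For any choice graph in which at least one $G$-pair is assigned $\wt\Phi^2$, the blue spanning tree is severed at that edge, and I would perform the internal summations using only the intact black spanning tree. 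The single uncontrolled summation now costs $\sum_x B_{\fa x}\prec L^2/W^2$, which produces the $(L^2/W^2)\wt\Phi^2$ portion of the bound. Summing over the $\OO(1)$ many choice graphs preserves the stochastic-domination estimate.

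The main obstacle is to show that when $k\ge 1$ distinct $G$-pairs are replaced by $\wt\Phi^2$, only a \emph{single} extra factor $L^2/W^2$ arises in the final estimate, rather than $k$ such factors. This requires a graph-theoretic argument showing that, after removing the $k$ broken blue edges, the intact black spanning tree together with the surviving blue edges still connects all but one pair of ``collapsed'' molecules, and that the remaining $\wt\Phi^{2(k-1)}$ factor is absorbed into $W^{-(m-3)(d/2-O(\e_0))}$ by $\wt\Phi\le W^{-\delta}$ combined with the excess orders available from the supercritical scaling $m\ge n+1$. This is a $\wt\Phi$-tracking refinement of Lemma \ref{no dot} which I would prove by induction on the number of internal molecules of the choice graph.
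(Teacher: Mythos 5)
There is a genuine gap, and you have in fact put your finger on it yourself: the step you defer (``show that when $k\ge 1$ distinct $G$-pairs are replaced by $\wt\Phi^2$, only a single extra factor $L^2/W^2$ arises'') is the entire content of the estimate, and the route you propose cannot deliver it. Once you expand each solid edge (or pair of edges) into the binary choice $B_{\al\beta}$ versus $\wt\Phi^2$, the choice graphs in which $\wt\Phi^2$ is selected have lost all spatial decay along the corresponding blue edges, and in particular they have lost the averaged (weak-norm) control $\|\,|G|\,\|_{w;(1,2)}\prec W^{\e_0}$ that the edge possessed \emph{before} the choice was made. With only the black spanning tree left to perform the summations, each severed blue edge costs a full free summation $\sum_x B_{\fa x}\lesssim L^2/W^2$; under \eqref{Lcondition1} one has $L^2/W^2\le W^{(n-1)d/2-c_0}$, which is nowhere near absorbed by the factor $\wt\Phi^{2}\le W^{-2\delta}$ per severed edge, nor by the excess scaling order $m-(n+1)$, which may be zero. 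So the ``induction on the number of internal molecules'' you invoke would have to prove something false for the choice graphs as you have defined them.

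The paper's resolution is precisely to \emph{not} make the choice: each blue solid edge is carried through the reduction as a single random variable $\Omega_{xy}$ satisfying simultaneously the entrywise bound $\Omega_{xy}\prec B_{xy}^{1/2}+\wt\Phi$ \emph{and} the weak-norm bound $\|\Omega\|_{w;(1,2)}\prec W^{\e_0}$ (Notation \ref{Omega1}), and the closure property \eqref{keyobs4}--\eqref{keyobs5} shows that summing a product of two such variables against the black-tree $B$ edges at an internal atom yields $W^{-d/2+\e_0}\Gamma\cdot\Omega'$ with $\Omega'$ again in the same class. (The proof of \eqref{keyobs4} splits the summation region so that one factor is pulled out by its entrywise bound while the other is summed via its weak norm.) Consequently $\wt\Phi$ never multiplies up: it stays additively inside a single $\Omega$ until the very last summation over the root of the blue tree, where $|\Omega_{x_\ell\fb}|^2\prec B_{x_\ell\fb}+\wt\Phi^2$ and $\sum_{x_\ell}B_{\fa x_\ell}(B_{x_\ell\fb}+\wt\Phi^2)\lesssim (L^2/W^2)(B_{\fa\fb}+\wt\Phi^2)$ produces the one and only factor $L^2/W^2$. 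Two smaller remarks: your premise that solid edges come in pairs between the same two molecules is not available for general graphs in $\AT^{(>n)}$ (the paper bounds single inter-molecular $G$ edges by the $\Psi$ variable of \eqref{Gpsi}, again an $\Omega$); and the paper first uses property (ix) of Definition \ref{defn genuni} to put each graph in one of the forms \eqref{Ho123}, reserving the tree-summation argument for the first form only. Your overall skeleton (spanning trees, maximum bounds on non-tree edges, local structures) does match the paper, but without the $\Omega$-class bookkeeping the quantitative core of the claim is not established.
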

\begin{claim}\label{lem QTab}
Under the setting of Lemma \ref{lem highp1}, we have that
\be\label{QTab}
\E T_{\fa\fb}^{p-1}(\QT^{(n)})_{\fa,\fb\fb} \prec \sum_{k=2}^{p}\left[W^{-d/4+\e_0/2} (B_{\fa\fb}+\wt\Phi^2)  \right]^k \E T_{\fa\fb}^{p-k} . 
\ee
\end{claim}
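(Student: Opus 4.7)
Proof plan. The key tool is the martingale structure of the $Q$-graphs comprising $(\QT^{(n)})_{\fa,\fb\fb}$. By Definition \ref{defn genuni} and property (3) of Definition \ref{def genuni2}, each summand is a $Q$-graph $\sum_x Q_x(\Gamma_x)$ (or $Q_{\fb}(\Gamma_\fb)$ if the labelled atom is external), where $\Gamma_x$ is a doubly connected normal regular graph and $x$ belongs to the MIS of $\Gamma_x$. I would begin with the identity $\E[f\cdot Q_x(g)]=\E[Q_x(f)\cdot Q_x(g)]$, valid because $\E_x Q_x(g)=0$, which converts the target quantity into $\E Q_x(T_{\fa\fb}^{p-1})\,Q_x(\Gamma_x)$. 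Decomposing $T_{\fa\fb}=P_xT_{\fa\fb}+Q_xT_{\fa\fb}$, expanding binomially, and noting that $(P_xT_{\fa\fb})^{p-1-j}$ is $H^{(x)}$-measurable and hence commutes with $Q_x$, I obtain
$$\E T_{\fa\fb}^{p-1}\,Q_x(\Gamma_x)=\sum_{j=1}^{p-1}\binom{p-1}{j}\,\E\,(P_xT_{\fa\fb})^{p-1-j}\,Q_x\!\pb{(Q_xT_{\fa\fb})^{j}}\,Q_x(\Gamma_x),$$
where the $j=0$ term vanishes by conditioning on $H^{(x)}$.

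Next I would estimate each summand via H\"older's inequality with $p$ equal factors of exponent $1/p$, combined with the trivial bound $|Q_x(Y)|\le 2|Y|$, to deduce
$$\absb{\text{summand}}\;\prec\;\|T_{\fa\fb}\|_p^{\,p-1-j}\cdot\|Q_xT_{\fa\fb}\|_p^{\,j}\cdot\|Q_x(\Gamma_x)\|_p,$$
where conditional Jensen absorbs the $P_x$ via $\|P_xT_{\fa\fb}\|_p\le\|T_{\fa\fb}\|_p$. Relabelling $k:=j+1\in\{2,\dots,p\}$ and exploiting the stochastic-domination equivalence $\|T_{\fa\fb}\|_p^{\,p-k}\prec \E T_{\fa\fb}^{\,p-k}$ (both quantities are of the same order $\alpha^{p-k}$ whenever $T_{\fa\fb}\prec\alpha$ for a deterministic $\alpha$, which is supplied by \eqref{initial_p}), the claim will follow provided one establishes the two pointwise fluctuation estimates
$$|Q_xT_{\fa\fb}|\;\prec\;\phi,\qquad |Q_x(\Gamma_x)|\;\prec\;\phi,\qquad \phi:=W^{-d/4+\e_0/2}\pb{B_{\fa\fb}+\wt\Phi^2},$$
since the $j$-th summand is then of order $\|T_{\fa\fb}\|_p^{p-k}\phi^{k}$, yielding precisely the stated sum over $k$.

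The principal obstacle is verifying these two fluctuation bounds. The estimate $|Q_x(\Gamma_x)|\prec\phi$ comes from Lemma \ref{no dot} applied to the doubly connected graph $\Gamma_x$: its scaling order is $\ge 2$, and the doubly connected decay $B_{\fa\fb}^{3/2}\,W^{-(\ord-3)d/2}\|G-m\|_{s;(1,1)}^{\ord-2}$, combined with \eqref{initial_p} and \eqref{Gmax}, yields $\phi$ after distributing the $B_{\fa\fb}^{3/2}$ decay appropriately (morally, one square root factor per $Q_x$). The bound $|Q_xT_{\fa\fb}|\prec\phi$ is the delicate core of the argument. Since $T_{\fa\fb}=|m|^2\sum_{\al}s_{\fa\al}|G_{\al\fb}|^2$ depends on the $x$-th row of $H$ only through the single $\al=x$ contribution and the resolvent perturbation $G_{\al\fb}-G^{(x)}_{\al\fb}=G_{\al x}G_{x\fb}/G_{xx}$, the fluctuation $Q_xT_{\fa\fb}$ is reduced, via the resolvent identity, to the term $s_{\fa x}|G_{x\fb}|^2$ plus a cross term controlled by $\sum_{\al}s_{\fa\al}|G_{\al x}|\,|G_{x\fb}|$. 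A Cauchy--Schwarz in $\al$ converts the latter into $(T_{\fa x})^{1/2}\cdot(B_{x\fb}+\wt\Phi^2)^{1/2}\prec(B_{\fa x}+\wt\Phi^2)^{1/2}(B_{x\fb}+\wt\Phi^2)^{1/2}$, which, combined with \eqref{Gmax} and the structure of $B$, produces the crucial $W^{-d/4+\e_0/2}$ improvement over the trivial bound $T_{\fa\fb}\prec B_{\fa\fb}+\wt\Phi^2$. The assumption $d\ge 8$ together with \eqref{Lcondition1} is what makes this gain compatible with \eqref{Gmax} and sufficient to close the high-moment bootstrap in Lemma \ref{lem highp1}.
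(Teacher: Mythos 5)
Your skeleton is the same as the paper's: the identity $\E[fQ_x(g)]=\E[Q_x(f)Q_x(g)]$ together with the binomial expansion of $(P_xT+Q_xT)^{p-1}$ is exactly the probabilistic content of the paper's $Q$-expansion \eqref{Holder5}, whose whole point is that at least $k-1$ of the external $T_{\fa\fb}$ factors must depend nontrivially on the row indexed by the $Q$-label atom. However, the execution cannot be completed as you describe, for two reasons.

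First, the $Q$-graphs in $(\QT^{(n)})_{\fa,\fb\fb}$ have the form $\sum_xQ_x(\Gamma_x)$ with $x$ an \emph{internal, summed} atom, and your plan never says how the sum over $x$ is performed. A uniform-in-$x$ bound on $|Q_xT_{\fa\fb}|$ and $|Q_x(\Gamma_x)|$ followed by the triangle inequality over $x$ loses a volume factor that is polynomially large in $L$; moreover, Lemma \ref{no dot} in the form you invoke treats $\fa,\fb$ as the two fixed external atoms and simply ignores the special role of $x$. The paper instead keeps the $x$-sum inside, and the final factor $(B_{\fa\fb}+\wt\Phi^2)$ is \emph{produced by} that summation via the weighted estimates \eqref{keyobs3}, \eqref{keyobs5} applied to $\sum_{x}B_{\fa x}(\cdots)$ (see \eqref{Gaux1}--\eqref{Gaux2}); it is not available edge-by-edge before summing. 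Second, the pointwise bound $|Q_xT_{\fa\fb}|\prec W^{-d/4+\e_0/2}(B_{\fa\fb}+\wt\Phi^2)$ is false under the hypotheses of Lemma \ref{lem highp1}. Your own reduction gives a cross term of size $(B_{\fa\fb}+\wt\Phi^2)^{1/2}(B_{\fa x}+\wt\Phi^2)^{1/2}(B_{x\fb}+\wt\Phi^2)^{1/2}$, which is of order $\wt\Phi^3$ when $\wt\Phi$ dominates; this exceeds $W^{-d/4}\wt\Phi^2$ whenever $\wt\Phi>W^{-d/4}$, and the lemma only assumes $\wt\Phi\le W^{-\delta}$ for an arbitrarily small constant $\delta$ (indeed the iteration in the proof of Lemma \ref{lemma ptree} starts from $\wt\Phi=W^{-d/2+\e_0}$ only because $d$ is large; the claim must hold for all admissible $\wt\Phi$). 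The actual source of the gain in the paper is different: out of the $r\ge k-1$ pairs of split external edges, $r-1$ individual off-diagonal $G$ entries are bounded by the pure $\ell^\infty$ estimate $W^{-d/2+\e_0}$ from \eqref{Gmax} — a gain that is multiplicative on top of the $(B_{\fa\fb}+\wt\Phi^2)^{k}$ and independent of $\wt\Phi$ — and only then does $r\ge k-1\ge k/2$ convert $W^{r(-d/2+\e_0)}$ into $W^{k(-d/4+\e_0/2)}$ as in \eqref{eq QTcase1}. This joint accounting across all $k-1$ fluctuating factors and the $Q$-graph cannot be repackaged as a clean per-factor bound of the form $\phi$ on each $Q_xT_{\fa\fb}$ separately.
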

We postpone the proofs of these two claims until we complete the proof of Lemma \ref{lem highp1}. Using Claim \ref{lem ATab} and the condition \eqref{Lcondition1}, we get that
\be\label{Holder3}
\begin{split}
\E T_{\fa\fb}^{p-1} \left[(\ATn)_{\fa ,\fb\fb}+(\Err_{n,D})_{\fa,\fb\fb}\right]  & \prec W^{(n-1)(-d/2 + \e_0) }  \frac{L^2}{W^2}(B_{\fa\fb} + \wt\Phi^2) \E T_{\fa\fb}^{p-1} \\
&\le   W^{-c_0+(n-1)\e_0} (B_{\fa\fb} + \wt\Phi^2)  \E T_{\fa\fb}^{p-1} .
\end{split}
\ee
%With Claim \ref{lem QTab} and H{\"o}lder's inequality, we can bound that 
%\be\label{Holder4}
%\E T_{\fa\fb}^{p-1}(\QT^{(n-1)})_{\fa\fb} \prec \sum_{k=2}^{p-1} \left[W^{-d/2+\e_0}\left(B_{\fa\fb} + \wt\Phi^2\right)\right]^k   \left(\E T_{\fa\fb}^p\right)^{\frac{p-k}{p}}. 
%\ee
Combining \eqref{Holder1}, \eqref{Holder2}, \eqref{QTab} and \eqref{Holder3}, and using H{\"o}lder's inequality $ \E T_{\fa\fb}^{p-k}\le \left(\E T_{\fa\fb}^{p}\right)^{\frac{p-k}{p}}$, we can bound \eqref{mlevelT n-1} as
\be\label{Holder3.5}\E T_{\fa\fb}^p  \prec ( B_{\fa\fb} + W^{-c}\wt\Phi^2) \left(\E T_{\fa\fb}^{p}\right)^{\frac{p-1}{p}} +\sum_{k=2}^{p}\left[W^{-c} (B_{\fa\fb}+\wt\Phi^2)  \right]^k \left(\E T_{\fa\fb}^{p}\right)^{\frac{p-k}{p}}, \ee
where $c:=\min(c_0-(n-1)\e_0, d/4-\e_0/2)$ is a positive constant as long as $\e_0$ is sufficiently small. Applying Young's inequality to every term on the right-hand side, we get that
\be\label{Holder3.6} \E T_{\fa\fb}^p  \prec W^\e ( B_{\fa\fb} + W^{-c}\wt\Phi^2)^p + W^{-\e}\E T_{\fa\fb}^{p}  \quad \Rightarrow \quad \E T_{\fa\fb}^p  \prec W^\e ( B_{\fa\fb} + W^{-c}\wt\Phi^2)^p , \ee
for any constant $\e>0$. Since $\e$ is arbitrary, we conclude \eqref{locallawptree}.
\end{proof}

For convenience of our proof, we introduce the following notation of $\Omega$ variables, which satisfy the same properties needed for the proof as off-diagonal $G$ edges. 
\begin{notation}\label{Omega1}
We will use $\Omega$ to denote matrices of non-negative random variables satisfying $\|\Omega\|_{w;(1,2)}\prec W^{\e_0}$ and %satisfies the level-$(1,3/2)$ averaged bound (recall Notation \ref{Def PseudoG}) and that
\be\label{Omega2}
\Omega_{x y} \prec B_{x y}^{1/2}+\wt\Phi .
\ee
In the setting of Lemma \ref{lem highp1}, $ |G_{xy}-m\delta_{xy}|$ and $ \Psi_{xy}$ in \eqref{eq defPsi} are both $\Omega$ variables.
\end{notation}

%Then we have a natural extension of the estimate \eqref{keyobs}: 

Now we claim the following useful fact: if $\Omega^{(1)}$ and $\Omega^{(2)}$ satisfy Notation \ref{Omega1}, then 
\be\label{keyobs4}
\begin{split}
& \Omega_{\al\beta}:=\frac{W^{d/2-\e_0}}{\Gamma(y_1,\cdots, y_k)}\sum_{x}  \Omega^{(1)}_{x  \al}\Omega ^{(2)}_{x  \beta}\cdot  \prod_{i=1}^k B_{x y_i} \ \ \text{also sastifies Notation \ref{Omega1}.}
\end{split}
\ee
First, notice that by \eqref{keyobs2}, we have $ \|\Omega\|_{w;(1,2)}\prec W^{\e_0}$. It remains to prove that $\Omega$ satisfies \eqref{Omega2}. Consider two regions $\cal I_1:=\{x : \langle x -\al\rangle \ge \langle x -\beta\rangle \}$ and  $\cal I_2:=\{x : \langle x - \al\rangle < \langle x -\beta\rangle  \}$. On $\cal I_1$, we have $\langle x -\al\rangle \ge \langle \al -\beta\rangle/2$, which gives that
$$\Omega^{(1)} _{x \al}   \prec B_{x \al}^{1/2}+\wt\Phi \lesssim B_{\al \beta}^{1/2}+\wt\Phi.$$
Together with \eqref{keyobs3}, it gives that
\begin{align*}
\sum_{x \in \cal I_1}  \Omega^{(1)}_{x \al}\Omega^{(2)} _{x \beta} \cdot  \prod_{i=1}^k B_{x y_i}   \prec (B_{\al \beta}^{1/2}+\wt\Phi)\sum_{x \in \cal I_1} \Omega^{(2)} _{x \beta} \cdot  \prod_{i=1}^k B_{x y_i}  \prec (B_{\al \beta}^{1/2}+\wt\Phi) \cdot W^{- d/2+\e_0}\Gamma(y_1,\cdots, y_k) .
\end{align*}
We have a similar bound for the sum over $\cal I_2$. This concludes \eqref{keyobs4}. In the proof, it is more convenient to use the  following form of \eqref{keyobs4}:
%Again as in \eqref{keyobs2}, it can be reformulated into a clearer form:
\be\label{keyobs5}
\sum_{x} \Omega^{(1)}_{x \al}\Omega^{(2)} _{x \beta}\cdot  \prod_{i=1}^k B_{x y_i}  = W^{- d/2+\e_0}\Gamma(y_1,\cdots, y_k)  \Omega_{\al\beta},
\ee
for some $\Omega$ satisfying Notation \ref{Omega1}.

%We first give the proof of Claim \ref{lem ATab} using the estimates \eqref{keyobs2}, \eqref{keyobs3} and \eqref{keyobs5}, and similar arguments as in the proof of Lemma \ref{no dot} and Claim \ref{twonet claim}.

\begin{proof}[Proof of Claim \ref{lem ATab}]
We only need to consider the graphs in $(\AT^{(>n)})_{\fa\fb}$ that are not $\oplus$-recollision graphs, because $\oplus$-recollision graphs have been shown to satisfy \eqref{PTab1}. Any such graph $\cal G_{\fa\fb}$ can be written into %one of the forms in \eqref{Aho 3forms}, i.e.,
\be\label{Ho123}
\cal G_{\fa\fb}= \sum_{x,y_1, y_2}{\Theta}_{ \fa x} (\cal G_0)_{x,y_1y_2} G_{y_1\fb}\overline G_{y_2\fb},\ \ \cal G_{{\fa\fb}}= \sum_{x,y_1}{\Theta}_{\fa x} (\cal G_0)_{x y} |G_{y\fb}|^2 ,\ \  \cal G_{\fa\fb}= \sum_{x,y}{\Theta}_{\fa x} (\cal G_0)_{xy}\Theta_{y\fb},\ee
or some forms obtained by setting $x$ to be equal to $y$, $y_1$ or $y_2$ and by replacing the $\Theta$ edges with labelled $\dashed$ edges. Here the graphs $\cal G_0$ are doubly connected graphs without external atoms. Using \eqref{bound 2net1}, we can bound the second and third terms of \eqref{Ho123} as 
\begin{align} 
|\cal G_{\fa\fb}| &\prec  W^{(n_{xy}-3)(-d/2+\e_0)+\e_0}\sum_{x,y} B_{\fa x} B_{xy} \cal A_{xy}\left(B_{y\fb}+\wt\Phi^2\right) \lesssim W^{(n_{xy}-2)(-d/2+\e_0)}\sum_{y} B_{\fa y}\left(B_{y\fb}+\wt\Phi^2\right) \nonumber\\
&\lesssim W^{(n_{xy}-2)(-d/2+\e_0)}\left( \frac{1}{W^4\langle \fa-\fb\rangle^{d-4}} + \frac{L^2}{W^2}\wt\Phi^2\right)\lesssim W^{(n -1)(-d/2+\e_0)}\frac{L^2}{W^2}\left( B_{\fa\fb} + \wt\Phi^2\right), \label{calc1}
\end{align}
where $n_{xy}:=\ord( (\cal G_0)_{xy} )$. Here in the second and third steps we used \eqref{keyobs3} and the simple bounds
$$ %\sum_{x} B_{\fa x} B_{xy}^{3/2}\lesssim W^{-d/2}B_{\fa y},\quad 
\sum_{y} B_{\fa y} B_{y\fb}\lesssim \frac{1}{W^4\langle \fa-\fb\rangle^{d-4}}, \quad \sum_{y} B_{\fa y}\lesssim \frac{L^2}{W^2},$$
and in the fourth step we used $\ord( (\cal G_0)_{xy} )= \ord( \cal G_{\fa\fb} )\ge n+1$ and $\langle \fa-\fb\rangle\lesssim L$.
It remains to bound the first expression in \eqref{Ho123}. (Some variants of \eqref{Ho123} obtained by setting $x$ to be equal to $y$, $y_1$ or $y_2$ and by replacing the $\Theta$ edges with labelled $\dashed$ edges can be estimated in similar ways, so we omit the details.)

By \eqref{subpoly}, \eqref{thetaxy}, \eqref{S+xy} and \eqref{BRB}, we have the following maximum bounds on deterministic edges:
\be\label{Ssmall}\begin{split}
\max_{x,y}s_{xy} = & \OO   (W^{-d}), \quad  \max_{x,y}|S^{\pm}_{xy}|  = \OO(W^{-d}),\quad \max_{x,y}\Theta_{xy} \prec W^{-d}, \\
& \max_{x,y}\left|(\Theta \Sele_{2k_1}\Theta  \Sele_{2k_2}\Theta \cdots \Theta  \Sele_{2k_l}\Theta)_{xy}\right|  \prec W^{-kd /2 } ,
\end{split}
\ee
where $k:=\sum_{i=1}^l 2 k_i -2(l-1)$. For simplicity of presentation, we will use $\al\sim_{\cal M} \beta$ to mean that ``atoms $\al$ and $\beta$ belong to the same molecule". Suppose there are $\ell$ internal molecules $\cal M_i$, $1\le i \le \ell$, in $\cal G_0$. We choose one atom in each $\cal M_i$, say $x_i$, as a representative. %and we let atoms $x$, $y_1$ and $y_2$ be the representatives of their respective molecules in $\cal G^{\abs}$. 
For definiteness, we assume that $x$, $y_1$ and $y_2$ belong to different molecules. Otherwise, the proof will be easier and we omit the details. Throughout the following proof, we fix a small constant $\tau>0$ and a large constant $D>0$. For any $\al \sim_{\cal M}x_i$, it suffices to assume that  
\be\label{yixi}
|\al-x_i|\le W^{1+\tau/2},
\ee
because otherwise the graph is smaller than $W^{-D}$. % by \eqref{subpoly} and \eqref{S+xy}. 
Then under the assumption \eqref{yixi}, for $\al_i\sim_{\cal M} x_i$ and $\al_j\sim_{\cal M} x_j$, by \eqref{thetaxy}, \eqref{Gpsi} and \eqref{BRB} we have that
\be\label{intermole1}|G_{\al_i \al_j}|\prec W^{d\tau} \Psi_{x_i x_j}(\tau, D),\quad \Theta_{\al_i \al_j}\prec B_{y_i y_j} \lesssim W^{(d-2)\tau/2} B_{x_i x_j}.\ee
%for any fixed $D>0$. 
%Similarly, by \eqref{BRB} we can bound the labelled $\dashed$ edges as
\be\label{intermole2} \left|(\Theta \Sele_{2k_1}\Theta  \Sele_{2k_2}\Theta \cdots \Theta  \Sele_{2k_l}\Theta)_{\al_i \al_j}\right| \prec W^{-(k-2)d /2 + (d-2)\tau/2} B_{x_i x_j}. \ee 
%for any constant $\tau>0$. 
%,\quad |(\Theta \Sele_{2l}\Theta)_{y_i y_j}| \le W^{-(l-1)d+\tau} B_{x_i x_j}.\ee
%We shall use \eqref{intermole1} and \eqref{intermole2} to bound the edges between different molecules, 
The above estimates show that we can bound edges between different molecules with $\Psi$ or $B$ variables that only contain the representative atoms $x_i$ in their indices.

We first bound the edges between different molecules. Due to the doubly connected property of $\cal G_0$, we can pick two spanning trees of the black and blue nets, which we refer to as the \emph{black tree} and \emph{blue tree}, respectively. We bound the \emph{internal edges} that are not in the two trees by their maximum norms: 
\begin{itemize}
	\item[(i)] we bound each solid edge that is not in the blue tree by $\OO_\prec(W^{-d/2+\e_0})$; % using \eqref{eq PseudoG2} with $a=1$; 
	
	\item[(ii)] we bound each $\dashed$ edge that is not in the black and blue trees by $\OO_\prec(W^{-d})$; 
	
	\item[(iii)] we bound each labelled $\dashed$ edge that is not in the black and blue trees by $\OO_\prec(W^{-kd/2})$, where $k$ is the scaling order of this edge. 
\end{itemize}
We bound the external edges and edges of the two trees as follows:
\begin{itemize}
	\item[(iv)] the blue solid and $\dashed$ edges in the two trees and the external edges ${\Theta}_{ \fa x}$, $G_{y_1\fb}$ and $\overline G_{y_2\fb}$ are bounded using \eqref{intermole1} and \eqref{intermole2}.
\end{itemize}
In this way, we can bound that 
\be\label{reduce Gaux0}
|\cal G_{\fa\fb}| \prec W^{-n_1 d/2 + n_2 \tau + n_3\e_0}\sum_{x_1, \cdots, x_\ell} \Gamma_{global}(x_1, \cdots, x_\ell) \prod_{i=1}^\ell |\cal G_{x_i}^{(i)}| \ ,
\ee
where $W^{-n_1 d/2+ n_2\tau + n_3\e_0}$ is a factor coming from the above items (i)--(iv); $\Gamma_{global}$ is a product of blue solid edges that represent $\Psi$ entries, and (black or blue) double-line edges that represent $B$ entries; every \smash{$\cal G_{x_i}^{(i)}$} is the subgraph inside the molecule $\cal M_i$, which has $x_i$ as an external atom. The local structure \smash{$\cal G_{x_i}^{(i)}$} can be  bounded as follows: 
\begin{itemize}
	\item we bound each waved or $\dashed$ edge by $\OO_\prec(W^{-d })$ using the maximum bounds in \eqref{Ssmall}; 
	\item we bound each labelled $\dashed$ edge by $\OO_\prec (W^{-kd/2 })$, where $k$ is its scaling order; 
	\item we bound each off-diagonal $G$ edge and light $G$ weight by $\OO_\prec(W^{-d/2+\e_0})$; 
	\item each summation over an internal atom in $\cal M_i \setminus \{x_i\}$ gives a $W^{(1+\tau/2)d}$ factor due to \eqref{yixi}. 
\end{itemize}
Thus with the definition of scaling order in \eqref{eq_deforderrandom3}, we get that
\begin{align}\label{internal struc1}
	|\cal G_{x_i}^{(i)}|\prec W^{-\ord(\cal G^{(i)}_{x_i}) \cdot d/2 + l_i \e_0 + k_{i} d \tau /2 }, 
\end{align} 
where $l_i$ is the number of off-diagonal $G$ edges and light weights in $\cal G_{x_i}^{(i)}$, and $k_i$ is the number of internal atoms. Finally, for convenience of proof, we bound every $\dashed$ edge in the \emph{blue} (but not black) tree of $\Gamma_{global}(x_1, \cdots, x_\ell)$ as
\be\label{bound thetaweak} B_{x_i x_j}\le W^{-d/2}B_{x_i x_j} ^{1/2}.\ee
(This is merely because we can express both the $\Psi$ and $B^{1/2}$ entries using the Notation \ref{Omega1}.) Plugging \eqref{internal struc1} and \eqref{bound thetaweak} into \eqref{reduce Gaux0}, we obtain that
\be\label{bound G aux}
\cal G_{\fa\fb}\prec W^{ (n_{\fa\fb}-\ell - 1) ( -d/2 +  \e_0) + n_4\tau} (\cal G_{\fa\fb})_{aux},  
\ee
where $n_{\fa\fb}:= \ord( \cal G_{\fa\fb} ),$ $n_4:=n_2+\sum_{i=1}^\ell k_i d/2$, and $n_{\fa\fb}-\ell -1$ in the exponent can be obtained by counting the number of $W^{-d/2}$ factors from the above argument. Here we also used the following fact: by property (ii) of Definition \ref{defnlvl0}, the number of internal atoms in $\cal G_{\fa\fb}$ is at most the number of waved and $\dashed$ edges, and hence the number of $W^{\e_0}$ factors coming from off-diagonal $G$ edges and light weights is less than or equal to the number of $W^{-d/2}$ factors. $(\cal G_{\fa\fb})_{aux}$ is an auxiliary graph satisfying the following properties: %defined as follows:}
%First, we can bound the local structures as in the proof of Lemma \ref{no dot}, and reduce the problem to bounding an auxiliary graph  $(\cal G_{\fa\fb})_{aux}$ satisfying the following properties:
\begin{enumerate}
	\item $(\cal G_{\fa\fb})_{aux}$ has two external atoms $\fa$ and $\fb$, and $\ell$ internal atoms $x_i$, $1\le i \le \ell$, which are the representative atoms of the internal molecules of $\cal G_{\fa\fb}$; 
	
	\item every double-line edge between atoms $x$ and $y$ represents a $B_{xy}$ factor, and every blue solid edge between atoms $x$ and $y$ represents a %$\Omega_{x y}$ 
	variable satisfying the Notation \ref{Omega1};
	
	\item there is a spanning tree of black double-line edges, and $\otimes$ is connected to an internal atom through a double-line edge (which corresponds to the edge $\Theta_{\fa x}$ in $\cal G_{\fa\fb}$); %(so that we also include $\otimes$ into the black tree);  
	
	\item there is a spanning tree of blue solid edges, and $\oplus$ is connected to internal atoms through two blue solid edges (which correspond to the edges $G_{y_1\fb}$ and $\overline G_{y_2\fb}$ in $\cal G_{\fa\fb}$).
\end{enumerate}
%In (iv), the two blue solid edges correspond to the two edges $G_{y_1\fb}$ and $\overline G_{y_2\fb}$ in \eqref{Ho123}. 

Now we choose an atom connected to $\oplus$ as the root of the blue spanning tree. Then we sum over the internal atoms in $(\cal G_{\fa\fb})_{aux}$ from the leaves to this root. Without loss of generality, assume that we sum over the internal atoms according to the order $\sum_{x_\ell}\cdots \sum_{x_2}\sum_{x_1}$, such that atom $x_\ell$ is the root. For simplicity of notations, we denote all blue solid edges appearing in the proof by $\Omega$, including the $\Psi$ and $B^{1/2}$ edges in $(\cal G_{\fa\fb})_{aux}$ and new edges coming from applications of \eqref{keyobs4}. All these $\Omega$ variables satisfy the Notation \ref{Omega1}, and their exact forms may change from one line to another. 

%For each sum over the internal atom, we perform a similar graph reduction as in the proof of Claim \ref{twonet claim}. For example, 

We sum over the internal atoms one by one. For the sum over $x_1$, using \eqref{keyobs5} (if $x_1$ is connected with $\oplus$) or \eqref{keyobs3} (if  $x_1$ is not connected with $\oplus$), we can bound $ (\cal G_{\fa\fb})_{aux}$ as
\be\label{reduceaux add}
(\cal G_{\fa\fb})_{aux} \prec W^{-d/2+\e_0}\sum_{k=1}^{n_1} (\cal G^{(1)}_{\fa\fb,k})_{aux},\ee
where $(\cal G^{(1)}_{\fa\fb, k})_{aux}$ are auxiliary graphs obtained by replacing the edges connected to $x_1$ with the graphs on the right-hand side of \eqref{keyobs5} or \eqref{keyobs3}, and $n_1$ is the number of atoms connected to $x_1$ through double-line edges. More precisely, after summing over $x_1$, we get rid of the blue solid edge connected with $x_1$. If $\oplus$ and $x_1$ are connected through a blue solid edge in $(\cal G_{\fa\fb})_{aux} $, then in each new graph $\oplus$ is connected to the parent of $x_1$ on the blue tree through a blue solid edge. Moreover, suppose $w_{1},\cdots,w_{n_1}$ are the atoms connected to $x_1$ through double-line edges. Then corresponding to the $n_1$ terms in $\Gamma(w_{1},\cdots, w_{n_1})$, the atoms $w_{1},\cdots,w_{n_1}$ are now connected to one of them, say $w_{k}$, through double-line edges in new graphs, which leads to the $n_1$ different choices in \eqref{reduceaux add}. 
Now we observe that every new graph \smash{$(\cal G^{(1)}_{\fa\fb,k})_{aux}$} is doubly connected in the following sense: there is a black spanning tree connecting all the internal atoms and $\otimes$, and a disjoint blue spanning tree connecting all the internal atoms and $\oplus$. Furthermore, there are at least two blue solid edges connected with $\oplus$ in \smash{$(\cal G^{(1)}_{\fa\fb,k})_{aux}$}. 

Then we sum over the atoms $x_2, \cdots , x_{\ell-1}$ one by one using \eqref{keyobs5} or \eqref{keyobs3}. At each step, we gain a $W^{-d/2+\e_0}$ factor and reduce a graph into a sum of several new graphs, each of which has one fewer atom, a black spanning tree connecting all the internal atoms and $\otimes$, a blue spanning tree connecting all the internal atoms and $\oplus$, and two blue solid edges connected with $\oplus$. In the following figure, we give an example of the graph reduction process by summing over the internal atoms $x_1,x_2, x_3$. 
\begin{center}
	\includegraphics[width=10cm]{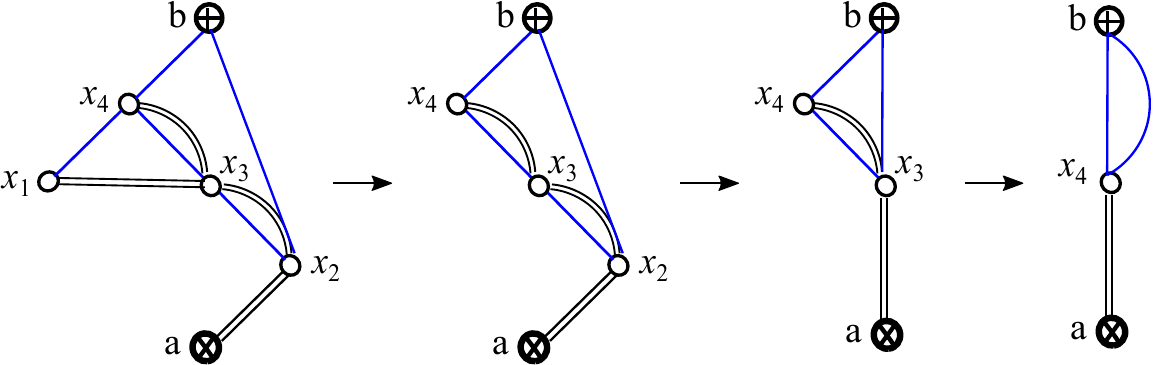}
\end{center}
Finally, we get a graph with only one internal atom $x_l$, one double-line edge between $\otimes$ and $x_l$, and two blue solid edges between $\oplus$ and $x_l$. Then using \eqref{Omega2}, we can bound \eqref{bound G aux} as 
%Hence by counting the $W^{-d/2+\e_0}$ factors carefully, we can bound the first term in \eqref{Ho123} as
\begin{align*}
\left|\cal G_{\fa\fb}\right|\prec W^{(n_{\fa\fb} -2)(-d/2+\e_0) + n_4\tau}\sum_{x_{\ell}}B_{\fa x_{\ell}}\left(B_{ x_{\ell}\fb} +\wt\Phi^2\right) \lesssim W^{(n-1)(-d/2+\e_0)+ n_4\tau}\frac{L^2}{W^2}\left( B_{\fa\fb} + \wt\Phi^2\right),
\end{align*}
where we used $n_{\fa\fb} \ge n +1$ and a similar argument as in \eqref{calc1}. Since $\tau$ is arbitrary, we can bound the first case in \eqref{Ho123} by the right-hand side of \eqref{ATab}. This concludes the proof.
\end{proof}

\begin{proof} [Proof of Claim \ref{lem QTab}]
We only consider the graphs in $(\QT^{(n)})_{\fa,\fb\fb}$ that are not $\oplus$-recollision graphs, because $\oplus$-recollision graphs are much easier to deal with and we omit the details. % and similar arguments as for the graphs in \smash{$(\PTn)_{\fa,\fb\fb}$}. %$\oplus$-recollision graphs have been shown to satisfy \eqref{PTab}. 
Let $\cal G_{\fa\fb}$ be such a $Q$-graph. We need to bound the graph \smash{$\E T_{\fa\fb}^{p-1}\cal G_{\fa\fb}$}. In the following proof, we call the $2(p-1)$ solid edges in $T_{\fa\fb}^{p-1}$ \emph{external solid edges}, and we regard the atoms $x$ in $T_{\fa\fb} =|m|^2\sum_x s_{\fa x}|G_{x\fb}|^2$ as an atom in the external $\otimes$ molecule. 

We perform the $Q$-expansions in Section \ref{sec defnQ} to $T_{\fa\fb}^{p-1}\cal G_{\fa\fb}$ and get that
\be\label{Holder5}
\E T_{\fa\fb}^{p-1}\cal G_{\fa\fb}=\E \sum_{k=2}^{p} T_{\fa\fb}^{p-k} \sum_{\omega} (\cal G^{(k)}_\omega)_{\fa\fb}  + \E \sum_{k=2}^{p} T_{\fa\fb}^{p-k} \sum_{\zeta} (\cal G^{(k)}_{err,\zeta})_{\fa\fb}.\ee
%where $A_k$ and $\wt A_k$ are index sets for the graphs.
%where $ \cal Q_{\fa\fb}$ is a sum of $\OO(1)$ many $Q$-graphs, each $A_k=\OO(1)$ is an index set for the graphs, and 
Every $(\cal G^{(k)}_\omega)_{\fa\fb}$ is a normal regular graph satisfying the following properties.
\begin{enumerate}
\item[(1)] $(\cal G^{(k)}_\omega)_{\fa\fb}$ contains no $P$/$Q$ labels.
\item[(2)] It %maximal subgraph induced on the internal molecules 
is doubly connected.
\item[(3)] $\otimes$ is connected to internal molecules through a (labelled) $\dashed$ edge.
\item[(4)] Let the atom in the $Q$-label of $\cal G_{\fa\fb}$ be $\al$. % belongs to the MIS by Definition \ref{def genuni2} 
There are $r$, $k-1\le r \le 2(k-1)$, external edges (coming from $(k-1)$ many $T_{\fa\fb}$ variables) that become paths of two external solid edges in the molecular graph from the $\otimes$ molecule to the molecule containing $\al$ and then to the $\oplus$ molecule.

\item[(5)] Besides the external solid edges in (4), the internal molecules are also connected to $\oplus$ through a $\dashed$ edge, or two blue and red solid edges (which correspond to the two solid edges connected with $\oplus$ in $\cal G_{\fa\fb}$). In the latter case, we call the two solid edges \emph{special solid edges}. 
\end{enumerate}
Every graph $(\cal G^{(k)}_{err,\zeta})_{\fa\fb}$ is of scaling order $> 2k+D$ and satisfies the above properties (2)--(5) (but it may contain $P$/$Q$ labels). For convenience of presentation, whenever we refer to external solid edges in the following proof, they do not include the special solid edges in (5). For the rest of the proof, we will focus on \smash{$(\cal G^{(k)}_\omega)_{\fa\fb} $}, while the graphs \smash{$(\cal G^{(k)}_{err,\zeta})_{\fa\fb}$} can be bounded in the same way. Moreover, we only consider the hardest case where (a) there are no dotted edges connected with $\oplus$ or $\otimes$ in \smash{$(\cal G^{(k)}_\omega)_{\fa\fb}$}; (b) in the above item (5), $\oplus$ is connected to the internal molecules through a blue solid edge and a red solid edge. Other cases where (a) or (b) does not hold %$\oplus$ or $\otimes$ is connected to internal molecules through dotted edges, or the cases where two special solid edges are replaced by a single diffusive 
are all easier to bound. 

% that has .  Without loss of generality, we assume that in (5), , because the case where one internal molecule is connected to $\oplus$ molecule through a $\dashed$ edge can be regarded as a special case. 

We will follow similar arguments as in the above proof of Claim \ref{lem ATab}. More precisely, given any graph $(\cal G^{(k)}_\omega)_{\fa\fb}$, we first pick two spanning trees of the black and blue nets. Then we bound the internal edges that are not in the two trees by their maximum norms, and bound the local structures inside all internal molecules by \eqref{internal struc1}. Furthermore, we bound the external edges, special solid edges and the internal edges that are in the two trees using \eqref{intermole1}, \eqref{intermole2} and \eqref{bound thetaweak}. Again we can reduce the problem to bounding an auxiliary graph  $(\cal G_{\fa\fb})_{aux}$ satisfying the following properties:
 \begin{enumerate}
\item $(\cal G^{(k)}_\omega)_{\fa\fb}$ has two external atoms $\fa$ and $\fb$, and $\ell$ internal atoms $x_i$, $1\le i \le \ell$, which are the representative atoms of the internal molecules; 

\item each double-line edge between atoms $x$ and $y$ represents a $B_{x y}$ factor, and each blue solid edge between atoms $x$ and $y$ represents a variable satisfying the Notation \ref{Omega1};

\item each external or special solid edge between two molecules in $(\cal G^{(k)}_\omega)_{\fa\fb}$ is replaced by an external blue solid edge between the representative atoms of these molecules in $(\cal G_{\fa\fb})_{aux}$; % and we still call this edge an external blue solid edge in $(\cal G_{\fa\fb})_{aux}$; 

\item the maximal subgraph of $(\cal G_{\fa\fb})_{aux}$ induced on the internal atoms has a spanning tree of black double-line edges and a spanning tree of blue solid edges;

\item in $(\cal G_{\fa\fb})_{aux}$, $\otimes$ is connected to the internal atoms through a black double-line edge, and $\oplus$ is connected to the internal atoms through two special blue solid edges;

\item in $(\cal G_{\fa\fb})_{aux}$, there are $2(k-1)-r$ blue solid edges between $\fa$ and $\fb$, $r$ blue solid edges between $\fa$ and $x_{\ell}$, and $r$ blue solid edges between $\fb$ and $x_{\ell}$, where $x_{\ell}$ is the representative atom of the molecule containing $\al$.

%\item corresponding to (4), besides the external solid edges, the internal molecules are connected to $\oplus$ molecule through two blue solid edges, which we shall call the \emph{special edges}. 
\end{enumerate}
Since $(\cal G^{(k)}_\omega)_{\fa\fb}  \prec ({\cal G}_{\fa\fb})_{aux}$, we only need to estimate $({\cal G}_{\fa\fb})_{aux}$. We choose the atom $x_{\ell}$, the representative atom of the molecule containing $\al$, as the root of the blue spanning tree. Then we sum over the internal atoms in $(\cal G_{\fa\fb})_{aux}$ from leaves to this root. Without loss of generality, assume that we sum over the internal atoms according to the order $\sum_{x_\ell}\cdots \sum_{x_2}\sum_{x_1}$. For simplicity of notations, we denote all blue solid edges appearing in the proof by $\Omega$, including the edges in the original graph $(\cal G_{\fa\fb})_{aux}$ and new edges coming from applications of \eqref{keyobs4}. All these $\Omega$ variables satisfy the Notation \ref{Omega1}, and their exact forms may change from one line to another. 
%For each sum over the internal atom, we perform a similar graph reduction as in the proof of Claim \ref{lem ATab}. 
Suppose the two special blue solid edges in (v) connect $\oplus$ to atoms $x_i$ and $x_j$, respectively. We have the following two cases.

\medskip
\noindent{\bf Case 1:} Suppose the unique path on the blue spanning tree from $x_i$ to $x_j$ passes through $x_\ell$. Then, similar to the proof of Claim \ref{lem ATab}, we sum over the internal atoms one by one and reduce the graphs using \eqref{keyobs3} and \eqref{keyobs5}. Finally, we get the following graph with $2(k-1)-r$ blue solid edges between $\otimes$ and $\oplus$, and only one internal atom $x_{\ell}$, which is connected with $\otimes$ through a black double-line edge and $r$ external solid edges, and with $\oplus$ through two special solid edges and $r$ external solid edges. (To have a clearer picture, we draw two copies of the $\otimes$ and $\oplus$ atoms in the graph.)
\begin{center}
\includegraphics[width=2.7cm]{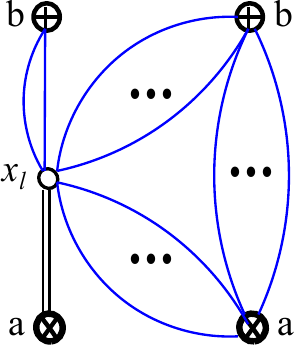}
\end{center}
Denote this graph by $(\wt{\cal G}_{\fa\fb})_{aux}$. To estimate $(\wt{\cal G}_{\fa\fb})_{aux}$, we consider two regions $\cal I_1:=\{x_\ell : \langle x_\ell  -\fa\rangle \ge \langle x_\ell -\fb\rangle \}$ and  $\cal I_2:=\{x : \langle x_\ell - \fa\rangle<\langle x_\ell -\fb\rangle  \}$. Then we can bound that 
\begin{align}
(\wt{\cal G}_{\fa\fb})_{aux}  &\le \Big(\sum_{x_\ell \in \cal I_1}+\sum_{x_\ell \in \cal I_2}\Big) \left( B_{\fa x_\ell} |\Omega_{x_\ell \fb}|^2\right)\left( |\Omega_{\fa x_\ell}|^r|\Omega_{ x_\ell \fb}|^r\right) | \Omega_{\fa\fb}|^{2k-2-r} \nonumber\\ %+\sum_{x_\ell \in \cal I_2} \left( B_{\fa x_\ell} |\Omega_{x_\ell \fb}|^2\right)\left( |\Omega_{\fa x_\ell}|^r|\Omega_{ x_\ell \fb}|^r\right) | \Omega_{\fa\fb}|^{2k-2-r} \nonumber\\
&\prec \Big[\sum_{x_\ell \in \cal I_1} B_{\fa x_\ell}\left( B_{x_\ell \fb}+\wt\Phi^2\right)\left|\Omega_{ x_\ell \fb}\right|^r + \sum_{x_\ell \in \cal I_2}B_{\fa x_\ell}\left( B_{x_\ell \fb}+\wt\Phi^2\right)\left| \Omega_{\fa x_\ell}\right|^r \Big] \left(B^{1/2}_{\fa\fb}+\wt\Phi\right)^{2k-2},\label{Gaux1}
\end{align}
where in the second step we used \eqref{Omega2}, $\langle x_\ell -\fa\rangle \ge \langle \fa -\fb\rangle/2$ on $\cal I_1$ and $\langle x_\ell -\fb\rangle \ge \langle \fa -\fb\rangle/2$ on $\cal I_2$. We bound the sum over $\cal I_1$ as 
\be\label{Gaux2}
\begin{split}\sum_{x_\ell \in \cal I_1} B_{\fa x_\ell}\left( B_{x_\ell \fb}+\wt\Phi^2\right)\left|\Omega_{ x_\ell \fb}\right|^r &\prec W^{(r-1)(-d/2+\e_0)}\sum_{x_\ell \in \cal I_1} B_{\fa x_\ell}\left( B_{x_\ell \fb}+\wt\Phi^2\right) \Omega_{ x_\ell \fb}\\
& \prec W^{r(-d/2+\e_0)} \left( B_{\fa \fb}+\wt\Phi^2\right),\end{split}\ee
where we used \eqref{eq PseudoG2} in the first step, and \eqref{keyobs3} in the second step with $(a,b)=(1,2)$ and $\|\Omega\|_{w;(1,2)}\prec W^{\e_0}$. We have a similar bound for the sum over $\cal I_2$. Hence we get that
\begin{align}\label{eq QTcase1}
 (\wt{\cal G}_{\fa\fb})_{aux}  &\prec W^{r(-d/2+\e_0)} \left( B_{\fa \fb}+\wt\Phi^2\right)\left(B^{1/2}_{\fa\fb}+\wt\Phi\right)^{2k-2}\lesssim W^{r(-d/2+\e_0)} \left( B_{\fa \fb}+\wt\Phi^2\right)^{k}.
\end{align}

%\medskip
\noindent{\bf Case 2:} Suppose the unique path on the blue spanning tree from $x_i$ to $x_j$ does not pass through $x_\ell$. Then, similar to the proof of Claim \ref{lem ATab}, we sum over the internal atoms one by one and reduce the graphs using \eqref{keyobs3} and \eqref{keyobs5}. At certain step, we will get a graph where the two special blue solid edges are connected with the same internal atom, say $x_i$. Denote this graph by \smash{$(\wt{\cal G}_{\fa\fb})_{aux}$}. Now applying \eqref{Omega2} to the two special edges, we can bound $(\wt{\cal G}_{\fa\fb})_{aux}$ by 
\be\label{eq QTcase2} (\wt{\cal G}_{\fa\fb})_{aux} \prec (\wt{\cal G}_{\fa\fb}^{(1)})_{aux}+\wt\Phi^2 (\wt{\cal G}^{(2)}_{\fa\fb})_{aux},\ee
where $(\wt{\cal G}_{\fa\fb}^{(1)})_{aux}$ is obtained by replacing the two special edges with a double-line edge between $x_i$ and $\oplus$, and {$(\wt{\cal G}_{\fa\fb}^{(2)})_{aux}$} is obtained by removing the two special edges. In the following figure, we draw an example of graphs {$(\wt{\cal G}_{\fa\fb})_{aux}$, $(\wt{\cal G}_{\fa\fb}^{(1)})_{aux}$ and $(\wt{\cal G}_{\fa\fb}^{(2)})_{aux}$}.
\begin{equation} \nonumber % \label{example Red_QT2}
\parbox[c]{0.9\linewidth}{\center
\includegraphics[width=14cm]{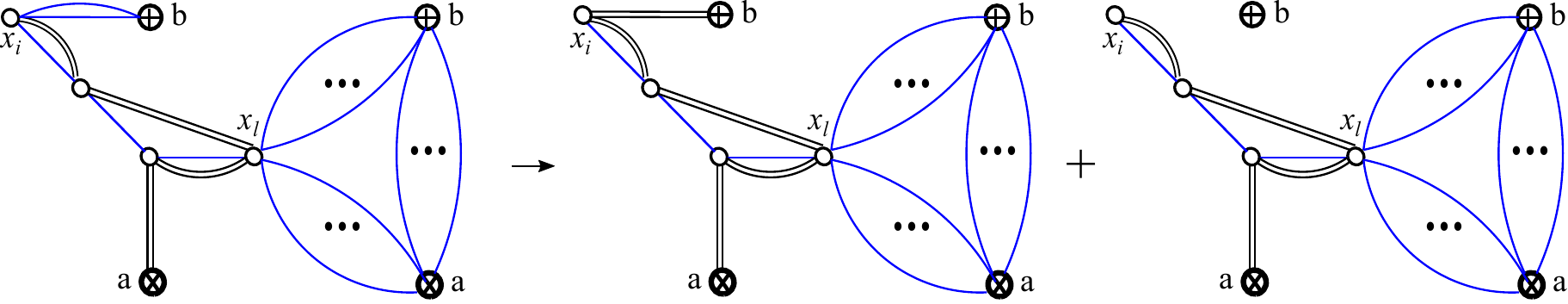}}
\end{equation}
There is a $\wt\Phi^2$ factor associated with the third graph $(\wt{\cal G}_{\fa\fb}^{(2)})_{aux}$, but we did not draw it. 

Similar to the proof of Claim \ref{lem ATab}, we sum over the internal atoms of \smash{$(\wt{\cal G}_{\fa\fb}^{(2)})_{aux}$} from leaves to the root $x_\ell$ on the blue tree and reduce the graphs using \eqref{keyobs3} at each step. %After each summation, we get a sum of several new graphs, each of which has a black spanning tree and a blue spanning tree that connect all the internal atoms, and and the $\otimes$ atoms. 
We will finally get the following graph with only one internal atom $x_{\ell}$:
\be\nonumber
%(\wt{\cal G}^{(2)}_{\fa\fb})_{aux} \prec 
\parbox[c]{0.3\linewidth}{\center
\includegraphics[width=2.7cm]{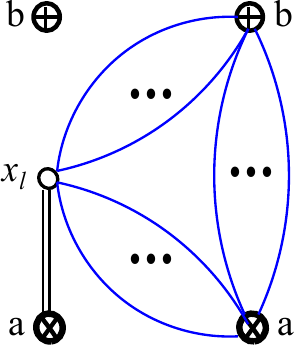}}
\ee
Denote this graph by $(\wh{\cal G}_{\fa\fb}^{(2)})_{aux}$. Using similar arguments as in \eqref{Gaux1} and \eqref{Gaux2}, we can bound that 
\begin{align}
(\wt{\cal G}^{(2)}_{\fa\fb})_{aux} \prec (\wh{\cal G}^{(2)}_{\fa\fb})_{aux} & \prec \Big(\sum_{x_\ell \in \cal I_1} B_{\fa x_\ell} \left|\Omega_{ x_\ell \fb}\right|^r + \sum_{x_\ell \in \cal I_2}B_{\fa x_\ell} \left| \Omega_{\fa x_\ell}\right|^r \Big) \left(B^{1/2}_{\fa\fb}+\wt\Phi\right)^{2k-2} \nonumber\\
&\prec \Big(\sum_{x_\ell \in \cal I_1} B_{\fa x_\ell}  \Omega_{ x_\ell \fb}  + \sum_{x_\ell \in \cal I_2}B_{\fa x_\ell} \Omega_{\fa x_\ell} \Big)W^{(r-1)(-d/2+\e_0)} \left(B^{1/2}_{\fa\fb}+\wt\Phi\right)^{2k-2} \nonumber\\ 
&\prec W^{r(-d/2+\e_0)}\left(B_{\fa\fb}+\wt\Phi^2\right)^{k-1} . \label{eq QTcase2.1}
\end{align}

To estimate the graph $(\wt{\cal G}_{\fa\fb}^{(1)})_{aux}$, we also sum over its internal atoms from leaves to the root $x_\ell$ on the blue tree and reduce the graphs using \eqref{keyobs3} at each step. After each summation, we will get a sum of several new graphs, each of which has a black spanning tree that connects all the internal atoms and the atoms $\otimes$ and $\oplus$. In the end, we get a sum of graphs with only one internal atom $x_{\ell}$. Each of these graphs must take one of the following three forms: 
\begin{center}
\includegraphics[width=11cm]{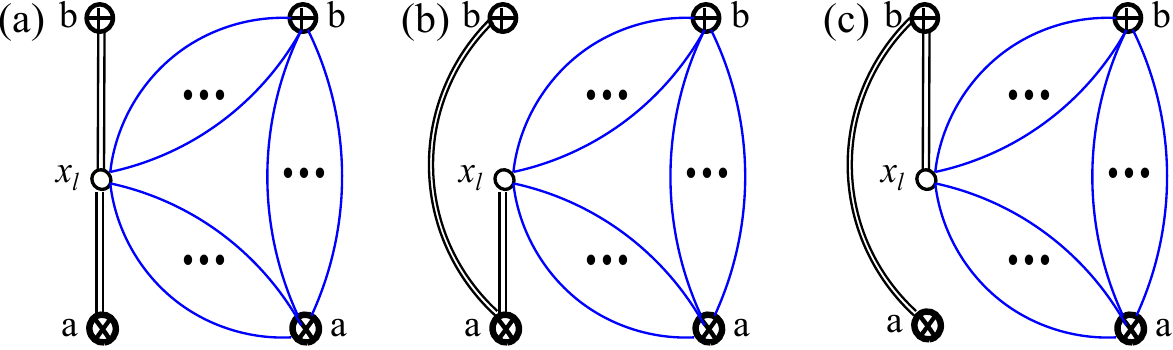}
\end{center}
Graph (a), denoted by $(\wh{\cal G}_{\fa\fb}^{(a)})_{aux}$, can be bounded using similar arguments as in \eqref{Gaux1} and \eqref{Gaux2}: 	
\begin{align*}%\label{eq QTcase2.2}
(\wh{\cal G}_{\fa\fb}^{(a)})_{aux}  \prec   W^{r(-d/2+\e_0)} B_{\fa \fb} \left( B_{\fa \fb}+\wt\Phi^2\right)^{k-1}.
\end{align*}
Graph (b), denoted by $(\wh{\cal G}_{\fa\fb}^{(b)})_{aux}$, can be bounded by 
\begin{align*} %\label{eq QTcase2.3}
(\wh{\cal G}_{\fa\fb}^{(b)})_{aux}\prec B_{\fa\fb} (\wh{\cal G}_{\fa\fb}^{(2)})_{aux} \prec W^{r(-d/2+\e_0)}B_{\fa\fb} \left(B_{\fa\fb}+\wt\Phi^2\right)^{k-1}.
\end{align*}
It is trivial to see that this bound also holds for graph (c) denoted by $(\wh{\cal G}_{\fa\fb}^{(c)})_{aux}$. Combining these estimates, we obtain that 
\begin{align}
(\wt{\cal G}^{(1)}_{\fa\fb})_{aux} \prec (\wh{\cal G}^{(a)}_{\fa\fb})_{aux} + (\wh{\cal G}^{(b)}_{\fa\fb})_{aux}  + (\wh{\cal G}^{(c)}_{\fa\fb})_{aux}  \prec W^{r(-d/2+\e_0)}B_{\fa\fb}\left(B_{\fa\fb}+\wt\Phi^2\right)^{k-1} . \label{eq QTcase2.2}
\end{align}
Plugging \eqref{eq QTcase2.1} and \eqref{eq QTcase2.2} into \eqref{eq QTcase2}, we get the estimate \eqref{eq QTcase1} in case 2.

\medskip

Finally, combining the above two cases, we conclude from \eqref{eq QTcase1} that 
$$(\cal G^{(k)}_\omega)_{\fa\fb} \prec ({\cal G}_{\fa\fb})_{aux}  \prec (\wt{\cal G}_{\fa\fb})_{aux} \prec W^{r(-d/2+\e_0)} \left( B_{\fa \fb}+\wt\Phi^2\right)^{k}  \prec \left[W^{-d/4+\e_0/2}   \left( B_{\fa \fb}+\wt\Phi^2\right)\right]^{k},$$
where we used $k\ge 2$ and $r\ge k-1\ge k/2$ in the last step. Inserting this estimate into \eqref{Holder5}, we can conclude Claim \ref{lem QTab}.
\end{proof}

\subsection{Proof of Lemma \ref{eta1case0}}
The proof of Lemma \ref{eta1case0} is similar to (and actually much easier than) the one for Lemma \ref{lemma ptree}. 
%\begin{proof}[Proof of Lemma \ref{eta1case0}]
In the $\eta=1$ case, 
%we have $|m(z)|\le 1 - \al$ for some constant $\al>0$, which gives 
%$$\sum_x \Theta_{xy} = \frac{|m(z)|^2}{1-|m(z)|^2}=\OO(1).$$
 by \eqref{thetaxy} we have that for any small constant $\tau>0$ and large constant $D>0$, 
 \be\label{thetaxy eta1}\Theta_{xy}\le \frac{W^\tau}{W^d}\mathbf 1_{|x-y|\le W^{1+\tau}}+ \OO(W^{-D}).\ee
%that is, $\Theta_{xy}$ is negligible when $|x-y|\gg W$. 
In other words, the $\Theta$ edge is typically a ``short edge" of length $\OO_\prec(W)$. Moreover, the following local law on $G(z)$ was proved in \cite{ErdYauYin2012Univ,Semicircle}. 
\begin{lemma}[Theorem 2.3 of \cite{Semicircle} and Theorem 2.1 of \cite{ErdYauYin2012Univ}]\label{semicircle}
Suppose the assumptions of Theorem \ref{main thm} hold. For any constants $\kappa, \e >0$, the local law 
\be\label{jxw}
\max_{x,y}|G_{xy}(z) - \delta_{xy}m(z)| \prec ( W^d\eta)^{-1/2} 
 \ee 
 holds uniformly in $z=E+\ii \eta$ with $E\in (-2+\kappa, 2- \kappa)$ and $\eta \in[W^{-d+\e},1]$.
\end{lemma}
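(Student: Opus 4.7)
The plan is to derive an approximate self-consistent equation for $G_{xx}(z)$ via the Schur complement formula, carry out a bulk stability analysis, and then extend the estimate from large $\eta$ down to the optimal scale $\eta \ge W^{-d+\e}$ by a bootstrap in $\eta$. The Schur complement formula gives, for each $x \in \Z_L^d$,
\[
G_{xx}^{-1}(z) = -z - h_{xx} - \sum_{y, y' \neq x} h_{xy}\, G^{(x)}_{yy'}(z)\, h_{y'x},
\]
where $G^{(x)}$ denotes the resolvent of the minor $H^{(x)}$. Decomposing the quadratic form as $\sum_y s_{xy} G^{(x)}_{yy} + Z_x$ with $\E_x Z_x = 0$, a standard Gaussian large-deviation bound for quadratic forms yields $|Z_x| \prec \big(\sum_{y,y'} s_{xy} s_{xy'} |G^{(x)}_{yy'}|^2 \big)^{1/2} \lec W^{-d/2} (\im m/\eta)^{1/2}$, using $s_{xy} = \OO(W^{-d})$ and the Ward identity $\sum_{y'} |G^{(x)}_{yy'}|^2 = \eta^{-1}\im G^{(x)}_{yy}$. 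Combined with the minor expansion $G^{(x)}_{yy} = G_{yy} - G_{yx}G_{xy}/G_{xx}$, this produces the approximate self-consistent equation $G_{xx}^{-1} = -z - m(z) + \OO_\prec(\Lambda^2 + W^{-d/2}(\im m/\eta)^{1/2})$, where $\Lambda := \max_{x,y}|G_{xy} - \delta_{xy}m|$.

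A parallel Schur expansion for $x \neq y$ based on the identity $G_{xy} = -G_{xx}\sum_{z \neq x} G^{(x)}_{yz}\, h_{xz}$, together with the same large-deviation tool, controls the off-diagonal entries and gives $|G_{xy}| \prec W^{-d/2}(\im m/\eta)^{1/2}$ under the a priori hypothesis $\Lambda \le W^{-\delta}$ for some constant $\delta > 0$. Bulk stability of the semicircle fixed-point equation (using $|2m(z)+z|\ge c_\kappa>0$ uniformly for $E \in (-2+\kappa, 2-\kappa)$) then upgrades the approximate self-consistent equation for $G_{xx}$ to $\max_x|G_{xx}(z) - m(z)| \prec (W^d\eta)^{-1/2}$, closing the bound on $\Lambda$ self-consistently.

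The a priori hypothesis $\Lambda \le W^{-\delta}$ is propagated from large to small $\eta$ by a bootstrap in $\eta$: at $\eta = 1$, the norm bound $\|H\| \lec 1$ (which holds w.h.p.\ for Gaussian random band matrices) combined with $\|G(z)\| \le \eta^{-1}$ gives $\Lambda \prec W^{-c}$ directly via a perturbative expansion around $E+\ii$; one then decreases $\eta$ along a geometric sequence $\eta_{k+1} = \eta_k/2$, using Lipschitz continuity of $G(E+\ii\eta)$ in $\eta$ (with constant $\OO(\eta^{-2})$) and a union bound over a fine $L^{-C}$-grid of energies to transfer the bootstrap hypothesis across scales. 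The main obstacle is reaching the optimal scale $\eta \ge W^{-d+\e}$ rather than a weaker scale such as $\eta \ge W^{-d/2+\e}$: near the sharp scale, the fluctuation contribution $W^{-d/2}(\im m/\eta)^{1/2}$ is of the same order as the target bound itself, so any non-sharp constant in the stability analysis propagates into a loss of exponent in $\eta$. Overcoming this requires a genuine fluctuation-averaging step that exploits cancellations in weighted sums $\sum_x w_x Z_x$ for deterministic test weights $w_x$ to gain an additional $(W^d\eta)^{-1/2}$ factor, which allows the self-consistent equation to close at the optimal scale. This fluctuation-averaging machinery, together with the careful handling of the off-diagonal $G^{(x)}_{yz}$ entries in the band setting (where $G^{(x)}_{yz}$ can be of order one for $|y-z|\le W$), is exactly what is developed in \cite{Semicircle, ErdYauYin2012Univ}, and I would invoke their results directly rather than reprove them.
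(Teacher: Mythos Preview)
The paper does not prove this lemma at all: it is quoted verbatim as a known result from \cite{Semicircle} and \cite{ErdYauYin2012Univ}, with no argument given. Your sketch correctly summarizes the method of those references (Schur complement, large-deviation bounds for quadratic forms, bulk stability of the semicircle equation, bootstrap in $\eta$, and fluctuation averaging to reach the optimal scale), and you yourself conclude by invoking those results directly rather than reproving them, which is exactly what the paper does.
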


With \eqref{jxw} as the a priori estimate, to conclude Lemma \ref{eta1case0}, it suffices to prove the following high moment estimate similar to \eqref{locallawptree}: if \eqref{initial_p} holds, then
\be\label{locallawptree222}
\E T_{xy} (z) ^p \prec  (B_{xy} +W^{-d/4} \wt\Phi^2)^p. 
\ee
To prove \eqref{locallawptree222}, we use the second order $T$-expansion \eqref{seconduniversal}. Using \eqref{thetaxy eta1}, \eqref{jxw} and \eqref{initial_p}, we can easily bound that 
$$(\AT^{(>2)})_{\fa,\fb\fb} \prec W^{-d/2}\sum_{x:|x-\fa|\le W^{1+\tau}}  \frac{W^\tau}{W^d} \left( B_{x \fb} + \wt\Phi^2\right) + W^{-D} \prec W^{-d/2 + 2d\tau}  \left( B_{\fa\fb} + \wt\Phi^2\right) .$$
Moreover, we can prove that 
$$ \E T_{\fa\fb}^{p-1}(\QT^{(2)})_{\fa,\fb\fb} \prec \sum_{k=2}^{p}\left[W^{-d/4+d\tau} \left(B_{xy}+\wt\Phi^2\right)  \right]^k \E T_{\fa\fb}^{p-k}  . $$
We omit the details since the proof is similar to (and actually much easier than) the one for Claim \ref{lem QTab}.
With the above two estimates, we can obtain an estimate similar to \eqref{Holder3.5}, which gives \eqref{locallawptree222} by the argument  in \eqref{Holder3.6}. 
%Then using the same argument as in the proof of Lemma \ref{lem highp1}, we conclude \eqref{locallawptree222}, which further concludes Lemma \ref{eta1case0}.
%\end{proof}

\section{Continuity estimate}\label{sec ini_bound}
%In this section, we outline the proofs of Lemma \ref{eta1case0} and Lemma \ref{lem: ini bound}.  the proof of Lemma \ref{lem: ini bound} is based on Lemma \ref{lem normA}, whose proof will be postponed to Section \ref{sec ini_bound}. %will be given in the appendix or the second paper.
 
 \subsection{Proof of Lemma \ref{lem: ini bound}}

Define the subset of indices $\cal I=\{y:|y-x_0|\le K\}$ for $x_0\in \Z_L^d$ and $K\in [W,L/2]$. Then we define the $\mathcal I \times \mathcal I$ positive definite Hermitian matrix $A=(A_{yy'}: y,y'\in \cal I)$ by
\begin{align} \label{Ayy}
	%A_{y'y}:= \frac{G_{y'y}(\wt z)- \overline {G_{yy'}(\wt z)}}{2\ii \left[\im G_{yy}(\wt z)\im G_{y'y'}(\wt z)\right]^{1/2}}.
	A_{y'y}:= \frac{G_{y'y}(\wt z)- \overline {G_{yy'}(\wt z)}}{2\ii }.
\end{align}
In the proof of Lemma 5.3 in \cite{PartI_high}, we have obtained the following estimates:  
\be\label{vwrel3}
\sum_{y\in \mathcal I} |G_{xy}(z)|^2 \lesssim \sum_{y\in \mathcal I} |G_{xy}(\wt z)|^2 +  \left( \frac{\wt\eta}{\eta}\right)^2 \|A\|_{\ell^2\to \ell^2},
\ee
%and %where we used \eqref{l2vw2} in the first step, and \eqref{b<A} in the second step. Similarly, we can get that
\be\label{vwrel4}
\sum_{y\in \mathcal I} |G_{yx}(z)|^2 \lesssim \sum_{y\in \mathcal I} |G_{yx}(\wt z)|^2 + \left( \frac{\wt\eta}{\eta}\right)^2  \|A\|_{\ell^2\to \ell^2} ,
\ee
with high probability.
%Now for the index set $\cal I=\{y:|y-w|\le K\}$, 
Using \eqref{ulevel}, we can bound that
\be\label{vwrel5}\sum_{y\in \mathcal I}\left( |G_{xy}(\wt z)|^2+ |G_{yx}(\wt z)|^2\right) \prec 1+ \sum_{y\in \mathcal I}B_{xy}\lesssim \frac{K^2}{W^2}.\ee
In this section, we will prove the following bound. % on $ \|A\|_{\ell^2\to \ell^2}$. 
\iffalse
A simple bound can be obtained by using the Hilbert-Schmidt norm and the local law \eqref{ulevel}:
$$ \|A\|_{l^2\to l^2}^2 \le \|A\|_{HS}^2\lesssim \sum_{y,y'\in \cal I}\left(|G_{y'y}(\wt z)|^2+|G_{y'y}(\wt z^*)|^2\right) \prec |\cal I|+ \sum_{y,y'\in \mathcal I}B_{yy'} \lesssim \frac{K^{d+2}}{W^2}. $$
This estimate is not strong enough to give the estimate \eqref{initial Txy2}. To obtain a better bound on $ \|A\|_{l^2\to l^2}$, we shall use the following lemma. {The proof of this lemma will be given in Section \ref{sec ini_bound}.}\fi

\begin{lemma}\label{lem normA}
Suppose the assumptions of Lemma \ref{lem: ini bound} hold. %We choose the index set $\cal I=\{y:|y-x_0|\le K\}$ for  $ K\in [W, L/2]$. 
Then for any fixed $p\in \N$ and small constant $\e>0$, the matrix $A$ in \eqref{Ayy} satisfies that  
\be\label{Moments method}
\E \tr\left(A^{2p}\right)  \le K^d\left( W^\e \frac{K^4}{W^4}\right)^{2p-1}.
\ee
%Under the assumptions of Lemma \ref{lem: ini bound}, if \eqref{ulevel} holds, then for any fixed large $p\in \N$ and any small constant $\e>0$, we have
%\be\label{Moments method}
%\E \tr\left[(AA^*)^p\right] \le K^d\left( W^\e \frac{K^4}{W^4}\right)^{2p-1}.
%\ee
\end{lemma}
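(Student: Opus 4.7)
The plan is to bound $\E\tr(A^{2p})$ by combining the local law \eqref{ulevel} on $G(\wt z)$ with the $T$-expansion machinery of the paper, applied to the cyclic product of $A$-entries. Since $A_{yy'}=\im G_{yy'}(\wt z)$ and $\im G(\wt z)=\wt\eta\,G(\wt z)G(\wt z)^*$, I first rewrite
\begin{align*}
\tr(A^{2p})&=\wt\eta^{2p}\tr\bigl((P_\cal I G(\wt z)G(\wt z)^*P_\cal I)^{2p}\bigr)\\
&=\wt\eta^{2p}\sum_{y_1,\ldots,y_{2p}\in\cal I}\sum_{x_1,\ldots,x_{2p}\in\Z_L^d}\prod_{i=1}^{2p}G_{y_ix_i}(\wt z)\overline{G_{y_{i+1}x_i}(\wt z)},
\end{align*}
with $y_{2p+1}\equiv y_1$, yielding a cyclic graph of $4p$ resolvent edges whose $2p$ "outer" vertices are constrained to $\cal I$ and whose $2p$ "inner" vertices range over $\Z_L^d$. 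The key input from the assumed local law is the restricted Ward-type bound
$$\sum_{y\in\cal I}|G_{yx}(\wt z)|^2\prec 1+\sum_{y\in\cal I}B_{yx}\lesssim K^2/W^2,$$
which holds uniformly in $x\in\Z_L^d$ because the box $\cal I$ of radius $K$ is much smaller than the natural Ward scale $1/\wt\eta$.

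Second, I would apply the $T$-expansion (via Corollary \ref{lem completeTexp}) to each $T$-type pair $\overline{G_{y_{i+1}x_i}(\wt z)}G_{y_{i+1}x_{i+1}}(\wt z)$ appearing in the cyclic product, viewing the cyclic trace as a sum of global expansions. Using the strategy of Section \ref{sec global}, the expansion decomposes $\E\tr(A^{2p})$ into a sum of doubly-connected graphs of three types: leading deterministic graphs (cycles of $2p$ $\dashed$ or labelled-$\dashed$ edges forming a closed global diagram), recollision/$Q$-graphs, and higher-order error graphs. Each of these graphs is bounded using the tools of Section \ref{sec bound double}: Lemma \ref{lem Rdouble} for deterministic graphs, Lemma \ref{no dot} for general doubly-connected graphs, and Claim \ref{claim_basic} for the row-sum estimates.

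Third, the leading contribution comes from the deterministic cyclic graphs. Each $A$-entry in the cycle contributes a factor $K^4/W^4$, which I extract as follows: one of the two $G$-edges in the original $A_{yy'}=\wt\eta\sum_x G_{yx}\overline{G_{y'x}}$ is absorbed into a $\Theta$-edge via the deterministic part of the $T$-expansion, and the other is contracted to give $\wt\eta\sum_{y\in\cal I}|G_{yx}(\wt z)|^2\lesssim \wt\eta K^2/W^2$. In the cyclic structure there are two such $\cal I$-sums per vertex, giving $(K^2/W^2)^2=K^4/W^4$ per pair of adjacent $A$-entries. After fixing one outer vertex $y_1$ (which contributes the unconstrained factor $K^d$ from the remaining free summation over the $\otimes$-molecule of the cycle), the remaining $2p-1$ pairs yield the claimed $(K^4/W^4)^{2p-1}$. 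The subleading recollision, $Q$-, and higher-order error graphs are controlled by Lemma \ref{no dot} and the doubly-connected estimate \eqref{bound 2net1 strong}; under the assumption \eqref{ulevel} they contribute at most $W^{-d/2}$ per extra scaling order beyond the leading graph, which is absorbed into the $W^\e$ factor on the right-hand side of \eqref{Moments method}.

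The main obstacle will be Step 3: verifying that the cyclic $T$-expansion indeed produces doubly-connected graphs whose summation over the $\cal I$-restricted indices gives exactly $K^d(K^4/W^4)^{2p-1}$ rather than a weaker factor. A naive use of the entrywise bound $|A_{yy'}|\prec B_{yy'}^{1/2}+\delta_{yy'}$ yields only $(K^2/W^2)^{2p-1}$ per chain, which is off by $(W^2/K^2)^{2p-1}$ from the target. The sharp factor requires pairing consecutive $\cal I$-restricted Ward sums through the cyclic structure, so that every "internal" outer vertex participates in two such restricted sums rather than a single one. Implementing this pairing rigorously relies on the globally standard expansion strategy (Strategy \ref{strat_global}) and its preservation of the doubly-connected property under global expansions (Lemma \ref{lem globalgood}), which is exactly the role these structural lemmas play in the rest of the paper.
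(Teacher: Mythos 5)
There is a genuine gap, and it sits exactly where you flag "the main obstacle": your proposed resolution of Step 3 relies on tools that are not strong enough. You plan to iterate the ordinary $T$-expansion via Strategy \ref{strat_global} and Lemma \ref{lem globalgood}. But that strategy, by design, only ever expands \emph{redundant} blue solid edges and stops at non-expandable graphs (stopping rule (S4)); applied to $\tr(A^{2p})$ it leaves behind graphs that still contain random $G$ edges attached to strongly isolated subgraphs, and these cannot be reduced to deterministic graphs without expanding a \emph{pivotal} edge, i.e.\ without breaking the doubly connected property. The paper's proof requires two pieces of machinery that your proposal never invokes: (i) the non-universal $T$-expansion of Lemma \ref{def nonuni-T}, in which pivotal edges are expanded at the cost of inserting ghost edges carrying $L^2/W^2$ factors, controlled only through the scaling-order conditions \eqref{ordG1}--\eqref{ordG2} and the hypothesis \eqref{Lcondition1}; and (ii) the averaging estimate of Lemma \ref{lemma_boundgen2net} for deterministic, \emph{generalized} doubly connected graphs with $2p$ external atoms in $\cal I$, which is the actual source of the factor $(W^{-4}K^{-(d-4)})^{2p-1}$, i.e.\ of $K^4/W^4$ per averaged external atom. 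Your heuristic "one $G$-edge becomes a $\Theta$-edge, the other is contracted by a restricted Ward sum" only captures the very leading cycle of $\dashed$ edges and does not produce the connected deterministic diagrams whose joint average over $\fa_1,\dots,\fa_{2p}\in\cal I$ the lemma needs; the bookkeeping "$(K^2/W^2)^2$ per pair of adjacent $A$-entries, hence $(K^4/W^4)^{2p-1}$" does not survive scrutiny for the subleading graphs.

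A secondary point: your starting decomposition $A=\wt\eta\, P_{\cal I}G(\wt z)G(\wt z)^*P_{\cal I}$ is formally valid but introduces $2p$ unrestricted inner summation vertices each of which, bounded crudely, costs $1/\wt\eta\le L^2/W^{2+\e}$; one must then track the exact cancellation against the prefactor $\wt\eta^{2p}$ through every graph of the expansion. The paper instead writes $A=(G-G^*)/(2\ii)$ and expands $\tr(A^{2p})$ directly as a sum of $2p$-gons $\prod_i G^{s_i}_{\fa_i\fa_{i+1}}$ over sign choices, which avoids this issue entirely. This choice is not fatal, but combined with the missing non-universal expansion and the missing averaging lemma, the proposal as written does not close.
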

With Lemma \ref{lem normA}, we obtain that 
$$\E\|A\|_{\ell^2\to \ell^2}^{2p} \le  \E \tr\left(A^{2p}\right)\le K^d\left( W^\e\frac{K^4}{W^4}\right)^{2p-1}.$$
Since $p$ can be arbitrarily large, using Markov's inequality we obtain that 
\be\label{strong normA}
\|A\|_{\ell^2\to \ell^2} \prec \frac{K^4}{W^4}.
\ee
Inserting \eqref{vwrel5} and \eqref{strong normA} into \eqref{vwrel3} and \eqref{vwrel4}, we conclude \eqref{initial Txy2}. The proof of \eqref{Gmax} uses a standard perturbation and union bound argument, which has been given in \cite{PartI_high}. Hence we omit the details.

\subsection{Non-universal $T$-expansion}\label{sec nonT}

The rest of this section is devoted to proving Lemma \ref{lem normA}. Recall that the matrix $A$ is defined using $G(\wt z)$. Hence all $G$ edges in the following proof represent $G(\wt z)$ entries, and they satisfy the local law \eqref{ulevel}. The proof of Lemma \ref{lem normA} is based on an intricate expansion strategy of the graphs in $ \tr\left(A^{2p}\right)$, which is an extension of Strategy \ref{strat_global}. In this subsection, we establish the first tool for our proof, that is, the \emph{non-universal $T$-expansion}. We will expand the $n$-th order $T$-expansion into %a \emph{\nonuni}, which expands $T_{\fa,\fb_1\fb_2}$ into 
a sum of deterministic graphs (more precisely, graphs whose maximal subgraphs are deterministic) and $Q$-graphs. In other words, we will further expand the graphs in \smash{$(\PITn)_{\fa,\fb_1 \fb_2}$ and $ (\AITn)_{\fa,\fb_1 \fb_2}$} into deterministic graphs plus $Q$-graphs.
%To help the reader to understand our expansion procedure, we first present a simple version on the expansion of the $T$-variables. More precisely, under the condition \eqref{Lcondition0}, we can expand the $(n-1)$-th order $T$-expansions further and obtain a {\it non-universal $T$-expansion}. 
As we will see later, the expansion is called \emph{non-universal} because it depends on $L$ explicitly, and the graphs in the expansion are bounded properly only when the condition \eqref{Lcondition1} holds. 

To get a {\nonuni}, we apply Strategy \ref{strat_global} to \smash{$(\PITn)_{\fa,\fb_1 \fb_2}$} and \smash{$ (\AITn)_{\fa,\fb_1 \fb_2}$} with the following stopping rule: we will stop expanding a graph when it has a deterministic maximal subgraph, it is a $Q$-graph, or its scaling order is larger than a large enough constant $D>0$. However, there is one scenario which is not covered by Strategy \ref{strat_global}, that is, at a certain step we have to expand a pivotal edge and hence break the double connected property. This is the main difficulty with the proof. We will design an expansion strategy so that in the end all the graphs that are not doubly connected in the $\nonuni$ can be bounded properly under the condition \eqref{Lcondition1}. 

%In contrast to universal $T$-expansions, a graph is regarded as a high order graph in non-universal $T$-expansion only when it scaling order is larger than a large enough constant $D>0$. This means that given an $(n-1)$-th order $T$-expansion, we need to further expand the graphs in $(\PT^{(n-1)})_{\fa,\fb_1 \fb_2}$ and $(\AT^{(>n-1)})_{\fa,\fb_1\fb_2}$ until each resulting graph either has a deterministic maximal subgraph, or is a $Q$-graph, or has size $\OO_\prec(W^{-D})$ for a large enough constant $D>0$.

Although some graphs may not be doubly connected, all graphs in our expansions will automatically have a black net. On the other hand, the subset of blue and red solid edges may not contain a spanning tree and can be separated into disjoint components. For simplicity, we summarize these properties in the following definition. %All graphs in this section will satisfy this definition (for some $k_0$). 

%satisfy the following {\bf weak doubly connected property}. It is much weaker than Definition \ref{def 2net}. %{\cor (We want to include red edges into our molecular graphs.)}

\begin{definition}\label{weak-double}
%Fix the order $n$ in the assumptions of Theorem \ref{thm ptree}. Suppose the condition \eqref{Lcondition0} holds. 
We say a subgraph $\cal G$ without external atoms has (at most) $k_0\equiv k_0(\cal G)$ many components if there exist three disjoint subsets of edges in the molecular graph $\cal G_{\cal M}$ satisfying the following properties. 
\begin{enumerate}
\item $\cal B_{black}$ is a \emph{black net} consisting of $\dashed$ edges. % that connect all the molecules in $\cal G_{\cal M}$ together.

\item $\cal B_{blue}$ is a subset consisting of blue solid (i.e. plus $G$) and $\dashed$ edges.

\item $\cal R_{red}$ is a subset consisting of red solid (i.e. minus $G$) edges.

\item There are $k_{0}$ disjoint components in the molecular subgraph consisting of the edges in $\cal B_{blue}\cup \cal R_{red}$. (By definition, a component means a subgraph in which any two molecules are connected to each other by a path of edges in $\cal B_{blue}\cup \cal R_{red} $, and which is connected to no additional molecules in the rest of the graph.) 

\end{enumerate}
%We will call $\cal B_{blue}\cup \cal R_{red} $ a $G$-net (although it may also contain $\dashed$ edges). %If the above properties hold, then we say that $\cal G$ is {\bf weakly doubly connected}.
A graph $\cal G$ with external atoms is said to have $k_0$ many components if its maximal subgraph induced on internal atoms has $k_0$ many components. We will call the edges in $\cal B_{blue}$ and $\cal B_{red}$ as blue edges and red edges, respectively. 
\end{definition}

%\begin{remark}
%Note that the number of components of the weak $G$-net depends on the choice of $\cal B_1$, $\cal B_2$ and $\cal R$. In order to have the minimal number of components, it is better to take $\cal B_1$ as a black spanning tree, and include the other $\dashed$ edges into $\cal B_2$. 
%\end{remark}
 
%Using a similar argument as for Lemma \ref{no dot}, we can bound a weak doubly connected graph with $k_0$ many components.  %connected graph.  

If a graph has only one component, then Lemma \ref{no dot} applies to it. In general, we have the following lemma for graphs with more than one component. 
\begin{lemma}\label{no dot weak0}
Under the assumptions of Lemma \ref{lem: ini bound}, %and $G\equiv G(\wt z)$ satisfies \eqref{ulevel}.
let $\cal G$ be a normal regular graph without external atoms and with (at most) $k_0$ components in the sense of Definition \ref{weak-double}.
%satisfying Definition the weak doubly connected property in Definition \ref{weak-double} with $k_0$ components in the weak-$G$ net. 
Pick any two atoms of $\cal G$ and fix their values $x , y\in \Z_L^d$. Then the resulting graph $\cal G_{xy}$ satisfies that
\be\label{bound 2net1 weak}
\left|\cal G_{xy}\right| \prec \left(\frac{L^2}{W^2}\right)^{k_0-1} \frac{W^{ - \left(n_{xy}-2\right)d/2 } }{\langle x-y\rangle^d }, %  \left(\frac{L^2}{W^2}\right)^{k_0-1} W^{ - \left(n_{xy}-3\right)d/2 } B_{xy}^{3/2} ,
\ee
where $n_{xy}:=\ord(\cal G_{xy})$ is the scaling order of $\cal G_{xy}$. 
%\iffalse If we fix an atom $x$ in $G$, then the resulting graph  $\cal G_{x}$ satisfies that  
%\be\label{bound 2net1 weak max}
%\left|\cal G_{x}\right| \prec \left(\frac{L^2}{W^2}\right)^{k_0-1} W^{ -  {\ord(\cal G_{x}) \cdot} d/2 } .
%\ee
%\fi
This bound holds also for the graph $ {\cal G}^{{\rm abs}}$. %which is obtained by replacing each component (including edges, weights and coefficients) in $\cal G$ with its absolute value and ignoring all the $P$ or $Q$ labels (if any). 
 \end{lemma}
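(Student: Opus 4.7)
The plan is to induct on the number of components $k_0$.

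For the base case $k_0=1$, the edges in $\cal B_{\mathrm{blue}}\cup\cal R_{\mathrm{red}}$ form a single connected component on all molecules, so together with the spanning tree contained in $\cal B_{\mathrm{black}}$ the graph carries two disjoint spanning nets; up to treating a red edge like a blue one in the second net (both represent off-diagonal $G$ entries satisfying the same pointwise bounds under \eqref{ulevel}), Lemma \ref{no dot} applies. Indeed, \eqref{ulevel} gives $\|G(\wt z)-m(\wt z)I_N\|_{s;(1,1)}\prec 1$, so \eqref{bound 2net1 strong} yields
\begin{equation*}
|\cal G_{xy}|\prec W^{-(n_{xy}-3)d/2-3}\langle x-y\rangle^{-3(d-2)/2}.
\end{equation*}
Since $\langle x-y\rangle\ge W$ and $d\ge 8\ge 6$, one has $W^{d/2-3}\le\langle x-y\rangle^{(d-6)/2}$, which rearranges to the desired $|\cal G_{xy}|\prec W^{-(n_{xy}-2)d/2}\langle x-y\rangle^{-d}$.

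For the inductive step, let $\cal G$ have $k_0\ge 2$ components $C_1,\ldots,C_{k_0}$ in the sense of Definition \ref{weak-double}. Contract each $C_i$ to a single vertex; the resulting multigraph on $k_0$ vertices is connected by $\cal B_{\mathrm{black}}$ and admits a spanning tree $T^*$ with exactly $k_0-1$ inter-component dashed edges. I would pick $C_0$ to be a leaf of $T^*$, so that $C_0$ is attached to the rest of $\cal G$ by a single tree dashed edge $\Theta_{uv}$ (with $u\in C_0$, $v\notin C_0$) together with possibly additional non-tree dashed edges. Any non-tree dashed edge can be bounded by its pointwise maximum, e.g.\ $\OO_\prec(W^{-d})$ for $\Theta$ by \eqref{Ssmall} and the analogous bound for labelled $\Theta$ edges; this produces strictly more decay than needed and is absorbed. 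Thus, up to harmless factors, we may reduce to a single cut edge and write
\begin{equation*}
|\cal G_{xy}|\prec\sum_{u,v}|\cal G^{C_0}(u,\cdot)|\,|\Theta_{uv}|\,|\cal G^{\mathrm{rest}}(v,\cdot)|,
\end{equation*}
where $\cal G^{C_0}$ is the subgraph on the molecules of $C_0$ with $u$ (and $x$ if $x\in C_0$) as external atoms, and $\cal G^{\mathrm{rest}}$ is the complementary subgraph with $v$ (and $y$ if $y\notin C_0$) as external atoms.

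Now $\cal G^{C_0}$ has only one component in $\cal B_{\mathrm{blue}}\cup\cal R_{\mathrm{red}}$ and the restriction of $\cal B_{\mathrm{black}}$ to $C_0$ still spans it, so the base case applies to $\cal G^{C_0}$. The residual $\cal G^{\mathrm{rest}}$ has $k_0-1$ components (those of $\cal G$ minus $C_0$) and inherits a spanning black net (the original $T^*$ minus the cut edge), so the inductive hypothesis applies. When both $x,y\notin C_0$ one combines the single-atom estimate \eqref{bound 2net1 singlex} for $\cal G^{C_0}$ with the row-sum $\sum_v|\Theta_{uv}|\prec\wt\eta^{-1}\lesssim L^2/W^2$ to pay exactly one $L^2/W^2$ factor; when $x\in C_0$ (say), one instead uses the two-atom base-case bound for $\cal G^{C_0}(x;u)$ together with the standard convolution $\sum_u\langle x-u\rangle^{-d}|\Theta_{uv}|\prec(L^2/W^2)\langle x-v\rangle^{-d}$ (valid for $d\ge 4$) to propagate the $\langle x-y\rangle^{-d}$ decay. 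The scaling orders add across the cut, $\ord(\cal G^{C_0})+\ord(\cal G^{\mathrm{rest}})+2=n_{xy}$ (the $+2$ accounting for $\Theta_{uv}$), and the $W$-exponents combine to give $W^{-(n_{xy}-2)d/2}$ with the desired prefactor $(L^2/W^2)^{k_0-1}$.

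The main obstacle I foresee is the scaling-order bookkeeping across the cut and the simultaneous treatment of the possibly many non-tree dashed edges between $C_0$ and its complement; each such edge contributes a pointwise factor at worst $\OO_\prec(W^{-d})$, so they collectively produce only improving factors and can be absorbed without disturbing the $(L^2/W^2)^{k_0-1}$ prefactor. The bound for $\cal G^{\mathrm{abs}}$ follows by the same reasoning since every estimate used is pointwise or a row sum, hence insensitive to taking absolute values.
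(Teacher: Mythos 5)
Your base case is fine: for $k_0=1$ the blue and red edges together span all molecules, and since $\overline G$ entries obey the same strong-$(1,1)$ bounds as $G$ entries under \eqref{ulevel}, the argument of Lemma \ref{no dot} goes through and your arithmetic converting $W^{-(n_{xy}-3)d/2}B_{xy}^{3/2}$ into $W^{-(n_{xy}-2)d/2}\langle x-y\rangle^{-d}$ is correct (the paper likewise reduces this case to the doubly connected estimate, treating red tree edges as $B^{1/2}$ factors).

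The inductive step, however, has a genuine gap. You contract the blue/red components, take a spanning tree $T^*$ of the contracted multigraph, cut at a single inter-component $\dashed$ edge, and then assert that ``the restriction of $\cal B_{\mathrm{black}}$ to $C_0$ still spans it'' and that $\cal G^{\mathrm{rest}}$ ``inherits a spanning black net.'' Neither claim is justified: the black net is a spanning structure on the \emph{molecules}, and it need not respect the partition into blue/red components. For instance, with molecules $a,b$ in one component and $c$ in another, the black spanning tree could be the path $a\mbox{--}c\mbox{--}b$; then the black net restricted to $\{a,b\}$ is disconnected, and after your cut the piece $\cal G^{C_0}$ has no black spanning tree, so neither the base case nor the inductive hypothesis applies to it. Bounding the extra inter-component $\dashed$ edges by their maxima does not repair this, since doing so destroys exactly the black connectivity (and, when $x$ and $y$ lie in different pieces, the $\langle x-y\rangle^{-d}$ decay) that the pieces need. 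The paper circumvents this incompatibility by never decomposing the graph: it first sums out the atoms of each blue/red tree from leaves to a chosen root using \eqref{keyobs3} and \eqref{keyobs2.2} with $(a,b)=(1,1)$ --- at each step the black edges attached to the summed atom are reconnected through the $\Gamma(y_1,\dots,y_k)$ factor, so a black spanning structure survives every reduction --- and only then sums the $k_0-2$ remaining component roots along the surviving black tree via \eqref{keyobs2.91}, each such summation costing one factor $L^2/W^2$. Your scaling-order bookkeeping across the cut is a secondary concern; the structural incompatibility between the black net and the component partition is the step that fails.
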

\begin{proof}
%We follow a similar argument as in the proof of Lemma \ref{no dot}. 
%We only need to prove \eqref{bound 2net1 weak}, since the estimate \eqref{bound 2net1 weak max} can be regarded as a special case of \eqref{bound 2net1 weak}. 
The proof is similar to the one for Lemma \ref{no dot} in \cite{PartI_high} and the one for Claim \ref{lem ATab} in Section \ref{sec ptree}. Hence we only explain the main difference instead of writing down all the details. 
%First, we can bound the local structures as in the proof of Lemma \ref{no dot}. Second in each component of the weak $G$-net, we pick a spanning tree of the edges in $\cal B_2\cup \cal R$, and bound the other edges between different molecules by their $\ell^\infty$ norms. Hence we again 
First, we pick a spanning tree of black edges in $\cal B_{black}$, and a spanning tree of blue and red edges in $\cal B_{blue}\cup \cal R_{red}$ for each component. Then similar to the proof of Claim \ref{lem ATab}, we can reduce the problem to bounding an auxiliary graph $(\cal G_{xy})_{aux}$ satisfying the following properties:
 \begin{itemize}
\item $\cal G_{xy}$ has two external atoms $x$ and $y$, and $\ell$ internal atoms $x_i$, $1\le i \le \ell$, which are the representative atoms of the internal molecules; 

\item there is a spanning tree of black double-line edges connecting all atoms, where a double-line edge between atoms $\al$ and $\beta$ represents a $B_{\al\beta}$ factor; 

\item there is a spanning tree of blue solid edges connecting all atoms in each of the $k_0$ components, where a blue solid edge between atoms $\al$ and $\beta$ represents a $B^{1/2}_{\al\beta}$ factor.
\end{itemize}
In obtaining the auxiliary graph, we also bound the red solid edges in the spanning trees of the $k_0$ components by $B^{1/2}$ factors. This is different from the proof of Claim \ref{lem ATab}, where we bound all red solid edges by their maximum norms.

For any spanning tree of blue solid edges in a component of $(\cal G_{xy})_{aux}$, we choose an atom in it as the root. If a blue tree contains $x$ or $y$, then we choose $x$ or $y$ as the root. For definiteness, we assume that $x$ and $y$ belong to different blue trees in the following proof. The case where $x$ and $y$ belong to the same tree can be handled in a similar way, and we omit the details. Now we sum over the atoms in a spanning tree from leaves to the root. Similar to the proof of Claim \ref{lem ATab}, we bound each sum using \eqref{keyobs3} and \eqref{keyobs2.2} with $(a,b)=(1,1)$. After summing over all internal atoms in one component except for the root, we turn to the blue spanning tree of the next component and sum over its internal atoms from leaves to the chosen root. After summing over the non-root atoms in all components, we obtain a graph, say $(\wt{\cal G}_{xy})_{aux}$, satisfying the following properties: 
\begin{itemize}
\item it has two external atoms $x,\ y$ and $k_0-2$ internal atoms, which are the roots of the $k_0$ blue trees; 

\item it has a spanning tree of black $\dashed$ edges connecting all atoms.  

%\item both atoms $x$ and $y$ connect to some internal atoms through a blue solid edge, and there are no other blue solid edges in $(\wt{\cal G}_{xy})_{aux}$ beside these two.
\end{itemize}
We sum over the internal atoms, say $x_1, \cdots, x_{k_0-2}$, of $(\wt{\cal G}_{xy})_{aux}$ one by one, and bound each sum using the simple estimate
%For the graph $\wt{\cal G}_{xy}$, we sum over the atoms one by one using \eqref{keyobs2.2} with $(a,b)=(1,1)$ and 
%the following estimates:
\be\label{keyobs2.91}
\sum_{x_i} \prod_{i=1}^k B_{x_i y_k}\lesssim \frac{L^2}{W^2}\Gamma(y_1,\cdots, y_k).
\ee
%\iffalse
%and
%\be\label{keyobs2.92}
%\sum_{x_i} B_{y_1 x_i}\cdots B_{y_k x_i}B^{1/2}_{x_i \al}  \le \frac{L^2}{W^2} \sum_{i=1}^k B^{1/2}_{y_i \al} \prod_{j:j\ne i}B_{y_{i}y_j}  .
%\ee
%These two estimates follow from basic calculus, %so we is the same as the one for \eqref{keyobs3}, 
%so we omit the details. 
%\fi
In $(\wt{\cal G}_{xy})_{aux}$, we have a black spanning tree connecting all internal atoms. By \eqref{keyobs2.91}, after each summation, we get some new graphs, each of which has one fewer internal atom, an additional $L^2/W^2$ factor and a black spanning tree connecting all atoms. After summing over all the $k_0-2$ internal atoms, we get a graph where $x$ is connected to $y$ %connect to $x_{k_0}$ 
through a double-line edge, i.e., %and a blue solid edge, i.e., 
 $$(\wt{\cal G}_{xy})_{aux} \prec \left( \frac{L^2}{W^2}\right)^{k_0-2} B_{xy} \lesssim  \left( \frac{L^2}{W^2}\right)^{k_0-1} \frac{1}{\langle x-y\rangle^d} .$$
%  $$(\wt{\cal G}_{xy})_{aux} \prec \left( \frac{L^2}{W^2}\right)^{k_0-2} \sum_{x_{k_0}} B_{x x_{k_0}}^{3/2} B_{y x_{k_0}}^{3/2} \lesssim W^{-d/2}\left( \frac{L^2}{W^2}\right)^{k_0-1} B_{xy}^{3/2} .$$
%Summing over $x_{k_0}$ and using \eqref{keyobs2}, we can bound that
% $$(\wt{\cal G}_{xy})_{aux} \lesssim W^{-d/2}\left( \frac{L^2}{W^2}\right)^{k_0-1} B_{xy}^{3/2}.$$
Finally, by counting the number of $W^{-d/2}$ factors carefully, we can obtain \eqref{bound 2net1 weak}.
\end{proof}

For our purpose, the weakest bound required for a graph $\cal G_{xy}$ satisfying the setting of Lemma \ref{no dot weak0} is $\langle x-y\rangle^{-d}$. Hence we need to make sure that the factor $W^{-\left(n_{xy}-2\right)d/2 }$ is small enough to cancel the $ ({L^2}/{W^2} )^{k_0-1}$ factor. By condition \eqref{Lcondition1}, if we have $n_{xy}\ge (n-1)\cdot (k_0-1)+2$, then \eqref{bound 2net1 weak} gives
$$\left|\cal G_{xy}\right| \prec %\frac{1}{\langle x-y\rangle^d} W^{ - \left(n_{xy}-2\right)d/2 }\left(\frac{L^2}{W^2}\right)^{k_0-1}\le 
\frac{W^{-(k_0-1)c_0}}{\langle x-y\rangle^d}.$$   
Thus in expansions, we need to make sure that the scaling order of every graph is sufficiently high depending on the number of components in it. For this purpose, we introduce the following helpful tool.

\begin{definition}[Ghost edges]
We use a green dashed edge between atoms $x$ and $y$ to represent a $W^2/L^2$ factor, and we call such an edge a \emph{ghost edge}. We do not count ghost edges when calculating the scaling order of a graph, i.e. the scaling order of a ghost edge is $0$. Moreover, the doubly connected property in Definition \ref{def 2net}, the SPD property in Definition \ref{def seqPDG}, and the globally standard property in Definition \ref{defn gs} can be extended to graphs with ghost edges by including these edges into blue nets. 
\end{definition}

To keep graph values unchanged, whenever we add a ghost edge to a graph, we always multiply it by $L^2/W^2$ to cancel this edge. In a doubly connected graph with $k_0$ ghost edges, removing all of them gives at most $k_0+1$ many components. Hence with Lemma \ref{no dot weak0}, we immediately obtain the following estimate. 
  \begin{lemma}\label{no dot weak}
%Suppose the assumptions of Theorem \ref{thm ptree} hold, and $G\equiv G(\wt z)$ satisfies \eqref{ulevel}. 
Under the assumptions of Lemma \ref{lem: ini bound}, let $\cal G$ be a doubly connected normal regular graph without external atoms and with $k_{\gh}$ ghost edges. Pick any two atoms of $\cal G$ and fix their values $x , y\in \Z_L^d$. Then the resulting graph $\cal G_{xy}$ satisfies that
\be\label{bound 2net1 weak2}
\left|\cal G_{xy}\right| \prec \left( \frac{L^2}{W^2}\right)^{k_{\gh}} \frac{W^{ - \left(n_{xy}-2\right)d/2 } }{\langle x-y\rangle^d } , %W^{ - \left(n_{xy}-3\right)d/2 } B_{xy}^{3/2}  ,
\ee
where $n_{xy}:=\ord(\cal G_{xy})$ is the scaling order of $\cal G_{xy}$.
%\iffalse  and $k_{\gh}$ is the number of ghost edges in $\cal G$.  If we fix an atom $x$ in $G$, then the resulting graph  $\cal G_{x}$ satisfies that  
%\be\label{bound 2net1 weak2 max}
%\left|\cal G_{x}\right| \prec \left( \frac{L^2}{W^2}\right)^{k_{\gh}} W^{ -  {\ord(\cal G_{x}) \cdot} d/2 } .
%\ee
%\fi
This bound holds also for the graph $ {\cal G}^{{\rm abs}}$. % which is obtained by replacing each component (including edges, weights and coefficients) in $\cal G$ with its absolute value and ignoring all the $P$ or $Q$ labels (if any). 
\end{lemma}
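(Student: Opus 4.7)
The plan is to reduce Lemma \ref{no dot weak} directly to Lemma \ref{no dot weak0} by stripping out the ghost edges. First I would let $\cal G'_{xy}$ denote the graph obtained from $\cal G_{xy}$ by deleting all $k_{\gh}$ ghost edges. Since each ghost edge carries the numerical factor $W^2/L^2$ and by definition contributes $0$ to the scaling order, we have the exact relations
$$|\cal G_{xy}| = \left(\frac{W^2}{L^2}\right)^{k_{\gh}}|\cal G'_{xy}|, \qquad \ord(\cal G'_{xy}) = n_{xy}.$$
Thus bounding $\cal G_{xy}$ is equivalent to bounding $\cal G'_{xy}$ up to a single multiplicative constant that I can track explicitly.

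Next I would verify that $\cal G'$ satisfies the hypotheses of Lemma \ref{no dot weak0}. Because $\cal G_{xy}$ is doubly connected with its ghost edges placed in the blue net, the black net $\cal B_{black}$ (a $\dashed$-edge spanning tree) is untouched by the deletion, while $\cal B_{blue}\cup \cal R_{red}$ loses precisely the $k_{\gh}$ ghost edges. Since removing a single edge from a connected subgraph can split it into at most two connected pieces, iterating the deletion $k_{\gh}$ times shows that the resulting molecular subgraph decomposes into at most $k_0 \le k_{\gh}+1$ components in the sense of Definition \ref{weak-double}. This is essentially the only nontrivial graph-theoretic step, and it is a standard elementary observation.

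Finally, applying Lemma \ref{no dot weak0} to $\cal G'_{xy}$ with $k_0 \le k_{\gh}+1$ yields
$$|\cal G'_{xy}| \prec \left(\frac{L^2}{W^2}\right)^{k_{\gh}} \frac{W^{-(n_{xy}-2)d/2}}{\langle x-y\rangle^d},$$
and multiplying by $(W^2/L^2)^{k_{\gh}}$ recovers a bound for $|\cal G_{xy}|$ that is in fact slightly stronger than the claimed one (the two $k_{\gh}$-powers cancel). The claim as stated then follows immediately from $L\ge W$, which makes the factor $(L^2/W^2)^{k_{\gh}}\ge 1$. The same argument applies verbatim to $\cal G^{\mathrm{abs}}$ because replacing components by their absolute values commutes with deleting the (positive) ghost edges and with the estimates in Lemma \ref{no dot weak0}. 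I do not anticipate any real obstacle: the entire proof is essentially bookkeeping built on top of the already-proved Lemma \ref{no dot weak0}.
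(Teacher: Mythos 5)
Your proof is correct and takes essentially the same route as the paper, which likewise notes that deleting the $k_{\gh}$ ghost edges from the blue net leaves at most $k_{\gh}+1$ components in the sense of Definition \ref{weak-double} and then invokes Lemma \ref{no dot weak0}. The only difference is a harmless bookkeeping convention about whether the compensating factor $(L^2/W^2)^{k_{\gh}}$ is carried in the graph's coefficient; under either reading your accounting yields the stated bound.
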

 
Ghost edges are introduced to deal with expansions of pivotal blue solid edges (recall Definition \ref{defn pivotal}). We have avoided such expansions in the proof of Theorem \ref{incomplete Texp}, but they are necessary for constructing the $\nonuni$. Roughly speaking, when we expand a pivotal blue solid edge in a graph, we will add a ghost edge between its two ending atoms and multiply the graph by $L^2/W^2$ to keep the graph value unchanged. Then this pivotal edge becomes redundant, and we can make use of the tools developed in Sections \ref{sec pre} and \ref{sec global}. In this way, we do not need to rebuild a new set of graphical tools from scratch. However, different from  Strategy \ref{strat_global}, we also need to keep track of the scaling orders of our graphs carefully to make sure that they are high enough to cancel the $L^2/W^2$ factors associated with the ghost edges. To deal with this issue, we will introduce some new graphical tools. %in the following proof.

%We will see that after introducing the ghost edges, the tools developed in Section \ref{sec global} and the global expansion strategy can be applied to the expansions in this section with only some minor changes. 

To define the $\nonuni$, we first define the \emph{generalized SPD property}, extending the SPD property in Definition \ref{def seqPDG}. For simplicity of notations, we call an isolated subgraph plus its external edges (including the ending atoms of them) the \emph{closure} of this isolated subgraph.

\begin{definition}[Generalized SPD property]\label{def seqPDG weak}
A graph $\cal G$ with ghost edges is said to be a generalized sequentially pre-deterministic graph if the following properties hold.
\begin{itemize}
\item[(i)] $\cal G$ is doubly connected by including its ghost edges into the blue net. % in the sense of Definition \ref{def 2net}.
%\item[(ii)] Each proper isolated subgraph of $\cal G$ is regular.

\item[(ii)] %For any two proper isolated subgraphs $\Iso_1$ and $\Iso_2$, we have either $\Iso_1\subset \Iso_2$ or $\Iso_2\subset \Iso_1$.
$\cal G$ contains at most one sequence of proper isolated subgraphs with non-deterministic closures. (By definition, an isolated subgraph has a non-deterministic closure if this subgraph contains $G$ edges and weights inside it, or it is connected with at least one external $G$ edge.)  

%(recall Definition \ref{defn iso}), say
%\be\label{seq_iso}
%\Iso_k \subset \Iso_{k-1} \subset \cdots \subset \Iso_1 \subset  \cal G_{\max},
%\ee
%where $\Iso_{j+1}$ is a maximal isolated subgraph of $\Iso_{j}$ for any $j$. 

\item[(iii)] The maximal subgraph $\cal G_{\max}$ (resp. an isolated subgraph $\Iso$) of $\cal G$ is pre-deterministic if it has no proper isolated subgraph with non-deterministic closure. Otherwise, let $\Iso'$ be its maximal isolated subgraph with non-deterministic closure. If we replace the closure of $\Iso'$ with a $\dashed$ edge in the molecular graph without red solid edges, then $\cal G_{\max}$ (resp. $\Iso$) becomes pre-deterministic. % isolated subgraph. 
%by using this blue edge as the first edge in the pre-deterministic order. 
\end{itemize}

\end{definition}
%\begin{remark}
%The propert (ii) means that $\cal G$ contains at most one sequence of proper isolated subgraphs (recall Definition \ref{defn iso}), say
%\be\nonumber%\label{seq_iso}
%\Iso_k \subset \Iso_{k-1} \subset \cdots \subset \Iso_1 \subset  \cal G_{\max},
%\ee
%where $\Iso_{j+1}$ is a maximal isolated subgraph of $\Iso_{j}$ for any $j$. In particular, any proper isolated subgraph $\Iso$ has at most one maximal isolated subgraph, and hence property (iii) is well-defined. 
%\end{remark}}

The only difference from Definition \ref{def seqPDG} is that property (ii) of Definition \ref{def seqPDG weak} is restricted to the sequence of isolated subgraphs with non-deterministic closures. In other words, a proper isolated subgraph of $\cal G$ may have several other sequences of \emph{deterministic} isolated subgraphs. %For convenience, for the rest of this section, we still call a generalized SPD graph a SPD graph. 
Now we are ready to state the main result of this subsection on $\nonuni$s.
%is to prove the following proposition on non-universal $T$-expansions. %It will not be used in the proof of Lemma \ref{lem normA} directly, but that expansion procedure developed in the proof of this lemma also applies to the later proof of Lemma \ref{lem normA}.
\begin{lemma}[$\Nonuni$]\label{def nonuni-T}
Suppose the assumptions of Lemma \ref{lem: ini bound} hold. For any large constant $D>0$,  $T_{\fa,\fb_1 \fb_2}$ can be expanded into a sum of $\OO(1)$ many graphs: %for any fixed large constant $D>0$,  
\be\label{mlevelTgdef weak}
\begin{split}
  T_{\fa,\fb_1 \fb_2} & =   m  \Theta_{\fa \fb_1}\overline G_{\fb_1\fb_2} +  \sum_{\mu} B^{(\mu)}_{\fa \fb_1}\overline G_{\fb_1\fb_2} f_\mu (G_{\fb_1\fb_1})+   \sum_{\nu} \wt B^{(\nu)}_{\fa \fb_2} G_{\fb_2\fb_1} \wt f_\nu(G_{\fb_2\fb_2})  \\
&+   \sum_{\mu} \mathscr B^{(\mu)}_{\fa , \fb_1 \fb_2}g_\mu(G_{\fb_1\fb_1},G_{\fb_2\fb_2},\overline G_{\fb_1\fb_2},  G_{\fb_2\fb_1}) + \cal Q_{\fa,\fb_1\fb_2}   + \Err_{\fa,\fb_1 \fb_2} .
%&+   \sum_{\mu} \cal D^{(\mu)}_{\fa , \fb_1 \fb_2} \overline G_{\fb_1\fb_2} g_\mu(G_{\fb_1\fb_1},G_{\fb_2\fb_2})+   \sum_{\nu} \wt{\cal D}^{(\nu)}_{\fa , \fb_1 \fb_2} G_{\fb_2\fb_1} \wt g_\nu(G_{\fb_1\fb_1},G_{\fb_2\fb_2}) + \cal Q_{\fa,\fb_1\fb_2}   + \Err_{\fa,\fb_1 \fb_2} .
\end{split}
 %\OO_\prec(W^{-D}),
\ee
%where there are at most $\OO(1)$ many graphs on the right-hand side, each of which 
The graphs on the right-hand side depend on $n$, $D$, $W$ and $L$ directly, and satisfy the following properties. 
\begin{enumerate}
\item $\Err_{\fa,\fb_1 \fb_2}$ is an error term satisfying $\Err_{\fa,\fb_1 \fb_2}\prec W^{-D}.$ 

\item $B^{(\mu)}_{\fa \fb_1}$ and $\wt B^{(\nu)}_{\fa \fb_2}$ are deterministic graphs satisfying  
\be\label{est_BwtB} | B^{(\mu)}_{\fa \fb_1}|\prec W^{-c_1}B_{\fa\fb_1},\quad |\wt B^{(\nu)}_{\fa \fb_2}|\prec W^{-c_1}B_{\fa\fb_2},\ee
under the condition \eqref{Lcondition1}, where $c_1:=\min(c_0,d/2)$. 

\item $\mathscr B^{(\mu)}_{\fa \fb_1}$ are deterministic graphs of the form  
$ \sum_{x}\wh B^{(\mu)}_{\fa x}\cal D^{(\mu)}_{x, \fb_1\fb_2},$
%or forms obtained by replacing $\Theta_{\fa x}$ with a {\cor labelled $\dashed$ edge}, 
where $\wh B^{(\mu)}_{\fa x}$ satisfies  
\be\label{est_BwtB1} | \wh B^{(\mu)}_{\fa x}|\prec B_{\fa x}, \ee
and $\cal D^{(\mu)}_{x, \fb_1\fb_2}$ is doubly connected with $x$, $\fb_1$ and $\fb_2$ regarded as internal atoms and satisfies  
\be\label{ordG1}\ord(\cal D^{(\mu)}_{x, \fb_1\fb_2}) \ge k_\gh(\cal D^{(\mu)}_{x, \fb_1\fb_2}) \cdot (n-1) + 2.\ee
Here $k_\gh(\cal D^{(\mu)}_{x, \fb_1\fb_2}) $ denotes the number of ghost edges in $\cal D^{(\mu)}_{x, \fb_1\fb_2}$.

\item $f_\mu$ are monomials of $G_{\fb_1\fb_1}$ and $\overline G_{\fb_1\fb_1}$, $\wt f_\nu$ are monomials of  $G_{\fb_2\fb_2}$ and $\overline G_{\fb_2\fb_2}$, and $g_\mu$ are monomials of $G_{\fb_1\fb_1}$, $\overline G_{\fb_1\fb_1}$, $G_{\fb_2\fb_2}$, $\overline G_{\fb_2\fb_2}$, $\overline G_{\fb_1\fb_2}$ and $ G_{\fb_2\fb_1}$. % and each $g_\mu$ contains at most one $\overline G_{\fb_1\fb_2}$ or $ G_{\fb_2\fb_1}$ edge. 

\item $\cal Q_{\fa,\fb_1\fb_2} $ is a sum of $\OO(1)$ many $Q$-graphs, denoted by $\cal Q^{(\omega)}_{\fa,\fb_1\fb_2} $, satisfying the following properties.

\begin{enumerate}

\item $\cal Q^{(\omega)}_{\fa,\fb_1\fb_2} $ is a generalized SPD graph. %Moreover, all the atoms that are not standard neutral (recall Definition \ref{deflvl1}) in it belong to the minimal non-deterministic isolated subgraph. Finally, 

\item The atom in the $Q$-label of $\cal Q^{(\omega)}_{\fa,\fb_1\fb_2} $ belongs to the MIS with non-deterministic closure, denoted by $\Iso_{\min}$. In other words, we can write that 
$$\cal Q^{(\omega)}_{\fa,\fb_1\fb_2} =\left( \frac{L^2}{W^2}\right)^{ k_{\gh} ( \cal Q^{(\omega)}_{\fa,\fb_1\fb_2} )}\sum_\al  Q_\al(\Gamma^{(\omega)}),$$ 
where $k_{\gh} ( \cal Q^{(\omega)}_{\fa,\fb_1\fb_2} )$ denotes the number of ghost edges in $\cal Q^{(\omega)}_{\fa,\fb_1\fb_2} $, $\Gamma^{(\omega)}$ is a graph without $P/Q$ labels, and the internal atom $\al$ is inside $\Iso_{\min}$. 

\item The scaling order of $\cal Q^{(\omega)}_{\fa,\fb_1\fb_2}$ satisfies that 
\be\label{ordG2} \ord( \cal Q^{(\omega)}_{\fa,\fb_1\fb_2} ) \ge k_\gh( \cal Q^{(\omega)}_{\fa,\fb_1\fb_2} ) \cdot (n-1) + 2.\ee

\end{enumerate}
\item Each $\cal Q^{(\omega)}_{\fa,\fb_1\fb_2}$ has a deterministic graph $\cal B^{(\omega)}_{\fa x}$ between $\otimes$ and an internal atom $x$, where $\cal B^{(\omega)}_{\fa x}$ satisfies 
\be\label{est_BwtB2}| \cal B^{(\omega)}_{\fa x}|\prec B_{\fa x}. \ee
Moreover, there is at least an edge, which is either blue solid or $\dashed$ or dotted, connected to $\oplus$, and there is at least an edge, which is either red solid or $\dashed$ or dotted, connected to $\ominus$. 
\end{enumerate} 
 \end{lemma}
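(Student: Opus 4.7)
The plan is to start from the $n$-th order $T$-expansion provided by Corollary \ref{lem completeTexp}:
\[
T_{\fa,\fb_1\fb_2} = m\Theta_{\fa\fb_1}\overline G_{\fb_1\fb_2} + m(\Theta\Sdelta^{(n)}\Theta)_{\fa\fb_1}\overline G_{\fb_1\fb_2} + (\PTn)_{\fa,\fb_1\fb_2} + (\ATn)_{\fa,\fb_1\fb_2} + (\QTn)_{\fa,\fb_1\fb_2} + (\Err_{n,D})_{\fa,\fb_1\fb_2}.
\]
The first term already appears on the right-hand side of \eqref{mlevelTgdef weak}; the second fits into $\sum_\mu B^{(\mu)}_{\fa\fb_1}\overline G_{\fb_1\fb_2}$ with $f_\mu\equiv 1$, and \eqref{est_BwtB} for this contribution follows directly from \eqref{BRB}. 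The error term is absorbed into $\Err_{\fa,\fb_1\fb_2}$ once $D$ is chosen large. It remains to further expand $\PTn$, $\ATn$ and $\QTn$ into deterministic + $Q$-pieces.

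Next, I would design a modified version of Strategy \ref{strat_global} whose stopping rules are now: (S1$'$) the graph has a doubly connected deterministic maximal subgraph, so that after factoring out the dotted-edge identifications $\oplus\sim\cdot$ and $\ominus\sim\cdot$ the remaining random factors collapse to a monomial $f_\mu$, $\wt f_\nu$ or $g_\mu$ in $G_{\fb_i\fb_j}$ and the deterministic piece is of the form $B^{(\mu)}_{\fa\fb_1}$, $\wt B^{(\nu)}_{\fa\fb_2}$ or $\sum_x \wh B^{(\mu)}_{\fa x}\cal D^{(\mu)}_{x,\fb_1\fb_2}$; (S2$'$) the graph is a $Q$-graph satisfying (a)--(c) of the lemma; (S3$'$) the scaling order of the graph exceeds $D$ and is sent to $\Err$. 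An iteration of the strategy consists of: local expansions (using Definitions \ref{dot-def}--\ref{GGbar-def}) to reach a locally standard graph, followed by a global expansion on the first blue solid edge in a pre-deterministic order of the minimal isolated subgraph with non-deterministic closure (cf.\ Definition \ref{def seqPDG weak}), substituting the chosen $t$-variable by \eqref{replaceT}. By Lemmas \ref{lem localgood}, \ref{lem localgood2}, \ref{lem localgoodQ} and \ref{lem globalgood}, such operations preserve the generalized SPD property and the location of the $Q$-label in the MIS.

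The key new ingredient, which is absent in Strategy \ref{strat_global}, is the \emph{pivotal-edge expansion via ghost edges}. Unlike in Theorem \ref{incomplete Texp}, where globally standard graphs are always expandable on a redundant edge, here we may reach graphs whose remaining non-deterministic blue solid edges are all pivotal. In this case I would do the following. Given a pivotal blue edge $G_{\al y_1}$ inside a $t$-variable $t_{x,y_1y_2}$, removing it disconnects the blue net into exactly two pieces; I insert a ghost edge between the two molecules that lose blue-connectivity (paying a compensating factor $L^2/W^2$ so the graph value is unchanged). The ghost edge is declared to belong to the blue net, so the graph becomes generalized doubly connected in the sense of Definition \ref{def seqPDG weak}; the pivotal edge is now redundant and can be expanded via \eqref{replaceT}. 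Each such ghost insertion adds $1$ to $k_\gh(\cal G)$, while the substitution forces the scaling order to jump by at least $n-1$ whenever the leading $m\Theta_{xy_1}\overline G_{y_1y_2}$ substitution cannot be used (and the leading substitution itself turns the pivotal blue edge into a $\dashed$ edge that can play the same role as the ghost, so in the leading case no ghost is needed). This bookkeeping yields the scaling-order bounds \eqref{ordG1} and \eqref{ordG2}, which together with condition \eqref{Lcondition1} control the $(L^2/W^2)^{k_\gh}$ factors via Lemma \ref{no dot weak} and give \eqref{est_BwtB}, \eqref{est_BwtB1} and \eqref{est_BwtB2}.

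Finally, I would establish termination in $\OO_{n,D}(1)$ many iterations, by the same height-of-tree argument as in Lemma \ref{lemm expansionstrat}: each iteration either strictly increases the scaling order or strictly decreases the number of non-deterministic blue solid edges inside the MIS with non-deterministic closure, and both quantities are bounded by $D$ and by the number of waved/$\dashed$ edges, respectively. The main obstacle I anticipate is verifying that the generalized SPD property, the location of the $Q$-label inside the MIS with non-deterministic closure, and the scaling-order lower bound $\ord \ge k_\gh(n-1)+2$ are all simultaneously preserved under the three types of operations---local expansions, global expansions on redundant edges, and the new pivotal-edge expansions with ghost insertions. This requires extending Lemmas \ref{hard lemmdot}--\ref{hard lemmQ} and Lemma \ref{lem globalgood} to the setting with ghost edges and with isolated subgraphs of deterministic closure, and carefully arguing that inserting a ghost edge to restore blue connectivity does not create a new sequence of non-deterministic proper isolated subgraphs disjoint from the existing one (so that property (ii) of Definition \ref{def seqPDG weak} is maintained). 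Once this structural bookkeeping is in place, the decomposition into the five terms on the right-hand side of \eqref{mlevelTgdef weak} and all the stated bounds on scaling orders, $B^{(\mu)}, \wt B^{(\nu)}, \wh B^{(\mu)}, \cal B^{(\omega)}$, follow from Lemmas \ref{lem redundantagain} and \ref{no dot weak} applied to the terminal graphs.
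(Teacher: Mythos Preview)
Your overall architecture is right---start from the $n$-th order $T$-expansion, run a modified Strategy \ref{strat_global} with ghost edges for pivotal blue edges, and track scaling order versus $k_\gh$---and this matches the paper's Strategy \ref{strat_global_weak} and the GGS property (Definition \ref{defn gs weak}). However, there is a genuine structural gap.

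Your stopping rule (S1$'$) asserts that a graph with deterministic maximal subgraph has random part that ``collapses to a monomial in $G_{\fb_i\fb_j}$''. This is only true when the graph is a $\oplus/\ominus$-recollision graph. A graph can perfectly well satisfy (T1) of Strategy \ref{strat_global_weak}---deterministic, locally standard maximal subgraph---while still carrying a standard neutral atom $\alpha$ connected to $\fb_1,\fb_2$ by external $G$ edges, i.e.\ a full $t_{y,\fb_1\fb_2}$ with $y$ internal; see case (d) and \eqref{cased_higher} in the paper. These graphs are \emph{not} monomials, your pivotal-edge mechanism does not apply to them (the only blue solid edge left is external to the maximal subgraph, so it is not in any pre-deterministic order of an internal MIS), and your termination argument does not cover them. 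The paper does not try to expand them further: it collects them as $\sum_\mu\sum_{x,y}\Theta_{\fa x}\wt{\cal D}^{(\mu)}_{xy}T_{y,\fb_1\fb_2}$ on the right-hand side, obtaining the self-consistent equation \eqref{mlevelTgdef weak2}, and then \emph{solves} it by multiplying through by $(1-\Theta\wt{\cal D})^{-1}$. The Neumann-series estimate \eqref{key_taylor}, which uses that each $\wt{\cal D}^{(\mu)}$ satisfies \eqref{ord ghost0} together with \eqref{Lcondition1}, is what actually produces the prefactors $\wh B^{(\mu)}_{\fa x}$ and $\cal B^{(\omega)}_{\fa x}$ obeying \eqref{est_BwtB1} and \eqref{est_BwtB2}. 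This entire step is missing from your plan.

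A secondary inaccuracy: your bookkeeping ``the substitution forces the scaling order to jump by at least $n-1$ whenever the leading substitution cannot be used'' is false for the $(\PTn)$ and post-$Q$-expansion substitutions, where the order may increase only by $1$. The paper handles this not by a scaling-order jump but by showing that in those cases the freshly inserted ghost edge is \emph{redundant} (property (B1) of Definition \ref{defn gs weak}); see Cases III and IV in the proof of Lemma \ref{lem localgood claim3}. Your plan would need this finer case analysis to keep the invariant $\ord\ge k_\gh(n-1)+2$.
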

To prove Lemma \ref{def nonuni-T} , we start with the $n$-th order $T$-expansion \eqref{mlevelTgdef}. 
%Definition \ref{defn genuni}:
% \be\label{mlevelTgdef n-1}
%T_{\fa,\fb_1 \fb_2}= m  \Theta_{\fa \fb_1}\overline G_{\fb_1\fb_2} +m   \left(\Theta \Sdelta^{(n-1)}\Theta\right)_{\fa\fb_1} \overline G_{\fb_1\fb_2} + (\PT^{(n-1)})_{\fa,\fb_1 \fb_2} +  (\AT^{(>n-1)})_{\fa,\fb_1\fb_2}  + (\QT^{(n-1)})_{\fa,\fb_1\fb_2} .%+ \OO_\prec(W^{-D}).
%\ee
%The first term on the right-hand side of \eqref{mlevelTgdef} corresponds to the first term on the right-hand side of \eqref{mlevelTgdef weak}, 
The term $m(\Theta \Sdelta^{(n)}\Theta)_{\fa\fb_1} \overline G_{\fb_1\fb_2} $ can be included into the second term on the right-hand side of \eqref{mlevelTgdef weak}, \smash{$(\QT^{(n)})_{\fa,\fb_1\fb_2} $} can be included into $\cal Q_{\fa,\fb_1\fb_2} $, and $(\Err_{n,D})_{\fa,\fb_1\fb_2}$ can be included into $\Err_{\fa,\fb_1 \fb_2}$. It remains to expand the graphs in \smash{$(\PT^{(n)})_{\fa,\fb_1 \fb_2}$} and \smash{$(\AT^{(>n)})_{\fa,\fb_1\fb_2}$}. We again use the Strategy \ref{strat_global}, but with some modifications to handle the ghost edges and to keep track of the scaling order of every graph (see Strategy \ref{strat_global_weak} below). % during the expansions.  

To describe the expansion strategy, we now define the stopping rules. Given a graph $\cal G$, we define  
\be\label{defn sizeG} \size(\cal G): =\left( \frac{L^2}{W^2}\right)^{k_\gh(\cal G)} W^{-\ord(\cal G) \cdot d/2},\ee
where $k_{\gh}(\cal G)$ denotes the number of ghost edges in $\cal G$. By Lemma \ref{no dot weak}, we have $|\cal G|\prec  \size(\cal G)$ if $\cal G$ is doubly connected with ghost edges. 
%Moreover, let $k_{\gh}(\cal G)$ denote the number of ghost edges in a doubly connected graph $\cal G$. Then we have the trivial bound 
%$$k_0(\cal G) - 1\le k_\gh(\cal G) .$$ 
%More generally, for a graph $\cal G$ with ghost edges, we define 
%$$k_\gh(\cal G) :=k_\gh(\wt{\cal G}),  $$
%where $\wt{\cal G}$, by definition, is the graph obtained by removing all ghost edges in $\cal G$. Finally, we remark that in order to maintain the stronger double-net property as in Definition \ref{def 2net} or the generalized SPD property in Definition \ref{def seqPDG weak}, we may need to add strictly more ghost edges to the graphs. But for bounding the graph, we only need to add $k_{\gh}(\cal G)$ many ghost edges to connect disjoint components in the weak-$G$ net.
%but in some graphs we will have a better bound, as we will point out in the following proof.   
Given the large constant $D$ in Lemma \ref{def nonuni-T}, we stop expanding a graph $\cal G$ if it is normal regular and satisfies at least one of the following properties:
\begin{itemize}
\item[(T1)] %after combining all the pairs of external and internal atoms that are connected by dotted edges, the internal atoms of $\cal G$ are not connected with any $G$ edges or weights; 
$\cal G$ has a deterministic maximal subgraph that is locally standard (i.e. if there is an internal atom connected with external edges, it must be standard neutral);

%, which contains a standard neutral atom connected with a pair of external plus and minus $G$ edges if it is not a $\oplus/\ominus$-recollision graph; % (recall Definition \ref{defn iso});

\item[(T2)] $\size(\cal G)\le W^{-D}$;

\item[(T3)] $\cal G$ is a $Q$-graph. 

%\item[(S4)] it is {\bf non-expandable}, that is, it is a locally standard graph, and has no redundant plus $G$ edges in the maximal subgraph. 
\end{itemize}
%Now our expansion strategy for the proof of Lemma \ref{def nonuni-T}    is a direct expansion of Strategy \ref{strat_global}, such that all the graphs satisfying stopping rules (T1)-(T3) can be included into one of the terms on the right-hand side of \eqref{mlevelTgdef weak}. 
Then, similar to Strategy \ref{strat_global}, the core of the expansion strategy for the proof of Lemma \ref{def nonuni-T} is still to expand the plus $G$ edges according to a pre-deterministic order in the MIS with non-deterministic closure. However, there is a key difference that we have to expand a pivotal blue solid edge connected with an isolated subgraph at a certain step. To deal with this issue, we add a ghost edge between the ending atoms of the pivotal edge which we want to expand. Then this blue solid edge becomes redundant, and we can expand it in the same way as Step 3 of Strategy \ref{strat_global}. However, we need to make sure that every graph, say $\cal G$, from the expansion has a scaling order satisfying %for a fixed $a_0\in \N,$
$$\ord(\cal G)\ge (n-1)\cdot k_{\gh}(\cal G)  + 2.$$
%Moreover, whenever we expand a pivotal blue solid edge in a graph, say $\cal G$, the scaling order of the graph is at least $\ord(\cal G)\ge (n-2)\cdot (k_{\gh}(\cal G)+1) + 2$, so that after adding a ghost edge the scaling order of the resulting graph, say $\cal G'$, satisfies $\ord(\cal G')\ge (n-2)\cdot k_{\gh}(\cal G')  + 2$. 
%We will keep track of $k_0(\cal G)$ using $k_{\gh}(\cal G)$. 
%Given a generalized SPD input graph, we perform one step of expansion as follows.
Now we define the following expansion strategy.
 
\begin{strategy}%[One step of expansion]
\label{strat_global_weak}
Given a large constant $D>0$ and a generalized SPD graph $\cal G$ without $P/Q$ labels, we perform one step of expansion as follows. Let $\Iso_{k}$ be the MIS with non-deterministic closure. % and $\Iso_{k-1}$ be the isolated subgraph which contains $\Iso_{k}$ as the maximal isolated subgraph with non-deterministic closure. 
 
\medskip

\noindent{\bf Case 1}: If $\Iso_{k}$ is not locally standard, then we perform the local expansions in Definitions \ref{Ow-def}, \ref{multi-def}, \ref{GG-def} and \ref{GGbar-def} on an atom in $\Iso_{k}$. We send the resulting graphs that already satisfy the stopping rules (T1)--(T3) to the outputs. Every remaining graph has a locally standard MIS with non-deterministic closure and satisfies the generalized SPD property by Lemma \ref{lem localgood2}.

\medskip
\noindent{\bf Case 2}: Suppose $\Iso_{k}$ is locally standard. We find a $t_{x,y_1y_2}$ or $t_{y_1y_2,x}$ variable defined in \eqref{12in4T}, so that $\al$ is a standard neutral atom in $\Iso_{k}$ and the edge $G_{\al y_1}$ or $G_{y_1 \al}$ is the first blue solid edge in a pre-deterministic order of $\Iso_{k}$. Then we apply the expansion in Step 3 of Strategy \ref{strat_global} (except that we will use \eqref{replaceT} with $n-1$ replaced by $n$).

%Let the pre-deterministic order of plus $G$ edges in $\Iso_k$ be $b_1\prec b_2 \prec \cdots$. Recall that in a locally standard graph, the ending atoms of edge $b_1$ are both standard neutral atoms. For any one of them, say $\al$, the edges connected to it take one of the forms in \eqref{12in4T}. In these expressions, the plus solid $G$ edge $G_{\al y_1}$ or $G_{y_1\al}$ is $b_1$. Then we apply the expansion in \eqref{replaceT} to the variable in \eqref{12in4T} as in Steps 2 and 3 of Strategy \ref{strat_global}. 

\medskip
\noindent{\bf Case 3}: Suppose $\Iso_{k}$ is deterministic, strongly isolated in $\Iso_{k-1}$ (cf. Definition \ref{def_weakstrong}), and locally standard. (In other words, $\Iso_{k}$ contains a standard neutral atom connected with two external blue and red solid edges.) %and these two edges are connected with the same atom in $\Iso_k$.  
%connected to its complement through a $\dashed$ edge, a pivotal blue solid edge $b_1$ and a red solid edge.
%locally standard and is a proper isolated subgraph. Then this isolated subgraph must be a strongly isolated subgraph in the sense of Definition \ref{def_weakstrong}, i.e. $\Iso_{k}$ is connected to its complement through a $\dashed$ edge, a pivotal blue solid edge $b_1$ and a red solid edge.
%Suppose the input graph is SPD and locally standard, and there is a {\bf deterministic} isolated subgraph, say $\Iso_k$, that is connected with a blue solid edge and a dashed edge to its complement. (Note that since the graph is locally standard, this isolated subgraph must be a strongly isolated subgraph in the sense of Definition \ref{def_weakstrong}.) Denote this blue solid edge by $b_1$. 
Suppose $b_1$ is the edge $G_{\al y_1}$ or $G_{y_1\al}$ in \eqref{12in4T}, with $\al$ being the standard neutral atom in $\Iso_k$. We add a ghost edge between $\al$ and $y_1$ and multiply the graph by $L^2/W^2$. In the resulting graph, the edge $b_1$ becomes redundant if we include the newly added ghost edge into the blue net. Then we apply the expansion in Step 3 of Strategy \ref{strat_global} (except that we will use \eqref{replaceT} with $n-1$ replaced by $n$).
%apply the expansion in \eqref{replaceT} to the variable in \eqref{12in4T} as in Steps 2 and 3 of Strategy \ref{strat_global}. We will show In the proof, 
We will show that each $Q$-graph from this expansion satisfies the property (v) of Lemma \ref{def nonuni-T}, and each non-$Q$ graph, say $\cal G'$, satisfies %that for a fixed $a_0\in \N,$
$$\ord(\cal G')\ge (n-1)\cdot k_{\gh}(\cal G')  + 2,$$
so that $ \size(\cal G')$ is under control. 
\end{strategy}

We will show that by applying the Strategy \ref{strat_global_weak} repeatedly until all graphs satisfy the stopping rules (T1)--(T3), we can obtain the expansion \eqref{mlevelTgdef weak}. For this purpose, we define the \emph{generalized globally standard property}, extending the globally standard property in Definition \ref{defn gs}. 

\begin{definition}[Generalized globally standard graphs]\label{defn gs weak}
 We say a graph $\cal G$ is generalized globally standard (GGS) if it is a generalized SPD graph and satisfies at least one of the following properties. 
\begin{itemize}
\item[(A)] $\cal G$ satisfies \be\label{ord ghost0} \ord(\cal G)\ge (n-1)\cdot \left[k_{\gh}(\cal G)+1\right] + 2 .\ee

\item[(B)] $\cal G$ satisfies 
\be\label{ord ghost1}
 \ord(\cal G)\ge (n-1)\cdot  k_{\gh}(\cal G) + 2 . %(n-1)\cdot  k_{\gh}(\cal G) + 2 \le \ord(\cal G)\le (n-1)\cdot \left[k_{\gh}(\cal G)+1\right] + 2,
\ee
Let $\Iso_k$ be the minimal strongly isolated subgraph with non-deterministic closure. (In other words, $\Iso_k$ may contain weakly isolated subgraphs, but a strongly isolated subgraph of $\Iso_k$ must have a deterministic closure.) %If there is no such subgraph, we let $\Iso_k$ be the maximal subgraph.
%Let $\Iso_{k+1}$ be the maximal weakly isolated subgraph of $\Iso_k$. 
There exists a ghost edge, say $g_0$, inside $\Iso_k$ such that the following property holds. 
\begin{itemize}
\item[(B1)] Let $\Iso'$ be the minimal isolated subgraph of $\Iso_k$ that contains $g_0$. ($\Iso'$ can be $\Iso_k$ if $g_0$ is not inside any proper isolated subgraph of $\Iso_k$.) If we replace the closure of the maximal proper isolated subgraph of $\Iso'$ and all other blue solid edges in $\Iso'$ with $\dashed$ edges, then $g_0$ becomes redundant.
	
%\item[(B-2)] if we replace all blue solid edges in $\Iso_{k+1}$ with $\dashed$ edges, then there exists a redundant ghost edge in $\Iso_{k+1}$.
\end{itemize}

%the closure of the maximal (weakly) isolated subgraph of $\Iso_k$ with a $\dashed$ edge and all other blue solid edges in $\Iso_k$ with $\dashed$ edges, then there exists a redundant ghost edge in $\Iso_k$.
% \begin{itemize}
%\item[(B.1)] There exists at least one ghost edge inside $\Iso_k$. %, say $g_*$,
%
%\item[(B.2)] The subgraph $\Iso_k$ is globally standard in the sense of Definition \ref{defn gs} by including the ghost edges into the blue net.
%
%\item[(B.3)] If we replace the closure of the maximal (weakly) isolated subgraph of $\Iso_k$ with a $\dashed$ edge and all other blue solid edges in $\Iso_k$ with $\dashed$ edges in the molecular graph without red solid edges, then there exists a redundant ghost edge in $\Iso_k$. % (in the sense of Definition \ref{defn pivotal}).  
%\end{itemize}
\end{itemize}
\end{definition}

Similar to the proof of Theorem \ref{incomplete Texp} in Section \ref{sec global}, Lemma \ref{def nonuni-T} follows from the following two  observations:
\begin{itemize}
\item if the input graph is GGS, then after one step of expansion in Strategy \ref{strat_global_weak}, all the resulting non-$Q$ graphs are still GGS and all the $Q$-graphs satisfy property (v) of Lemma \ref{def nonuni-T}; 
	
\item the expansion of $T_{\fa,\fb_1\fb_2}$ will stop after $\OO(1)$ many iterations of Strategy \ref{strat_global_weak}.
\end{itemize}
We postpone the detailed proof of Lemma \ref{def nonuni-T} to Appendix \ref{app_pf_nonuniversal}.

\subsection{Generalized doubly connected property}\label{sec gen2net}

In this subsection, we develop another tool for the proof of Lemma \ref{lem normA}, that is, a \emph{generalized doubly connected property} of graphs with external atoms, which extends the doubly connected property in Definition \ref{def 2net}. 

\begin{definition}[Generalized doubly connected property]\label{def 2netex}
A graph $\cal G$ with external molecules is said to be generalized doubly connected if its molecular graph satisfies the following property. There exist a collection, say $\cal B_{black}$, of $\dashed$ edges, and another collection, say $\cal B_{blue}$, of blue solid, $\dashed$ or ghost edges such that: (a)  $\cal B_{black}\cap \cal B_{blue}=\emptyset$, (b) every internal molecule connects to external molecules through two disjoint paths: a path of edges in $\cal B_{black}$ and a path of edges in $\cal B_{blue}$. 
%such that the edge connected with the external edge is not a ghost edge (i.e. the last edge of the path is either a blue solid or a $\dashed$ edge). 
%there exist two {\bf distinct} collections of edges---a net of black edges (which may include $\dashed$ or dotted edges) and a net of blue edges (which may include blue solid, $\dashed$, or dotted edges)---such that the following property hold. For each internal molecule there exist both a blue path and a black path connecting it to the external molecules.
%all the internal molecules are doubly connected; there is at least one dashed or dotted edge edge connecting $\otimes$ to the internal molecules; there is at least one blue solid, dashed or dotted edge edge connecting $\oplus$ to the internal molecules. 
%If $\cal G$ is doubly connected, then we will also say that it has two nets---a blue net/$\otimes$ net, which connects all the internal molecules with the $\otimes$-molecule, and a black net/$\oplus$net, which connects all the internal molecules with the $\oplus$-molecule.
\end{definition}

%\begin{remark}\label{G_doublenet}
%Given a doubly connected graph without external molecules as in Definition \ref{def 2net}, if we fix some of its internal molecules to be external molecules, then we get a doubly connected graph as in Definition \ref{def 2netex}. On the other hand, 
Given a generalized doubly connected graph $\cal G$, after removing its external molecules we \emph{may not} get a doubly connected graph in the sense of Definition \ref{def 2net} (i.e., the maximal subgraph of $\cal G$ consisting of internal molecules may not have a black net and a blue net). For example, the following graph is a generalized doubly connected graph with external molecules $a_1, a_2, a_3, a_4$, and internal molecules $x_1, x_2, x_3,x_4, x_5$, but it is not doubly connected.
\begin{center}
	\includegraphics[width=4cm]{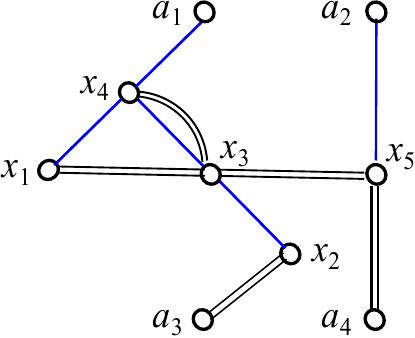}
\end{center}
However, by Definition \ref{def 2netex}, if we merge all external molecules in a generalized doubly connected graph into a single internal molecule, then we will get a doubly connected graph in the sense of Definition \ref{def 2net}.  

%If we remove the external molecules (together with the edges connected to them), we do not have either a black net or a blue net that connect all the internal molecules. However, if we combine all external molecules into an internal molecule, then we get a doubly connected graph without external molecules.

%\end{remark}

%For simplicity of notations, for the rest of this section, we shall simply call a generalized doubly connected graph $\cal G$ a \emph{doubly connected} graph, with the understanding that $\cal G$ satisfies Definition \ref{def 2netex}. We first extend Lemma \ref{no dot} to the following result on doubly connected graphs with external molecules as in Definition \ref{def 2netex}. 

Generalized doubly connected graphs satisfy the following bound. 
% it is easy to obtain the following general version for doubly connected graphs, which may have dotted edges between different molecules. 
  \begin{lemma}\label{general dot}
  %Suppose that \eqref{initial Txy222} and \eqref{Gmax} hold, $d\ge 7$, $\eta\ge W^{2+\e}/L^2$ and $\eta/\wt\eta\ge W^{-\e_0}$ for some small constants $\e,\e_0>0$. 
  %Suppose the assumptions of Theorem \ref{thm ptree} hold, and $G\equiv G(\wt z)$ satisfies \eqref{ulevel}. 
  Under the assumptions of Lemma \ref{lem: ini bound}, let $\cal G$ be a generalized doubly connected normal regular graph. 
  %Let $\cal G$ be a normal regular graph satisfying the generalized doubly connected property in Definition \ref{def 2netex} with some external molecules. 
  Then, it satisfies the bound
 \be\label{bound 2net}
 |\cal G| \prec \left( \frac{L^2}{W^2}\right)^{k_{\gh}(\cal G)}  W^{- \ord(\cal G) \cdot d/2 }.
 \ee
 \end{lemma}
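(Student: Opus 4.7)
The plan is to adapt the spanning-tree summation technique developed in the proof of Lemma \ref{no dot weak0} (and its doubly-connected-with-ghost-edges refinement Lemma \ref{no dot weak}) to graphs that already carry external atoms. The key structural observation is that, by Definition \ref{def 2netex}, contracting all external atoms of $\cal G$ into a single super-vertex produces a molecular graph that is doubly connected in the sense of Definition \ref{def 2net} (with ghost edges counted in the blue net). Thus the generalized case reduces to the familiar one provided we use the external atoms as the natural ``roots'' for the leaf-to-root summation.

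First I would mimic the reduction in the proof of Lemma \ref{no dot weak0} and bound $|\cal G|$ by the value of an auxiliary graph $(\cal G)_{aux}$ whose vertices are the external atoms of $\cal G$ together with one representative atom per internal molecule. In $(\cal G)_{aux}$, black double-line edges represent $B$ factors, blue solid edges represent $\Omega$-type variables bounded (up to $W^{\e_0}$) by $B^{1/2}$, ghost edges carry their $W^2/L^2$ factors, and each labelled $\dashed$ edge of $\cal G$ contributes the usual $W^{-kd/2}$. The local-structure bound \eqref{internal struc1} combined with \eqref{intermole1}--\eqref{intermole2} extracts an overall factor $W^{-\ord(\cal G)\cdot d/2 + \OO(\tau)}$ multiplied by one ``to-be-cancelled'' $W^{d/2}$ per internal atom still to be summed out. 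Next I would choose disjoint spanning trees $T_{black}\subset \cal B_{black}$ and $T_{blue}\subset \cal B_{blue}$ of the \emph{contracted} quotient graph; pulled back to $(\cal G)_{aux}$, these become forests rooted at the external atoms. Every non-tree internal edge is bounded by its maximum norm, which introduces no further dominant factor.

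Then I would sum over the internal atoms of $(\cal G)_{aux}$ one at a time, from leaves to roots of $T_{blue}$. At each step the leaf has at least one incident black $B$-edge from $T_{black}$ and exactly one incident blue or ghost edge from $T_{blue}$, so either \eqref{keyobs3} or \eqref{keyobs2.2} with $(a,b)=(1,2)$ applies and yields a $W^{-d/2+\OO(\e_0)}$ gain while merging the leaf's black incidences into its parent in $T_{blue}$, exactly as in the proof of Lemma \ref{no dot weak0}. After every internal atom is summed out, only external atoms remain and the residual product is $\OO_\prec(1)$. Combining the contributions: the $k_\gh(\cal G)$ ghost edges give the declared $(L^2/W^2)^{k_\gh(\cal G)}$ prefactor, and the remaining $W^{-d/2}$'s add up to exactly $W^{-\ord(\cal G)\cdot d/2}$ by the same counting as in Lemma \ref{no dot weak0}; the residual $W^{\OO(\tau+\e_0)}$ is harmless under $\prec$ since $\tau$ can be taken arbitrarily small.

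The main obstacle is the leaf-to-root bookkeeping: one must verify that after each application of \eqref{keyobs3} the remaining graph is still a forest of $T_{blue}$ rooted at the external atoms and that every surviving leaf still carries at least one black and one blue (or ghost) incident edge, so that the induction may continue. This is a routine but slightly delicate argument that uses the disjointness $\cal B_{black}\cap\cal B_{blue}=\emptyset$ from Definition \ref{def 2netex}, together with the fact that ghost edges play the same combinatorial role as blue solid edges during the summation (the $W^2/L^2$ weight sits outside the leaf-summation procedure and contributes directly to the announced $(L^2/W^2)^{k_\gh(\cal G)}$ factor).
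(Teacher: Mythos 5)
Your proposal follows essentially the same route as the paper: reduce to an auxiliary graph, choose disjoint black and blue spanning forests rooted at the external atoms (equivalently, spanning trees of the graph with all external molecules contracted to one vertex), and sum the internal atoms from the leaves of the blue forest to the roots using \eqref{keyobs3}-type estimates; the argument is correct and the choice of $(a,b)=(1,2)$ rather than $(1,1)$ is immaterial since $d\ge 8$. One correction to your bookkeeping, though: the factor $\left(L^2/W^2\right)^{k_{\gh}(\cal G)}$ in \eqref{bound 2net} does \emph{not} come from the ghost-edge weights themselves --- those equal $\left(W^2/L^2\right)^{k_{\gh}(\cal G)}$, i.e.\ the \emph{reciprocal}, and may simply be bounded by $1$. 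The true source is the loss incurred when summing over an internal atom whose only blue-forest connection is a ghost edge: such a sum carries no decaying blue factor and is controlled by \eqref{keyobs2.91} (equivalently the second estimate in \eqref{keyobs3.3}), producing one factor of $L^2/W^2$ for each ghost edge used in the blue forest. With that attribution fixed, your leaf-to-root counting closes exactly as in Lemma \ref{no dot weak0}.
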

\begin{proof}
The proof of this lemma is similar to the one for Lemma \ref{no dot weak0}. With the generalized doubly connected property of $\cal G$, we can choose disjoint black spanning trees and blue spanning trees with external molecules so that each internal molecule connects to an external molecule through a \emph{unique black path} on a black tree, and to an external molecule through a \emph{unique blue path} on a blue tree. We can again reduce the problem to bounding an auxiliary graph $\cal G_{aux}$ consisting of these black and blue trees. Then we sum over the internal atoms in $\cal G_{aux}$ from leaves of the blue trees to the roots of them. We bound the summation over each internal atom using  \eqref{keyobs3} with $(a,b)=(1,1)$ (if the blue edge in the summation is bounded by a $B^{1/2}$ entry), or \eqref{keyobs2.91}  (if the blue edge in the summation is bounded by $W^2/L^2$). After each summation, we get a sum of new graphs that are still generalized doubly connected. Finally, after summing over all internal atoms, we get the bound \eqref{bound 2net}. We omit the details of the proof.
\end{proof}

The following lemma will be used in the proof of Lemma \ref{lem normA}. It bounds the average of a deterministic generalized doubly connected graph over its external molecules in the box $\cal I$. 
 
\begin{lemma}\label{lemma_boundgen2net}
Under the assumptions of Lemma \ref{lem: ini bound}, let $\cal G$ be a deterministic generalized doubly connected regular graph with $2p$ external atoms $\fa_i$, $1\le i \le 2p$. Moreover, suppose $\cal G$ is a connected graph without using ghost edges, i.e., % in the sense that 
all (internal and external) molecules are connected to each other through diffusive edges. 
%Finally, suppose all ghost edges are between internal atoms. 
%Finally, assume that every external atom is connected with at least a $\dashed$ edge or a dotted edge. 
Then, we have that
\be\label{bound 2net weakdeter}
\frac{1}{\left(K^d\right)^{2p}}\sum_{\fa_1,\fa_2, \cdots, \fa_{2p}\in \cal I} \left|\cal G \left( \fa_1,\fa_2,\cdots, \fa_{2p}\right)\right| \prec \left(\frac{W^{d-4}}{K^{d-4}}\right)^{2p-1} \cdot \left( \frac{L^2}{W^2}\right)^{k_{\gh}(\cal G)}  W^{-\ord(\cal G)\cdot d/2},
\ee
where recall that $\cal I=\{y:|y-x_0|\le K\}$ for some $x_0\in \Z_L^d$. We emphasize that in calculating $\ord(\cal G)$, $\fa_i$, $1\le i \le 2p$, are still treated as external atoms, regardless of the sum $\sum_{\fa_1,\fa_2, \cdots, \fa_{2p}}$. 
\end{lemma}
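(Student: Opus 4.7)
The strategy is a two-step reduction: first eliminate internal atoms via an auxiliary-graph technique, then sum over external atoms using the boundedness of $\cal I$. By the generalized doubly connected property, choose a black subgraph $\cal B_{\mathrm{black}}$ of $\dashed$ edges and a blue subgraph $\cal B_{\mathrm{blue}}$ of blue solid, $\dashed$, or ghost edges so that every internal molecule is connected to the external molecules through disjoint paths in each, and extract spanning subgraphs rooted at the external molecules. Following the reduction used in the proofs of Lemmas \ref{no dot weak0} and \ref{general dot}, bound every non-tree solid edge by its max norm $\OO_\prec(W^{-d/2})$, every non-tree $\dashed$ or labelled $\dashed$ edge by $\OO_\prec(W^{-kd/2})$ for its scaling order $k$, and bound each molecular local structure by \eqref{internal struc1}. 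Ghost edges are absorbed into the $(L^2/W^2)^{k_\gh(\cal G)}$ factor. This leaves an auxiliary graph on the representative atoms whose edges are either black $B$-factors (from the black spanning subgraph), blue $B^{1/2}$-factors, or blue $W^2/L^2$ ghost factors, together with a prefactor $(L^2/W^2)^{k_\gh(\cal G)} W^{-\ord(\cal G)d/2}$ (up to harmless $W^{\tau}$ losses).

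Next, sum over internal representative atoms from the leaves of the blue spanning subgraph toward its external-atom roots. Each such sum is controlled by Claim \ref{claim_basic} with $(a,b)=(1,1)$ for $B^{1/2}$ blue edges and by \eqref{keyobs2.91} for ghost edges, each step removing one internal atom while preserving the spanning-subgraph structure attaching the remaining atoms to the external ones. Once all internal atoms are eliminated, we obtain an auxiliary graph on the $2p$ external atoms connected by $B$-type factors. The hypothesis that every molecule of $\cal G$ is connected—combined with the fact that each internal molecule terminates its black and blue paths at external molecules—guarantees that these $B$-factors form a connected multigraph on $\{\fa_1,\ldots,\fa_{2p}\}$, from which we extract a spanning tree with $2p-1$ edges.

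Finally, sum over $\fa_1,\ldots,\fa_{2p}\in\cal I$. Designate one external atom as the root and sum the remaining $2p-1$ external atoms along the external spanning tree from leaves toward the root. Each such summation applies \eqref{eq PseudoG} with $\|B\|_{w;(2,2)}\prec 1$, and the resulting gain, combined with the $W^{-d/2}$ factors already extracted into the order-counting prefactor, is at most $W^{d-4}/K^{d-4}$ per step (this is a clean over-approximation of the tighter bound $(W^2 K^{d-2})^{-1}$ that actually holds, using $W\le K$ to degrade it into the desired form). The final free summation over the root external atom gives $K^d$, which exactly cancels the remaining $K^{-d}$ in the normalization. Multiplying the $(2p-1)$ factors of $W^{d-4}/K^{d-4}$ with the reduction prefactor $(L^2/W^2)^{k_\gh(\cal G)} W^{-\ord(\cal G)d/2}$ yields \eqref{bound 2net weakdeter}.

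The main technical obstacle is the bookkeeping: tracking how the total scaling order of $\cal G$ distributes between edges retained in the spanning subgraphs (which are kept as $B$ or $B^{1/2}$ factors and contribute to the summations), edges discarded and bounded by max norms (which contribute to the $W^{-\ord(\cal G)d/2}$ prefactor), and intramolecular structure, while ensuring that the degrees of the $W^{-d/2}$ factors available per external-atom summation suffice to produce the uniform $(W^{d-4}/K^{d-4})^{2p-1}$ gain. A secondary subtlety is verifying that after eliminating internal atoms, the connectedness hypothesis transfers to connectedness of the residual $B$-graph on external atoms; this requires that every black tree path and every blue tree path ends at some external molecule, which is built into the generalized doubly connected property.
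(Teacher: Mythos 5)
Your high-level architecture (reduce to an auxiliary graph on representative atoms, eliminate internal atoms along the nets, then average over the external atoms along a spanning tree) matches the paper's, and your power counting would close \emph{if} the residual multigraph on $\{\fa_1,\ldots,\fa_{2p}\}$ were connected. But that connectivity claim is exactly where the proposal has a genuine gap. Your internal elimination "from the leaves of the blue spanning subgraph toward its external-atom roots" via Claim \ref{claim_basic} does not preserve attachment to the external atoms: in \eqref{keyobs3} the endpoint $\beta$ of the summed edge $\cal A^{(1)}_{x\beta}$ is absent from the output $\Gamma(y_1,\dots,y_k)$, so when the \emph{last} internal atom $x$ of a blue tree is summed, its external root $\fa_i$ is dropped. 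The generalized doubly connected property together with the connectedness hypothesis permits an external atom to be attached to $\cal G_{aux}$ by that single blue edge only; such an atom is then orphaned, its average over $\cal I$ contributes a factor $1$ instead of $W^{d-4}/K^{d-4}$, and you obtain at best the exponent $2p-2$, which does not prove \eqref{bound 2net weakdeter}. The reconnection identity \eqref{keyobs2} cannot rescue this, since it requires two blue-type edges at the summed atom, while the last atom of a blue tree has only one. (A smaller inaccuracy: the graph here is deterministic, so the blue net consists of $\dashed$ and ghost edges and the blue-tree edges are $B$ factors, not $B^{1/2}$ factors; this only relocates the $W^{-d/2}$ bookkeeping.)

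The paper closes this gap with a device your proposal has no substitute for: the purple edges $\wt B_{\al\beta}=W^{-4}\langle\al-\beta\rangle^{-(d-4)}$ and the star-contraction identity \eqref{keyobs4.3}. The internal summation is done in two phases — blue trees first, \emph{stopping while each blue tree still retains one internal atom adjacent to its external root} (so no external atom loses its last edge), then black trees using \eqref{keyobs4.3}, whose right-hand side reconnects all neighbors of the summed atom to the external atom $\fa$ while converting pairs of $B$ edges into single $\wt B$ edges at constant scaling order. This guarantees the final graph on the external atoms is connected, and the external averaging is then performed on $\wt B$ stars, for which $\frac{1}{K^d}\sum_{\fa_i\in\cal I}\prod_j\wt B_{y_j\fa_i}\lesssim\frac{1}{W^4K^{d-4}}\sum_l\prod_{j\ne l}\wt B_{y_ly_j}$ yields the per-atom gain $W^{-d}\cdot W^{d-4}/K^{d-4}$ together with a still-connected remainder. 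Without this mechanism (or an equivalent one) the spanning tree you want to sum along need not exist.
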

\begin{proof}
We can assume that there are no dotted edges in $\cal G$. Otherwise, we merge the atoms connected by dotted edges and still get a graph satisfying the assumptions of this lemma and with at most $2p$ external atoms (there may be dotted edges between external atoms, so the graph after the merging operation may contain strictly fewer external atoms). Without loss of generality, we assume that the external atoms are $\fa_i$, $1\le i \le q$, for some $1\le q\le 2p$. Moreover, for simplicity, we can pick out the $( {L^2}/{W^2} )^{k_{\gh}(\cal G)}$ factor in the coefficient of $\cal G$ and do not keep it in the following proof.

Following the argument in the proof of Claim \ref{lem ATab}, we again reduce the problem to bounding a generalized doubly connected auxiliary graph $\cal G_{aux}$ obtained as follows: $\cal G_{aux}$ has a set of external atoms, say $\fa_1, \fa_2,\cdots, \fa_q$, and a set of internal atoms, say $x_1,x_2,\cdots, x_\ell$, which are the representative atoms of the internal molecules. Every $\dashed$ edge between different molecules of $\cal G$ is replaced by a double-line edge representing a $B$ factor in $\cal G_{aux}$. Every ghost edge in  $\cal G$ is still a ghost edge representing a $W^2/L^2$ factor in $\cal G_{aux}$.  To conclude the proof, it suffices to prove that
\be \label{boundaux_ext_weak}
\frac{1}{\left(K^d\right)^{2p}}\sum_{\fa_1,\fa_2, \cdots, \fa_{q}\in \cal I} \left|\cal G_{aux} \left( \fa_1,\fa_2,\cdots, \fa_{q}\right)\right| \prec \left(\frac{W^{d-4}}{K^{d-4}}\right)^{2p-1} W^{-\ord(\cal G_{aux})\cdot d/2} ,
\ee
where $\ord(\cal G_{aux})$ is defined by
$$\ord(\cal G_{aux}):=2\#\{\text{double-line edges}\} -2 \#\{\text{internal atoms}\}.$$

% To show \eqref{boundaux_ext_weak}, we first use a similar graph reduction technique as in the above proof of Lemma \ref{general dot}.
With the generalized doubly connected property of $\cal G_{aux}$, we can choose a collection of blue trees such that every internal atom connects to an external atom through a \emph{unique path} on a blue tree. These blue trees are disjoint, and each of them contains an external atom as the root. Now, we sum over all internal atoms from leaves of the blue trees to the roots until there is no internal atom in every graph. % until each blue tree has only one internal atom that is connected to the root. %Now we sum over the internal atoms in $\cal G_{\omega,aux}$. 
We introduce a new type of purple solid edges: a purple solid edge between atoms $\al$ and $\beta$ represents a \smash{$\wt B_{\al\beta}= W^{-4}\langle \al-\beta\rangle^{-(d-4)}$} factor. Purple solid edges are not included into either the black net or the blue net. In bounding each summation, we will use the following estimates, which can be proved with basic calculus: if $k\ge 1$ and $r\ge 0$, we have %(recall \eqref{defn_Gamma})
\be\label{keyobs4.3}
\begin{split}
& \sum_{x_i} \prod_{j=1}^k B_{ x_i y_j}\cdot \prod_{s=1}^r \wt B_{ x_i z_s} \cdot B_{x_i \fa} \\ %\lesssim  \sum_{l=1}^k \prod_{j: j\ne l}B_{y_j y_{l}} \cdot \wt B_{y_l \fa}  \Big) \\
&\lesssim \sum_{l=1}^k \prod_{j: j\ne l}B_{y_j y_{l}} \cdot \Big( \wt B_{y_l \fa} \prod_{s=1}^r \wt B_{z_s \fa}+ \wt B_{y_l \fa} \prod_{s=1}^r \wt B_{z_s y_l} + \sum_{t=1}^r  \wt B_{y_l z_t}  \wt B_{z_t \fa} \prod_{s:s\ne t} \wt B_{z_s z_t} \Big) ,
\end{split}
\ee
\be\label{keyobs4.3_add}
\sum_{x_i} \prod_{j=1}^k B_{ x_i y_j}\cdot \prod_{s=1}^r \wt B_{ x_i z_s} \cdot \frac{W^2}{L^2}\lesssim \sum_{l=1}^k \prod_{j: j\ne l}B_{y_j y_{l}} \cdot \Big(  \prod_{s=1}^r \wt B_{z_s y_l} + \sum_{t=1}^r  \wt B_{y_l z_t}  \prod_{s:s\ne t} \wt B_{z_s z_t} \Big) .
\ee
Note that the left-hand side is a star graph consisting of $k$ double-line edges in the black tree, a double-line or ghost edge in the blue tree, and $r$ purple solid edges connected with $x_i$, while every graph on the right-hand side is a connected graph consisting of $k-1$ double-line edges in the black tree and $r+1$ purple solid edges. % connected with $\fa$, $y_l$ or $z_t$.

Recall that the graph $\cal G_{aux}$ is generalized doubly connected and all its atoms are connected to each other through double-line edges. Then, using \eqref{keyobs4.3} or \eqref{keyobs4.3_add}, we can bound a summation over an internal atom by a sum of new graphs, each of which satisfies the following two properties:
\begin{itemize}
	\item[(i)] it is generalized doubly connected;
	\item[(ii)] its atoms are connected to each other through double-line and purple solid edges. %consisting of double-line edges and purple solid edges;
%\item[(iii)] every internal atom is connected with at least one external atom through a blue double-line edge. % in the blue net.
\end{itemize} 
\iffalse
In every new graph, we again find a leaf of a black tree, and bound the sum over it using \eqref{keyobs4.3}. More precisely, when we sum over $x_i$, $x_i$ is a leaf of a black tree, $y_1$ is the parent of $x_i$ on this tree, $B_{x_i \fa}$ is a blue double-line edge connecting $x_i$ to an external atom $\fa$, $B_{y_j x_i}$, $2\le j \le k $, are the remaining double-line edges connected with $x_i$, and the purple solid edges connected with $z_1,\cdots, z_r$ are the edges coming from previous sums over internal atoms. 
%Moreover, the graph is doubly connected in the sense that it has a black net of $\dashed$ edges, and each internal atom in it is connected with an external atom through a blue $\dashed$ edge. Let $x_i$ be a leave of a black spanning. Then we sum over $x_i$ using \eqref{keyobs4.3}, where $\beta$ is the external atom that is connected with $x_i$ through a blue $\dashed$ edge, $y_1$ is the parent of $x_i$ on the black spanning tree, the dashed edges connected with $y_2, \cdots, y_k$ contain the (black or blue) $\dashed$ edges that are not in the black spanning tree, and the black solid edges connected with $z_1,\cdots, z_r$ are the edges coming from the sums over the previous internal atom. {\cor In applying this estimate, $B_{x_i \beta}$ is the blue double-line edge connected connected with an external atom $\fa$} 
%Using the above observations (A) and (B), we find that after summing over $x_i$, we still have graph such that: 
In this way, we can bound the summation over $x_i$ by a sum of new graphs satisfying the above properties (i)--(iii).
%it is a connected graph consisting of $\dashed$ edges and black solid edges; it is doubly connected in the sense that it has a black net of $\dashed$ edges, and each internal atom in it is connected with an external atom through a blue $\dashed$ edge. 
\fi
For every new graph, we bound its sum over a leaf of the blue tree by another set of new graphs satisfying the above properties (i) and (ii). Finally, after summing over all internal atoms, we obtain that 
\be\label{bound 2net weakdeter2} \cal G_{aux} \prec \sum_{\omega} W^{- \Delta_\omega d/2}\cal G_{\omega,aux} ,\quad  \Delta_\omega:=\ord(\cal G_{aux})- \ord(\cal G_{\omega,aux}).\ee
where every $\cal G_{\omega,aux}$ is a connected graph consisting of external atoms $\fa_1, \cdots, \fa_q$, and double-line and purple solid edges. Since $\cal G_{\omega,aux}$ only contains external atoms, its scaling order is 
\be\label{eq_scale_aux}
 \ord\left( \cal G_{\omega,aux}\right)=2\#\{\text{$\dashed$ and purple solid edges in }\cal G_{\omega,aux}\} %=\ord(\cal G_{\omega,aux}) 
 .\ee

Finally, it remains to bound 
\be\nonumber \frac{1}{\left(K^d\right)^{2p}}\sum_{\fa_1,\fa_2, \cdots, \fa_{q}\in \cal I} \cal G_{\omega,aux} \left( \fa_1,\fa_2,\cdots, \fa_{q}\right) .\ee
We perform the summations according to the order $\sum_{\fa_q}\cdots \sum_{\fa_2}\sum_{\fa_1}$, and we use the following estimate to bound each average:  
 \be\nonumber
 \frac{1}{K^d}\sum_{\fa_i \in \cal I} \prod_{j=1}^k \wt B_{y_j \fa_i} \lesssim  \frac{1}{W^4 K^{d-4}}\sum_{l=1}^k   \prod_{j:j\ne l}\wt B_{y_{l}y_j} = W^{-d}\cdot \frac{W^{d-4}}{K^{d-4}}\sum_{l=1}^k   \prod_{j\ne l}\wt B_{y_{l}y_j} .
\ee
This estimate shows that we can bound each average by a sum of new connected graphs with one fewer atom and one fewer edge, % and a connected net of double-line and purple solid edges, 
while gaining an extra factor $W^{-4}K^{-(d-4)}$. % corresponding one edge we have lost in the summation. 
After taking averages over $\fa_1, \cdots, \fa_{q-2}$, we obtain a graph $(\wt B_{\fa_{q-1}\fa_q})^k$ with 
$$k :=  \ord\left( \cal G_{\omega, aux}\right)/2- (q-2).$$
Its average over $\fa_{q-1}$ can be bounded by 
 \be\nonumber
\frac{1}{K^d}\sum_{\fa_{q-1} \in \cal I} (\wt B_{\fa_{q-1}\fa_q})^k \lesssim W^{-kd} \frac{W^{d-4}}{  K^{d-4}}  .
\ee
Finally, the average over $\fa_q$ is equal to 1. In sum, we get that
%It is not hard to check that the number of $W^{-d}$ factors are exactly the number of edges in $\cal G_{\kappa,\gamma, aux}$, which by \eqref{eq_scale_aux} gives that 
\begin{align} \label{bound_aux_wgamma}
\frac{1}{\left(K^d\right)^{2p}}\sum_{\fa_1,\fa_2, \cdots, \fa_{q}\in \cal I} \cal G_{\omega,aux} \left( \fa_1,\fa_2,\cdots, \fa_{q}\right) \prec \left(\frac{1}{K^d}\right)^{2p-q}\left(\frac{W^{d-4}}{K^{d-4}}\right)^{q-1}  W^{-\ord(\cal G_{\omega,aux})\cdot d/2}  .
\end{align}
Plugging this estimate into \eqref{bound 2net weakdeter2}, we obtain \eqref{boundaux_ext_weak}, which concludes \eqref{bound 2net weakdeter}. 
\end{proof}

\subsection{Proof of Lemma \ref{lem normA}}

In this subsection, we complete the proof of Lemma \ref{lem normA}. 
%Using $G_{y'y}(\wt  z^*)=\overline G_{y y'}(\wt  z)$, we can write \eqref{Ayy} as
%\begin{align} \label{Ayy2}
%A_{y'y}= \frac{G_{y'y}(\wt z)-G_{y y'}(\wt z)}{2\ii \left[\im G_{yy}(\wt z)\im G_{y'y'}(\wt z)\right]^{1/2}}.
%\end{align}
Using \eqref{Ayy}, we can expand $ \E \tr\left( A^{2p}\right) $ as %(recall that $\cal I:=\{x\in \Z_L^d: |x-w|\le K\}$ for a fixed $w\in \Z_L^d$)
\begin{align}\label{eq_pgons}
\E \tr\left(A^{2p}\right) = \sum_{\fa_1,\cdots, \fa_{2p}\in \cal I}\sum_{s_1, \cdots, s_{2p} \in \{+,-\}} c(s_1, \cdots, s_{2p}) \prod_{i=1}^{2p}G^{s_i}_{\fa_i \fa_{i+1}},
\end{align}
where as a convention we let $\fa_{2p+1}:= \fa_1$, $c(s_1, \cdots, s_{2p})$ is a deterministic coefficient of order $\OO(1)$, and $s_i $, $i=1,\cdots, 2p$, denote $\pm$ signs. Here we adopted the notations
$$ G_{xy}^{+}:= G_{xy},\quad G_{xy}^{-}:= \overline G_{xy}.$$ 
Thus to conclude \eqref{Moments method}, it suffices to prove that for any $2p$-gon graph 
\be\label{pgon graphs} 
\cal G_{\fa_1,\cdots,\fa_{2p}}\equiv \cal G_{\fa_1,\cdots,\fa_{2p}}(s_1, \cdots, s_{2p}):=\prod_{i=1}^{2p}G^{s_i}_{\fa_i \fa_{i+1}},
\ee
we have 
\be\label{Moments method2}
\sum_{\fa_1,\cdots, \fa_{2p}\in \cal I} \E \cal G_{\fa_1,\cdots,\fa_{2p}} \le K^d\left( W^\e \frac{K^4}{W^4}\right)^{2p-1},
\ee
for any small constant $\e>0$. The proof of \eqref{Moments method2} is based on the results in Sections \ref{sec nonT} and \ref{sec gen2net}: we first use Lemma \ref{def nonuni-T} to expand $\cal G_{\fa_1,\cdots,\fa_{2p}}$ into a sum of deterministic generalized doubly connected graphs, and then use Lemma \ref{lemma_boundgen2net} to bound each of them.% Now we give the details of the argument.

Because of the estimates \eqref{est_BwtB}, \eqref{est_BwtB1} and \eqref{est_BwtB2}, it is more natural to treat graphs $B^{(\mu)}$, $\wt B^{(\nu)}$,  $\wh B^{(\mu)}$ and $\cal B^{(\omega)}$ in Lemma \ref{def nonuni-T} as new types of $\dashed$ edges in the following proof. %instead of complicated subgraphs. %The sizes of these edges are defined using \eqref{defn sizeG}.
%Moreover, they have sizes
%$$ \size \left( B^{(\mu)}_{\fa \fb_1}\right)\le W^{-\ord\left( B^{(\mu)}_{\fa \fb_1}\right) \cdot d/2}\left( \frac{L^2}{W^2}\right)^{k_\gh\left( B^{(\mu)}_{\fa \fb_1}\right)},\quad \size \left( \wt B^{(\nu)}_{\fa \fb_1}\right)\le W^{-\ord\left( \wt B^{(\nu)}_{\fa \fb_2}\right) \cdot d/2}\left( \frac{L^2}{W^2}\right)^{k_\gh\left( \wt B^{(\nu)}_{\fa \fb_2}\right)}. $$
%Then for convenience, we define the scaling orders of these edges as
%$$\ord\left( B^{(\mu)}_{\fa \fb_1}\right):= \log_{W^{-d/2}} \size \left( B^{(\mu)}_{\fa \fb_1}\right),\quad \ord\left( \wt B^{(\nu)}_{\fa \fb_2}\right):= \log_{W^{-d/2}} \size \left(\wt B^{(\nu)}_{\fa \fb_2}\right).$$ 
It is not hard to see that Lemma \ref{general dot} and Lemma \ref{lemma_boundgen2net} still hold for generalized doubly connected graphs containing these new $\dashed$ edges, as long as we replace the factor $({L^2}/{W^2})^{k_{\gh}(\cal G)}  W^{-\ord(\cal G)\cdot d/2}$ by $\size(\cal G)$ defined in \eqref{defn sizeG}.

Corresponding to Definition \ref{def 2netex}, we can extend the generalized SPD property in Definition \ref{def seqPDG weak} and the globally standard property in Definition \ref{defn gs} as follows. %The motivation of the following definition comes from Remark \ref{G_doublenet}. 

\begin{definition}%[Generalize SPD property with external molecules]
\label{GSS_external}
\noindent {(i)} A graph $\cal G$ is said to satisfy the {\bf generalized SPD property with external molecules} if, after merging all external molecules of $\cal G$ into one single internal molecule, the resulting molecular graph satisfies Definition \ref{def seqPDG weak}. 

\vspace{5pt}
\noindent {(ii)} A graph $\cal G$ is said to be {\bf globally standard with external molecules} if it is generalized SPD with external molecules in the above sense, and every proper isolated subgraph with non-deterministic closure is weakly isolated.
\end{definition}

%Now we claim the following expansion of the $2p$-gon graphs.
It is easy to see that Lemma \ref{lem normA} is an immediate consequence of the following lemma. 

%Combining \eqref{mlevelTgdef pgon} and \eqref{ordGkappa}, we obtain \eqref{Moments method2}, which concludes 

\begin{lemma}\label{lemma pgon expand}
Suppose the assumptions of Lemma \ref{lem: ini bound} hold. Then for any large constant $D>0$, a $2p$-gon graph in \eqref{pgon graphs} can be expanded as 
\be\label{mlevelTgdef pgon}
\E \cal G_{\fa_1,\cdots,\fa_{2p}} =\sum_\omega \cal G^{(\omega)}_{\fa_1,\cdots,\fa_{2p}} + \OO_\prec(W^{-D}),
\ee
where the first term on the right-hand side is a sum of $\OO(1)$ many deterministic graphs \smash{$\cal G^{(\omega)}_{\fa_1,\cdots,\fa_{2p}}$} satisfying the assumptions of Lemma \ref{lemma_boundgen2net}. In particular, they satisfy the estimate 
%\be\label{ordGkappa}\size( \cal G^{(\omega)}_{\fa_1,\cdots,\fa_{2p}})\le W^{-(2p-1)d} . \ee
\be\label{ordGkappa}
\frac{1}{\left(K^d\right)^{2p}}\sum_{\fa_1,\fa_2, \cdots, \fa_{2p}\in \cal I}   \cal G^{(\omega)}_{\fa_1,\cdots,\fa_{2p}} \prec \left(\frac{1}{W^4K^{d-4}}\right)^{2p-1}.
\ee

%\be\label{ordGkappa}\ord\left( \cal G^{(\omega)}_{x_1,\cdots,x_{2p}} \right)\ge 2(2p-1) + (n-1)\cdot k_{\gh}\left( \cal G^{(\omega)}_{x_1,\cdots,x_{2p}} \right). \ee
\end{lemma}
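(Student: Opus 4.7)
The plan is to express $\E\cal G_{\fa_1,\ldots,\fa_{2p}}$ as a finite linear combination of deterministic graphs, by iteratively applying the local expansions from Section~\ref{sec_basiclocal} together with the $\nonuni$ of Lemma~\ref{def nonuni-T}, and then invoking Lemma~\ref{lemma_boundgen2net} to bound each resulting graph. The guiding observation is that $\E\,Q_\al(\cdot)=0$ for every atom $\al$, so all $Q$--graphs produced during the expansion disappear after taking expectation; only the deterministic contributions and an error $\OO_\prec(W^{-D})$ survive. The stopping rules will be analogous to (T1)--(T3) of Strategy~\ref{strat_global_weak}: we stop expanding a graph either when it is entirely deterministic, when it is a $Q$--graph, or when its $\size$ is already $\le W^{-D}$.

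To generate $T$--variables inside the $2p$--gon, I would repeatedly pick an atom $\fa_i$ at which the two incident edges $G^{s_{i-1}}_{\fa_{i-1}\fa_i}$ and $G^{s_i}_{\fa_i\fa_{i+1}}$ have opposite charges, and apply the $G\overline G$ expansion of Definition~\ref{GGbar-def} (or a multi--edge expansion of Definition~\ref{multi-def} if charges agree, invoking also the $GG$ expansion of Definition~\ref{GG-def}). These are Gaussian integration by parts identities and remain valid when the ``focus'' atom is external. The leading terms are of the form $T_{\fa_i,\fa_{i-1}\fa_{i+1}}\cdot f(G)$, which we then substitute by the $\nonuni$ of Lemma~\ref{def nonuni-T}. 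The first term $m\Theta_{\fa_i\fa_{i-1}}\overline G_{\fa_{i-1}\fa_{i+1}}$ and the $B^{(\mu)}$, $\wt B^{(\nu)}$, $\mathscr B^{(\mu)}$ terms are deterministic insertions; the $\cal Q_{\fa_i,\fa_{i-1}\fa_{i+1}}$ part is a $Q$--graph that will eventually be killed by $\E$; and $\Err$ is absorbed into the $W^{-D}$ error. The subleading terms of the original local expansion either introduce a new standard neutral internal atom (again yielding a $T$--variable, to be substituted at the next step) or produce light weights / extra off--diagonal edges that raise the scaling order. In either case each iteration reduces the number of unexpanded random $G$ edges or strictly increases the scaling order, so the whole procedure terminates after at most $C_{p,D}$ iterations.

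The main obstacle is the preservation of the generalized doubly connected property with external molecules (Definition~\ref{def 2netex} and Definition~\ref{GSS_external}) throughout the iteration, so that Lemma~\ref{lemma_boundgen2net} is applicable at the end. After collapsing $\fa_1,\ldots,\fa_{2p}$ into a single internal molecule, the initial $2p$--gon becomes a single molecule carrying self--loops, so the condition is vacuous at step zero. When we substitute a $T$--variable $T_{\fa_i,\fa_{i-1}\fa_{i+1}}$ using Lemma~\ref{def nonuni-T}, the inserted block $\cal D^{(\mu)}_{x,\fb_1\fb_2}$ is doubly connected with $x,\fb_1,\fb_2$ regarded as internal atoms; identifying $\fb_1=\fa_{i-1}$, $\fb_2=\fa_{i+1}$ and viewing all $\fa_j$'s as merged into one internal molecule, this inserted block continues to provide, for every new internal atom, a black path and a disjoint blue (possibly ghost) path back to the external super--molecule. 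A bookkeeping analogous to Lemma~\ref{lem localgood} (for local expansions) and Lemma~\ref{lem globalgood} (for global substitutions) --- but tracking connectivity to the \emph{external} super--molecule rather than to internal ones --- shows that the generalized doubly connected property is inherited at each step. A parallel argument controls the scaling orders versus ghost--edge counts, via the bound \eqref{ordG1} built into Lemma~\ref{def nonuni-T}.

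At termination, every output $\cal G^{(\omega)}_{\fa_1,\ldots,\fa_{2p}}$ is deterministic, connected as a whole, generalized doubly connected with external atoms $\fa_1,\ldots,\fa_{2p}$, and has all its ghost edges inside the internal atoms; thus it meets the hypotheses of Lemma~\ref{lemma_boundgen2net}. Applying \eqref{bound 2net weakdeter} to each such graph and using \eqref{ordG1} together with the condition \eqref{Lcondition1} to absorb the factor $(L^2/W^2)^{k_{\gh}(\cal G^{(\omega)})}$ into the $W^{-\ord(\cal G^{(\omega)})\cdot d/2}$ factor (up to $W^\epsilon$ tolerated by $\prec$) yields
\[
\frac{1}{(K^d)^{2p}}\sum_{\fa_1,\ldots,\fa_{2p}\in\cal I}\cal G^{(\omega)}_{\fa_1,\ldots,\fa_{2p}}\prec\Big(\frac{W^{d-4}}{K^{d-4}}\Big)^{2p-1}\cdot\frac{1}{K^d}=\Big(\frac{1}{W^4 K^{d-4}}\Big)^{2p-1},
\]
which is \eqref{ordGkappa}. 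Summing over the $\OO(1)$ many output graphs and combining with the $W^{-D}$ error completes \eqref{mlevelTgdef pgon}, from which Lemma~\ref{lem normA} follows by \eqref{eq_pgons}.
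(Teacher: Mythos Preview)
Your expansion strategy is broadly the same as the paper's, and the argument up to the point where you obtain the deterministic graphs $\cal G^{(\omega)}_{\fa_1,\ldots,\fa_{2p}}$ is sound (modulo one omission: when a $Q$-graph $\cal Q_{\fa_i,\fa_{i-1}\fa_{i+1}}$ is inserted while the rest of the polygon still carries random $G$-edges, you cannot kill it by expectation immediately; you must first apply the $Q$-expansions of Section~\ref{sec defnQ}, as the paper does explicitly).

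The real gap is in your final estimate. The displayed identity
\[
\Big(\frac{W^{d-4}}{K^{d-4}}\Big)^{2p-1}\cdot\frac{1}{K^d}\;=\;\Big(\frac{1}{W^4 K^{d-4}}\Big)^{2p-1}
\]
is simply false for $p\ge 2$. More fundamentally, the only scaling-order information you invoke is \eqref{ordG1}, which together with \eqref{Lcondition1} lets you absorb $(L^2/W^2)^{k_{\gh}}$ into at most a factor $W^{-d}$ (when $k_{\gh}=0$ you get nothing at all). Plugging that into \eqref{bound 2net weakdeter} yields $(W^{d-4}/K^{d-4})^{2p-1}\cdot W^{-d}$, which is larger than the target $(W^{-4}K^{-(d-4)})^{2p-1}$ by a factor $W^{d(2p-2)}$. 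What is missing is a lower bound of the form $\ord\gtrsim 2(2p-1)$, and Lemma~\ref{lemma_boundgen2net} applied as a black box does not supply it.

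The paper does not cite Lemma~\ref{lemma_boundgen2net}; it \emph{repeats its proof} down to the auxiliary graphs $\cal G_{\omega,\gamma,aux}$ supported only on the $q\le 2p$ external atoms. Because each $\cal G_{\omega,\gamma,aux}$ is connected on $q$ vertices, it has at least $q-1$ edges and hence scaling order $\ge 2(q-1)$. Feeding this into \eqref{bound_aux_wgamma} gives $(K^{-d})^{2p-q}(W^{d-4}/K^{d-4})^{q-1}W^{-(q-1)d}=(K^{-d})^{2p-q}(W^{-4}K^{-(d-4)})^{q-1}$, and since $K\ge W$ this is $\le (W^{-4}K^{-(d-4)})^{2p-1}$. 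That connectivity-based order bound on the \emph{reduced} graph is the missing ingredient; your argument needs either this step or an independent proof that every $\cal G^{(\omega)}$ has scaling order at least $2(2p-1)+k_{\gh}(n-1)$.
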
 

%\begin{proof}[Proof of Lemma \ref{lem normA}]
%By \eqref{mlevelTgdef pgon} and \eqref{bound 2net weakdeter}, we have that for any $2p$-gon graph, 
%\begin{align*}
%\frac{1}{\left(K^d\right)^{2p}}\sum_{\fa_1,\fa_2, \cdots, \fa_{2p}\in \cal I} \E \cal G_{x_1,\cdots,x_{2p}} & = \frac{1}{\left(K^d\right)^{2p}}\sum_\omega \sum_{\fa_1,\fa_2, \cdots, \fa_{2p}\in \cal I} \cal G^{(\omega)}_{\fa_1,\cdots,\fa_{2p}} +\OO(W^{-D}) \\
%&\prec \left(\frac{W^{d-4}}{K^{d-4}}\right)^{2p-1} W^{-(2p-1) d} = \left(\frac{1}{W^4K^{d-4}}\right)^{2p-1} ,
%\end{align*}
%from which we immediately obtain \eqref{Moments method2}. This concludes Lemma \ref{lem normA}.
%\end{proof}
%Finally, we give the proof of Lemma \ref{lemma pgon expand}.

\begin{proof}%[Proof of Lemma \ref{lemma pgon expand}]
%The proof of Lemma \ref{lemma pgon expand} is based on the same expansion strategy as in Strategy \ref{strat_global}, 
The expansion \eqref{mlevelTgdef pgon} can be obtained by applying Strategy \ref{strat_global} with the following modifications: 
\begin{enumerate}
	\item we will use the $\nonuni$ \eqref{mlevelTgdef weak} instead of the $n$-th order $T$-expansion; 
	\item we will use the globally standard property with external molecules defined in Definition \ref{GSS_external};
	\item we will stop the expansion of a graph if it is deterministic, its size is less than $W^{-D}$ or it is a $Q$-graph.
\end{enumerate}  
%except that in Steps 2 and 3 we will replace the $t_{x,y_1y_2}$ with the non-universal $T$-expansion as given in Lemma \ref{def nonuni-T}, instead of the $(n-1)$-th order $T$-expansion as in Definition \ref{def genuni}. In fact, 
The proof is actually easier than that for Theorem \ref{incomplete Texp}, because the $\nonuni$ takes a simpler form with only two types of graphs: graphs with deterministic maximal subgraphs and $Q$-graphs. Moreover, $Q$-graphs from the expansion of $\cal G_{\fa_1,\cdots,\fa_{2p}}$ will vanish after taking expectation, so we do not need to keep them. We now give more details. % in the following proof. 

First, we apply local expansions to $\cal G_{\fa_1,\cdots,\fa_{2p}}$ to get a linear combination of locally standard graphs without $P/Q$ labels. The new atoms generated in this process are all included into the external molecules around $\fa_i$, $1\le i \le 2p$. Then we pick any $t_{x,y_1y_2}$ variable in one of the new graphs, say \smash{$\wt{\cal G}$}, and replace it with the $\nonuni$ \eqref{mlevelTgdef weak}: 
\be\label{mlevelTgdef weak3} 
\begin{split}
	t_{x,y_1 y_2}&=   m  \Theta_{xy_1}\overline G_{y_1y_2} +  \sum_{\mu} B^{(\mu)}_{xy_1}\overline G_{y_1y_2} f_\mu (G_{y_1y_1})+   \sum_{\nu} \wt B^{(\nu)}_{xy_2} G_{y_2y_1} \wt f_\nu(G_{y_2y_2})  \\
	&+   \sum_{\mu} \mathscr B^{(\mu)}_{x , y_1 y_2}g_\mu(G_{y_1y_1},G_{y_2y_2},\overline G_{y_1y_2},  G_{y_2y_1}) + \cal Q_{x,y_1y_2}   + \Err_{x,y_1y_2} .
\end{split}
\ee
Here we again include the graphs containing 
%\be\label{mlevelTgdef weak3} 
%\begin{split}
%t_{x,y_1 y_2}&=|m|^{-2}\left[m  \Theta_{xy_1}\overline G_{y_1y_2} +  \sum_{\mu} B^{(\mu)}_{x y_1}\overline G_{y_1y_2} f_\mu(G_{y_1y_1})+   \sum_{\nu} \wt B^{(\nu)}_{x y_2} G_{y_1y_2} \wt f_\nu(G_{y_2y_2})\right] \\
%&+|m|^{-2} \sum_{\kappa}\cal Q^{(\omega)}_{x,y_1y_2}  + \OO_\prec(W^{-D}), 
%\end{split}
%\ee
%where we include 
$t_{x, y_1 y_2}-T_{x,y_1y_2}$ into the second and third terms on the right-hand side of \eqref{mlevelTgdef weak3} (where we abuse the notation a little bit). If we replace $t_{x,y_1y_2}$ with a graph in the first four terms on the right-hand side of \eqref{mlevelTgdef weak3}, then we will get a generalized doubly connected graph which either does not contain any internal molecule or has a deterministic maximal subgraph. Moreover, the new graph either has one fewer blue solid edge or is of smaller size than $\size(\wt{\cal G})$ by a factor $W^{-c_1}$. If we replace $t_{x,y_1y_2}$ with a graph in $\Err_{x,y_1y_2}$, then the new graph is of size $\le W^{-D}$. 

Now suppose we replace $t_{x,y_1y_2}$ in $\wt {\cal G}$ with a $Q$-graph in $ \cal Q_{x,y_1y_2} $ and apply the $Q$-expansions. Then by Lemma \ref{lem globalgood}, the resulting non-$Q$ graphs are globally standard with external molecules in the sense of Definition \ref{GSS_external}. Here we draw two examples of such molecular graphs with $2p=6$: %  obtained from $Q$-expansions 
%More precisely, we will pull blue or red solid edges to the atom in the label of the $Q$-graph as in the following figure:
\begin{center}
\includegraphics[width=11cm]{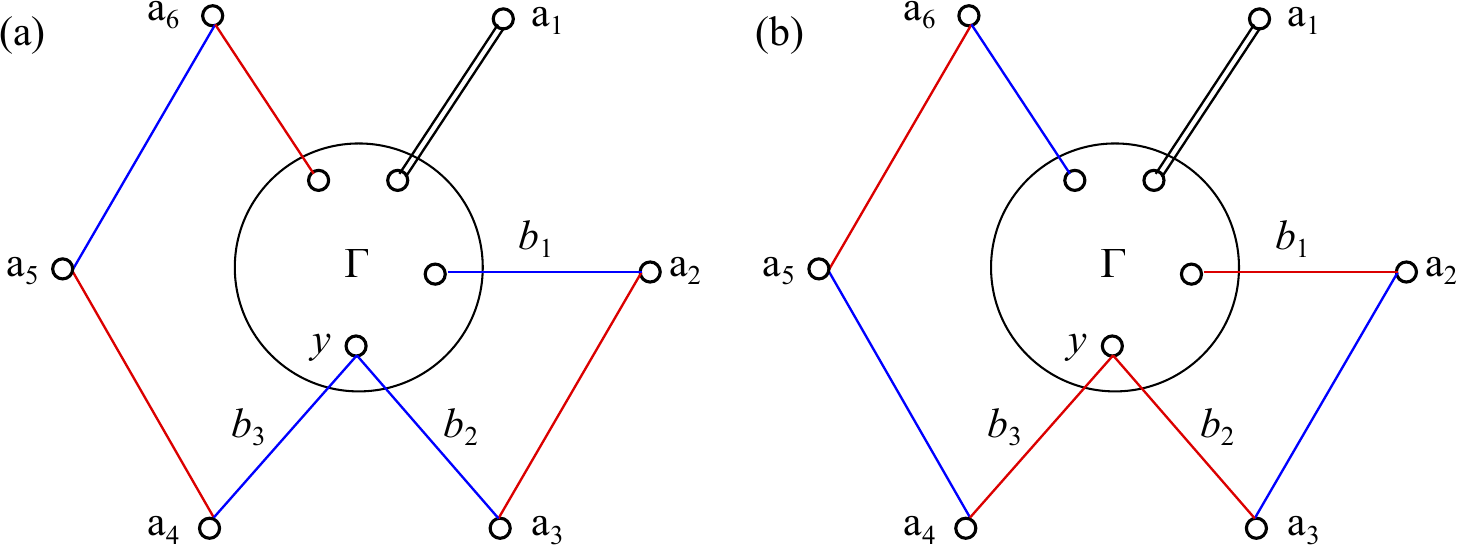}
\end{center}
Inside the black circle is a subgraph containing all internal molecules, and $y$ is the atom in the label of the $Q$-graph (where we used $y$ to label the molecule containing it). Graph (a) does not contain any isolated subgraph with non-deterministic closure and has a pre-deterministic order: internal blue solid edges $\preceq b_1\preceq b_2\preceq b_3$. Graph (b) may contain isolated subgraphs with non-deterministic closures, but they are weakly isolated due to the two red solid edges $b_2$ and $b_3$ connected with $y$. 
%with a pre-deterministic order as follows: the internal edges in $\Gamma$, and the edges $b_1, b_2, b_3$. In the second graph, the subgraph $\Gamma$ is weakly isolated.  Here are only draw the cases where only two edges are pulled, while the other cases with more edges pulled can be analyzed in a similar way. 
Now, for each resulting graph from $Q$-expansions, we either perform local expansions on atoms in the MIS as in Step 1 of Strategy \ref{strat_global}, or expand the first blue solid edge in a pre-deterministic order of the MIS with non-deterministic closure as in Step 3 of Strategy \ref{strat_global} by plugging into the non-universal $T$-expansion. 

Continuing the expansions, we will get a sum of graphs that are generalized doubly connected with external molecules, and whose internal molecules are only connected with deterministic edges. In other words, all the $G$ edges are inside or between external molecules as in the following example: % of a molecular graph: % as in the following figure: 
\begin{center}
\includegraphics[width=5cm]{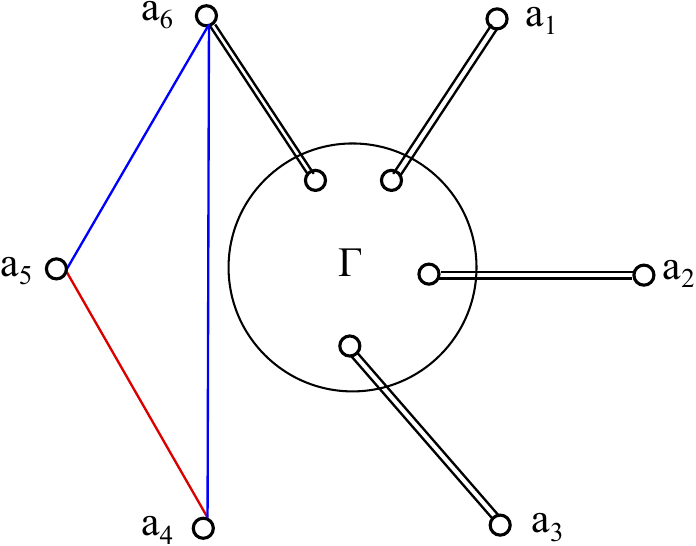}
\end{center}
Now, we repeat the previous expansions, that is, we first apply local expansions and then pick any $t_{x,y_1y_2}$ variable to expand using \eqref{mlevelTgdef weak3}. This may generate a new subset of internal molecules. %if we replace  $t_{x,y_1y_2}$ with a $Q$-graph and apply $Q$-expansions. 
Again, we will apply Strategy \ref{strat_global} to the resulting graphs. Notice that since the old internal molecules are connected only with deterministic edges, they will not be affected in the rest of the expansion process.

After each step of expansion, all non-$Q$ graphs satisfy the following properties: it is generalized doubly connected, and all molecules are connected together through $\dashed$ edges. % and all ghost edges are between internal atoms. 
Our expansions will stop when every graph is deterministic, of size $\le W^{-D}$, or a $Q$-graph. Then, taking expectation gives \eqref{mlevelTgdef pgon} with deterministic graphs \smash{$\cal G^{(\omega)}_{\fa_1,\cdots,\fa_{2p}}$} satisfying the assumptions of Lemma \ref{lemma_boundgen2net}.
%is a deterministic doubly connected graph in the sense of Definition \ref{def 2netex}, and is a connected graph such that the ghost edges are only inside the subgraph induced on the internal molecules. These properties justify assumptions of Lemma \ref{lemma_boundgen2net} for $\cal G^{(\omega)}_{x_1,\cdots,x_{2p}}$. 
Notice that throughout the expansion process, the number of ghost edges in each graph (including \smash{$\cal G^{(\omega)}_{\fa_1,\cdots,\fa_{2p}}$}) are under control due to the conditions \eqref{ordG1} and \eqref{ordG2}. Now, repeating the proof of Lemma \ref{lemma_boundgen2net}, we can bound that  
\be\label{ordG3}\frac{1}{\left(K^d\right)^{2p}}\sum_{\fa_1,\fa_2, \cdots, \fa_{2p}\in \cal I}   \cal G^{(\omega)}_{\fa_1,\cdots,\fa_{2p}} \prec \frac{1}{\left(K^d\right)^{2p}}\sum_{q=1}^{2p}\sum_{\omega,\gamma}\sum_{\fa_{1},\fa_2, \cdots, \fa_{q}\in \cal I} \cal G_{\gamma,aux}^{(\omega)} \left( \fa_1,\fa_2,\cdots, \fa_{q}\right),\ee
where we followed the notation in \eqref{bound 2net weakdeter2}: $\cal G_{\gamma,aux}^{(\omega)}$ are connected graphs consisting of external atoms $\fa_1, \cdots, \fa_q$, and double-line and purple solid edges. Using \eqref{bound_aux_wgamma} and the fact 
$$\ord\left( \cal G_{\gamma,aux}^{(\omega)}\right)=2\#\{\text{$\dashed$ and purple solid edges in }\cal G_{\gamma,aux}^{(\omega)}
\}\ge 2(q-1),$$
we get that 
$$\frac{1}{\left(K^d\right)^{2p}}\sum_{\fa_1,\fa_2, \cdots, \fa_{q}\in \cal I} \cal G_{\omega,\gamma,aux} \left( \fa_1,\fa_2,\cdots, \fa_{q}\right) \prec \left(\frac{1}{K^d}\right)^{2p-q}\left(\frac{W^{d-4}}{K^{d-4}}\right)^{q-1}  W^{-(q-1)d}  .
$$
Plugging it into \eqref{ordG3}, we conclude \eqref{ordGkappa}.
%For each $\cal G^{(\omega)}_{\fa_1,\cdots,\fa_{2p}}$, it is trivial to check that the subgraph induced on the internal molecules has size $\le 1$. Moreover, there are at least $2p-1$ $\dashed$ edges connected with the external molecules, which have size at least $W^{-(2p-1)d}$. Hence we obtain \eqref{ordGkappa}, which concludes the proof.
\end{proof}

\appendix

\section{Proof of Lemma \ref{hard lemmdot} and Lemma \ref{hard lemmQ}}\label{sec PDG}

All graphs in this section are \emph{molecular graphs without red solid edges}. %For any doubly connected graph, we assume that it is automatically associated with a black net and a blue net. 
We will say a diffusive edge is black (resp. blue) if this edge is used in the black (resp. blue) net.   
% Moreover, for the proof of Lemma \ref{hard lemmdot}, it is convenient to introduce the \emph{generalized molecular graphs,} which allows for dotted edges between molecules. More precisely, a dotted edge between a pair of molecules implies that the two molecules are the same molecule, but we do not merged them. The reason is that in the setting of Lemma \ref{hard lemmdot}, adding a dotted edge between components $\Gamma_{i}$ and $\Gamma_j$ without merging them gives a graph that has a similar structure to the original graph. In particular, we can make use of the SPD structure of the original graph to simplify the proof greatly. For a generalized molecular graph, it is {\bf doubly connected} if and only if after merging molecules connected by dotted edges, the resulting molecular graph is doubly connected in the sense of Definition \ref{def 2net}. Then 
%similar to Definition \ref{defn pivotal}, in a generalized molecular graph $\cal G$ that is doubly connected, we say a solid blue or a black $\dashed$ edge is {\bf redundant} if the graph still satisfies the doubly connected property without this edge. 
In the proof, whenever we say ``we can change a blue solid edge into a $\dashed$ edge", we actually mean ``this edge is redundant and hence we can change it into a $\dashed$ edge". Thus showing a subgraph is pre-deterministic is equivalent to showing that ``we can change the blue solid edges in this subgraph into $\dashed$ edges one by one". %we are allowed to change a blue solid edge into a $\dashed$ edge only when this edge is redundant. 

First, we state the following two claims, whose proofs are trivial.
% We first prove several lemmas that will simplify the graph structures.
\begin{claim}\label{trivial_graph1}
Given a graph $\cal G$, let $\cal S$ be a subset of molecules such that the subgraph induced on $\cal S$ is doubly connected. Then $\cal G$ is doubly connected if and only if the following quotient graph $\cal G/\cal S$ is doubly connected: $\cal G/\cal S$ is obtained by treating the subset of molecules $\cal S$ as one single vertex, and the edges connected with $\cal S$ in $\cal G$ are now connected with this vertex $\cal S$ in $\cal G/\cal S$. 
\end{claim}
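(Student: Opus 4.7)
The plan is to prove both implications by explicit construction of the relevant black and blue nets, using the fact that all of the nets in question live on disjoint sets of edges (edges inside $\cal S$ vs.\ edges incident to $\cal S$ but with one endpoint outside $\cal S$ vs.\ edges entirely outside $\cal S$). By Definition \ref{def 2net} we may, in both directions, assume that any external molecules have been removed, since the doubly connected property of a graph with external molecules is defined via the subgraph on the internal molecules. Also, after contracting $\cal S$ to a single vertex, that vertex should be treated as an internal vertex of $\cal G/\cal S$, so the assertion really concerns doubly connectedness of a subgraph on internal molecules.

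For the forward direction, suppose $\cal G$ admits a black net $\cal B_{black}$ and a blue net $\cal B_{blue}$ satisfying Definition \ref{def 2net}. I will define $\cal B_{black}^{\cal G/\cal S}$ (resp.\ $\cal B_{blue}^{\cal G/\cal S}$) as the subset of edges of $\cal B_{black}$ (resp.\ $\cal B_{blue}$) that are not internal to $\cal S$, i.e.\ edges with at least one endpoint outside $\cal S$. These two sets are disjoint because $\cal B_{black}\cap\cal B_{blue}=\emptyset$. It remains to check that each contains a spanning tree of $\cal G/\cal S$. This follows from a standard fact about quotient graphs: any spanning tree of $\cal G$ projects to a connected spanning subgraph of the quotient (by collapsing edges inside $\cal S$), which then contains a spanning tree.

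For the reverse direction, suppose the subgraph $\cal G|_{\cal S}$ is doubly connected with nets $\cal B_{black}^{\cal S}, \cal B_{blue}^{\cal S}$, and $\cal G/\cal S$ is doubly connected with nets $\cal B_{black}^{\cal G/\cal S}, \cal B_{blue}^{\cal G/\cal S}$. Define
\[
\cal B_{black} := \cal B_{black}^{\cal S} \cup \cal B_{black}^{\cal G/\cal S}, \qquad \cal B_{blue} := \cal B_{blue}^{\cal S}\cup \cal B_{blue}^{\cal G/\cal S},
\]
where the edges of the quotient nets are reinterpreted as the corresponding edges of $\cal G$. These two subsets are disjoint because $\cal B_{black}^{\cal S}\cap \cal B_{blue}^{\cal S}=\emptyset$, $\cal B_{black}^{\cal G/\cal S}\cap \cal B_{blue}^{\cal G/\cal S}=\emptyset$, and the edges with an endpoint outside $\cal S$ are disjoint from the edges entirely inside $\cal S$. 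I then verify that each of $\cal B_{black}$ and $\cal B_{blue}$ contains a spanning tree of $\cal G$: given a spanning tree $T^{\cal G/\cal S}\subset \cal B_{black}^{\cal G/\cal S}$ of $\cal G/\cal S$ and a spanning tree $T^{\cal S}\subset \cal B_{black}^{\cal S}$ of $\cal G|_{\cal S}$, their union (in $\cal G$) is a connected subgraph covering every molecule of $\cal G$, hence contains a spanning tree; the same argument works for the blue color.

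I do not expect any real obstacle: the whole claim reduces to the elementary fact that spanning trees lift and descend along contractions, together with the bookkeeping that edges inside $\cal S$ and edges outside $\cal S$ occupy disjoint ``slots'' in the two color classes. The only mildly delicate point is verifying that the red solid edges (which by convention are not tracked in the doubly connected property) and the external molecules (which are handled separately in Definition \ref{def 2net}) do not interfere; this is handled by the observation above that we may reduce to the purely internal subgraph from the outset.
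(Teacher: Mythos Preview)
Your proof is correct; the paper itself does not prove this claim at all, declaring it trivial alongside Claim~\ref{trivial_graph3}. Your explicit construction of the black and blue nets in each direction (projecting nets of $\cal G$ to the quotient, and lifting by taking unions of the $\cal S$-nets with the quotient nets) is precisely the natural elementary argument the authors have in mind.
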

%\begin{proof}
%	The proof is almost trivial by definition.
%\end{proof}

\begin{claim}\label{trivial_graph3}
Suppose a blue solid edge $b_0$ is redundant in a doubly connected graph $\cal G$. We replace a $\dashed$ edge $b$ between two molecules $\cal M_1$ and $\cal M_2$ with a path of two $\dashed$ edges: $b_1$ from $\cal M_1$ to another molecule $\cal M$, and $b_2$ from $\cal M$ to $\cal M_{2}$. Then the edge $b_0$ is still redundant in the new graph. 
\end{claim}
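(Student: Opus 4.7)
\medskip

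The plan is to prove the claim by transferring the witness of redundancy for $b_0$ in $\cal G$ to a witness of redundancy for $b_0$ in the modified graph $\cal G'$, via an explicit substitution of the edge $b$ by the two-edge path through $\cal M$. Since $b_0$ is redundant in $\cal G$, Definition~\ref{defn pivotal} produces two disjoint edge collections $\cal B_{black}$ (of $\dashed$ edges) and $\cal B_{blue}$ (of blue solid and $\dashed$ edges) in $\cal G\setminus\{b_0\}$, each containing a spanning tree of all internal molecules. The goal is to build analogous collections $\cal B'_{black},\cal B'_{blue}$ in $\cal G'\setminus\{b_0\}$.

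First, I would branch on the role of $b$ in the original nets and define the new collections by a local replacement at $b$:
\begin{itemize}
\item if $b\in \cal B_{black}$, set $\cal B'_{black}:=(\cal B_{black}\setminus\{b\})\cup\{b_1,b_2\}$ and $\cal B'_{blue}:=\cal B_{blue}$;
\item if $b\in \cal B_{blue}$, set $\cal B'_{blue}:=(\cal B_{blue}\setminus\{b\})\cup\{b_1,b_2\}$ and $\cal B'_{black}:=\cal B_{black}$;
\item if $b$ is in neither net, simply keep $\cal B'_{black}:=\cal B_{black}$ and $\cal B'_{blue}:=\cal B_{blue}$.
\end{itemize}
In each case the required disjointness $\cal B'_{black}\cap \cal B'_{blue}=\emptyset$ is immediate, because $b_1$ and $b_2$ are the newly created $\dashed$ edges and in particular do not appear in either of the original collections.

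Second, I would verify that each of $\cal B'_{black}$ and $\cal B'_{blue}$ still contains a spanning tree of all internal molecules of $\cal G'\setminus\{b_0\}$. The underlying fact is that subdividing an edge of a tree by inserting one extra vertex and two edges produces another tree spanning the enlarged vertex set; applying this to the spanning tree of whichever net contained $b$ shows that its replacement $\{b_1,b_2\}$ restores the missing $\cal M_1$--$\cal M_2$ connectivity and incorporates $\cal M$ into that same net. Under the interpretation (consistent with the way this claim will be invoked in the proofs of Lemmas~\ref{hard lemmdot} and~\ref{hard lemmQ}) that $\cal M$ is a pre-existing molecule of $\cal G$, the molecule $\cal M$ is already spanned by the other net via its other incident edges, which are untouched by the substitution; so both new collections remain spanning. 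In the remaining case where $b$ was in neither net, neither spanning tree is altered and $\cal M$ is again spanned through its other connections.

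The one point deserving care is checking that the rerouting preserves acyclicity/connectivity of the spanning tree — this is the standard observation that edge subdivision of a tree yields a tree, so no genuine obstacle is anticipated. The whole argument should fit into a short paragraph once the case split is laid out.
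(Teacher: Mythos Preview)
Your proposal is correct and is the natural direct argument; the paper in fact omits the proof entirely, declaring it ``trivial.'' One small note: you need not worry about preserving acyclicity, since a \emph{net} in Definition~\ref{def 2net} is only required to \emph{contain} a spanning tree, so connectivity of each of $\cal B'_{black}$ and $\cal B'_{blue}$ on the internal molecules of $\cal G'\setminus\{b_0\}$ is all that is needed. Your reading that $\cal M$ is a pre-existing molecule of $\cal G$ is the intended one (compare the explicit ``we create a new molecule $\cal M_{new}$'' in Claims~\ref{trivial_graph4}--\ref{trivial_graph7}), and under that reading both original nets already span $\cal M$, which closes the only potential gap in the case split.
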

%\begin{proof}
%	The proof is almost  trivial by noticing that the path of edges $b_1$ and $b_2$ in the new graph plays the same role as the edge $b$ in the original graph $\cal G$.
%\end{proof}

Now Lemma \ref{hard lemmdot} follows from repeated applications of the above two claims. %Recall that the original graph before merging the molecules in $\Gamma_i$ and $\Gamma_j$ is denoted by $\cal G$.

\begin{proof}[Proof of Lemma \ref{hard lemmdot}]
%We mainly focus on the first statement, and the second statement in some sense is a special case of the first one. 
%For simplicity of presentation, in the following proof, we only focus on the isolated subgraphs $\Iso_i$. 
We replace $\Iso_{j+1}$ and its two external edges with a black $\dashed$ edge, and get the following graph (a) for $\Iso_i$:
	\begin{equation}\label{graph_adddot1}
	\parbox[c]{0.8\linewidth}{\center
		\includegraphics[width=13cm]{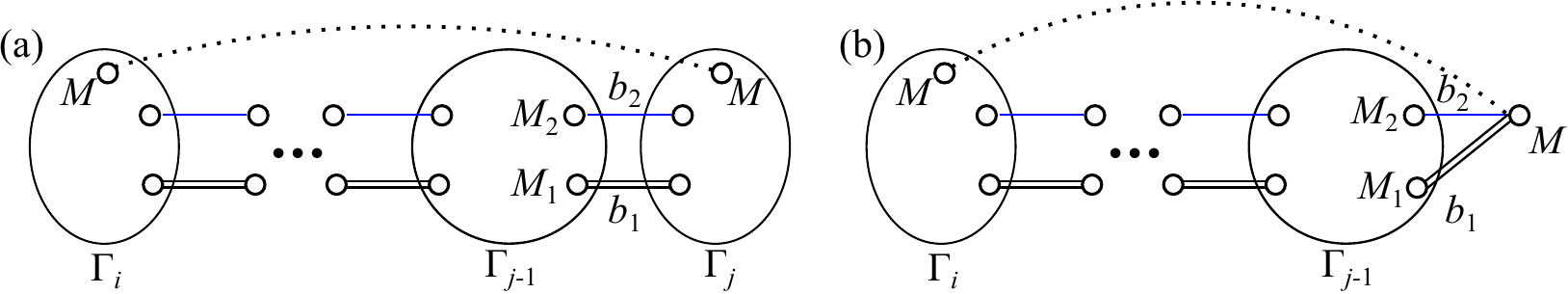}}
\end{equation}
Here we keep the names $\Gamma_l$, $i \le l \le j$, for the subgraph components inside the black circles. We also keep the dotted edge between $\Gamma_i$ and $\Gamma_j$ in order to have a clearer comparison with the original graph without this dotted edge. In particular, the ending molecules of this dotted edge should be understood as the same molecule, denoted by $\cal M$.

First, by the SPD property of the original graph $\cal G$, we know that the component $\Gamma_j$ in graph (a) of \eqref{graph_adddot1} is pre-deterministic, and hence we can change the blue solid edges in it into $\dashed$ edges one by one according to a pre-deterministic order. 
%\iffalse
%Then we get the first graph of \eqref{graph_adddot1}, where $\Gamma_j$ contains no plus $G$ edges and is doubly connected: 
%	\begin{equation}%\label{graph_adddot1}
%		\parbox[c]{0.9\linewidth}{\center
%			\includegraphics[width=14cm]{graph_adddot1.png}}
%	\end{equation}
%	Here as in \eqref{eq_keepdot}, we keep the dotted edge between two molecules in $\Gamma_i$ and $\Gamma_j$ in order to have a clearer comparison with the original graph without the dotted edge. 
%\fi
By Claim \ref{trivial_graph1}, graph (a) of \eqref{graph_adddot1} is equivalent (in the sense of doubly connected property) to graph (b) of \eqref{graph_adddot1} with $\Gamma_j$ replaced by a vertex. Furthermore, by merging the molecules connected by the dotted edge, we get the following graph (a): % of \eqref{graph_adddot1}. 
	\begin{equation}\label{graph_adddot2}
	\parbox[c]{0.8\linewidth}{\center
		\includegraphics[width=11cm]{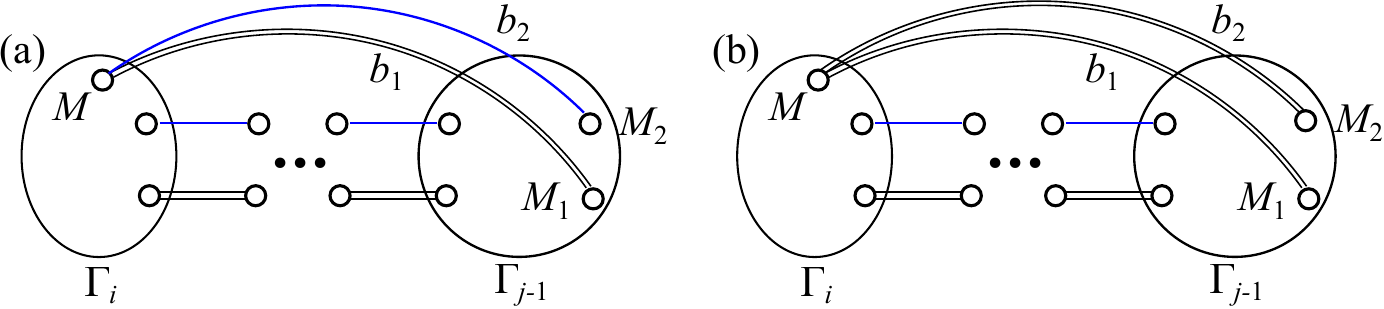}}
\end{equation}
%Hence, it suffices to show that the graph (a) in \eqref{graph_adddot2} is pre-deterministic. 
We claim that $b_2$ is redundant in graph (a) of \eqref{graph_adddot1}. Due to the doubly connected property of the original graph $\cal G$, removing the subgraph $\Iso_{j}$ and its two external edges still gives a doubly connected graph. Correspondingly, in graph (a) of \eqref{graph_adddot2}, if we remove the edges $b_1$ and $b_2$, we still get a doubly connected graph, so the edge $b_2$ is redundant. Then we can change $b_2$ into a $\dashed$ edge and get graph (b) of \eqref{graph_adddot2}.
	
%By property (i) of Definition \ref{def seqPDG}, we can check that removing the component $\Gamma_j$ and its external dotted, blue solid and $\dashed$ edges in the first graph of \eqref{graph_adddot1} leads to a graph that is doubly connected. (Otherwise, the original graph $\cal G$ cannot be doubly connected due to the fact that $\Iso_j$ is isolated in $\cal G$.)   Then the second graph in \eqref{graph_adddot1} is obtained by adding two extra edges $b_1$ and $b_2$ to a doubly connected graph. Hence $b_2$ is redundant, and we can change it into a $\dashed$ edge as in the following graph
%	\begin{equation}%\label{graph_adddot2}
%		\parbox[c]{0.8\linewidth}{\center
%			\includegraphics[width=4cm]{graph_adddot2.png}}
%	\end{equation}
	
Next, by property (iii) of Definition \ref{def seqPDG}, if we replace $b_1$ and $b_2$ with a single $\dashed$ edge $b$ between $\cal M_1$ and $\cal M_2$ in graph (b) of \eqref{graph_adddot2}, the component $\Gamma_{j-1}$ in the resulting graph becomes pre-deterministic.  
%component $\Gamma_j$ and its external dotted, blue solid and $\dashed$ edges with a $\dashed$ edge $b$ between $\cal M_1$ and $\cal M_2$ in the first graph of \eqref{graph_adddot1}, then the component $\Gamma_{j-1}$ in the resulting graph, say $\cal G'$, becomes pre-deterministic. Note that the graph \eqref{graph_adddot2} is obtained by replacing the $\dashed$ edge $b$ in $\cal G'$ with a path of two black $\dashed$ edges from $\cal M_1$ to $\cal M$, and from $\cal M$ to $\cal M_{2}$. 
Then using Claim \ref{trivial_graph3}, we get that the component $\Gamma_{j-1}$ in graph (b) of \eqref{graph_adddot2} is pre-deterministic, and we can change the blue solid edges in it into $\dashed$ edges one by one. % according to the same pre-deterministic order as in the component $\Gamma_{j-1}$ of graph $\cal G'$. 
After that, the subgraph induced on the molecule $\cal M$ and the molecules in $\Gamma_{j-1}$ is a doubly connected graph. Thus we can replace this subgraph with a vertex by Claim \ref{trivial_graph1} and obtain a graph with a similar structure as graph (a) of \eqref{graph_adddot2}.
%\begin{center}
%\includegraphics[width=4.5cm]{graph_adddot3.png}
%\end{center}
%Notice that it has a similar structure as the third graph in \eqref{graph_adddot1}. 
Hence we can repeat exactly the same argument again. 
	
Continuing in this way, we can show that the blue solid edges in $\Iso_{i}$ can be changed into $\dashed$ edges one by one according to an order given by the above argument. Hence $\Iso_{i}$ is pre-deterministic. 
\end{proof}

The proof of the case (i) of Lemma \ref{hard lemmQ} uses Claim \ref{trivial_graph1} and the following three claims.   

\begin{claim}\label{trivial_graph4}
Given a doubly connected graph $\cal G$, we construct a new graph $\cal G_{new}$ as follows: we create a new molecule $\cal M_{new}$; we add a $\dashed$ edge $b_1$ between $\cal M_{new}$ and a molecule $\cal M_1$ in $\cal G$, and a blue solid edge $b_2$ between $\cal M_{new}$ and a molecule $\cal M_2$ in $\cal G$; we replace a blue solid edge $b$ between molecules $\cal M_3$ and $\cal M_4$ in $\cal G$ with two blue solid edges $b_3$ between $\cal M_3$ and $\cal M_{new}$ and $b_4$ between $\cal M_4$ and $\cal M_{new}$. Then the blue solid edge $b_2$ is redundant in $\cal G_{new}$.
\end{claim}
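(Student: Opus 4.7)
The plan is to exhibit explicitly a black net and a blue net for $\cal G_{new}\setminus\{b_2\}$ that satisfy Definition \ref{def 2net}, using the given black and blue nets of the original doubly connected graph $\cal G$. Since $\cal G$ is doubly connected, by definition there exist disjoint collections $\cal B_{black}$ (of $\dashed$ edges) and $\cal B_{blue}$ (of blue solid or $\dashed$ edges) in $\cal G$, each containing a spanning tree of all molecules of $\cal G$. My goal is to augment these collections in a way that accommodates the new molecule $\cal M_{new}$, that avoids using $b_2$, and that keeps the two nets disjoint.

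First I would define the new black net as $\cal B_{black}^{new}:=\cal B_{black}\cup\{b_1\}$. Because $b_1$ is a $\dashed$ edge between the new molecule $\cal M_{new}$ and $\cal M_1$ (which lies in the original spanning tree inside $\cal B_{black}$), attaching $b_1$ extends this spanning tree to one covering all molecules of $\cal G_{new}$, so $\cal B_{black}^{new}$ contains a spanning tree. For the blue net, I would split into two cases according to whether the edge $b$ belongs to $\cal B_{blue}$ or not. If $b\in\cal B_{blue}$, then I would set $\cal B_{blue}^{new}:=(\cal B_{blue}\setminus\{b\})\cup\{b_3,b_4\}$: replacing $b$ by the connected path $b_3, b_4$ through $\cal M_{new}$ preserves the spanning-tree property while adding $\cal M_{new}$ to the tree. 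If $b\notin\cal B_{blue}$, then $\cal B_{blue}$ already spans all molecules of $\cal G$, and I would set $\cal B_{blue}^{new}:=\cal B_{blue}\cup\{b_3\}$, so $\cal M_{new}$ is attached via $b_3$ to $\cal M_3$. In both cases the resulting blue collection contains a spanning tree of $\cal G_{new}$ and avoids $b_2$.

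It then remains to verify the three defining conditions of Definition \ref{def 2net}: (a) $\cal B_{black}^{new}\cap\cal B_{blue}^{new}=\emptyset$, which holds since $\cal B_{black}^{new}$ only adds the $\dashed$ edge $b_1$ while $\cal B_{blue}^{new}$ only adds blue solid edges among $\{b_3,b_4\}$ and removes (possibly) $b$, all of which are blue solid; (b) $\cal B_{black}^{new}$ contains a spanning tree of $\cal G_{new}$; and (c) $\cal B_{blue}^{new}$ contains a spanning tree of $\cal G_{new}$. Since $b_2$ is not used in either collection, the graph $\cal G_{new}\setminus\{b_2\}$ is doubly connected, which is precisely the statement that $b_2$ is redundant.

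I expect this to be essentially a bookkeeping argument, and the only potentially subtle point is the case split according to whether $b\in\cal B_{blue}$: in the case $b\in\cal B_{blue}$ one must verify that removing $b$ from the spanning tree and re-attaching through the two-edge path $b_3,b_4$ does not disconnect anything, which follows because $\cal M_{new}$ is incident only to $b_1,b_2,b_3,b_4$ and we have retained $\{b_3,b_4\}$. No further combinatorial obstacle is anticipated, and the conclusion follows directly from the definition of redundancy in Definition \ref{defn pivotal}.
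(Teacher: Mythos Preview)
Your proposal is correct and follows essentially the same approach as the paper: augment the black net by $b_1$ and the blue net by $\{b_3,b_4\}$ (in place of $b$) so that $\cal M_{new}$ is covered without using $b_2$. The only difference is that the paper writes the single formula $\cal B_{blue}\cup\{b_3,b_4\}\setminus\{b\}$ without splitting into cases, which subsumes both of your cases since a net is only required to contain a spanning tree.
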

\begin{proof}
We illustrate the setting of this claim with the following figure, where graphs (a) and (b) represent $\cal G$ and $\cal G_{new}$, respectively:
\begin{equation} \label{eq_add_Q1}
\parbox[c]{0.5\linewidth}{\center \includegraphics[width=7cm]{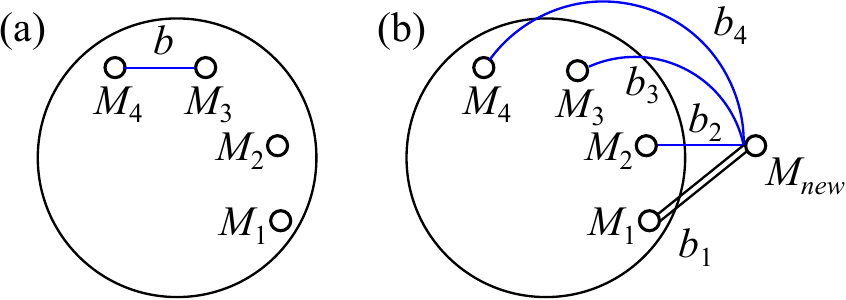}} 
\end{equation}
The original graph $\cal G$ is doubly connected with black and blue nets denoted by $\cal B_{black}$ and $\cal B_{blue}$. In $\cal G_{new}$, the edge $b_1$ connects the new molecule $\cal M_{new}$ to $\cal M_1$, and hence $\cal B_{black} \cup\{b_1\}$ is a black net in $\cal G_{new}$. Moreover, the path of edges $b_3$ and $b_4$ in $\cal G_{new}$ can replace the role of $b$ in the blue net $\cal B_{blue}$, and they also connect $\cal M_{new}$ to the molecules in $\cal G$. Hence $\cal B_{blue} \cup\{b_3,b_4\}\setminus \{b\}$ is a blue net in $\cal G_{new}$. Since $b_2$ is not used in this blue net, it is redundant in $\cal G_{new}$.
\end{proof}

\begin{claim}\label{trivial_graph5}
Given a doubly connected graph $\cal G$, we construct a new graph $\cal G_{new}$ as follows: we create a new molecule $\cal M_{new}$; we replace a $\dashed$ edge $b_0$ between molecules $\cal M_1$ and $\cal M_2$ in $\cal G$ with two $\dashed$ edges $b_1$ between $\cal M_1$ and $\cal M_{new}$, and $b_2$ between $\cal M_2$ and $\cal M_{new}$; we replace a blue solid edge $b$ between molecules $\cal M_3$ and $\cal M_4$ in $\cal G$ with two blue solid edges $b_3$ between $\cal M_3$ and $\cal M_{new}$, and $b_4$ between $\cal M_4$ and $\cal M_{new}$. Then we have that:
\begin{itemize}
\item[(i)] If a blue solid edge is not equal to $b$ and is redundant in $\cal G$, then it is also redundant in $\cal G_{new}$. 
			
\item[(ii)] If the edge $b$ is redundant in $\cal G$, then either $b_3$ or $b_4$ is redundant. Moreover, if we change one redundant edge of them into a $\dashed$ edge, the other edge also becomes redundant. 
\end{itemize}
\end{claim}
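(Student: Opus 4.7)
The plan is to prove both parts by explicit construction of disjoint nets in the modified graphs, starting from nets of $\cal G \setminus \{e\}$ for part (i) and of $\cal G \setminus \{b\}$ for part (ii). The operation $\cal G \mapsto \cal G_{new}$ simultaneously subdivides the dashed edge $b_0$ and the blue solid edge $b$ through a shared new molecule $\cal M_{new}$, so $\cal M_{new}$ carries four edges --- the dashed pair $b_1, b_2$ and the blue solid pair $b_3, b_4$ --- which provide just enough flexibility to rearrange the nets.

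For part (i), I fix disjoint nets $\cal B'_{black}, \cal B'_{blue}$ of $\cal G \setminus \{e\}$ and build $\cal B_{black}^{new}, \cal B_{blue}^{new}$ in $\cal G_{new} \setminus \{e\}$ by case analysis on which of these nets contain $b_0$ and $b$. In the cleanest case $b_0 \in \cal B'_{black}$ and $b \in \cal B'_{blue}$ I substitute the path $\{b_1, b_2\}$ for $b_0$ in the black net and $\{b_3, b_4\}$ for $b$ in the blue net; when $b_0$ or $b$ lies in neither net I instead add a single edge at $\cal M_{new}$ (namely $b_1$ for black and one of $b_3, b_4$ for blue) to include the new molecule. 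The delicate sub-case is $b_0 \in \cal B'_{blue}$: removing $b_0$ splits the blue spanning tree into two components $T_1 \ni \cal M_1$ and $T_2 \ni \cal M_2$, which I reconnect through $\cal M_{new}$ by placing exactly one of $b_1, b_2$ into the blue net together with one of $b_3, b_4$, while the remaining dashed edge of $\{b_1, b_2\}$ enters the black net. Disjointness of the constructed nets is automatic because $b_1, b_2$ are dashed while $b_3, b_4$ are blue solid.

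For part (ii), I start from disjoint nets $\cal B'_{black}, \cal B'_{blue}$ of $\cal G \setminus \{b\}$ and try to construct nets of $\cal G_{new} \setminus \{b_3\}$. The only difference from part (i) is that $b_3$ is unavailable, so the blue solid bridge through $\cal M_{new}$ must be $b_4$. When $b_0 \in \cal B'_{black}$, placing $\{b_1, b_2\}$ in black and $b_4$ in blue works immediately. When $b_0 \in \cal B'_{blue}$, I perform a sub-case analysis on whether $\cal M_3, \cal M_4$ lie in the same component of $\cal B'_{blue} \setminus \{b_0\}$ or in different ones; in each configuration exactly one of $\{b_1, b_2\}$ combined with $b_4$ suffices to reconnect $T_1$ and $T_2$ through $\cal M_{new}$ in the blue net, and the remaining dashed edge completes the black net at $\cal M_{new}$. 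The symmetric argument with $b_3 \leftrightarrow b_4$ shows $b_4$ is likewise redundant in $\cal G_{new}$, so both $b_3$ and $b_4$ are in fact redundant, which gives the ``either / or'' statement a fortiori.

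The second assertion of (ii) --- that after changing the redundant $b_3$ into a dashed edge to form $\cal G'_{new}$ the edge $b_4$ is redundant in $\cal G'_{new}$ --- is then straightforward: in $\cal G'_{new} \setminus \{b_4\}$ the molecule $\cal M_{new}$ carries the three dashed edges $b_1, b_2, b_3$, leaving ample room to place two of them into disjoint nets (for instance $\{b_1, b_2\}$ in the blue net substituting for $b_0$ when $b_0 \in \cal B'_{blue}$, together with $b_3$ in the black net, or the analogous assignment when $b_0 \in \cal B'_{black}$). The main obstacle throughout will be the case $b_0 \in \cal B'_{blue}$ in part (ii), where the two dashed edges $b_1, b_2$ at $\cal M_{new}$ must be split between the black and blue nets; I will have to check in each geometric configuration of $\cal M_3, \cal M_4$ relative to the components $T_1, T_2$ that the single blue solid bridge $b_4$ indeed suffices, together with exactly one of $b_1, b_2$, to make the new blue net contain a spanning tree of $\cal G_{new} \setminus \{b_3\}$.
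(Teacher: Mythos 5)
Your overall strategy---fixing a pair of disjoint nets witnessing the redundancy (of $e$ in part (i), of $b$ in part (ii)) and performing explicit surgery on them, with a case split according to which net contains $b_0$ and $b$---is the same as the paper's. Your treatment of part (ii) is correct and matches the paper's Cases (C)--(D): since the nets there come from $\cal G\setminus\{b\}$, the edge $b$ lies in neither net, so deleting $b_0$ from the blue spanning tree creates only two components and a two-edge star at $\cal M_{new}$ (one of $b_1,b_2$ plus the surviving blue solid edge) suffices; your handling of the ``moreover'' assertion is likewise fine.

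There is, however, a concrete step in part (i) that fails as written. In the sub-case where \emph{both} $b_0$ and $b$ are bridges of the blue spanning tree of $\cal G\setminus\{e\}$, deleting both produces \emph{three} components, and all three must be reattached to $\cal M_{new}$ inside the new blue net; this requires three of the four edges incident to $\cal M_{new}$, whereas your prescription ``exactly one of $b_1,b_2$ together with one of $b_3,b_4$'' supplies only two and necessarily leaves one component disconnected from the blue net. (For instance, if the blue spanning tree contains the path $\cal M_1 \,{-}\, b_0 \,{-}\,\cal M_2\,{=}\,\cal M_3\,{-}\, b\,{-}\,\cal M_4$ and these are its only edges meeting those molecules, the new blue net must contain the full star $\{b_1,b_3,b_4\}$ at $\cal M_{new}$, with $b_2$ going to the black net.) This is precisely the situation the paper's Case (B) is designed for: it first observes that the three components form a path in the quotient, so some endpoint of $b_0$ and some endpoint of $b$ share a component, and then takes $\cal B_{blue}\cup\{b_1,b_3,b_4\}\setminus\{e,b_0,b\}$ as the new blue net and $\cal B_{black}\cup\{b_2\}$ as the new black net. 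The fix to your argument is local---in the sub-case $b_0\in\cal B'_{blue}$ you should put one of $\{b_1,b_2\}$ and \emph{both} of $b_3,b_4$ into the blue net---but as stated the construction does not produce a blue spanning tree in this configuration, so this case needs to be redone.
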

\begin{proof}
%In the proof of this claim, we will use the following simple fact: a blue edge $e=(\cal M, \cal M')$ is redundant in the blue net if and only if there exists another molecule $\cal M''$ and two connected paths of blue edges, such that one of them connects $\cal M$ to $\cal M''$, another one connects $\cal M'$ to $\cal M''$, and both of these two paths do not contain the edge $e$. %through which we can go from $x$ to $y$ without using the edge $(x,y)$.
We first prove the statement (i). Suppose the redundant blue solid edge we are considering is $e=(\cM_5,\cM_6)$. We consider two cases: (A) the edge $b_0$ is included in the black net of $\cal G$ in order for $e$ to be redundant, and (B) the edge $b_0$ is included in the blue net of $\cal G$ in order for $e$ to be redundant. 
		
\vspace{5pt}
\noindent {\bf Case (A):} The graphs (a) and (b) of the following figure represent $\cal G$ and $\cal G_{new}$, respectively: 
%where we only draw the isolated subgraph $I_i$. The right graph gives the subgraph $I_i$ we are considering, and the left graph gives the subgraph $I_i$ obtained by replacing the path $a\to \gamma\to b$ with a blue edge $(a,b)$ and the component $\overline I_j$ with a $\dashed$ edge $b_1=(x,y)$. 
\begin{equation} \label{eq_add_Q2}
\parbox[c]{0.8\linewidth}{\center\includegraphics[width=8.5cm]{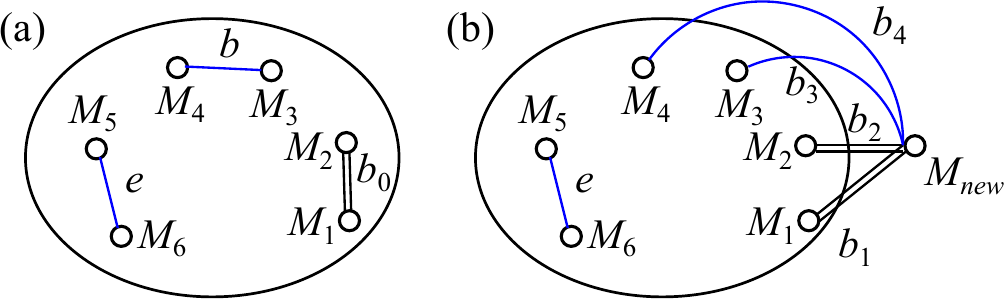}}
\end{equation}
In graph $\cal G$, the edge $e=(\cM_5,\cM_6)$ is redundant by putting $b_0$ into the black net. Let the corresponding black net and blue net be $\cal B_{black}$ and $\cal B_{blue}$, such that $b_0\in \cal B_{black}$ and $\cal B_{blue} \setminus \{e\}$ is still a blue net of $\cal G$. Then in $\cal G_{new}$, if we put edges $b_1$ and $b_2$ into the black net, they can replace the role of $b_0$ in the black net $\cal B_{black}$ and also connect $\cal M_{new}$ to other molecules. Hence $\cal B_{black} \cup\{b_1,b_2\}\setminus \{b_0\}$ is a black net in $\cal G_{new}$. Similarly, the path of edges $b_3$ and $b_4$ can replace the role of $b$ in the blue net $\cal B_{blue} \setminus \{e\}$ and also connect $\cal M_{new}$ to other molecules. Hence $\cal B_{blue} \cup\{b_3,b_4\}\setminus \{e, b\}$ is a blue net in $\cal G_{new}$. Since $e$ is not used in this blue net, it is redundant in $\cal G_{new}$.

\vspace{5pt}
\noindent {\bf Case (B):} The graphs (a) and (b) of the following figure represent $\cal G$ and $\cal G_{new}$, respectively:  
\begin{equation} \label{eq_add_Q3}
\parbox[c]{0.8\linewidth}{\center\includegraphics[width=8.5cm]{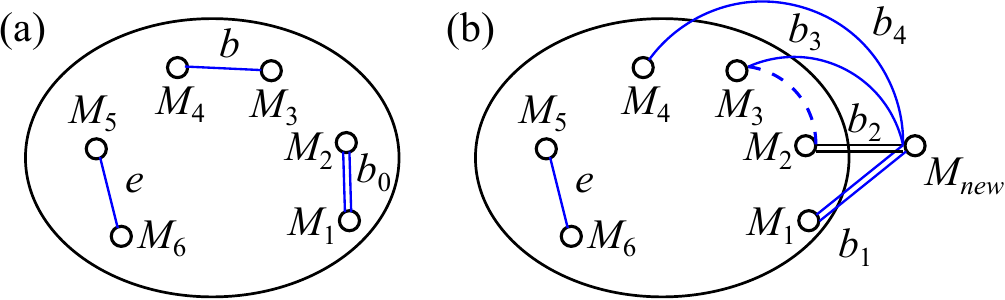}}
\end{equation}
In graph $\cal G$, the edge $e=(\cM_5,\cM_6)$ is redundant by putting $b_0$ into the blue net.  Let the corresponding black net and blue net be $\cal B_{black}$ and $\cal B_{blue}$, such that $b_0\in \cal B_{blue}$ and $\cal B_{blue}\setminus \{e\}$ is still a blue net of $\cal G$. Then either $\cal M_1$ or $\cal M_2$ is connected to molecules $\cal M_3$ and $\cal M_4$ through a path of edges in $\cal B_{blue}\setminus \{e,b_0\}$, since otherwise adding back the edge $b_0$ to $\cal B_{blue}\setminus \{e,b_0\}$ does not give a blue net, which is a contradiction. Without loss of generality, assume that $\cal M_2$ is connected to $\cal M_3$ through a path of edges in $\cal B_{blue}\setminus \{e,b_0,b\}$. In graph (b) of \eqref{eq_add_Q3}, the dashed edge represents such a path from $\cal M_2$ to $\cal M_3$.
 
%at least one of the molecules $x_2$ and $y_2$ must be connected to $a$ and $b$ through a blue path without using $b_1=(x,y)$ and $b_2=(x_2,y_2)$. Without loss of generality, assume that this molecule is $y_2$. 
%By a similar argument, $\cal M_3$ and $\cal M_4$ must be connected with either $\cal M_1$ or $\cal M_2$ through a path of edges in $\cal B_2\setminus \{e,b_0\}$. Without loss of generality, assume that $\cal M_3$ and $\cal M_4$ are connected with $\cal M_2$ through a path of edges in $\cal B_2\setminus \{e,b_0\}$. 
%We draw one possible scenario in the second graph of \eqref{eq_add_Q3}. 
%Without loss of generality, assume that $x_2$ is connected with $x$ through a blue path without using $b_1$ and $b_2$. Then as in the right graph, 
		
Now in graph (b) of \eqref{eq_add_Q3}, we put $b_1$ into the blue net and $b_2$ into the black net. Then  $\cal B_{black}\cup\{b_2\}$ is a black net in $\cal G_{new}$, because $b_2$ connects the new molecule $\cal M_{new}$ to other molecules. On the other hand, we claim that $\cal B^{new}_{blue}:=\cal B_{blue}\cup \{b_1, b_3,b_4\}\setminus \{e,b_0,b\}$ is a blue net in $\cal G_{new}$. This can be derived from the following observations.
\begin{itemize}
\item[(1)] $\cM_3$ and $\cM_4$ are connected by the path of edges $b_3$ and $b_4$, which replaces the role of edge $b$ in the blue net $\cal B_{blue} \setminus \{e\}$ of $\cal G$.

\item[(2)] $\cM_1$ and $\cM_2$ is connected by a path of edges in $\cal B^{new}_{blue}$ consisting of $b_1$, $b_3$ and a path of edges connecting $\cal M_2$ to $\cal M_3$ in $\cal B_{blue}\setminus \{e,b_0, b\}$. This shows that the role of edge $b_0$ in the blue net $\cal B_{blue} \setminus \{e\}$ can be replaced by a path of edges in $\cal B^{new}_{blue}$.
			
\item[(3)] The new molecule $\cal M_{new}$ is connected to other molecules through $b_1$, $b_3$ and $b_4$.
\end{itemize}
Combing these observations with the fact that $\cal B_{blue}\setminus \{e\}$ is a blue net in $\cal G$, we conclude that $\cal B^{new}_{blue}$ is a blue net in $\cal G_{new}$. Since $e$ is not in this blue net, it is redundant in $\cal G_{new}$.
		
\medskip
		
Next we prove the statement (ii). Again there are two cases: (C) the edge $b_0$ is included in the black net of $\cal G$ in order for $b$ to be redundant, and (D) the edge $b_0$ is included in the blue net of $\cal G$ in order for $b$ to be redundant.

\vspace{5pt}
\noindent {\bf Case (C):} In this case, $\cal G$ and $\cal G_{new}$ are represented by the graphs (a) and (b) of \eqref{eq_add_Q2} (where the edge $e$ is irrelevant). The graph $\cal G$ has a black net $\cal B_{black}$ and a blue net $\cal B_{blue}$, such that $b_0\in \cal B_{black}$ and $\cal B_{blue} \setminus \{b\}$ is still a blue net of $\cal G$. Then in $\cal G_{new}$, 
%if we put edges $b_1$ and $b_2$ into the black net, they can replace the role of $b_0$ in the black net $\cal B_{black}$ and also connect $\cal M_{new}$ to other molecules. Hence 
we again have that $\cal B_{black} \cup\{b_1,b_2\}\setminus \{b_0\}$ is a black set in $\cal G_{new}$. On the other hand, $\cal B_{blue} \cup \{b_4\}\setminus \{b\}$ is a blue net in $\cal G_{new}$, because $\cal B_{blue} \setminus \{b\}$ is a blue net in $\cal G$ and $b_4$ connects $\cal M_{new}$ to other  molecules.  Since $b_3$ is not used in this blue net, it is redundant in $\cal G_{new}$. Hence we can change $b_3$ into a $\dashed$ edge. With the same argument, we can show that $b_4$ is also redundant in the new graph. This concludes statement (ii) for case (C).

%and get the following graph, where, for simplicity, we still denote it by $\cal G_{new}$:
%\begin{equation} \label{eq_add_Q4}
%\parbox[c]{0.8\linewidth}{\center\includegraphics[width=12cm]{add_Q4.png}}
%\end{equation}
%We again put edges $b_1$ and $b_2$ into the black net, and put edge $b_3$ into the blue net. Then with a similar argument as above, we can check that $\cal B_1 \cup\{b_1,b_2\}\setminus \{b_0\}$ is a black set in $\cal G_{new}$, and $\cal B_2 \cup \{b_3\}\setminus \{b\}$ is a blue net in $\cal G_{new}$. Since $b_4$ is not  in this blue net, it is redundant in $\cal G_{new}$.

\vspace{5pt}
\noindent {\bf Case (D):} In this case, $\cal G$ and $\cal G_{new}$ are represented by the graphs (a) and (b) of \eqref{eq_add_Q3} (where the edge $e$ is irrelevant). The graph $\cal G$ has a black net $\cal B_{black}$ and a blue net $\cal B_{blue}$, such that $b_0\in \cal B_{blue}$ and $\cal B_{blue} \setminus \{b\}$ is still a blue net in $\cal G$. By a similar argument as in case (B), either $\cal M_3$ or $\cal M_4$ is connected to one of $\cal M_1$ and $\cal M_2$ through a path of edges in $\cal B_{blue}\setminus \{b,b_0\}$. 
%Then either $\cal M_3$ or $\cal M_4$ is connected to one of the molecules $\cal M_1$ and $\cal M_2$ through a path of edges in $\cal B_2\setminus \{b,b_0\}$ (otherwise, after adding back edge $b_0$ to $\cal B_2\setminus \{e,b_0\}$ does not give a blue net $\cal B_2\setminus \{b\}$, which gives a contradiction). 
Without loss of generality, assume that $\cal M_3$ is connected to $\cal M_2$ through a path of edges in $\cal B_{blue}\setminus \{b,b_0\}$ (which is represented by the dashed edge in graph (b) of \eqref{eq_add_Q3}). %The second graph of \eqref{eq_add_Q3} shows one such scenario. 
First, we again have that $\cal B_{black}\cup\{b_2\}$ is a black net in $\cal G_{new}$. Then we claim that $\cal B^{new}_{blue}:=\cal B_{blue}\cup \{b_1, b_3\}\setminus \{b,b_0\}$ is a blue net in $\cal G_{new}$. This claim follows from the observation that $\cM_1$ and $\cM_2$ are connected by a path of edges in $\cal B^{new}_{blue}$ consisting of $b_1$, $b_3$ and a path of edges connecting $\cal M_2$ to $\cal M_3$ in $\cal B_{blue}\setminus \{b,b_0\}$, and hence the role of edge $b_0$ in $\cal B_{blue}\setminus \{b\}$ can be replaced by a path of edges in $\cal B^{new}_{blue}$. Since $b_4$ is not used in $\cal B^{new}_{blue}$, it is redundant in $\cal G_{new}$.
		
Now we change $b_4$ into a $\dashed$ edge and get the following graph, denoted by $\wt{\cal G}_{new}$: %, where, for simplicity, we still denote it by $\cal G_{new}$:
\begin{equation} \label{eq_add_Q5}
\parbox[c]{0.8\linewidth}{\center\includegraphics[width=4.7cm]{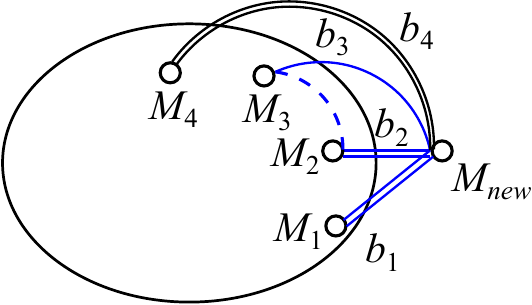}}
\end{equation}
We put $b_4$ into the black net, and $b_1$ and $b_2$ into the blue net. Then it is easy to see that $\cal B_{black}\cup\{b_4\}$ is a black net in $\cal G_{new}$. Moreover, $\cal B_{blue}\cup \{b_1, b_2\} \setminus \{b,b_0\}$ is a blue net in $\cal G_{new}$, because the path of edges $b_1$ and $b_2$ can replace the role of edge $b_0$ in the original blue net $\cal B_{blue}\setminus \{b\}$ of $\cal G$, and $b_1$ and $b_2$ also connect $\cal M_{new}$ to other molecules. Since $b_3$ is not used in this blue net, it is redundant in $\cal G_{new}$. This concludes the statement (ii) for case (D).
\end{proof}
	
\begin{claim}\label{trivial_graph6}
Given a doubly connected graph $\cal G$, we construct a new graph $\cal G_{new}$ as follows: we create a new molecule $\cal M_{new}$; we replace a $\dashed$ edge $b_0$ between molecules $\cal M_1$ and $\cal M_2$ in $\cal G$ with two $\dashed$ edges $b_1$ between $\cal M_1$ and $\cal M_{new}$, and $b_2$ between $\cal M_2$ and $\cal M_{new}$; we replace a $\dashed$ edge $b$ between molecules $\cal M_3$ and $\cal M_4$ in $\cal G$ with two $\dashed$ edges $b_3$ between $\cal M_3$ and $\cal M_{new}$, and $b_4$ between $\cal M_4$ and $\cal M_{new}$. If a blue solid edge is redundant in $\cal G$, then it is also redundant in $\cal G_{new}$.
\end{claim}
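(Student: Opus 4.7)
The plan is to adapt the case analysis in the proof of statement (i) of Claim~\ref{trivial_graph5} to the present setting, where both $b_0$ and $b$ are $\dashed$ edges and may therefore reside in either net. Let $e=(\cal M_5,\cal M_6)$ be the blue solid edge redundant in $\cal G$, and choose nets $\cal B_{black},\cal B_{blue}$ of $\cal G$ such that $\cal B_{blue}\setminus\{e\}$ is still a blue net; by pruning we may assume that $\cal B_{black}$ and $\cal B_{blue}\setminus\{e\}$ are both spanning trees in the molecular graph of $\cal G$.

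I would first dispatch the easy scenarios in which $b_0$ and $b$ do not both lie in the same net. Here the assignment is the natural one: the pair $\{b_1,b_2\}$ is placed into whichever net originally contained $b_0$ (or into either net if $b_0$ is in neither), and analogously for $\{b_3,b_4\}$ with respect to $b$. A direct verification, essentially identical to cases (A) and (B) in the proof of Claim~\ref{trivial_graph5}, shows that the resulting $\cal B_{black}^{new}$ and $\cal B_{blue}^{new}$ are disjoint spanning sets of $\cal G_{new}$ with $\cal M_{new}$ reached through the new paths in each net, and that the redundancy of $e$ is preserved.

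The main obstacle is the symmetric pair of cases where both $b_0,b\in\cal B_{black}$ or both $b_0,b\in\cal B_{blue}$, since the naive rule would then place all four new edges $b_1,b_2,b_3,b_4$ in a single net, leaving $\cal M_{new}$ disconnected in the other. For concreteness take $b_0,b\in\cal B_{black}$. Because $\cal B_{black}$ is a spanning tree containing $b_0$ and $b$ as two of its edges, removing them yields exactly three components, and each of these three components contains at least one of the four endpoints $\cal M_1,\cal M_2,\cal M_3,\cal M_4$: the cut across $b_0$ places $\cal M_1$ and $\cal M_2$ into different pieces, and the subsequent cut across $b$ places $\cal M_3$ and $\cal M_4$ into different sub-pieces of whichever piece contained $b$. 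Distributing four endpoints among three non-empty components forces, by pigeonhole, the existence of indices $i\neq j$ in $\{1,2,3,4\}$ with $\cal M_i$ and $\cal M_j$ lying in a common component.

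With such an $i$ in hand I would set
\[
\cal B_{black}^{new}:=(\cal B_{black}\setminus\{b_0,b\})\cup\{b_k:k\in\{1,2,3,4\},\ k\neq i\},\qquad \cal B_{blue}^{new}:=\cal B_{blue}\cup\{b_i\}.
\]
The three edges $\{b_k:k\neq i\}$ attach $\cal M_{new}$ to three of the four endpoints and thereby touch every one of the three components of $\cal B_{black}\setminus\{b_0,b\}$, since the component of $\cal M_i$ is reached through $\cal M_j$ via $b_j$; hence $\cal B_{black}^{new}$ spans $\cal G_{new}$. The set $\cal B_{blue}^{new}$ spans because $\cal B_{blue}$ already spanned all old molecules and $b_i$ attaches $\cal M_{new}$ to one of them, and the two nets are manifestly disjoint. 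Finally the identity $\cal B_{blue}^{new}\setminus\{e\}=(\cal B_{blue}\setminus\{e\})\cup\{b_i\}$ shows that removing $e$ still leaves a spanning blue set of $\cal G_{new}$, so $e$ remains redundant in $\cal G_{new}$. The parallel case $b_0,b\in\cal B_{blue}$ is handled by the same argument with the roles of the two nets swapped, using the spanning tree $\cal B_{blue}\setminus\{e\}$ in place of $\cal B_{black}$ in the pigeonhole step.
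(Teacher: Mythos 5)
Your proof is correct and takes essentially the same route as the paper: the easy mixed cases are the direct verifications of Claim~\ref{trivial_graph5}, and in the hard case (both $b_0$ and $b$ in the same net) your pigeonhole argument on the three components of the spanning tree minus $\{b_0,b\}$ produces exactly the paper's key observation that some $\cal M_k\in\{\cal M_3,\cal M_4\}$ is joined to some $\cal M_i\in\{\cal M_1,\cal M_2\}$ within that net, after which the three-edges-to-one-net, one-edge-to-the-other assignment is identical to the paper's choice of putting $b_2$ in the blue net and $b_1,b_3,b_4$ in the black net. The pigeonhole framing is a slightly cleaner, symmetric way to state the same connectivity fact, and the rest of the verification matches.
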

\begin{proof}
Suppose the redundant blue solid edge we are considering is $e=(\cM_5,\cM_6)$. If the edge $b$ is included into the blue net in order for $e$ to be redundant in $\cal G$, we put the edges $b_3$ and $b_4$ into the blue net, and the result then follows from Claim \ref{trivial_graph5} (because blue $\dashed$ edges and blue solid edges are equivalent in a blue net). Notice that $b_0$ and $b$ play symmetric roles. Hence if $b_0$ is included into the blue net in order for $e$ to be redundant in $\cal G$, we can conclude the proof with the same argument. 

It remains to consider the case where both $b$ and $b_0$ are included into the black net in order for $e$ to be redundant in $\cal G$. We have the following figure, where the graphs (a) and (b) respectively represent $\cal G$ and $\cal G_{new}$:
\begin{equation} \label{eq_add_Q6}
\parbox[c]{0.8\linewidth}{\center\includegraphics[width=8.5cm]{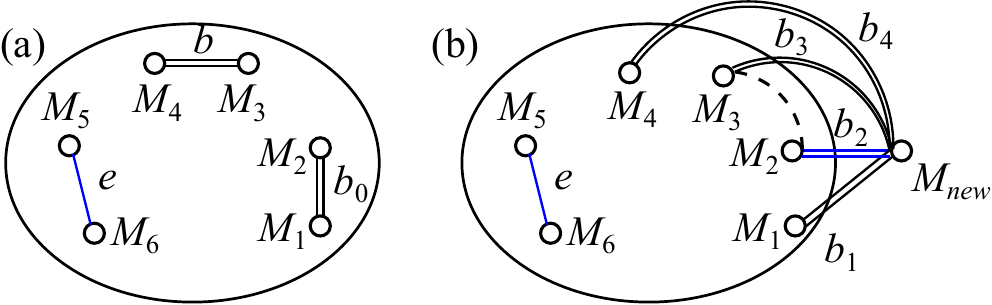}}
\end{equation}
The graph $\cal G$ has a black net $\cal B_{black}$ and a blue net $\cal B_{blue}$, such that $b_0, b\in \cal B_{black}$ and $\cal B_{blue} \setminus \{e\}$ is still a blue net of $\cal G$. Then either $\cal M_3$ or $\cal M_4$ is connected to one of $\cal M_1$ and $\cal M_2$ through a path of black edges in $\cal B_{black}\setminus \{b,b_0\}$, since otherwise adding back the edges $b$ and $b_0$ to $\cal B_{black}\setminus \{b,b_0\}$ does not give a black net, which is a contradiction. Without loss of generality, assume that $\cal M_3$ is connected to $\cal M_2$ through a path of edges in $\cal B_{black}\setminus \{b,b_0\}$, which is represented by the dashed edge in the graph (b) of \eqref{eq_add_Q6}. %The second graph of \eqref{eq_add_Q6} shows one such scenario. 
Then in $\cal G_{new}$, we put $b_2$ into the blue net, and put edges $b_1$, $b_3$ and $b_4$ into the black net. First, using the fact that $\cal B_{blue} \setminus \{e\}$ is a blue net in $\cal G$, it is easy to see that $\cal B_{blue} \cup\{b_2\}\setminus \{e\}$ is a blue net in $\cal G_{new}$. Then we claim that $\cal B^{new}_{black}:=\cal B_{black}\cup \{b_1, b_3,b_4\}\setminus \{b,b_0\}$ is a black net in $\cal G_{new}$. This is due to the following observations.
\begin{itemize}
\item[(1)] $\cM_3$ and $\cM_4$ are connected by the path of edges $b_3$ and $b_4$, which replaces the role of edge $b$ in the original black net $\cal B_{black}$.
			
\item[(2)] $\cM_1$ and $\cM_2$ are connected by a path of edges in $\cal B^{new}_{black}$ consisting of $b_1$, $b_3$ and a path of black edges connecting $\cal M_2$ to $\cal M_3$ in $\cal B_{black}\setminus \{b,b_0\}$. This shows that the role of edge $b_0$ in the original black net $\cal B_{black}$ can be replaced by a path of edges in $\cal B^{new}_{black}$.
			
\item[(3)] The new molecule $\cal M_{new}$ is connected to other molecules through $b_1$, $b_3$ and $b_4$.
\end{itemize}
Hence $\cal G_{new}$ has a black net $\cal B^{new}_{black}$ and a blue net $\cal B_{blue}\cup\{b_2\} \setminus \{e\}$. Since $e$ is not used in this blue net, it is redundant.
\end{proof}
	
Now we give the proof of the case (i) of Lemma \ref{hard lemmQ} using Claims \ref{trivial_graph4}--\ref{trivial_graph6}. Recall that the original graph is denoted by $\cal G$, and the new graph is denoted by $\wt{\cal G}$.
\begin{proof}[Proof of Lemma \ref{hard lemmQ}: case (i)]
%For simplicity of presentation, in the following proof, we only focus on the isolated subgraphs $\Iso_i$. Moreover, whenever we say that ``we can change a blue solid edge into a $\dashed$ edge", we actually mean that ``we know that this edge is redundant and we then change it into a $\dashed$ edge". 
The statement that $\Iso_{j+1}$ is the maximal isolated subgraph of $\Iso_i$ is trivial. We replace $\Iso_{j+1}$ and its two external edges with a $\dashed$ edge, and get the following graph (a) for the isolated subgraph $\Iso_i$ of $\wt{\cal G}$:
\begin{equation} \label{graph_add_Q7}
\parbox[c]{0.9\linewidth}{\center\includegraphics[width=14cm]{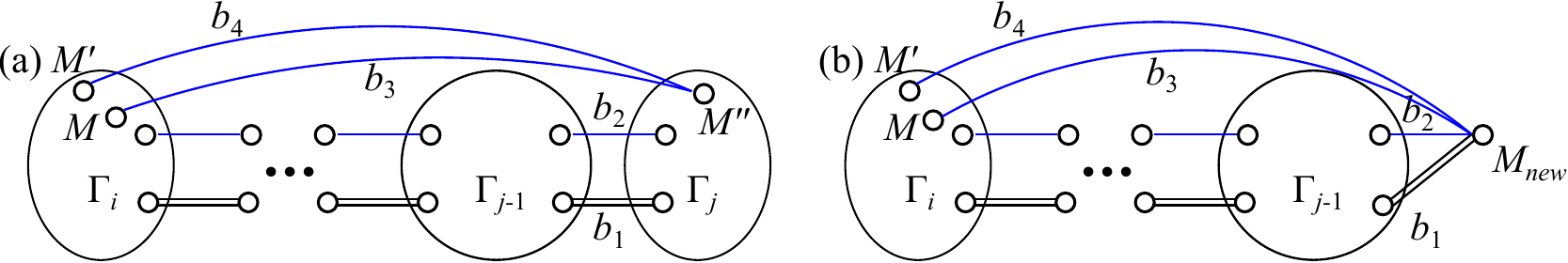}}
\end{equation}
Here we keep the names $\Gamma_l$, $i \le l \le j$, for the subgraph components inside the black circles. By the SPD property of $\cal G$, we know that the component $\Gamma_j$ in graph (a) is pre-deterministic, and hence we can change its blue solid edges into $\dashed$ edges one by one according to a pre-deterministic order. 
% Then we get the first graph of \eqref{graph_add_Q7}, where $\Gamma_j$ contains no plus $G$ edges and is doubly connected: 
%		\begin{equation}% \label{graph_add_Q7}
%			\parbox[c]{0.9\linewidth}{\center
%				\includegraphics[width=15cm]{add_Q7.png}}
%		\end{equation}
Now by Claim \ref{trivial_graph1}, %to show graph (a) of \eqref{graph_add_Q7} is pre-deterministic, 
it suffices to show that the graph (b) with $\Gamma_j$ replaced by a vertex is pre-deterministic. 

Now we show that the edge $b_2$ is redundant in the graph (b) of \eqref{graph_add_Q7}. By the doubly connected property of $\cal G$, removing the subgraph $\Iso_{j+1}$ and its two external edges still gives a doubly connected graph. Correspondingly, in the graph (b) of \eqref{graph_add_Q7}, if we remove the two edges $b_1$ and $b_2$ and replace the edges $b_3$ and $b_4$ with a single blue solid edge $b=(\cal M, \cal M')$, we still get a doubly connected graph. Then by Claim \ref{trivial_graph4}, the edge $b_2$ is redundant, so we can change it into a $\dashed$ edge and get the following graph: % and get the third graph of \eqref{graph_add_Q7}.
\begin{equation} \label{graph_add_Q75}
\parbox[c]{0.4\linewidth}{\center\includegraphics[width=6.5cm]{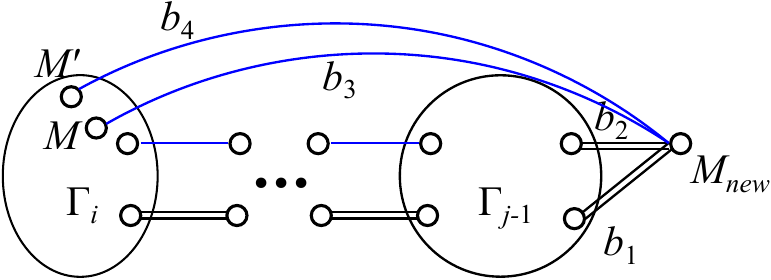}}
\end{equation}		
By the SPD property of $\cal G$, if we replace $\Iso_j$ and its two external edges with a $\dashed$ edge, then the blue solid edges in $\Gamma_{j-1}$ can be changed into $\dashed$ edges one by one according to a pre-deterministic order. Now using Claim \ref{trivial_graph5} (i), we get that the blue solid edges inside $\Gamma_{j-1}$ in \eqref{graph_add_Q75} can be changed into $\dashed$ edges according to the same pre-deterministic order. After that, since the subgraph induced on the molecules in $\Gamma_{j-1}$ and the molecule $\cal M_{new}$ is doubly connected, by Claim \ref{trivial_graph1} it is equivalent to replace them with a single molecule and get a graph that is similar to graph (b) of \eqref{graph_add_Q7}. 

Repeating the above argument, we finally reduced the graph to the following graph (b) with component $\Gamma_i$ only and a new molecule $\cal M_{new}$: % as shown in the second graph of \eqref{graph_add_Q8}:
\begin{equation} \label{graph_add_Q8}
\parbox[c]{0.9\linewidth}{\center\includegraphics[width=13.5cm]{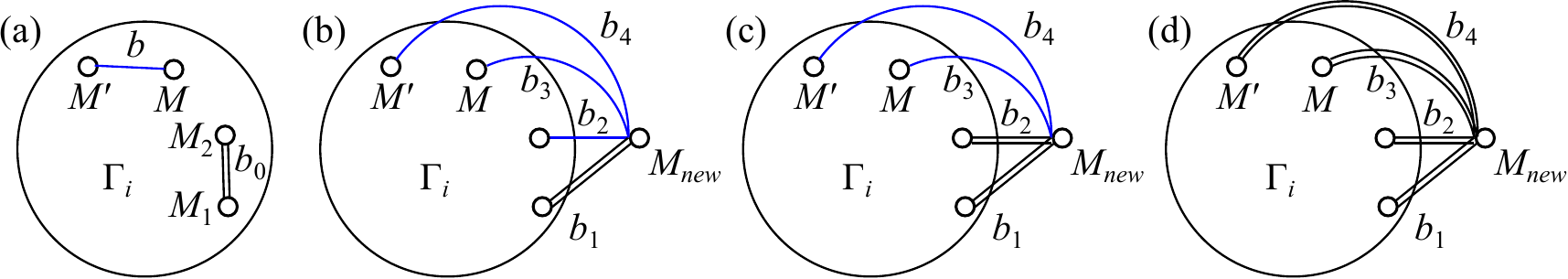}}
\end{equation}
Here the graph (a) is obtained from $\cal G$ by replacing $\Iso_{i+1}$ and its two external edges with a $\dashed$ edge $b_0$ between $\cal M_1$ and $\cal M_2$. By the property (iii) of Definition \ref{def seqPDG}, the component $\Gamma_i$ in graph (a) is pre-deterministic. Suppose a pre-deterministic order of the blue solid edges is $e_1\preceq \cdots \preceq e_{\ell-1}\preceq b\preceq e_{\ell+1}\preceq\cdots\preceq e_{k}$. First, by Claim \ref{trivial_graph4}, the edge $b_2$ in the graph (b) of \eqref{graph_add_Q8} is redundant, so we can change it into a $\dashed$ edge and get the graph (c). Second, by Claim \ref{trivial_graph5} (i), the blue solid edges $e_1, \cdots, e_{\ell-1}$ inside $\Gamma_{i}$ can be changed into $\dashed$ edges one by one. Third, by Claim \ref{trivial_graph5} (ii), the edges $b_3$ and $ b_4$ can be changed into $\dashed$ edges according to the order $b_3\preceq b_4$ or $b_4\preceq b_3$, which gives the graph (d) of \eqref{graph_add_Q8}. Finally, by Claim \ref{trivial_graph6}, the blue solid edges $e_{\ell+1},\cdots, e_{k}$ in $\Gamma_{i}$ can be changed into $\dashed$ edges one by one. 
		
In sum, we have shown that the blue solid edges in $\Iso_{i}$ can be changed into $\dashed$ edges one by one according to an order given by the above argument. Hence $\Iso_{i}$ is pre-deterministic. This concludes the case (i) of Lemma \ref{hard lemmQ}.
\end{proof}
	
The cases (ii)--(v) of Lemma \ref{hard lemmQ} are all easier to prove than case (i). We consider them case by case. 
	
\begin{proof}[Proof of Lemma \ref{hard lemmQ}: cases (ii) and (iv)] 
For the case (ii), the statement that $\Iso_{i+1}$ is the maximal isolated subgraph of $\Iso_j$ is trivial. We replace $\Iso_{i+1}$ and its two external edges with a $\dashed$ edge, and get the following graph (a) for the isolated subgraph $\Iso_j$ of \smash{$\wt{\cal G}$}:
\begin{equation} \label{eq_add_Q9}
\parbox[c]{0.9\linewidth}{\center\includegraphics[width=14.5cm]{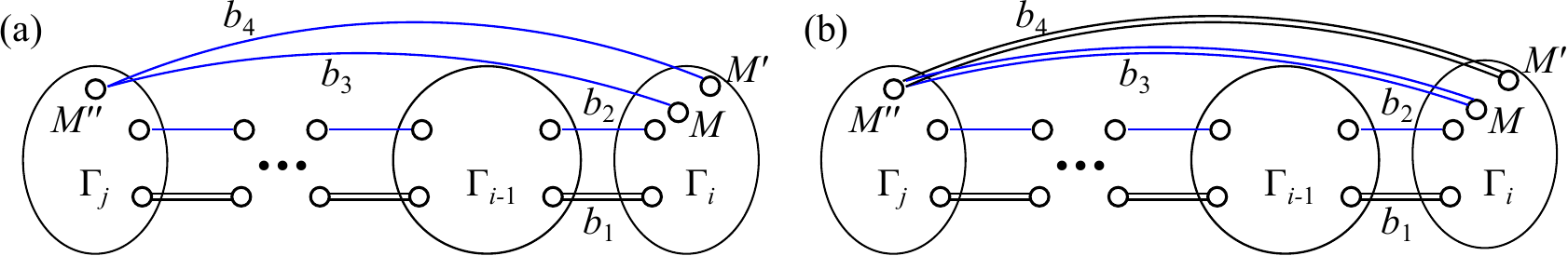}}
\end{equation}
By the SPD property of $\cal G$, we know that after replacing the edges $b_3$ and $b_4$ in the graph (a) with a blue solid edge $b:=(\cal M,\cal M')$, the graph component $\Gamma_j$ becomes pre-deterministic. Suppose a pre-deterministic order of blue solid edges in $\Gamma_i$ is $e_1\preceq \cdots \preceq e_{\ell-1}\preceq b\preceq e_{\ell+1}\preceq \cdots\preceq e_{k}$.
% as in the first graph of \eqref{eq_add_Q9}:
%		\begin{equation} %\label{eq_add_Q9}
%			\parbox[c]{0.9\linewidth}{\center
%				\includegraphics[width=15cm]{add_Q9.png}}
%		\end{equation}
%		In the second graph, we replace the edge $b$ with two blue solid edges $b_1$ and $b_2$ between $\Gamma_i$ and $\Gamma_j$ as in case (ii) of Lemma \ref{hard lemmQ}. We want to show this graph is pre-deterministic. 
Then it is trivial to observe that in the graph (a), the edges $e_1, \cdots, e_{\ell-1}, b_3, b_4, e_{\ell+1},\cdots, e_{k}$ can be changed into $\dashed$ edges one by one, which gives the graph (b) of \eqref{eq_add_Q9}. Now if we put $b_3$ into the blue net and $b_4$ into the black net, the subgraph induced on the molecules in $\Gamma_i$ and the molecule $\cal M''$ is doubly connected. By Claim \ref{trivial_graph1}, it is equivalent to look at the quotient graph with this subgraph replaced by a single vertex. The quotient graph has the same structure as the graph (a) of \eqref{graph_adddot2}, which has been shown to be pre-deterministic. This concludes the case (ii) of Lemma \ref{hard lemmQ}.

%After replacing $\overline I_{i+1}$ with a blue edge $b_1$ and the path $a\to \gamma\to b$ with a blue edge $(a,b)$, the subgraph $I_i$ becomes pre-deterministic. Let the edge $(a,b)$ be the $l$-th edge $b_l$ in the pre-deterministic order. We first change $b_1,\cdots, b_{l-1} $ into $\dashed$ edges one by one. Then the edges $(a,\gamma)$ and $(b,\gamma)$ become redundant, and we can turn them into $\dashed$ edges. Next we change the plus $G$ edges in $I_j$ into $\dashed$ edges according to the following order. We first handle the edges $b_{l+1},b_{l+2},\cdots$ in $\Gamma_i$, then the plus $G$ edge $(x,\al)$ connecting $\Gamma_i$ and $\Gamma_{i-1}$, then the plus $G$ edges in $\Gamma_{i-1}$, then the plus $G$ edge connecting $\Gamma_{i-1}$ and $\Gamma_{i-2}$, and so on until all the plus $G$ edges in $I_j$ are turned into $\dashed$ edges. This concludes case (ii). In fact, since the edges $(a,\gamma)$ and $(b,\gamma)$ are turned into $\dashed$ edges in a relatively earlier stage, the proof of case (ii) is a little bit easier than the proof for case (i). 

For the case (iv), we replace $\Iso_{i+2}$ and its two external edges with a $\dashed$ edge, and get the following graph (a) for the isolated subgraph $\Iso_j$ of $\wt{\cal G}$:
%Then after replacing $\Iso_{i+1}$ and its two external edges with a $\dashed$ edge, we get the second graph in the following figure:
\begin{equation} \label{eq_add_Q10}
\parbox[c]{0.9\linewidth}{\center\includegraphics[width=13cm]{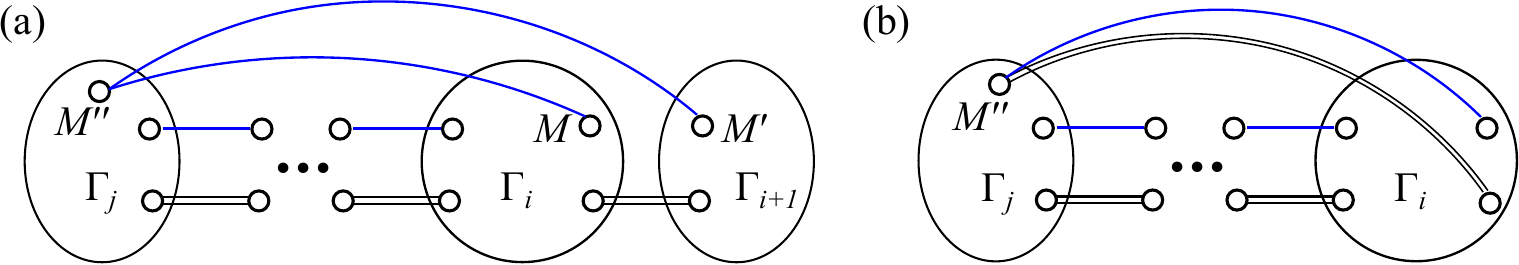}}
\end{equation}
The statement that $\Iso_{i+1}$ is the maximal isolated subgraph of $\Iso_j$ is trivial. Then we replace $\Iso_{i+1}$ and its two external edges with a $\dashed$ edge, and get the graph (b) in \eqref{eq_add_Q10}. Again this graph has the same structure as the graph (a) of \eqref{graph_adddot2}, which has been shown to be pre-deterministic. Hence we conclude the case (iv) of Lemma \ref{hard lemmQ}.
%First, it is easy to see that the blue solid edge between $\cal M$ and $\cal M''$ is redundant in the second graph of \eqref{eq_add_Q10}. We can change it into a $\dashed$ edge and obtain the third graph of \eqref{eq_add_Q10}. We see that it has the same structure as the third graph of \eqref{eq_add_Q9}. Hence the same proof for case (ii) concludes case (iv).
\end{proof}

The proofs of cases (iii) and (v) of Lemma \ref{hard lemmQ} use the following two claims.

\begin{claim}\label{trivial_graph2}
Given a doubly connected graph $\cal G$, we construct a new graph $\cal G_{new}$ as follows: 
\begin{center}
\includegraphics[width=4cm]{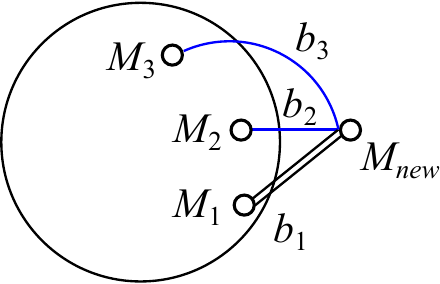}
\end{center}		
More precisely, we create a new molecule $\cal M_{new}$. Given any three molecules $\cal M_1$, $\cal M_2$ and $\cal M_3$ in $\cal G$, we add a $\dashed$ edge $b_1$ between $\cal M_{new}$ and $\cal M_1$, a blue solid edge $b_2$ between $\cal M_{new}$ and $\cal M_2$, and a blue solid edge $b_3$ between $\cal M_{new}$ and $\cal M_3$. Then the blue solid edges $b_2$ and $b_3$ are both redundant in $\cal G_{new}$.
\end{claim}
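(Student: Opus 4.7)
The plan is to directly exhibit, for each of $b_2$ and $b_3$, a black net and a blue net of $\cal G_{new}$ that avoid that edge; by Definition \ref{defn pivotal} this shows the edge is redundant. Since $\cal G$ is doubly connected, pick disjoint collections $\cal B_{black}$ (of $\dashed$ edges) and $\cal B_{blue}$ (of blue solid and $\dashed$ edges) in $\cal G$, each containing a spanning tree of the internal molecules of $\cal G$, as guaranteed by Definition \ref{def 2net}.

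To show $b_2$ is redundant, I would set
\[
\cal B_{black}^{new} := \cal B_{black} \cup \{b_1\}, \qquad \cal B_{blue}^{new} := \cal B_{blue} \cup \{b_3\}.
\]
These two sets are disjoint because $b_1$ is the only new $\dashed$ edge and $b_3$ is a blue solid edge distinct from everything in $\cal B_{blue}$. The set $\cal B_{black}^{new}$ contains a spanning tree of the molecules of $\cal G_{new}$: a spanning tree of $\cal B_{black}$ covers all molecules of $\cal G$, and adjoining the $\dashed$ edge $b_1$ attaches the single new molecule $\cal M_{new}$ to $\cal M_1$. Similarly $\cal B_{blue}^{new}$ contains a spanning tree of $\cal G_{new}$, with $b_3$ attaching $\cal M_{new}$ to $\cal M_3$. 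Since $b_2$ appears in neither $\cal B_{black}^{new}$ nor $\cal B_{blue}^{new}$, the graph $\cal G_{new}\setminus\{b_2\}$ remains doubly connected, so $b_2$ is redundant.

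For $b_3$, the argument is symmetric: swap the roles of $b_2$ and $b_3$ and take $\cal B_{blue}^{new}:=\cal B_{blue}\cup\{b_2\}$ instead, with $\cal B_{black}^{new}$ unchanged. Since $b_2$ now attaches $\cal M_{new}$ to $\cal M_2$ in the blue net, the same reasoning shows $b_3$ is redundant. There is no real obstacle here; the only thing to verify carefully is the disjointness and spanning properties of the two nets in $\cal G_{new}$, which follow immediately because only one new molecule and three new edges are added, and the construction places exactly one new edge in each net (leaving one spare).
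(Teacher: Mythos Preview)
Your argument is correct and is exactly the natural one: the paper itself states that the proof of this claim is simple and omits the details, and what you have written is precisely the straightforward verification one would supply.
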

%	\begin{proof}
%		We have the following graph:
%		\begin{center}
%			\includegraphics[width=4cm]{add_Q11.png}
%		\end{center}
%		Let the black net and blue net in $\cal G$ be $\cal B_1$ and $\cal B_2$, respectively. Then $\cal B_1\cup \{b_1\}$ is a blue net in $\cal G_{new}$, and $\cal B_2\cup \{b_2\}$ is a blue net in $\cal G_{new}$. Hence $b_3$ is redundant. Similarly, $\cal B_2\cup \{b_3\}$ is another blue net in $\cal G_{new}$. Hence $b_2$ is redundant.
%	\end{proof}
	
\begin{claim}\label{trivial_graph7}
Given a doubly connected graph $\cal G$, we construct a new graph $\cal G_{new}$ as follows: 
\begin{center}
\includegraphics[width=4cm]{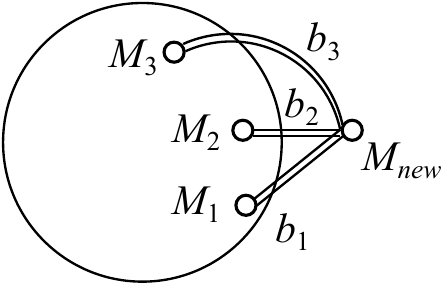}
\end{center}
More precisely, we create a new molecule $\cal M_{new}$, replace a $\dashed$ edge $b_0=(\cal M_1,\cal M_2)$ in $\cal G$ with two $\dashed$ edges $b_1$ between $\cal M_1$ and $\cal M_{new}$ and $b_2$ between $\cal M_2$ and $\cal M_{new}$, and add a $\dashed$ edge $b_3$ between $\cal M_{new}$ and a molecule $\cal M_3$ in $\cal G$. If a blue solid edge is redundant in $\cal G$, it is also redundant in $\cal G_{new}$.
\end{claim}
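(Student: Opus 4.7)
\medskip

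\noindent\textbf{Proof proposal for Claim \ref{trivial_graph7}.} The plan is to mimic the strategy used in the proofs of Claim \ref{trivial_graph5} and Claim \ref{trivial_graph6}: start from the nets $\cal B_{black}, \cal B_{blue}$ witnessing the doubly connected property of $\cal G$ with $\cal B_{blue}\setminus\{e\}$ still a blue net (where $e=(\cM_5,\cM_6)$ is the redundant blue solid edge under consideration), and then explicitly construct a black net $\cal B^{new}_{black}$ and a blue net $\cal B^{new}_{blue}$ for $\cal G_{new}$ such that $e\notin\cal B^{new}_{blue}$ and $\cal B^{new}_{black}\cap \cal B^{new}_{blue}=\emptyset$. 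Both nets must connect the new molecule $\cal M_{new}$ to the rest of the graph, so the three new $\dashed$ edges $b_1,b_2,b_3$ must be distributed between the two nets in a way that simultaneously repairs the effect of removing $b_0$ and incorporates $\cal M_{new}$.

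First I would split into two cases depending on the role of $b_0$ in $\cal G$. In Case A, $b_0\in\cal B_{black}$: put $b_1,b_2$ into the new black net so that the path $b_1\cup b_2$ replaces $b_0$ and also attaches $\cal M_{new}$, giving $\cal B^{new}_{black}:=\cal B_{black}\cup\{b_1,b_2\}\setminus\{b_0\}$; then put $b_3$ into the new blue net to attach $\cal M_{new}$, giving $\cal B^{new}_{blue}:=\pb{\cal B_{blue}\setminus\{e\}}\cup\{b_3\}$. In Case B, $b_0\in\cal B_{blue}$ (so in particular $b_0\in\cal B_{blue}\setminus\{e\}$): now put $b_1,b_2$ into the new blue net to replace $b_0$ and attach $\cal M_{new}$, giving $\cal B^{new}_{blue}:=\pb{\cal B_{blue}\setminus\{e\}}\cup\{b_1,b_2\}\setminus\{b_0\}$; then put $b_3$ into the new black net to attach $\cal M_{new}$, giving $\cal B^{new}_{black}:=\cal B_{black}\cup\{b_3\}$. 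In both cases the disjointness $\cal B^{new}_{black}\cap\cal B^{new}_{blue}=\emptyset$ is immediate because $b_1,b_2,b_3$ are newly created edges and hence lie in neither of the original nets.

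The remaining verification is routine: I would check that each constructed net contains a spanning tree of all molecules of $\cal G_{new}$. For the net that inherits the path $b_1\cup b_2$ in place of $b_0$, any spanning tree in the original net that used $b_0$ can be re-routed through $b_1,b_2$, which has exactly the same connectivity effect on $\cal M_1,\cal M_2$ while also attaching $\cal M_{new}$. For the other net, adding the single $\dashed$ edge $b_3$ to an existing spanning tree of $\cal G$ produces a spanning tree of $\cal G_{new}$ because $\cal M_3$ was already spanned and $\cal M_{new}$ is its only new neighbor. In neither case do we ever need to use $e$ in the blue net, so $e$ remains redundant.

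The only place where something could go subtly wrong is the \emph{re-routing step} after we swap $b_0$ out of its net and swap $\{b_1,b_2\}$ in. This is essentially the same subtlety that appears in Claim \ref{trivial_graph5} and Claim \ref{trivial_graph6}, and the main obstacle would be a scenario (analogous to Case (B)/(D) there) in which $b_0$ and another edge of the original net lie on a unique cycle witnessing connectivity, so that merely substituting $b_1,b_2$ for $b_0$ is insufficient. However, unlike Claim \ref{trivial_graph6}, here only \emph{one} of the original net edges (namely $b_0$) is subdivided and the third new edge $b_3$ is freely attached to an arbitrary $\cal M_3$, so no genuine cycle-rerouting argument is required; a direct substitution suffices. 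Hence the case analysis above should go through without invoking the more delicate ``alternative blue path in $\cal B_{blue}\setminus\{b,b_0\}$" argument that was needed in Claim \ref{trivial_graph5}(ii) and Claim \ref{trivial_graph6}.
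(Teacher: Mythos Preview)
Your argument is correct and is exactly the kind of direct case analysis the paper has in mind; indeed the paper omits the proof entirely, noting only that it is ``simple,'' and your construction of $\cal B^{new}_{black}$ and $\cal B^{new}_{blue}$ in Cases A and B is the natural one in the style of Claims \ref{trivial_graph5} and \ref{trivial_graph6}. The only trivial omission is the case where $b_0$ lies in neither $\cal B_{black}$ nor $\cal B_{blue}$, but then you may simply adjoin $b_0$ to $\cal B_{black}$ (a net may contain extra edges beyond a spanning tree) and reduce to Case A.
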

%	\begin{proof}
%		We have the following graph:
%		\begin{center}
%			\includegraphics[width=7cm]{add_Q13.png}
%		\end{center}
%		Let the black net and blue net in $\cal G$ be $\cal B_1$ and $\cal B_2$,  so that $b$ is redundant. If $b_0$ is used in the black net in $\cal G$ in order for $b$ to be redundant, then we put $b_0$ and $b_1$ into the black net and $b_3$ into the blue net. It is easy to check that $\cal B_1\cup \{b_1, b_2\}\setminus \{b_0\}$ and $\cal B_2\cup \{b_3\}\setminus \{b\}$ are the black and blue nets in $\cal G_{new}$, which shows that $b$ is redundant in $\cal G_{new}$. 
%		On the other hand, if $b_0$ is used in the blue net in $\cal G$ in order for $b$ to be redundant, then  we put $b_0$ and $b_1$ into the blue net and $b_3$ into the black net. It is easy to check that $\cal B_1\cup \{b_3\}$ and $\cal B_2\cup \{b_1, b_2\}\setminus \{b\}$ are the black and blue nets in $\cal G_{new}$, which shows that $b$ is redundant in $\cal G_{new}$. 
%	\end{proof}
The proofs of the above two claims are simple, so we omit the details. 	
	
\begin{proof}[Proof of Lemma \ref{hard lemmQ}: cases (iii) and (v)] 
The case (v) can be regarded as a special case of the case (iii). For the case (iii), it is trivial to show that $\Iso_{i+1}$ is the maximal isolated subgraph of $\Iso_i$, and $\Iso_{j+1}$ is the maximal isolated subgraph of $\Iso_{i+1}$. We replace $\Iso_{j+1}$ and its two external edges with a $\dashed$ edge and get the following graph (a) for the isolated subgraph $\Iso_{i+1}$: 
\begin{equation} \label{eq_add_Q12}
	\parbox[c]{0.8\linewidth}{\center
		\includegraphics[width=14cm]{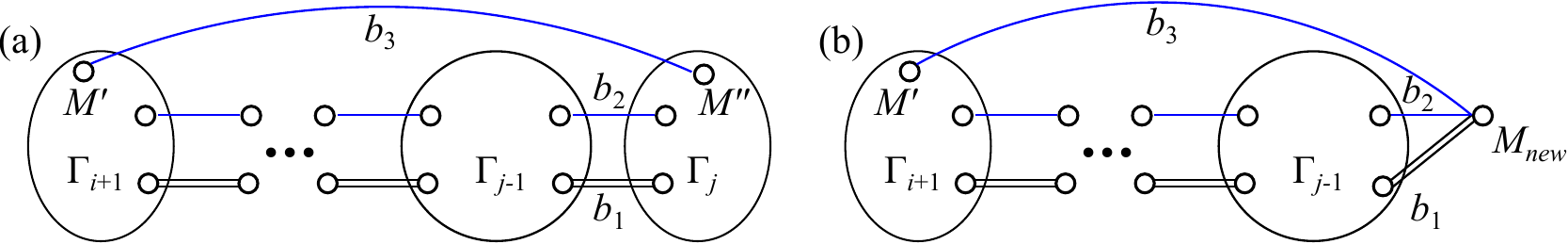}}
\end{equation}
%		\begin{equation} %\label{eq_add_Q12}
%			\parbox[c]{0.8\linewidth}{\center
%				\includegraphics[width=14cm]{add_Q12.png}}
%		\end{equation}
By the SPD property of $\cal G$, the component $\Gamma_j$ in the graph (a) of \eqref{eq_add_Q12} is pre-deterministic, so we can change its blue solid edges into $\dashed$ edges one by one. Then by Claim \ref{trivial_graph1}, we can replace $\Gamma_j$ with a single molecule and get the graph (b) of \eqref{eq_add_Q12}. Now by Claim \ref{trivial_graph2}, both $b_2$ and $b_3$ are redundant, so we can change them into $\dashed$ edges. % as in the third graph of \eqref{eq_add_Q12}. 
Next by Claim \ref{trivial_graph7}, we can change the blue solid edges in $\Gamma_{j-1}$ into $\dashed$ edges one by one. In the resulting graph, since the subgraph induced on the molecules in $\Gamma_{j-1}$ and the molecule $\cal M_{new}$ is a doubly connected subgraph, by Claim \ref{trivial_graph1} we can replace them with a single molecule and get a graph that is similar to the graph (b) of \eqref{eq_add_Q12}. Repeating the above argument, we can show that the blue solid edges in $\Iso_{i+1}$ can be changed into $\dashed$ edges one by one according to some order. Hence $\Iso_{i+1}$ is pre-deterministic, which concludes the case (iii) of Lemma \ref{hard lemmQ}.
%In this case, we notice that $I_{i+1}$ is still an isolated subgraph. Moreover, we change the plus $G$ edges in $I_{i+1}$ into $\dashed$ edges according to the following order. We first handle the edges in $\Gamma_j$ according to the pre-deterministic order, then the plus $G$ edge $(x,\al)$ connecting $\Gamma_{j}$ and $\Gamma_{j-1}$, then the plus $G$ edges in $\Gamma_{j-1}$, then the plus $G$ edge connecting $\Gamma_{j-1}$ and $\Gamma_{j-2}$, and so on until all the plus $G$ edges in $I_{i+1}$ are turned into $\dashed$ edges except for the edge $(b,\gamma)$. Now it is easy to see that edge $(b,\gamma)$ is redundant, because without it the blue net in $I_{i+1}$ is connected to $\Gamma_i$ through edge $(a,\gamma)$. 
\end{proof}

\section{Proof of Lemma \ref{def nonuni-T}}\label{app_pf_nonuniversal}

As discussed below Lemma \ref{def nonuni-T}, we need to expand the graphs in $(\PT^{(n)})_{\fa,\fb_1 \fb_2}$ and $(\AT^{(>n)})_{\fa,\fb_1\fb_2}$. First, we keep applying cases 1 and 2 of Strategy \ref{strat_global_weak} until all graphs satisfy the stopping rules (T1)--(T3) or the setting in cases 3 of Strategy \ref{strat_global_weak}. So far, all the resulting graphs do not contain any ghost edge. Moreover, as shown in Section \ref{sec expandlvl4}, the resulting graphs $\cal G_{\fa,\fb_1\fb_2}$ can be classified as follows.
	\begin{itemize}
		\item[(a)] If $\cal G_{\fa,\fb_1\fb_2}$ satisfies (T2), then it can be included into $\Err_{\fa,\fb_1\fb_2}$.
		
		\item[(b)] If $\cal G_{\fa,\fb_1\fb_2}$ is a $Q$-graph, then it satisfies the property (v) of Lemma \ref{def nonuni-T} with $k_{\gh}(\cal G)=0$. 
		
		\item[(c)] Suppose $\cal G_{\fa,\fb_1\fb_2}$ satisfies (T1) and is a $\oplus/\ominus$-recollision graph. Depending on whether each of $\oplus$ and $\ominus$ is connected with a dotted edge or not, we can write $\cal G_{\fa,\fb_1\fb_2}$ as 
		\be\label{one_dot}
		\cal G_{\fa,\fb_1\fb_2}= \sum_{x}\Theta_{\fa x} \cal D_{x\fb_1} \overline G_{\fb_1\fb_2} f(G_{\fb_1 \fb_1}),\quad \text{or}\quad \cal G_{\fa,\fb_1\fb_2}= \sum_{x}\Theta_{\fa x} \cal D_{x\fb_2} G_{\fb_2\fb_1} f(G_{\fb_2 \fb_2}), \ee
		or
		\be\label{two_dots}  \cal G_{\fa,\fb_1\fb_2}= \sum_{x}\Theta_{\fa x}\cal D_{x, \fb_1\fb_2} g(G_{\fb_1\fb_1},G_{\fb_2\fb_2},\overline G_{\fb_1\fb_2},  G_{\fb_2\fb_1}),
		\ee
		where $\cal D$ are deterministic doubly connected graphs with $x$, $\fa$, $\fb_1$ and $\fb_2$ regarded as internal atoms, and $f$ and $g$ are monomials satisfying (iv) of Lemma \ref{def nonuni-T}. %is a monomial of the weights $G_{\fb_i\fb_i}$, $\overline G_{\fb_i\fb_i}$, $G_{\fb_i\fb_i}-m$ and $\overline G_{\fb_i\fb_i}-\overline m$. 	
		If $\cal G_{\fa,\fb_1\fb_2}$ takes the form \eqref{two_dots}, then it can be included into the fourth term on the right-hand side of \eqref{mlevelTgdef weak}. If $\cal G_{\fa,\fb_1\fb_2}$ takes one of the forms in \eqref{one_dot}, using Lemma \ref{lem Rdouble} we get that 
		\be\label{more_doubleline}\sum_x |\Theta_{\fa x} \cal D_{x\fb_i}| \prec \sum_x B_{\fa x} B_{x\fb_i}^2 \prec W^{-d}B_{\fa\fb_i},\quad i=1,2.\ee
		Hence $\cal G_{\fa,\fb_1\fb_2}$ can be included into the second and third terms on the right-hand side of \eqref{mlevelTgdef weak}.
		
		\item[(d)] If $\cal G_{\fa,\fb_1\fb_2}$ satisfies (T1) and is not a $\oplus/\ominus$-recollision graph, then we have $\ord(\cal G_{\fa,\fb_1\fb_2})> n$, because it must come from expansions of a graph in $(\AT^{(>n)})_{\fa,\fb_1\fb_2}$. Then $\cal G_{\fa,\fb_1\fb_2}$ can be written as  
		\be\label{cased_higher} \cal G_{\fa,\fb_1\fb_2}= \sum_{x,y} \Theta_{\fa x} \cal D_{x y} t_{y, \fb_1 \fb_2},\ee
		where $\cal D_{x y} $ is a deterministic doubly connected graph of scaling order $> n$. %We need to further expand the $t_{y, \fb_1 \fb_2}$ variable. This will be discussed later.
	\end{itemize}

%\iffalse
%Then using Lemma \ref{lem Rdouble}, we obtain that 
%	$$\sum_{x,y}\Theta_{\al x}\cal D_{xy}\Theta_{y \beta}\prec W^{-(n-4)d/2} \sum_{x,y}B_{\al x}B^2_{xy}B_{y \beta}\prec W^{-(n-2)d/2}\frac{1}{W^4\langle \al-\beta\rangle^{d-4}} \lesssim W^{-c_0} B_{\al\beta},$$ 
%	where we used $\langle \al-\beta\rangle\le 2L$ and condition \eqref{Lcondition0} in the last step. Hence as in the proof of Corollary \ref{lem completeTexp} in Section \ref{sec lastglobal}, to expand \eqref{cased_higher}, we can apply \eqref{replaceT} to the  $t_{y, \fb_1 \fb_2}$ and treat $(\Theta \cal D\Theta)_{\fa x}$ as a labelled $\dashed$ edge. However, we shall use another way to deal with \eqref{cased_higher}, that is, we will treat \eqref{cased_higher} as a GGS graph, and apply Case 3 of Strategy \ref{strat_global_weak} to the $t_{y, \fb_1 \fb_2}$. %In particular, this kind of graphs will be contained in the following arguments.
%	%We will discuss this process after we finish the discussion the 
%	We will give a unified treatment of this scenario after we finish the discussion of the expansions of graphs with ghost edges.
%	
%	Now we consider graphs with ghost edges. Since ghost edges will appear only when we have expanded a blue solid edge connected with a strongly isolated subgraph. Hence our following argument starts with the graphs for which we have to apply Case 3 of Strategy \eqref{strat_global_weak} for the first time. 
%\fi

Now we consider the remaining graphs that satisfy the setting in cases 3 of Strategy \ref{strat_global_weak} but not the stopping rules (T1)--(T3). So far, these graphs do not contain any ghost edge. We claim that all these graphs are GGS. First, by Lemma \ref{lem localgood2} and Lemma \ref{lem globalgood}, all these graphs are SPD. Then we show that these graphs satisfy \eqref{ord ghost0}. We have two cases. If a graph comes from expansions of \smash{$(\AT^{(>n)})_{\fa,\fb_1\fb_2}$}, then it trivially satisfies \eqref{ord ghost0} with $k_\gh=0$. On the other hand, suppose a graph, say $\cal G$, comes from expansions of a graph, say $\cal G_0$, in  \smash{$(\PT^{(n)})_{\fa,\fb_1\fb_2}$}. Since $\cal G_0$ is globally standard, it has no strongly isolated subgraphs. Furthermore, the only operation to generate a strongly isolated subgraph is to replace a $t_{x,y_1y_2}$ variable with a graph in \smash{$(\AT^{(>n)})_{x,y_1y_2}$} in Case 2 of Strategy \ref{strat_global_weak}. Hence $\cal G$ is of scaling order $> n$, and satisfies \eqref{ord ghost0} with $k_\gh=0$. 
%First, we observe that the ghost edges will appear only when we have to expand a blue solid edge connected with a strongly isolated subgraph. 
%	Next we prove the following two facts:
%	\begin{itemize}
%		
%		\item if the input graph is GGS, then after every step of expansion as in Strategy \ref{strat_global_weak}, all the resulting non-$Q$ graphs are still GGS and all the $Q$-graphs satisfy properties (i)-(iii) in Lemma \ref{def nonuni-T}; 
%		
%		\item the expansion will stop after $\OO(D)$ many steps.
%	\end{itemize}
	
In the next few lemmas, we show that Strategy \ref{strat_global_weak} preserve the GGS property. % prove that for any input graph 
%First, the new graphs from the expansions $(\PT^{(n-1)})_{\fa,\fb_1 \fb_2}$ and $(\AT^{(>n-1)})_{\fa,\fb_1\fb_2}$ are all SPD as we have shown in the proof of Theorem \ref{incomplete Texp} in Section \ref{sec expandlvl4}. Hence they are all GGS if they contain no ghost edges.  
%First, for Case 3 of Strategy \ref{strat_global_weak}, we have the following claim.
\begin{lemma}\label{lem localgood claim3}
%Let $\cal G\equiv \cal G_{\fa,\fb_1\fb_2}$ be a locally standard and GGS graph which has no $P/Q$ labels.Then applying the expansion in Case 3 of Strategy \ref{strat_global_weak}, each resulting non-$Q$ graph is still GGS, and satisfies that all the atoms that are not standard neutral belong to the minimal non-deterministic isolated subgraph. On the other hand, the resulting $Q$-graphs satisfies properties (i)--(iii) of Lemma \ref{def nonuni-T}.	
Let $\cal G_{\fa,\fb_1\fb_2}$ be a GGS graph without $P/Q$ labels and satisfy the setting in Case 3 of Strategy \ref{strat_global_weak}. Applying Strategy \ref{strat_global_weak} to $\cal G_{\fa,\fb_1\fb_2}$, the resulting non-$Q$ graphs are still GGS, and the resulting $Q$-graphs satisfy the property (v) of Lemma \ref{def nonuni-T}.
\end{lemma}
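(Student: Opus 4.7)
The plan is to mirror the argument of Lemma \ref{lem globalgood}, adjusting for the bookkeeping needed by ghost edges and the scaling-order inequalities in Definition \ref{defn gs weak}. Let $\Iso_k$ be the MIS with non-deterministic closure of $\cal G \equiv \cal G_{\fa,\fb_1\fb_2}$. Since we are in Case 3, $\Iso_k$ is deterministic, strongly isolated in $\Iso_{k-1}$, and locally standard; the pivotal blue solid edge $b_1$ we must expand is therefore the external blue solid edge emanating from a standard neutral atom $\al\in\Iso_k$ to some atom $y_1\in \Iso_{k-1}\setminus \Iso_k$. Adding a ghost edge $g_0$ between $\al$ and $y_1$ and compensating by a factor $L^2/W^2$ leaves the value of the graph unchanged, and puts $g_0$ in position to replace $b_1$ in any blue net: this makes $b_1$ redundant (Claim \ref{trivial_claim3} style verification). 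Thus the resulting graph $\cal G'$ satisfies the \emph{doubly connected} property when $g_0$ is allowed in the blue net, and we may legitimately apply the expansion of Step 3 of Strategy \ref{strat_global} to the $t$-variable containing $b_1$, using the $\nonuni$ \eqref{mlevelTgdef weak}.

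Next I would split the resulting contributions according to which term of \eqref{mlevelTgdef weak} is substituted in place of the $t$-variable, and verify GGS (or property (v)) case by case.

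\begin{itemize}
\item \emph{Leading term} $m\Theta_{xy_1}\overline G_{y_1y_2}$: this just replaces $b_1$ by a $\dashed$ edge, so $\ord$ is preserved while $k_\gh$ increases by $1$. The resulting graph has the same molecular structure as $\cal G$ with $b_1\rightsquigarrow\dashed$ and a new ghost edge $g_0$ inside $\Iso_k$ (or its parent). I claim this graph is GGS in the sense of property (B) of Definition \ref{defn gs weak}: the ghost $g_0$ is the designated edge, and the pre-determinacy condition (B1) follows from the pre-determinacy of $\Iso_k$ in the original graph $\cal G$ together with Claims \ref{trivial_graph1}--\ref{trivial_graph3} (exactly as in the proof of Lemma \ref{lem globalgood}(1)). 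The scaling-order bound \eqref{ord ghost1} follows because the old graph satisfied \eqref{ord ghost0} (or \eqref{ord ghost1} with one fewer ghost).

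\item \emph{Deterministic corrections} built from $B^{(\mu)}$, $\wt B^{(\nu)}$ or $\mathscr B^{(\mu)}$: by \eqref{est_BwtB}, \eqref{est_BwtB1} and \eqref{ordG1}, these insertions raise the scaling order by at least $n-1$, which compensates for the single new ghost edge and pushes the graph back into case (A) of the GGS definition. The doubly connected/SPD/weakly-isolated properties are verified exactly as in Lemma \ref{lem globalgood}(2) using the additional tools from Lemmas \ref{hard lemmdot}--\ref{hard lemmQ}; the substituted block itself is already generalized doubly connected, so merging it into $\Iso_k$ preserves the generalized SPD property.

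\item \emph{$Q$-graphs} obtained from $\cal Q_{x,y_1y_2}$ and the subsequent $Q$-expansions: one invokes Lemma \ref{Q_lemma} together with Lemma \ref{lem localgoodQ} to show that all produced non-$Q$ graphs are again GGS (with the scaling-order gain from property~(v)(c) of Lemma \ref{def nonuni-T}), while each surviving $Q$-graph inherits property (v): it is generalized SPD by Lemma \ref{lem localgoodQ}, the $Q$-atom lies in the MIS with non-deterministic closure by part (iv) of Lemma \ref{Q_lemma}, and the scaling-order condition \eqref{ordG2} follows from combining the gains guaranteed by \eqref{ordG2} for the substituted $Q$-graph with the single new ghost edge $g_0$.

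\item \emph{Error term} $\Err_{x,y_1y_2}$: contributes graphs with $\size\le W^{-D}$, which are absorbed into the global error term.
\end{itemize}

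The main obstacle is the third bullet: the $Q$-expansions may merge molecules and reroute external solid edges in ways that change the structure of $\Iso_k$ and of the sequence of isolated subgraphs above it. To handle this I would combine the case analysis of Lemma \ref{hard lemmQ} with a careful check that, after the $Q$-expansion, (a) the new ghost edge $g_0$ still lies inside \emph{some} minimal strongly isolated subgraph with non-deterministic closure, and (b) the redundancy condition (B1) continues to hold for $g_0$ in that new minimal subgraph. Once these two points are verified in each of the five configurations of Lemma \ref{hard lemmQ}, property (B) (or (A), in the subcases where the scaling order strictly increases) of Definition \ref{defn gs weak} follows, completing the proof.
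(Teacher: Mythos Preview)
There is a genuine circularity problem in your approach. You substitute for the $t$-variable using the non-universal $T$-expansion \eqref{mlevelTgdef weak}, but that expansion is precisely the statement of Lemma~\ref{def nonuni-T}, whose proof relies on Strategy~\ref{strat_global_weak} and hence on the present lemma. You cannot invoke \eqref{mlevelTgdef weak} here. What Strategy~\ref{strat_global_weak} actually prescribes in Case~3 (see the parenthetical there) is to apply \eqref{replaceT} with $n-1$ replaced by $n$, i.e.\ the ordinary $n$-th order $T$-expansion; that expansion contains no ghost edges and is already available by the induction hypothesis.

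Once you use the correct substitution, your case split must change accordingly: the terms are (I) the leading diffusive terms $m\Theta_{xy_1}\overline G_{y_1y_2}$ and $m(\Theta\Sdelta^{(n)}\Theta)_{xy_1}\overline G_{y_1y_2}$; (II) the higher-order and error graphs in $(\AT^{(>n)})_{x,y_1y_2}$ and $(\Err_{n,D})_{x,y_1y_2}$; (III) the recollision graphs in $(\wtPT^{(n)})_{x,y_1y_2}$; and (IV) the $Q$-graphs in $(\QT^{(n)})_{x,y_1y_2}$ followed by $Q$-expansions. Your ``deterministic corrections'' bullet, which relies on the bounds \eqref{est_BwtB}--\eqref{ordG1}, has no counterpart and must be replaced. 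The key dichotomy in the paper's argument is that Case~II gains at least $n-1$ in scaling order and hence lands in property~(A) of Definition~\ref{defn gs weak}, while Cases~I, III and the non-$Q$ output of~IV need not gain order and are handled instead by verifying property~(B1) for the specific ghost edge $g_0$ just added; this last verification (especially in Case~IV, where one distinguishes whether the $Q$-expansion connects $y$ to the complement of $\Iso_k$ by dotted, red, or blue solid edges) is done via Claims~\ref{trivial_graph1} and~\ref{trivial_graph5} rather than Lemma~\ref{hard lemmQ}. The paper also begins by observing that if $\cal G$ satisfies~(B) one may first delete a redundant ghost edge so that~(A) holds, which makes the scaling-order bookkeeping uniform.
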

	\begin{proof}
		$\cal G_{\fa,\fb_1\fb_2}$ satisfies either (A) or (B) of Definition \ref{defn gs weak}. If $\cal G$ satisfies (B), then, by the fact that $\Iso_k$ is deterministic, there exists a redundant ghost edge in $\Iso_k$. We can remove this edge, and get a new graph satisfying (A) of Definition \ref{defn gs weak}. Hence without loss of generality, we assume that $\cal G_{\fa,\fb_1\fb_2}$ satisfies \eqref{ord ghost0}.
		%we still denote the graph by $\cal G$. Now we have a GGS graph $\cal G$ satisfying (A) of Definition \ref{defn gs weak}. 
		Now we add a ghost edge, denoted by $g_0$, between the ending atoms of $b_1$, and denote the resulting graph by $\wt{\cal G}$. Then we have 
		\be\label{trivial wtG} \ord(\wt{\cal G})\ge (n-1)\cdot  k_{\gh}(\wt{\cal G})  + 2, \ee
		and $b_1$ becomes a redundant edge in $\wt{\cal G}$. Using the generalized SPD property of $\cal G_{\fa,\fb_1\fb_2}$, it is trivial to see that $b_1$ is the first edge in a pre-deterministic order of the MIS containing $\Iso_k$.

		Now we apply \eqref{replaceT} (with $n-1$ replaced by $n$) to $t_{x,y_1 y_2}$, where $G_{\al y_1}$ is the edge $b_1$ and $\al$ is the standard neutral atom in $\Iso_k$. By \eqref{trivial wtG}, all the resulting graphs satisfy \eqref{ord ghost1}.  Moreover, using %the arguments in the proof of 
		Lemma \ref{lem globalgood}, we get that the resulting $Q$-graphs satisfy the property (v) of Lemma \ref{def nonuni-T}, and the resulting non-$Q$ graphs satisfy the generalized SPD property.
		%have scaling orders $\ge \ord(\wt{\cal G}) \ge (n-1)\cdot  k_{\gh}(\wt{\cal G})  + 2$.  %and the property (B.2) of Definition \ref{defn gs weak}		
		It remains to prove the property (A) or (B1) of Definition \ref{defn gs weak} for the non-$Q$ graphs. Corresponding to the terms on the right-hand side of \eqref{replaceT}, we have the following four cases. 
		
		\medskip 
		
		\noindent{\bf Case I:} If we replace $t_{x,y_1y_2}$ with the first two terms on the right-hand side of \eqref{replaceT}, then it is trivial to see that the edge $g_0$ is redundant in each resulting graph, so that the property (B1) of Definition \ref{defn gs weak} holds.
		
		%a $\dashed$ $\Theta$ edge $\overline m^{-1}\Theta_{xy_1}\overline G_{y_1y_2}$ or a labelled $\dashed$ $\Theta$ edge  		$$\overline m^{-1}(\Theta \Sele_{2k_1}\Theta  \Sele_{2k_2}\Theta \cdots \Theta  \Sele_{2k_l}\Theta)_{xy_1}\overline G_{y_1y_2}.$$		
		%Then it is trivial to see that in the resulting graph, say $\cal G_{new}$, the edge $g$ is redundant. Together with \eqref{trivial wtG} and the fact that  $ \ord(\cal G_{new})\ge \ord(\wt{\cal G})$, we see that $\cal G_{new}$ satisfies (B) of Definition \ref{defn gs weak}.
		\medskip 
		
		\noindent{\bf Case II:} If we replace $t_{x,y_1y_2}$ with a graph $(\cal G_{A})_{x,y_1y_2}$ in $(\AT^{(>n)})_{x,y_1 y_2}$ or $ (\Err_{n,D})_{x,y_1y_2}$, then every resulting graph, say $\cal G_{new}$, satisfies that 
		$$ \ord({\cal G}_{new})\ge  \ord(\wt{\cal G}) + (n-1) \ge (n-1)\cdot [k_{\gh}(\wt{\cal G})+1]  + 2= (n-1)\cdot [k_{\gh}( {\cal G}_{new})+1]  + 2.$$
		Hence $\cal G_{new}$ satisfies the property (A) of Definition \ref{defn gs weak}.
		
		\medskip 
		
		\noindent{\bf Case III:} Suppose we replace $t_{x,y_1y_2}$ with a graph $(\cal G_{R})_{x,y_1 y_2}$ in $(\PT^{(n)})_{x,y_1 y_2}$, and get the following molecular graph (a) without red solid edges. We only show a case where there is a dotted edge connected with $y_2$ in $(\cal G_{R})_{x,y_1 y_2}$. %There are also cases with a dotted edge connected with $y_2$, but they 
		All the other cases can be analyzed in a similar way. With a slight abuse of notation, we use $x$, $y_1$ and $y_2$ to denote the molecules that contain atoms $x$, $y_1$ and $y_2$.
		\begin{center}
			\includegraphics[width=10cm]{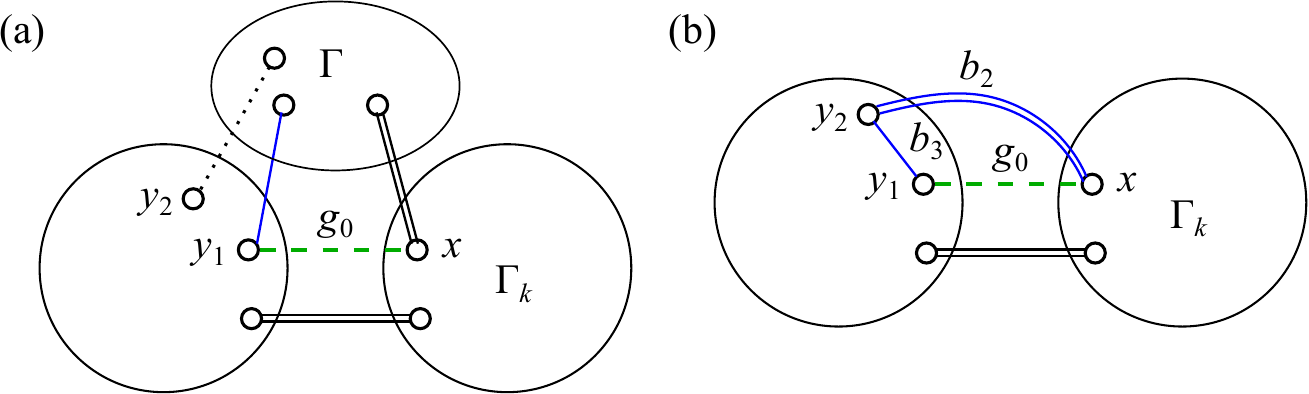}
		\end{center}
	Inside the back circles of graph (a) are some subgraphs, where $\Gamma_k$ contains the molecules in $\Iso_k$ of the original graph \smash{$\wt{\cal G}$}, and $\Gamma$ contains the molecules in $(\cal G_{R})_{x,y_1 y_2}$. We now show that $g_0$ is redundant in graph (a), which is stronger than the property (B1) of Definition \ref{defn gs weak}. By Claim \ref{trivial_graph1}, it is equivalent to consider the graph (b) obtained by merging all molecules in $\Gamma$ with molecule $y_2$. If we include the edges $b_2$ and $b_3$ into the blue net, then $g_0$ becomes redundant.
	%Identifying the molecules connected by dotted edges, we obtain the third graph, in which we can easily see that the ghost edge $g$ between $x$ and $y_1$ is redundant if we include the edges $(x,y_2)$ and $(y_1,y_2)$ into the blue net. 
	%Combining it with \eqref{trivial wtG} and the fact $ \ord(\cal G_{new})\ge \ord(\wt{\cal G})$, we obtain that 
	%This implies that graph (a) satisfies the property (B1) of Definition \ref{defn gs weak}. 

		\medskip 
		\noindent{\bf Case IV:} Suppose we replace $t_{x,y_1y_2}$ with a graph $(\cal G_{Q})_{x,y_1y_2}$ in $\cal Q^{(n)}_{x,y_1 y_2}$ and get a graph $(\cal G_{new})_{\fa,\fb_1\fb_2}$. Applying $Q$-expansions, we can expand it into a sum of $\OO(1)$ many new graphs:
		\be\nonumber %\label{mlevelTgdef3}
		(\cal G_{new})_{\fa,\fb_1\fb_2}= \sum_\omega \cal G_\omega  + \cal Q  +\cal G_{err}, %(\Err_{D})_{\fa,\fb_1\fb_2}.
		\ee
		where $ \cal G_\omega $ are graphs without $P/Q$ labels, $Q$ is a sum of $Q$-graphs and $\cal G_{err}$ satisfies $|\cal G_{err}|\prec W^{-D}$. 
		%and there are $\OO(1)$ many of them. On the other hand, as in Lemma \ref{hard lemma}, we can show that \smash{$\cal Q^{(n)}_{\fa,\fb_1\fb_2}$} is a sum of $Q$-graphs that satisfy properties (i) and (iii) of Lemma \ref{def nonuni-T}. The property (ii) is also trivial due to \eqref{trivial wtG} and the fact that the graphs in \smash{$\cal Q^{(n)}_{\fa,\fb_1\fb_2}$} all have scaling orders at least $\ord(\wt{\cal G})$. 
		Now we show that the graphs $ \cal G_\omega $ are all GGS. Suppose all the solid edges and weights in $(\cal G_{Q})_{x,y_1y_2}$ have the same $Q_y$ label for an atom $y$. Let $\cal G'$ be the complement of $\Iso_k$ in \smash{$\wt{\cal G}$}. By the property (iv) of Lemma \ref{Q_lemma}, $\cal G_{\omega}$ satisfies at least one of the following properties: 
		\begin{itemize}
			\item [(1)] there exist atoms in $\cal G'$ that have been merged with $y$ due to dotted edges;
			\item[(2)] the atoms in $\cal G'$ connect to $y$ only through red solid edges;
			\item[(3)] there exist atoms in $\cal G'$ that connect to $y$ through blue solid edges. 
		\end{itemize}
	    In the case (1), we can show that $g_0$ is redundant using the argument in Case III. In the case (2), if $(\cal G_{Q})_{x,y_1y_2}$ is a $\oplus/\ominus$-recollision graph, we can again use the argument in Case III to show that $g_0$ is redundant. Otherwise, \smash{$(\cal G_{Q})_{x,y_1y_2}$} is the closure of a weakly isolated subgraph, and replacing it with a $\dashed$ edge corresponds to replacing the edge $b_1$ in graph \smash{$\wt{\cal G}$} with a $\dashed$ edge, % in the molecular graph of \smash{$\wt{\cal G}$}. 
	    in which case the ghost edge $g_0$ becomes redundant. This shows that $\cal G_\omega$ satisfies the property (B1) of Definition \ref{defn gs weak} in the case (2).	
		%Suppose $b_1$ is replaced by a weakly isolated subgraph in the molecular graph of $(\cal G_\kappa)_{\fa,\fb_1\fb_2}$ (i.e. the $Q$-expansion has only pull $\overline G$ edges and weights). Then replacing this isolated subgraph with a $\dashed$ edge, we get a molecular graph obtained by replacing the edge $b_1$ with a $\dashed$ edge in the molecular graph of $\wt{\cal G}$. Hence we trivially have that $g$ is redundant.  
		Finally, we consider the case (3), where we have the following molecular graph (a) without red solid edges: %where some blue solid edges of plus $G$ weights are pulled to the atom in the $Q$-label of $\cal G_{Q}$. In this case, we have the following graph:
		\begin{center}
			\includegraphics[width=14cm]{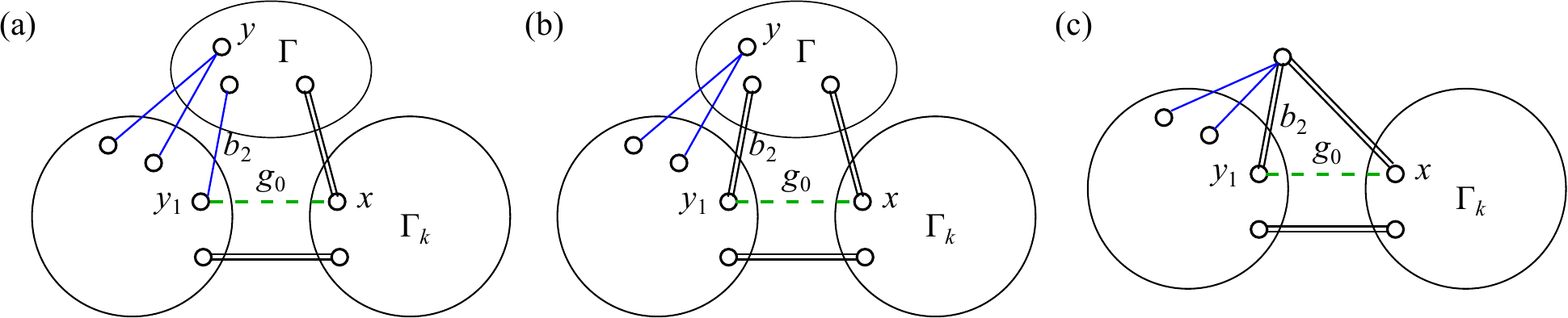}
		\end{center}
	    We change $b_2$ to a diffusive edge and get the graph (b). We claim that $g_0$ is redundant in the graph (b), which implies that $\cal G_{\omega}$ satisfies the property (B1) of Definition \ref{defn gs weak}. By Claim \ref{trivial_graph1}, it is equivalent to show that $g_0$ is redundant in the graph (c) obtained by merging the molecules in $\Gamma$. This follows directly from Claim \ref{trivial_graph5}.	    
		%Here we have drawn the case where two blue solid edges are pulled to $(\cal G_{Q})_{x,y_1y_2}$; if more blue solid edges are pulled, then the resulting graph is easier to analyze. Then we replace the edge $(y_1,\beta)$ with a $\dashed$ edge. By Claim \ref{trivial_graph1}, it is equivalent to study the third graph of the above figure, obtained by combining all internal molecules in $(\cal G_{Q})_{x,y_1 y_2}$ into one single molecule. In this figure,  the ghost edge $g$ between $x$ and $y_1$ is redundant by Claim \ref{trivial_graph5}. Together with \eqref{trivial wtG} and the fact that  $ \ord(\cal G_{new})\ge \ord(\wt{\cal G})$, we see that $\cal G_{new}$ satisfies (B) of Definition \ref{defn gs weak}.
		
		\medskip
		
		Combining the above four cases, we conclude that the resulting non-$Q$ graphs satisfy either (A) or (B) of Definition \ref{defn gs weak}, and hence are GGS.
	\end{proof}

\begin{lemma}\label{lem localgood claim1}
	Let $\cal G_{\fa,\fb_1\fb_2}$ be a GGS graph without $P/Q$ labels.  Applying any expansion in Definitions \ref{Ow-def}, \ref{multi-def}, \ref{GG-def} and \ref{GGbar-def} on an atom in the MIS with non-deterministic closure, the resulting non-$Q$ graphs are still GGS, and the resulting $Q$-graphs satisfy the property (v) of Lemma \ref{def nonuni-T}.

	%Let $\cal G\equiv \cal G_{\fa,\fb_1\fb_2}$ be a GGS graph, in which the atoms that are not standard neutral are all contained in the minimal non-deterministic isolated subgraph. Then applying any local expansions in Definitions \ref{dot-def}, \ref{Ow-def}, \ref{multi-def}, \ref{GG-def} and \ref{GGbar-def} to $\cal G$, each resulting non-$Q$ graph is still GGS, and satisfies that all the atoms that are not standard neutral belong to the minimal non-deterministic isolated subgraph. On the other hand, the resulting $Q$-graphs satisfies properties (i)--(iii) of Lemma \ref{def nonuni-T}.
\end{lemma}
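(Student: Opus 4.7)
The plan is to reduce the statement to Lemma \ref{lem localgood2} and Lemma \ref{lem localgood3} together with a careful bookkeeping of ghost edges. The crucial observation is that none of the local expansions in Definitions \ref{Ow-def}, \ref{multi-def}, \ref{GG-def} and \ref{GGbar-def} creates or destroys ghost edges: these expansions only add waved/dotted edges inside existing molecules, merge molecules through newly added dotted or waved edges, or replace a plus $G$ edge $b_0=(\cal M,\cal M')$ with a path of two plus $G$ edges $(\cal M,\cal M'')$ and $(\cal M'',\cal M')$. Therefore $k_\gh$ is unchanged by a local expansion, and in particular the scaling-order inequalities \eqref{ord ghost0} and \eqref{ord ghost1} remain valid after the expansion, since local expansions never decrease the scaling order.

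First I will establish the generalized SPD property. Treating the ghost edges of $\cal G_{\fa,\fb_1\fb_2}$ as (additional) blue $\dashed$ edges, Definition \ref{def seqPDG weak} reduces to Definition \ref{def seqPDG}, and the two operations (A) molecule-merging and (B) edge-splitting discussed in the proof of Lemma \ref{lem localgood2} are the only graph-theoretic effects of a local expansion on the molecular graph. Exactly as in that proof, every proper isolated subset of molecules in the new graph is obtained by merging a proper isolated subset of molecules in $\cal G_{\fa,\fb_1\fb_2}$, which preserves both property (ii) of Definition \ref{def seqPDG weak} (the chain of isolated subgraphs with non-deterministic closure) and property (iii) via Lemmas \ref{hard lemmdot} and \ref{hard lemmQ}. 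The only new point is that one has to verify that an isolated subgraph with deterministic closure cannot acquire a non-deterministic closure through a local expansion; but this is obvious because a local expansion acts within a single molecule or merges atoms in the same molecule, and we have assumed by hypothesis that the expansion is performed on an atom in the MIS with non-deterministic closure, so the deterministic parts of the graph are not touched.

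Next I will verify conditions (A) and (B) of Definition \ref{defn gs weak}. If $\cal G_{\fa,\fb_1\fb_2}$ satisfies (A), every new graph has at least the same scaling order with the same $k_\gh$, and hence still satisfies (A). If $\cal G_{\fa,\fb_1\fb_2}$ satisfies (B) with distinguished ghost edge $g_0$ lying inside the minimal strongly isolated subgraph $\Iso_k$ with non-deterministic closure, then $g_0$ lives in the same molecule after the expansion (since the expansion acts inside the MIS, which is contained in $\Iso_k$). The redundancy property (B1) for $g_0$, stated as the redundancy of $g_0$ after collapsing the maximal proper isolated subgraph of the ambient $\Iso'$ and all other blue solid edges of $\Iso'$ to $\dashed$ edges, then follows by the same case analysis used to prove Lemma \ref{lem localgood2}: operation (A) is handled by Lemma \ref{hard lemmdot} applied to the graph with all the other blue solid edges of $\Iso'$ replaced by $\dashed$, while operation (B) is handled by Lemma \ref{hard lemmQ}, noting that in either case $g_0$ is again the distinguished redundant edge in the resulting $\Iso'$. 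Thus property (B) is preserved.

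Finally, for the $Q$-graphs produced by the expansion, part (ii) of Lemma \ref{lem localgood3} yields that the atom in each $Q$-label belongs to the MIS of the resulting graph; combined with the generalized SPD property just verified, this is exactly what property (v) of Lemma \ref{def nonuni-T} demands. The deterministic edge $\cal B^{(\omega)}_{\fa x}$ between $\otimes$ and an interior atom $x$, together with the estimate \eqref{est_BwtB2} and the edge-connectivity requirements at $\oplus$ and $\ominus$, are carried over directly from the input graph since local expansions do not touch the external connections or the $\Theta$-edge incident to $\otimes$. The main potential obstacle in carrying this plan out is the verification of (B1) under operation (B), where a plus $G$ edge of $\Iso'$ is split into a two-edge path through a newly created molecule: one must check carefully that after the collapse of the maximal proper isolated subgraph and the remaining blue solid edges, the ghost edge $g_0$ can still be used as the ``extra'' blue edge in a doubly connected decomposition. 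This is however exactly the scenario controlled by Claims \ref{trivial_graph5} and \ref{trivial_graph6}, so the argument goes through without new ideas.
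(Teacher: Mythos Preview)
Your proposal is essentially correct and follows the same approach as the paper: you note that local expansions preserve $k_\gh$ and never decrease scaling order (so \eqref{ord ghost0}/\eqref{ord ghost1} persist), you reduce the generalized SPD part to Lemmas \ref{lem localgood2}--\ref{lem localgood3}, and you isolate the only nontrivial task as verifying (B1) under the two operations (merging and edge-splitting), invoking Claims \ref{trivial_graph5}--\ref{trivial_graph6} for the latter. One small correction: for the merging operation (I) and the edge-splitting operation (II) the relevant tools are not Lemmas \ref{hard lemmdot}/\ref{hard lemmQ} themselves (those concern pre-determinism of $\Iso_i$), but rather the elementary Claims \ref{trivial_graph1}, \ref{trivial_graph3} for (I) and Claims \ref{trivial_graph1}, \ref{trivial_graph5}, \ref{trivial_graph6} for (II), applied directly to show $g_0$ remains redundant after collapsing $\Iso'_1$ and the blue solid edges of $\Iso'$ --- you do cite the correct claims at the end, so this is only a presentational muddle, not a gap.
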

\begin{proof}
	%The SPD property follows from Lemma \ref{lem localgood2}. 
	Using Lemma \ref{lem localgood3}, we get that the resulting non-$Q$ graphs satisfy the generalized SPD property, and the resulting $Q$-graphs satisfy the property (v) of Lemma \ref{def nonuni-T}. It remains to prove that the new non-$Q$ graphs satisfy the property (A) or (B1) of Definition \ref{defn gs weak}. 
	%By Lemmas \ref{lem localgood3}, we can get the SPD properties of the resulting graphs, the globally standard properties of the minimal strong isolated subgraphs, the fact that all the atoms that are not standard neutral belong to the minimal non-deterministic isolated subgraph, and that all the $Q$-graphs satisfy properties (i)--(iii) of Lemma \ref{def nonuni-T}. 
	If $\cal G$ satisfies \eqref{ord ghost0} (resp. \eqref{ord ghost1}), then the new graphs also satisfy \eqref{ord ghost0} (resp. \eqref{ord ghost1}), because they have scaling orders $\ge \ord(\cal G)$. Hence we only need to prove that if $\cal G_{\fa,\fb_1\fb_2}$ satisfies the property (B1) of Definition \ref{defn gs weak}, then the resulting non-$Q$ graphs also satisfy the property (B1). % of Definition \ref{defn gs weak}. 
	
	Similar to the proof of Lemma \ref{lem localgood2}, we need to show that the following two operations will preserve the property (B1) of Definition \ref{defn gs weak}:
	\begin{itemize}
		\item[(I)] merging a pair of molecules due to a newly added dotted or waved edge between them;
		
		\item[(II)] replacing a plus $G$ edge between molecules $\cal M$ and $\cal M'$ with a path of two plus $G$ edges from $\cal M\to \cal M''\to \cal M'$ for a molecule $\cal M''$ in the MIS with non-deterministic closure. 
	\end{itemize}
Denote the maximal isolated subgraph of $\Iso'$ in $\cal G_{\fa,\fb_1\fb_2}$ by $\Iso_1'$. The proof of case (I) is non-trivial only when one molecule is inside $\Iso'_1$, while the other is not. The proof is the same as the one for Lemma \ref{hard lemmdot} by using Claims \ref{trivial_graph1} and \ref{trivial_graph3}, so we omit the details.  
%\iffalse
%	Case (1) is non-trivial only when one molecule is in a (weakly) isolated subgraph of $\Iso_k$ and the other molecule is not, as in the first graph of the following figure:
%	\begin{center}
%		\includegraphics[width=14cm]{Nonuni3.png}
%	\end{center}
%	Here we have drawn the case where one end atom of the dotted edge is not in $\Iso_k$; the case where the dotted edge is inside $\Iso_k$ can be analyzed in the same way. By Claim \ref{trivial_graph1}, it is equivalent to study the second graph of the above figure, obtained by combining all molecules in the subgraph $\Gamma$ into one single molecule. Identifying the molecules connected by dotted edges, we obtain the third graph, in which we can easily see that the ghost edge $g_*$ is redundant if we include the edges $b_1$ and $b_2$ into the blue net. Hence the first graph in the above figure satisfies (B.3) of Definition \ref{defn gs weak}.
%\fi	
The proof of case (II) is non-trivial only when molecules $\cal M$ and $\cal M'$ are not inside $\Iso'_1$, as shown in the graph (a) of the following figure: 
\begin{center}
\includegraphics[width=13cm]{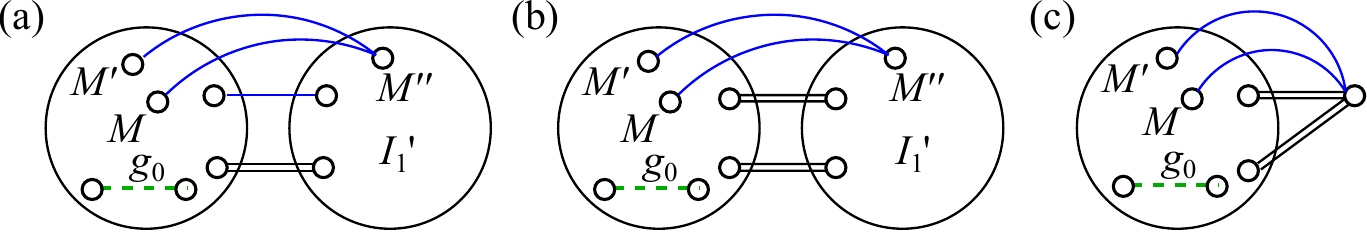}
\end{center}
%Here we have drawn the case where $\cal M$ and $\cal M'$ are not in $\Iso_k$; all the other cases can be  analyzed in the same way. 
%Let $\Iso''$ be the MIS containing $g_0$ in the graph (a). 
We replace the blue solid edges inside $\Iso'$ with $\dashed$ edges and get the graph (b), where we have drawn a case such that $\cal M$ or $\cal M'$ does not belong to $\Iso'$. To show that $g_0$ is redundant in the graph (b), by Claim \ref{trivial_graph1} it is equivalent to consider the graph (c) obtained by merging the molecules in $\Iso'_1$. Now by Claim \ref{trivial_graph5}, $g_0$ is redundant in the graph (c). This implies that the graph (a) satisfies the property (B1) of Definition \ref{defn gs weak}. Finally, for the case where $\cal M$ and $\cal M'$ belong to $\Iso'$, we can conclude the proof with a similar argument by using Claim \ref{trivial_graph6}.
	%$b$ with a $\dashed$ edge. By Claim \ref{trivial_graph1}, it is equivalent to study the third graph of the above figure, obtained by combining all molecules in $\Gamma$ into one single molecule. In this figure,  the ghost edge $g_*$ between $x$ and $y_1$ is redundant by Claim \ref{trivial_graph5}. {\cob Claim \ref{trivial_graph6}} Hence the first graph in the above figure satisfies (B.3) of Definition \ref{defn gs weak}. 	This concludes the proof of Claim \ref{lem localgood claim1}.
\end{proof}
	
	%Second, for Case 2 of Strategy \ref{strat_global_weak}, we have the following lemma. 
	
	\begin{lemma}\label{lem leading claim}
		%Let $\cal G\equiv \cal G_{\fa,\fb_1\fb_2}$ be a locally standard and GGS graph which has no $P/Q$ labels. Then applying the expansion in Case 2 of Strategy \ref{strat_global_weak}, each resulting non-$Q$ graph is still GGS, and satisfies that all the atoms that are not standard neutral belong to the minimal non-deterministic isolated subgraph. On the other hand, the resulting $Q$-graphs satisfies properties (i)--(iii) of Lemma \ref{def nonuni-T}.
		Let $\cal G_{\fa,\fb_1\fb_2}$ be a GGS graph without $P/Q$ labels. Applying the expansion in Case 2 of Strategy \ref{strat_global_weak}, the resulting non-$Q$ graphs are still GGS, and the resulting $Q$-graphs satisfy the property (v) of Lemma \ref{def nonuni-T}.
	\end{lemma}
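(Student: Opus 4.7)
The plan is to mirror the two-step structure of the proofs of Lemma \ref{lem localgood claim3} and Lemma \ref{lem localgood claim1}: first extract the generalized SPD property of the non-$Q$ outputs and the correct $Q$-label location in the $Q$-outputs from Lemma \ref{lem globalgood} (interpreted for the minimal isolated subgraph with non-deterministic closure, which plays the role of the MIS here), and then verify condition (A) or (B) of Definition \ref{defn gs weak} separately. The single structural observation that simplifies the whole analysis is that Case 2 of Strategy \ref{strat_global_weak} never inserts a ghost edge, so $k_\gh$ is unchanged across the substitution and the scaling-order bound driving (A) or (B) can only improve.

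First I would handle the sub-case where $\cal G_{\fa,\fb_1\fb_2}$ already satisfies (A). Since any graph produced by \eqref{replaceT} has scaling order $\ge \ord(\cal G_{\fa,\fb_1\fb_2})$, condition \eqref{ord ghost0} is preserved automatically; the generalized SPD property of non-$Q$ outputs follows by rerunning case (2) of Lemma \ref{lem globalgood} with isolated subgraphs of non-deterministic closure in place of ordinary isolated subgraphs, and the $Q$-graph verification of property (v) of Lemma \ref{def nonuni-T} proceeds as in Case IV of the proof of Lemma \ref{lem localgood claim3}. The hard part will be the sub-case where $\cal G_{\fa,\fb_1\fb_2}$ satisfies (B) with a designated ghost edge $g_0$ in the minimal strongly isolated subgraph $\Iso_k$ of non-deterministic closure. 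Here the delicate situation is a substitution of $t_{x,y_1y_2}$ by the leading term $m\Theta_{xy_1}\overline G_{y_1y_2}$ or by $m(\Theta\Sdelta^{(n)}\Theta)_{xy_1}\overline G_{y_1y_2}$, because the scaling order is unchanged and we cannot retreat to (A). We must verify that (B1) is preserved: the substitution turns the first blue solid edge $b_1$ in a pre-deterministic order of the MIS into a $\dashed$ edge lying inside $\Iso_k$, and Claim \ref{trivial_graph3} together with Claims \ref{trivial_graph5} and \ref{trivial_graph6} ensures that the redundancy of $g_0$ after the (B1)-type edge replacements is not destroyed by turning another redundant blue solid edge into a $\dashed$ edge.

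The remaining sub-cases of (B) are handled uniformly by scaling-order bookkeeping combined with the molecular-graph arguments of the previous lemmas. Substitution with a recollision graph from $(\wtPT^{(n)})_{x,y_1y_2}$ adds at least $1$ to the scaling order, and one adapts Case III of the proof of Lemma \ref{lem localgood claim3} to verify that (B1) persists (any new isolated subgraphs created inside the MIS have deterministic closure); substitution with a graph from $(\AT^{(>n)})_{x,y_1y_2}$ adds at least $n-1$ to the scaling order, automatically promoting the output into condition (A); the $\Err_{n,D}$ contribution is absorbed into $\Err_{\fa,\fb_1\fb_2}$ of \eqref{mlevelTgdef weak} by the $\size$ bound; finally for $Q$-graphs obtained from $(\QT^{(n)})_{x,y_1y_2}$ or from subsequent $Q$-expansions, property (iv) of Lemma \ref{Q_lemma} guarantees that the atom carrying the $Q$-label lies in the new MIS with non-deterministic closure, which together with the unchanged ghost-edge count and the scaling-order estimate delivers \eqref{ordG2} and verifies property (v) of Lemma \ref{def nonuni-T}.
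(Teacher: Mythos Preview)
Your proposal is correct and takes essentially the same approach as the paper: the paper's proof simply states that the lemma follows by the same arguments as in Lemmas \ref{lem localgood claim3} and \ref{lem localgood claim1}, and you have filled in precisely those arguments (no new ghost edges in Case~2, hence $k_\gh$ unchanged; case~(A) preserved by scaling-order monotonicity; case~(B1) checked term-by-term via the molecular-graph claims). Your sketch is in fact more detailed than what the paper provides.
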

	\begin{proof}
		This lemma can be proved using similar arguments as in the proofs of Lemmas \ref{lem localgood claim3} and \ref{lem localgood claim1}. We do not repeat them again. 
		%The proof of this claim is almost the same as the one for Lemma \ref{lem localgood claim3}. In fact, regarding the results we want to show, the graph $\cal G$ in Claim \ref{lem leading claim} essentially has the same setting as the graph $\wt{\cal G}$ above \eqref{trivial wtG} in the proof of Claim \ref{lem localgood claim3}. (because we can always relocate the ghost edges)
	\end{proof}
	
	%Finally, for local expansions, we have the following lemma.
	
In sum, Lemmas \ref{lem localgood claim3}--\ref{lem leading claim} show that applying Strategy \ref{strat_global_weak} to any GGS input graph once, all the resulting non-$Q$ graphs are still GGS and all the $Q$-graphs satisfy the property (v) of Lemma \ref{def nonuni-T}. Hence we can apply Strategy \ref{strat_global_weak} repeatedly until all graphs satisfy the stopping rules (T1)--(T3). Now we claim that the expansion process will stop after $\OO(1)$ many iterations of Strategy \ref{strat_global_weak}.  	
%	Now we show that the expansions will stop after at most $\OO(1)$ many steps. 
%	\begin{claim}\label{lemm expansionstrat222}
%		For any fixed $D>0$, the expansions will stop after $\OO(1)$ many iterations of Strategy \ref{strat_global_weak}.
%	\end{claim}
%	\begin{proof}
	The proof of this claim is similar to the one for Lemma \ref{lemm expansionstrat}, so we will not write down all the details. %Instead, we only explain the main ideas of the proof as follows. 

If we only apply the Cases 1 and 2 of Strategy \ref{strat_global_weak}, then the expansions will stop after $\OO(1)$ many steps by Lemma \ref{lemm expansionstrat}. To complete the proof, it suffices to show that the Case 3 of Strategy \ref{strat_global_weak} is applied at most for $\OO(1)$ many times when all graphs satisfy the stopping rules (T1)--(T3).
%	\iffalse
%	can expand a graph until all the new graphs satisfy stopping rules (T1)--(T3) without using Case 3 of Strategy \ref{strat_global_weak}, then it requires at most $\OO(1)$ many iterations as given by Lemma \ref{lemm expansionstrat}. To deal with the general case, we claim that there are at most $\OO(1)$ steps where Case 3 of Strategy \ref{strat_global_weak} are applied when we stop the expansions. If this claim holds, then we can conclude  Claim \ref{lemm expansionstrat222}, because Lemma \ref{lemm expansionstrat} has shown that between two steps where Case 3 of Strategy \ref{strat_global_weak} are applied, there at at most $\OO(1)$ many steps.
%	
%	Now we explain why the above claim holds. As explained in the proof of Lemma \ref{lem localgood claim3}, for a graph $\cal G$ satisfying the setting in Case 3 of Strategy \ref{strat_global_weak}, it is a locally standard and GGS graph satisfying (A) of Definition \ref{defn gs weak}.
%	\fi	
Suppose a graph $\cal G$ satisfies the setting in the Case 3 of Strategy \ref{strat_global_weak}. As explained in the proof of Lemma \ref{lem localgood claim3}, we can remove a redundant ghost edge from $\cal G$ if necessary so that $\cal G$ satisfies the property (A) of Definition \ref{defn gs weak}. We expand $\cal G$ by applying Strategy \ref{strat_global_weak} repeatedly, and construct correspondingly a tree diagram of graphs as in the proof of Lemma \ref{lemm expansionstrat}. Suppose $\cal G_1$ is a graph on the tree so that it satisfies the setting in Case 3 of Strategy \ref{strat_global_weak}, but all the other graphs on the unique path between $\cal G$ and $\cal G_1$ do not. % satisfy the setting in Case 3 of Strategy \ref{strat_global_weak}.
Following the expansion process, we find that at least one of the following properties holds for $\cal G_{1}$.
\begin{itemize}
			\item $\cal G_1$ contains strictly fewer blue solid edges than $\cal G$, and $\size(\cal G_1)\le \size( \cal G).$
			
			\item $\cal G_1$ has a strictly higher scaling order than $\cal G$, but not more ghost edges. Hence we have
			$$\size(\cal G_1)\le W^{-d/2}\size( \cal G).$$ 
			
			\item $\cal G_1$ has one more ghost edge than $\cal G$, and is of scaling order 
			\be\label{scaling_order_cond}
			\ord(\cal G_1)\ge \ord(\cal G) + (n-1). 
			\ee
			Here the extra ghost edge appears when we apply the Case 3 of Strategy \ref{strat_global_weak} to $\cal G$. Furthermore, in order for this ghost edge to be pivotal in $\cal G_1$, we must have replaced a $t_{x,y_1y_2}$ variable with a graph in \smash{$(\AT^{(>n)})_{x,y_1 y_2}$} at some step. This gives the condition \eqref{scaling_order_cond}. Using  \eqref{Lcondition1} and \eqref{defn sizeG}, we get that 
			$$\size(\cal G_1)\le W^{-c_0} \size( \cal G).$$ 
\end{itemize}
With the above facts, it is easy to show that the Case 3 of Strategy \ref{strat_global_weak} are applied at most for $\OO(1)$ many times when the expansion process stops.
	
	%With the above fact and a similar argument as in the proof of Lemma \ref{lvl1 lemma}, we can show that between two steps where Case 3 of Strategy \ref{strat_global_weak} are applied, there are at most $D/\min(c_0,d/2)$ steps.
	%\end{proof}
	
	Applying Strategy \ref{strat_global_weak} repeatedly until all graphs satisfy the stopping rules (T1)--(T3), we can expand $T_{\fa,\fb_1 \fb_2}$ into a sum of $\OO(1)$ many graphs. If a graph satisfies the stopping rule (T2), then it can be included into $\Err_{\fa,\fb_1\fb_2}$ in \eqref{mlevelTgdef weak}. If a graph satisfies the stopping rule (T3), then it can be included into $\cal Q_{\fa,\fb_1\fb_2}$ in \eqref{mlevelTgdef weak}. If a graph $\cal G_{\fa,\fb_1\fb_2}$ satisfies the stopping rule (T1) and is a $\oplus/\ominus$-recollision graph, then it can be written into one of the forms in \eqref{one_dot} and \eqref{two_dots}, where $\cal D_{x\fb_1}$, $\cal D_{x\fb_2}$ and $\cal D_{x, \fb_1\fb_2}$ are deterministic doubly connected graphs satisfying \eqref{ord ghost1}. Then using Lemma \ref{no dot weak} and a similar calculation as in \eqref{more_doubleline}, we get that $\cal G_{\fa,\fb_1\fb_2}$ can be included into the second to fourth terms in \eqref{mlevelTgdef weak}. If a graph $\cal G_{\fa,\fb_1\fb_2}$ satisfies the stopping rule (T1) and is \emph{not} a $\oplus/\ominus$-recollision graph, then it is GGS and can be written into 
	$$\cal G_{\fa,\fb_1\fb_2}= \sum_{x,y} \Theta_{\fa x} \wt{\cal D}_{x y} t_{y, \fb_1 \fb_2} = \sum_{x,y} \Theta_{\fa x} \wt{\cal D}_{x y} T_{y, \fb_1 \fb_2} - \sum_{x,y} \Theta_{\fa x} \wt{\cal D}_{x y} |m|^2\sum_\al s_{y\al} G_{\al\fb_1}\overline G_{\al\fb_2}\left(1- \mathbf 1_{\al \ne \fb_1}\mathbf 1_{\al\ne \fb_2} \right),$$
	where $\wt{\cal D}_{xy}$ is a deterministic doubly connected graph satisfying \eqref{ord ghost0} (by removing a redundant ghost edge  if necessary). We include the second term of the above equation into the second to fourth terms in \eqref{mlevelTgdef weak}. In sum, we have obtained the following expansion:	 
	\be\label{mlevelTgdef weak2} 
	\begin{split}
		& T_{\fa,\fb_1 \fb_2} =m  \Theta_{\fa \fb_1}\overline G_{\fb_1\fb_2} +  \sum_{\mu} B^{(\mu)}_{\fa \fb_1}\overline G_{\fb_1\fb_2} f_\mu (G_{\fb_1\fb_1})+   \sum_{\nu} \wt B^{(\nu)}_{\fa \fb_2} G_{\fb_2\fb_1} \wt f_\nu(G_{\fb_2\fb_2})  \\
		&+\sum_{\mu} \sum_x \Theta_{\fa x}\cal D^{(\mu)}_{x, \fb_1 \fb_2}g_\mu(G_{\fb_1\fb_1},G_{\fb_2\fb_2},\overline G_{\fb_1\fb_2},  G_{\fb_2\fb_1})  +\sum_\mu \sum_{x,y} \Theta_{\fa x} \wt{\cal D}^{(\mu)}_{x y} T_{y, \fb_1 \fb_2}+  \cal Q_{\fa,\fb_1\fb_2}   + \Err_{\fa,\fb_1 \fb_2},  % \OO_\prec(W^{-D}), 
	\end{split}
	\ee
	where %$B^{(\mu)}_{\fa \fb_1}$ and $\wt B^{(\nu)}_{\fa \fb_2}$ satisfy the property (ii) of Lemma \ref{def nonuni-T}; $f_\mu$, $\wt f_\nu$ and $g_\mu$ satisfy the property (iv) of Lemma \ref{def nonuni-T}; $ \cal Q_{\fa,\fb_1\fb_2} $ satisfies properties (v) and (vi) of Lemma \ref{def nonuni-T}; $\Err_{\fa,\fb_1 \fb_2}$ satisfies the properties (i) of Lemma \ref{def nonuni-T}; 
	$\cal D^{(\mu)}_{x, \fb_1 \fb_2}$ are deterministic doubly connected graphs satisfying \eqref{ord ghost1}, and $\wt{\cal D}^{(\mu)}_{x y}$ are deterministic doubly connected graphs satisfying \eqref{ord ghost0}.
	
	Now similar to the proof of Corollary \ref{lem completeTexp} in Section \ref{sec lastglobal}, we solve \eqref{mlevelTgdef weak2} to get that 
	\be\label{mlevelTgdef weak22} 
	\begin{split}
		T_{\fa,\fb_1 \fb_2}&=\sum_x \left(1- \Theta \wt{\cal D}^{(\mu)}\right)^{-1}_{\fa x} \Big[m  \Theta_{x \fb_1}\overline G_{\fb_1\fb_2} +  \sum_{\mu} B^{(\mu)}_{x \fb_1}\overline G_{\fb_1\fb_2} f_\mu (G_{\fb_1\fb_1})+   \sum_{\nu} \wt B^{(\nu)}_{x \fb_2} G_{\fb_2\fb_1} \wt f_\nu(G_{\fb_2\fb_2}) \Big] \\
		&+\sum_x \left(1- \Theta \wt{\cal D}^{(\mu)}\right)^{-1}_{\fa x} \Big[\sum_{\mu} \sum_y \Theta_{xy}\cal D^{(\mu)}_{y, \fb_1 \fb_2}g_\mu(G_{\fb_1\fb_1},G_{\fb_2\fb_2},\overline G_{\fb_1\fb_2},  G_{\fb_2\fb_1})  \Big] \\
		&+\sum_x \left(1- \Theta \wt{\cal D}^{(\mu)}\right)^{-1}_{\fa x} \Big[\cal Q_{x,\fb_1\fb_2}   + \Err_{x,\fb_1 \fb_2}  \Big]. 
	\end{split}
	\ee
	Using \eqref{bound 2net1 weak2}, we can get that 
	$$  \sum_{\al} \Theta_{x\al} \wt{\cal D}^{(\mu)}_{\al y}\prec \left( \frac{L^2}{W^2}\right)^{k_{\gh}(\wt{\cal D}^{(\mu)}_{\al y})} W^{ - \left[\ord(\wt{\cal D}^{(\mu)} _{\al y})-2\right]d/2 } B_{xy} \le \frac{W^{-c_0}}{\langle x- y\rangle^d}, $$
	where we used \eqref{ord ghost0}, \eqref{Lcondition1} and $W^2/L^2 \cdot B_{xy}\le \langle x- y\rangle^d$ in the second step. Using this estimate and the Taylor expansion of $(1- \Theta \wt{\cal D}^{(\mu)})^{-1}$, we can get that 
	\be\label{key_taylor} \left|(1- \Theta \wt{\cal D}^{(\mu)})^{-1}_{xy} -\delta_{xy}\right| \prec \frac{W^{-c_0}}{\langle x- y\rangle^d} + W^{-D},\ee
	for any large constant $D>0$. By \eqref{key_taylor}, we see that 
	\begin{itemize}
		\item matrix products of (labelled) $\dashed$ edges with $(1- \Theta \wt{\cal D}^{(\mu)})^{-1}$ give deterministic graphs $\wh B^{(\mu)}$ and $\cal B^{(\omega)}$ satisfying \eqref{est_BwtB1} and \eqref{est_BwtB2};
		
		\item matrix products of $B^{(\mu)}$ and $\wt B^{(\nu)}$ in \eqref{mlevelTgdef weak22} with $(1- \Theta \wt{\cal D}^{(\mu)})^{-1}$ give deterministic graphs satisfying \eqref{est_BwtB} (where we have used the same notations $B^{(\mu)}$ and $\wt B^{(\nu)}$ for convenience); 
		
		\item $[(1- \Theta \wt{\cal D}^{(\mu)})^{-1} \Theta]_{xy}=\Theta_{xy} + B^{(\mu)}_{x\fb_1} $ for a deterministic graph $B^{(\mu)}_{x\fb_1}$ satisfying \eqref{est_BwtB}. 
	\end{itemize}
	Hence \eqref{mlevelTgdef weak22} can be written into \eqref{mlevelTgdef weak}, which concludes the proof of Lemma \ref{def nonuni-T}. 

\begin{bibdiv}
	\begin{biblist}
		
		\bib{Anderson}{article}{
			author={Anderson, P.~W.},
			title={Absence of diffusion in certain random lattices},
			date={1958},
			journal={Phys. Rev.},
			volume={109},
			pages={1492\ndash 1505},
		}
		
		\bib{BaoErd2015}{article}{
			author={Bao, Z.},
			author={Erd{\H{o}}s, L.},
			title={Delocalization for a class of random block band matrices},
			date={2017},
			journal={Probab. Theory Relat. Fields},
			volume={167},
			number={3},
			pages={673\ndash 776},
		}
		
		\bib{PB_review}{incollection}{
			author={Bourgade, P.},
			title={Random band matrices},
			date={2018},
			booktitle={Proceedings of the {I}nternational {C}ongress of
				{M}athematicians},
			pages={2759\ndash 2783},
		}
		
		\bib{BouErdYauYin2017}{article}{
			author={Bourgade, P.},
			author={Erd{\H o}s, L.},
			author={Yau, H.-T.},
			author={Yin, J.},
			title={Universality for a class of random band matrices},
			date={2017},
			journal={Advances in Theoretical and Mathematical Physics},
			volume={21},
			number={3},
			pages={739\ndash 800},
		}
		
		\bib{PartII}{article}{
			author={Bourgade, P.},
			author={Yang, F.},
			author={Yau, H.-T.},
			author={Yin, J.},
			title={Random band matrices in the delocalized phase, {II}: Generalized
				resolvent estimates},
			date={2019},
			journal={J. Stat. Phys.},
			volume={174},
			number={6},
			pages={1189\ndash 1221},
		}
		
		\bib{PartI}{article}{
			author={Bourgade, P.},
			author={Yau, H.-T.},
			author={Yin, J.},
			title={Random band matrices in the delocalized phase, {I}: Quantum
				unique ergodicity and universality},
			date={2020},
			journal={Comm. Pure Appl. Math.},
			volume={73},
			number={7},
			pages={1526\ndash 1596},
		}
		
		\bib{ConJ-Ref2}{article}{
			author={Casati, G.},
			author={Guarneri, I.},
			author={Izrailev, F.},
			author={Scharf, R.},
			title={Scaling behavior of localization in quantum chaos},
			date={1990Jan},
			journal={Phys. Rev. Lett.},
			volume={64},
			pages={5\ndash 8},
		}
		
		\bib{ConJ-Ref1}{article}{
			author={Casati, G.},
			author={Molinari, L.},
			author={Izrailev, F.},
			title={Scaling properties of band random matrices},
			date={1990Apr},
			journal={Phys. Rev. Lett.},
			volume={64},
			pages={1851\ndash 1854},
		}
		
		\bib{ErdKno2013}{article}{
			author={Erd{\H{o}}s, L.},
			author={Knowles, A.},
			title={Quantum diffusion and delocalization for band matrices with
				general distribution},
			date={2011},
			journal={Ann. Henri Poincar\'e},
			volume={12},
			number={7},
			pages={1227\ndash 1319},
		}
		
		\bib{ErdKno2011}{article}{
			author={Erd{\H{o}}s, L.},
			author={Knowles, A.},
			title={Quantum diffusion and eigenfunction delocalization in a random
				band matrix model},
			date={2011},
			journal={Comm. Math. Phys.},
			volume={303},
			number={2},
			pages={509\ndash 554},
		}
		
		\bib{EKY_Average}{article}{
			author={Erd{\H o}s, L.},
			author={Knowles, A.},
			author={Yau, H.-T.},
			title={Averaging fluctuations in resolvents of random band matrices},
			date={2013},
			journal={Ann. Henri Poincar\'e},
			volume={14},
			pages={1837\ndash 1926},
		}
		
		\bib{delocal}{article}{
			author={Erd{\H{o}}s, L.},
			author={Knowles, A.},
			author={Yau, H.-T.},
			author={Yin, J.},
			title={Delocalization and diffusion profile for random band matrices},
			date={2013},
			journal={Comm. Math. Phys.},
			volume={323},
			number={1},
			pages={367\ndash 416},
		}
		
		\bib{Semicircle}{article}{
			author={Erd{\H{o}}s, L.},
			author={Knowles, A.},
			author={Yau, H.-T.},
			author={Yin, J.},
			title={The local semicircle law for a general class of random matrices},
			date={2013},
			journal={Elect. J. Prob.},
			volume={18},
			number={59},
			pages={1\ndash 58},
		}
		
		\bib{ErdYauYin2012Univ}{article}{
			author={Erd{\H{o}}s, L.},
			author={Yau, H.-T.},
			author={Yin, J.},
			title={Bulk universality for generalized {W}igner matrices},
			date={2012},
			journal={Probab. Theory Relat. Fields},
			volume={154},
			number={1-2},
			pages={341\ndash 407},
		}
		
		\bib{ConJ-Ref4}{article}{
			author={Feingold, M.},
			author={Leitner, D.~M.},
			author={Wilkinson, M.},
			title={Spectral statistics in semiclassical random-matrix ensembles},
			date={1991Feb},
			journal={Phys. Rev. Lett.},
			volume={66},
			pages={986\ndash 989},
		}
		
		\bib{fy}{article}{
			author={Fyodorov, Y.~V.},
			author={Mirlin, A.~D.},
			title={Scaling properties of localization in random band matrices: A
				$\sigma$-model approach.},
			date={1991},
			journal={Phys. Rev. Lett.},
			volume={67},
			pages={2405\ndash 2409},
		}
		
		\bib{HeMa2018}{article}{
			author={He, Y.},
			author={Marcozzi, M.},
			title={Diffusion profile for random band matrices: A short proof},
			date={2019},
			journal={J. Stat. Phys.},
			volume={177},
			number={4},
			pages={666\ndash 716},
			url={https://doi.org/10.1007/s10955-019-02385-2},
		}
		
		\bib{PelSchShaSod}{article}{
			author={Peled, R.},
			author={Sodin, S.},
			author={Schenker, J.},
			author={Shamis, M.},
			title={{On the Wegner Orbital Model}},
			date={201707},
			journal={International Mathematical Research Notices},
		}
		
		\bib{Sch2009}{article}{
			author={Schenker, J.},
			title={Eigenvector localization for random band matrices with power law
				band width},
			date={2009},
			journal={Comm. Math. Phys.},
			volume={290},
			pages={1065\ndash 1097},
		}
		
		\bib{SchMT}{article}{
			author={Shcherbina, M.},
			author={Shcherbina, T.},
			title={Characteristic polynomials for 1d random band matrices from the
				localization side},
			date={2017},
			journal={Comm. Math. Phys.},
			volume={351},
			number={3},
			pages={1009\ndash 1044},
		}
		
		\bib{Sch3}{article}{
			author={Shcherbina, M.},
			author={Shcherbina, T.},
			title={Universality for 1d random band matrices: Sigma-model
				approximation},
			date={2018},
			journal={J. Stat. Phys.},
			volume={172},
			number={2},
			pages={627\ndash 664},
			url={https://doi.org/10.1007/s10955-018-1969-1},
		}
		
		\bib{1Dchara}{article}{
			author={Shcherbina, Mariya},
			author={Shcherbina, Tatyana},
			title={Universality for 1d random band matrices},
			date={2021},
			journal={Comm. Math. Phys.},
			volume={385},
			number={2},
			pages={667\ndash 716},
		}
		
		\bib{Sch1}{article}{
			author={Shcherbina, T.},
			title={On the second mixed moment of the characteristic polynomials of
				1d band matrices},
			date={2014},
			journal={Comm. Math. Phys.},
			volume={328},
			pages={45\ndash 82},
		}
		
		\bib{Sch2014}{article}{
			author={Shcherbina, T.},
			title={Universality of the local regime for the block band matrices with
				a finite number of blocks},
			date={2014},
			journal={J. Stat. Phys.},
			volume={155},
			pages={466\ndash 499},
		}
		
		\bib{Sch2}{article}{
			author={Shcherbina, T.},
			title={Universality of the second mixed moment of the characteristic
				polynomials of the 1d band matrices: Real symmetric case},
			date={2015},
			journal={J. Math. Phys.},
			volume={56},
		}
		
		\bib{Sod2010}{article}{
			author={Sodin, S.},
			title={The spectral edge of some random band matrices},
			date={2010},
			journal={Ann. of Math.},
			volume={173},
			number={3},
			pages={2223\ndash 2251},
		}
		
		\bib{Spencer3}{article}{
			author={Spencer, T.},
			title={Duality, statistical mechanics and random matrices},
			journal={Current Developments in Mathematics},
			volume={2012},
			pages={229\ndash 260},
		}
		
		\bib{Spencer2}{incollection}{
			author={Spencer, T.},
			title={Random banded and sparse matrices},
			date={2011},
			booktitle={Oxford handbook of random matrix theory},
			editor={Akemann, G.},
			editor={Baik, J.},
			editor={Francesco, P.~Di},
			publisher={Oxford University Press},
			address={New York},
		}
		
		\bib{Spencer1}{incollection}{
			author={Spencer, T.},
			title={{SUSY} statistical mechanics and random band matrices},
			date={2012},
			booktitle={Quantum many body systems},
			series={Lecture Notes in Mathematics, vol 2051},
			publisher={Springer},
			address={Berlin, Heidelberg},
		}
		
		\bib{Wigner}{article}{
			author={Wigner, E.~P.},
			title={Characteristic vectors of bordered matrices with infinite
				dimensions},
			date={1955},
			journal={Ann. of Math.},
			volume={62},
			number={3},
			pages={548\ndash 564},
		}
		
		\bib{ConJ-Ref6}{article}{
			author={Wilkinson, M.},
			author={Feingold, M.},
			author={Leitner, D.~M.},
			title={Localization and spectral statistics in a banded random matrix
				ensemble},
			date={1991},
			journal={Journal of Physics A: Mathematical and General},
			volume={24},
			number={1},
			pages={175},
		}
		
		\bib{PartI_high}{article}{
			author={Yang, F.},
			author={Yau, H.-T.},
			author={Yin, J.},
			title={Delocalization and quantum diffusion of random band matrices in
				high dimensions {I}: self-energy renormalization},
			date={2021},
			journal={arXiv:2104.12048},
		}
		
		\bib{PartIII}{article}{
			author={Yang, F.},
			author={Yin, J.},
			title={Random band matrices in the delocalized phase, {III}: averaging
				fluctuations},
			date={2021},
			journal={Probab. Theory Relat. Fields},
			volume={179},
			pages={451\ndash 540},
		}
		
	\end{biblist}
\end{bibdiv}

\end{document}